\theoremstyle{plain}
\newtheorem{theorem}{Theorem}[section]
\newtheorem{lemma}{Lemma}[section]%[theorem]
\newtheorem{remark}{Remark}
\newtheorem{definition}{Definition}[section]
\numberwithin{equation}{section}
\begin{document}
%
%
%\centerline{q-sooner waiting time}

%\title{}
%\maketitle
%\begin{center} {\Large{\bf{Run and Frequency quotas for $q$-binary trials}}}

%\vspace*{.3in}

%{\large{
%\begin{tabular}{ccccc}
%Jungtaek Oh
%$^\dagger$&&Sivaraman Balakrishnan$^\dagger$ && Larry Wasserman$^\dagger$ \\

%\end{tabular}

%\vspace*{.1in}

%\begin{tabular}{ccc}
%Department of Statistics and Data Science$^{\dagger}$ \\
%\end{tabular}

%\begin{tabular}{c}
%Carnegie Mellon University \\
%Pittsburgh, PA 15213
%\end{tabular}
%}}

%\vspace*{.2in}

%\begin{abstract}
%Closed expressions of $q$-sooner and later waiting time problem when a run and frequency quotas imposed on runs of successes and failure, and the joint distribution of the length of the longest success and longest failure runs are obtained in a sequence of binary (zero and one) trials with probability of ones varying according to a geometric rule are derived by means of combinatorial analysis.
%\end{abstract}
%\end{center}

{
% A Parallelized Partitioned Approach to Model Massive Non-stationary Non-Gaussian Spatial Datasets
  \title{\bf Run and Frequency quotas for $q$-binary trials}
 \author{Jungtaek Oh\thanks{Corresponding Author:
 %Department of Statistics and Data Science, Yonsei University; Department of Applied Statistics, Yonsei University, Seoul, 03722, Republic of Korea
 (e-mail: jungtaekoh0191@gmail.com)}\hspace{.2cm}\\
%Department of Statistics, George Mason University \hspace{.2cm}\\
%Department of Statistics and Data Science, Yonsei University \hspace{.2cm}\\
%Department of Applied Statistics, Yonsei University
}
  \maketitle
} %\fi

\begin{abstract}
We study the distributions
of waiting times in variations of the $q$-sooner and later waiting
time problem. One variation imposes length and frequency quotas on
the runs of successes and failures. Another case considers binary
trials for which the probability of ones is geometrically varying. First, we consider sooner case (or later case) impose the run quotas both runs of successes and failures. Theorem \ref{thm:3.1} gives an probability function of the $q$-sooner waiting time distribution of order $(k_1,k_2)$ when a quota is imposed on runs of successes and failures. Theorem \ref{thm:3.2} gives an probability function of the $q$-later waiting time distribution of order $(k_1,k_2)$ when a quota is imposed on runs of successes and failures. Next, we consider sooner case (or later case) impose a frequency quota on successes and a run quota on failures. Theorem \ref{thm:4.1} gives an probability function of the $q$-sooner waiting time distribution impose a frequency quota on successes and a run quota on failures. Theorem \ref{thm:4.2} gives an probability function of the $q$-later waiting time distribution impose a frequency quota on successes and a run quota on failures. Next, we consider sooner case (or later case) impose a run quota on successes and a frequency quota on failures. Theorem \ref{thm:4.3} gives an probability function of the $q$-sooner waiting time distribution impose a run quota on successes and a frequency quota on failures. Theorem \ref{thm:4.4} gives an probability function of the $q$-later waiting time distribution impose a run quota on successes and a frequency quota on failures. Next, we consider sooner case (or later case) impose a frequency quotas on successes and failures. Theorem \ref{thm:5.1} gives an probability function of the $q$-sooner waiting time distribution impose a frequency quotas on successes and failures. Theorem \ref{thm:5.3} gives an probability function of the $q$-later waiting time distribution impose a frequency quotas on both successes and failures.

We also study the distributions of Longest run under the same variations. The main theorems are sooner and later waiting time problem and the joint distribution of the length of the longest success and longest failure runs when a run and frequency quotas imposed on runs of successes and failure. In the present work, we consider a sequence of independent binary zero and one trials with not necessarily identical distributed with probability of ones varying according to a geometric rule. Exact formulae for the distributions obtained by means of enumerative combinatorics.

% We illustrate the application of our method to challenging real and simulated data examples for which providing an accurate inference within a reasonable time is challenging.

% The proposed algorithm shows significant computational gains and provides an accurate prediction for massive nonstationary non-Gaussian spatial data.

% Ben Lee to edit this

% Our method is efficient in that we use parallel computation for fitting the local process, and basis function approximations are simple but can represent the nonstationary behavior well.

% We provide automated heuristics for implementing our method.

\end{abstract}
% We construct the global nonstationary process by combining the local processes through the weighted average.
\noindent%
{\it Keywords:}  Sooner/later waiting time problems, Run quota, Frequency quota, Longest run, Runs, Binary trials, $q$-Distributions
\vfill
\tableofcontents
\section{Introduction}
\citet{charalambides2010a} studied discrete $q$-distributions on Bernoulli trials with a geometrically varying success probability. Let us consider a sequence $X_{1}$,...,$X_{n}$ of zero(failure)-one(success) Bernoulli trials, such that the trials of the subsequence after the $(i-1)$st zero until the $i$th zero are independent with equal failure probability. For $i>0$ the $i$'s geometric sequences of trials is the subsequence after the $(i-1)$'st zero. The probability $q_i$ of zero is fixed for all trials during the $i$'th geometric sequence and is given by
\noindent
\begin{equation}\label{failureprobofq}
\begin{split}
q_{i}=1-\theta q^{i-1},\quad 0\leq\theta\leq1,\quad 0\leq q<1.
\end{split}
\end{equation}
\noindent
By the standard definitions it follows from \eqref{failureprobofq} that the success probability $1-q_i$ is geometrically decreasing. Let $S_{j}^{(0)}=\Sigma_{m=1}^{j}(1-X_{m})$ denote the number of zeros in the first $j$ trials. Because the probability of zeros at the $i$th geometric sequence of trials is in fact the conditional probability $q_{j,i}$ of occurrence of a zero at any trial $j$ given the occurrence of exactly $i-1$ zeros in the previous trials. We can write as follows.
\noindent
\begin{equation}\label{failureprobofq2}
\begin{split}
q_{j,i}=p\Big(X_{j}=0\ \bigm|\ S_{j-1}^{(0)}=i-1\Big)=1-\theta q^{i-1},\quad \text{for}\ i\leq j.
\end{split}
\end{equation}
\noindent
To make the preceding more clear, we consider the example of the sequence 111011110111100110 for $n=18$. Each subsequence in brackets has own success and failure probabilities according to the geometric rule.
\begin{figure}[t]
\noindent
  \begin{center}
\includegraphics[scale=0.65]{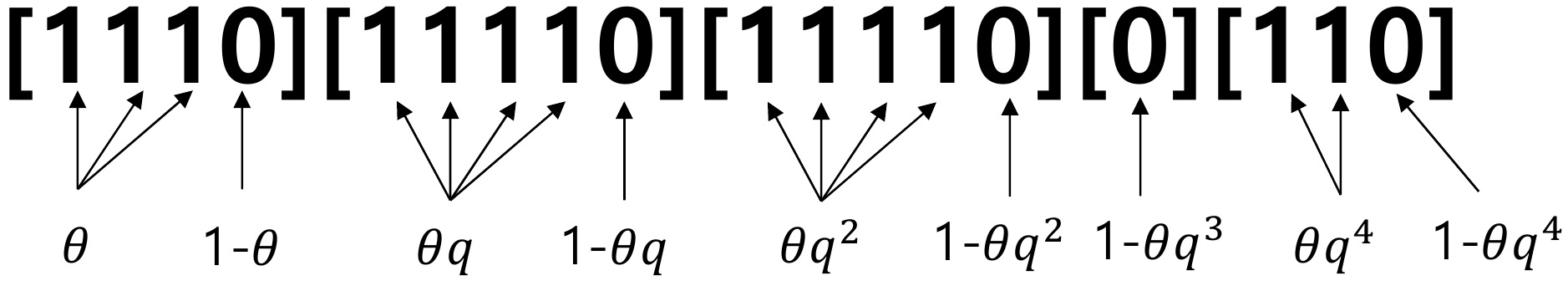}
\end{center}
\end{figure}

This stochastic model \eqref{failureprobofq} or \eqref{failureprobofq2} has interesting applications, studied as a reliability growth model by \cite{Dubman&Sherman1969}, and applies to a $q$-boson theory in physics by \cite{Jing&Fan1994} and \cite{Jing1994}. This stochastic model \eqref{failureprobofq} also applies to start-up demonstration tests, as a sequential-intervention model which is proposed by \cite{BBV1995}.\\

The stochastic model \eqref{failureprobofq} is the $q$-analogue of the classical binomial distribution with geometrically varying probabilities of zeros. This provides a stochastic model for independent identically distributed (IID) trials with failure probability function $\pi$ given by
\noindent
\begin{equation}\label{bernoullifailureprob}
\pi_{j}=P\left(X_{j}=0\right)=1-\theta,\ j>0,
\end{equation}
\noindent
where $0\leq \theta<1$.

As $q$ tends toward $1$, the stochastic model \eqref{failureprobofq} reduces  to IID(Bernoulli) model \eqref{bernoullifailureprob}, since $q_{i}\rightarrow\pi_{i}$, $i=1,2,\ldots$ or $q_{j,i}\rightarrow1-\theta$, $i=1,2,\ldots,j$, $j=1,2,\ldots.$\\

The discrete $q$-distributions based on the stochastic model of the sequence of independent Bernoulli trials have been investigated by numerous researchers, for a lucid review and comprehensive list of publications on this area the interested reader may consult the monographs by \cite{charalambides2010a,Charalambides2010b,Charalambides2016}.\\

From a Mathematical and Statistical point of view, \cite{Charalambides2016} mentioned in the preface of his book \textit{"It should be noticed that a stochastic model of a sequence of independent Bernoulli trials, in which the probability of success at a trial is assumed to vary with the number of trials and/or the number of successes, is advantageous in the sense that it permits incorporating the experience gained from previous trials and/or successes. If the probability of success at a trial is a very general function of the number of trials and/or the number successes, very little can be inferred from it about the distributions of the various random variables that may be defined on this model. The assumption that the probability of success (or failure) at a trial varies geometrically, with rate (proportion) $q$, leads to the introduction of discrete $q$-distributions"}.\\

Following the work of \citet{philippou1982waiting}, \citet{philippou1983generalized} and \citet{aki1984discrete}, the distribution theory of runs and patterns has been intensively developed in the last few decades, because of its theoretical interest and applications in a wide variety of research areas such as hypothesis testing, system reliability, quality control, physics, psychology, radar astronomy, molecular biology, computer science, insurance, and finance. During the past few decades up to recently, the meaningful progress on runs and pattern statistics has been wonderfully surveyed in \citet{balakrishnan2003runs} as well as in \citet{fu2003distribution} and references therein. Furthermore, there are some more recent contributions on the topic such as \citet{arapis2018}, \citet{eryilmaz2018}, \citet{kong2019}, \citet{makri2019}, and \citet{aki2019}.\\

Waiting time distributions related to runs and patterns received much attention recently in applied probability. In many situations, it is necessary to study the waiting time problems associated with two or more runs or patterns; for example, an experiment stops or a system fails whenever one of several predefined runs or patterns occurs. Sometimes, the order among these runs or patterns is important and must be taken into consideration; for example, the DNA sequence of a virus contains certain patterns that occur in order. We introduce another important class of waiting time distributions called sooner waiting time distributions. We will consider both success and failure runs of fixed lengths, instead of simply focus on success runs of fixed length. An interesting type of distribution is the waiting time until the first occurrence of a success run of fixed length or a failure run of fixed length in a sequence of Bernoulli trials was introduced by \cite{feller1968introduction} for the first time, instead of just concentrate on success runs of fixed length. The related problems, named as \textit{sooner/later succession} or \textit{run quota problems}. Let $X_{1},X_{2},\ldots$ be a sequence of binary trials, each resulting in either a Success (S) or Failure (F) and fix two positive integers $k_{1}\geq 1$, $k_{2}\geq 1$. Let $T_{k_{1},k_{2}}^{(S)}$ denote the waiting time until a sequence of $k_{1}$ consecutive successes or $k_{2}$ consecutive failures are observed for the first time.\\

The distribution of $T_{k_{1},k_{2}}^{(S)}$ named as \textit{sooner succession quota waiting time distribution}, or \textit{sooner run quota waiting time distribution}, or simply \textit{sooner waiting time distribution} by \cite{ebneshahrashoob1990sooner}. \citet{ling1990geometric} named as \textit{geometric distribution of order} $(k_{1},k_{2})$.\\

The sooner waiting time random variable $T_{k_{1},k_{2}}^{(S)}$ is closely related with the random variable $T_{k_{1},k_{2}}^{(L)}$, which denote stopping the trials when both a success run of length $k_1$ and a failure run of length $k_2$ have been observed. This distribution named as \textit{later succession quota waiting time distribution}, or \textit{later run quota waiting time distribution}, or simply \textit{later waiting time distribution}.
\citet{feller1968introduction} applied renewal theory to derive the p.g.f. of sooner waiting time distribution by using recurrent events. \citet{ebneshahrashoob1990sooner} first introduced the generalized probability generating function (gpgf), which provides a mechanism to take into account certain partitions into the probability space of the random variable of interest. They investigated probability generating functions of the sooner and later waiting time problems. \citet{ling1992generalization} and \cite{sobel1992quota} studied the sooner and later problems when a frequency quota is imposed on runs of successes and failures. \citet{aki1993discrete} derived the p.g.f. of the sooner and later waiting time problems in the first order Markov dependent trials. \citet{balasubramanian1993sooner} studied sooner and later waiting time problems in a markov correlated Bernoulli trials. \citet{aki1996sooner} studied sooner and later waiting time problems in a higher order Markov dependent trials. \citet{aki1999sooner} studied sooner and later waiting time problems in a Markov dependent bivariate trials. \citet{koutras1997sooner} studied the problem in the trinary trials.
 \citet{han1999joint} studied sooner and later waiting problems for runs in both identical and nonidentical independent sequences of multi-state trials. \citet{han2003sooner} investigated sooner and later waiting time problems for patterns in multi-state Markov dependent trials. \citet{kim2013waiting} derived probabilty distribution of sooner waiting time for both independent and homogeneous two-state Markovian Bernoulli trials, using a generalized Fibonacci sequence of order $k$. \citet{eryilmaz2016generalized} studied the generalized sooner waiting time problems in a sequence of trinary trials with random rewards. \citet{kim2019sooner} investigated the sooner waiting time problems in a sequence of independent multi-state trials with random rewards, using the analysis of a Markov chain with a special transition structure. They are generalized the sooner waiting time problems of \citet{eryilmaz2016generalized} for a sequence of trinary trials with random rewards in the case of multi-state trials.\\

 The random variable is the $L_{n}$ length of the longest success run in $n$ trials. Longest runs which can be considered as the extremes of run lengths. Denote by $R_{n}^{(j)}$ is the number of runs of type $j$ $(j=0,1)$ and let $x_{i}(y_{i})$ denote the length of the $i$th run of successes (failures) in $X_{1}$, $\ldots$,$X_{n}$, respectively. Let $L_{n}^{(j)}$ is denote denote the longest run of type $j$($j =0, 1$). More precisely, a fixed number $n$,
\begin{equation*}
\begin{split}
L_{n}^{(j)}&=\text{max}\Big\{b\ \bigm|\ X_{a+1}=\cdots=X_{a+b}=j\ \wedge\ 0\leq a\leq n-b\Big\}\\
&=\underset{1\leq j \leq R_{n}^{(j)}}{\text{max}}x_{j}.
\end{split}
\end{equation*}
To make more clear we mention by way of example that a sequence of ten trials $0011110110$. We have $R_{10}^{(0)}=3$, $R_{10}^{(1)}=2$, $L_{n}^{(0)}=2$ and $L_{n}^{(1)}=4$.\\
\citet{burr1961longest} studied the exact expression for the distribution of the longest run in i.i.d. Bernoulli trials. \citet{fu1994distribution} obtain the distribution of the longest run, using a Markov chain imbedding technique. \citet{fu1996distribution} developed a method of finite Markov-chain imbedding technique to obtain the exact distribution of $L_n$ in sequences of multi-state trials. \citet{wendy1996runs} studied the joint distribution of the length of the longest success run and the number of successes, as well as the conditional distribution of the length of the longest success run given the number of successes in sequences of Bernoulli trials. \citet{vaggelatou2003length} investigated the asymptotic results for the longest run statistics in a multi state markov chain. \citet{makri2007shortest} derived the exact probability function of the the longest run statistics for both linear and circular sequence by means of combinatorial analysis. Many authors have contributed to the longest run statistics in the cases of Bernoulli trials, Markov dependent trials, and exchangeable binary trials. See. \citet{philippou1985longest,philippou1986successes,philippou1990longest},  \citet{antzoulakos1997probability,antzoulakos1998longest}, \citet{charalambides1994success}, \citet{eryilmaz2005distribution,eryilmaz2005longest,eryilmaz2006some}, \citet{muselli1996simple}.\\

As mentioned in \cite{johnson2005univariate}, "\textit{The formulas for the probabilities for these distributions are not at first sight very illuminating; also in some cases there is more than one correct published expression for the probabilities}". \citet{fu1994distribution} mentioned that some of the formulae are extremely complicated and hard to compute. To the best of our knowledge, this is the first work that derive the exact probability functions of sooner and later waiting time problem and longest run statistics in the binary trials with probability of ones varying according to a geometric rule.\\

In this paper we study of the $q$-sooner/later waiting time distribution of order $(k_1,k_2)$ and longest run statistics when a quota is imposed on runs of successes and failures, with probability of ones varying according to a geometric rule. Run (or succession) quota is based on the length of the current run in that trial and frequency quota is based on the total frequency accumulated in that trial (i.e. number of the totality of frequency).\\

The paper is organized as follows. In section 2 we introduce basic definitions and necessary notations that will be useful throughout  this article. In Section 3 we shall study of the $q$-sooner and later waiting time distribution of order $(k_1,k_2)$ when a quota is imposed on runs of successes and failures. We derive exact probability function of $q$-sooner and later waiting time by means of combinatorial analysis. In section 4 we shall study of the $q$-sooner and later waiting times to be discussed will arise by setting quotas on both runs and frequencies of successes and failures. First, we consider sooner cases (or later cases) impose a frequency quota on successes and a run quota on failures. Next, we consider sooner cases (or later cases) impose a run quota on successes and a frequency quota on failures. In section 5 we study the joint distribution of the lengths of the longest success and longest failure runs.

\section{Terminology and notation}
We first recall some definitions, notation and known results in which will be used in this paper. Throughout the paper, we suppose that $0<q<1$. First, we introduce the following notation.
\begin{itemize}
\item $L_n^{(1)}:$ the length of the longest run of successes in $X_{1}, X_{2}, \ldots, X_{n}$;
\item $L_n^{(0)}:$ the length of the longest run of failures in $X_{1}, X_{2}, \ldots, X_{n}$;
\item $S_{n}:$ the total number of successes in $X_{1}, X_{2}, \ldots, X_{n}$;
\item $F_{n}:$ the total number of failures in $X_{1}, X_{2}, \ldots, X_{n}$.
\end{itemize}
\noindent
Next, let us introduce some basic $q$-sequences and functions and their properties, which are useful in the sequel. The $q$-shifted factorials are defined as
\begin{equation}\label{}
(a;q)_{0}=1,\quad (a;q)_{n} =\prod_{k=0}^{n-1}(1-aq^{k}),\quad (a;q)_{\infty} =\prod_{k=0}^{\infty}(1-aq^{k}).
\end{equation}
\noindent
Let $m$, $n$ and $i$ be positive integer and $z$ and $q$ be real numbers, with $q\neq 1$. The number $[z]_{q} = (1-q^{z})/(1-q)$ is called $q$-number and in particular $[z]_{q}$ is called $q$-integer. The $m$ th order factorial of the $q$-number $[z]_{q}$, which is defined by
\begin{equation}\label{}
\begin{split}
[z]_{m,q}&=\prod_{i=1}^{m}[z-i+1]_{q}= [z]_{q} [z-1]_{q}\cdots[z-m+1]_{q}\\
&=\frac{(1-q^{z})(1-q^{z-1})\cdots(1-q^{z-m+1})}{(1-q)^{m}},\ z=1,2,\ldots,\ m=0,1,\ldots,z.
\end{split}
\end{equation}
\noindent
is called $q$-factorial of $z$ of order $m$. In particular, $[m]_{q}! = [1]_{q}[2]_{q}...[m]_{q}$ is called $q$-factorial of $m$. The $q$-binomial coefficient (or Gaussian polynomial) is defined by
\noindent
\begin{equation}\label{}
\begin{split}
\begin{bmatrix}
n\\
m
\end{bmatrix}_{q}
&=\frac{[n]_{m,q}}{[m]_{q}!}=\frac{[n]_{q}!}{[m]_{q}![n-m]_{q}!}=\frac{(1-q^{n})(1-q^{n-1})\cdots(1-q^{n-m+1})}{(1-q^{m})(1-q^{m-1})\cdots(1-q)}\\
&=\frac{(q;q)_{n}}{(q;q)_{m}(q;q)_{n-m}},\ m=1,2,\ldots,
\end{split}
\end{equation}
\noindent
The $q$-binomial ($q$-Newton's binomial) formula is expressed as
\noindent
\begin{equation}\label{}
\prod_{i=1}^{n}(1+zq^{i-1})=\sum_{k=0}^{n}q^{k(k-1)/2}\begin{bmatrix}
n\\
k
\end{bmatrix}_{q}
z^{k},\ -\infty<z<\infty,\ n=1,2,\ldots.
\end{equation}
\noindent
For $q\rightarrow 1$ the $q$-analogs tend to their classical counterparts, that is
\noindent
\begin{equation*}\label{eq: 1.1}
\begin{split}
\lim\limits_{q\rightarrow 1}
\begin{bmatrix}
n\\
r
\end{bmatrix}_{q}
={n \choose r}
\end{split}
\end{equation*}
\noindent
Let us consider again a sequence of independent geometric sequences of trials with probability of failure at the $i$th geometric sequence of trials given by \eqref{failureprobofq} or \eqref{failureprobofq2}. We are interesting now is focused on the study of the number of successes in a given number of trials in this stochastic model.
\begin{definition}
Let $Z_{n}$ be the number of successes in a sequence of $n$ independent Bernoulli trials, with probability of success at the $i$th geometric sequence of trials given by \eqref{failureprobofq} or \eqref{failureprobofq2}. The distribution of the random variable $Z_{n}$ is called $q$-binomial distribution, with parameters $n$, $\theta$, and $q$.
\end{definition}
\noindent
Let we introduce a $q$-analogue of the binomial distribution with the probability function of the number $Z_{n}$ of successes in $n$ trials $X_{1}, \ldots, X_{n}$ is given by
\begin{equation}\label{q-binomialdist}
\begin{split}
P_{q,\theta}\{Z_{n}=r\}=
\begin{bmatrix}
n\\
r
\end{bmatrix}_{q}
\theta^{r}\prod_{i=1}^{n-r}(1-\theta q^{i-1}),
\end{split}
\end{equation}\\
for $r = 0, 1, . . . , n$, $0 < q < 1$. The distribution  is called a $q$-binomial distribution. For $q \rightarrow 1$, because
\begin{equation*}\label{eq: 1.1}
\begin{split}
\lim\limits_{q\rightarrow 1}
\begin{bmatrix}
n\\
r
\end{bmatrix}_{q}
={n \choose r}
\end{split}
\end{equation*}
so that the q-binomial distribution converges to the usual binomial distribution as $q \rightarrow 1$, as follows
\begin{equation}
P_{\theta}\left(Z_{n}=r\right)={n\choose r}\theta^{r}(1-\theta)^{n-r},\ r=0,1,\ldots,n,
\end{equation}
with parameters $n$ and $\theta$.
The $q$-binomial distribution studied by \citet{charalambides2010a,Charalambides2016}, which is connected with $q$-Berstein polynomial. \citet{Jing1994} introduced probability function \eqref{q-binomialdist} as a $q$-deformed binomial distribution, also derived recurrence relation of its probability distribution.
\noindent
In the sequel, $P_{q ,\theta}(.)$ and $P_{\theta}(.)$ denote probabilities related with the stochastic model \eqref{failureprobofq} and \eqref{bernoullifailureprob}, respectively.
\newpage

\section{Succession (run) quota for $q$-binary trials}

{\rm
In this section, we shall study of the $q$-sooner and later waiting time distribution of order $(k_1,k_2)$ when a quota is imposed on runs of successes and failures. Let $E_{1}$ be the event that a $k_{1}$ consecutive successes (ones) occurs, and $E_{0}$ be the event that a $k_{2}$ consecutive failures (zeros) occurs.

Let $W$ be a nonnegative integer valued random variable defined on a probability space $(\Omega,F,P).$ Suppose that we are given a partition of $\Omega$, i.e. $\Omega$ can be written as a disjoint union of finite or countably many subsets $\Omega_{i}$ of $\Omega$. Then the PMF of event $W=n$ can be expressed as
\noindent
\begin{equation*}\label{eq: 1.1}
\begin{split}
P_{q,\theta}(W=n)=\sum P_{q,\theta}\left(\{\omega\in \Omega : W(\omega)=n\}\right),\ 0<\theta<1,\ 0<q<1.
\end{split}
\end{equation*}

\noindent

First, we solve the sooner waiting time problem by means of combinatorial analysis. Let $W_{S}$ be a random variable denoting that the waiting time until either $k_{1}$ consecutive successes or $k_{2}$ consecutive failures are occurred, whichever event observe sooner. To be precise, we let $\Omega_{1}=\{\omega \in \Omega | \omega_{n} \in E_1 \setminus E_2 \}$ and $\Omega_{2}=\{\omega_{n} \in \Omega| \omega \in E_2 \setminus E_1 \},$ where $\omega_n$ for every $\omega$ as the string of the first $n$ bits of $\omega$. The support (range set) of $W_S$ is
\noindent
\begin{equation*}
\begin{split}
\mathfrak{R}(W_{S})=\{\text{min}(k_1,k_2),\ldots\}.
\end{split}
\end{equation*}
\noindent

Next, we solve the later waiting time problem by means of combinatorial analysis. Let $W_{L}$ be a random variable denoting that the waiting time until both of  $k_{1}$ consecutive successes and $k_{2}$ consecutive failures are occurred, whichever event observe later. To be precise, we let $\Omega_{3}=\{\omega_n \in \Omega| \Omega_{2}\cap E_1\}$ and $\Omega_{4}=\{\omega_n \in \Omega| \Omega_{1}\cap E_2\}.$ The support (range set) of $W_L$ is
\noindent
\begin{equation*}
\begin{split}
\mathfrak{R}(W_{L})=\{k_1+k_2,\ldots\}.
\end{split}
\end{equation*}
\noindent

As an illustration we mention that, suppose that $E_1$ means four consecutive ones: $1111$, and $E_2$ means three consecutive zeros: $000$. And we consider a bitstring $S = 11010001001111000\ldots$ . Then $S$ belongs to both events $E_1$ and $E_2$. But the beginning string $S' = 1101000$ belongs only to $E_2$ and not to $E_1$, so it belongs to $E_2\setminus E_1$, and the definition of $\Omega_2$ can be expressed by saying this precisely. Similarly the beginning string $S" = 11010001001111$ belongs to the intersection of $\Omega_2$ and $E_1$, which means that $E_1$ comes later than $E_2$ in $S$.
}

\subsection{$q$-sooner waiting time distribution of order $(k_1,k_2)$}

The problem of waiting time that will be discussed in this section is one of the 'sooner cases' and it emerges when a run quota is imposed on runs of successes and failures. More specifically, Binary (zero and one) trials with probability of ones varying according to a geometric rule, are performed sequentially until $k_1$ consecutive successes or $k_2$ consecutive failures are observed, whichever event occurs first. Let $W_S^{(1)}$  be a random variable denoting that the waiting time until  $k_{1}$ consecutive successes are observed for the first time and no failure run of length $k_2$ or more has appeared before and $W_S^{(0)}$  be a random variable denoting that the waiting time until  $k_{2}$ consecutive failures are observed for the first time and no success run of length $k_1$ or more has appeared before.

We will also simply write $f^{(1)}_{q,S}(n;\theta) = P_{q,\theta}(W_S^{(1)}=n)$ and $f^{(0)}_{q,S}(n;\theta)=P_{q,\theta}(W_{S}^{(0)}=n)$. Therefore we can write the probability of event $W_{S}=n$ as follows
\begin{equation*}\label{eq: 1.1}
\begin{split}
P_{q,\theta}(W_{S}=n)=f^{(1)}_{q,S}(n;\theta)+f^{(0)}_{q,S}(n;\theta),\ n\geq \text{min}(k_{1},\ k_{2}).
\end{split}
\end{equation*}
\noindent
We now make some useful Definition and Lemma for the proofs of Theorem in the sequel.
\noindent
\begin{definition}
For $0<q\leq1$, we define

\begin{equation*}\label{eq: 1.1}
\begin{split}
A_{q}^{k_1,k_2}(m,r,s)={\sum_{x_{1},\ldots,x_{s-1}}}\
\sum_{y_{1},\ldots,y_{s}} q^{y_{1}x_{1}+(y_{1}+y_{2})x_{2}+\cdots+(y_{1}+\cdots+y_{s-1})x_{s-1}},\
\end{split}
\end{equation*}
\noindent
where the summation is over all integers $x_1,\ldots,x_{s-1},$ and $y_1,\ldots,y_s$ satisfying
\noindent

\begin{equation*}\label{eq:1}
\begin{split}
0<x_{j}<k_{1}\ \text{for}\ j=1,\ldots,s-1,
\end{split}
\end{equation*}
\noindent
\begin{equation*}\label{eq:1}
\begin{split}
x_{1}+\cdots+x_{s-1}=m,\ \text{and}
\end{split}
\end{equation*}
\noindent
\begin{equation*}\label{eq:2}
\begin{split}
0<y_{j}<k_{2}\ \text{for}\ j=1,\ldots,s,
\end{split}
\end{equation*}
\noindent
\begin{equation*}\label{eq:2}
\begin{split}
y_{1}+\cdots+y_{s}=r.
\end{split}
\end{equation*}

\end{definition}

\noindent
The following gives a recurrence relation useful for the computation of $A_{q}^{k_1,k_2}(m,r,s)$.\\
\noindent

\begin{lemma}
\label{lemma:3.1}
For $0<q\leq1$, $A_{q}^{k_1,k_2}(m,r,s)$ obeys the following recurrence relation.
\noindent
\begin{equation*}\label{eq: 1.1}
\begin{split}
A&_{q}^{k_1,k_2}(m,r,s)\\
&=\left\{
  \begin{array}{ll}
    \sum_{a=1}^{k_{1}-1}\sum_{b=1}^{k_{2}-1}q^{a(r-b)}A_{q}^{k_1,k_2}(m-a,r-b,s-1) & \text{for}\ s>1,\ s-1\leq m\leq(s-1)(k_{1}-1)\\
    &\text{and}\ s\leq r\leq s(k_{2}-1), \\
    1 & \text{for}\ s=1,\ m=0,\ \text{and}\ 1\leq r\leq k_{2}-1, \text{or}\\
    0 & \text{otherwise.}\\
  \end{array}
\right.
\end{split}
\end{equation*}
\end{lemma}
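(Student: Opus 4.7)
The plan is to prove the recurrence by conditioning on the final pair of indices $(x_{s-1}, y_s)$. Fix $x_{s-1}=a$ with $1\le a\le k_1-1$ and $y_s=b$ with $1\le b\le k_2-1$. Then the constraint $x_1+\cdots+x_{s-1}=m$ reduces to $x_1+\cdots+x_{s-2}=m-a$ over variables in $\{1,\ldots,k_1-1\}$, and $y_1+\cdots+y_s=r$ reduces to $y_1+\cdots+y_{s-1}=r-b$ over variables in $\{1,\ldots,k_2-1\}$. The ranges $s-1\le m\le(s-1)(k_1-1)$ and $s\le r\le s(k_2-1)$ asserted in the lemma are exactly those forced by the per-coordinate bounds, and outside them the indexing set is empty, giving value $0$.

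The key step is that the exponent telescopes cleanly under this substitution. Since the coefficient of $x_i$ in $\sum_{i=1}^{s-1}(y_1+\cdots+y_i)x_i$ depends only on $y_1,\ldots,y_i$, the last summand $(y_1+\cdots+y_{s-1})x_{s-1}$ equals $(r-y_s)a=(r-b)a$, while the remaining summands involve only $x_1,\ldots,x_{s-2}$ together with the partial sums $y_1+\cdots+y_i$ for $i\le s-2$. Pulling out the factor $q^{a(r-b)}$ leaves precisely the sum that defines $A_q^{k_1,k_2}(m-a,r-b,s-1)$, where the reindexed problem has $s-2$ residual $x$-variables and $s-1$ residual $y$-variables, matching the definition for parameter $s-1$. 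Summing over $a\in\{1,\ldots,k_1-1\}$ and $b\in\{1,\ldots,k_2-1\}$ produces the stated identity for $s>1$.

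For the base case $s=1$, there is no $x$-variable, so the exponent is an empty sum and the weight is $q^0=1$. The sole constraint is $y_1=r$ with $1\le y_1\le k_2-1$, which forces $m=0$ and gives $A_q^{k_1,k_2}(0,r,1)=1$ for $1\le r\le k_2-1$, and $0$ otherwise. The only real obstacle is bookkeeping: verifying that after peeling off the pair $(x_{s-1},y_s)$ the residual weight and index set line up exactly with the definition at parameter $s-1$. Once that telescoping identity is observed, no further computation is required.
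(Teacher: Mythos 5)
Your proposal is correct and follows essentially the same route as the paper: peel off the last run lengths $x_{s-1}=a$ and $y_s=b$, use that the coefficient of $x_{s-1}$ in the exponent is $y_1+\cdots+y_{s-1}=r-y_s$ to extract the factor $q^{a(r-b)}$, and recognize the residual sum as $A_q^{k_1,k_2}(m-a,r-b,s-1)$. The base case and the vanishing outside the stated ranges are handled as in the paper.
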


\begin{proof}
For $s > 1$, $ s-1\leq m\leq(s-1)(k_{1}-1)$ and $s\leq r\leq s(k_{2}-1)$, we observe that since $x_{s-1}$ can assume the values $1,\ldots,k_{1}-1$, then $A_{q}^{k_1,k_2}(m,r,s)$ can be written as
\noindent

\begin{equation*}%\label{eq: 1.1}
\begin{split}
A&_{q}^{k_1,k_2}(m,r,s)\\
=&\sum_{x_{s-1}=1}^{k_1-1}{\sum_{\substack{x_{1}+\cdots+x_{s-2}=m-x_{s-1}\\ x_{1},\ldots,x_{s-2} \in \{1,\ldots,k_{1}-1\}}}}\quad
{\sum_{\substack{y_{1}+\cdots+y_{s}=r\\ y_{1},\ldots,y_{s} \in \{1,\ldots,k_{2}-1\}}}}q^{x_{s-1}(r-y_{s})}\ q^{y_{1}x_{1}+(y_{1}+y_{2})x_{2}+\cdots+(y_{1}+\cdots+y_{s-2})x_{s-2}}.
\end{split}
\end{equation*}
Similarly, we observe that since $y_s$ can assume the values $1,\ldots,k_{2}-1$, then $A_{q}^{k_1,k_2}(m,r,s)$ can be rewritten as

\begin{equation*}%\label{eq: 1.1}
\begin{split}
A&_{q}^{k_1,k_2}(m,r,s)\\
=&\sum_{y_{s}=1}^{k_2-1}\sum_{x_{s-1}=1}^{k_1-1}q^{x_{s-1}(r-y_{s})}{\sum_{\substack{x_{1}+\cdots+x_{s-2}=m-x_{s-1}\\ x_{1},\ldots,x_{s-2} \in \{1,\ldots,k_{1}-1\}}}}\quad
{\sum_{\substack{y_{1}+\cdots+y_{s-1}=r-y_s\\ y_{1},\ldots,y_{s-1} \in \{1,\ldots,k_{2}-1\}}}} q^{y_{1}x_{1}+(y_{1}+y_{2})x_{2}+\cdots+(y_{1}+\cdots+y_{s-2})x_{s-2}}\\
=&\sum_{a=1}^{k_{1}-1}\sum_{b=1}^{k_{2}-1}q^{a(r-b)}A_{q}^{k_1,k_2}(m-a,r-b,s-1).
\end{split}
\end{equation*}
The other cases are obvious and thus the proof is completed.
\end{proof}

\begin{remark}
{\rm
We observe that $A_1^{k_1,k_2}(m,r,s)$ is the number of integer solutions $(x_{1},\ \ldots,\ x_{s-1})$ and $(y_{1},\ \ldots,\ y_{s})$ of
\noindent
\begin{equation*}\label{eq:1}
\begin{split}
0<x_{j}<k_{1}\ \text{for}\ j=1,\ldots,s-1,
\end{split}
\end{equation*}
\noindent
\begin{equation*}\label{eq:1}
\begin{split}
x_{1}+\cdots+x_{s-1}=m,\ \text{and}
\end{split}
\end{equation*}
\noindent
\begin{equation*}\label{eq:2}
\begin{split}
0<y_{j}<k_{2}\ \text{for}\ j=1,\ldots,s,
\end{split}
\end{equation*}
\noindent
\begin{equation*}\label{eq:2}
\begin{split}
y_{1}+\cdots+y_{s}=r
\end{split}
\end{equation*}
\noindent
which is
\noindent
\begin{equation*}\label{eq: 1.1}
\begin{split}
A_{1}^{k_1,k_2}(m,r,s)=S(s-1,\ k_{1},\ m)S(s,\ k_{2},\ r),
\end{split}
\end{equation*}
\noindent
where $S(a,\ b,\ c)$ denotes the total number of integer solutions $x_{1}+x_{2}+\cdots+x_{a}=c$ such that $0<x_{i}<b$ for $i=1,2,\ldots,a$. Alternatively, it is the number of ways of distributing $c$ identical balls into $a$ different cells with no containing more or equal than $b$ balls. The number can be expressed as
\noindent
\begin{equation*}\label{eq: 1.1}
\begin{split}
S(a,\ b,\ c)=\sum_{j=0}^{min(a,\left[\frac{c-a}{b-1}\right])}(-1)^{j}{a \choose j}{c-j(b-1)-1 \choose a-1}.
\end{split}
\end{equation*}
\noindent
See, e.g. \citet{charalambides2002enumerative}.
}
\end{remark}

\begin{definition}

For $0<q\leq1$, we define
\noindent
\begin{equation*}\label{eq: 1.1}
\begin{split}
B_{q}^{k_1,k_2}(m,r,s)={\sum_{x_{1},\ldots,x_{s}}}\
\sum_{y_{1},\ldots,y_{s}} q^{y_{1}x_{2}+(y_{1}+y_{2})x_{3}+\cdots+(y_{1}+\cdots+y_{s-1})x_{s}},\
\end{split}
\end{equation*}
\noindent
where the summation is over all integers $x_1,\ldots,x_{s},$ and $y_1,\ldots,y_s$ satisfying
\noindent
\begin{equation*}\label{eq:1}
\begin{split}
0<x_{j}<k_{1}\ \text{for}\ j=1,\ldots,s,
\end{split}
\end{equation*}
\noindent
\begin{equation*}\label{eq:1}
\begin{split}
x_{1}+\cdots+x_{s}=m,\ \text{and}
\end{split}
\end{equation*}
\noindent
\begin{equation*}\label{eq:2}
\begin{split}
0<y_{j}<k_{2}\ \text{for}\ j=1,\ldots,s,
\end{split}
\end{equation*}
\noindent
\begin{equation*}\label{eq:2}
\begin{split}
y_{1}+\cdots+y_{s}=r.
\end{split}
\end{equation*}

\end{definition}

\noindent
The following gives a recurrence relation useful for the computation of $B_{q}^{k_1,k_2}(m,r,s)$.
\noindent

\begin{lemma}
\label{lemma:3.2}
For $0<q\leq1$, $B_{q}^{k_1,k_2}(m,r,s)$ obeys the following recurrence relation.
\noindent
\begin{equation*}\label{eq: 1.1}
\begin{split}
B&_{q}^{k_1,k_2}(m,r,s)\\
&=\left\{
  \begin{array}{ll}
    \sum_{a=1}^{k_{1}-1}\sum_{b=1}^{k_{2}-1}q^{a(r-b)}B_{q}^{k_1,k_2}(m-a,r-b,s-1) & \text{for}\ s>1,\ s\leq m\leq s(k_{1}-1)\\
    &\text{and}\ s\leq r\leq s(k_{2}-1), \\
    1 & \text{for}\ s=1,\ 1\leq m\leq k_1-1\\
     &\text{and}\ 1\leq r\leq k_{2}-1, \text{or}\\
    0 & \text{otherwise.}\\
  \end{array}
\right.
\end{split}
\end{equation*}

\end{lemma}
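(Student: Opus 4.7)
The plan is to mirror the argument given for Lemma~\ref{lemma:3.1}, exploiting the close structural parallel between $A_{q}^{k_1,k_2}$ and $B_{q}^{k_1,k_2}$. The main difference is bookkeeping: $B_{q}^{k_1,k_2}(m,r,s)$ carries $s$ summation indices on the $x$-side (as opposed to $s-1$ for $A$) and $s$ indices on the $y$-side, so after peeling off the last pair of variables I will land on $B_{q}^{k_1,k_2}$ at parameter $s-1$, as required by the recurrence.

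The first step is to split the defining double summation according to the values of $x_s = a \in \{1,\ldots,k_1-1\}$ and $y_s = b \in \{1,\ldots,k_2-1\}$. Under this split, the contribution of $x_s$ to the exponent $y_{1}x_{2}+(y_{1}+y_{2})x_{3}+\cdots+(y_{1}+\cdots+y_{s-1})x_{s}$ is exactly $a(y_1+\cdots+y_{s-1}) = a(r-b)$, where I have used $y_1+\cdots+y_{s-1}=r-b$. This factor can be pulled out of the inner sum as $q^{a(r-b)}$, independent of the remaining $x_j, y_j$.

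The second step is to identify what is left. After the peel, the residual summation runs over $x_1,\ldots,x_{s-1} \in \{1,\ldots,k_1-1\}$ with $x_1+\cdots+x_{s-1} = m-a$ and $y_1,\ldots,y_{s-1} \in \{1,\ldots,k_2-1\}$ with $y_1+\cdots+y_{s-1} = r-b$, weighted by $q^{y_{1}x_{2}+(y_{1}+y_{2})x_{3}+\cdots+(y_{1}+\cdots+y_{s-2})x_{s-1}}$. This is precisely $B_{q}^{k_1,k_2}(m-a,r-b,s-1)$ as defined (the number of $x$'s drops from $s$ to $s-1$, the exponent is the one dictated by the definition at index $s-1$). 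Summing over $a$ and $b$ yields the stated recurrence; any contributions with $(m-a, r-b, s-1)$ outside its admissible range vanish automatically, so no restriction beyond $a \in \{1,\ldots,k_1-1\}$ and $b \in \{1,\ldots,k_2-1\}$ is needed.

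The base case $s=1$ is immediate: only $x_1$ and $y_1$ remain, the weighted exponent is an empty sum and therefore equal to $q^{0}=1$, and the range constraints force $x_1 = m$ with $0 < m < k_1$ and $y_1 = r$ with $0 < r < k_2$, leaving a single admissible configuration; elsewhere the defining sum is empty and $B_{q}^{k_1,k_2}$ is zero. No real obstacle arises, since the argument tracks the proof of Lemma~\ref{lemma:3.1} line for line; the only point that demands care is the off-by-one shift on the $x$-side relative to $A$, which I have verified matches the definition of $B$ at $s-1$.
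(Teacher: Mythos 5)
Your proposal is correct and follows essentially the same argument as the paper: peel off $x_s=a$ and $y_s=b$, observe that the coefficient of $x_s$ in the exponent is $y_1+\cdots+y_{s-1}=r-b$ so the factor $q^{a(r-b)}$ extracts, and recognize the residual double sum as $B_{q}^{k_1,k_2}(m-a,r-b,s-1)$. The only cosmetic difference is that the paper performs the peeling in two displayed steps ($x_s$ first, then $y_s$) while you do both at once; the base case and the vanishing of out-of-range terms are handled identically.
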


\begin{proof}

For $s > 1$, $s\leq m\leq s(k_{1}-1)$ and $s\leq r\leq s(k_{2}-1)$, we observe that since $x_{s}$ can assume the values $1,\ldots,k_{1}-1$, then $B_{q}^{k_1,k_2}(m,r,s)$ can be written as
\noindent
\begin{equation*}%\label{eq: 1.1}
\begin{split}
B&_{q}^{k_1,k_2}(m,r,s)\\
=&\sum_{x_{s}=1}^{k_1-1}{\sum_{\substack{x_{1}+\cdots+x_{s-1}=m-x_{s}\\ x_{1},\ldots,x_{s-1} \in \{1,\ldots,k_{1}-1\}}}}\quad
{\sum_{\substack{y_{1}+\cdots+y_{s}=r\\ y_{1},\ldots,y_{s} \in \{1,\ldots,k_{2}-1\}}}}q^{x_{s}(r-y_{s})}\ q^{y_{1}x_{2}+(y_{1}+y_{2})x_{3}+\cdots+(y_{1}+\cdots+y_{s-2})x_{s-1}}.
\end{split}
\end{equation*}
\noindent
Similarly, we observe that since $y_s$ can assume the values $1,\ldots,k_{2}-1$, then $B_{q}^{k_1,k_2}(m,r,s)$ can be rewritten as
\noindent
\begin{equation*}%\label{eq: 1.1}
\begin{split}
B&_{q}^{k_1,k_2}(m,r,s)\\
=&\sum_{y_{s}=1}^{k_2-1}\sum_{x_{s}=1}^{k_1-1}q^{x_{s}(r-y_{s})}{\sum_{\substack{x_{1}+\cdots+x_{s-1}=m-x_{s}\\ x_{1},\ldots,x_{s-1} \in \{1,\ldots,k_{1}-1\}}}}\quad
{\sum_{\substack{y_{1}+\cdots+y_{s-1}=r-y_s\\ y_{1},\ldots,y_{s-1} \in \{1,\ldots,k_{2}-1\}}}}\ q^{y_{1}x_{2}+(y_{1}+y_{2})x_{3}+\cdots+(y_{1}+\cdots+y_{s-2})x_{s-1}}\\
=&\sum_{a=1}^{k_{1}-1}\sum_{b=1}^{k_{2}-1}q^{a(r-b)}B_{q}^{k_1,k_2}(m-a,r-b,s-1).
\end{split}
\end{equation*}
The other cases are obvious and thus the proof is completed.
\end{proof}

\begin{remark}
{\rm
We observe that $B_1^{k_1,k_2}(m,r,s)$ is the number of integer solutions $(x_{1},\ldots,x_{s})$ and $(y_{1},\ldots,y_{s})$ of
\noindent
\begin{equation*}\label{eq:1}
\begin{split}
0<x_{j}<k_{1}\ \text{for}\ j=1,\ldots,s,
\end{split}
\end{equation*}
\noindent
\begin{equation*}\label{eq:1}
\begin{split}
x_{1}+\cdots+x_{s}=m,\ \text{and}
\end{split}
\end{equation*}
\noindent
\begin{equation*}\label{eq:2}
\begin{split}
0<y_{j}<k_{2}\ \text{for}\ j=1,\ldots,s,
\end{split}
\end{equation*}
\noindent
\begin{equation*}\label{eq:2}
\begin{split}
y_{1}+\cdots+y_{s}=r
\end{split}
\end{equation*}
\noindent
which is
\noindent
\begin{equation*}\label{eq: 1.1}
\begin{split}
B_{1}^{k_1,k_2}(m,r,s)=S(s,\ k_{1},\ m)S(s,\ k_{2},\ r),
\end{split}
\end{equation*}
\noindent
where $S(a,\ b,\ c)$ denotes the total number of integer solutions $x_{1}+x_{2}+\cdots+x_{a}=c$ such that $0<x_{i}<b$ for $i=1,2,\ldots,a$. The number can be expressed as
\noindent
\begin{equation*}\label{eq: 1.1}
\begin{split}
S(a,\ b,\ c)=\sum_{j=0}^{a}(-1)^{j}{a \choose j}{c-j(b-1)-1 \choose a-1}.
\end{split}
\end{equation*}
\noindent
See, e.g. \citet{charalambides2002enumerative}.}
\end{remark}

\begin{definition}
For $0<q\leq1$, we define
\noindent
\begin{equation*}\label{eq: 1.1}
\begin{split}
C_{q}^{k_1,k_2}(m,r,s)={\sum_{x_{1},\ldots,x_{s}}}\
\sum_{y_{1},\ldots,y_{s-1}} q^{y_{1}x_{2}+(y_{1}+y_{2})x_{3}+\cdots+(y_{1}+\cdots+y_{s-2})x_{s-1}+(y_{1}+\cdots+y_{s-1})x_{s}},\
\end{split}
\end{equation*}
\noindent
where the summation is over all integers $x_1,\ldots,x_{s},$ and $y_1,\ldots,y_{s-1}$ satisfying
\noindent
\begin{equation*}\label{eq:1}
\begin{split}
0<x_{j}<k_{1}\ \text{for}\ j=1,\ldots,s,
\end{split}
\end{equation*}
\noindent
\begin{equation*}\label{eq:1}
\begin{split}
x_{1}+\cdots+x_{s}=m,\ \text{and}
\end{split}
\end{equation*}
\noindent
\begin{equation*}\label{eq:2}
\begin{split}
0<y_{j}<k_{2}\ \text{for}\ j=1,\ldots,s-1,
\end{split}
\end{equation*}
\noindent
\begin{equation*}\label{eq:2}
\begin{split}
y_{1}+\cdots+y_{s-1}=r.
\end{split}
\end{equation*}
\end{definition}
\noindent
The following gives a recurrence relation useful for the computation of $C_{q}^{k_1,k_2}(m,r,s)$.
\noindent
\begin{lemma}
\label{lemma:3.3}
For $0<q\leq1$, $C_{q}^{k_1,k_2}(m,r,s)$ obeys the following recurrence relation.
\noindent
\begin{equation*}\label{eq: 1.1}
\begin{split}
C&_{q}^{k_1,k_2}(m,r,s)\\
&=\left\{
  \begin{array}{ll}
    \sum_{a=1}^{k_{1}-1}\sum_{b=1}^{k_{2}-1}q^{ra}C_{q}(m-a,r-b,s-1) & \text{for}\ s>1,\ s\leq m\leq s(k_{1}-1)\\
    &\text{and}\ s-1\leq r\leq (s-1)(k_{2}-1), \\
    1 & \text{for}\ s=1,\ 1\leq m\leq k_1-1\ \text{and}\ r=0, \text{or}\\
    0 & \text{otherwise.}\\
  \end{array}
\right.
\end{split}
\end{equation*}

\end{lemma}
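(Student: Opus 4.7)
The plan is to follow exactly the strategy used in the proofs of Lemmas \ref{lemma:3.1} and \ref{lemma:3.2}: condition on the last success-block length $x_s$ and on the last failure-block length $y_{s-1}$, peel off their contributions to the exponent, and recognise what remains as $C_q^{k_1,k_2}(m-a,r-b,s-1)$.

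First I would treat the main case $s>1$ with $s\le m\le s(k_1-1)$ and $s-1\le r\le (s-1)(k_2-1)$. Fix $x_s=a$ with $1\le a\le k_1-1$. Inspecting the exponent in the definition of $C_q^{k_1,k_2}(m,r,s)$, the variable $x_s$ appears only in the last summand $(y_1+\cdots+y_{s-1})x_s$, and the constraint $y_1+\cdots+y_{s-1}=r$ forces this partial sum to equal $r$. Hence the contribution of $x_s=a$ to the total exponent is exactly $ra$, independent of the individual $y_j$. This is the crucial structural difference from Lemmas \ref{lemma:3.1} and \ref{lemma:3.2}: because $C_q^{k_1,k_2}$ has $s$ success-blocks but only $s-1$ failure-blocks, the final $x_s$ pairs with the \emph{full} partial sum $y_1+\cdots+y_{s-1}$ rather than with a proper partial sum.

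Next I would fix $y_{s-1}=b$ with $1\le b\le k_2-1$. Since the remaining exponent $y_1x_2+(y_1+y_2)x_3+\cdots+(y_1+\cdots+y_{s-2})x_{s-1}$ involves only $y_1,\ldots,y_{s-2}$, removing $y_{s-1}$ does not alter any weight; it merely reduces the linear constraint on the remaining $y$-variables to $y_1+\cdots+y_{s-2}=r-b$. The residual double sum over $(x_1,\ldots,x_{s-1})$ with $x_1+\cdots+x_{s-1}=m-a$ and over $(y_1,\ldots,y_{s-2})$ with $y_1+\cdots+y_{s-2}=r-b$, subject to the box constraints $0<x_j<k_1$ and $0<y_j<k_2$, matches verbatim the definition of $C_q^{k_1,k_2}(m-a,r-b,s-1)$. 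Summing $q^{ra}$ times this quantity over $a=1,\ldots,k_1-1$ and $b=1,\ldots,k_2-1$ yields the recurrence asserted in the lemma.

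Finally I would dispatch the boundary configurations. For $s=1$ the sum over $y$-variables is empty, which forces $r=0$, and the single constraint $x_1=m$ with $0<x_1<k_1$ gives $1\le m\le k_1-1$; the (empty) exponent then contributes the factor $1$. In every other configuration (wrong ranges of $m$ or $r$, or $s<1$) the summation set is empty, so $C_q^{k_1,k_2}(m,r,s)=0$. The main obstacle is less one of mathematics than of careful bookkeeping: one must keep straight that the weight extracted here is $q^{ra}$ rather than $q^{a(r-b)}$ as in the two previous lemmas, an asymmetry that is the distinctive feature of Lemma \ref{lemma:3.3} and is caused entirely by the mismatch between the number of $x$'s and the number of $y$'s in the definition of $C_q^{k_1,k_2}$.
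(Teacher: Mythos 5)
Your proposal is correct and follows essentially the same route as the paper: peel off $x_s=a$, noting that its coefficient in the exponent is the full sum $y_1+\cdots+y_{s-1}=r$ so the extracted weight is $q^{ra}$, then peel off $y_{s-1}=b$ and identify the residual double sum as $C_q^{k_1,k_2}(m-a,r-b,s-1)$, with the boundary cases handled by emptiness of the index set. Your explicit remark on why the weight is $q^{ra}$ rather than $q^{a(r-b)}$ (the mismatch between $s$ success-blocks and $s-1$ failure-blocks) is exactly the right observation and is, if anything, stated more cleanly than in the paper's own proof.
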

\begin{proof}
For $s > 1$, $s\leq m\leq s(k_{1}-1)$ and $s-1\leq r\leq (s-1)(k_{2}-1),$ we observe that since $x_{s}$ can assume the values $1,\ldots,k_{1}-1$, then $C_{q}^{k_1,k_2}(m,r,s)$ can be written as
\noindent
\begin{equation*}%\label{eq: 1.1}
\begin{split}
C&_{q}^{k_1,k_2}(m,r,s)\\
=&\sum_{x_{s}=1}^{k_1-1}{\sum_{\substack{x_{1}+\cdots+x_{s-1}=m-x_{s}\\ x_{1},\ldots,x_{s-1} \in \{1,\ldots,k_{1}-1\}}}}\quad
{\sum_{\substack{y_{1}+\cdots+y_{s}=r\\ y_{1},\ldots,y_{s} \in \{1,\ldots,k_{2}-1\}}}}q^{rx_{s}}\ q^{y_{1}x_{2}+(y_{1}+y_{2})x_{3}+\cdots+(y_{1}+\cdots+y_{s-2})x_{s-1}}.
\end{split}
\end{equation*}
\noindent
Similarly, we observe that since $y_s$ can assume the values $1,\ldots,k_{2}-1$, then $C_{q}^{k_1,k_2}(m,r,s)$ can be rewritten as

\noindent
\begin{equation*}%\label{eq: 1.1}
\begin{split}
C&_{q}^{k_1,k_2}(m,r,s)\\
=&\sum_{y_{s}=1}^{k_2-1}\sum_{x_{s}=1}^{k_1-1}q^{rx_{s}}{\sum_{\substack{x_{1}+\cdots+x_{s-1}=m-x_{s}\\ x_{1},\ldots,x_{s-1} \in \{1,\ldots,k_{1}-1\}}}}\quad
{\sum_{\substack{y_{1}+\cdots+y_{s-1}=r-y_s\\ y_{1},\ldots,y_{s-1} \in \{1,\ldots,k_{2}-1\}}}}\ q^{y_{1}x_{2}+(y_{1}+y_{2})x_{3}+\cdots+(y_{1}+\cdots+y_{s-2})x_{s-1}}\\
=&\sum_{a=1}^{k_{1}-1}\sum_{b=1}^{k_{2}-1}q^{ra}C_{q}^{k_1,k_2}(m-a,r-b,s-1).
\end{split}
\end{equation*}
The other cases are obvious and thus the proof is completed.
\end{proof}

\begin{remark}
{\rm
We observe that $C_1^{k_1,k_2}(m,r,s)$ is the number of integer solutions $(x_{1},\ldots,x_{s})$ and $(y_{1},\ldots,y_{s-1})$ of
\noindent
\begin{equation*}\label{eq:1}
\begin{split}
0<x_{j}<k_{1}\ \text{for}\ j=1,\ldots,s,
\end{split}
\end{equation*}
\noindent
\begin{equation*}\label{eq:1}
\begin{split}
x_{1}+\cdots+x_{s}=m,\ \text{and}
\end{split}
\end{equation*}
\noindent
\begin{equation*}\label{eq:2}
\begin{split}
0<y_{j}<k_{2}\ \text{for}\ j=1,\ldots,s-1,
\end{split}
\end{equation*}
\noindent
\begin{equation*}\label{eq:2}
\begin{split}
y_{1}+\cdots+y_{s-1}=r
\end{split}
\end{equation*}
\noindent
which is
\noindent
\begin{equation*}\label{eq: 1.1}
\begin{split}
C_{1}^{k_1,k_2}(m,r,s)=S(s,\ k_{1},\ m)S(s-1,\ k_{2},\ r),
\end{split}
\end{equation*}
\noindent
where $S(a,\ b,\ c)$ denotes the total number of integer solutions $x_{1}+x_{2}+\cdots+x_{a}=c$ such that $0<x_{i}<b$ for $i=1,\ 2,\ldots,a$. The number can be expressed as
\noindent
\begin{equation*}\label{eq: 1.1}
\begin{split}
S(a,\ b,\ c)=\sum_{j=0}^{a}(-1)^{j}{a \choose j}{c-j(b-1)-1 \choose a-1}.
\end{split}
\end{equation*}
\noindent
See, e.g. \citet{charalambides2002enumerative}..
}
\end{remark}

\begin{definition}
For $0<q\leq1$, we define
\noindent
\begin{equation*}\label{eq: 1.1}
\begin{split}
D_{q}^{k_1,k_2}(m,r,s)={\sum_{x_{1},\ldots,x_{s}}}\
\sum_{y_{1},\ldots,y_{s}} q^{y_{1}x_{1}+(y_{1}+y_{2})x_{2}+\cdots+(y_{1}+\cdots+y_{s-1})x_{s-1}+(y_{1}+\cdots+y_{s})x_{s}},\
\end{split}
\end{equation*}
\noindent
where the summation is over all integers $x_1,\ldots,x_{s},$ and $y_1,\ldots,y_{s}$ satisfying
\noindent
\begin{equation*}\label{eq:1}
\begin{split}
0<x_{j}<k_{1}\ \text{for}\ j=1,\ldots,s,
\end{split}
\end{equation*}
\noindent
\begin{equation*}\label{eq:1}
\begin{split}
x_{1}+\cdots+x_{s}=m,\ \text{and}
\end{split}
\end{equation*}
\noindent
\begin{equation*}\label{eq:2}
\begin{split}
0<y_{j}<k_{2}\ \text{for}\ j=1,\ldots,s,
\end{split}
\end{equation*}
\noindent
\begin{equation*}\label{eq:2}
\begin{split}
y_{1}+\cdots+y_{s}=r.
\end{split}
\end{equation*}
\end{definition}
\noindent
The following gives a recurrence relation useful for the computation of $D_{q}^{k_1,k_2}(m,r,s)$.
\noindent

\begin{lemma}
\label{lemma:3.4}
For $0<q\leq1$, $D_{q}^{k_1,k_2}(m,r,s)$ obeys the following recurrence relation.
\noindent
\begin{equation*}\label{eq: 1.1}
\begin{split}
D&_{q}^{k_1,k_2}(m,r,s)\\
&=\left\{
  \begin{array}{ll}
    \sum_{a=1}^{k_{1}-1}\sum_{b=1}^{k_{2}-1}q^{ra}D_{q}^{k_1,k_2}(m-a,r-b,s-1) & \text{for}\ s>1,\ s\leq m\leq s(k_{1}-1)\\
    &\text{and}\ s\leq r\leq s(k_{2}-1), \\
    1 & \text{for}\ s=1,\ 1\leq m\leq k_1-1\\
    &\text{and}\ 1\leq r\leq k_2-1, \text{or}\\
    0 & \text{otherwise.}\\
  \end{array}
\right.
\end{split}
\end{equation*}
\end{lemma}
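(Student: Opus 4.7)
The plan is to imitate the arguments used for Lemmas \ref{lemma:3.1}, \ref{lemma:3.2}, and \ref{lemma:3.3}: separate the contribution of the last coordinates $x_s$ and $y_s$, factor out a closed-form power of $q$, and rewrite the remaining double sum as a copy of $D_q^{k_1,k_2}$ with smaller parameters. Throughout I would assume $s>1$ together with the ranges $s\leq m\leq s(k_1-1)$ and $s\leq r\leq s(k_2-1)$, which guarantee that the set of admissible tuples $(x_1,\ldots,x_s;y_1,\ldots,y_s)$ is nonempty.

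First I would condition on $x_s=a$ with $a$ ranging over $\{1,\ldots,k_1-1\}$. The key algebraic observation is that the final summand in the exponent, $(y_1+\cdots+y_s)x_s$, collapses to $r\cdot a = ra$ by virtue of the constraint $y_1+\cdots+y_s=r$. Consequently $q^{ra}$ pulls outside the inner sums, leaving an exponent of the form $y_1x_1+(y_1+y_2)x_2+\cdots+(y_1+\cdots+y_{s-1})x_{s-1}$, paired with the reduced constraints $x_1+\cdots+x_{s-1}=m-a$ and $y_1+\cdots+y_s=r$.

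Next I would condition on $y_s=b$ with $b\in\{1,\ldots,k_2-1\}$. Since $y_s$ no longer appears in the residual exponent, its only effect is to modify the $y$-constraint to $y_1+\cdots+y_{s-1}=r-b$. At that point the remaining double sum is exactly the definition of $D_q^{k_1,k_2}(m-a,r-b,s-1)$, so summing over $a$ and $b$ produces the stated recurrence. The boundary cases outside the admissible ranges, as well as the base case $s=1$, are then read off directly from the definition by inspection.

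The main obstacle is the careful handling of the last summand $(y_1+\cdots+y_s)x_s$. Unlike Lemma \ref{lemma:3.2}, where this term is absent, and Lemma \ref{lemma:3.3}, where the analogous weight uses only $y_1+\cdots+y_{s-1}$, here the full running sum up to index $s$ appears. The reduction hinges on first replacing it by the constant $r$ (using the $y$-sum constraint) and only then isolating $y_s$, so that the dependence on $y_s$ genuinely disappears from the exponent; without performing these two steps in the correct order the identification with $D_q^{k_1,k_2}(m-a,r-b,s-1)$ is not transparent. The admissible ranges for the reduced arguments $(m-a,r-b,s-1)$ should be checked routinely to confirm that no boundary contributions are accidentally dropped.
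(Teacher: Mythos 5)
Your proposal is correct and follows essentially the same route as the paper's proof: condition on $x_s=a$, use the constraint $y_1+\cdots+y_s=r$ to collapse the last exponent term to $ra$ and pull out $q^{ra}$, then condition on $y_s=b$ to reduce the $y$-constraint, identifying the residual double sum as $D_q^{k_1,k_2}(m-a,r-b,s-1)$. Your remark about performing the substitution $(y_1+\cdots+y_s)x_s=rx_s$ \emph{before} isolating $y_s$ is precisely the order the paper uses, and the base and degenerate cases are handled identically by inspection.
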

\begin{proof}
For $s > 1$, $s\leq m\leq s(k_{1}-1)$ and $s\leq r\leq s(k_{2}-1)$, we observe that since $x_{s}$ can assume the values $1,\ldots,k_{1}-1$, then $D_{q}^{k_1,k_2}(m,r,s)$ can be written as
\noindent
\begin{equation*}%\label{eq: 1.1}
\begin{split}
D&_{q}^{k_1,k_2}(m,r,s)\\
=&\sum_{x_{s}=1}^{k_1-1}{\sum_{\substack{x_{1}+\cdots+x_{s-1}=m-x_{s}\\ x_{1},\ldots,x_{s-1} \in \{1,\ldots,k_{1}-1\}}}}\quad
{\sum_{\substack{y_{1}+\cdots+y_{s}=r\\ y_{1},\ldots,y_{s} \in \{1,\ldots,k_{2}-1\}}}}q^{rx_{s}}\ q^{y_{1}x_{1}+(y_{1}+y_{2})x_{2}+\cdots+(y_{1}+\cdots+y_{s-1})x_{s-1}}.
\end{split}
\end{equation*}
\noindent
Similarly, we observe that since $y_s$ can assume the values $1,\ldots,k_{2}-1$, then $D_{q}^{k_1,k_2}(m,r,s)$ can be rewritten as
\noindent
\begin{equation*}%\label{eq: 1.1}
\begin{split}
D&_{q}^{k_1,k_2}(m,r,s)\\
=&\sum_{y_{s}=1}^{k_2-1}\sum_{x_{s}=1}^{k_1-1}q^{rx_{s}}{\sum_{\substack{x_{1}+\cdots+x_{s-1}=m-x_{s}\\ x_{1},\ldots,x_{s-1} \in \{1,\ldots,k_{1}-1\}}}}\quad
{\sum_{\substack{y_{1}+\cdots+y_{s-1}=r-y_s\\ y_{1},\ldots,y_{s-1} \in \{1,\ldots,k_{2}-1\}}}}\ q^{y_{1}x_{1}+(y_{1}+y_{2})x_{2}+\cdots+(y_{1}+\cdots+y_{s-1})x_{s-1}}\\
=&\sum_{a=1}^{k_{1}-1}\sum_{b=1}^{k_{2}-1}q^{ra}D_{q}^{k_1,k_2}(m-a,r-b,s-1).
\end{split}
\end{equation*}
The other cases are obvious and thus the proof is completed.
\end{proof}
\begin{remark}
{\rm
We observe that $D_1^{k_1,k_2}(m,r,s)$ is the number of integer solutions $(x_{1},\ldots,x_{s})$ and $(y_{1},\ldots,y_{s})$ of
\noindent
\begin{equation*}\label{eq:1}
\begin{split}
0<x_{j}<k_{1}\ \text{for}\ j=1,\ldots,s,
\end{split}
\end{equation*}
\noindent
\begin{equation*}\label{eq:1}
\begin{split}
x_{1}+\cdots+x_{s}=m,\ \text{and}
\end{split}
\end{equation*}
\noindent
\begin{equation*}\label{eq:2}
\begin{split}
0<y_{j}<k_{2}\ \text{for}\ j=1,\ldots,s,
\end{split}
\end{equation*}
\noindent
\begin{equation*}\label{eq:2}
\begin{split}
y_{1}+\cdots+y_{s}=r
\end{split}
\end{equation*}
\noindent
which is
\noindent
\begin{equation*}\label{eq: 1.1}
\begin{split}
D_{1}^{k_1,k_2}(m,r,s)=S(s,\ k_{1},\ m)S(s,\ k_{2},\ r),
\end{split}
\end{equation*}
\noindent
where $S(a,\ b,\ c)$ denotes the total number of integer solutions $x_{1}+x_{2}+\cdots+x_{a}=c$ such that $0<x_{i}<b$ for $i=1,2,\ldots,a$. The number can be expressed as
\noindent
\begin{equation*}\label{eq: 1.1}
\begin{split}
S(a,\ b,\ c)=\sum_{j=0}^{a}(-1)^{j}{a \choose j}{c-j(b-1)-1 \choose a-1}.
\end{split}
\end{equation*}
\noindent
See, e.g. \citet{charalambides2002enumerative}.
}
\end{remark}

\noindent
The probability function of the $q$-sooner waiting time distribution of order $(k_1,k_2)$ when a quota is imposed on runs of successes and failures is obtained by the following theorem. It is evident that
\noindent
\begin{equation*}
P_{q,\theta}(W_{S}=n)=0\ \text{for}\ 0\leq n<\text{min}(k_1,k_2)
\end{equation*}
\noindent
and so we shall focus on determining the probability mass function for $n\geq\min(k_{1},\ k_{2})$.
\noindent

\begin{theorem}
\label{thm:3.1}
The PMF $f_{q,S}(n;\theta)=P_{q,\theta}(W_{S}=n)$ satisfies
\noindent
\begin{equation*}\label{eq:bn}
\begin{split}
f_{q,S}(n;\theta)=f^{(1)}_{q,S}(n;\theta)+f^{(0)}_{q,S}(n;\theta),\ \text{for}\ n\geq\min(k_{1},\ k_{2}),
\end{split}
\end{equation*}
\noindent
where $f^{(1)}_{q,S}(k_1;\theta)=\theta^{k_{1}}$, $f^{(0)}_{q,S}(k_2;\theta)={\prod_{j=1}^{k_2}}(1-\theta q^{j-1})$,
\noindent
\begin{equation*}\label{eq:bn1}
\begin{split}
f^{(1)}_{q,S}(n;\theta)=\sum_{i=1}^{n-k_{1}}\theta^{n-i} q^{ik_{1}}{\prod_{j=1}^{i}}(1-\theta q^{j-1})\sum_{s=1}^{i}\bigg[&A_{q}^{k_1,k_2}(n-k_{1}-i,\ i,\ s)\\
&+B_{q}^{k_1,k_2}(n-k_{1}-i,\ i,\ s)\bigg],\ n>k_{1}\\
\end{split}
\end{equation*}
\noindent
and
\noindent
\begin{equation*}\label{eq:bn1}
\begin{split}
f^{(0)}_{q,S}(n;\theta)=\sum_{i=1}^{n-k_{2}}\theta^{n-k_{2}-i}{\prod_{j=1}^{i+k_2}}(1-\theta q^{j-1})\sum_{s=1}^{n-k_2-i}\bigg[&C_{q}^{k_1,k_2}(n-k_{2}-i,\ i,\ s)\\
&+D_{q}^{k_1,k_2}(n-k_{2}-i,\ i,\ s)\bigg],\ n>k_{2}.\\
\end{split}
\end{equation*}
\end{theorem}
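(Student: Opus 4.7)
The plan is to partition the event $\{W_{S}=n\}$ into two disjoint sub-events recording whether stopping is triggered by a success run of length $k_{1}$ or by a failure run of length $k_{2}$, so that $f_{q,S}(n;\theta)=f_{q,S}^{(1)}(n;\theta)+f_{q,S}^{(0)}(n;\theta)$. For each sub-event I would enumerate the qualifying length-$n$ binary strings by classifying them according to their internal run structure, write down the probability of each specific string from the conditional rule \eqref{failureprobofq2}, and group the resulting sum so that the combinatorial factors coincide with those in Definitions 3.1--3.4.

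For the sub-event $\{W_{S}^{(1)}=n\}$ with $n>k_{1}$, a qualifying string has positions $n-k_{1}+1,\ldots,n$ all successes, position $n-k_{1}$ a failure (otherwise the terminating success run has length at least $k_{1}+1$ and a stop would have occurred earlier), and in positions $1,\ldots,n-k_{1}$ every success run has length in $\{1,\ldots,k_{1}-1\}$ and every failure run length in $\{1,\ldots,k_{2}-1\}$. I would parameterise the prefix by $i$, the number of failures, and by $s$, the number of failure runs, then split further into Case A (prefix begins with a success; $s$ success and $s$ failure runs $x_{1},y_{1},\ldots,x_{s},y_{s}$) and Case B (prefix begins with a failure; $s-1$ success and $s$ failure runs $y_{1},x_{1},\ldots,x_{s-1},y_{s}$). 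Using \eqref{failureprobofq2}, any specific qualifying string has probability
\[
\theta^{n-i}\prod_{\ell=1}^{i}(1-\theta q^{\ell-1})\cdot q^{E},
\]
where $E=\sum_{\ell=1}^{i+1}(\ell-1)u_{\ell}$ and $u_{\ell}$ denotes the number of successes lying in the $\ell$-th geometric subsequence. Because a subsequence contains successes only at the transition between two consecutive failure runs, direct bookkeeping gives $E=y_{1}x_{1}+(y_{1}+y_{2})x_{2}+\cdots+(y_{1}+\cdots+y_{s-1})x_{s-1}+ik_{1}$ in Case B and $E=y_{1}x_{2}+(y_{1}+y_{2})x_{3}+\cdots+(y_{1}+\cdots+y_{s-1})x_{s}+ik_{1}$ in Case A, the extra $ik_{1}$ arising because the $k_{1}$ terminal successes all sit in the $(i+1)$-st geometric subsequence. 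Summing over admissible $(x_{j}),(y_{j})$ at fixed $(i,s)$ gives exactly $A_{q}^{k_1,k_2}(n-k_{1}-i,i,s)$ from Case B and $B_{q}^{k_1,k_2}(n-k_{1}-i,i,s)$ from Case A via Definitions 3.1--3.2, and summing over $s\in\{1,\ldots,i\}$ and $i\in\{1,\ldots,n-k_{1}\}$ yields the claimed expression for $f_{q,S}^{(1)}(n;\theta)$.

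The sub-event $\{W_{S}^{(0)}=n\}$ is handled symmetrically: the terminal $k_{2}$ positions are failures, position $n-k_{2}$ is a success, and the same run-length restrictions hold on positions $1,\ldots,n-k_{2}$. Letting $i$ be the number of failures and $s$ the number of success runs in the prefix, Case C (prefix starts with a success: $s$ success runs, $s-1$ failure runs) matches $C_{q}^{k_1,k_2}$, and Case D (prefix starts with a failure: $s$ success runs, $s$ failure runs) matches $D_{q}^{k_1,k_2}$. The prefactor $\theta^{n-k_{2}-i}\prod_{j=1}^{i+k_{2}}(1-\theta q^{j-1})$ accounts for the $n-k_{2}-i$ successes and $i+k_{2}$ failures in the full string, and the terminal block contributes no extra $q$-factor because those positions are failures rather than successes; the subsequence bookkeeping yields $q$-exponents that agree term-for-term with Definitions 3.3--3.4. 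The boundary cases $n=k_{1}$ and $n=k_{2}$ each have a unique qualifying string ($1^{k_{1}}$ and $0^{k_{2}}$), whose probability under \eqref{failureprobofq2} reads off as $\theta^{k_{1}}$ and $\prod_{j=1}^{k_{2}}(1-\theta q^{j-1})$, matching the stated initial values.

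The main obstacle is the $q$-exponent bookkeeping: the geometric rule is asymmetric in zeros and ones, so one must track precisely which geometric subsequence each success sits in, and the two sub-cases ``prefix starts with a success'' versus ``prefix starts with a failure'' must be kept separate in order to avoid double counting and to match each enumeration with the correct one of $A_{q},B_{q},C_{q},D_{q}$. Once the key observation is in hand that a geometric subsequence contributes successes only at the junction between two consecutive failure runs, the exponent collapses to the finite double sum of products $y_{\bullet}x_{\bullet}$ appearing in Definitions 3.1--3.4 and the remainder is routine rearrangement of sums.
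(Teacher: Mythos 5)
Your proposal is correct and follows essentially the same route as the paper's proof: the same partition on the number of failures $i$ in the prefix, the same classification by the number of runs $s$ together with the split into ``prefix starts with a success'' versus ``prefix starts with a failure,'' the same identification of the resulting $q$-exponents with $A_{q},B_{q}$ (respectively $C_{q},D_{q}$), and the same extraction of the factor $q^{ik_{1}}$ from the terminal success block. The exponent bookkeeping you describe (each success in the $\ell$-th geometric subsequence contributing $q^{\ell-1}$, with the subsequence index determined by the cumulative failure counts $y_{1}+\cdots+y_{j}$) is exactly the ``simple exponentiation algebra'' step in the paper.
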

\begin{proof}
We start with the study of $P_{q,S}^{(1)}(n)$. It is easy to see that $P_{q,S}^{(1)}(k_1)=\big(\theta q^{0}\big)^{k_{1}}=\theta^{k_{1}}$. From now on we assume $n > k_1,$ and we can rewrite $P_{q,S}^{(1)}(n)$ as follows
\noindent
\begin{equation*}\label{eq:bn}
\begin{split}
P_{q,S}^{(1)}(n)&=P_{q,\theta}\left(L_{n-k_{1}}^{(1)}< k_{1}\ \wedge\ L_{n-k_{1}}^{(0)}< k_{2}\ \wedge\ X_{n-k_{1}}=0\ \wedge\ X_{n-k_{1}+1}=\cdots =X_{n}=1\right).\\
\end{split}
\end{equation*}
\noindent
We partition the event $W_{S}^{(1)}=n$ into disjoint events given by $F_{n-k_1}=i,$ for $i=1,\ldots,n-k_1.$ Adding the probabilities we have
\noindent
%We want to rewrite the above expression conditioning on the number $F_{n-k_{1}}$ of failures and using the total probability, we have\\
\begin{equation*}\label{eq:bn}
\begin{split}
P_{q,S}^{(1)}(n)=\sum_{i=1}^{n-k_{1}}P_{q,\theta}\Big(L_{n-k_{1}}^{(1)}&< k_{1}\ \wedge\ L_{n-k_{1}}^{(0)}< k_{2}\ \wedge\ F_{n-k_{1}}=i\ \wedge\ X_{n-k_{1}}=0\ \wedge\\
&X_{n-k_{1}+1}=\cdots =X_{n}=1\Big).\\
\end{split}
\end{equation*}
\noindent
If the number of $0$'s in the first $n-k_1$ trials is equal to $i,$ that is, $F_{n-k_1}=i,$ then in each of the $(n-k_1+1)$ to $n$-th trials the probability of success is
\noindent
\begin{equation*}\label{eq: kk}
\begin{split}
p_{n-k_{1}+1}=\cdots=p_{n}=\theta q^{i}.
\end{split}
\end{equation*}
\noindent
We will write $E_{n,i}^{(0)}$ for the event $\left\{L_n^{(1)}< k_1\ \wedge\ L_n^{(0)}< k_2\ \wedge\ X_n=0\ \wedge\ F_n=i\right\}.$
\noindent
We can now rewrite as follows.
\noindent
\begin{equation*}\label{eq:bn1}
\begin{split}
P_{q,S}^{(1)}(n)=\sum_{i=1}^{n-k_{1}}&P_{q,\theta}\Big(L_{n-k_{1}}^{(1)}< k_{1}\ \wedge\ L_{n-k_{1}}^{(0)}< k_{2}\ \wedge\ F_{n-k_{1}}=i\ \wedge\ X_{n-k_{1}}=0\Big)\\
&\times P_{q,\theta}\Big(X_{n-k_{1}+1}=\cdots =X_{n}=1\mid F_{n-k_{1}}=i\Big)\\
=\sum_{i=1}^{n-k_{1}}&P_{q,\theta}\Big(E_{n-k_{1},i}^{(0)}\Big)\Big(\theta q^{i}\Big)^{k_1}.\\
\end{split}
\end{equation*}
\noindent
We are going to focus on the event $E_{n-k_{1},i}^{(0)}$. For $i=1,\ldots,n-k_1$, a typical element of the event $\{L_{n-k_1}^{(1)}< k_1\ \wedge\ L_{n-k_1}^{(0)}< k_2\ \wedge\ F_{n-k_1}=i\}$ is an ordered sequence which consists of $n-k_{1}-i$ successes and $i$ failures such that the length of the longest success run is less than $k_1$ and the length of the longest failure run is less than $k_2$. The number of these sequences can be derived as follows.
 First we will distribute the $i$ failures. Let $s$ $(1\leq s \leq i)$ be the number of runs of failures in the typical element of the event $E_{n-k_{1},i}^{(0)}$. Next, we will distribute the $n-k_{1}-i$ successes. We divide into two cases: starting with a failure run or starting with a success run. Thus, we distinguish between two types of sequences in the event  $$\left\{L_{n-k_1}^{(1)}< k_1 \wedge\ L_{n-k_1}^{(0)}< k_2 \wedge\ F_{n-k_1}=i\right\},$$ respectively named $(s-1,\ s)$-type and $(s,\ s)$-type, which are defined as follows.
\noindent
\begin{equation*}\label{eq:bn}
\begin{split}
(s-1,\ s)\text{-type}\ :&\quad \overbrace{0\ldots 0}^{y_{1}}\mid\overbrace{1\ldots 1}^{x_{1}}\mid\overbrace{0\ldots 0}^{y_{2}}\mid\overbrace{1\ldots 1}^{x_{2}}\mid \ldots \mid \overbrace{0\ldots 0}^{y_{s-1}}\mid\overbrace{1\ldots 1}^{x_{s-1}}\mid\overbrace{0\ldots 0}^{y_{s}},\\
\end{split}
\end{equation*}
\noindent
with $i$ $0$'s and $n-k_{1}-i$ $1$'s, where $x_{j}$ $(j=1,\ldots,s-1)$ represents a length of run of $1$'s and $y_{j}$ $(j=1,\ldots,s)$ represents the length of a run of $0$'s. And all integers $x_{1},\ldots,x_{s-1}$, and $y_{1},\ldots ,y_{s}$ satisfy the conditions
\noindent
\begin{equation*}\label{eq:bn}
\begin{split}
0 < x_j < k_1 \mbox{\ for\ } j=1,...,s-1, \mbox{\ and\ } x_1+\cdots +x_{s-1} = n-k_1-i,
\end{split}
\end{equation*}
\noindent
\begin{equation*}\label{eq:bn}
\begin{split}
0 < y_j < k_2 \mbox{\ for\ } j=1,...,s, \mbox{\ and\ } y_1+\cdots +y_s=i.
\end{split}
\end{equation*}

\begin{equation*}\label{eq:bn}
\begin{split}
(s,\ s)\text{-type}\  :&\quad \overbrace{1\ldots 1}^{x_{1}}\mid\overbrace{0\ldots 0}^{y_{1}}\mid\overbrace{1\ldots 1}^{x_{2}}\mid\overbrace{0\ldots 0}^{y_{2}}\mid\overbrace{1\ldots 1}^{x_{3}}\mid \ldots \mid \overbrace{0\ldots 0}^{y_{s-1}}\mid\overbrace{1\ldots 1}^{x_{s}}\mid\overbrace{0\ldots 0}^{y_{s}},
\end{split}
\end{equation*}
\noindent
with $i$ $0$'s and $n-k_{1}-i$ $1$'s, where $x_{j}$ $(j=1,\ldots,s)$ represents a length of run of $1$'s and $y_{j}$ $(j=1,\ldots,s)$ represents the length of a run of $0$'s. Here all of $x_1,\ldots, x_{s},$ and $y_1,\ldots, y_s$ are integers, and they satisfy
\noindent
\begin{equation*}\label{eq:bn}
\begin{split}
0 < x_j < k_1 \mbox{\ for\ } j=1,...,s, \mbox{\ and\ } x_1+\cdots +x_s = n-k_1-i,
\end{split}
\end{equation*}
\noindent
\begin{equation*}\label{eq:bn}
\begin{split}
0 < y_j < k_2 \mbox{\ for\ } j=1,...,s, \mbox{\ and\ } y_1+\cdots +y_s =i.
\end{split}
\end{equation*}
\noindent
Then the probability of the event $E_{n-k_1,i}^{(0)}$ is given by
\noindent
\begin{equation*}\label{eq: 1.1}
\begin{split}
P_{q,\theta}\Big(E_{n-k_{1},i}^{(0)}&\Big)=\\
\sum_{s=1}^{i}\Bigg[\bigg\{&\sum_{\substack{x_{1}+\cdots+x_{s-1}=n-k_{1}-i\\ x_{1},\ldots,x_{s-1} \in \{1,\ldots,k_{1}-1\}}}\
\sum_{\substack{y_{1}+\cdots+y_{s}=i\\ y_{1},\ldots,y_{s} \in \{1,\ldots,k_{2}-1\}}} \big(1-\theta q^{0}\big)\cdots \big(1-\theta q^{y_{1}-1}\big)\times\\
&\Big(\theta q^{y_{1}}\Big)^{x_{1}}\big(1-\theta q^{y_{1}}\big)\cdots \big(1-\theta q^{y_{1}+y_{2}-1}\big)\times\\
&\Big(\theta q^{y_{1}+y_{2}}\Big)^{x_{2}}\big(1-\theta q^{y_{1}+y_2}\big)\cdots \big(1-\theta q^{y_{1}+y_{2}+y_3-1}\big)\times \\
&\quad \quad \quad \quad\quad \quad\quad \quad \quad \quad \quad \vdots\\
&\Big(\theta q^{y_{1}+\cdots+y_{s-1}}\Big)^{x_{s-1}}
\big(1-\theta q^{y_{1}+\cdots+y_{s-1}}\big)\cdots \big(1-\theta q^{y_{1}+\cdots+y_{s}-1}\big)\bigg\}+\\
\end{split}
\end{equation*}
\noindent
\begin{equation*}\label{eq: 1.1}
\begin{split}
\quad \quad \quad \quad \quad \quad &\bigg\{\sum_{\substack{x_{1}+\cdots+x_{s}=n-k_{1}-i\\ x_{1},\ldots,x_{s} \in \{1,\ldots,k_{1}-1\}}}\
\sum_{\substack{y_{1}+\cdots+y_{s}=i\\ y_{1},\ldots,y_{s} \in \{1,\ldots,k_{2}-1\}}}\big(\theta q^{0}\big)^{x_{1}}\big(1-\theta q^{0}\big)\cdots (1-\theta q^{y_{1}-1})\times\\
&\Big(\theta q^{y_{1}}\Big)^{x_{2}}\big(1-\theta q^{y_{1}+y_2}\big)\cdots \big(1-\theta q^{y_{1}+y_{2}-1}\big)\times\\
&\Big(\theta q^{y_{1}+y_{2}}\Big)^{x_{3}}\big(1-\theta q^{y_{1}}\big)\cdots \big(1-\theta q^{y_{1}+y_{2}+y_3-1}\big)\times \\
&\quad \quad \quad \quad\quad \quad\quad \quad \quad \quad \quad \vdots\\
&\Big(\theta q^{y_{1}+\cdots+y_{s-1}}\Big)^{x_{s}}
\big(1-\theta q^{y_{1}+\cdots+y_{s-1}}\big)\cdots \big(1-\theta q^{y_{1}+\cdots+y_{s}-1}\big)\bigg\}\Bigg].\\
\end{split}
\end{equation*}
\noindent
%\begin{equation*}\label{eq: 1.1}
%\begin{split}
%P\Big(E_{n-k_{1},\ i}^{(0)}\Big)=\sum_{s=1}^{i}\Bigg[&\bigg\{\sum_{\substack{x_{1}+\cdots+x_{s-1}=n-k_{1}-i\\ x_{1},\ \ldots,\ x_{s-1} \in \{1,\ \ldots,\ k_{1}-1\}}}\
%\sum_{\substack{y_{1}+\cdots+y_{s}=i\\ y_{1},\ \ldots,\ y_{s} \in \{1,\ \ldots,\ k_{2}-1\}}} \big(1-\theta q^{0}\big)\cdots \big(1-\theta q^{y_{1}-1}\big)\\
%\times&\Big(\theta q^{y_{1}}\Big)^{x_{1}}\big(1-\theta q^{y_{1}}\big)\cdots \big(1-\theta q^{y_{1}+y_{2}-1}\big)\\
%\times&\Big(\theta q^{y_{1}+y_{2}}\Big)^{x_{2}}\big(1-\theta q^{y_{1}+y_2}\big)\cdots \big(1-\theta q^{y_{1}+y_{2}+y_3-1}\big) \\
%\vdots&\\
%\times&\Big(\theta q^{y_{1}+\cdots+y_{s-1}}\Big)^{x_{s-1}}
%\big(1-\theta q^{y_{1}+\cdots+y_{s-1}}\big)\cdots \big(1-\theta q^{y_{1}+\cdots+y_{s}-1}\big)\bigg\}\\
%+&\bigg\{\sum_{\substack{x_{1}+\cdots+x_{s}=n-k_{1}-i\\ x_{1},\ \ldots,\ x_{s} \in \{1,\ \ldots,\ k_{1}-1\}}}\
%\sum_{\substack{y_{1}+\cdots+y_{s}=i\\ y_{1},\ \ldots,\ y_{s} \in \{1,\ \ldots,\ k_{2}-1\}}}\big(\theta q^{0}\big)^{x_{1}}\big(1-\theta q^{0}\big)\cdots (1-\theta q^{y_{1}-1})\\
%\times&\Big(\theta q^{y_{1}}\Big)^{x_{2}}\big(1-\theta q^{y_{1}+y_2}\big)\cdots \big(1-\theta q^{y_{1}+y_{2}-1}\big)\\
%\times&\Big(\theta q^{y_{1}+y_{2}}\Big)^{x_{3}}\big(1-\theta q^{y_{1}}\big)\cdots \big(1-\theta q^{y_{1}+y_{2}+y_3-1}\big) \\
%\vdots&\\
%\times&\Big(\theta q^{y_{1}+\cdots+y_{s-1}}\Big)^{x_{s}}
%\big(1-\theta q^{y_{1}+\cdots+y_{s-1}}\big)\cdots \big(1-\theta q^{y_{1}+\cdots+y_{s}-1}\big)\bigg\}\Bigg].\\
%\end{split}
%\end{equation*}
%\noindent
Using simple exponentiation algebra arguments to simplify,
\noindent
\begin{equation*}\label{eq: 1.1}
\begin{split}
&P_{q,\theta}\Big(E_{n-k_{1},i}^{(0)}\Big)=\\
&\theta^{n-k_{1}-i}{\prod_{j=1}^{i}}\ \left   (1-\theta q^{j-1}\right)\times\\
&\sum_{s=1}^{i}\Bigg[\sum_{\substack{x_{1}+\cdots+x_{s-1}=n-k_{1}-i\\ x_{1},\ldots,x_{s-1} \in \{1,\ldots,k_{1}-1\}}}\
\sum_{\substack{y_{1}+\cdots+y_{s}=i\\ y_{1},\ldots,y_{s} \in \{1,\ldots,k_{2}-1\}}}q^{y_{1}x_{1}+(y_{1}+y_{2})x_{2}+\cdots+(y_{1}+\cdots+y_{s-1})x_{s-1}}+\\
&\quad \quad \ \sum_{\substack{x_{1}+\cdots+x_{s}=n-k_{1}-i\\ x_{1},\ldots,x_{s} \in \{1,\ldots,k_{1}-1\}}}\
\sum_{\substack{y_{1}+\cdots+y_{s}=i\\ y_{1},\ldots,y_{s} \in \{1,\ldots,k_{2}-1\}}}q^{y_{1}x_{2}+(y_{1}+y_{2})x_{3}+\cdots+(y_{1}+\cdots+y_{s-1})x_{s}}\Bigg].\\
\end{split}
\end{equation*}
\noindent
Using Lemma \ref{lemma:3.1} and Lemma \ref{lemma:3.2}, we can rewrite as follows.
\noindent
\begin{equation*}\label{eq: 1.1}
\begin{split}
P_{q,\theta}\Big(E_{n-k_{1},i}^{(0)}\Big)=\theta^{n-k_{1}-i}{\prod_{j=1}^{i}}\left(1-\theta q^{j-1}\right)\sum_{s=1}^{i}\Bigg[A&_{q}^{k_1,k_2}(n-k_{1}-i,\ i,\ s)+B_{q}^{k_1,k_2}(n-k_{1}-i,\ i,\ s)\Bigg],\\
\end{split}
\end{equation*}\\
\noindent
where
\noindent
\begin{equation*}\label{eq: 1.1}
\begin{split}
A_{q}^{k_1,k_2}(n&-k_{1}-i,\ i,\ s)\\
&=\sum_{\substack{x_{1}+\cdots+x_{s-1}=n-k_{1}-i\\ x_{1},\ldots,x_{s-1} \in \{1,\ldots,k_{1}-1\}}}\
\sum_{\substack{y_{1}+\cdots+y_{s}=i\\ y_{1},\ldots,y_{s} \in \{1,\ldots,k_{2}-1\}}} q^{y_{1}x_{1}+(y_{1}+y_{2})x_{2}+\cdots+(y_{1}+\cdots+y_{s-1})x_{s-1}},
\end{split}
\end{equation*}
\noindent
and
\noindent
\begin{equation*}\label{eq: 1.1}
\begin{split}
B_{q}^{k_1,k_2}(n&-k_{1}-i,\ i,\ s)\\
&=\sum_{\substack{x_{1}+\cdots+x_{s}=n-k_{1}-i\\ x_{1},\ldots,x_{s} \in \{1,\ldots,k_{1}-1\}}}\
\sum_{\substack{y_{1}+\cdots+y_{s}=i\\ y_{1},\ldots,y_{s} \in \{1,\ldots,k_{2}-1\}}} q^{y_{1}x_{2}+(y_{1}+y_{2})x_{3}+\cdots+(y_{1}+\cdots+y_{s-1})x_{s}}.
\end{split}
\end{equation*}
\noindent
Therefore we can compute the probability of the event $W_{S}^{(1)}=n$ as follows.
\noindent
\begin{equation*}\label{eq:bn1}
\begin{split}
P_{q,S}^{(1)}(n)=&\sum_{i=1}^{n-k_{1}}P\Big(E_{n-k_{1},i}^{(0)}\Big)\Big(\theta q^{i}\Big)^{k_{1}}\\
=&\sum_{i=1}^{n-k_{1}}\theta^{n-k_{1}-i}{\prod_{j=1}^{i}}\left(1-\theta q^{j-1}\right)\sum_{s=1}^{i}\bigg[A_{q}^{k_1,k_2}(n-k_{1}-i,\ i,\ s)\\
&\quad\quad\quad\quad\quad\quad\quad\quad\quad\quad\quad\quad\quad\quad +B_{q}^{k_1,k_2}(n-k_{1}-i,\ i,\ s)\bigg]\Big(\theta q^{i}\Big)^{k_{1}}.\\
\end{split}
\end{equation*}
\noindent
Finally, applying typical factorization algebra arguments, $P_{S}^{(1)}(n)$ can be rewritten as
\noindent
\begin{equation*}\label{eq:bn1}
\begin{split}
P_{q,S}^{(1)}(n)=\sum_{i=1}^{n-k_{1}}\theta^{n-i} q^{ik_{1}}{\prod_{j=1}^{i}}(1-\theta q^{j-1})\sum_{s=1}^{i}\bigg[A_{q}^{k_1,k_2}(n-k_{1}-i,\ i,\ s)+B_{q}^{k_1,k_2}(n-k_{1}-i,\ i,\ s)\bigg].\\
\end{split}
\end{equation*}
\noindent
Next, we are now going to study $P_{q,S}^{(0)}(n)$. It is easy to see that $P_{q,S}^{(0)}(k_2)={\prod_{j=1}^{k_2}}(1-\theta q^{j-1})$. From now on we assume $n > k_2,$ and we can write $P_{q,S}^{(0)}(n)$ as
\noindent
\begin{equation*}\label{eq:bn}
\begin{split}
P_{q,S}^{(0)}(n)&=P\left(L_{n-k_{2}}^{(1)}< k_{1}\ \wedge\ L_{n-k_{2}}^{(0)}< k_{2}\ \wedge\ X_{n-k_{2}}=1\ \wedge\ X_{n-k_{2}+1}=\cdots =X_{n}=0\right).\\
\end{split}
\end{equation*}
\noindent
We partition the event $W_{S}^{(0)}=n$ into disjoint events given by $F_{n-k_2}=i,$ for $i=1,\ldots,n-k_2.$ Adding the probabilities we have
\noindent
%We want to rewrite the above expression conditioning on the number $F_{n-k_{1}}$ of failures and using the total probability, we have\\
\begin{equation*}\label{eq:bn}
\begin{split}
P_{q,S}^{(0)}(n)=\sum_{i=1}^{n-k_{2}}P_{q,\theta}\Big(L_{n-k_{2}}^{(1)}&< k_{1}\ \wedge\ L_{n-k_{2}}^{(0)}< k_{2}\ \wedge\ F_{n-k_{2}}=i\ \wedge\ X_{n-k_{2}}=1\ \wedge\\
&X_{n-k_{2}+1}=\cdots =X_{n}=0\Big).\\
\end{split}
\end{equation*}
\noindent
If the number of $0$'s in the first $n-k_2$ trials is equal to $i,$ that is, $F_{n-k_2}=i,$ then the probability of failures in all of the $(n-k_2+1)$-th to $n$-th trials is
\noindent
\begin{equation*}\label{eq: kk}
\begin{split}
P_{q,\theta}(X_{n-k_2+1}=\cdots = X_n = 0 \,\mid \, F_{n-k_2}=i)={\prod_{j=i+1}^{i+k_2}}(1-\theta q^{j-1}).
\end{split}
\end{equation*}
\noindent
We write $E_{n,i}^{(1)}$ for the event $\left\{L_n^{(1)}< k_1\ \wedge\ L_n^{(0)}< k_2\ \wedge\ X_n=1\ \wedge\ F_n=i\right\}.$ We can now rewrite as follows
\noindent
\begin{equation*}\label{eq:bn1}
\begin{split}
P_{q,S}^{(0)}(n)=\sum_{i=1}^{n-k_{2}}&P_{q,\theta}\Big(L_{n-k_{2}}^{(1)}< k_{1}\ \wedge\ L_{n-k_{2}}^{(0)}< k_{2}\ \wedge\ F_{n-k_{2}}=i\ \wedge\ X_{n-k_{2}}=1\Big)\\
&\times P_{q,\theta}\Big(X_{n-k_{2}+1}=\cdots =X_{n}=0\mid F_{n-k_{2}}=i\Big)\\
=\sum_{i=1}^{n-k_{2}}&P_{q,\theta}\Big(E_{n-k_{2},i}^{(1)}\Big){\prod_{j=i+1}^{i+k_2}}(1-\theta q^{j-1}).\\
\end{split}
\end{equation*}

\noindent
We are going to focus on the event $E_{n-k_{2},i}^{(1)}$. For $i=1,\ldots,n-k_2$, a typical element of the event $\{L_{n-k_1}^{(1)}< k_1 \wedge\ L_{n-k_1}^{(0)}< k_2 \wedge\ F_{n-k_1}=i\}$ is an ordered sequence which consists of $n-k_{2}-i$ successes and $i$ failures such that the length of the longest success run is less than $k_1$ and the length of the longest failure run is less than $k_2$. The number of these sequences can be derived as follows.
 First we will distribute the $i$ failures. Let $s$ $(1\leq s \leq i)$ be the number of runs of failures in the typical element of the event  $E_{n-k_{2},i}^{(1)}$. Next, we will distribute the $n-k_{2}-i$ successes. We divide into two cases: starting with a success run or starting with a failure run. Thus, we distinguish between two types of sequences in the event  $$\left\{L_{n-k_1}^{(1)} <k_1 \wedge\ L_{n-k_1}^{(0)}< k_2 \wedge\ F_{n-k_1}=i\right\},$$ respectively named $(s-1,\ s)$-type and $(s,\ s)$-type, which are defined as follows.
\noindent

\begin{equation*}\label{eq:bn}
\begin{split}
(s,\ s-1)\text{-type}\ :&\quad \overbrace{1\ldots 1}^{x_{1}}\mid\overbrace{0\ldots 0}^{y_{1}}\mid\overbrace{1\ldots 1}^{x_{2}}\mid\overbrace{0\ldots 0}^{y_{2}}\mid \ldots \mid\overbrace{0\ldots 0}^{y_{s-1}}\mid\overbrace{1\ldots 1}^{x_{s}},\\
\end{split}
\end{equation*}
\noindent
with $i$ $0$'s and $n-k_{2}-i$ $1$'s, where $x_{j}$ $(j=1,\ldots,s)$ represents the length of a run of $1$'s and $y_{j}$ $(j=1,\ldots,s-1)$ represents the length of a run of $0$'s. And all integers $x_{1},\ldots,x_{s}$, and $y_{1},\ldots,y_{s-1}$ satisfy the conditions
\noindent
\begin{equation*}\label{eq:bn}
\begin{split}
0 < x_j < k_1 \mbox{\ for\ } j=1,...,s, \mbox{\ and\ } x_1+\cdots +x_{s} = n-k_2-i,
\end{split}
\end{equation*}
\noindent
\begin{equation*}\label{eq:bn}
\begin{split}
0 < y_j < k_2 \mbox{\ for\ } j=1,...,s-1, \mbox{\ and\ } y_1+\cdots +y_{s-1}=i.
\end{split}
\end{equation*}

\begin{equation*}\label{eq:bn}
\begin{split}
(s,\ s)\text{-type}\  :&\quad \overbrace{0\ldots 0}^{y_{1}}\mid\overbrace{1\ldots 1}^{x_{1}}\mid\overbrace{0\ldots 0}^{y_{2}}\mid\overbrace{1\ldots 1}^{x_{2}}\mid\overbrace{0\ldots 0}^{y_{3}}\mid \ldots \mid\overbrace{0\ldots 0}^{y_{s}}\mid\overbrace{1\ldots 1}^{x_{s}},
\end{split}
\end{equation*}
\noindent
with $i$ $0$'s and $n-k_{2}-i$ $1$'s, where $x_{j}$ $(j=1,\ldots,s)$ represents the length of a run of $1$'s and $y_{j}$ $(j=1,\ldots,s)$ represents the length of a run of $0$'s. Here all of $x_1,\ldots, x_{s},$ and $y_1,\ldots, y_s$ are integers, and they satisfy
\noindent
\begin{equation*}\label{eq:bn}
\begin{split}
0 < x_j < k_1 \mbox{\ for\ } j=1,...,s, \mbox{\ and\ } x_1+\cdots +x_s = n-k_2-i,
\end{split}
\end{equation*}
\noindent
\begin{equation*}\label{eq:bn}
\begin{split}
0 < y_j < k_2 \mbox{\ for\ } j=1,...,s, \mbox{\ and\ } y_1+\cdots +y_s =i.
\end{split}
\end{equation*}
\noindent
Then the probability of the event $E_{n-k_2,i}^{(1)}$ is given by
\noindent
\begin{equation*}\label{eq: 1.1}
\begin{split}
P_{q,\theta}\Big(E_{n-k_{2},i}^{(1)}&\Big)=\\
\sum_{s=1}^{n-k_2-i}\Bigg[\bigg\{&\sum_{\substack{x_{1}+\cdots+x_{s}=n-k_{2}-i\\ x_{1},\ldots,x_{s} \in \{1,\ldots,k_{1}-1\}}}\
\sum_{\substack{y_{1}+\cdots+y_{s-1}=i\\ y_{1},\ldots,y_{s-1} \in \{1,\ldots,k_{2}-1\}}} \big(\theta q^{0}\big)^{x_{1}}\big(1-\theta q^{0}\big)\cdots (1-\theta q^{y_{1}-1})\times\\
&\Big(\theta q^{y_{1}}\Big)^{x_{2}}\big(1-\theta q^{y_{1}}\big)\cdots \big(1-\theta q^{y_{1}+y_{2}-1}\big)\times\\
&\Big(\theta q^{y_{1}+y_{2}}\Big)^{x_{3}}\big(1-\theta q^{y_{1}+y_2}\big)\cdots \big(1-\theta q^{y_{1}+y_{2}+y_3-1}\big)\times \\
&\quad \quad \quad \quad\quad \quad\quad \quad \quad \quad \quad\vdots\\
&\Big(\theta q^{y_{1}+\cdots+y_{s-1}}\Big)^{x_{s}}\bigg\}+\\
\end{split}
\end{equation*}

\noindent
\begin{equation*}\label{eq: 1.1}
\begin{split}
\quad \quad \quad \quad&\bigg\{\sum_{\substack{x_{1}+\cdots+x_{s}=n-k_{2}-i\\ x_{1},\ldots,x_{s} \in \{1,\ldots,k_{1}-1\}}}\
\sum_{\substack{y_{1}+\cdots+y_{s}=i\\ y_{1},\ldots,y_{s} \in \{1,\ldots,k_{2}-1\}}}\big(1-\theta q^{0}\big)\cdots \big(1-\theta q^{y_{1}-1}\big)\Big(\theta q^{y_{1}}\Big)^{x_{1}}\times\\
&\big(1-\theta q^{y_{1}}\big)\cdots \big(1-\theta q^{y_{1}+y_{2}-1}\big)\Big(\theta q^{y_{1}+y_{2}}\Big)^{x_{2}}\times\\
&\big(1-\theta q^{y_{1}+y_2}\big)\cdots \big(1-\theta q^{y_{1}+y_{2}+y_3-1}\big)\Big(\theta q^{y_{1}+y_{2}+y_3}\Big)^{x_{3}}\times \\
&\quad \quad \quad \quad\quad \quad\quad \quad \quad \quad \quad\vdots\\
&\big(1-\theta q^{y_{1}+\cdots+y_{s-1}}\big)\cdots \big(1-\theta q^{y_{1}+\cdots+y_{s}-1}\big)\Big(\theta q^{y_{1}+\cdots+y_{s}}\Big)^{x_{s}}\bigg\}\Bigg].\\
\end{split}
\end{equation*}

%\noindent
%\begin{equation*}\label{eq: 1.1}
%\begin{split}
%P\Big(E_{n-k_{2},\ i}^{(1)}\Big)=\sum_{s=1}^{n-k_2-i}\Bigg[&\bigg\{\sum_{\substack{x_{1}+\cdots+x_{s}=n-k_{2}-i\\ x_{1},\ \ldots,\ x_{s} \in \{1,\ \ldots,\ k_{1}-1\}}}\
%\sum_{\substack{y_{1}+\cdots+y_{s-1}=i\\ y_{1},\ \ldots,\ y_{s-1} \in \{1,\ \ldots,\ k_{2}-1\}}} \big(\theta q^{0}\big)^{x_{1}}\big(1-\theta q^{0}\big)\cdots (1-\theta q^{y_{1}-1})\\
%\times&\Big(\theta q^{y_{1}}\Big)^{x_{2}}\big(1-\theta q^{y_{1}}\big)\cdots \big(1-\theta q^{y_{1}+y_{2}-1}\big)\\
%\times&\Big(\theta q^{y_{1}+y_{2}}\Big)^{x_{3}}\big(1-\theta q^{y_{1}+y_2}\big)\cdots \big(1-\theta q^{y_{1}+y_{2}+y_3-1}\big) \\
%\vdots&\\
%\times&\Big(\theta q^{y_{1}+\cdots+y_{s-1}}\Big)^{x_{s}}\\
%+&\bigg\{\sum_{\substack{x_{1}+\cdots+x_{s}=n-k_{2}-i\\ x_{1},\ \ldots,\ x_{s} \in \{1,\ \ldots,\ k_{1}-1\}}}\
%\sum_{\substack{y_{1}+\cdots+y_{s}=i\\ y_{1},\ \ldots,\ y_{s} \in \{1,\ \ldots,\ k_{2}-1\}}}\big(1-\theta q^{0}\big)\cdots \big(1-\theta q^{y_{1}-1}\big)\Big(\theta q^{y_{1}}\Big)^{x_{1}}\\
%\times&\big(1-\theta q^{y_{1}}\big)\cdots \big(1-\theta q^{y_{1}+y_{2}-1}\big)\Big(\theta q^{y_{1}+y_{2}}\Big)^{x_{2}}\\
%\times&\big(1-\theta q^{y_{1}+y_2}\big)\cdots \big(1-\theta q^{y_{1}+y_{2}+y_3-1}\big)\Big(\theta q^{y_{1}+y_{2}+y_3}\Big)^{x_{3}} \\
%\vdots&\\
%\times&
%\big(1-\theta q^{y_{1}+\cdots+y_{s-1}}\big)\cdots \big(1-\theta q^{y_{1}+\cdots+y_{s}-1}\big)\Big(\theta q^{y_{1}+\cdots+y_{s}}\Big)^{x_{s}}\bigg\}\Bigg].\\
%\end{split}
%\end{equation*}
\noindent
Using simple exponentiation algebra arguments to simplify,
\noindent
\begin{equation*}\label{eq: 1.1}
\begin{split}
&P_{q,\theta}\Big(E_{n-k_{2},i}^{(1)}\Big)=\\
&\theta^{n-k_{2}-i}{\prod_{j=1}^{i}}\ (1-\theta q^{j-1})\times\\
&\sum_{s=1}^{n-k_2-i}\Bigg[\sum_{\substack{x_{1}+\cdots+x_{s}=n-k_{2}-i\\ x_{1},\ldots,x_{s} \in \{1,\ldots,k_{1}-1\}}}\
\sum_{\substack{y_{1}+\cdots+y_{s-1}=i\\ y_{1},\ldots,y_{s-1} \in \{1,\ldots,k_{2}-1\}}}q^{y_{1}x_{2}+(y_{1}+y_{2})x_{3}+\cdots+(y_{1}+\cdots+y_{s-1})x_{s}}+\\
&\quad \quad \quad \ \sum_{\substack{x_{1}+\cdots+x_{s}=n-k_{2}-i\\ x_{1},\ldots,x_{s} \in \{1,\ldots,k_{1}-1\}}}\
\sum_{\substack{y_{1}+\cdots+y_{s}=i\\ y_{1},\ldots,y_{s} \in \{1,\ldots,k_{2}-1\}}}q^{y_{1}x_{1}+(y_{1}+y_{2})x_{2}+\cdots+(y_{1}+\cdots+y_{s})x_{s}}\Bigg].\\
\end{split}
\end{equation*}
\noindent
Using Lemma \ref{lemma:3.3} and Lemma \ref{lemma:3.4}, we can rewrite as follows.

\noindent
\begin{equation*}\label{eq: 1.1}
\begin{split}
P_{q,\theta}\Big(E_{n-k_{2},i}^{(1)}\Big)=\theta^{n-k_{2}-i}{\prod_{j=1}^{i}}(1-\theta q^{j-1})\sum_{s=1}^{n-k_2-i}\Bigg[C&_{q}^{k_1,k_2}(n-k_{2}-i,\ i,\ s)+D_{q}^{k_1,k_2}(n-k_{2}-i,\ i,\ s)\Bigg],\\
\end{split}
\end{equation*}\\
\noindent
where
\noindent
\begin{equation*}\label{eq: 1.1}
\begin{split}
C_{q}^{k_1,k_2}(n&-k_{2}-i,\ i,\ s)\\
&=\sum_{\substack{x_{1}+\cdots+x_{s}=n-k_{2}-i\\ x_{1},\ldots,x_{s} \in \{1,\ldots,k_{1}-1\}}}\
\sum_{\substack{y_{1}+\cdots+y_{s-1}=i\\ y_{1},\ldots,y_{s-1} \in \{1,\ldots,k_{2}-1\}}}q^{y_{1}x_{2}+(y_{1}+y_{2})x_{3}+\cdots+(y_{1}+\cdots+y_{s-1})x_{s}},
\end{split}
\end{equation*}
\noindent
and\noindent
\begin{equation*}\label{eq: 1.1}
\begin{split}
D_{q}^{k_1,k_2}(n&-k_{2}-i,\ i,\ s)\\
&=\sum_{\substack{x_{1}+\cdots+x_{s}=n-k_{2}-i\\ x_{1},\ldots,x_{s} \in \{1,\ldots,k_{1}-1\}}}\
\sum_{\substack{y_{1}+\cdots+y_{s}=i\\ y_{1},\ldots,y_{s} \in \{1,\ldots,k_{2}-1\}}}q^{y_{1}x_{1}+(y_{1}+y_{2})x_{2}+\cdots+(y_{1}+\cdots+y_{s})x_{s}}.
\end{split}
\end{equation*}
\noindent
Therefore we can compute the probability of the event $W_{S}^{(0)}=n$ as follows
\noindent
\begin{equation*}\label{eq:bn1}
\begin{split}
P_{q,S}^{(0)}(n)=&\sum_{i=1}^{n-k_{2}}P_{q,\theta}\Big(E_{n-k_{2},i}^{(1)}\Big){\prod_{j=i+1}^{i+k_2}}(1-\theta q^{j-1})\\
=&\sum_{i=1}^{n-k_{2}}\theta^{n-k_{2}-i}{\prod_{j=1}^{i}}\left(1-\theta q^{j-1}\right)\sum_{s=1}^{n-k_2-i}\bigg[C_{q}^{k_1,k_2}(n-k_{2}-i,\ i,\ s)+D_{q}^{k_1,k_2}(n-k_{2}-i,\ i,\ s)\bigg]\\
&\times{\prod_{j=i+1}^{i+k_2}}\left(1-\theta q^{j-1}\right).\\
\end{split}
\end{equation*}
\noindent
Finally, applying typical factorization algebra arguments, $P_{q,S}^{(0)}(n)$ can be rewritten as follows
\noindent
\begin{equation*}\label{eq:bn1}
\begin{split}
P_{q,S}^{(0)}(n)=\sum_{i=1}^{n-k_{2}}\theta^{n-k_{2}-i}{\prod_{j=1}^{i+k_2}}(1-\theta q^{j-1})\sum_{s=1}^{n-k_2-i}\bigg[C_{q}^{k_1,k_2}(n-k_{2}-i,\ i,\ s)+D_{q}^{k_1,k_2}(n-k_{2}-i,\ i,\ s)\bigg].\\
\end{split}
\end{equation*}
Thus proof is completed.
\end{proof}
\noindent
It is worth mentioning here that the PMF $f_{q,S}(n;\theta)$ approaches the probability function of the sooner waiting time distribution of order $(k_1,k_2)$
in the limit as $q$ tends to 1 when a run(succession) run quota is imposed on runs of successes and failures of IID model. The details are presented in the following remark.
\noindent
\begin{remark}
{\rm For $q=1$, the PMF $f_{q,S}(n;\theta)$ reduces to the PMF $f_{S}(n;\theta)=P_{\theta}(W_{S}=n)$ for $n\geq min(k_1,k_2)$ is given by

\noindent
\begin{equation*}\label{eq:bn}
\begin{split}
P_{\theta}(W_{S}=n)=P_{S}^{(1)}(n)+P_{S}^{(0)}(n),
\end{split}
\end{equation*}
\noindent
where $P_S^{(1)}(k_1)=\theta^{k_{1}}$, $P_S^{(0)}(k_2)=(1-\theta)^{k_2}$,
\noindent
\begin{equation*}\label{eq:bn1}
\begin{split}
P_{S}^{(1)}(n)=\sum_{i=1}^{n-k_{1}}\theta^{n-i} (1-\theta )^{i}\sum_{s=1}^{i}S(s,k_2,i)\bigg[&S(s-1,k_1,n-k_1-i)\\
&+S(s-1,k_1,n-k_1-i)\bigg],\ n>k_1\\
\end{split}
\end{equation*}
\noindent
and
\noindent
\begin{equation*}\label{eq:bn1}
\begin{split}
P_{S}^{(0)}(n)=\sum_{i=1}^{n-k_{2}}\theta^{n-k_{2}-i}(1-\theta )^{i+k_2}\sum_{s=1}^{n-k_2-i}S(s,k_1,n-k_2-i)\bigg[S(s-1,k_2,i)+S(s,k_2,i)\bigg],\ n>k_2.\\
\end{split}
\end{equation*}

where

\noindent
\begin{equation*}\label{eq: 1.1}
\begin{split}
S(a,\ b,\ c)=\sum_{j=0}^{min(a,\left[\frac{c-a}{b-1}\right])}(-1)^{j}{a \choose j}{c-j(b-1)-1 \choose a-1}.
\end{split}
\end{equation*}
\noindent
See, e.g. \citet{charalambides2002enumerative}.
}
\end{remark}

\subsection{$q$-later waiting time distribution of order $(k_1,k_2)$}

The problem of waiting time that will be discussed in this section is one of the 'later cases' and it emerges when a quota is imposed on runs of successes and failures. More specifically, Binary (zero and one) trials with probability of ones varying according to a geometric rule, are performed sequentially until $k_1$ consecutive successes and $k_2$ consecutive failures are observed, whichever event gets observed later. Let $W_L^{(1)}$  be a random variable denoting that the waiting time until  $k_{1}$ consecutive successes are observed for the first time and failure run of length $k_2$ or more has appeared before and $W_L^{(0)}$  be a random variable denoting that the waiting time until  $k_{2}$ consecutive failures are observed for the first time and success run of length $k_1$ or more has appeared before.

We will also simply write $f^{(1)}_{q,L}(n;\theta) = P_{q,\theta}(W_L^{(1)}=n)$ and $f^{(0)}_{q,L}(n;\theta)=P_{q,\theta}(W_{L}^{(0)}=n)$. Therefore we can write the probability of event $W_{L}=n$ as follows
\begin{equation*}\label{eq: 1.1}
\begin{split}
P_{q,\theta}(W_{L}=n)=f^{(1)}_{q,L}(n;\theta)+f^{(0)}_{q,L}(n;\theta),\ n\geq k_{1}+ k_{2}.
\end{split}
\end{equation*}

%We now make some notation and lemma for the proofs of Theorem in the sequel.
\noindent
We now make some useful Definition and Lemma for the proofs of Theorem in the sequel.
\begin{definition}
For $s,r,k \in \mathbb{N}$ we define
\noindent
\begin{equation*}\label{eq: 1.1}
\begin{split}
S_{r,s}^{k} = \left\{ (i_1,\ldots ,i_s) \in \mathbb{Z}_+^s \  \mid \ \sum_{j=1}^s i_j = r\ \wedge \ \max(i_1,\ldots,i_s) > k \right\}.
\end{split}
\end{equation*}
\end{definition}

\begin{definition}
For $0<q\leq1$, we define

\begin{equation*}\label{eq: 1.1}
\begin{split}
\overline{E}_{q,0,0}^{k_1,\infty}(u,v,s)={\sum_{x_{1},\ldots,x_{s-1}}}\
\sum_{y_{1},\ldots,y_{s}} q^{y_{1}x_{1}+(y_{1}+y_{2})x_{2}+\cdots+(y_{1}+\cdots+y_{s-1})x_{s-1}},\
\end{split}
\end{equation*}
\noindent
where the summation is over all integers $x_1,\ldots,x_{s-1},$ and $y_1,\ldots,y_{s}$ satisfying

\begin{equation*}\label{eq:1}
\begin{split}
0<x_{j}<k_1\ \text{for}\ j=1,\ldots,s-1,
\end{split}
\end{equation*}

\noindent
\begin{equation*}\label{eq:1}
\begin{split}
(x_1,\ldots,x_{s-1})\in S_{u,s-1}^{0},\ \text{and}
\end{split}
\end{equation*}

\noindent
\begin{equation*}\label{eq:2}
\begin{split}
(y_{1},\ldots,y_{s}) \in S_{v,s}^{0}.
\end{split}
\end{equation*}
\end{definition}
\noindent
The following gives a recurrence relation useful for the computation of $\overline{E}_{q,0,0}^{k_1,\infty}(u,v,s)$.\\
\noindent
\begin{lemma}
\label{lemma:3.5}
$\overline{E}_{q,0,0}^{k_1,\infty}(u,v,s)$ obeys the following recurrence relation.
\begin{equation*}\label{eq: 1.1}
\begin{split}
\overline{E}&_{q,0,0}^{k_1,\infty}(u,v,s)\\
&=\left\{
  \begin{array}{ll}
    \sum_{b=1}^{v-(s-1)}\sum_{a=1}^{k_1-1}q^{a(v-b)} \overline{E}_{q,0,0}^{k_1,\infty}(u-a,v-b,s-1), & \text{for}\ s>1,\ s-1\leq u\leq(s-1)(k_{1}-1)\\
    &\text{and}\ s\leq v \\
    1, & \text{for}\ s=1,\ u=0,\ \text{and}\ 1\leq v\\
    0, & \text{otherwise.}\\
  \end{array}
\right.
\end{split}
\end{equation*}
\end{lemma}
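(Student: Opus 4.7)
The plan is to follow exactly the same strategy used in the proofs of Lemma \ref{lemma:3.1} and Lemma \ref{lemma:3.2}: I will isolate the last variable in each summation index and rewrite the multiple sum as a sum over the remaining indices. The key structural difference here is that the $y_j$'s are only bounded below (since we have $k_2 = \infty$), so when I split off $y_s$, its range will be determined not by $k_2-1$ but by the requirement that the remaining $y_1,\ldots,y_{s-1}$ admit a valid configuration.

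First I would handle the degenerate and base cases. When $s=1$ and $u=0$, there are no $x$ variables at all, and the only $y$-configuration is $y_1 = v$ for any $v \geq 1$, so the exponent is empty and the whole quantity equals $1$. For values of $(u,v,s)$ outside the ranges $s-1 \le u \le (s-1)(k_1-1)$ and $s \le v$, the constraints on the $x_j$'s and $y_j$'s are infeasible, and the sum is empty, yielding $0$.

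Next, for the main case $s > 1$ with $(u,v,s)$ in the admissible range, I would fix $x_{s-1} = a$ with $a \in \{1,\ldots,k_1-1\}$ and $y_s = b$ with $b \geq 1$. The crucial observation is that $y_1+\cdots+y_{s-1} = v-b$, so the exponent $(y_1+\cdots+y_{s-1})x_{s-1}$ contributes the factor $q^{a(v-b)}$, while the remaining part of the exponent
\[
y_1 x_1 + (y_1+y_2)x_2 + \cdots + (y_1+\cdots+y_{s-2})x_{s-2}
\]
is exactly the exponent appearing in the definition of $\overline{E}_{q,0,0}^{k_1,\infty}(u-a,v-b,s-1)$. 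Summing over the remaining $x_1,\ldots,x_{s-2}$ and $y_1,\ldots,y_{s-1}$ (each constrained as before) therefore produces $\overline{E}_{q,0,0}^{k_1,\infty}(u-a,v-b,s-1)$.

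The one point that needs a little care, and is the only genuine difference from Lemma \ref{lemma:3.1}, is the upper limit of $b$. Since the remaining $y_1,\ldots,y_{s-1}$ must all be at least $1$ and sum to $v-b$, we need $v-b \ge s-1$, i.e. $b \le v-(s-1)$; no $k_2-1$ ceiling applies because $y_j$ is otherwise unconstrained above. This yields the outer summation $\sum_{b=1}^{v-(s-1)}$ in the stated recurrence, and combining with the summation $\sum_{a=1}^{k_1-1}$ completes the identification with the right-hand side. I expect no serious obstacle; the only thing to watch is making sure the admissibility ranges of $u-a$ and $v-b$ for level $s-1$ are correctly inherited from those at level $s$, which is a direct algebraic check.
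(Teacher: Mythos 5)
Your proposal is correct and follows essentially the same route as the paper's proof: peel off $x_{s-1}\in\{1,\ldots,k_1-1\}$ and $y_s\in\{1,\ldots,v-(s-1)\}$, use $y_1+\cdots+y_{s-1}=v-y_s$ to extract the factor $q^{a(v-b)}$, and identify the remaining sum as $\overline{E}_{q,0,0}^{k_1,\infty}(u-a,v-b,s-1)$. Your explicit justification of the upper limit $b\le v-(s-1)$ and of the base case $s=1$, $u=0$ only makes explicit what the paper leaves as "obvious."
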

\begin{proof}
For $s > 1$, $s-1\leq u\leq(s-1)(k_{1}-1)$ and $s\leq v$, we observe that $x_{s-1}$ may assume any value $1,\ldots,k_{1}-1$, then $\overline{E}_{q,0,0}^{k_1,\infty}(u,v,s)$ can be written as

\begin{equation*}%\label{eq: 1.1}
\begin{split}
\overline{E}&_{q,0,0}^{k_1,\infty}(u,v,s)\\
=&\sum_{x_{s-1}=1}^{k_1-1}{\sum_{\substack{0<x_1,\ldots,x_{s-2}<k_1\\(x_1,\ldots,x_{s-2})\in S_{u-x_{s-1},s-2}^{0}}}}{\hspace{0.1cm}\sum_{\substack{(y_{1},\ldots,y_{s}) \in S_{v,s}^{0}}}}q^{x_{s-1}(v-y_{s})}\ q^{y_{1}x_{1}+(y_{1}+y_{2})x_{2}+\cdots+(y_{1}+\cdots+y_{s-2})x_{s-2}}
\end{split}
\end{equation*}
Similarly, we observe that since $y_s$ can assume the values $1,\ldots, v-(s-1)$, then $\overline{E}_{q,0,0}^{k_1,\infty}(u,v,s)$ can be rewritten as

\begin{equation*}%\label{eq: 1.1}
\begin{split}
\overline{E}&_{q,0,0}^{k_1,\infty}(u,v,s)\\
=&\sum_{y_{s}=1}^{v-(s-1)}\sum_{x_{s-1}=1}^{k_1-1}q^{x_{s-1}(v-y_{s})}{\hspace{-1cm}\sum_{\substack{0<x_1,\ldots,x_{s-2}<k_1\\(x_1,\ldots,x_{s-2})\in S_{u-x_{s-1},s-2}^{0}}}}
{\hspace{0.1cm}\sum_{\substack{(y_{1},\ldots,y_{s-1}) \in S_{v-y_{s},s-1}^{0}}}} q^{y_{1}x_{1}+(y_{1}+y_{2})x_{2}+\cdots+(y_{1}+\cdots+y_{s-2})x_{s-2}}\\
=&\sum_{b=1}^{v-(s-1)}\sum_{a=1}^{k_1-1}q^{a(v-b)} \overline{E}_{q,0,0}^{k_1,\infty}(u-a,v-b,s-1).
\end{split}
\end{equation*}
The other cases are obvious and thus the proof is completed.
\end{proof}
\begin{remark}
{\rm
We observe that for $\overline{E}_{1,0,0}^{k_1,\infty}(m,r,s)$ is the number of integer solutions $(x_{1},\ldots,x_{s-1})$ and $(y_{1},\ldots,y_{s})$ of
\noindent
\begin{equation*}\label{eq:1}
\begin{split}
0<x_{j}<k_1\ \text{for}\ j=1,\ldots,s-1,
\end{split}
\end{equation*}

\noindent
\begin{equation*}\label{eq:1}
\begin{split}
(x_1,\ldots,x_{s-1})\in S_{u,s-1}^{0},\ \text{and}
\end{split}
\end{equation*}

\noindent
\begin{equation*}\label{eq:2}
\begin{split}
(y_{1},\ldots,y_{s}) \in S_{r,s}^{0}
\end{split}
\end{equation*}
\noindent
which is
\noindent
\begin{equation*}\label{eq: 1.1}
\begin{split}
\overline{E}_{1,0,0}^{k_1,\infty}(m,r,s)=S(s-1,\ k_{1},\ u)M(s,\ v ),
\end{split}
\end{equation*}
\noindent
where $S(a,\ b,\ c)$ denotes the total number of integer solution $x_{1}+x_{2}+\cdots+x_{a}=c$ such that $0<x_{i}<b$ for $i=1,2,\ldots,a$. The number is given by
\noindent
\begin{equation*}\label{eq: 1.1}
\begin{split}
S(a,\ b,\ c)=\sum_{j=0}^{min\left(a,\ \left[\frac{c-a}{b-1}\right]\right) }(-1)^{j}{a \choose j}{c-j(b-1)-1 \choose a-1}.
\end{split}
\end{equation*}
\noindent
where $M(a,\ b)$ denotes the total number of integer solution $y_{1}+x_{2}+\cdots+y_{a}=b$ such that $(y_{1},\ldots,y_{a}) \in S_{b,a}^{0}$. The number is given by
\noindent
\begin{equation*}\label{eq: 1.1}
\begin{split}
M(a,\ b)={b-1 \choose a-1}.
\end{split}
\end{equation*}
\noindent
See, e.g. \citet{charalambides2002enumerative}.
}
\end{remark}

\vskip 5mm

\begin{definition}
For $0<q\leq1$, we define

\begin{equation*}\label{eq: 1.1}
\begin{split}
E_{q,0,k_2}^{k_1,\infty}(m,r,s)={\sum_{x_{1},\ldots,x_{s-1}}}\
\sum_{y_{1},\ldots,y_{s}} q^{y_{1}x_{1}+(y_{1}+y_{2})x_{2}+\cdots+(y_{1}+\cdots+y_{s-1})x_{s-1}},\
\end{split}
\end{equation*}
\noindent
where the summation is over all integers $x_1,\ldots,x_{s-1},$ and $y_1,\ldots,y_s$ satisfying
\noindent
\begin{equation*}\label{eq:1}
\begin{split}
0<x_{j}<k_1\ \text{for}\ j=1,\ldots,s-1,
\end{split}
\end{equation*}
\noindent
\begin{equation*}\label{eq:1}
\begin{split}
(x_1,\ldots,x_{s-1})\in S_{m,s-1}^{0},\ \text{and}
\end{split}
\end{equation*}
\noindent
\begin{equation*}\label{eq:2}
\begin{split}
(y_1,\ldots,y_s)\in S_{r,s}^{k_2-1}.
\end{split}
\end{equation*}
\end{definition}
\noindent
The following gives a recurrence relation useful for the computation of $E_{q,0,k_2}^{k_1,\infty}(m,r,s)$.\\
\noindent
\begin{lemma}
\label{lemma:3.6}
$E_{q,0,k_2}^{k_1,\infty}(m,r,s)$ obeys the following recurrence relation.
\begin{equation*}\label{eq: 1.1}
\begin{split}
E&_{q,0,k_2}^{k_1,\infty}(m,r,s)\\
&=\left\{
  \begin{array}{ll}
    \sum_{b=1}^{k_2-1}\sum_{a=1}^{k_1-1}q^{a(r-b)} E_{q,0,k_2}^{k_1,\infty}(m-a,r-b,s-1)\\
    +\sum_{b=k_2}^{r-(s-1)}\sum_{a=1}^{k_1-1} q^{a(r-b)} \overline{E}_{q,0,0}^{k_1,\infty}(m-a,r-b,s-1), & \text{for}\ s>1,\ s-1\leq m\leq(s-1)(k_{1}-1)\\
    &\text{and}\ s-1+k_2\leq r \\
    1, & \text{for}\ s=1,\ m=0,\ \text{and}\ k_2\leq r\\
    0, & \text{otherwise.}\\
  \end{array}
\right.
\end{split}
\end{equation*}
\end{lemma}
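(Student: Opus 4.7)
The plan is to mirror the proof of Lemma \ref{lemma:3.5}, conditioning first on the last run length $x_{s-1}$ of ones and then on the last run length $y_s$ of zeros, but to handle carefully the new constraint $(y_1,\ldots,y_s)\in S_{r,s}^{k_2-1}$, i.e. $\max(y_1,\ldots,y_s)\geq k_2$, which forces a case split according to whether $y_s$ itself already exceeds the threshold $k_2-1$ or not. The weight $q^{y_1 x_1+(y_1+y_2)x_2+\cdots+(y_1+\cdots+y_{s-1})x_{s-1}}$ factorizes cleanly when we peel off the last $x$, yielding a prefactor $q^{x_{s-1}(y_1+\cdots+y_{s-1})}=q^{x_{s-1}(r-y_s)}$ that matches exactly the powers $q^{a(r-b)}$ appearing in the claimed recurrence.

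First I would fix $s>1$, $s-1\leq m\leq(s-1)(k_1-1)$ and $r\geq s-1+k_2$, and write
\[
E_{q,0,k_2}^{k_1,\infty}(m,r,s)=\sum_{x_{s-1}=1}^{k_1-1}\ \sum_{y_s}\ q^{x_{s-1}(r-y_s)}\ \Sigma(x_{s-1},y_s),
\]
where $\Sigma(x_{s-1},y_s)$ is the residual sum over $(x_1,\ldots,x_{s-2})$ with $0<x_j<k_1$ summing to $m-x_{s-1}$ and over $(y_1,\ldots,y_{s-1})$ with positive entries summing to $r-y_s$, weighted by $q^{y_1x_1+\cdots+(y_1+\cdots+y_{s-2})x_{s-2}}$, subject to a constraint on $\max(y_1,\ldots,y_{s-1})$ that depends on $y_s$.

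The key step is the split on $y_s$. If $1\leq y_s\leq k_2-1$, then $y_s$ does not by itself satisfy $\max\geq k_2$, so the requirement $(y_1,\ldots,y_s)\in S_{r,s}^{k_2-1}$ forces $(y_1,\ldots,y_{s-1})\in S_{r-y_s,s-1}^{k_2-1}$; by the definition of $E$, the residual sum is exactly $E_{q,0,k_2}^{k_1,\infty}(m-x_{s-1},r-y_s,s-1)$, yielding the first double sum. If instead $k_2\leq y_s\leq r-(s-1)$, the threshold constraint on the maximum is already fulfilled by $y_s$, so the remaining $(y_1,\ldots,y_{s-1})$ only need to be positive integers summing to $r-y_s$, i.e.\ $(y_1,\ldots,y_{s-1})\in S_{r-y_s,s-1}^{0}$, and the residual sum equals $\overline{E}_{q,0,0}^{k_1,\infty}(m-x_{s-1},r-y_s,s-1)$ by Lemma \ref{lemma:3.5}. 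Summing the two contributions over $x_{s-1}=a$ and $y_s=b$ gives precisely the stated recurrence.

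The base case $s=1$ is immediate: when $s=1$ there are no $x$-variables, so we must have $m=0$; the single $y_1=r$ lies in $S_{r,1}^{k_2-1}$ iff $r\geq k_2$, giving the value $1$; all other situations contribute $0$ by an empty-sum argument. The main obstacle is purely bookkeeping: verifying that the two $y_s$-ranges $\{1,\ldots,k_2-1\}$ and $\{k_2,\ldots,r-(s-1)\}$ partition the admissible range of $y_s$ without overlap or omission (which requires $r\geq s-1+k_2$ so that the upper range is non-degenerate, matching the hypothesis of the lemma), and checking that the exponent collected when peeling $x_{s-1}$ is identically $x_{s-1}(r-y_s)$, independent of whether we are in Case 1 or Case 2.
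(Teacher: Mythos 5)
Your proposal is correct and follows essentially the same route as the paper: peel off $x_{s-1}$ to extract the factor $q^{x_{s-1}(r-y_s)}$, then split the $y_s$-sum according to whether $y_s\le k_2-1$ (so the maximum constraint must be carried by $(y_1,\ldots,y_{s-1})$, giving the $E$-term) or $y_s\ge k_2$ (so the remaining $y$'s are unconstrained beyond positivity, giving the $\overline{E}$-term). The only cosmetic remark is that the identification of the second residual sum with $\overline{E}_{q,0,0}^{k_1,\infty}$ is by its definition rather than by Lemma \ref{lemma:3.5}.
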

\begin{proof}
For $s > 1$, $s-1\leq m\leq(s-1)(k_{1}-1)$ and $s-1+k_2\leq r$, we observe that $x_{s-1}$ may assume any value $1,\ldots,k_{1}-1$, then $E_{q,0,k_2}^{k_1,\infty}(m,r,s)$ can be written as
%\begin{equation*}%\label{eq: 1.1}
%\begin{split}
%A&_{m,r,s}(q)\\
%=&\sum_{\substack{x_{1}+\cdots+x_{s-1}=m\\ x_{1},\ \ldots,\ x_{s-1} \in \{1,\ \ldots,\ k_{1}-1\}}}\quad
%{\sum_{\substack{y_{1}+\cdots+y_{s}=r\\ y_{1},\ \ldots,\ y_{s} \in \{1,\ \ldots,\ k_{2}-1\}}}} q^{y_{1}x_{1}+(y_{1}+y_{2})x_{2}+\cdots+(y_{1}+\cdots+y_{s-2})x_{s-2}+(y_{1}+\cdots+y_{s-1})x_{s-1}}\\
%=&{\sum_{\substack{x_{1}+\cdots+x_{s-2}=m-1\\ x_{1},\ \ldots,\ x_{s-1} \in \{1,\ \ldots,\ k_{1}-1\}}}}\quad
%{\sum_{\substack{y_{1}+\cdots+y_{m}=r\\ y_{1},\ \ldots,\ y_{s} \in \{1,\ \ldots,\ k_{2}-1\}}}}q^{r-y_{s}}\ q^{y_{1}x_{1}+(y_{1}+y_{2})x_{2}+\cdots+(y_{1}+\cdots+y_{s-2})x_{s-2}}+\cdots\\
%&+{\sum_{\substack{x_{1}+\cdots+x_{s-2}=m-(k_{1}-1)\\ x_{1},\ \ldots,\ x_{s-1} \in \{1,\ \ldots,\ k_{1}-1\}}}}\quad
%{\sum_{\substack{y_{1}+\cdots+y_{m}=r\\ y_{1},\ \ldots,\ y_{s} \in \{1,\ \ldots,\ k_{2}-1\}}}}q^{(r-y_{s})(k_{1}-1)} q^{y_{1}x_{1}+(y_{1}+y_{2})x_{2}+\cdots+(y_{1}+\cdots+y_{s-2})x_{s-2}}
%\end{split}
%\end{equation*}

\begin{equation*}%\label{eq: 1.1}
\begin{split}
E&_{q,0,k_2}^{k_1,\infty}(m,r,s)\\
=&\sum_{x_{s-1}=1}^{k_1-1}{\sum_{\substack{0<x_1,\ldots,x_{s-2}<k_1\\(x_1,\ldots,x_{s-2})\in S_{m-x_{s-1},s-2}^{0}}}}\
{\hspace{0.1cm}\sum_{\substack{(y_{1},\ldots,y_{s}) \in S_{r,s}^{k_{2}-1}}}}q^{x_{s-1}(r-y_{s})}\ q^{y_{1}x_{1}+(y_{1}+y_{2})x_{2}+\cdots+(y_{1}+\cdots+y_{s-2})x_{s-2}}
\end{split}
\end{equation*}
\noindent
Similarly, we observe that since $y_s$ can assume the values $1,\ldots,k_{2}-1$, then $E_{q,0,k_2}^{k_1,\infty}(m,r,s)$ can be rewritten as
\noindent
\begin{equation*}%\label{eq: 1.1}
\begin{split}
E&_{q,0,k_2}^{k_1,\infty}(m,r,s)\\
=&\sum_{y_{s}=1}^{k_2-1}\sum_{x_{s-1}=1}^{k_1-1}q^{x_{s-1}(r-y_{s})}{\hspace{-0.1cm}\sum_{\substack{0<x_1,\ldots,x_{s-2}<k_1\\(x_1,\ldots,x_{s-2})\in S_{m-x_{s-1},s-2}^{0}}}}\
{\sum_{\substack{(y_{1},\ldots,y_{s-1}) \in S_{r-y_s,s-1}^{k_2-1}}}}\ q^{y_{1}x_{1}+(y_{1}+y_{2})x_{2}+\cdots+(y_{1}+\cdots+y_{s-2})x_{s-2}}\\
&+\sum_{y_{s}=k_2}^{r-(s-1)}\sum_{x_{s-1}=1}^{k_1-1}q^{x_{s-1}(r-y_{s})}{\hspace{-1cm}\sum_{\substack{0<x_1,\ldots,x_{s-2}<k_1\\(x_1,\ldots,x_{s-2})\in S_{m-x_{s-1},s-2}^{0}}}}\
{\sum_{\substack{(y_{1},\ldots,y_{s-1}) \in S_{r-y_s,s-1}^{0}}}}\ q^{y_{1}x_{1}+(y_{1}+y_{2})x_{2}+\cdots+(y_{1}+\cdots+y_{s-2})x_{s-2}}\\
=&\sum_{b=1}^{k_2-1}\sum_{a=1}^{k_1-1}q^{a(r-b)} E_{q,0,k_2}^{k_1,\infty}(m-a,r-b,s-1)+\sum_{b=k_2}^{r-(s-1)}\sum_{a=1}^{k_1-1} q^{a(r-b)} \overline{E}_{q,0,0}^{k_1,\infty}(m-a,r-b,s-1).
\end{split}
\end{equation*}
The other cases are obvious and thus the proof is completed.
\end{proof}

\vskip 5mm
\begin{remark}
{\rm
We observe that for $E_{1,0,k_2}^{k_1,\infty}(m,r,s)$ is the number of integer solutions $(x_{1},\ldots, x_{s-1})$ and $(y_{1},\ldots,y_{s})$ of

\noindent
\begin{equation*}\label{eq:1}
\begin{split}
0<x_{j}<k_1\ \text{for}\ j=1,\ldots,s-1,
\end{split}
\end{equation*}
\noindent
\begin{equation*}\label{eq:1}
\begin{split}
(x_1,\ldots,x_{s-1})\in S_{m,s-1}^{0},\ \text{and}
\end{split}
\end{equation*}
\noindent
\begin{equation*}\label{eq:2}
\begin{split}
(y_{1},\ldots,y_{s}) \in S_{r,s}^{k_2-1}
\end{split}
\end{equation*}
\noindent
which is
\noindent
\begin{equation*}\label{eq: 1.1}
\begin{split}
E_{1,0,k_2}^{k_1,\infty}(m,r,s)=S(s-1,\ k_{1},\ m)R(s,\ k_{2},\ r),
\end{split}
\end{equation*}
\noindent
where $S(a,\ b,\ c)$ denotes the total number of integer solution $x_{1}+x_{2}+\cdots+x_{a}=c$ such that $0<x_{i}<b$ for $i=1,2,\ldots,a$. The number is given by
\noindent
\begin{equation*}\label{eq: 1.1}
\begin{split}
S(a,\ b,\ c)=\sum_{j=0}^{min\left(a,\ \left[\frac{c-a}{b-1}\right]\right) }(-1)^{j}{a \choose j}{c-j(b-1)-1 \choose a-1}.
\end{split}
\end{equation*}
\noindent
where $R(a,\ b,\ c)$ denotes the total number of integer solution $y_{1}+x_{2}+\cdots+y_{a}=c$ such that $(y_{1},\ldots,y_{a}) \in S_{c,a}^{b-1}$. Alternatively, it is the number of ways in which $c$ identical objects can be arranged to form $a$ groups with at least one group containing more than or equal to $b$ objects. The number is given by
\noindent
\begin{equation*}\label{eq: 1.1}
\begin{split}
R(a,\ b,\ c)=\sum_{j=1}^{min\left(a,\ \left[\frac{c-a}{b-1}\right]\right) }(-1)^{j+1}{a \choose j}{c-j(b-1)-1 \choose a-1}.
\end{split}
\end{equation*}
\noindent
See, e.g. \citet{charalambides2002enumerative}.
}
\end{remark}

\vskip 5mm
\begin{definition}
For $0<q\leq1$, we define

\begin{equation*}\label{eq: 1.1}
\begin{split}
\overline{F}_{q,0,0}^{k_1,\infty}(u,v,s)={\sum_{x_{1},\ldots,x_{s}}}\
\sum_{y_{1},\ldots,y_{s}} q^{y_{1}x_{2}+(y_{1}+y_{2})x_{3}+\cdots+(y_{1}+\cdots+y_{s-1})x_{s}},\
\end{split}
\end{equation*}
\noindent
where the summation is over all integers $x_1,\ldots,x_{s},$ and $y_1,\ldots,y_s$ satisfying
\noindent
\begin{equation*}\label{eq:1}
\begin{split}
0<x_{j}<k_{1}\ \text{for}\ j=1,\ldots,s,
\end{split}
\end{equation*}
\noindent
\begin{equation*}\label{eq:1}
\begin{split}
(x_1,\ldots,x_{s})\in S_{m,s}^{0},\ \text{and}\end{split}
\end{equation*}
\noindent
\begin{equation*}\label{eq:2}
\begin{split}
(y_{1},\ldots,y_{s})\in S_{r,s}^{0}.
\end{split}
\end{equation*}
\end{definition}
\noindent
The following gives a recurrence relation useful for the computation of $\overline{F}_{q,0,0}^{k_1,\infty}(u,v,s)$.\\
\noindent
\begin{lemma}
\label{lemma:3.7}
$\overline{F}_{q,0,0}^{k_1,\infty}(u,v,s)$ obeys the following recurrence relation.
\begin{equation*}\label{eq: 1.1}
\begin{split}
\overline{F}&_{q,0,0}^{k_1,\infty}(u,v,s)\\
&=\left\{
  \begin{array}{ll}
    \sum_{b=1}^{v-(s-1)}\sum_{a=1}^{k_1-1}q^{a(v-b)} \overline{F}_{q,0,0}^{k_1,\infty}(u-a,v-b,s-1), & \text{for}\ s>1,\ s\leq u\leq s(k_{1}-1)\\
    &\text{and}\ s\leq v \\
    1, & \text{for}\ s=1,\ 1\leq u\leq k_{1}-1,\ \text{and}\ 1\leq v\\
    0, & \text{otherwise.}\\
  \end{array}
\right.
\end{split}
\end{equation*}
\end{lemma}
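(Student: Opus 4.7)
The plan is to mirror the argument used for Lemmas \ref{lemma:3.5} and \ref{lemma:3.6}, adapted to the present setting where the variables $x_1,\ldots,x_s$ now run from $1$ to $s$ (rather than $1$ to $s-1$) while the $y_j$'s are bounded only by positivity (since the superscript $\infty$ signals no upper $k_2$ cap on the $y$'s). I would first dispose of the easy ranges: for $s=1$ the defining sum contains no exponent factor (the weighted sum $y_1 x_2 + \cdots$ is empty), a unique choice $x_1=u$ with $1\leq u\leq k_1-1$ and $y_1=v\geq 1$, so the value is $1$; and when $u$ or $v$ falls outside the stated ranges, the defining constraints are inconsistent and the sum is empty, giving $0$.

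For the main case $s>1$ with $s\leq u\leq s(k_1-1)$ and $s\leq v$, I would peel off the final index $x_s$, which by the constraint $0<x_s<k_1$ may assume any value $a\in\{1,\ldots,k_1-1\}$. The key algebraic observation is that $x_s$ appears in the exponent only through the summand $(y_1+\cdots+y_{s-1})x_s$, and since $(x_1,\ldots,x_s)\in S_{u,s}^0$ and $(y_1,\ldots,y_s)\in S_{v,s}^0$ we have $y_1+\cdots+y_{s-1}=v-y_s$. Thus fixing $x_s=a$ factors out $q^{a(v-y_s)}$ from the inner sum.

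Next I would peel off $y_s$. Because there is no $k_2$-cap, the only constraint is $y_s\geq 1$ together with $y_1+\cdots+y_{s-1}\geq s-1$, so $y_s$ ranges over $\{1,\ldots,v-(s-1)\}$. Setting $y_s=b$, the remaining double sum is exactly the definition of $\overline{F}_{q,0,0}^{k_1,\infty}(u-a,v-b,s-1)$, since $(x_1,\ldots,x_{s-1})\in S_{u-a,s-1}^0$ and $(y_1,\ldots,y_{s-1})\in S_{v-b,s-1}^0$ with the exponent now reading $y_1 x_2+(y_1+y_2)x_3+\cdots+(y_1+\cdots+y_{s-2})x_{s-1}$. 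Collecting these contributions yields
\begin{equation*}
\overline{F}_{q,0,0}^{k_1,\infty}(u,v,s)=\sum_{b=1}^{v-(s-1)}\sum_{a=1}^{k_1-1}q^{a(v-b)}\overline{F}_{q,0,0}^{k_1,\infty}(u-a,v-b,s-1),
\end{equation*}
as required.

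The only subtle step, and the likely source of minor error, is getting the summation range for $y_s$ correct. Unlike Lemma \ref{lemma:3.6}, where the $y$-variables had an upper bound $k_2-1$ (and one had to split the range across the threshold), here the absence of an upper cap means $y_s$ runs all the way to $v-(s-1)$ without any splitting. Everything else is a direct bookkeeping argument essentially identical to the proofs of Lemmas \ref{lemma:3.5}--\ref{lemma:3.6}.
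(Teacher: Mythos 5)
Your proposal is correct and follows essentially the same route as the paper's proof: peel off $x_s\in\{1,\ldots,k_1-1\}$, use $y_1+\cdots+y_{s-1}=v-y_s$ to factor out $q^{x_s(v-y_s)}$, then peel off $y_s\in\{1,\ldots,v-(s-1)\}$ (no splitting, since the $y$'s carry no upper cap) to recover $\overline{F}_{q,0,0}^{k_1,\infty}(u-a,v-b,s-1)$. The boundary cases are handled the same way.
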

\begin{proof}
For $s > 1$, $s\leq u\leq s(k_{1}-1)$ and $s\leq v$, we observe that $x_{s}$ may assume any value $1,\ldots,k_{1}-1$, then $\overline{F}_{q,0,0}^{k_1,\infty}(u,v,s)$ can be written as

\begin{equation*}%\label{eq: 1.1}
\begin{split}
\overline{F}&_{q,0,0}^{k_1,\infty}(u,v,s)\\
=&\sum_{x_{s}=1}^{k_1-1}{\sum_{\substack{0<x_1,\ldots,x_{s-1}<k_1\\(x_1,\ldots,x_{s-1})\in S_{u-x_{s},s-1}^{0}}}}\
{\sum_{\substack{(y_{1},\ldots,y_{s}) \in S_{v,s}^{0}}}}q^{x_{s}(v-y_{s})}\ q^{y_{1}x_{2}+(y_{1}+y_{2})x_{3}+\cdots+(y_{1}+\cdots+y_{s-2})x_{s-1}}
\end{split}
\end{equation*}
Similarly, we observe that since $y_s$ can assume the values $1,\ldots,v-(s-1)$, then $\overline{F}_{q,0,0}^{k_1,\infty}(u,v,s)$ can be rewritten as

\begin{equation*}%\label{eq: 1.1}
\begin{split}
\overline{F}&_{q,0,0}^{k_1,\infty}(u,v,s)\\
=&\sum_{y_{s}=1}^{v-(s-1)}\sum_{x_{s}=1}^{k_1-1}q^{x_{s}(v-y_{s})}{\sum_{\substack{0<x_1,\ldots,x_{s-2}<k_1\\(x_1,\ldots,x_{s-2})\in S_{u-x_{s-1},s-2}^{0}}}}\
{\sum_{\substack{(y_{1},\ldots,y_{s-1}) \in S_{v-y_{s},s-1}^{0}}}}q^{y_{1}x_{2}+(y_{1}+y_{2})x_{3}+\cdots+(y_{1}+\cdots+y_{s-2})x_{s-1}}\\
=&\sum_{b=1}^{v-(s-1)}\sum_{a=1}^{k_1-1}q^{a(v-b)} \overline{F}_{q,0,0}^{k_1,\infty}(u-a,v-b,s-1).
\end{split}
\end{equation*}
The other cases are obvious and thus the proof is completed.
\end{proof}

{\hfill $\Box$}
\vskip 5mm
\begin{remark}
{\rm
We observe that $\overline{F}_{1,0,0}^{k_1,\infty}(m,r,s)$ is the number of integer solutions $(x_{1},\ldots,x_{s})$ and $(y_{1},\ldots,y_{s})$ of
\noindent
\begin{equation*}\label{eq:1}
\begin{split}
0<x_{j}<k_1\ \text{for}\ j=1,\ldots,s,
\end{split}
\end{equation*}

\noindent
\begin{equation*}\label{eq:1}
\begin{split}
(x_1,\ldots,x_{s})\in S_{u,s}^{0},\ \text{and}
\end{split}
\end{equation*}

\noindent
\begin{equation*}\label{eq:2}
\begin{split}
(y_{1},\ldots,y_{s}) \in S_{r,s}^{0}
\end{split}
\end{equation*}
\noindent
which is
\noindent
\begin{equation*}\label{eq: 1.1}
\begin{split}
\overline{F}_{1,0,0}^{k_1,\infty}(m,r,s)=S(s,\ k_{1},\ u)M(s,\ v ),
\end{split}
\end{equation*}
\noindent
where $S(a,\ b,\ c)$ denotes the total number of integer solution $x_{1}+x_{2}+\cdots+x_{a}=c$ such that $0<x_{i}<b$ for $i=1,2,\ldots,a$. The number is given by
\noindent
\begin{equation*}\label{eq: 1.1}
\begin{split}
S(a,\ b,\ c)=\sum_{j=0}^{min\left(a,\ \left[\frac{c-a}{b-1}\right]\right) }(-1)^{j}{a \choose j}{c-j(b-1)-1 \choose a-1}.
\end{split}
\end{equation*}
\noindent
where $M(a,\ b)$ denotes the total number of integer solution $y_{1}+x_{2}+\cdots+y_{a}=b$ such that $(y_{1},\ldots,y_{a}) \in S_{b,a}^{0}$. The number is given by
\noindent
\begin{equation*}\label{eq: 1.1}
\begin{split}
M(a,\ b)={b-1 \choose a-1}.
\end{split}
\end{equation*}
\noindent
See, e.g. \citet{charalambides2002enumerative}.
}
\end{remark}

\vskip 5mm
\begin{definition}
For $0<q\leq1$, we define
\begin{equation*}\label{eq: 1.1}
\begin{split}
F_{q,0,k_2}^{k_1,\infty}(m,r,s)={\sum_{x_{1},\ldots,x_{s}}}\
\sum_{y_{1},\ldots,y_{s}}q^{y_{1}x_{2}+(y_{1}+y_{2})x_{3}+\cdots+(y_{1}+\cdots+y_{s-1})x_{s}},\
\end{split}
\end{equation*}
\noindent
where the summation is over all integers $x_1,\ldots,x_{s},$ and $y_1,\ldots,y_s$ satisfying
\noindent
\begin{equation*}\label{eq:1}
\begin{split}
0<x_{j}<k_{1},\ \text{for} j=1,\ \ldots,\ s,
\end{split}
\end{equation*}
\noindent
\begin{equation*}\label{eq:1}
\begin{split}
(x_1,\ldots,x_{s})\in S_{m,s}^{0},\ \text{and}\end{split}
\end{equation*}
\noindent
\begin{equation*}\label{eq:2}
\begin{split}
(y_{1},\ldots,y_{s})\in S_{r,s}^{k_2-1}.
\end{split}
\end{equation*}
\end{definition}

\noindent
The following gives a recurrence relation useful for the computation of $F_{q,0,k_2}^{k_1,\infty}(m,r,s)$.\\
\noindent
\begin{lemma}
\label{lemma:3.8}
$F_{q,0,k_2}^{k_1,\infty}(m,r,s)$ obeys the following recurrence relation.
\begin{equation*}\label{eq: 1.1}
\begin{split}
F&_{q,0,k_2}^{k_1,\infty}(m,r,s)\\
&=\left\{
  \begin{array}{ll}
    \sum_{b=1}^{k_{2}-1}\ \sum_{a=1}^{k_{1}-1}q^{a(r-b)}F_{q,0,k_2}^{k_1,\infty}(m-a,r-b,s-1)\\
    +\sum_{b=k_{2}}^{r-(s-1)}\ \sum_{a=1}^{k_{1}-1}q^{a(r-b)}\overline{F}_{q,0,0}^{k_1,\infty}(m-a,r-b,s-1), & \text{for}\ s>1,\ s\leq m\leq s(k_{1}-1)\\
    &\text{and}\ (s-1)+k_{2}\leq r \\
    1, & \text{for}\ s=1,\ 0<m<k_1,\ \text{and}\ k_{2}\leq r\\
    0, & \text{otherwise.}\\
  \end{array}
\right.
\end{split}
\end{equation*}
\end{lemma}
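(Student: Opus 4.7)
The plan is to mirror the proof of Lemma \ref{lemma:3.6} (which handles the analogous $(s-1,s)$-type object $E_{q,0,k_2}^{k_1,\infty}$), adapting it to the $(s,s)$-type exponent pattern used by $\overline{F}_{q,0,0}^{k_1,\infty}$ in Lemma \ref{lemma:3.7}. The trivial cases $s=1$ and the ``otherwise'' case are immediate from the definition, so I would focus on the recursive case $s>1$, $s\leq m\leq s(k_1-1)$, $(s-1)+k_2\leq r$.

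First I would isolate the final success-run length $x_s$. Since $x_s$ can take any value $a\in\{1,\ldots,k_1-1\}$, and the only term in the exponent involving $x_s$ is $(y_1+\cdots+y_{s-1})x_s = (r-y_s)x_s$, pulling $x_s=a$ out of the sum contributes the factor $q^{a(r-y_s)}$ and leaves the remaining $(x_1,\ldots,x_{s-1})$ ranging over positive integers less than $k_1$ with $x_1+\cdots+x_{s-1}=m-a$. The residual exponent is $y_1 x_2+(y_1+y_2)x_3+\cdots+(y_1+\cdots+y_{s-2})x_{s-1}$, which matches the integrand defining the $F$ and $\overline{F}$ objects at level $s-1$.

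Next I would isolate $y_s$ and split the sum by whether the ``some $y_j\geq k_2$'' constraint is fulfilled by $y_s$ itself or by one of $y_1,\ldots,y_{s-1}$. If $y_s=b\in\{1,\ldots,k_2-1\}$, then the constraint $(y_1,\ldots,y_s)\in S_{r,s}^{k_2-1}$ forces $(y_1,\ldots,y_{s-1})\in S_{r-b,s-1}^{k_2-1}$, and the inner double sum collapses to $F_{q,0,k_2}^{k_1,\infty}(m-a,r-b,s-1)$. If instead $y_s=b\in\{k_2,\ldots,r-(s-1)\}$ (the upper bound being what still allows positive $y_1,\ldots,y_{s-1}$ summing to $r-b$), then the ``$\max\geq k_2$'' condition is already discharged by $y_s$, so $(y_1,\ldots,y_{s-1})$ needs only to be a composition into positive parts, i.e.\ $S_{r-b,s-1}^{0}$, and the inner double sum collapses to $\overline{F}_{q,0,0}^{k_1,\infty}(m-a,r-b,s-1)$. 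Summing these two contributions over $a$ and $b$ with the factor $q^{a(r-b)}$ yields the stated recurrence.

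The only delicate point, and where I would be careful, is the disjoint-and-exhaustive case split on $y_s$: one must verify that in Case 1 the ``$\max\geq k_2$'' condition genuinely transfers to $(y_1,\ldots,y_{s-1})$ (which it does, because $y_s<k_2$), and that in Case 2 this condition becomes vacuous on $(y_1,\ldots,y_{s-1})$ (which it does, because $y_s$ alone already witnesses it). Everything else is bookkeeping on exponents and summation ranges, parallel to Lemmas \ref{lemma:3.6} and \ref{lemma:3.7}; the ``otherwise'' and $s=1$ branches follow directly by inspecting the defining constraints.
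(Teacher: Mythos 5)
Your proposal is correct and follows essentially the same route as the paper's proof: peel off $x_s$ to extract the factor $q^{a(r-y_s)}$, then split the sum over $y_s$ according to whether $y_s<k_2$ (so the ``some part $\geq k_2$'' constraint passes to $(y_1,\ldots,y_{s-1})$, giving the $F$ term) or $y_s\geq k_2$ (so the constraint is already witnessed and only $S_{r-b,s-1}^{0}$ remains, giving the $\overline{F}$ term). Your explicit check that this case split is disjoint and exhaustive and that the max-condition transfers correctly is exactly the point the paper leaves implicit.
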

\begin{proof}
For $s > 1$, $s\leq m\leq s(k_{1}-1)$ and $(s-1)+k_{2}\leq r$, we observe that $x_{s}$ may assume any value $1,\ldots,k_{1}-1$, then $F_{q,0,k_2}^{k_1,\infty}(m,r,s)$ can be written as
%\begin{equation*}%\label{eq: 1.1}
%\begin{split}
%A&_{m,r,s}(q)\\
%=&\sum_{\substack{x_{1}+\cdots+x_{s-1}=m\\ x_{1},\ \ldots,\ x_{s-1} \in \{1,\ \ldots,\ k_{1}-1\}}}\quad
%{\sum_{\substack{y_{1}+\cdots+y_{s}=r\\ y_{1},\ \ldots,\ y_{s} \in \{1,\ \ldots,\ k_{2}-1\}}}} q^{y_{1}x_{1}+(y_{1}+y_{2})x_{2}+\cdots+(y_{1}+\cdots+y_{s-2})x_{s-2}+(y_{1}+\cdots+y_{s-1})x_{s-1}}\\
%=&{\sum_{\substack{x_{1}+\cdots+x_{s-2}=m-1\\ x_{1},\ \ldots,\ x_{s-1} \in \{1,\ \ldots,\ k_{1}-1\}}}}\quad
%{\sum_{\substack{y_{1}+\cdots+y_{m}=r\\ y_{1},\ \ldots,\ y_{s} \in \{1,\ \ldots,\ k_{2}-1\}}}}q^{r-y_{s}}\ q^{y_{1}x_{1}+(y_{1}+y_{2})x_{2}+\cdots+(y_{1}+\cdots+y_{s-2})x_{s-2}}+\cdots\\
%&+{\sum_{\substack{x_{1}+\cdots+x_{s-2}=m-(k_{1}-1)\\ x_{1},\ \ldots,\ x_{s-1} \in \{1,\ \ldots,\ k_{1}-1\}}}}\quad
%{\sum_{\substack{y_{1}+\cdots+y_{m}=r\\ y_{1},\ \ldots,\ y_{s} \in \{1,\ \ldots,\ k_{2}-1\}}}}q^{(r-y_{s})(k_{1}-1)} q^{y_{1}x_{1}+(y_{1}+y_{2})x_{2}+\cdots+(y_{1}+\cdots+y_{s-2})x_{s-2}}
%\end{split}
%\end{equation*}

\begin{equation*}%\label{eq: 1.1}
\begin{split}
F&_{q,0,k_2}^{k_1,\infty}(m,r,s)\\
=&\sum_{x_{s}=1}^{k_1-1}{\sum_{\substack{x_{1}+\cdots+x_{s-1}=m-x_{s}\\ x_{1},\ldots,x_{s-1} \in \{1,\ldots,k_{1}-1\}}}}\quad
{\sum_{\substack{(y_{1},\ldots,y_{s}) \in S_{r,s}^{k_2-1}}}}q^{x_{s}(r-y_{s})}\ q^{y_{1}x_{2}+(y_{1}+y_{2})x_{3}+\cdots+(y_{1}+\cdots+y_{s-2})x_{s-1}}
\end{split}
\end{equation*}
Similarly, we observe that since $y_s$ can assume the values $1,\ldots,r-(s-1)$, then $F_{q,0,k_2}^{k_1,\infty}(m,r,s)$ can be rewritten as

\begin{equation*}%\label{eq: 1.1}
\begin{split}
F&_{q,0,k_2}^{k_1,\infty}(m,r,s)\\
=&\sum_{y_{s}=1}^{k_{2}-1}\sum_{x_{s}=1}^{k_1-1}q^{x_{s}(r-y_{s})}{\hspace{-0.3cm}\sum_{\substack{x_{1}+\cdots+x_{s-1}=m-x_{s}\\ x_{1},\ldots,x_{s-1} \in \{1,\ldots,k_{1}-1\}}}}\quad
{\sum_{\substack{(y_{1},\ldots,y_{s-1}) \in S_{r-y_{s},s-1}^{k_2-1}}}} q^{y_{1}x_{2}+(y_{1}+y_{2})x_{3}+\cdots+(y_{1}+\cdots+y_{s-2})x_{s-1}}\\
&+\sum_{y_{s}=k_{2}}^{r-(s-1)}\sum_{x_{s}=1}^{k_1-1}q^{x_{s}(r-y_{s})}{\hspace{-0.3cm}\sum_{\substack{x_{1}+\cdots+x_{s-1}=m-x_{s}\\ x_{1},\ \ldots,\ x_{s-1} \in \{1,\ \ldots,\ k_{1}-1\}}}}\quad
{\sum_{\substack{(y_{1},\ldots,y_{s-1}) \in S_{r-y_{s},s-1}^{0}}}}\hspace{-0.5cm} q^{y_{1}x_{2}+(y_{1}+y_{2})x_{3}+\cdots+(y_{1}+\cdots+y_{s-2})x_{s-1}}\\
=&\sum_{b=1}^{k_{2}-1}\ \sum_{a=1}^{k_{1}-1}q^{a(r-b)}F_{q,0,k_2}^{k_1,\infty}(m-a,r-b,s-1)+\sum_{b=k_{2}}^{r-(s-1)}\ \sum_{a=1}^{k_{1}-1}q^{a(r-b)}\overline{F}_{q,0,0}^{k_1,\infty}(m-a,r-b,s-1).
\end{split}
\end{equation*}
The other cases are obvious and thus the proof is completed.
\end{proof}

%\begin{equation*}%\label{eq: 1.1}
%\begin{split}
%A_{q}&(m,\ s-1,\ k_{1}\ ;\ r,\ s,\ k_{2})\\
%=&{\sum_{\substack{x_{1}+\cdots+x_{s-2}=m-1\\ x_{1},\ \ldots,\ x_{s-1} \in \{1,\ \ldots,\ k_{1}-1\}}}}\quad
%{\sum_{\substack{y_{1}+\cdots+y_{s-1}=r-1\\ y_{1},\ \ldots,\ y_{s-1} \in \{1,\ \ldots,\ k_{2}-1\}}}}q^{(r-1)} q^{y_{1}x_{1}+(y_{1}+y_{2})x_{2}+\cdots+(y_{1}+\cdots+y_{s-2})x_{s-2}}+\cdots\\
%&+\sum_{\substack{x_{1}+\cdots+x_{s-2}=m-1\\ x_{1},\ \ldots,\ x_{s-1} \in \{1,\ \ldots,\ k_{1}-1\}}}\quad
%\sum_{\substack{y_{1}+\cdots+y_{s-1}=r-(k_{2}-1)\\ y_{1},\ \ldots,\ y_{s-1} \in \{1,\ \ldots,\ k_{2}-1\}}}q^{r-(k_{2}-1)} q^{y_{1}x_{1}+(y_{1}+y_{2})x_{2}+\cdots+(y_{1}+\cdots+y_{s-2})x_{s-2}}+\cdots\\
%&+\sum_{\substack{x_{1}+\cdots+x_{s-2}=m-(k_{1}-1)\\ x_{1},\ \ldots,\ i_{s-1} \in \{1,\ \ldots,\ k_{1}-1\}}}\quad
%\sum_{\substack{y_{1}+\cdots+y_{s-1}=r-1\\ y_{1},\ \ldots,\ y_{s-1} \in \{1,\ \ldots,\ k_{2}-1\}}}q^{(k_{1}-1)(r-1)} q^{y_{1}x_{1}+(y_{1}+y_{2})x_{2}+\cdots+(y_{1}+\cdots+y_{s-2})x_{s-2}}+\cdots\\
%&+\sum_{\substack{i_{1}+\cdots+i_{m-2}=n-k-i-(k-1)\\ i_{1},\ldots ,i_{m-1} \in \{1,\ldots,k-1\}}}\quad
%\sum_{\substack{y_{1}+\cdots+y_{s-1}=r-(k_{2}-1)\\ y_{1},\ \ldots,\ y_{s-1} \in \{1,\ \ldots,\ k_{2}-1\}}}q^{(k_{1}-1)\{r-(k_{2}-1)\}} q^{y_{1}x_{1}+(y_{1}+y_{2})x_{2}+\cdots+(y_{1}+\cdots+y_{s-2})x_{s-2}}\\
%=&\sum_{a=1}^{k_{1}-1}\sum_{b=1}^{k_{2}-1}q^{a(r-b)}A_{q}(m-a,\ s-2,\ k_{1}\ ;\ r-b,\ s-1,\ k_{2})
%\end{split}
%\end{equation*}
%
%$$\sum_{x_{s-1}=1}^{k_1-1}\sum_{x_1+\cdots+x_{s-2}=m-x_{s-1}} \sum_{y_1+\cdots+y_s=r}?$$

{\hfill $\Box$}

\vskip 5mm

\begin{remark}
{\rm
We observe that $F_{1,0,k_2}^{k_1,\infty}(m,r,s)$ is the number of integer solutions $(x_{1},\ldots,x_{s})$ and $(y_{1},\ldots,y_{s})$ of
\noindent
\noindent
\begin{equation*}\label{eq:1}
\begin{split}
0<x_{j}<k_{1}\ \text{for}\ j=1,\ldots,s,
\end{split}
\end{equation*}
\noindent
\begin{equation*}\label{eq:1}
\begin{split}
(x_1,\ldots,x_{s})\in S_{m,s}^{0},\ \text{and}\end{split}
\end{equation*}
\noindent
\begin{equation*}\label{eq:2}
\begin{split}
(y_{1},\ldots,y_{s})\in S_{r,s}^{k_2-1}.
\end{split}
\end{equation*}
\noindent
which is
\noindent
\begin{equation*}\label{eq: 1.1}
\begin{split}
F_{1,0,k_2}^{k_1,\infty}(m,r,s)=S(s,\ k_{1},\ m)R(s,\ k_{2},\ r),
\end{split}
\end{equation*}
\noindent
where $S(a,\ b,\ c)$ denotes the total number of integer solution $x_{1}+x_{2}+\cdots+x_{a}=c$ such that $0<x_{i}<b$ for $i=1,2.\ldots,a$. The number is given by
\noindent
\begin{equation*}\label{eq: 1.1}
\begin{split}
S(a,\ b,\ c)=\sum_{j=0}^{min\left(a,\ \left[\frac{c-a}{b-1}\right]\right)}(-1)^{j}{a \choose j}{c-j(b-1)-1 \choose a-1}.
\end{split}
\end{equation*}
\noindent
where $R(a,\ b,\ c)$ denotes the total number of integer solution $y_{1}+x_{2}+\cdots+y_{a}=c$ such that $y_{1},\ldots,y_{a} \in S_{c,a}^{b-1}$. The number is given by
\noindent
\begin{equation*}\label{eq: 1.1}
\begin{split}
R(a,\ b,\ c)=\sum_{j=1}^{min\left(a,\ \left[\frac{c-a}{b-1}\right]\right)}(-1)^{j+1}{a \choose j}{c-j(b-1)-1 \choose a-1}.
\end{split}
\end{equation*}
\noindent
See, e.g. \citet{charalambides2002enumerative}.
}
\end{remark}

\vskip 5mm

\begin{definition}
For $0<q\leq1$, we define

\begin{equation*}\label{eq: 1.1}
\begin{split}
\overline{G}_{q,0,0}^{\infty,k_2}(u,v,s)={\sum_{x_{1},\ldots,x_{s}}}\
\sum_{y_{1},\ldots,y_{s-1}} q^{y_{1}x_{2}+(y_{1}+y_{2})x_{3}+\cdots+(y_{1}+\cdots+y_{s-1})x_{s}},\
\end{split}
\end{equation*}
\noindent
where the summation is over all integers $x_1,\ldots,x_{s},$ and $y_1,\ldots,y_{s-1}$ satisfying

\begin{equation*}\label{eq:1}
\begin{split}
(x_1,\ldots,x_{s})\in S_{u,s}^{0},\ \text{and}
\end{split}
\end{equation*}
\noindent
\begin{equation*}\label{eq:1}
\begin{split}
0<y_{j}<k_2\ \text{for}\ j=1,\ldots,s-1,
\end{split}
\end{equation*}
\noindent
\begin{equation*}\label{eq:2}
\begin{split}
(y_{1},\ldots,y_{s-1}) \in S_{v,s-1}^{0}.
\end{split}
\end{equation*}
\end{definition}
\noindent
The following gives a recurrence relation useful for the computation of $\overline{G}_{q,0,0}^{\infty,k_2}(u,v,s)$.
\noindent
\begin{lemma}
\label{lemma:3.9}
$\overline{G}_{q,0,0}^{\infty,k_2}(u,v,s)$ obeys the following recurrence relation.
\begin{equation*}\label{eq: 1.1}
\begin{split}
\overline{G}&_{q,0,0}^{\infty,k_2}(u,v,s)\\
&=\left\{
  \begin{array}{ll}
    \sum_{a=1}^{u-(s-1)}\sum_{b=1}^{k_2-1}q^{va} \overline{G}_{q,0,0}^{\infty,k_2}(u-a,v-b,s-1), & \text{for}\ s>1,\ s\leq u\\
    &\text{and}\ s-1\leq v\leq(s-1)(k_{2}-1) \\
    1, & \text{for}\ s=1,\ 1\leq u,\ \text{and}\  v=0\\
    0, & \text{otherwise.}\\
  \end{array}
\right.
\end{split}
\end{equation*}
\end{lemma}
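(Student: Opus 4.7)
The plan is to mimic the arguments used in Lemmas~\ref{lemma:3.5}--\ref{lemma:3.8}: condition on the last success-run length $x_s$ and the last failure-run length $y_{s-1}$, factor out the unique term of the exponent that depends on $x_s$ but not on the inner $y$-variables, and recognize the remaining sum as a smaller instance of $\overline{G}_{q,0,0}^{\infty,k_2}$.

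First I would dispose of the boundary and empty cases. For $s=1$ there are no $y$-variables (the index set $\{1,\ldots,s-1\}$ is empty), the exponent in the definition reduces to the empty sum $0$, and the only admissible tuple is $(x_1)=(u)$, which requires $u\geq 1$ and $v=0$; this yields the stated value $1$. For parameters outside the ranges $s\leq u$ and $s-1\leq v\leq(s-1)(k_2-1)$, no admissible $(x,y)$ exists, so the sum is empty and equal to $0$. For the main case $s>1$, $s\leq u$ and $s-1\leq v\leq(s-1)(k_2-1)$, I would split according to the value of $x_s$. Because $x_1,\ldots,x_{s-1}$ are strictly positive integers summing to $u-x_s$, the index $x_s$ runs over $\{1,\ldots,u-(s-1)\}$. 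The decisive observation is that the exponent contains the term $(y_1+\cdots+y_{s-1})x_s$, and since $y_1+\cdots+y_{s-1}=v$ is prescribed by the outer constraint, this term equals $v\,x_s$ and is constant over the inner $y$-sum. I would factor out $q^{v\,x_s}$ from the inner summation.

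Next I would condition on $y_{s-1}$, which can assume any value $b\in\{1,\ldots,k_2-1\}$. Writing $x_s=a$ and $y_{s-1}=b$, the residual exponent collapses to
\[
y_1 x_2+(y_1+y_2)x_3+\cdots+(y_1+\cdots+y_{s-2})x_{s-1},
\]
with $(x_1,\ldots,x_{s-1})\in S_{u-a,\,s-1}^{0}$ and $(y_1,\ldots,y_{s-2})\in S_{v-b,\,s-2}^{0}$ subject to $0<y_j<k_2$. Matching this against the definition of $\overline{G}_{q,0,0}^{\infty,k_2}$ with parameters $(u-a,v-b,s-1)$, the inner double sum is precisely $\overline{G}_{q,0,0}^{\infty,k_2}(u-a,v-b,s-1)$, and collecting the factored $q^{va}$ yields the claimed recurrence.

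The only real obstacle is the routine bookkeeping of the summation ranges: I should check that the stated bounds $1\leq a\leq u-(s-1)$ and $1\leq b\leq k_2-1$ are compatible with the nonvanishing range of $\overline{G}_{q,0,0}^{\infty,k_2}(u-a,v-b,s-1)$, so that the identity is exact rather than merely an inclusion. Terms where $(u-a,v-b,s-1)$ falls outside the admissible range contribute $0$ by the base case, so no explicit truncation of the outer sums is necessary; this observation closes the proof.
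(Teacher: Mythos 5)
Your proposal is correct and follows essentially the same route as the paper's proof: condition on $x_s\in\{1,\ldots,u-(s-1)\}$, use $y_1+\cdots+y_{s-1}=v$ to replace the last exponent term by $vx_s$ and factor out $q^{vx_s}$, then condition on $y_{s-1}\in\{1,\ldots,k_2-1\}$ and identify the inner sum as $\overline{G}_{q,0,0}^{\infty,k_2}(u-a,v-b,s-1)$. Your explicit treatment of the base case and of out-of-range terms is slightly more careful than the paper's "the other cases are obvious," but the argument is the same.
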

\begin{proof}
For $s > 1$, $s\leq u$ and $s-1\leq v\leq(s-1)(k_{2}-1)$, we observe that $x_{s}$ may assume any value $1,\ldots,u-(s-1)$, then $\overline{G}_{q,0,0}^{\infty,k_2}(u,v,s)$ can be written as

\noindent
\begin{equation*}%\label{eq: 1.1}
\begin{split}
\overline{G}&_{q,0,0}^{\infty,k_2}(u,v,s)\\
=&\sum_{x_{s}=1}^{u-(s-1)}{\sum_{\substack{(x_1,\ldots,x_{s-1})\in S_{u-x_{s},s-1}^{0}}}}\
{\hspace{0.1cm}\sum_{\substack{y_{1},\ldots,y_{s-1} \in \{1,\ldots,k_{2}-1\}\\(y_{1},\ldots,y_{s-1}) \in S_{v,s-1}^{0}}}}q^{vx_{s}}q^{y_{1}x_{2}+(y_{1}+y_{2})x_{3}+\cdots+(y_{1}+\cdots+y_{s-2})x_{s-1}}.
\end{split}
\end{equation*}
\noindent
Similarly, we observe that since $y_s$ can assume the values $1,\ldots,k_{2}-1$, then $\overline{G}_{q,0,0}^{\infty,k_2}(u,v,s)$ can be rewritten as
\noindent

\begin{equation*}%\label{eq: 1.1}
\begin{split}
\overline{G}&_{q,0,0}^{\infty,k_2}(u,v,s)\\
=&\sum_{y_{s-1}=1}^{k_2-1}\sum_{x_{s}=1}^{u-(s-1)}{\sum_{\substack{(x_1,\ldots,x_{s-1})\in S_{u-x_{s},s-1}^{0}}}}\
{\hspace{0.1cm}\sum_{\substack{y_{1},\ldots,y_{s-2} \in \{1,\ldots,k_{2}-1\}\\(y_{1},\ldots,y_{s-2}) \in S_{v-y_{s-1},s-2}^{0}}}}q^{vx_{s}}\ q^{y_{1}x_{2}+(y_{1}+y_{2})x_{3}+\cdots+(y_{1}+\cdots+y_{s-2})x_{s-1}}\\
=&\sum_{b=1}^{k_2-1}\sum_{a=1}^{u-(s-1)}q^{va}\overline{G}_{q,0,0}^{\infty,k_2}(u-a,v-b,s-1)
\end{split}
\end{equation*}
The other cases are obvious and thus the proof is completed.
\end{proof}

\vskip 5mm
\begin{remark}
{\rm
We observe that $\overline{G}_{1,0,0}^{\infty,k_2}(u,v,s)$ is the number of integer solutions $(x_{1},\ldots,x_{s})$ and $(y_{1},\ldots,y_{s-1})$ of

\noindent
\begin{equation*}\label{eq:1}
\begin{split}
(x_1,\ldots,x_{s})\in S_{u,s}^{0},\ \text{and}
\end{split}
\end{equation*}

\noindent
\begin{equation*}\label{eq:1}
\begin{split}
0<y_{j}<k_2\ \text{for}\ j=1,\ldots,s-1,
\end{split}
\end{equation*}

\noindent
\begin{equation*}\label{eq:2}
\begin{split}
(y_{1},\ldots,y_{s-1}) \in S_{v,s-1}^{0}
\end{split}
\end{equation*}
\noindent
which is\noindent
\begin{equation*}\label{eq: 1.1}
\begin{split}
\overline{G}_{1,0,0}^{\infty,k_2}(u,v,s)=M(s,\ u )S(s-1,\ k_{2},\ v),
\end{split}
\end{equation*}

\noindent
where $M(a,\ b)$ denotes the total number of integer solution $x_{1}+x_{2}+\cdots+x_{a}=b$ such that $(x_{1},\ldots,x_{a}) \in S_{b,a}^{0}$. The number is given by
\noindent
\begin{equation*}\label{eq: 1.1}
\begin{split}
M(a,\ b)={b-1 \choose a-1}.
\end{split}
\end{equation*}

\noindent
where $S(a,\ b,\ c)$ denotes the total number of integer solution $y_{1}+y_{2}+\cdots+y_{a}=c$ such that $0<y_{i}<b$ for $i=1,2,\ldots,a$. The number is given by
\noindent
\begin{equation*}\label{eq: 1.1}
\begin{split}
S(a,\ b,\ c)=\sum_{j=0}^{min\left(a,\ \left[\frac{c-a}{b-1}\right]\right) }(-1)^{j}{a \choose j}{c-j(b-1)-1 \choose a-1}.
\end{split}
\end{equation*}
\noindent
See, e.g. \citet{charalambides2002enumerative}.
}
\end{remark}

\vskip 5mm
\begin{definition}
We define

\begin{equation*}\label{eq: 1.1}
\begin{split}
G_{q,k_1,0}^{\infty,k_2}(m,r,s)={\sum_{x_{1},\ldots,x_{s}}}\
\sum_{y_{1},\ldots,y_{s-1}} q^{y_{1}x_{2}+(y_{1}+y_{2})x_{3}+\cdots+(y_{1}+\cdots+y_{s-1})x_{s}},\
\end{split}
\end{equation*}
\noindent
where the summation is over all integers $x_1,\ldots,x_{s},$ and $y_1,\ldots,y_{s-1}$ satisfying

\begin{equation*}\label{eq:1}
\begin{split}
(x_1,\ldots,x_{s})\in S_{m,s}^{k_1-1},\ \text{and}
\end{split}
\end{equation*}
\noindent
\begin{equation*}\label{eq:1}
\begin{split}
0<y_{j}<k_2\ \text{for}\ j=1,\ldots,s-1,
\end{split}
\end{equation*}
\noindent
\begin{equation*}\label{eq:2}
\begin{split}
(y_{1},\ldots,y_{s-1}) \in S_{r,s-1}^{0}.
\end{split}
\end{equation*}
\end{definition}
\noindent
The following gives a recurrence relation useful for the computation of $G_{q,k_1,0}^{\infty,k_2}(m,r,s)$.\\
\noindent
\begin{lemma}
\label{lemma:3.10}
$G_{q,k_1,0}^{\infty,k_2}(m,r,s)$ obeys the following recurrence relation.
\begin{equation*}\label{eq: 1.1}
\begin{split}
G&_{q,k_1,0}^{\infty,k_2}(m,r,s)\\
&=\left\{
  \begin{array}{ll}
    \sum_{b=1}^{k_2-1}\sum_{a=1}^{k_{1}-1}q^{ra}G_{q,k_1,0}^{\infty,k_2}(m-a,r-b,s-1)\\
    +\sum_{b=1}^{k_2-1}\sum_{a=k_{1}}^{m-(s-1)}q^{ra}\overline{G}_{q,0,0}^{\infty,k_2}(m-a,r-b,s-1), & \text{for}\ s>1,\ (s-1)+k_{1}\leq m\\
    &\text{and}\ s-1\leq r\leq(s-1)(k_{2}-1) \\
    1, & \text{for}\ s=1,\ k_1\leq m,\ \text{and}\ r=0\\
    0, & \text{otherwise.}\\
  \end{array}
\right.
\end{split}
\end{equation*}
\end{lemma}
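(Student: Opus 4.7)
The plan is to mimic the double-decomposition technique used in Lemmas \ref{lemma:3.1}--\ref{lemma:3.8}, namely condition on the last run lengths $x_s$ and $y_{s-1}$ and recognize the residual sum as a smaller instance of either $G_{q,k_1,0}^{\infty,k_2}$ or $\overline{G}_{q,0,0}^{\infty,k_2}$. The key point to exploit is that in the exponent $y_{1}x_{2}+(y_{1}+y_{2})x_{3}+\cdots+(y_{1}+\cdots+y_{s-1})x_{s}$, the last $x$--variable appears only through the single term $(y_1+\cdots+y_{s-1})x_s = r x_s$, while $y_{s-1}$ does not appear at all in the residual exponent once $x_s$ has been split off. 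This is exactly what lets the inner sum factor cleanly.

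First I would fix $s>1$ in the regime $(s-1)+k_1 \le m$ and $s-1 \le r \le (s-1)(k_2-1)$, and write
\begin{equation*}
G_{q,k_1,0}^{\infty,k_2}(m,r,s) = \sum_{x_s} \sum_{y_{s-1}=1}^{k_2-1} q^{r x_s}\,\bigl(\text{residual sum over } x_1,\ldots,x_{s-1},\, y_1,\ldots,y_{s-2}\bigr),
\end{equation*}
where the residual exponent is $y_{1}x_{2}+(y_{1}+y_{2})x_{3}+\cdots+(y_{1}+\cdots+y_{s-2})x_{s-1}$ and the residual constraints are $(x_1,\ldots,x_{s-1})$ positive with sum $m-x_s$ together with the $S^{k_1-1}_{m,s}$--max condition inherited from the full tuple, and $(y_1,\ldots,y_{s-2})$ with $0<y_j<k_2$ and $y_1+\cdots+y_{s-2}=r-y_{s-1}$ (which is the definition of the $S^{0}_{r-y_{s-1},s-2}$ class in $\overline{G}$ once we forget the $y_{s-1}$ we have split off).

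The central observation is that the condition $\max(x_1,\ldots,x_s)\ge k_1$ splits according to whether $x_s$ already witnesses it. If $x_s\in\{1,\ldots,k_1-1\}$, the maximum must be achieved among $x_1,\ldots,x_{s-1}$, so $(x_1,\ldots,x_{s-1})\in S^{k_1-1}_{m-x_s,\,s-1}$, and the residual sum equals $G_{q,k_1,0}^{\infty,k_2}(m-x_s,\,r-y_{s-1},\,s-1)$. If instead $x_s\in\{k_1,\ldots,m-(s-1)\}$, the max condition is automatically satisfied by $x_s$, so $(x_1,\ldots,x_{s-1})$ may be any tuple of positive integers summing to $m-x_s$, that is $(x_1,\ldots,x_{s-1})\in S^{0}_{m-x_s,\,s-1}$, and the residual sum equals $\overline{G}_{q,0,0}^{\infty,k_2}(m-x_s,\,r-y_{s-1},\,s-1)$. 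Summing $x_s$ over $\{1,\ldots,k_1-1\}\sqcup\{k_1,\ldots,m-(s-1)\}$ and $y_{s-1}$ over $\{1,\ldots,k_2-1\}$, and relabeling $a=x_s$, $b=y_{s-1}$, yields the two advertised double sums.

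The boundary case $s=1$ is immediate: the tuple collapses to $(x_1)$ with $x_1=m\ge k_1$ (forced by $S^{k_1-1}_{m,1}$) and there are no $y$--variables, so $r=0$ and the value is $1$; all other parameter regimes leave the summation range empty and hence give $0$. The main obstacle is administrative rather than conceptual: one must keep the two independent case splits (on $x_s$ versus $k_1$, and on $y_{s-1}$ versus $k_2$) from getting tangled, and confirm that the residual $(x_1,\ldots,x_{s-1})$--constraint in the $x_s\ge k_1$ branch really collapses to the $S^0$--condition defining $\overline{G}_{q,0,0}^{\infty,k_2}$ — i.e. that dropping the max constraint and only keeping positivity is exactly what the $\overline{G}$--definition already imposes. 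Once that identification is verified, the rest is bookkeeping identical to the proof of Lemma \ref{lemma:3.9}.
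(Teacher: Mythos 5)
Your proposal is correct and follows essentially the same route as the paper's proof: decompose on the last success run $x_s$ (splitting according to whether $x_s\ge k_1$ already witnesses the max condition, which yields the $G$ versus $\overline{G}$ branches), then decompose on $y_{s-1}$, factor out $q^{rx_s}$, and identify the residual sums with the smaller instances. The boundary cases are handled exactly as in the paper, so no further comment is needed.
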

\begin{proof}
For $s > 1$, $(s-1)+k_{1}\leq m$ and $s-1\leq r\leq(s-1)(k_{2}-1)$, we observe that $x_{s}$ may assume any value $1,\ldots,m-(s-1)$, then $G_{q,k_1,0}^{\infty,k_2}(m,r,s)$ can be written as

\noindent
\begin{equation*}%\label{eq: 1.1}
\begin{split}
G&_{q,k_1,0}^{\infty,k_2}(m,r,s)\\
=&\sum_{x_{s}=1}^{k_{1}-1}{\sum_{\substack{(x_1,\ldots,x_{s-1})\in S_{m-x_{s},s-1}^{k_{1}-1}}}}
{\hspace{0.3cm}\sum_{\substack{y_{1},\ldots,y_{s-1} \in \{1,\ldots,k_{2}-1\}\\(y_{1},\ldots,y_{s-1}) \in S_{r,s-1}^{0}}}}q^{rx_{s}}\ \ q^{y_{1}x_{2}+(y_{1}+y_{2})x_{3}+\cdots+(y_{1}+\cdots+y_{s-2})x_{s-1}}\\
&+\sum_{x_{s}=k_{1}}^{m-(s-1)}{\sum_{\substack{(x_1,\ldots,x_{s-1})\in S_{m-x_{s},s-1}^{0}}}}{\hspace{0.3cm}\sum_{\substack{y_{1},\ldots,y_{s-1} \in \{1,\ldots,k_{2}-1\}\\(y_{1},\ldots,y_{s-1}) \in S_{r,s-1}^{0}}}}q^{rx_{s}}\ \ q^{y_{1}x_{2}+(y_{1}+y_{2})x_{3}+\cdots+(y_{1}+\cdots+y_{s-2})x_{s-1}}.
\end{split}
\end{equation*}
\noindent
Similarly, we observe that since $y_{s-1}$ can assume the values $1,\ldots,k_{2}-1$, then $G_{q,k_1,0}^{\infty,k_2}(m,r,s)$ can be rewritten as
\noindent

\begin{equation*}%\label{eq: 1.1}
\begin{split}
G&_{q,k_1,0}^{\infty,k_2}(m,r,s)\\
=&\sum_{y_{s-1}=1}^{k_2-1}\sum_{x_{s}=1}^{k_{1}-1}{\hspace{0.3cm}\sum_{\substack{(x_1,\ldots,x_{s-1})\in S_{m-x_{s},s-1}^{k_{1}-1}}}}\
{\hspace{0.3cm}\sum_{\substack{y_{1},\ldots,y_{s-2} \in \{1,\ldots,k_{2}-1\}\\(y_{1},\ldots,y_{s-2}) \in S_{r-y_{s-1},s-2}^{0}}}}q^{rx_{s}}\ \ q^{y_{1}x_{2}+(y_{1}+y_{2})x_{3}+\cdots+(y_{1}+\cdots+y_{s-2})x_{s-1}}\\
&+\sum_{y_{s-1}=1}^{k_2-1}\sum_{x_{s}=k_{1}}^{m-(s-1)}{\sum_{\substack{(x_1,\ldots,x_{s-1})\in S_{m-x_{s},s-1}^{0}}}}{\hspace{0.3cm}\sum_{\substack{y_{1},\ldots,y_{s-2} \in \{1,\ldots,k_{2}-1\}\\(y_{1},\ldots,y_{s-2}) \in S_{r-y_{s-1},s-2}^{0}}}}q^{rx_{s}}\ \ q^{y_{1}x_{2}+(y_{1}+y_{2})x_{3}+\cdots+(y_{1}+\cdots+y_{s-2})x_{s-1}}\\
=&\sum_{b=1}^{k_2-1}\sum_{a=1}^{k_{1}-1}q^{ra}G_{q,k_1,0}^{\infty,k_2}(u-a,v-b,s-1)+\sum_{b=1}^{k_2-1}\ \sum_{a=k_{1}}^{m-(s-1)}q^{ra} \overline{G}_{q,0,0}^{\infty,k_2}(u-a,v-b,s-1).
\end{split}
\end{equation*}
The other cases are obvious and thus the proof is completed.
\end{proof}
\vskip 5mm
\begin{remark}
{\rm
We observe that $G_{1,k_1,0}^{\infty,k_2}(m,r,s)$ is the number of integer solutions $(x_{1},\ldots,x_{s})$ and $(y_{1},\ldots,y_{s-1})$ of

\noindent
\begin{equation*}\label{eq:1}
\begin{split}
(x_1,\ldots,x_{s})\in S_{m,s}^{k_{1}-1},\ \text{and}
\end{split}
\end{equation*}

\noindent
\begin{equation*}\label{eq:1}
\begin{split}
0<y_{j}<k_2\ \text{for}\ j=1,\ldots,s-1,
\end{split}
\end{equation*}

\noindent
\begin{equation*}\label{eq:2}
\begin{split}
(y_{1},\ldots,y_{s-1}) \in S_{r,s-1}^{0}
\end{split}
\end{equation*}
\noindent
which is
\noindent
\begin{equation*}\label{eq: 1.1}
\begin{split}
G_{1,k_1,0}^{\infty,k_2}(m,r,s)=R(s,\ k_{1},\ m)S(s-1,\ k_{2},\ r),
\end{split}
\end{equation*}
\noindent
where $R(a,\ b,\ c)$ denotes the total number of integer solution $y_{1}+x_{2}+\cdots+y_{a}=c$ such that $(y_{1},\ldots,y_{a}) \in S_{c,a}^{b-1}$. The number is given by
\noindent
\begin{equation*}\label{eq: 1.1}
\begin{split}
R(a,\ b,\ c)=\sum_{j=1}^{min\left(a,\ \left[\frac{c-a}{b-1}\right]\right) }(-1)^{j+1}{a \choose j}{c-j(b-1)-1 \choose a-1}.
\end{split}
\end{equation*}
where $S(a,\ b,\ c)$ denotes the total number of integer solution $x_{1}+x_{2}+\cdots+x_{a}=c$ such that $0<x_{i}<b$ for $i=1,2,\ldots,a$. The number is given by
\noindent
\begin{equation*}\label{eq: 1.1}
\begin{split}
S(a,\ b,\ c)=\sum_{j=0}^{min\left(a,\ \left[\frac{c-a}{b-1}\right]\right) }(-1)^{j}{a \choose j}{c-j(b-1)-1 \choose a-1}.
\end{split}
\end{equation*}
See, e.g. \citet{charalambides2002enumerative}.
}
\end{remark}
\noindent
\begin{definition}
For $0<q\leq1$, we define

\begin{equation*}\label{eq: 1.1}
\begin{split}
\overline{H}_{q,0,0}^{\infty,k_2}(u,v,s)={\sum_{x_{1},\ldots,x_{s}}}\
\sum_{y_{1},\ldots,y_{s}} q^{y_{1}x_{1}+(y_{1}+y_{2})x_{2}+\cdots+(y_{1}+\cdots+y_{s})x_{s}},\
\end{split}
\end{equation*}
\noindent
where the summation is over all integers $x_1,\ldots,x_{s},$ and $y_1,\ldots,y_{s}$ satisfying

\begin{equation*}\label{eq:1}
\begin{split}
(x_1,\ldots,x_{s})\in S_{u,s}^{0},\ \text{and}
\end{split}
\end{equation*}
\noindent
\begin{equation*}\label{eq:1}
\begin{split}
0<y_{j}<k_2\ \text{for}\ j=1,\ldots,s,
\end{split}
\end{equation*}
\noindent
\begin{equation*}\label{eq:2}
\begin{split}
(y_{1},\ldots,y_{s}) \in S_{v,s}^{0}.
\end{split}
\end{equation*}
\end{definition}
\noindent
The following gives a recurrence relation useful for the computation of $\overline{H}_{q,0,0}^{\infty,k_2}(u,v,s)$.
\noindent
\begin{lemma}
\label{lemma:3.11}
$\overline{H}_{q,0,0}^{\infty,k_2}(u,v,s)$ obeys the following recurrence relation.
\begin{equation*}\label{eq: 1.1}
\begin{split}
\overline{H}&_{q,0,0}^{\infty,k_2}(u,v,s)\\
&=\left\{
  \begin{array}{ll}
    \sum_{a=1}^{u-(s-1)}\sum_{b=1}^{k_2-1}q^{va} \overline{H}_{q,0,0}^{\infty,k_2}(u-a,v-b,s-1), & \text{for}\ s>1,\ s\leq u\\
    &\text{and}\ s\leq v\leq s(k_{2}-1) \\
    1, & \text{for}\ s=1,\ 1\leq u,\ \text{and}\  1\leq v\leq k_2-1\\
    0, & \text{otherwise.}\\
  \end{array}
\right.
\end{split}
\end{equation*}
\end{lemma}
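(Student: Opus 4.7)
The plan is to mirror the arguments used for Lemmas \ref{lemma:3.5}, \ref{lemma:3.7}, and \ref{lemma:3.9}, namely to peel off the last index pair $(x_s,y_s)$ and re-express what remains as the same object at parameters $(u-x_s,\,v-y_s,\,s-1)$. The key structural observation that makes this work for $\overline{H}$ is that the coefficient of $x_s$ in the exponent is exactly the full $y$-sum, namely $y_1+\cdots+y_s=v$, so pulling $x_s$ out yields the clean factor $q^{v x_s}$ that does not depend on any individual $y_j$. This is the $\overline{H}$-analogue of what happens in Lemma \ref{lemma:3.9} for $\overline{G}$, where the last $x$-coefficient also collapses to $v$.

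Concretely, for $s>1$, $s\leq u$, and $s\leq v\leq s(k_2-1)$, I would first write
\[
\overline{H}_{q,0,0}^{\infty,k_2}(u,v,s)=\sum_{x_s=1}^{u-(s-1)} q^{v x_s}\!\!\sum_{(x_1,\ldots,x_{s-1})\in S_{u-x_s,s-1}^{0}}\ \sum_{\substack{0<y_j<k_2\\ (y_1,\ldots,y_s)\in S_{v,s}^{0}}} q^{y_1 x_1+\cdots+(y_1+\cdots+y_{s-1})x_{s-1}},
\]
where the upper limit $u-(s-1)$ arises because the remaining $x_1,\ldots,x_{s-1}$ must each be at least $1$. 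Next I would split off $y_s$, which ranges over $1,\ldots,k_2-1$ by the bound $0<y_s<k_2$; note that $y_s$ does not appear in the exponent that remains after removing $x_s$, it only affects the constraint on the inner sum by reducing the target from $v$ to $v-y_s$. The double inner sum that is left is, by the definition of $\overline{H}$, exactly $\overline{H}_{q,0,0}^{\infty,k_2}(u-x_s,v-y_s,s-1)$. Relabelling $x_s=a$ and $y_s=b$ then gives the claimed recurrence.

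For the boundary and base cases I would follow the authors' standard convention: when $s=1$ the index sets for $(x_1,\ldots,x_{s-1})$ and for the summed exponent both become empty, and the constraints $(x_1)\in S_{u,1}^{0}$ and $(y_1)\in S_{v,1}^{0}$ with $0<y_1<k_2$ force the single term indexed by $x_1=u$, $y_1=v$, which is valid precisely when $1\leq u$ and $1\leq v\leq k_2-1$; outside the stated parameter ranges the summation set is empty, giving $0$. These are identical in spirit to the boundary analyses in the preceding four lemmas.

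I do not expect a genuine obstacle: the algebra is purely a re-indexing of nested sums and relies on the same two facts used repeatedly above, (i) the last $x$-coefficient equals the total $y$-sum $v$, and (ii) removing $y_s$ preserves the form of the exponent on the remaining indices. The only step requiring a small amount of care is the upper limit $u-(s-1)$ on $x_s$ (rather than $k_1-1$ as in the $A,B,C,D$ lemmas), reflecting the absence of an upper cap $k_1$ in the superscript ``$\infty$'' of $\overline{H}$; this is the same modification that distinguishes Lemma \ref{lemma:3.9} from Lemma \ref{lemma:3.3}, so no new technique is needed.
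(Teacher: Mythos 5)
Your proposal follows essentially the same route as the paper's own proof: peel off $x_s$, whose coefficient in the exponent is the full sum $y_1+\cdots+y_s=v$, producing the factor $q^{vx_s}$ with $x_s$ ranging over $1,\ldots,u-(s-1)$; then peel off $y_s\in\{1,\ldots,k_2-1\}$ and recognise the remaining double sum as $\overline{H}_{q,0,0}^{\infty,k_2}(u-x_s,\,v-y_s,\,s-1)$ before relabelling $(x_s,y_s)=(a,b)$. The boundary and base cases are treated exactly as in the paper (including the same implicit convention that the $s=1$ value is $1$), so no further comment is needed.
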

\begin{proof}
For $s > 1$, $s\leq u$ and $s\leq v\leq s(k_{2}-1)$, we observe that $x_{s}$ may assume any value $1,\ldots,u-(s-1)$, then $\overline{H}_{q,0,0}^{\infty,k_2}(u,v,s)$ can be written as

\noindent
\begin{equation*}%\label{eq: 1.1}
\begin{split}
\overline{H}&_{q,0,0}^{\infty,k_2}(u,v,s)\\
=&\sum_{x_{s}=1}^{u-(s-1)}{\sum_{\substack{(x_1,\ldots,x_{s-1})\in S_{u-x_{s},s-1}^{0}}}}{\hspace{0.3cm}\sum_{\substack{y_{1},\ldots,y_{s} \in \{1,\ldots,k_{2}-1\}\\(y_{1},\ldots,y_{s}) \in S_{v,s}^{0}}}}q^{vx_{s}}\ \ q^{y_{1}x_{1}+(y_{1}+y_{2})x_{2}+\cdots+(y_{1}+\cdots+y_{s-1})x_{s-1}}.
\end{split}
\end{equation*}
Similarly, we observe that since $y_s$ can assume the values $1,\ldots,k_{2}-1$, then $\overline{H}_{q,0,0}^{\infty,k_2}(u,v,s)$ can be rewritten as
\begin{equation*}%\label{eq: 1.1}
\begin{split}
\overline{H}&_{q,0,0}^{\infty,k_2}(u,v,s)\\
=&\sum_{y_{s}=1}^{k_2-1}\sum_{x_{s}=1}^{u-(s-1)}{\sum_{\substack{(x_1,\ldots,x_{s-1})\in S_{u-x_{s},s-1}^{0}}}}\
{\hspace{0.3cm}\sum_{\substack{y_{1},\ldots,y_{s-1} \in \{1,\ldots,k_{2}-1\}\\(y_{1},\ldots,y_{s-1}) \in S_{v-y_{s},s-1}^{0}}}}q^{vx_{s}}\ q^{y_{1}x_{1}+(y_{1}+y_{2})x_{2}+\cdots+(y_{1}+\cdots+y_{s-1})x_{s-1}}\\
=&\sum_{b=1}^{k_2-1}\sum_{a=1}^{u-(s-1)}q^{va}\overline{H}_{q,0,0}^{\infty,k_2}(u-a,v-b,s-1)
\end{split}
\end{equation*}
The other cases are obvious and thus the proof is completed.
\end{proof}

\vskip 5mm

\begin{remark}
{\rm
We observe that $\overline{H}_{1,0,0}^{\infty,k_2}(u,v,s)$ is the number of integer solutions $(x_{1},\ldots,x_{s})$ and $(y_{1},\ldots,y_{s})$ of

\noindent
\begin{equation*}\label{eq:1}
\begin{split}
(x_1,\ldots,x_{s})\in S_{u,s}^{0},\ \text{and}
\end{split}
\end{equation*}

\noindent
\begin{equation*}\label{eq:1}
\begin{split}
0<y_{j}<k_2\ \text{for}\ j=1,\ldots,s,
\end{split}
\end{equation*}

\noindent
\begin{equation*}\label{eq:2}
\begin{split}
(y_{1},\ldots,y_{s}) \in S_{v,s}^{0}
\end{split}
\end{equation*}
\noindent
which is
\noindent
\begin{equation*}\label{eq: 1.1}
\begin{split}
\overline{H}_{1,0,0}^{\infty,k_2}(u,v,s)=M(s,\ u )S(s,\ k_{2},\ v),
\end{split}
\end{equation*}

\noindent
where $M(a,\ b)$ denotes the total number of integer solution $x_{1}+x_{2}+\cdots+x_{a}=b$ such that $(x_{1},\ldots,x_{a}) \in S_{b,a}^{0}$. The number is given by
\noindent
\begin{equation*}\label{eq: 1.1}
\begin{split}
M(a,\ b)={b-1 \choose a-1}.
\end{split}
\end{equation*}

\noindent
where $S(a,\ b,\ c)$ denotes the total number of integer solution $y_{1}+y_{2}+\cdots+y_{a}=c$ such that $0<y_{i}<b$ for $i=1,2,\ldots,a$. The number is given by
\noindent
\begin{equation*}\label{eq: 1.1}
\begin{split}
S(a,\ b,\ c)=\sum_{j=0}^{min\left(a,\ \left[\frac{c-a}{b-1}\right]\right) }(-1)^{j}{a \choose j}{c-j(b-1)-1 \choose a-1}.
\end{split}
\end{equation*}
\noindent
See, e.g. \citet{charalambides2002enumerative}.
}
\end{remark}

\vskip 5mm

\begin{definition}
For $0<q\leq1$, we define

\begin{equation*}\label{eq: 1.1}
\begin{split}
H_{q,k_1,0}^{\infty,k_2}(m,r,s)={\sum_{x_{1},\ldots,x_{s}}}\
\sum_{y_{1},\ldots,y_{s}} q^{y_{1}x_{1}+(y_{1}+y_{2})x_{2}+\cdots+(y_{1}+\cdots+y_{s})x_{s}},\
\end{split}
\end{equation*}
\noindent
where the summation is over all integers $x_1,\ldots,x_{s},$ and $y_1,\ldots,y_{s}$ satisfying

\begin{equation*}\label{eq:1}
\begin{split}
(x_1,\ldots,x_{s})\in S_{m,s}^{k_1-1},\ \text{and}
\end{split}
\end{equation*}
\noindent
\begin{equation*}\label{eq:1}
\begin{split}
0<y_{j}<k_2\ \text{for}\ j=1,\ldots,s,
\end{split}
\end{equation*}
\noindent
\begin{equation*}\label{eq:2}
\begin{split}
(y_{1},\ldots,y_{s}) \in S_{r,s}^{0}.
\end{split}
\end{equation*}
\end{definition}
\noindent
The following gives a recurrence relation useful for the computation of $H_{q,k_1,0}^{\infty,k_2}(m,r,s)$.
\noindent
\begin{lemma}
\label{lemma:3.12}
$H_{q,k_1,0}^{\infty,k_2}(m,r,s)$ obeys the following recurrence relation.
\begin{equation*}\label{eq: 1.1}
\begin{split}
H&_{q,k_1,0}^{\infty,k_2}(m,r,s)\\
&=\left\{
  \begin{array}{ll}
    \sum_{b=1}^{k_2-1}\sum_{a=1}^{k_{1}-1}q^{ra}H_{q,k_1,0}^{\infty,k_2}(m-a,r-b,s-1)\\
    +\sum_{b=1}^{k_2-1}\sum_{a=k_{1}}^{m-(s-1)}q^{ra}\overline{H}_{q,0,0}^{\infty,k_2}(m-a,r-b,s-1), & \text{for}\ s>1,\ (s-1)+k_{1}\leq m\\
    &\text{and}\ s\leq r\leq s(k_{2}-1) \\
    1, & \text{for}\ s=1,\ k_1\leq m,\ \text{and}\ 1\leq r \leq k_2-1\\
    0, & \text{otherwise.}\\
  \end{array}
\right.
\end{split}
\end{equation*}
\end{lemma}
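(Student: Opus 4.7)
The plan is to mirror the proof of Lemma \ref{lemma:3.10} almost verbatim, the only structural difference being that the $y$-tuple in the definition of $H_{q,k_1,0}^{\infty,k_2}$ now has length $s$ (satisfying $(y_1,\ldots,y_s)\in S_{r,s}^{0}$ together with $0<y_j<k_2$) rather than length $s-1$, so the exponent carries an extra terminal term $(y_1+\cdots+y_s)x_s$. I would decompose the double sum by fixing the last entries $x_s$ and $y_s$, exploit the identity $(y_1+\cdots+y_s)x_s = r\,x_s$ to factor the monomial $q^{r x_s}$ out of each summand, and identify the residual sum (which has exponent $y_1 x_1+(y_1+y_2)x_2+\cdots+(y_1+\cdots+y_{s-1})x_{s-1}$) with either $H_{q,k_1,0}^{\infty,k_2}$ or $\overline{H}_{q,0,0}^{\infty,k_2}$ at level $s-1$, depending on whether the max-$k_1$ constraint on the $x$'s still needs to be enforced.

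In detail, for $s>1$, $(s-1)+k_1\leq m$, and $s\leq r\leq s(k_2-1)$, the constraint $(x_1,\ldots,x_s)\in S_{m,s}^{k_1-1}$ splits according to whether $x_s$ realizes the maximum. Setting $x_s = a$, the case $a\in\{1,\ldots,k_1-1\}$ forces $(x_1,\ldots,x_{s-1})\in S_{m-a,s-1}^{k_1-1}$, while the case $a\in\{k_1,\ldots,m-(s-1)\}$ leaves only the positivity condition $(x_1,\ldots,x_{s-1})\in S_{m-a,s-1}^{0}$. Independently, fixing $y_s = b$ with $b\in\{1,\ldots,k_2-1\}$ reduces the $y$-constraint to $(y_1,\ldots,y_{s-1})\in S_{r-b,s-1}^{0}$, which matches both the $H$ and $\overline{H}$ definitions at level $s-1$. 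Summing the first branch over $a\in\{1,\ldots,k_1-1\}$ and $b\in\{1,\ldots,k_2-1\}$ produces the $H$-term of the claimed recurrence, and summing the second branch over $a\in\{k_1,\ldots,m-(s-1)\}$ and $b\in\{1,\ldots,k_2-1\}$ produces the $\overline{H}$-term, with $q^{ra}$ as the common weight.

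The boundary $s=1$ is immediate from the definition: the only admissible configuration is $x_1 = m$ with $m\geq k_1$ (so that $x_1\geq k_1$) and $y_1 = r$ with $1\leq r\leq k_2-1$, giving the single contribution $q^{0}=1$; all other parameter regimes yield an empty sum and hence $0$. No substantial obstacle is expected, since the algebraic manipulation is essentially identical to that of Lemmas \ref{lemma:3.5}--\ref{lemma:3.11}; the only bookkeeping subtlety is the two-way split of the range of $x_s$ around the threshold $k_1$, which distinguishes this lemma (as in Lemma \ref{lemma:3.10}) from those handling $S^{0}$-type $x$-constraints alone.
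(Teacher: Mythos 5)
Your argument is correct and coincides with the paper's proof: both decompose by fixing the last entries, split the range of $x_s$ at the threshold $k_1$ (sending the branch $x_s\in\{1,\dots,k_1-1\}$ to $H_{q,k_1,0}^{\infty,k_2}$ at level $s-1$ and the branch $x_s\in\{k_1,\dots,m-(s-1)\}$ to $\overline{H}_{q,0,0}^{\infty,k_2}$), fix $y_s=b\in\{1,\dots,k_2-1\}$, and factor out $q^{rx_s}$ using $y_1+\cdots+y_s=r$. The only quibble is your base-case justification ``$q^0=1$'': for $s=1$ the exponent in the definition is literally $y_1x_1=rm$, so the stated value $1$ does not follow from the definition as written --- but this tension is already present in the paper's own lemma statement (and in its companion Lemmas for $D_q$ and $\overline{H}_q$), and it does not affect your recursive step, which is the substance of the proof.
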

\begin{proof}
For $s > 1$, $(s-1)+k_{1}\leq m$ and $s\leq r\leq s(k_{2}-1)$, we observe that $x_{s}$ may assume any value $1,\ldots,m-(s-1)$, then $H_{q,k_1,0}^{\infty,k_2}(m,r,s)$ can be written as

\noindent
\begin{equation*}%\label{eq: 1.1}
\begin{split}
H&_{q,k_1,0}^{\infty,k_2}(m,r,s)\\
=&\sum_{x_{s}=1}^{k_{1}-1}{\hspace{0.3cm}\sum_{\substack{(x_1,\ldots,x_{s-1})\in S_{m-x_{s},s-1}^{k_{1}-1}}}}\
{\hspace{0.5cm}\sum_{\substack{y_{1},\ldots,y_{s} \in \{1,\ldots,k_{2}-1\}\\(y_{1},\ldots,y_{s}) \in S_{r,s}^{0}}}}q^{rx_{s}}q^{y_{1}x_{1}+(y_{1}+y_{2})x_{2}+\cdots+(y_{1}+\cdots+y_{s-1})x_{s-1}}\\
&+\sum_{x_{s}=k_{1}}^{m-(s-1)}{\sum_{\substack{(x_1,\ldots,x_{s-1})\in S_{m-x_{s},s-1}^{0}}}}{\hspace{0.5cm}\sum_{\substack{y_{1},\ldots, y_{s} \in \{1,\ldots,k_{2}-1\}\\(y_{1},\ldots,y_{s}) \in S_{r,s}^{0}}}}q^{rx_{s}} q^{y_{1}x_{1}+(y_{1}+y_{2})x_{2}+\cdots+(y_{1}+\cdots+y_{s-1})x_{s-1}}.
\end{split}
\end{equation*}
\noindent
Similarly, we observe that since $y_s$ can assume the values $1,\ldots,k_{2}-1$, then $H_{q,k_1,0}^{\infty,k_2}(m,r,s)$ can be rewritten as
\noindent
\begin{equation*}%\label{eq: 1.1}
\begin{split}
H&_{q,k_1,0}^{\infty,k_2}(m,r,s)\\
=&\sum_{y_{s}=1}^{k_2-1}\sum_{x_{s}=1}^{k_{1}-1}{\hspace{0.3cm}\sum_{\substack{(x_1,\ldots,x_{s-1})\in S_{m-x_{s},s-1}^{k_{1}-1}}}}{\hspace{0.5cm}\sum_{\substack{y_{1},\ldots,y_{s-1} \in \{1,\ldots,k_{2}-1\}\\(y_{1},\ldots,y_{s-1}) \in S_{r-y_{s},s-1}^{0}}}}q^{rx_{s}}q^{y_{1}x_{1}+(y_{1}+y_{2})x_{2}+\cdots+(y_{1}+\cdots+y_{s-1})x_{s-1}}\\
&+\sum_{y_{s}=1}^{k_2-1}\sum_{x_{s}=k_{1}}^{m-(s-1)}{\sum_{\substack{(x_1,\ldots,x_{s-1})\in S_{m-x_{s},s-1}^{0}}}}{\hspace{0.5cm}\sum_{\substack{y_{1},\ldots,y_{s-1} \in \{1,\ldots,k_{2}-1\}\\(y_{1},\ldots,y_{s-1}) \in S_{r-y_{s},s-1}^{0}}}}q^{rx_{s}}q^{y_{1}x_{1}+(y_{1}+y_{2})x_{2}+\cdots+(y_{1}+\cdots+y_{s-1})x_{s-1}}\\
=&\sum_{b=1}^{k_2-1}\sum_{a=1}^{k_{1}-1}q^{ra}H_{q,k_1,0}^{\infty,k_2}(m-a,r-b,s-1)+\sum_{b=1}^{k_2-1}\sum_{a=k_{1}}^{m-(s-1)}q^{ra}\overline{H}_{q,0,0}^{\infty,k_2}(m-a,r-b,s-1).
\end{split}
\end{equation*}
The other cases are obvious and thus the proof is completed.
\end{proof}
\vskip 5mm
\begin{remark}
{\rm
We observe that $H_{1,k_1,0}^{\infty,k_2}(m,r,s)$ is the number of integer solutions $(x_{1},\ldots,x_{s})$ and $(y_{1},\ldots,y_{s})$ of

\noindent
\begin{equation*}\label{eq:1}
\begin{split}
(x_1,\ldots,x_{s})\in S_{m,s}^{k_{1}-1},\ \text{and}
\end{split}
\end{equation*}

\noindent
\begin{equation*}\label{eq:1}
\begin{split}
0<y_{j}<k_2\ \text{for}\ j=1,\ldots,s,
\end{split}
\end{equation*}

\noindent
\begin{equation*}\label{eq:2}
\begin{split}
(y_{1},\ldots,y_{s}) \in S_{r,s}^{0}
\end{split}
\end{equation*}
\noindent
which is
\noindent
\begin{equation*}\label{eq: 1.1}
\begin{split}
H_{1,k_1,0}^{\infty,k_2}(m,r,s)=R(s,\ k_{1},\ m)S(s,\ k_{2},\ r),
\end{split}
\end{equation*}
\noindent
where $R(a,\ b,\ c)$ denotes the total number of integer solution $y_{1}+x_{2}+\cdots+y_{a}=c$ such that $(y_{1},\ldots,y_{a}) \in S_{c,a}^{b-1}$. The number is given by
\noindent
\begin{equation*}\label{eq: 1.1}
\begin{split}
R(a,\ b,\ c)=\sum_{j=1}^{min\left(a,\ \left[\frac{c-a}{b-1}\right]\right) }(-1)^{j+1}{a \choose j}{c-j(b-1)-1 \choose a-1}.
\end{split}
\end{equation*}
where $S(a,\ b,\ c)$ denotes the total number of integer solution $x_{1}+x_{2}+\cdots+x_{a}=c$ such that $0<x_{i}<b$ for $i=1,2,\ldots,a$. The number is given by
\noindent
\begin{equation*}\label{eq: 1.1}
\begin{split}
S(a,\ b,\ c)=\sum_{j=0}^{min\left(a,\ \left[\frac{c-a}{b-1}\right]\right) }(-1)^{j}{a \choose j}{c-j(b-1)-1 \choose a-1}.
\end{split}
\end{equation*}
See, e.g. \citet{charalambides2002enumerative}.}

\end{remark}
\noindent

\vskip 5mm

\noindent
The probability function of the $q$-later waiting time distribution of order $(k_1,k_2)$ when a quota is imposed on runs of successes and failures is obtained by the following theorem. It is evident that
\noindent
\begin{equation*}
P_{q,\theta}(W_{L}=n)=0\ \text{for}\ n< k_1+k_2
\end{equation*}
\noindent
and so we shall focus on determining the probability mass function for $n\geq k_{1}+k_{2}$.
\noindent
\begin{theorem}
\label{thm:3.2}
The PMF $f_{q,L}(n;\theta)=P_{q,\theta}(W_{L}=n)$ satisfies
\noindent
\begin{equation*}\label{eq: 1.1}
\begin{split}
P_{q,\theta}(W_{L}=n)=P_{q,L}^{(1)}(n)+P_{q,L}^{(0)}(n)\ \text{for}\ n\geq k_1+k_2,
\end{split}
\end{equation*}
where
\begin{equation*}\label{eq:bn1}
\begin{split}
P_{q,L}^{(1)}(n)=\sum_{i=k_2}^{n-k_{1}}\theta^{n-i} q^{ik_{1}}{\prod_{j=1}^{i}}(1-\theta q^{j-1})\sum_{s=1}^{i-k_2+1}\bigg[E_{q,0,k_2-1}^{k_1,\infty}&(n-k_{1}-i,\ i,\ s)\\
&+F_{q,0,k_2-1}^{k_1,\infty}(n-k_{1}-i,\ i,\ s)\bigg],\\
\end{split}
\end{equation*}
and

\begin{equation*}\label{eq:bn1}
\begin{split}
P_{q,L}^{(0)}(n)=\sum_{i=k_1}^{n-k_{2}}\theta^{i}{\prod_{j=1}^{n-i}}(1-\theta q^{j-1})\sum_{s=1}^{i-k_1+1}\Bigg[G_{q,k_1,0}^{\infty,k_2}(i,\ n-k_{2}-i,\ s)+H_{q,k_1,0}^{\infty,k_2}(i,\ n-k_{2}-i,\ s)\Bigg].\\
\end{split}
\end{equation*}
\end{theorem}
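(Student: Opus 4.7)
The plan is to mirror the proof of Theorem \ref{thm:3.1} and derive $P_{q,L}^{(1)}(n)$ and $P_{q,L}^{(0)}(n)$ separately, with the sole substantive modification being a swap of the sooner-case condition ``no long run of the other type has yet appeared'' for its later-case counterpart ``at least one long run of the other type has already appeared''. Because the two terminating events (a $k_1$-success run completes at time $n$, or a $k_2$-failure run completes at time $n$) are mutually exclusive, the PMF decomposes as $P_{q,\theta}(W_L=n)=P_{q,L}^{(1)}(n)+P_{q,L}^{(0)}(n)$ on the support $n\geq k_1+k_2$. The difference from Theorem \ref{thm:3.1} shows up in the combinatorial content: instead of the unrestricted $q$-weighted counts $A,B,C,D$, we now obtain the $\max$-constrained quantities $E,F,G,H$ defined through the sets $S_{r,s}^{k}$.

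For $P_{q,L}^{(1)}(n)$ I first observe that the event $\{W_L^{(1)}=n\}$ is equivalent to $X_{n-k_1+1}=\cdots=X_n=1$, $X_{n-k_1}=0$, $L^{(1)}_{n-k_1}<k_1$, and $L^{(0)}_{n-k_1}\geq k_2$. The condition $L^{(1)}_{n-k_1}<k_1$ is exactly as in the sooner case, whereas $L^{(0)}_{n-k_1}\geq k_2$ is the new ingredient forcing a failure run of length at least $k_2$ in the prefix. I would then partition by $F_{n-k_1}=i$, which now requires $i\geq k_2$, and observe that given this count the tail contributes $(\theta q^i)^{k_1}$ while the prefix decomposes as in Theorem \ref{thm:3.1} into $(s-1,s)$- and $(s,s)$-types depending on whether it starts with a zero-run or a one-run. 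Performing the same telescoped expansion of the sequence probability as in Theorem \ref{thm:3.1} and factoring out $\theta^{n-k_1-i}\prod_{j=1}^{i}(1-\theta q^{j-1})$, the remaining $q$-weighted sums over the two types become precisely $E_{q,0,k_2-1}^{k_1,\infty}(n-k_1-i,i,s)$ and $F_{q,0,k_2-1}^{k_1,\infty}(n-k_1-i,i,s)$. Summing over $s=1,\ldots,i-k_2+1$ and $i=k_2,\ldots,n-k_1$ and using the identity $\theta^{n-k_1-i}(\theta q^i)^{k_1}=\theta^{n-i}q^{ik_1}$ yields the stated expression for $P_{q,L}^{(1)}(n)$.

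For $P_{q,L}^{(0)}(n)$ the roles of zeros and ones are interchanged: the event becomes $X_{n-k_2+1}=\cdots=X_n=0$, $X_{n-k_2}=1$, $L^{(0)}_{n-k_2}<k_2$, and $L^{(1)}_{n-k_2}\geq k_1$. This time I would partition by $S_{n-k_2}=i$, the number of ones in the prefix, requiring $i\geq k_1$, so that the prefix has $n-k_2-i$ zeros and the $k_2$ trailing zeros contribute $\prod_{j=n-k_2-i+1}^{n-i}(1-\theta q^{j-1})$. The prefix enumeration, split into types according to whether it begins with a success or a failure run, produces $q$-weighted counts matching exactly $G_{q,k_1,0}^{\infty,k_2}(i,n-k_2-i,s)$ and $H_{q,k_1,0}^{\infty,k_2}(i,n-k_2-i,s)$. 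Summing over $s=1,\ldots,i-k_1+1$ and $i=k_1,\ldots,n-k_2$, and combining the prefix factor $\theta^{i}\prod_{j=1}^{n-k_2-i}(1-\theta q^{j-1})$ with the tail factor $\prod_{j=n-k_2-i+1}^{n-i}(1-\theta q^{j-1})$ into the single product $\theta^{i}\prod_{j=1}^{n-i}(1-\theta q^{j-1})$, yields the stated formula.

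The main obstacle will be bookkeeping: correctly tracking which of the two runs is forced to satisfy $\max\geq k$ and which must satisfy $\max<k$ in each half of the theorem, determining the corresponding ranges of $s$ (the upper limits $i-k_2+1$ and $i-k_1+1$ arise from the pigeonhole requirement that at least one run achieves the threshold), and pairing the $(s-1,s)$/$(s,s)$ type split with the correct quantity from the quartet $E,F,G,H$. Once these assignments are in place the reduction from the telescoped geometric probabilities to the closed-form $q$-sums proceeds exactly as in Theorem \ref{thm:3.1}, using Lemmas \ref{lemma:3.5}--\ref{lemma:3.12} to expand the $q$-weighted counts via the recurrences derived there.
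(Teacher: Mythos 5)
Your proposal is correct and follows essentially the same route as the paper's proof: the same decomposition $P_{q,\theta}(W_L=n)=P_{q,L}^{(1)}(n)+P_{q,L}^{(0)}(n)$, the same conditioning on $F_{n-k_1}=i$ (respectively $S_{n-k_2}=i$) with the shifted starting indices $i\geq k_2$ (respectively $i\geq k_1$), the same $(s-1,s)$/$(s,s)$ type split of the prefix with $s$ ranging up to $i-k_2+1$ (respectively $i-k_1+1$), and the same identification of the resulting $q$-weighted sums with $E,F$ via Lemmas \ref{lemma:3.6} and \ref{lemma:3.8} and with $G,H$ via Lemmas \ref{lemma:3.10} and \ref{lemma:3.12}, followed by the identical factorization of the $\theta$- and $q$-powers.
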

\begin{proof}
We start with the study of $P_{q,L}^{(1)}(n)$. It is easy to see that $P_{q,L}^{(1)}(k_1+k_2)={\prod_{j=1}^{k_2}}(1-\theta q^{j-1})(\theta q^{k_2})^{k_{1}}.$ From now on we assume $n > k_1+k_2,$ and we can rewrite $P_{q,L}^{(1)}(n)$ as follows
\begin{equation*}\label{eq:bn}
\begin{split}
P_{q,L}^{(1)}(n)&=P_{q,\theta}\left(L_{n-k_{1}}^{(1)}< k_{1}\ \wedge\ L_{n-k_{1}}^{(0)}\geq k_{2}\ \wedge\ X_{n-k_{1}}=0\ \wedge\ X_{n-k_{1}+1}=\cdots =X_{n}=1\right).\\
\end{split}
\end{equation*}
\noindent
We partition the event $W_{q,L}^{(1)}=n$ into disjoint events given by $F_{n-k_1}=i,$ for $i=k_2,\ldots,n-k_1.$ Adding the probabilities we have
\noindent
%We want to rewrite the above expression conditioning on the number $F_{n-k_{1}}$ of failures and using the total probability, we have\\
\begin{equation*}\label{eq:bn}
\begin{split}
P_{q,L}^{(1)}(n)=\sum_{i=k_2}^{n-k_{1}}P_{q,\theta}\Big(L_{n-k_{1}}^{(1)}< k_{1}\ \wedge\ L_{n-k_{1}}^{(0)}\geq k_{2}\ \wedge\ F_{n-k_{1}}=i&\ \wedge\ X_{n-k_{1}}=0\ \wedge\\
&X_{n-k_{1}+1}=\cdots =X_{n}=1\Big).\\
\end{split}
\end{equation*}
\noindent
If the number of $0$'s in the first $n-k_1$ trials is equal to $i,$ that is, $F_{n-k_1}=i,$ then in the $(n-k_{1}+1)$-th to $n$-th trials with probability of success
\noindent
\begin{equation*}\label{eq: kk}
\begin{split}
p_{n-k_{1}+1}=\cdots=p_{n}=\theta q^{i}.
\end{split}
\end{equation*}
\noindent
We will write $E_{n,i}^{(0)}$ for the event $\{L_n^{(1)}< k_1\ \wedge\ L_n^{(0)}\geq k_2\ \wedge\ X_n=0\ \wedge\ F_n=i\}.$ We can now rewrite as follows
\noindent
\begin{equation*}\label{eq:bn1}
\begin{split}
P_{q,L}^{(1)}(n)=\sum_{i=k_2}^{n-k_{1}}&P_{q,\theta}\Big(L_{n-k_{1}}^{(1)}< k_{1}\ \wedge\ L_{n-k_{1}}^{(0)}\geq k_{2}\ \wedge\ F_{n-k_{1}}=i\ \wedge\ X_{n-k_{1}}=0\Big)\\
&\times P_{q,\theta}\Big(X_{n-k_{1}+1}=\cdots =X_{n}=1\mid F_{n-k_{1}}=i\Big)\\
=\sum_{i=k_2}^{n-k_{1}}&P_{q,\theta}\Big(E_{n-k_{1},\ i}^{(0)}\Big)\Big(\theta q^{i}\Big)^{k_1}.\\
\end{split}
\end{equation*}
\noindent
We are going to focus on the event $E_{n-k_{1},i}^{(0)}$. For $i=1,\ldots,n-k_1$, a typical element of the event $\left\{L_n^{(1)}< k_1\ \wedge\ L_n^{(0)}\geq k_2\ \wedge\ X_n=0\ \wedge\ F_n=i\right\}$ is an ordered sequence which consists of $n-k_{1}-i$ successes and $i$ failures such that the length of the longest success run is less than $k_1$ and the length of the longest failure run is greater than or equal to $k_2$. The number of these sequences can be derived as follows. First we will distribute the $i$ failures. Let $s$ $(1\leq s \leq i-k_2+1)$ be the number of runs of failures in the event $E_{n-k_{1},i}^{(0)}$. Next, we will distribute the $n-k_{1}-i$ successes. We divide into two cases: starting with a failure run or starting with a success run. Thus, we distinguish between two types of sequences in the event $\left\{L_n^{(1)}< k_1\ \wedge\ L_n^{(0)}\geq k_2\ \wedge\ X_n=0\ \wedge\ F_n=i\right\},$ respectively named $(s-1,\ s)$-type and $(s,\ s)$-type, which are defined as follows
\noindent
\begin{equation*}\label{eq:bn}
\begin{split}
(s-1,\ s)\text{-type}\ :&\quad \overbrace{0\ldots 0}^{y_{1}}\mid\overbrace{1\ldots 1}^{x_{1}}\mid\overbrace{0\ldots 0}^{y_{2}}\mid\overbrace{1\ldots 1}^{x_{2}}\mid \ldots \mid \overbrace{0\ldots 0}^{y_{s-1}}\mid\overbrace{1\ldots 1}^{x_{s-1}}\mid\overbrace{0\ldots 0}^{y_{s}},\\
\end{split}
\end{equation*}
\noindent
with $i$ $0$'s and $n-k_{1}-i$ $1$'s, where $x_{j}$ $(j=1,\ldots,s-1)$ represents a length of run of $1$'s and $y_{j}$ $(j=1,\ldots,s)$ represents the length of a run of $0$'s. And all integers $x_{1},\ldots,x_{s-1}$, and $y_{1},\ldots,y_{s}$ satisfy the conditions
\noindent
\begin{equation*}\label{eq:bn}
\begin{split}
0 < x_j < k_1 \mbox{\ for\ } j=1,...,s-1, \mbox{\ and\ } x_1+\cdots +x_{s-1} = n-k_1-i,
\end{split}
\end{equation*}
\noindent
\begin{equation*}\label{eq:bn}
\begin{split}
(y_{1},\ldots,y_{s})\in S_{i,s}^{k_2-1},
\end{split}
\end{equation*}
\noindent
\begin{equation*}\label{eq:bn}
\begin{split}
(s,\ s)\text{-type}\  :&\quad \overbrace{1\ldots 1}^{x_{1}}\mid\overbrace{0\ldots 0}^{y_{1}}\mid\overbrace{1\ldots 1}^{x_{2}}\mid\overbrace{0\ldots 0}^{y_{2}}\mid\overbrace{1\ldots 1}^{x_{3}}\mid \ldots \mid \overbrace{0\ldots 0}^{y_{s-1}}\mid\overbrace{1\ldots 1}^{x_{s}}\mid\overbrace{0\ldots 0}^{y_{s}},
\end{split}
\end{equation*}
\noindent
with $i$ $0$'s and $n-k_{1}-i$ $1$'s, where $x_{j}$ $(j=1,\ldots,s)$ represents a length of run of $1$'s and $y_{j}$ $(j=1,\ldots,s)$ represents the length of a run of $0$'s. Here all of $x_1,\ldots, x_{s-1},$ and $y_1,\ldots, y_s$ are integers, and they satisfy
\noindent
\begin{equation*}\label{eq:bn}
\begin{split}
0 < x_j < k_1 \mbox{\ for\ } j=1,...,s, \mbox{\ and\ } x_1+\cdots +x_{s} = n-k_1-i,
\end{split}
\end{equation*}
\noindent
\begin{equation*}\label{eq:bn}
\begin{split}
(y_{1},\ldots,y_{s})\in S_{i,s}^{k_2-1},
\end{split}
\end{equation*}
\noindent
Then the probability of the event $E_{n-k_1,i}^{(0)}$ is given by

\noindent
\begin{equation*}\label{eq: 1.1}
\begin{split}
P_{q,\theta}\Big(E_{n-k_{1},\ i}^{(0)}&\Big)=\\
\sum_{s=1}^{i-k_2+1}\Bigg[\bigg\{&\sum_{\substack{x_{1}+\cdots+x_{s-1}=n-k_{1}-i\\ x_{1},\ldots,x_{s-1} \in \{1,\ldots,k_{1}-1\}}}{\hspace{0.3cm}\sum_{\substack{(y_{1},\ldots,y_{s})\in S_{i,s}^{k_2-1}}}}\big(1-\theta q^{0}\big)\cdots \big(1-\theta q^{y_{1}-1}\big)\times\\
&\Big(\theta q^{y_{1}}\Big)^{x_{1}}\big(1-\theta q^{y_{1}}\big)\cdots \big(1-\theta q^{y_{1}+y_{2}-1}\big)\times\\
&\Big(\theta q^{y_{1}+y_{2}}\Big)^{x_{2}}\big(1-\theta q^{y_{1}}\big)\cdots \big(1-\theta q^{y_{1}+y_{2}+y_3-1}\big)\times \\
&\quad \quad \quad \quad\quad \quad\quad \quad \quad \quad \quad  \vdots\\
&\Big(\theta q^{y_{1}+\cdots+y_{s-1}}\Big)^{x_{s-1}}
\big(1-\theta q^{y_{1}+\cdots+y_{s-1}}\big)\cdots \big(1-\theta q^{y_{1}+\cdots+y_{s}-1}\big)\bigg\}+\\
\end{split}
\end{equation*}

\noindent
\begin{equation*}\label{eq: 1.1}
\begin{split}
\quad \quad \quad \quad \quad \quad&\bigg\{\sum_{\substack{x_{1}+\cdots+x_{s}=n-k_{1}-i\\ x_{1},\ldots,x_{s} \in \{1,\ldots,k_{1}-1\}}}{\hspace{0.3cm}\sum_{\substack{(y_{1},\ldots,y_{s})\in S_{i,s}^{k_2-1}}}}\big(\theta q^{0}\big)^{x_{1}}\big(1-\theta q^{0}\big)\cdots (1-\theta q^{y_{1}-1})\times\\
&\Big(\theta q^{y_{1}}\Big)^{x_{2}}\big(1-\theta q^{y_{1}}\big)\cdots \big(1-\theta q^{y_{1}+y_{2}-1}\big)\times\\
&\Big(\theta q^{y_{1}+y_{2}}\Big)^{x_{3}}\big(1-\theta q^{y_{1}}\big)\cdots \big(1-\theta q^{y_{1}+y_{2}+y_3-1}\big)\times \\
&\quad \quad \quad \quad\quad \quad\quad \quad \quad \quad \quad  \vdots\\
&\Big(\theta q^{y_{1}+\cdots+y_{s-1}}\Big)^{x_{s}}
\big(1-\theta q^{y_{1}+\cdots+y_{s-1}}\big)\cdots \big(1-\theta q^{y_{1}+\cdots+y_{s}-1}\big)\bigg\}\Bigg],\\
\end{split}
\end{equation*}
\noindent

and then using simple exponentiation algebra arguments to simplify,
\noindent
\begin{equation*}\label{eq: 1.1}
\begin{split}
&P_{q,\theta}\Big(E_{n-k_{1},\ i}^{(0)}\Big)=\\
&\theta^{n-k_{1}-i}{\prod_{j=1}^{i}}\ (1-\theta q^{j-1})\times\\
&\sum_{s=1}^{i-k_2+1}\Bigg[\sum_{\substack{x_{1}+\cdots+x_{s-1}=n-k_{1}-i\\ x_{1},\ldots,x_{s-1} \in \{1,\ldots,k_{1}-1\}}}\
\sum_{\substack{(y_{1},\ldots,y_{s})\in S_{i,s}^{k_2-1}}}q^{y_{1}x_{1}+(y_{1}+y_{2})x_{2}+\cdots+(y_{1}+\cdots+y_{s-1})x_{s-1}}+\\
&\quad\quad\quad\quad\sum_{\substack{x_{1}+\cdots+x_{s}=n-k_{1}-i\\ x_{1},\ldots,x_{s} \in \{1,\ldots,k_{1}-1\}}}\
\sum_{\substack{(y_{1},\ldots,y_{s})\in S_{i,s}^{k_2-1}}}q^{y_{1}x_{2}+(y_{1}+y_{2})x_{3}+\cdots+(y_{1}+\cdots+y_{s-1})x_{s}}\Bigg]\\
\end{split}
\end{equation*}
\noindent
Using Lemma \ref{lemma:3.6} and Lemma \ref{lemma:3.8}, we can rewrite as follows
\noindent
\begin{equation*}\label{eq: 1.1}
\begin{split}
P_{q,\theta}\Big(E_{n-k_{1},\ i}^{(0)}\Big)=\theta^{n-k_{1}-i}{\prod_{j=1}^{i}}(1-\theta q^{j-1})\sum_{s=1}^{i-k_2+1}\Bigg[E&_{q,0,k_2}^{k_1,\infty}(n-k_{1}-i,\ i,\ s)\\
&+F_{q,0,k_2}^{k_1,\infty}(n-k_{1}-i,\ i,\ s)\Bigg],\\
\end{split}
\end{equation*}\\
\noindent
where
\noindent
\begin{equation*}\label{eq: 1.1}
\begin{split}
E_{q,0,k_2}^{k_1,\infty}(n&-k_{1}-i,\ i,\ s)\\
&=\sum_{\substack{x_{1}+\cdots+x_{s-1}=n-k_{1}-i\\ x_{1},\ldots,x_{s-1} \in \{1,\ldots,k_{1}-1\}}}\
\sum_{\substack{(y_{1},\ldots,y_{s})\in S_{i,s}^{k_2-1}}}q^{y_{1}x_{1}+(y_{1}+y_{2})x_{2}+\cdots+(y_{1}+\cdots+y_{s-1})x_{s-1}},
\end{split}
\end{equation*}
\noindent
and
\noindent
\begin{equation*}\label{eq: 1.1}
\begin{split}
F_{q,0,k_2}^{k_1,\infty}(n&-k_{1}-i,\ i,\ s)\\
&=\sum_{\substack{x_{1}+\cdots+x_{s}=n-k_{1}-i\\ x_{1},\ldots,x_{s} \in \{1,\ldots,k_{1}-1\}}}\
\sum_{\substack{(y_{1},\ldots,y_{s})\in S_{i,s}^{k_2-1}}}q^{y_{1}x_{2}+(y_{1}+y_{2})x_{3}+\cdots+(y_{1}+\cdots+y_{s-1})x_{s}}.
\end{split}
\end{equation*}
\noindent
Therefore we can compute the probability of the event $W_{S}^{(1)}=n$ as follows
\noindent
\begin{equation*}\label{eq:bn1}
\begin{split}
P_{q,L}^{(1)}(n)=&\sum_{i=k_2}^{n-k_{1}}P_{q,\theta}\Big(E_{n-k_{1},\ i}^{(0)}\Big)\Big(\theta q^{i}\Big)^{k_{1}}\\
=&\sum_{i=k_2}^{n-k_{1}}\theta^{n-k_{1}-i}{\prod_{j=1}^{i}}(1-\theta q^{j-1})\sum_{s=1}^{i-k_2+1}\bigg[E_{q,0,k_2}^{k_1,\infty}(n-k_{1}-i,\ i,\ s)\\
&\quad\quad\quad\quad\quad\quad\quad\quad\quad\quad\quad\quad\quad\quad+F_{q,0,k_2}^{k_1,\infty}(n-k_{1}-i,\ i,\ s)\bigg]\Big(\theta q^{i}\Big)^{k_{1}}.\\
\end{split}
\end{equation*}
\noindent
Finally, applying typical factorization algebra arguments, $P_{S}^{(1)}(n)$ can be rewritten as follows
\noindent
\begin{equation*}\label{eq:bn1}
\begin{split}
P_{q,L}^{(1)}(n)=\sum_{i=k_2}^{n-k_{1}}\theta^{n-i} q^{ik_{1}}{\prod_{j=1}^{i}}(1-\theta q^{j-1})\sum_{s=1}^{i-k_2+1}\bigg[E_{q,0,k_2}^{k_1,\infty}(n-k_{1}-i,\ i,\ s)+F_{q,0,k_2}^{k_1,\infty}(n-k_{1}-i,\ i,\ s)\bigg].\\
\end{split}
\end{equation*}

\noindent
We are now going to study of $P_{q,L}^{(0)}(n)$. It is easy to see that $P_{q,L}^{(0)}(k_1+k_2)=\theta^{k_{1}}{\prod_{j=1}^{k_2}}(1-\theta q^{j-1})$. From now on we assume $n > k_1+k_2,$ and we can write
\noindent
$P_{q,L}^{(0)}(n)$ as follows.

\noindent
\begin{equation*}\label{eq:bn}
\begin{split}
P_{q,L}^{(0)}(n)&=P\left(L_{n-k_{2}}^{(1)}\geq k_{1}\ \wedge\ L_{n-k_{2}}^{(0)}< k_{2}\ \wedge\ X_{n-k_{2}}=1\ \wedge\ X_{n-k_{2}+1}=\cdots =X_{n}=0\right).\\
\end{split}
\end{equation*}

%%%%%%여기서 부터 작업 시작

\noindent
We partition the event $W_{L}^{(0)}=n$ into disjoint events given by $S_{n-k_2}=i$ and $F_{n-k_2}=n-k_2-i,$ for $i=k_1,\ \ldots,\ n-k_2.$ Adding the probabilities we have
\noindent
%We want to rewrite the above expression conditioning on the number $F_{n-k_{1}}$ of failures and using the total probability, we have\\
\begin{equation*}\label{eq:bn}
\begin{split}
P_{q,L}^{(0)}(n)=\sum_{i=k_1}^{n-k_{2}}P_{q,\theta}\Big(L_{n-k_{2}}^{(1)}\geq k_{1}\ \wedge\ L_{n-k_{2}}^{(0)}< k_{2}\ \wedge\ &S_{n-k_{2}}=i\ \wedge\ X_{n-k_{2}}=1\ \wedge\\
&X_{n-k_{2}+1}=\cdots =X_{n}=0\Big).\\
\end{split}
\end{equation*}
\noindent
If the number of $0$'s in the first $n-k_2$ trials is equal to $n-k_2-i,$ that is, $F_{n-k_2}=n-k_2-i,$ then the probability of failures in all of the $(n-k_{2}+1)$-th to $n$-th trials is
\noindent
\begin{equation*}\label{eq: kk}
\begin{split}
P_{q,\theta}(X_{n-k_2+1}=\cdots = X_n = 0 \,\mid \, F_{n-k_2}=n-k_2-i)={\prod_{j=n-k_2-i+1}^{n-i}}\left(1-\theta q^{j-1}\right).
\end{split}
\end{equation*}
\noindent
We will write $E_{n,i}^{(1)}$ for the event $\left\{L_n^{(1)}\geq k_1\ \wedge\ L_n^{(1)}< k_2\ \wedge\ X_n=1\ \wedge\ F_n=i\right\}.$ We can now rewrite as follows
\noindent
\begin{equation*}\label{eq:bn1}
\begin{split}
P_{q,L}^{(0)}(n)=\sum_{i=k_1}^{n-k_{2}}&P_{q,\theta}\Big(L_{n-k_{1}}^{(1)}\geq k_{1}\ \wedge\ L_{n-k_{1}}^{(0)}< k_{2}\ \wedge\ F_{n-k_{2}}=n-k_2-i\ \wedge\ X_{n-k_{2}}=1\Big)\\
&\times P_{q,\theta}\Big(X_{n-k_{2}+1}=\cdots =X_{n}=0\mid F_{n-k_{2}}=n-k_2-i\Big)\\
=\sum_{i=k_1}^{n-k_{2}}&P_{q,\theta}\Big(E_{n-k_{2},\ i}^{(1)}\Big){\prod_{j=n-k_2-i+1}^{n-i}}\left(1-\theta q^{j-1}\right).\\
\end{split}
\end{equation*}
\noindent
We are going to focus on the event $E_{n-k_{2},i}^{(1)}$. For $i=k_1,\ldots,n-k_2$, a typical element of the event $\left\{L_{n-k_1}^{(1)}> k_1-1\ \wedge\ L_{n-k_1}^{(0)}\leq k_2-1\ \wedge\ F_{n-k_2}=i\right\}$ is an ordered sequence which consists of $i$ successes and $n-k_{2}-i$ failures such that the length of the longest success run is greater than or equal to $k_1$ and the length of the longest failure run is less than $k_2$. The number of these sequences can be derived as follows. First we will distribute the $i$ successes. Let $s$ $(1\leq s \leq i-k_1+1)$ be the number of runs of successes in the event $E_{n-k_{2},i}^{(1)}$. Next, we will distribute the $n-k_{1}-i$ failures. We divide into two cases: starting with a success run or starting with a failure run. Thus, we distinguish between two types of sequences in the event $\left\{L_{n-k_1}^{(1)}\geq k_1\ \wedge\ L_{n-k_1}^{(0)}< k_2\ \wedge\ F_{n-k_2}=i\right\},$ respectively named $(s-1,\ s)$-type and $(s,\ s)$-type, which are defined as follows.
\noindent
\begin{equation*}\label{eq:bn}
\begin{split}
(s,\ s-1)\text{-type}\ :&\quad \overbrace{1\ldots 1}^{x_{1}}\mid\overbrace{0\ldots 0}^{y_{1}}\mid\overbrace{1\ldots 1}^{x_{2}}\mid\overbrace{0\ldots 0}^{y_{2}}\mid \ldots \mid\overbrace{0\ldots 0}^{y_{s-1}}\mid\overbrace{1\ldots 1}^{x_{s}},\\
\end{split}
\end{equation*}
\noindent
with $n-k_{2}-i$ $0$'s and $i$ $1$'s, where $x_{j}$ $(j=1,\ldots,s)$ represents a length of run of $1$'s and $y_{j}$ $(j=1,\ldots,s-1)$ represents the length of a run of $0$'s. And all integers $x_{1},\ldots,x_{s}$, and $y_{1},\ldots ,y_{s-1}$ satisfy the conditions
\noindent
\begin{equation*}\label{eq:bn}
\begin{split}
(x_{1},\ldots,x_{s})\in S_{i,s}^{k_1-1}, \text{and}
\end{split}
\end{equation*}
\noindent
\begin{equation*}\label{eq:bn}
\begin{split}
0 < y_j < k_2 \mbox{\ for\ } j=1,...,s-1, \mbox{\ and\ } y_1+\cdots +y_{s-1} = n-k_2-i,
\end{split}
\end{equation*}
\noindent
\begin{equation*}\label{eq:bn}
\begin{split}
(s,\ s)\text{-type}\  :&\quad \overbrace{0\ldots 0}^{y_{1}}\mid\overbrace{1\ldots 1}^{x_{1}}\mid\overbrace{0\ldots 0}^{y_{2}}\mid\overbrace{1\ldots 1}^{x_{2}}\mid\overbrace{0\ldots 0}^{y_{3}}\mid \ldots \mid\overbrace{0\ldots 0}^{y_{s}}\mid\overbrace{1\ldots 1}^{x_{s}},
\end{split}
\end{equation*}
\noindent
with $n-k_{2}-i$ $0$'s and $i$ $1$'s, where $x_{j}$ $(j=1,\ldots,s)$ represents a length of run of $1$'s and $y_{j}$ $(j=1,\ldots,s)$ represents the length of a run of $0$'s. And all integers $x_{1},\ldots,x_{s}$, and $y_{1},\ldots ,y_{s}$ satisfy the conditions
\noindent
\begin{equation*}\label{eq:bn}
\begin{split}
(x_{1},\ldots,x_{s})\in S_{i,s}^{k_1-1}, \text{and}
\end{split}
\end{equation*}
\noindent
\begin{equation*}\label{eq:bn}
\begin{split}
0 < y_j < k_2 \mbox{\ for\ } j=1,...,s, \mbox{\ and\ } y_1+\cdots +y_{s} = n-k_2-i,
\end{split}
\end{equation*}
\noindent
Then the probability of the event $E_{n-k_2,i}^{(1)}$ is given by
\noindent
\begin{equation*}\label{eq: 1.1}
\begin{split}
P_{q,\theta}\Big(E_{n-k_{2},\ i}^{(1)}&\Big)=\\
\sum_{s=1}^{i-k_1+1}\Bigg[\bigg\{&\sum_{\substack{(x_{1},\ldots,x_{s})\in S_{i,s}^{k_1-1}}}\
\sum_{\substack{y_1+\cdots +y_{s-1} = n-k_2-i\\ y_{1},\ldots,y_{s-1} \in \{1,\ldots,k_{2}-1\}}} \big(\theta q^{0}\big)^{x_{1}}\big(1-\theta q^{0}\big)\cdots (1-\theta q^{y_{1}-1})\times\\
&\Big(\theta q^{y_{1}}\Big)^{x_{2}}\big(1-\theta q^{y_{1}}\big)\cdots \big(1-\theta q^{y_{1}+y_{2}-1}\big)\times\\
&\Big(\theta q^{y_{1}+y_{2}}\Big)^{x_{3}}\big(1-\theta q^{y_{1}+y_2}\big)\cdots \big(1-\theta q^{y_{1}+y_{2}+y_3-1}\big)\times \\
&\quad \quad \quad \vdots\\
&\Big(\theta q^{y_{1}+\cdots+y_{s-1}}\Big)^{x_{s}}\bigg\}+\\
\end{split}
\end{equation*}

\noindent
\begin{equation*}\label{eq: 1.1}
\begin{split}
\quad \quad \quad \quad \quad \quad \bigg\{&\sum_{\substack{(x_{1},\ldots,x_{s})\in S_{i,s}^{k_1-1}}}\
\sum_{\substack{y_{1}+\cdots+y_{s}=n-k_2-i\\ y_{1},\ldots,y_{s} \in \{1,\ldots,k_{2}-1\}}}\big(1-\theta q^{0}\big)\cdots \big(1-\theta q^{y_{1}-1}\big)\Big(\theta q^{y_{1}}\Big)^{x_{1}}\times\\
&\big(1-\theta q^{y_{1}}\big)\cdots \big(1-\theta q^{y_{1}+y_{2}-1}\big)\Big(\theta q^{y_{1}+y_{2}}\Big)^{x_{2}}\times\\
&\big(1-\theta q^{y_{1}+y_2}\big)\cdots \big(1-\theta q^{y_{1}+y_{2}+y_3-1}\big)\Big(\theta q^{y_{1}+y_{2}+y_3}\Big)^{x_{3}}\times \\
&\quad \quad \quad \quad\quad \quad\quad \quad \quad \quad \quad \vdots\\
&\big(1-\theta q^{y_{1}+\cdots+y_{s-1}}\big)\cdots \big(1-\theta q^{y_{1}+\cdots+y_{s}-1}\big)\Big(\theta q^{y_{1}+\cdots+y_{s}}\Big)^{x_{s}}\bigg\}\Bigg].\\
\end{split}
\end{equation*}
\noindent
and then using simple exponentiation algebra arguments to simplify,
\noindent
\begin{equation*}\label{eq: 1.1}
\begin{split}
&P_{q,\theta}\Big(E_{n-k_{2},\ i}^{(1)}\Big)=\\
&\theta^{i}{\prod_{j=1}^{n-k_{2}-i}}\ (1-\theta q^{j-1})\times\\
&\sum_{s=1}^{i-k_1+1}\Bigg[\sum_{\substack{(x_{1},\ldots,x_{s})\in S_{i,s}^{k_1-1}}}\
\sum_{\substack{y_1+\cdots +y_{s-1} = n-k_2-i\\ y_{1},\ldots,y_{s-1} \in \{1,\ldots,k_{2}-1\}}}q^{y_{1}x_{2}+(y_{1}+y_{2})x_{3}+\cdots+(y_{1}+\cdots+y_{s-1})x_{s}}+\\
&\quad\quad\quad\sum_{\substack{(x_{1},\ldots,x_{s})\in S_{i,s}^{k_1-1}}}\
\sum_{\substack{y_1+\cdots +y_{s} = n-k_2-i\\ y_{1},\ldots,y_{s} \in \{1,\ldots,k_{2}-1\}}}q^{y_{1}x_{1}+(y_{1}+y_{2})x_{2}+\cdots+(y_{1}+\cdots+y_{s})x_{s}}\Bigg]\\
\end{split}
\end{equation*}
\noindent
Using Lemma \ref{lemma:3.10} and Lemma \ref{lemma:3.12}, we can rewrite as follows
\noindent
\begin{equation*}\label{eq: 1.1}
\begin{split}
P_{q,\theta}\Big(E_{n-k_{2},\ i}^{(1)}\Big)=\theta^{i}{\prod_{j=1}^{n-k_{2}-i}}\ (1-\theta q^{j-1})\sum_{s=1}^{i-k_1+1}\Bigg[G_{q,k_1,0}^{\infty,k_2}(i,\ n-k_{2}-i,\ s)+H_{q,k_1,0}^{\infty,k_2}(i,\ n-k_{2}-i,\ s)\Bigg],\\
\end{split}
\end{equation*}\\
\noindent
where
\noindent
\begin{equation*}\label{eq: 1.1}
\begin{split}
G_{q,k_1,0}^{\infty,k_2}(&i,\ n-k_{2}-i,\ s)\\
&=\sum_{\substack{(x_{1},\ldots,x_{s})\in S_{i,s}^{k_1-1}}}\
\sum_{\substack{y_1+\cdots +y_{s-1} = n-k_2-i\\ y_{1},\ldots,y_{s-1} \in \{1,\ldots,k_{2}-1\}}}q^{y_{1}x_{2}+(y_{1}+y_{2})x_{3}+\cdots+(y_{1}+\cdots+y_{s-1})x_{s}},
\end{split}
\end{equation*}
\noindent
and
\noindent
\begin{equation*}\label{eq: 1.1}
\begin{split}
H_{q,k_1,0}^{\infty,k_2}(&i,\ n-k_{2}-i,\ s)\\
&=\sum_{\substack{(x_{1},\ldots,x_{s})\in S_{i,s}^{k_1-1}}}\
\sum_{\substack{y_1+\cdots +y_{s} = n-k_2-i\\ y_{1},\ldots,y_{s} \in \{1,\ldots,k_{2}-1\}}}q^{y_{1}x_{1}+(y_{1}+y_{2})x_{2}+\cdots+(y_{1}+\cdots+y_{s})x_{s}}.
\end{split}
\end{equation*}
\noindent
Therefore we can compute the probability of the event $W_{L}^{(0)}=n$ as follows
\noindent
\begin{equation*}\label{eq:bn1}
\begin{split}
P_{q,L}^{(0)}(n)=&\sum_{i=k_1}^{n-k_{2}}P\Big(E_{n-k_{2},\ i}^{(1)}\Big){\prod_{j=n-k_2-i+1}^{n-i}}(1-\theta q^{j-1})\\
=&\sum_{i=k_1}^{n-k_{2}}\theta^{i}{\prod_{j=1}^{n-k_{2}-i}}\ (1-\theta q^{j-1})\sum_{s=1}^{i-k_1+1}\Bigg[G_{q,k_1,0}^{\infty,k_2}(i,\ n-k_{2}-i,\ s)+H_{q,k_1,0}^{\infty,k_2}(i,\ n-k_{2}-i,\ s)\Bigg]\\
&{\prod_{j=n-k_2-i+1}^{n-i}}(1-\theta q^{j-1}).\\
\end{split}
\end{equation*}
\noindent
Finally, applying typical factorization algebra arguments, $P_{L}^{(0)}(n)$ can be rewritten as follows
\noindent
\begin{equation*}\label{eq:bn1}
\begin{split}
P_{q,L}^{(0)}(n)=\sum_{i=k_1}^{n-k_{2}}\theta^{i}{\prod_{j=1}^{n-i}}(1-\theta q^{j-1})\sum_{s=1}^{i-k_1+1}\Bigg[G_{q,k_1,0}^{\infty,k_2}(i,\ n-k_{2}-i,\ s)+H_{q,k_1,0}^{\infty,k_2}(i,\ n-k_{2}-i,\ s)\Bigg].\\
\end{split}
\end{equation*}
Thus proof is completed.
\end{proof}
\noindent
It is worth mentioning here that the PMF $f_{q,L}(n;\theta)$ approaches the probability function of the later waiting time distribution of order $(k_1,k_2)$
in the limit as $q$ tends to 1 when a quota is imposed on runs of successes and failures of IID model. The details are presented in the following remark.
\noindent
\begin{remark}
{\rm
For $q=1$, the PMF $f_{q,L}(n;\theta)$ reduces to the PMF $f_{L}(n;\theta)=P_{\theta}(W_{L}=n)$ for $n\geq k_1+k_2$ is given by

\begin{equation*}\label{eq: 1.1}
\begin{split}
P(W_{L}=n)=P_{L}^{(1)}(n)+P_{L}^{(0)}(n),\ n\geq k_1+k_2,
\end{split}
\end{equation*}
where
\begin{equation*}\label{eq:bn1}
\begin{split}
P_{L}^{(1)}(n)=\sum_{i=k_2}^{n-k_{1}}\theta^{n-i} (1-\theta)^{i}\sum_{s=1}^{i-k_2+1}R(s,\ k_2,\ i)\Big[S(s-1,\ k_{1},\ n-k_{1}-i)+S(s,\ k_{1},\ n-k_{1}-i)\Big],\\
\end{split}
\end{equation*}
and

\begin{equation*}\label{eq:bn1}
\begin{split}
P_{L}^{(0)}(n)=\sum_{i=k_1}^{n-k_{2}}\theta^{i}(1-\theta)^{n-i}\sum_{s=1}^{i-k_1+1}R(s,\ k_1,\ i)\Big[S(s-1,\ k_2, n-k_{2}-i)+S(s,\ k_2,\ n-k_{2}-i)\Big],\\
\end{split}
\end{equation*}

where

\noindent
\begin{equation*}\label{eq: 1.1}
\begin{split}
R(a,\ b,\ c)=\sum_{j=1}^{min\left(a,\ \left[\frac{c-a}{b-1}\right]\right)}(-1)^{j+1}{a \choose j}{c-j(b-1)-1 \choose a-1}.
\end{split}
\end{equation*}
\noindent
and
\noindent
\begin{equation*}\label{eq: 1.1}
\begin{split}
S(a,\ b,\ c)=\sum_{j=0}^{min(a,\left[\frac{c-a}{b-1}\right])}(-1)^{j}{a \choose j}{c-j(b-1)-1 \choose a-1}.
\end{split}
\end{equation*}
\noindent
See, e.g. \citet{charalambides2002enumerative}.
}
\end{remark}

\section{Run and frequency quotas for $q$-binary trials}
In the present section we shall study of the $q$-sooner and later waiting times to be discussed will arise by setting quotas on both runs and frequencies of successes and failures. First, we consider sooner cases (or later cases) impose a frequency quota on successes and a run quota on failures. Next, we consider sooner cases (or later cases) impose a run quota on successes and a frequency quota on failures.

\subsection{A frequency quota on successes and a run quota on failures are imposed}
we shall discuss of the $q$-sooner and later waiting times impose a frequency quota on successes and a run quota on failures. Each cases will be described in the following subsection.

\vskip 5mm

\subsubsection{Sooner waiting time}
The problem of waiting time that will be discussed in this section is one of the 'sooner cases' and it emerges when a frequency quota on successes and a run quota on failures are imposed. More specifically, Binary (zero and one) trials with probability of ones varying according to a geometric rule, are performed sequentially until $k_1$ successes in total or $k_2$ consecutive failures are observed, whichever event occurs first. Let $\widehat{W}_{S}$ be a random variable denoting that the waiting time until either $k_{1}$ successes in total or $k_{2}$ consecutive failures are occurred, whichever event observe sooner.
\noindent
The probability function of the $q$-sooner waiting time distribution impose a frequency quota on successes and a run quota on failures is obtained by the following theorem. It is evident that
\noindent
\begin{equation*}
P_{q,\theta}(\widehat{W}_{S}=n)=0\ \text{for}\ 0\leq n<\text{min}(k_1,k_2)
\end{equation*}
\noindent
and so we shall focus on determining the probability mass function for $n\geq\min(k_{1},\ k_{2})$.
\noindent

\begin{theorem}
\label{thm:4.1}
The PMF $P_{q,\theta}(\widehat{W}_{S}=n)$ satisfies
\noindent
\begin{equation*}\label{eq:bn}
\begin{split}
P_{q,\theta}(\widehat{W}_{S}=n)=\widehat{P}_{q,S}^{(1)}(n)+\widehat{P}_{q,S}^{(0)}(n),\ \text{for}\ n\geq\min(k_{1},\ k_{2}),
\end{split}
\end{equation*}
\noindent
where $\widehat{P}_{q,S}^{(1)}(k_1)=\theta^{k_{1}}$, $\widehat{P}_{q,S}^{(0)}(k_2)={\prod_{j=1}^{k_2}}(1-\theta q^{j-1})$,
\noindent
\begin{equation*}\label{eq:bn1}
\begin{split}
\widehat{P}_{q,S}^{(1)}(n)=\theta^{k_{1}}\ {\prod_{j=1}^{n-k_1}}\ (1-\theta q^{j-1})\ \sum_{s=1}^{k_1}\bigg[\overline{H}_{q,0,0}^{\infty,k_2}&(k_{1},\ n-k_1,\ s)\\
&+\overline{G}_{q,0,0}^{\infty,k_2}(k_{1},\ n-k_1,\ s)\bigg],\ n>k_{1}\\
\end{split}
\end{equation*}
\noindent
and
\noindent
\begin{equation*}\label{eq:bn1}
\begin{split}
\widehat{P}_{q,S}^{(0)}(n)=\sum_{i=1}^{\min(n-k_2,k_1-1)}\theta^{i}\ {\prod_{j=1}^{n-i}}(1-\theta q^{j-1})\sum_{s=1}^{i}\bigg[\overline{G}_{q,0,0}^{\infty,k_2}&(i,\ n-k_{2}-i,\ s)\\
&+\overline{H}_{q,0,0}^{\infty,k_2}(i,\ n-k_{2}-i,\ s)\bigg],\ n>k_{2}.\\
\end{split}
\end{equation*}
\end{theorem}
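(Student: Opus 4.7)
The plan is to mirror the structure of Theorem~\ref{thm:3.1}, adapting the argument to the fact that now the stopping criterion on successes is a frequency quota rather than a run quota. Write the event $\{\widehat{W}_{S} = n\}$ as the disjoint union of $\{\widehat{W}_{S}^{(1)} = n\}$ (the $k_1$th success arrives at trial $n$, and no failure run of length $k_2$ has yet occurred) and $\{\widehat{W}_{S}^{(0)} = n\}$ (a failure run of length $k_2$ completes at trial $n$, and strictly fewer than $k_1$ successes have been recorded). The base cases $n = k_1$ and $n = k_2$ give $\theta^{k_1}$ and $\prod_{j=1}^{k_2}(1 - \theta q^{j-1})$ respectively by direct inspection.

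First I would treat $\widehat{P}_{q,S}^{(1)}(n)$ for $n > k_1$. A sample path contributing to the event consists of exactly $k_1$ successes and $n - k_1$ failures, with trial $n$ being the $k_1$th success, and with every maximal failure run of length strictly less than $k_2$. Partition such paths by the number $s$ of success runs, so $1 \le s \le k_1$. Because the last trial is a success, each path has either the $(s,s-1)$-type shape (starting and ending with a success run) or the $(s,s)$-type shape (starting with a failure run, ending with a success run). For each type, the $q$-probability of a specific block pattern factors as $\theta^{k_1} \prod_{j=1}^{n-k_1}(1 - \theta q^{j-1}) \cdot q^{E}$, where $E$ is the exponent accumulated by assigning each success block the factor $\theta q^{\text{preceding failures}}$. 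Summing over all admissible compositions and recognising the results from Definitions of $\overline{G}_{q,0,0}^{\infty,k_2}$ and $\overline{H}_{q,0,0}^{\infty,k_2}$ (whose $x$-parts are unbounded, reflecting the absence of a run quota on successes) produces the stated expression.

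Next I would handle $\widehat{P}_{q,S}^{(0)}(n)$ for $n > k_2$. Here I condition on the number of successes $i$ in the first $n - k_2$ trials; because trials $n-k_2+1, \ldots, n$ must all be failures and the sooner quota forces the event $E_0$ to occur strictly before $E_1$, the index $i$ ranges over $1 \le i \le \min(n-k_2,\; k_1-1)$, and trial $n-k_2$ must itself be a success. The conditional probability of the terminal $k_2$ failures is $\prod_{j=n-k_2-i+1}^{n-i}(1 - \theta q^{j-1})$, which combines with the failure contribution from the first $n-k_2$ trials to give the total failure factor $\prod_{j=1}^{n-i}(1 - \theta q^{j-1})$. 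Classifying the arrangements of $i$ successes and $n-k_2-i$ failures (ending with a success) into $(s, s-1)$-type and $(s, s)$-type and summing as above, the $q$-exponents assemble exactly into $\overline{G}_{q,0,0}^{\infty,k_2}(i,\; n-k_2-i,\; s)$ and $\overline{H}_{q,0,0}^{\infty,k_2}(i,\; n-k_2-i,\; s)$.

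The main obstacle I anticipate is the bookkeeping of the $q$-exponent: each success in a block contributes a factor $\theta q^{\text{preceding failures}}$, and one has to verify carefully that, after factoring out $\theta^{k_1}$ (respectively $\theta^{i}$) and the failure product, the residual power of $q$ matches the precise weight already built into the definitions of $\overline{G}_{q,0,0}^{\infty,k_2}$ and $\overline{H}_{q,0,0}^{\infty,k_2}$. A secondary subtlety is confirming that the ranges of $s$ and $i$ agree with the support conditions in Lemmas~\ref{lemma:3.9} and \ref{lemma:3.11}, in particular that out-of-range terms vanish automatically so the summation limits can be truncated at the natural bounds stated in the theorem.
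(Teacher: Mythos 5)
Your proposal matches the paper's own proof in every essential respect: the same decomposition of $\{\widehat{W}_{S}=n\}$ into the two sub-events, the same identification of $\widehat{P}_{q,S}^{(1)}(n)$ with the event $\{L_{n}^{(0)}<k_{2}\ \wedge\ X_{n}=1\ \wedge\ F_{n}=n-k_{1}\}$, the same conditioning on $S_{n-k_{2}}=i$ with $1\le i\le\min(n-k_{2},k_{1}-1)$ for the failure case, the same $(s,s-1)$/$(s,s)$ type classification, and the same recognition of the resulting $q$-exponent sums as $\overline{G}_{q,0,0}^{\infty,k_2}$ and $\overline{H}_{q,0,0}^{\infty,k_2}$ via Lemmas~\ref{lemma:3.9} and \ref{lemma:3.11}. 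The exponent bookkeeping and range verifications you flag as obstacles are exactly the routine computations the paper carries out, so the argument is correct and essentially identical.
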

\begin{proof}
We start with the study of $\widehat{P}_{q,S}^{(1)}(n)$. It is easy to see that $\widehat{P}_{q,S}^{(1)}(k_1)=\big(\theta q^{0}\big)^{k_{1}}=\theta^{k_{1}}$. From now on we assume $n > k_1,$ and we can write $\widehat{P}_{q,S}^{(1)}(n)$ as follows.
\noindent
\begin{equation*}\label{eq:bn}
\begin{split}
\widehat{P}_{q,S}^{(1)}(n)&=P_{q,\theta}\left(L_{n}^{(0)}< k_{2}\ \wedge\ X_{n}=1\ \wedge\ F_{n}=n-k_1\right).\\
\end{split}
\end{equation*}

\noindent
We are going to focus on the event $\left\{L_{n}^{(0)}< k_{2}\ \wedge\ X_{n}=1\ \wedge\ F_{n}=n-k_1\right\}$. A typical element of the event $\left\{L_{n}^{(0)}< k_{2}\ \wedge\ X_{n}=1\ \wedge\ F_{n}=n-k_1\right\}$ is an ordered sequence which consists of $k_{1}$ successes and $n-k_1$ failures such that the length of the longest failure run is less than $k_2$. The number of these sequences can be derived as follows. First we will distribute the $k_1$ successes. Let $s$ $(1\leq s \leq k_1)$ be the number of runs of successes in the event $\left\{L_{n}^{(0)}< k_{2}\ \wedge\ X_{n}=1\ \wedge\ F_{n}=n-k_1\right\}$. We divide into two cases: starting with a failure run or starting with a success run. Thus, we distinguish between two types of sequences in the event  $$\left\{L_{n}^{(0)}< k_{2}\ \wedge\ X_{n}=1\ \wedge\ F_{n}=n-k_1\right\},$$ respectively named $(s,\ s)$-type and $(s,\ s-1)$-type, which are defined as follows.
\noindent
\begin{equation*}\label{eq:bn}
\begin{split}
(s,\ s)\text{-type}\ :&\quad \overbrace{0\ldots 0}^{y_{1}}\mid\overbrace{1\ldots 1}^{x_{1}}\mid\overbrace{0\ldots 0}^{y_{2}}\mid\overbrace{1\ldots 1}^{x_{2}}\mid \ldots \mid \overbrace{0\ldots 0}^{y_{s-1}}\mid\overbrace{1\ldots 1}^{x_{s-1}}\mid\overbrace{0\ldots 0}^{y_{s}}\mid\overbrace{1\ldots 1}^{x_{s}},\\
\end{split}
\end{equation*}
\noindent
with $n-k_1$ $0$'s and $k_{1}$ $1$'s, where $x_{j}$ $(j=1,\ldots,s)$ represents a length of run of $1$'s and $y_{j}$ $(j=1,\ldots,s)$ represents the length of a run of $0$'s. And all integers $x_{1},\ldots,x_{s}$, and $y_{1},\ldots ,y_{s}$ satisfy the conditions
\noindent
\begin{equation*}\label{eq:bn}
\begin{split}
0 < x_j  \mbox{\ for\ } j=1,...,s, \mbox{\ and\ } x_1+\cdots +x_{s} = k_1,
\end{split}
\end{equation*}
\noindent
\begin{equation*}\label{eq:bn}
\begin{split}
0 < y_j < k_2 \mbox{\ for\ } j=1,...,s, \mbox{\ and\ } y_1+\cdots +y_s=n-k_1.
\end{split}
\end{equation*}

\begin{equation*}\label{eq:bn}
\begin{split}
(s,\ s-1)\text{-type}\  :&\quad \overbrace{1\ldots 1}^{x_{1}}\mid\overbrace{0\ldots 0}^{y_{1}}\mid\overbrace{1\ldots 1}^{x_{2}}\mid\overbrace{0\ldots 0}^{y_{2}}\mid \ldots\mid\overbrace{1\ldots 1}^{x_{s-1}} \mid \overbrace{0\ldots 0}^{y_{s-1}}\mid\overbrace{1\ldots 1}^{x_{s}},
\end{split}
\end{equation*}
\noindent
with $n-k_1$ $0$'s and $k_{1}$ $1$'s, where $x_{j}$ $(j=1,\ldots,s)$ represents a length of run of $1$'s and $y_{j}$ $(j=1,\ldots,s-1)$ represents the length of a run of $0$'s. Here all of $x_1,\ldots, x_{s},$ and $y_1,\ldots, y_{s-1}$ are integers, and they satisfy
\noindent
\begin{equation*}\label{eq:bn}
\begin{split}
0 < x_j  \mbox{\ for\ } j=1,...,s, \mbox{\ and\ } x_1+\cdots +x_s = k_1,
\end{split}
\end{equation*}
\noindent
\begin{equation*}\label{eq:bn}
\begin{split}
0 < y_j < k_2 \mbox{\ for\ } j=1,...,s-1, \mbox{\ and\ } y_1+\cdots +y_{s-1} =n-k_1.
\end{split}
\end{equation*}
\noindent
Then the probability of the event $\left\{L_{n}^{(0)}< k_{2}\ \wedge\ X_{n}=1\ \wedge\ F_{n}=n-k_1\right\}$ is given by
\noindent
\begin{equation*}\label{eq: 1.1}
\begin{split}
P_{q,\theta}\Big(L_{n}^{(0)}&< k_{2}\ \wedge\ X_{n}=1\ \wedge\ F_{n}=n-k_1\Big)=\\
\sum_{s=1}^{k_1}\Bigg[\bigg\{&\sum_{\substack{x_{1},\ldots,x_{s} \in S_{k_{1},s}^{0}}}\
\sum_{\substack{y_{1}+\cdots+y_{s}=n-k_1\\ y_{1},\ldots,y_{s} \in \{1,\ldots,k_{2}-1\}}} \big(1-\theta q^{0}\big)\cdots \big(1-\theta q^{y_{1}-1}\big)\Big(\theta q^{y_{1}}\Big)^{x_{1}}\times\\
&\big(1-\theta q^{y_{1}}\big)\cdots \big(1-\theta q^{y_{1}+y_{2}-1}\big)\Big(\theta q^{y_{1}+y_{2}}\Big)^{x_{2}}\times\\
&\quad \quad \quad \quad\quad \quad\quad \quad \quad \quad \quad \vdots\\
&\big(1-\theta q^{y_{1}+\cdots+y_{s-1}}\big)\cdots \big(1-\theta q^{y_{1}+\cdots+y_{s}-1}\big)\Big(\theta q^{y_{1}+\cdots+y_{s}}\Big)^{x_{s}}\bigg\}+\\
\end{split}
\end{equation*}

\noindent
\begin{equation*}\label{eq: 1.1}
\begin{split}
\quad \quad \quad  &\bigg\{\sum_{\substack{x_{1},\ldots,x_{s} \in S_{k_{1},s}^{0}}}\
\sum_{\substack{y_{1}+\cdots+y_{s-1}=n-k_1\\ y_{1},\ldots,y_{s-1} \in \{1,\ldots,k_{2}-1\}}}\big(\theta q^{0}\big)^{x_{1}}\big(1-\theta q^{0}\big)\cdots (1-\theta q^{y_{1}-1})\times\\
&\Big(\theta q^{y_{1}}\Big)^{x_{2}}\big(1-\theta q^{y_{1}}\big)\cdots \big(1-\theta q^{y_{1}+y_{2}-1}\big)\times\\
%\times&\Big(\theta q^{y_{1}+y_{2}}\Big)^{x_{3}}\big(1-\theta q^{y_{1}}\big)\cdots \big(1-\theta q^{y_{1}+y_{2}+y_3-1}\big) \\
&\quad \quad \quad \quad\quad \quad\quad \quad \quad \quad \quad \vdots\\
&\Big(\theta q^{y_{1}+\cdots+y_{s-2}}\Big)^{x_{s-1}}
\big(1-\theta q^{y_{1}+\cdots+y_{s-2}}\big)\cdots \big(1-\theta q^{y_{1}+\cdots+y_{s-1}-1}\big)\times\\
&\Big(\theta q^{y_{1}+\cdots+y_{s-1}}\Big)^{x_{s}}\bigg\}\Bigg].\\
\end{split}
\end{equation*}
\noindent
Using simple exponentiation algebra arguments to simplify,
\noindent
\begin{equation*}\label{eq: 1.1}
\begin{split}
&P_{q,\theta}\Big(L_{n}^{(0)}< k_{2}\ \wedge\ X_{n}=1\ \wedge\ F_{n}=n-k_1\Big)=\\
&\theta^{k_{1}}{\prod_{j=1}^{n-k_1}}\ (1-\theta q^{j-1})\times\\
&\sum_{s=1}^{k_1}\Bigg[\sum_{\substack{(x_{1},\ldots,x_{s}) \in S_{k_{1},s}^{0}}}
{\hspace{0.3cm}\sum_{\substack{y_{1}+\cdots+y_{s}=n-k_1\\ y_{1},\ldots,y_{s} \in \{1,\ldots,k_{2}-1\}}}}q^{y_{1}x_{1}+(y_{1}+y_{2})x_{2}+\cdots+(y_{1}+\cdots+y_{s})x_{s}}+\\
&\quad\quad\sum_{\substack{(x_{1},\ldots,x_{s}) \in S_{k_{1},s}^{0}}}\
{\hspace{0.3cm}\sum_{\substack{y_{1}+\cdots+y_{s-1}=n-k_1\\ y_{1},\ldots,y_{s-1} \in \{1,\ldots,k_{2}-1\}}}}q^{y_{1}x_{2}+(y_{1}+y_{2})x_{3}+\cdots+(y_{1}+\cdots+y_{s-1})x_{s}}\Bigg].\\
\end{split}
\end{equation*}
\noindent
Using Lemma \ref{lemma:3.9} and Lemma \ref{lemma:3.11}, we can rewrite as follows.
\noindent
\begin{equation*}\label{eq: 1.1}
\begin{split}
P_{q,\theta}\Big(&L_{n}^{(0)}< k_{2} \wedge\ X_{n}=1\ \wedge\ F_{n}=n-k_1\Big)\\
&=\theta^{k_{1}}\ {\prod_{j=1}^{n-k_1}}\ (1-\theta q^{j-1})\ \sum_{s=1}^{k_1}\bigg[\overline{H}_{q,0,0}^{\infty,k_2}(k_{1},\ n-k_1,\ s)+\overline{G}_{q,0,0}^{\infty,k_2}(k_{1},\ n-k_1,\ s)\bigg],\\
\end{split}
\end{equation*}\\
\noindent
where
\noindent
\begin{equation*}\label{eq: 1.1}
\begin{split}
\overline{H}_{q,0,0}^{\infty,k_2}(k_{1},\ n-k_1,\ s)=\sum_{\substack{(x_{1},\ldots,x_{s}) \in S_{k_{1},s}^{0}}}
{\hspace{0.3cm}\sum_{\substack{y_{1}+\cdots+y_{s}=n-k_1\\ y_{1},\ldots,y_{s} \in \{1,\ldots,k_{2}-1\}}}}q^{y_{1}x_{1}+(y_{1}+y_{2})x_{2}+\cdots+(y_{1}+\cdots+y_{s})x_{s}},
\end{split}
\end{equation*}
\noindent
and
\noindent
\begin{equation*}\label{eq: 1.1}
\begin{split}
\overline{G}_{q,0,0}^{\infty,k_2}(k_{1},\ n-k_1,\ s)=\sum_{\substack{ (x_{1},\ldots,x_{s}) \in S_{k_{1},s}^{0}}}\
{\hspace{0.3cm}\sum_{\substack{y_{1}+\cdots+y_{s-1}=n-k_1\\ (y_{1},\ldots,y_{s-1}) \in \{1,\ldots,k_{2}-1\}}}}q^{y_{1}x_{2}+(y_{1}+y_{2})x_{3}+\cdots+(y_{1}+\cdots+y_{s-1})x_{s}}.
\end{split}
\end{equation*}
\noindent
Therefore we can compute the probability of the event $W_{S}^{(1)}=n$ as follows.
\noindent
\begin{equation*}\label{eq:bn1}
\begin{split}
\widehat{P}_{S}^{(1)}(n)=\theta^{k_{1}}\ {\prod_{j=1}^{n-k_1}}\ (1-\theta q^{j-1})\ \sum_{s=1}^{k_1}\bigg[\overline{H}_{q,0,0}^{\infty,k_2}(k_{1},\ n-k_1,\ s)+\overline{G}_{q,0,0}^{\infty,k_2}(k_{1},\ n-k_1,\ s)\bigg].\\
\end{split}
\end{equation*}
\noindent

We are now going to study $\widehat{P}_{q,S}^{(0)}(n)$. It is easy to see that $\widehat{P}_{q,S}^{(0)}(k_2)={\prod_{j=1}^{k_2}}(1-\theta q^{j-1})$. From now on we assume $n > k_2,$ and we can write $\widehat{P}_{q,S}^{(0)}(n)$ as

\noindent
\begin{equation*}\label{eq:bn}
\begin{split}
\widehat{P}_{q,S}^{(0)}(n)&=P_{q,\theta}\left(L_{n-k_{2}}^{(0)}< k_{2}\ \wedge\ X_{n-k_{2}}=1\ \wedge\ X_{n-k_{2}+1}=\cdots =X_{n}=0\right).\\
\end{split}
\end{equation*}

We partition the event $\widehat{W}_{S}^{(0)}=n$ into disjoint events given by $S_{n-k_2}=i,$ for $i=1, \ldots,\min(n-k_2,k_1-1).$ Adding the probabilities we have
\noindent
\begin{equation*}\label{eq:bn}
\begin{split}
\widehat{P}_{q,S}^{(0)}(n)=\sum_{i=1}^{\min(n-k_2,k_1-1)}P_{q,\theta}\Big(L_{n-k_{2}}^{(0)}< k_{2}\ \wedge\ S_{n-k_2}=i&\ \wedge\ X_{n-k_{2}}=1\ \wedge\\
&X_{n-k_{2}+1}=\cdots =X_{n}=0\Big).\\
\end{split}
\end{equation*}
\noindent

Using $F_{n-k_2}=n-k_2-S_{n-k_2},$ we can rewrite as follows.
\begin{equation*}\label{eq:bn}
\begin{split}
\widehat{P}_{q,S}^{(0)}(n)=\sum_{i=1}^{\min(n-k_2,k_1-1)}P_{q,\theta}\Big(L_{n-k_{2}}^{(0)}< k_{2}\ \wedge\ F_{n-k_2}=n-k_2-i&\ \wedge\ X_{n-k_{2}}=1\ \wedge\\
&X_{n-k_{2}+1}=\cdots =X_{n}=0\Big).\\
\end{split}
\end{equation*}
\noindent
If the number of $0$'s in the first $n-k_2$ trials is equal to $n-k_2-i,$ that is, $F_{n-k_2}=n-k_2-i,$ then the probability of failures in all of the $(n-k_2+1)$-th to $n$-th trials is
\noindent
\begin{equation*}\label{eq: kk}
\begin{split}
P(X_{n-k_2+1}=\cdots = X_n = 0 \,\mid \, F_{n-k_2}=n-k_2-i)={\prod_{j=n-k_2-i+1}^{n-i}}(1-\theta q^{j-1}).
\end{split}
\end{equation*}
\noindent
We write $E_{n,i}^{(1)}$ for the event $\left\{L_n^{(0)}< k_2\ \wedge\ X_n=1\ \wedge\ F_n=n-i\right\}.$ We can now rewrite as follows.
\noindent
\begin{equation*}\label{eq:bn1}
\begin{split}
\widehat{P}_{q,S}^{(0)}(n)=\sum_{i=1}^{\min(n-k_2,k_1-1)}P_{q,\theta}\Big(L_{n-k_{2}}^{(0)}< &k_{2}\ \wedge\ F_{n-k_2}=n-k_2-i\ \wedge\ X_{n-k_{2}}=1\Big)\\
&\times P_{q,\theta}\Big(X_{n-k_{2}+1}=\cdots =X_{n}=0\mid F_{n-k_{2}}=n-k_2-i\Big)\\
=\sum_{i=1}^{\min(n-k_2,k_1-1)}P_{q,\theta}\Big(E_{n-k_{2},\ i}^{(1)}&\Big){\prod_{j=n-k_2-i+1}^{n-i}}(1-\theta q^{j-1}).\\
\end{split}
\end{equation*}
\noindent
We are going to focus on the event $E_{n-k_{2},i}^{(1)}$. A typical element of the event $E_{n-k_{2},i}^{(1)}$ is an ordered sequence which consists of $i$ successes and $n-k_2-i$ failures such that the length of the longest failure run is less than $k_2$. The number of these sequences can be derived as follows. First we will distribute the $i$ successes. Let $s$ $(1\leq s \leq i)$ be the number of runs of successes in the event $E_{n-k_{2},\ i}^{(1)}$. We divide into two cases:starting with a success run or starting with a failure run. Thus, we distinguish between two types of sequences in the event $$\left\{L_{n-k_{2}}^{(0)}< k_{2}\ \wedge\ F_{n-k_2}=n-k_2-i\ \wedge\ X_{n-k_{2}}=1\right\},$$ respectively named $(s,\ s-1)$-type and $(s,\ s)$-type, which are defined as follows.
\noindent
\begin{equation*}\label{eq:bn}
\begin{split}
(s,\ s-1)\text{-type}\ :&\quad \overbrace{1\ldots 1}^{x_{1}}\mid\overbrace{0\ldots 0}^{y_{1}}\mid\overbrace{1\ldots 1}^{x_{2}}\mid\overbrace{0\ldots 0}^{y_{2}}\mid \ldots \mid\overbrace{0\ldots 0}^{y_{s-1}}\mid\overbrace{1\ldots 1}^{x_{s}},\\
\end{split}
\end{equation*}
\noindent
with $n-k_{2}-i$ $0$'s and $i$ $1$'s, where $x_{j}$ $(j=1,\ldots,s)$ represents the length of a run of $1$'s and $y_{j}$ $(j=1,\ldots,s-1)$ represents the length of a run of $0$'s. And all integers $x_{1},\ldots,x_{s}$, and $y_{1},\ldots ,y_{s-1}$ satisfy the conditions
\noindent
\begin{equation*}\label{eq:bn}
\begin{split}
0 < x_j \mbox{\ for\ } j=1,...,s, \mbox{\ and\ } x_1+\cdots +x_{s} = i,
\end{split}
\end{equation*}
\noindent
\begin{equation*}\label{eq:bn}
\begin{split}
0 < y_j < k_2 \mbox{\ for\ } j=1,...,s-1, \mbox{\ and\ } y_1+\cdots +y_{s-1}=n-k_2-i.
\end{split}
\end{equation*}

\begin{equation*}\label{eq:bn}
\begin{split}
(s,\ s)\text{-type}\  :&\quad \overbrace{0\ldots 0}^{y_{1}}\mid\overbrace{1\ldots 1}^{x_{1}}\mid\overbrace{0\ldots 0}^{y_{2}}\mid\overbrace{1\ldots 1}^{x_{2}}\mid\overbrace{0\ldots 0}^{y_{3}}\mid \ldots \mid\overbrace{0\ldots 0}^{y_{s}}\mid\overbrace{1\ldots 1}^{x_{s}},
\end{split}
\end{equation*}
\noindent
with $n-k_{2}-i$ $0$'s and $i$ $1$'s, where $x_{j}$ $(j=1,\ldots,s)$ represents the length of a run of $1$'s and $y_{j}$ $(j=1,\ldots,s)$ represents the length of a run of $0$'s. Here all of $x_1,\ldots, x_{s},$ and $y_1,\ldots, y_s$ are integers, and they satisfy
\noindent
\begin{equation*}\label{eq:bn}
\begin{split}
0 < x_j \mbox{\ for\ } j=1,...,s, \mbox{\ and\ } x_1+\cdots +x_s = i,
\end{split}
\end{equation*}
\noindent
\begin{equation*}\label{eq:bn}
\begin{split}
0 < y_j < k_2 \mbox{\ for\ } j=1,...,s, \mbox{\ and\ } y_1+\cdots +y_s =n-k_2-i.
\end{split}
\end{equation*}

Then the probability of the event $E_{n-k_2,i}^{(1)}$ is given by
\noindent
\begin{equation*}\label{eq: 1.1}
\begin{split}
P_{q,\theta}\Big(E_{n-k_{2},\ i}^{(1)}&\Big)=\\
\sum_{s=1}^{i}\Bigg[\bigg\{&\sum_{\substack{(x_{1},\ldots,x_{s}) \in S_{i,s}^{0}}}{\hspace{0.5cm}\sum_{\substack{y_{1}+\cdots+y_{s-1}=n-k_2-i\\ y_{1},\ldots,y_{s-1} \in \{1,\ldots,k_{2}-1\}}}} \big(\theta q^{0}\big)^{x_{1}}\big(1-\theta q^{0}\big)\cdots (1-\theta q^{y_{1}-1})\times\\
&\Big(\theta q^{y_{1}}\Big)^{x_{2}}\big(1-\theta q^{y_{1}}\big)\cdots \big(1-\theta q^{y_{1}+y_{2}-1}\big)\times\\
&\Big(\theta q^{y_{1}+y_{2}}\Big)^{x_{3}}\big(1-\theta q^{y_{1}+y_2}\big)\cdots \big(1-\theta q^{y_{1}+y_{2}+y_3-1}\big)\times \\
&\quad \quad \quad \vdots\\
&\Big(\theta q^{y_{1}+\cdots+y_{s-1}}\Big)^{x_{s}}\bigg\}+\\
\end{split}
\end{equation*}

\noindent
\begin{equation*}\label{eq: 1.1}
\begin{split}
\quad \quad \quad &\bigg\{\sum_{\substack{(x_{1},\ldots,x_{s}) \in S_{i,s}^{0}}}\
\sum_{\substack{y_{1}+\cdots+y_{s}=n-k_2-i\\ y_{1},\ldots,y_{s} \in \{1,\ldots,k_{2}-1\}}}\big(1-\theta q^{0}\big)\cdots \big(1-\theta q^{y_{1}-1}\big)\Big(\theta q^{y_{1}}\Big)^{x_{1}}\times\\
&\big(1-\theta q^{y_{1}}\big)\cdots \big(1-\theta q^{y_{1}+y_{2}-1}\big)\Big(\theta q^{y_{1}+y_{2}}\Big)^{x_{2}}\times\\
&\big(1-\theta q^{y_{1}+y_2}\big)\cdots \big(1-\theta q^{y_{1}+y_{2}+y_3-1}\big)\Big(\theta q^{y_{1}+y_{2}+y_3}\Big)^{x_{3}}\times \\
&\quad \quad \quad  \quad \quad \quad \quad \quad \quad \quad \vdots\\
&\big(1-\theta q^{y_{1}+\cdots+y_{s-1}}\big)\cdots \big(1-\theta q^{y_{1}+\cdots+y_{s}-1}\big)\Big(\theta q^{y_{1}+\cdots+y_{s}}\Big)^{x_{s}}\bigg\}\Bigg].\\
\end{split}
\end{equation*}
\noindent
Using simple exponentiation algebra arguments to simplify,
\noindent
\begin{equation*}\label{eq: 1.1}
\begin{split}
&P_{q,\theta}\Big(E_{n-k_{2},\ i}^{(1)}\Big)=\\
&\theta^{i}\ {\prod_{j=1}^{n-k_2-i}}\ (1-\theta q^{j-1})\times\\
&\sum_{s=1}^{i}\Bigg[\sum_{\substack{(x_{1},\ldots,x_{s}) \in S_{i,s}^{0}}}\
\sum_{\substack{y_{1}+\cdots+y_{s-1}=n-k_2-i\\ y_{1},\ldots,y_{s-1} \in \{1,\ldots,k_{2}-1\}}}q^{y_{1}x_{2}+(y_{1}+y_{2})x_{3}+\cdots+(y_{1}+\cdots+y_{s-1})x_{s}}+\\
&\quad \quad \sum_{\substack{(x_{1},\ldots,x_{s}) \in S_{i,s}^{0}}}\
\sum_{\substack{y_{1}+\cdots+y_{s}=n-k_2-i\\ y_{1},\ldots,y_{s} \in \{1,\ldots,k_{2}-1\}}}q^{y_{1}x_{1}+(y_{1}+y_{2})x_{2}+\cdots+(y_{1}+\cdots+y_{s})x_{s}}\Bigg].\\
\end{split}
\end{equation*}
\noindent
Using Lemma \ref{lemma:3.9} and Lemma \ref{lemma:3.11}, we can rewrite as follows.
\noindent
\begin{equation*}\label{eq: 1.1}
\begin{split}
P_{q,\theta}\Big(E_{n-k_{2},\ i}^{(1)}\Big)=\theta^{i}\ {\prod_{j=1}^{n-k_2-i}}\ \left(1-\theta q^{j-1}\right)\sum_{s=1}^{i}\bigg[\overline{G}_{q,0,0}^{\infty,k_2}&(i,\ n-k_{2}-i,\ s)\\
&+\overline{H}_{q,0,0}^{\infty,k_2}(i,\ n-k_{2}-i,\ s)\bigg],\\
\end{split}
\end{equation*}\\
\noindent
where
\noindent
\begin{equation*}\label{eq: 1.1}
\begin{split}
\overline{G}_{q,0,0}^{\infty,k_2}(i,\ n-k_{2}-i,\ s)=\sum_{\substack{(x_{1},\ldots,x_{s}) \in S_{i,s}^{0}}}\
\sum_{\substack{y_{1}+\cdots+y_{s-1}=n-k_2-i\\ y_{1},\ldots,y_{s-1} \in \{1,\ldots,k_{2}-1\}}}q^{y_{1}x_{2}+(y_{1}+y_{2})x_{3}+\cdots+(y_{1}+\cdots+y_{s-1})x_{s}},
\end{split}
\end{equation*}
\noindent
and
\noindent
\begin{equation*}\label{eq: 1.1}
\begin{split}
\overline{H}_{q,0,0}^{\infty,k_2}(i,\ n-k_{2}-i,\ s)=\sum_{\substack{(x_{1},\ldots,x_{s}) \in S_{i,s}^{0}}}\
\sum_{\substack{y_{1}+\cdots+y_{s}=n-k_2-i\\ y_{1},\ldots,y_{s} \in \{1,\ldots,k_{2}-1\}}}q^{y_{1}x_{1}+(y_{1}+y_{2})x_{2}+\cdots+(y_{1}+\cdots+y_{s})x_{s}}.
\end{split}
\end{equation*}
\noindent
Therefore we can compute the probability of the event $\widehat{W}_{S}^{(0)}=n$ as follows
\noindent
\begin{equation*}\label{eq:bn1}
\begin{split}
\widehat{P}_{q,S}^{(0)}(n)=&\sum_{i=1}^{\min(n-k_2,k_1-1)}P\Big(E_{n-k_{2},\ i}^{(1)}\Big){\prod_{j=n-k_2-i+1}^{n-i}}(1-\theta q^{j-1})\\
=&\sum_{i=1}^{\min(n-k_2,k_1-1)}\theta^{i}{\prod_{j=1}^{n-k_2-i}}\ (1-\theta q^{j-1})\sum_{s=1}^{i}\bigg[\overline{G}_{q,0,0}^{\infty,k_2}(i,\ n-k_{2}-i,\ s)\\
&+\overline{H}_{q,0,0}^{\infty,k_2}(i,\ n-k_{2}-i,\ s)\bigg]\times{\prod_{j=n-k_2-i+1}^{n-i}}(1-\theta q^{j-1}).\\
\end{split}
\end{equation*}

\noindent
Finally, applying typical factorization algebra arguments, $\widehat{P}_{S}^{(0)}(n)$ can be rewritten as follows
\noindent
\begin{equation*}\label{eq:bn1}
\begin{split}
\widehat{P}_{q,S}^{(0)}(n)=\sum_{i=1}^{\min(n-k_2,k_1-1)}\theta^{i}\ {\prod_{j=1}^{n-i}}(1-\theta q^{j-1})\sum_{s=1}^{i}\bigg[\overline{G}_{q,0,0}^{\infty,k_2}&(i,\ n-k_{2}-i,\ s)\\
&+\overline{H}_{q,0,0}^{\infty,k_2}(i,\ n-k_{2}-i,\ s)\bigg].\\
\end{split}
\end{equation*}
Thus proof is completed.
\end{proof}
\noindent
It is worth mentioning here that the PMF $\widehat{f}_{q,S}(n;\theta)$ approaches the probability function of the sooner waiting time distribution of order $(k_1,k_2)$
in the limit as $q$ tends to 1 when a frequency quota on runs of successes and a run quota on failures are imposed of IID model. The details are presented in the following remark.
\noindent
\begin{remark}
{\rm For $q=1$, the PMF $\widehat{f}_{q,S}(n;\theta)$ reduces to the PMF $\widehat{f}_{S}(n;\theta)=P_{\theta}(\widehat{W}_{S}=n)$ for $n\geq min(k_1,k_2)$ is given by

\noindent
\begin{equation*}\label{eq:bn}
\begin{split}
P(\widehat{W}_{S}=n)=\widehat{P}_{S}^{(1)}(n)+\widehat{P}_{S}^{(0)}(n),
\end{split}
\end{equation*}
\noindent
where $\widehat{P}_S^{(1)}(k_1)=\theta^{k_{1}}$ and $\widehat{P}_S^{(0)}(k_2)=(1-\theta)^{k_2}$,
\noindent
\begin{equation*}\label{eq:bn1}
\begin{split}
\widehat{P}_{S}^{(1)}(n)=\theta^{k_{1}}(1-\theta)^{n-k_1}\sum_{s=1}^{k_1}M(s,\ k_1)\bigg[S(s,\ k_{2},\ n-k_1)+S(s-1,\ k_2,\ n-k_1)\bigg],\ n>k_{1}\\
\end{split}
\end{equation*}
\noindent
and
\noindent
\begin{equation*}\label{eq:bn1}
\begin{split}
\widehat{P}_{S}^{(0)}(n)=\sum_{i=1}^{\min(n-k_2,k_1-1)}\theta^{i}\ (1-\theta)^{n-i}\sum_{s=1}^{i}M(s,\ i)\bigg[&S(s-1,\ k_2,\ n-k_{2}-i)\\
&+S(s,\ k_2,\ n-k_{2}-i)\bigg],\ n>k_{2},\\
\end{split}
\end{equation*}

where

\noindent
\begin{equation*}\label{eq: 1.1}
\begin{split}
M(a,\ b)={b-1 \choose a-1}.
\end{split}
\end{equation*}
\noindent
and
\noindent
\begin{equation*}\label{eq: 1.1}
\begin{split}
S(a,\ b,\ c)=\sum_{j=0}^{min(a,\left[\frac{c-a}{b-1}\right])}(-1)^{j}{a \choose j}{c-j(b-1)-1 \choose a-1}.
\end{split}
\end{equation*}
\noindent
See, e.g. \citet{charalambides2002enumerative}.
}
\end{remark}
\subsubsection{Later waiting time}
The problem of waiting time that will be discussed in this section is one of the 'later cases' and it emerges when a frequency quota on successes and a run quota on failures are imposed. More specifically, Binary (zero and one) trials with probability of ones varying according to a geometric rule, are performed sequentially until $k_1$ successes in total or $k_2$ consecutive failures are observed, whichever event gets observed later.
Let random variable $\widehat{W}_{L}$ denote the waiting time until both $k_{1}$ successes in total and $k_{2}$ consecutive failures have observed, whichever event gets observed later. Let random variable $\widehat{W}_{L}$ denote the waiting time until both $k_{1}$ successes in total and $k_{2}$ consecutive failures have observed, whichever event gets observed later. We now make some Lemma for the proofs of Theorem in the sequel.
\noindent
\begin{definition}
For $0<q\leq1$, we define

\begin{equation*}\label{eq: 1.1}
\begin{split}
\overline{I}_{q}(u,v,s)={\sum_{x_{1},\ldots,x_{s}}}\
\sum_{y_{1},\ldots,y_{s-1}} q^{y_{1}x_{2}+(y_{1}+y_{2})x_{3}+\cdots+(y_{1}+\cdots+y_{s-1})x_{s}},\
\end{split}
\end{equation*}
\noindent
where the summation is over all integers $x_1,\ldots,x_{s},$ and $y_1,\ldots,y_{s-1}$ satisfying
\noindent

\begin{equation*}\label{eq:1}
\begin{split}
(x_1,\ldots,x_{s})\in S_{u,s}^{0},\ \text{and}
\end{split}
\end{equation*}
\noindent
\begin{equation*}\label{eq:2}
\begin{split}
(y_{1},\ldots,y_{s-1}) \in S_{v,s-1}^{0}.
\end{split}
\end{equation*}
\end{definition}
\noindent
The following gives a recurrence relation useful for the computation of $\overline{I}_{q}(u,v,s)$.
\noindent
\begin{lemma}
\label{lemma:4.1}
$\overline{I}_{q}(u,v,s)$ obeys the following recurrence relation.
\begin{equation*}\label{eq: 1.1}
\begin{split}
\overline{I}&_{q}(u,v,s)\\
&=\left\{
  \begin{array}{ll}
    \sum_{b=1}^{v-(s-2)}\sum_{a=1}^{u-(s-1)}q^{va}\overline{I}_{q}(u-a,v-b,s-1), & \text{for}\ s>1,\ s\leq u\ \text{and}\ (s-1)\leq v \\
    1, & \text{for}\ s=1,\ 1\leq u,\ \text{and}\  v=0\\
    0, & \text{otherwise.}\\
  \end{array}
\right.
\end{split}
\end{equation*}
\end{lemma}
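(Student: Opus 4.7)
The plan is to mirror the argument pattern used for Lemmas \ref{lemma:3.9} and \ref{lemma:3.11}, peeling off the last $x$-variable and then the last $y$-variable so that the remaining double sum is precisely $\overline{I}_q(u-a,v-b,s-1)$. The essential algebraic observation is that in the exponent
\[
y_1 x_2 + (y_1+y_2)x_3 + \cdots + (y_1+\cdots+y_{s-1})x_s,
\]
the coefficient of $x_s$ is $y_1+\cdots+y_{s-1}$, which equals $v$ by the constraint $(y_1,\ldots,y_{s-1})\in S_{v,s-1}^{0}$. Hence fixing $x_s=a$ pulls out a factor $q^{va}$ that is independent of the remaining $y$-variables.

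Concretely, for $s>1$, $s\leq u$ and $s-1\leq v$, I would first note that since $x_1,\ldots,x_{s-1}$ are each at least $1$, the range of $x_s$ is $1,\ldots,u-(s-1)$. Writing
\[
\overline{I}_q(u,v,s)=\sum_{x_s=1}^{u-(s-1)} q^{v x_s}\sum_{\substack{(x_1,\ldots,x_{s-1})\in S_{u-x_s,s-1}^{0}}}\;\sum_{(y_1,\ldots,y_{s-1})\in S_{v,s-1}^{0}} q^{y_1 x_2+\cdots+(y_1+\cdots+y_{s-2})x_{s-1}},
\]
I next condition on $y_{s-1}$. Since $y_{s-1}$ no longer appears in the residual exponent and the remaining $s-2$ positive $y$-variables must each be at least $1$, the range of $y_{s-1}$ is $1,\ldots,v-(s-2)$. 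Setting $y_{s-1}=b$ leaves precisely
\[
\sum_{\substack{(x_1,\ldots,x_{s-1})\in S_{u-a,s-1}^{0}}}\;\sum_{(y_1,\ldots,y_{s-2})\in S_{v-b,s-2}^{0}} q^{y_1 x_2+\cdots+(y_1+\cdots+y_{s-2})x_{s-1}}=\overline{I}_q(u-a,v-b,s-1),
\]
which, after relabelling $x_s=a$, yields the claimed recurrence $\overline{I}_q(u,v,s)=\sum_{b=1}^{v-(s-2)}\sum_{a=1}^{u-(s-1)} q^{va}\,\overline{I}_q(u-a,v-b,s-1)$.

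For the base case $s=1$, the list $y_1,\ldots,y_{s-1}$ is empty, so the constraint $(y_1,\ldots,y_{0})\in S_{v,0}^{0}$ forces $v=0$ (the empty sum equals $0$). The single variable $x_1$ must equal $u$, and the exponent is an empty sum, so $q^0=1$; this contributes $1$ when $u\geq 1$ and $v=0$, and $0$ otherwise. In all remaining degenerate regimes (e.g.\ $u<s$ or $v<s-1$) the defining summation is empty, giving $0$.

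There is no genuine obstacle here: this lemma is a routine variant of Lemma \ref{lemma:3.9} where the upper bound $y_j<k_2$ is removed, and the argument is mechanical. The only minor care required is in the index ranges: $x_s$ has upper bound $u-(s-1)$ (reflecting the positivity of the other $x$'s) while $y_{s-1}$ has upper bound $v-(s-2)$ (reflecting the positivity of the other $s-2$ $y$'s), and these two ranges are what yield the outer limits of summation in the stated recurrence.
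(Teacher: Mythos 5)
Your proof is correct and follows essentially the same route as the paper's: peel off $x_s$ over the range $1,\ldots,u-(s-1)$ (extracting the factor $q^{va}$ because its coefficient $y_1+\cdots+y_{s-1}$ equals $v$), then peel off $y_{s-1}$ over $1,\ldots,v-(s-2)$, and recognize the remaining double sum as $\overline{I}_q(u-a,v-b,s-1)$. The treatment of the base case $s=1$ and the degenerate regimes also matches the paper's (tersely stated) handling.
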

\begin{proof}
For $s > 1$, $s\leq u$ and $(s-1)\leq v$, we observe that $x_{s}$ may assume any value $1,\ldots,u-(s-1)$, then $\overline{I}_{q}(u,v,s)$ can be written as
\noindent
\begin{equation*}%\label{eq: 1.1}
\begin{split}
\overline{I}&_{q}(u,v,s)\\
=&\sum_{x_{s}=1}^{u-(s-1)}{\sum_{\substack{(x_1,\ldots,x_{s-1})\in S_{u-x_{s},s-1}^{0}}}}\
{\sum_{(y_{1},\ldots,y_{s-1}) \in S_{v,s-1}^{0}}}q^{vx_{s}}q^{y_{1}x_{2}+(y_{1}+y_{2})x_{3}+\cdots+(y_{1}+\cdots+y_{s-2})x_{s-1}}.
\end{split}
\end{equation*}
\noindent
Similarly, we observe that since $y_{s-1}$ can assume the values $1,\ldots,v-(s-2)$, then $\overline{I}_{q}(u,v,s)$ can be rewritten as
\noindent
\begin{equation*}%\label{eq: 1.1}
\begin{split}
\overline{I}&_{q}(u,v,s)\\
=&\sum_{y_{s-1}=1}^{v-(s-2)}\sum_{x_{s}=1}^{u-(s-1)}{\sum_{\substack{(x_1,\ldots,x_{s-1})\in S_{u-x_{s},s-1}^{0}}}}\
{\sum_{(y_{1},\ldots,y_{s-2}) \in S_{v-y_{s-1},s-2}^{0}}}q^{vx_{s}}\ q^{y_{1}x_{2}+(y_{1}+y_{2})x_{3}+\cdots+(y_{1}+\cdots+y_{s-2})x_{s-1}}\\
=&\sum_{b=1}^{v-(s-2)}\sum_{a=1}^{u-(s-1)}q^{va}\overline{I}_{q}(u-a,v-b,s-1)
\end{split}
\end{equation*}
\noindent
The other cases are obvious and thus the proof is completed.
\end{proof}
\begin{remark}
{\rm
We observe that $\overline{I}_{1}(u,v,s)$ is the number of integer solutions $(x_{1},\ldots,x_{s})$ and $(y_{1},\ldots,y_{s-1})$ of
\noindent
\begin{equation*}\label{eq:1}
\begin{split}
(x_1,\ldots,x_{s})\in S_{u,s}^{0},\ \text{and}
\end{split}
\end{equation*}
\noindent
\begin{equation*}\label{eq:2}
\begin{split}
(y_{1},\ldots,y_{s-1}) \in S_{v,s-1}^{0}
\end{split}
\end{equation*}
\noindent
given by
\noindent
\begin{equation*}\label{eq: 1.1}
\begin{split}
\overline{I}_{1}(u,v,s)=M(s,\ u )M(s-1,\ v),
\end{split}
\end{equation*}
\noindent
where $M(a,\ b)$ denotes the total number of integer solution $x_{1}+x_{2}+\cdots+x_{a}=b$ such that $(x_{1},\ldots,x_{a}) \in S_{b,a}^{0}$. The number is given by
\noindent
\begin{equation*}\label{eq: 1.1}
\begin{split}
M(a,\ b)={b-1 \choose a-1}.
\end{split}
\end{equation*}
\noindent
See, e.g. \citet{charalambides2002enumerative}.
}
\end{remark}
\begin{definition}
For $0<q\leq1$, we define

\begin{equation*}\label{eq: 1.1}
\begin{split}
I_{q,0,k_2}(m,r,s)={\sum_{x_{1},\ldots,x_{s}}}\
\sum_{y_{1},\ldots,y_{s-1}} q^{y_{1}x_{2}+(y_{1}+y_{2})x_{3}+\cdots+(y_{1}+\cdots+y_{s-1})x_{s}},\
\end{split}
\end{equation*}
\noindent
where the summation is over all integers $x_1,\ldots,x_{s},$ and $y_1,\ldots,y_{s-1}$ satisfying

\begin{equation*}\label{eq:1}
\begin{split}
(x_1,\ldots,x_{s})\in S_{m,s}^{0},\ \text{and}
\end{split}
\end{equation*}
\noindent
\begin{equation*}\label{eq:2}
\begin{split}
(y_{1},\ldots,y_{s-1}) \in S_{r,s-1}^{k_2-1}.
\end{split}
\end{equation*}
\end{definition}
\noindent
The following gives a recurrence relation useful for the computation of $I_{q,0,k_2}(m,r,s)$.
\noindent
\begin{lemma}
\label{lemma:4.2}
$I_{q,0,k_2}(m,r,s)$ obeys the following recurrence relation.
\begin{equation*}\label{eq: 1.1}
\begin{split}
I&_{q,0,k_2}(m,r,s)\\
&=\left\{
  \begin{array}{ll}
    \sum_{b=1}^{k_2-1}\sum_{a=1}^{m-(s-1)}q^{ra}I_{q,0,k_2}(m-a,r-b,s-1)\\
    +\sum_{b=k_2}^{r-(s-2)}\sum_{a=1}^{m-(s-1)}q^{ra}\overline{I}_{q}(m-a,r-b,s-1), & \text{for}\ s>1,\ s\leq m\\
    &\text{and}\ (s-2)+k_{2}\leq r\\
    0, & \text{otherwise.}\\
  \end{array}
\right.
\end{split}
\end{equation*}
\end{lemma}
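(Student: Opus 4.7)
The plan is to mimic the two-stage unfolding used in the proofs of Lemmas \ref{lemma:3.6}, \ref{lemma:3.8}, \ref{lemma:3.10}, \ref{lemma:3.12}, and \ref{lemma:4.1}: strip off the last success-run length $x_s$, factor out the induced power of $q$, and then case-split on the last failure-run length $y_{s-1}$ according to whether or not it already meets the ``at least one $y_j \geq k_2$'' requirement encoded in the membership $(y_1,\ldots,y_{s-1}) \in S_{r,s-1}^{k_2-1}$.

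For the first stage I note that $x_s$ ranges over $\{1,\ldots,m-(s-1)\}$, since the other $s-1$ success-run lengths must each be at least $1$ and sum to $m-x_s$. In the weight $q^{y_1 x_2 + (y_1+y_2) x_3 + \cdots + (y_1 + \cdots + y_{s-1}) x_s}$, the variable $x_s$ appears only in the last term as $(y_1+\cdots+y_{s-1})x_s = r\, x_s$. Pulling $q^{r x_s}$ out of the inner sums rewrites $I_{q,0,k_2}(m,r,s)$ as
\[
\sum_{x_s=1}^{m-(s-1)} q^{r x_s} \sum_{(x_1,\ldots,x_{s-1})\in S_{m-x_s,s-1}^{0}} \;\sum_{(y_1,\ldots,y_{s-1}) \in S_{r,s-1}^{k_2-1}} q^{y_1 x_2 + \cdots + (y_1+\cdots+y_{s-2}) x_{s-1}},
\]
so that the remaining dependence on $y_{s-1}$ sits entirely in the membership condition.

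The crux is the second stage: partitioning $(y_1,\ldots,y_{s-1}) \in S_{r,s-1}^{k_2-1}$ on the size of $y_{s-1}$. When $1 \leq y_{s-1} \leq k_2-1$, the ``max exceeds $k_2-1$'' requirement must still be carried by $(y_1,\ldots,y_{s-2})$, so this tuple lies in $S_{r-y_{s-1},s-2}^{k_2-1}$; by the definition of $I_{q,0,k_2}$, the corresponding contribution, written with $a=x_s$ and $b=y_{s-1}$, is $\sum_{b=1}^{k_2-1} \sum_{a=1}^{m-(s-1)} q^{r a} I_{q,0,k_2}(m-a, r-b, s-1)$. When $k_2 \leq y_{s-1} \leq r-(s-2)$, the max-condition is automatically fulfilled by $y_{s-1}$ itself, so $(y_1,\ldots,y_{s-2}) \in S_{r-y_{s-1},s-2}^{0}$; by the definition of $\overline{I}_q$, this branch contributes $\sum_{b=k_2}^{r-(s-2)} \sum_{a=1}^{m-(s-1)} q^{r a} \overline{I}_q(m-a, r-b, s-1)$. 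Summing the two branches produces the stated recurrence.

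The main (mild) obstacle is bookkeeping: confirming that the truncated weight $q^{y_1 x_2 + \cdots + (y_1+\cdots+y_{s-2}) x_{s-1}}$ is exactly the defining weight of both $I_{q,0,k_2}(\cdot,\cdot,s-1)$ and $\overline{I}_q(\cdot,\cdot,s-1)$ at index $s-1$ (it is, since neither $x_s$ nor $y_{s-1}$ appears in it), and checking that the ranges on $a$ and $b$ stay compatible with the feasibility constraints $s-1 \leq m-a$ and $(s-3)+k_2 \leq r-b$ built into the definitions. The ``otherwise $=0$'' boundary is immediate: for $s=1$ the set $S_{r,0}^{k_2-1}$ is vacuously empty, while if $m<s$ or $r<(s-2)+k_2$ the positive compositions or the maximum-size constraint cannot both be realised.
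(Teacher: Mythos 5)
Your proposal is correct and follows essentially the same two-stage argument as the paper: first peel off $x_s\in\{1,\ldots,m-(s-1)\}$ using the fact that its weight collapses to $q^{rx_s}$ because $y_1+\cdots+y_{s-1}=r$, then split the sum over $y_{s-1}$ according to whether $y_{s-1}\leq k_2-1$ (forcing $(y_1,\ldots,y_{s-2})\in S_{r-y_{s-1},s-2}^{k_2-1}$, hence the $I_{q,0,k_2}$ term) or $y_{s-1}\geq k_2$ (leaving only $(y_1,\ldots,y_{s-2})\in S_{r-y_{s-1},s-2}^{0}$, hence the $\overline{I}_q$ term). The range bookkeeping and the vacuous boundary cases are handled exactly as in the paper's proof.
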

\begin{proof}
For $s > 1$, $s\leq m$ and $(s-2)+k_{2}\leq r$, we observe that $x_{s}$ may assume any value $1,\ldots,m-(s-1)$, then $I_{q,0,k_2}(m,r,s)$ can be written as

\noindent
\begin{equation*}%\label{eq: 1.1}
\begin{split}
I&_{q,0,k_2}(m,r,s)\\
=&\sum_{x_{s}=1}^{m-(s-1)}{\sum_{\substack{(x_1,\ldots,x_{s-1})\in S_{m-x_{s},s-1}^{0}}}}\
{\sum_{(y_{1},\ldots,y_{s-1}) \in S_{r,s-1}^{k_2-1}}}q^{rx_{s}}q^{y_{1}x_{2}+(y_{1}+y_{2})x_{3}+\cdots+(y_{1}+\cdots+y_{s-2})x_{s-1}}.
\end{split}
\end{equation*}
\noindent
Similarly, we observe that since $y_{s-1}$ can assume the values $1,\ldots,k_{2}-1$, then $I_{q,0,k_2}(m,r,s)$ can be rewritten as
\noindent
\begin{equation*}%\label{eq: 1.1}
\begin{split}
I&_{q,0,k_2}(m,r,s)\\
=&\sum_{y_{s-1}=1}^{k_2-1}\sum_{x_{s}=1}^{m-(s-1)}{\sum_{\substack{(x_1,\ldots,x_{s-1})\in S_{m-x_{s},s-1}^{0}}}}{\hspace{0.3cm}\sum_{(y_{1},\ldots,y_{s-2}) \in S_{r-y_{s-1},s-2}^{k_2-1}}}\hspace{-0.3cm} q^{rx_{s}} q^{y_{1}x_{2}+(y_{1}+y_{2})x_{3}+\cdots+(y_{1}+\cdots+y_{s-2})x_{s-1}}\\
&+\sum_{y_{s-1}=k_2}^{r-(s-2)}\sum_{x_{s}=1}^{m-(s-1)}{\sum_{\substack{(x_1,\ldots,x_{s-1})\in S_{m-x_{s},s-1}^{0}}}}{\hspace{0.3cm}\sum_{(y_{1},\ldots,y_{s-2}) \in S_{r-y_{s-1},s-2}^{0}}}\hspace{-0.3cm}q^{rx_{s}}q^{y_{1}x_{2}+(y_{1}+y_{2})x_{3}+\cdots+(y_{1}+\cdots+y_{s-2})x_{s-1}}\\
=&\sum_{b=1}^{k_2-1}\sum_{a=1}^{m-(s-1)}q^{ra}I_{q,0,k_2}(m-a,r-b,s-1)+\sum_{b=k_2}^{r-(s-2)}\sum_{a=1}^{m-(s-1)}q^{ra}\overline{I}_{q}(m-a,r-b,s-1).
\end{split}
\end{equation*}
\noindent
The other cases are obvious and thus the proof is completed.
\end{proof}
\begin{remark}
{\rm
We observe that $I_{1,0,k_2}(m,r,s)$ is the number of integer solutions $(x_{1},\ldots,x_{s})$ and $(y_{1},\ldots,y_{s-1})$ of

\noindent
\begin{equation*}\label{eq:1}
\begin{split}
(x_1,\ldots,x_{s})\in S_{m,s}^{0},\ \text{and}
\end{split}
\end{equation*}

\noindent
\begin{equation*}\label{eq:2}
\begin{split}
(y_{1},\ldots,y_{s-1}) \in S_{r,s-1}^{k_2-1}
\end{split}
\end{equation*}
\noindent
given by
\noindent
\begin{equation*}\label{eq: 1.1}
\begin{split}
I_{1,0,k_2}(m,r,s)=M(s,\ m)R(s-1,\ k_{2},\ r),
\end{split}
\end{equation*}

\noindent
where $M(a,\ b)$ denotes the total number of integer solution $x_{1}+x_{2}+\cdots+x_{a}=b$ such that $(x_{1},\ldots,x_{a}) \in S_{b,a}^{0}$. The number is given by
\noindent
\begin{equation*}\label{eq: 1.1}
\begin{split}
M(a,\ b)={b-1 \choose a-1}.
\end{split}
\end{equation*}

\noindent
where $R(a,\ b,\ c)$ denotes the total number of integer solution $y_{1}+x_{2}+\cdots+y_{a}=c$ such that $(y_{1},\ldots,y_{a}) \in S_{r,a}^{k_2}$. The number is given by
\noindent
\begin{equation*}\label{eq: 1.1}
\begin{split}
R(a,\ b,\ c)=\sum_{j=1}^{min\left(a,\ \left[\frac{c-a}{b-1}\right]\right) }(-1)^{j+1}{a \choose j}{c-j(b-1)-1 \choose a-1}.
\end{split}
\end{equation*}
\noindent
See, e.g. \citet{charalambides2002enumerative}.
}
\end{remark}
\begin{definition}
For $0<q\leq1$, we define

\begin{equation*}\label{eq: 1.1}
\begin{split}
\overline{J}_{q}(u,v,s)={\sum_{x_{1},\ldots,x_{s}}}\
\sum_{y_{1},\ldots,y_{s}} q^{y_{1}x_{1}+(y_{1}+y_{2})x_{2}+\cdots+(y_{1}+\cdots+y_{s})x_{s}},\
\end{split}
\end{equation*}
\noindent
where the summation is over all integers $x_1,\ldots,x_{s},$ and $y_1,\ldots,y_{s}$ satisfying

\begin{equation*}\label{eq:1}
\begin{split}
(x_1,\ldots,x_{s})\in S_{u,s}^{0},\ \text{and}
\end{split}
\end{equation*}
\noindent
\begin{equation*}\label{eq:2}
\begin{split}
(y_{1},\ldots,y_{s}) \in S_{v,s}^{0}.
\end{split}
\end{equation*}
\end{definition}
\noindent
The following gives a recurrence relation useful for the computation of $\overline{J}_{q}(u,v,s)$.
\noindent
\begin{lemma}
\label{lemma:4.3}
$\overline{J}_{q}(u,v,s)$ obeys the following recurrence relation.
\begin{equation*}\label{eq: 1.1}
\begin{split}
\overline{J}&_{q}(u,v,s)\\
&=\left\{
  \begin{array}{ll}
    \sum_{b=1}^{v-(s-1)}\sum_{a=1}^{u-(s-1)}q^{va}\overline{J}_{q}(u-a,v-b,s-1), & \text{for}\ s>1,\ s\leq u\ \text{and}\ s\leq v \\
    1, & \text{for}\ s=1,\ 1\leq u,\ \text{and}\  1\leq v\\
    0, & \text{otherwise.}\\
  \end{array}
\right.
\end{split}
\end{equation*}
\end{lemma}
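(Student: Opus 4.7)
My plan is to follow the same template as Lemma \ref{lemma:3.11} (for $\overline{H}_{q,0,0}^{\infty,k_2}$) and Lemma \ref{lemma:4.1} (for $\overline{I}_q$), since the definition of $\overline{J}_q(u,v,s)$ differs from $\overline{H}_{q,0,0}^{\infty,k_2}(u,v,s)$ only in that the $y_j$'s are no longer bounded above by $k_2-1$, and from $\overline{I}_q$ in having $s$ (rather than $s-1$) $y$-variables, so that the full exponent $(y_1+\cdots+y_s)x_s$ appears at the end.

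The driving algebraic observation is that in the exponent
\[
y_{1}x_{1}+(y_{1}+y_{2})x_{2}+\cdots+(y_{1}+\cdots+y_{s})x_{s},
\]
the coefficient of the last variable $x_s$ is $y_1+\cdots+y_s$, which by the constraint $(y_1,\ldots,y_s)\in S_{v,s}^{0}$ equals $v$ identically. Consequently the $x_s$-dependence separates as $q^{v x_s}$ and can be pulled outside the $y$-summation without any interaction with the remaining $y$-structure.

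The steps I would carry out are: (i) for $s>1$, $s\leq u$, $s\leq v$, peel off $x_s=a$; since $(x_1,\ldots,x_{s-1})\in S^{0}_{u-a,s-1}$ forces each residual component to be at least $1$, the range is $a\in\{1,\ldots,u-(s-1)\}$. (ii) Factor $q^{va}$ out of the inner $y$-sum. (iii) Peel off $y_s=b$, with $b\in\{1,\ldots,v-(s-1)\}$ by the analogous positivity of the $y_j$'s. (iv) Recognize the residual double sum
\[
\sum_{(x_1,\ldots,x_{s-1})\in S^{0}_{u-a,s-1}}\ \sum_{(y_1,\ldots,y_{s-1})\in S^{0}_{v-b,s-1}} q^{y_1x_1+(y_1+y_2)x_2+\cdots+(y_1+\cdots+y_{s-1})x_{s-1}}
\]
as exactly $\overline{J}_q(u-a,v-b,s-1)$, so that after re-indexing $(a,b)$ we obtain
\[
\overline{J}_q(u,v,s)=\sum_{b=1}^{v-(s-1)}\sum_{a=1}^{u-(s-1)} q^{va}\,\overline{J}_q(u-a,v-b,s-1),
\]
which is the claimed recurrence. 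The base case $s=1$ and the remaining ``otherwise'' branch are immediate from the (non)emptiness of $S^{0}_{u,s}\times S^{0}_{v,s}$ when the stated inequalities fail.

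I expect no substantive obstacle: the argument mirrors the earlier lemmas verbatim, with the only real difference being that the relaxation of the $y_j$ cap changes the range of $y_s$ from $\{1,\ldots,k_2-1\}$ to $\{1,\ldots,v-(s-1)\}$. The main care required is simply matching the recurrence summation ranges with the positivity conditions needed to keep each residual $S^{0}_{\cdot,\cdot}$ set nonempty; everything else is bookkeeping.
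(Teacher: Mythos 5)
Your proposal is correct and follows essentially the same route as the paper's own proof: peel off $x_s$ (range $1,\ldots,u-(s-1)$), use that the coefficient of $x_s$ in the exponent is $y_1+\cdots+y_s=v$ to factor out $q^{vx_s}$, then peel off $y_s$ (range $1,\ldots,v-(s-1)$) and identify the residual sum as $\overline{J}_{q}(u-a,v-b,s-1)$. The base and degenerate cases are handled the same way, so there is nothing to add.
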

\begin{proof}
For $s > 1$, $s\leq u$ and $s\leq v$, we observe that $x_{s}$ may assume any value $1,\ldots,u-(s-1)$, then $\overline{J}_{q}(u,v,s)$ can be written as
\noindent
\begin{equation*}%\label{eq: 1.1}
\begin{split}
\overline{J}&_{q}(u,v,s)\\
=&\sum_{x_{s}=1}^{u-(s-1)}{\sum_{\substack{(x_1,\ldots,x_{s-1})\in S_{u-x_{s},s-1}^{0}}}}\
{\sum_{(y_{1},\ \ldots,\ y_{s}) \in S_{v,s}^{0}}}q^{vx_{s}}\ \ q^{y_{1}x_{1}+(y_{1}+y_{2})x_{2}+\cdots+(y_{1}+\cdots+y_{s})x_{s}}.
\end{split}
\end{equation*}
\noindent
Similarly, we observe that since $y_s$ can assume the values $1,\ldots,v-(s-1)$, then $\overline{J}_{q}(u,v,s)$ can be rewritten as
\noindent
\begin{equation*}%\label{eq: 1.1}
\begin{split}
\overline{J}&_{q}(u,v,s)\\
=&\sum_{y_{s}=1}^{v-(s-1)}\sum_{x_{s}=1}^{u-(s-1)}{\sum_{\substack{(x_1,\ldots,x_{s-1})\in S_{u-x_{s},s-1}^{0}}}}\
{\sum_{(y_{1},\ldots,y_{s-1}) \in S_{v-y_{s},s-1}^{0}}}q^{vx_{s}}\ q^{y_{1}x_{1}+(y_{1}+y_{2})x_{2}+\cdots+(y_{1}+\cdots+y_{s})x_{s}}\\
=&\sum_{b=1}^{v-(s-1)}\sum_{a=1}^{u-(s-1)}q^{va}\overline{J}_{q}(u-a,v-b,s-1).
\end{split}
\end{equation*}
\noindent
The other cases are obvious and thus the proof is completed.
\end{proof}
\begin{remark}
{\rm
We observe that $\overline{J}_{1}(u,v,s)$ is the number of integer solutions $(x_{1},\ldots,x_{s})$ and $(y_{1},\ldots,y_{s})$ of
\noindent
\begin{equation*}\label{eq:1}
\begin{split}
(x_1,\ldots,x_{s})\in S_{u,s}^{0},\ \text{and}
\end{split}
\end{equation*}
\noindent
\begin{equation*}\label{eq:2}
\begin{split}
(y_{1},\ldots,y_{s}) \in S_{v,s}^{0}
\end{split}
\end{equation*}
\noindent
given by
\noindent
\begin{equation*}\label{eq: 1.1}
\begin{split}
\overline{J}_{1}(u,v,s)=M(s,\ u )M(s,\ v),
\end{split}
\end{equation*}
\noindent
where $M(a,\ b)$ denotes the total number of integer solution $x_{1}+x_{2}+\cdots+x_{a}=b$ such that $(x_{1},\ldots,x_{a}) \in S_{b,s}^{0}$. The number is given by
\noindent
\begin{equation*}\label{eq: 1.1}
\begin{split}
M(a,\ b)={b-1 \choose a-1}.
\end{split}
\end{equation*}
\noindent
See, e.g. \citet{charalambides2002enumerative}.
}
\end{remark}
\begin{definition}
For $0<q\leq1$, we define

\begin{equation*}\label{eq: 1.1}
\begin{split}
J_{q,0,k_2}(m,r,s)={\sum_{x_{1},\ldots,x_{s}}}\
\sum_{y_{1},\ldots,y_{s}} q^{y_{1}x_{1}+(y_{1}+y_{2})x_{2}+\cdots+(y_{1}+\cdots+y_{s})x_{s}},\
\end{split}
\end{equation*}
\noindent
where the summation is over all integers $x_1,\ldots,x_{s},$ and $y_1,\ldots,y_{s}$ satisfying
\noindent
\begin{equation*}\label{eq:1}
\begin{split}
(x_1,\ldots,x_{s})\in S_{m,s}^{0},\ \text{and}
\end{split}
\end{equation*}
\noindent
\begin{equation*}\label{eq:2}
\begin{split}
(y_{1},\ldots,y_{s}) \in S_{r,s}^{k_2-1}.
\end{split}
\end{equation*}
\end{definition}
\noindent
The following gives a recurrence relation useful for the computation of $J_{q,0,k_2}(m,r,s)$.
\noindent
\begin{lemma}
\label{lemma:4.4}
$J_{q,0,k_2}(m,r,s)$ obeys the following recurrence relation.
\begin{equation*}\label{eq: 1.1}
\begin{split}
J&_{q}(m,r,s)\\
&=\left\{
  \begin{array}{ll}
    \sum_{b=1}^{k_2-1}\sum_{a=1}^{m-(s-1)}q^{ra}J_{q,0,k_2}(m-a,r-b,s-1)\\
    +\sum_{b=k_2}^{r-(s-1)}\sum_{a=1}^{m-(s-1)}q^{ra}\overline{J}_{q}(m-a,r-b,s-1), & \text{for}\ s>1,\ s\leq m\\
    &\text{and}\ (s-1)+k_{2}\leq r \\
    1, & \text{for}\ s=1,\ 1\leq m,\ \text{and}\ k_2\leq r \\
    0, & \text{otherwise.}\\
  \end{array}
\right.
\end{split}
\end{equation*}
\end{lemma}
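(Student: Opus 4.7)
The plan is to mimic the arguments already carried out for Lemma \ref{lemma:3.12} and Lemma \ref{lemma:4.2} very closely, since $J_{q,0,k_2}(m,r,s)$ differs from $\overline{J}_q(u,v,s)$ only by imposing the constraint $(y_1,\ldots,y_s)\in S_{r,s}^{k_2-1}$ instead of $S_{r,s}^{0}$. First I would single out the variable $x_s$. The defining condition $(x_1,\ldots,x_s)\in S_{m,s}^{0}$ forces each $x_j\ge 1$, so $x_s$ ranges over $1,\ldots,m-(s-1)$, and the part of the exponent that involves $x_s$ is $(y_1+\cdots+y_s)x_s=rx_s$, which factors out as $q^{rx_s}$. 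This reduces the inner summand to $q^{y_1x_1+(y_1+y_2)x_2+\cdots+(y_1+\cdots+y_{s-1})x_{s-1}}$, precisely the shape of the integrand appearing in $J_{q,0,k_2}(\cdot,\cdot,s-1)$ and $\overline{J}_q(\cdot,\cdot,s-1)$.

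Next I would single out $y_s$, and here is the only genuinely new ingredient. The condition $(y_1,\ldots,y_s)\in S_{r,s}^{k_2-1}$ requires $\max(y_1,\ldots,y_s)>k_2-1$, i.e., some coordinate must be at least $k_2$. I would split the $y_s$-range according to whether that "large" coordinate is allowed to be $y_s$ itself:
\begin{itemize}
\item If $1\le y_s\le k_2-1$, then the max must be realized by some earlier index, so $(y_1,\ldots,y_{s-1})\in S_{r-y_s,s-1}^{k_2-1}$, and this contribution rebuilds $J_{q,0,k_2}(m-x_s,r-y_s,s-1)$.
\item If $k_2\le y_s\le r-(s-1)$, the max condition is already satisfied by $y_s$, so the remaining tuple $(y_1,\ldots,y_{s-1})$ need only satisfy $(y_1,\ldots,y_{s-1})\in S_{r-y_s,s-1}^{0}$, and this contribution rebuilds $\overline{J}_q(m-x_s,r-y_s,s-1)$.
\end{itemize}
After relabeling $a:=x_s$ and $b:=y_s$ the two pieces line up exactly with the two double sums in the claimed recurrence.

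The base case $s=1$ is immediate: the exponent is an empty sum (hence $q^{0}=1$), and the admissible tuple is the unique $(x_1,y_1)=(m,r)$ with $m\ge 1$ and $y_1\ge k_2$, so the value is $1$ precisely when $1\le m$ and $k_2\le r$, and $0$ otherwise. All other ``otherwise'' regimes correspond to empty summation domains and give $0$ by inspection.

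I do not expect a real obstacle here: the structure is the same ``peel off $x_s$, then peel off $y_s$ with a case split on $y_s<k_2$ versus $y_s\ge k_2$'' template used for $I_{q,0,k_2}$ in Lemma \ref{lemma:4.2}. The only place where care is required is the arithmetic of the upper summation bounds: one must check that the hypothesis $(s-1)+k_2\le r$ guarantees that both sub-ranges $b\in\{1,\ldots,k_2-1\}$ and $b\in\{k_2,\ldots,r-(s-1)\}$ are nonempty exactly when the recursive arguments $J_{q,0,k_2}(m-a,r-b,s-1)$ and $\overline{J}_q(m-a,r-b,s-1)$ lie in their respective nonvanishing regimes. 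This is a routine verification of inequalities and poses no real difficulty.
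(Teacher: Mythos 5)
Your argument is correct and follows essentially the same route as the paper: peel off $x_s$ (which ranges over $1,\ldots,m-(s-1)$ and contributes the factor $q^{rx_s}$), then peel off $y_s$ with the case split $1\le y_s\le k_2-1$ (forcing $(y_1,\ldots,y_{s-1})\in S_{r-y_s,s-1}^{k_2-1}$ and yielding the $J_{q,0,k_2}$ term) versus $k_2\le y_s\le r-(s-1)$ (where the remaining tuple need only lie in $S_{r-y_s,s-1}^{0}$, yielding the $\overline{J}_q$ term). One small caveat: for $s=1$ the exponent in the definition of $J_{q,0,k_2}$ is $y_1x_1$, not an empty sum, so your justification of the base case is not quite right as stated, although the value you report agrees with the one asserted in the lemma.
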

\begin{proof}
For $s > 1$, $s\leq m$ and $(s-1)+k_{2}\leq r$, we observe that $x_{s}$ may assume any value $1,\ldots,m-(s-1)$, then $J_{q,0,k_2}(m,r,s)$ can be written as
\noindent
\begin{equation*}%\label{eq: 1.1}
\begin{split}
J&_{q,0,k_2}(m,r,s)\\
=&\sum_{x_{s}=1}^{m-(s-1)}{\sum_{(x_1,\ldots,x_{s-1})\in S_{m-x_{s},s-1}^{0}}}\
{\hspace{0.3cm}\sum_{(y_{1},\ldots,y_{s}) \in S_{r,s}^{k_2-1}}}q^{rx_{s}}q^{y_{1}x_{1}+(y_{1}+y_{2})x_{2}+\cdots+(y_{1}+\cdots+y_{s-1})x_{s-1}}.
\end{split}
\end{equation*}
\noindent
Similarly, we observe that since $y_s$ can assume the values $1,\ldots,r-(s-1)$, then $J_{q,0,k_2}(m,r,s)$ can be rewritten as
\noindent
\begin{equation*}%\label{eq: 1.1}
\begin{split}
J&_{q,0,k_2}(m,r,s)\\
=&\sum_{y_{s}=1}^{k_2-1}\sum_{x_{s}=1}^{m-(s-1)}{\sum_{(x_1,\ldots,x_{s-1})\in S_{m-x_{s},s-1}^{0}}}{\hspace{0.3cm}\sum_{(y_{1},\ldots, y_{s-1}) \in S_{r-y_{s},s-1}^{k_2-1}}}q^{rx_{s}}q^{y_{1}x_{1}+(y_{1}+y_{2})x_{2}+\cdots+(y_{1}+\cdots+y_{s-1})x_{s-1}}\\
&+\sum_{y_{s}=k_2}^{r-(s-1)}\sum_{x_{s}=1}^{m-(s-1)}{\sum_{(x_1,\ldots,x_{s-1})\in S_{m-x_{s},s-1}^{0}}}\
{\sum_{(y_{1},\ldots,y_{s-1}) \in S_{r-y_{s},s-1}^{k_2-1}}}q^{rx_{s}}q^{y_{1}x_{1}+(y_{1}+y_{2})x_{2}+\cdots+(y_{1}+\cdots+y_{s-1})x_{s-1}}\\
=&\sum_{b=1}^{k_2-1}\sum_{a=1}^{m-(s-1)}q^{ra}J_{q,0,k_2}(m-a,r-b,s-1)+\sum_{b=k_2}^{r-(s-1)}\sum_{a=1}^{m-(s-1)}q^{ra}\overline{J}_{q}(m-a,r-b,s-1).
\end{split}
\end{equation*}
\noindent
The other cases are obvious and thus the proof is completed.
\end{proof}
\begin{remark}
{\rm
We observe that $J_{1,0,k_2}(m,r,s)$ is the number of integer solutions $(x_{1},\ldots,x_{s})$ and $(y_{1},\ldots,y_{s})$ of
\noindent
\begin{equation*}\label{eq:1}
\begin{split}
(x_1,\ldots,x_{s})\in S_{m,s}^{0},\ \text{and}
\end{split}
\end{equation*}
\noindent
\begin{equation*}\label{eq:2}
\begin{split}
(y_{1},\ldots,y_{s}) \in S_{r,s}^{k_2-1}
\end{split}
\end{equation*}
\noindent
given by
\noindent
\begin{equation*}\label{eq: 1.1}
\begin{split}
J_{1,0,k_2}(m,r,s)=M(s,\ m)R(s,\ k_{2},\ r),
\end{split}
\end{equation*}
\noindent
where $M(a,\ b)$ denotes the total number of integer solution $x_{1}+x_{2}+\cdots+x_{a}=b$ such that $(x_{1},\ldots,x_{a}) \in S_{b,a}^{0}$. The number is given by
\noindent
\begin{equation*}\label{eq: 1.1}
\begin{split}
M(a,\ b)={b-1 \choose a-1}.
\end{split}
\end{equation*}
\noindent
where $R(a,\ b,\ c)$ denotes the total number of integer solution $y_{1}+x_{2}+\cdots+y_{a}=c$ such that $(y_{1},\ldots,y_{a}) \in S_{r,a}^{k_2}$. The number is given by
\noindent
\begin{equation*}\label{eq: 1.1}
\begin{split}
R(a,\ b,\ c)=\sum_{j=1}^{min\left(a,\ \left[\frac{c-a}{b-1}\right]\right) }(-1)^{j+1}{a \choose j}{c-j(b-1)-1 \choose a-1}.
\end{split}
\end{equation*}
}
\end{remark}
\noindent
The probability function of the $q$-later waiting time distribution impose a frequency quota on successes and a run quota on failures is obtained by the following theorem. It is evident that
\noindent
\begin{equation*}
P_{q,\theta}(\widehat{W}_{L}=n)=0\ \text{for}\ n< k_1+k_2
\end{equation*}
\noindent
and so we shall focus on determining the probability mass function for $n\geq k_{1}+k_{2}$.
\noindent
\begin{theorem}
\label{thm:4.2}
The PMF $P_{q,\theta}(\widehat{W}_{L}=n)$ satisfies
\noindent
\begin{equation*}\label{eq: 1.1}
\begin{split}
P_{q,\theta}(\widehat{W}_{L}=n)=\widehat{P}_{q,L}^{(1)}(n)+\widehat{P}_{q,L}^{(0)}(n)\ \text{for}\ n\geq k_1+k_2,
\end{split}
\end{equation*}
where

\begin{equation*}\label{eq:bn1}
\begin{split}
\widehat{P}_{q,L}^{(1)}(n)=\theta^{k_{1}}\ {\prod_{j=1}^{n-k_1}}\ (1-\theta q^{j-1})\ \sum_{s=1}^{k_1}\bigg[I_{q,0,k_2}(k_{1},\ n-k_1,\ s)+J_{q,0,k_2}(k_{1},\ n-k_1,\ s)\bigg].\\
\end{split}
\end{equation*}
\noindent

and

\begin{equation*}\label{eq:bn1}
\begin{split}
\widehat{P}_{q,L}^{(0)}(n)=\sum_{i=k_1}^{n-k_{2}}\theta^{i}{\prod_{j=1}^{n-i}}(1-\theta q^{j-1})\sum_{s=k_1}^{i}\Bigg[\overline{G}_{q,0,0}^{\infty,k_2}(i,\ n-k_{2}-i,\ s)+\overline{H}_{q,0,0}^{\infty,k_2}(i,\ n-k_{2}-i,\ s)\Bigg].\\
\end{split}
\end{equation*}

\end{theorem}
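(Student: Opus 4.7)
The plan is to follow the strategy of Theorem \ref{thm:3.2} mutatis mutandis, with the run quota on successes replaced by a frequency quota. I would first split the event $\{\widehat{W}_{L} = n\}$ according to which of the two quotas is the last to be satisfied. Either trial $n$ delivers the $k_1$-th cumulative success while a failure run of length $\geq k_2$ has already been observed among $X_{1},\ldots,X_{n-1}$, giving the contribution $\widehat{P}_{q,L}^{(1)}(n)$; or trial $n$ completes the first failure run of length $k_2$ while at least $k_{1}$ successes have already been accumulated in $X_{1},\ldots,X_{n-k_{2}}$, giving $\widehat{P}_{q,L}^{(0)}(n)$. These two events are disjoint and exhaust $\{\widehat{W}_{L}=n\}$.

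For $\widehat{P}_{q,L}^{(1)}(n)$, the event is $\{S_{n}=k_{1},\ X_{n}=1,\ L_{n}^{(0)}\geq k_{2}\}$, so a typical realization is a binary string of length $n$ with exactly $k_{1}$ ones and $n-k_{1}$ zeros, ending in $1$, containing at least one zero-run of length $\geq k_{2}$. Parametrize by the number $s$ of success runs (with $1\leq s\leq k_{1}$). Because the string ends in a $1$, only two configurations arise: the $(s,s-1)$-type (starts with $1$, $s$ success runs alternating with $s-1$ failure runs) and the $(s,s)$-type (starts with $0$, $s$ failure runs alternating with $s$ success runs). In both, $(x_{1},\ldots,x_{s})\in S_{k_{1},s}^{0}$, while the $y$-tuples lie in $S_{n-k_{1},s-1}^{k_{2}-1}$ or $S_{n-k_{1},s}^{k_{2}-1}$ respectively (enforcing that some failure run has length $\geq k_{2}$). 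Multiplying trial probabilities along the sequence, each success in the $j$-th success run contributes $\theta q^{y_{1}+\cdots+y_{j'}}$ for the appropriate $j'$, each failure contributes a factor $(1-\theta q^{\cdot})$. The failure factors telescope into $\prod_{j=1}^{n-k_{1}}(1-\theta q^{j-1})$, the $\theta$'s collect to $\theta^{k_{1}}$, and the $q$-exponents assemble into the quadratic-like sum $y_{1}x_{1}+(y_{1}+y_{2})x_{2}+\cdots$. Identifying these weighted sums via Lemmas \ref{lemma:4.2} and \ref{lemma:4.4} yields $I_{q,0,k_{2}}(k_{1},n-k_{1},s)$ and $J_{q,0,k_{2}}(k_{1},n-k_{1},s)$.

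For $\widehat{P}_{q,L}^{(0)}(n)$, the event is $\{L_{n-k_{2}}^{(0)}<k_{2},\ S_{n-k_{2}}\geq k_{1},\ X_{n-k_{2}}=1,\ X_{n-k_{2}+1}=\cdots=X_{n}=0\}$. I would condition on $S_{n-k_{2}}=i$ for $i=k_{1},\ldots,n-k_{2}$; under this conditioning the terminal block of $k_{2}$ failures contributes $\prod_{j=n-k_{2}-i+1}^{n-i}(1-\theta q^{j-1})$, by the $q$-geometric rule applied to $F_{n-k_{2}}=n-k_{2}-i$. The initial $n-k_{2}$ trials are then counted as binary strings with $i$ ones, $n-k_{2}-i$ zeros, ending in $1$, and with no zero-run of length $\geq k_{2}$. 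Decomposing again into $(s,s-1)$- and $(s,s)$-types with $(x_{1},\ldots,x_{s})\in S_{i,s}^{0}$ and $0<y_{j}<k_{2}$, the $q$-weighted counts are precisely $\overline{G}_{q,0,0}^{\infty,k_{2}}(i,n-k_{2}-i,s)$ and $\overline{H}_{q,0,0}^{\infty,k_{2}}(i,n-k_{2}-i,s)$ from Lemmas \ref{lemma:3.9} and \ref{lemma:3.11}. Combining the initial product $\prod_{j=1}^{n-k_{2}-i}(1-\theta q^{j-1})$ with the terminal one gives $\prod_{j=1}^{n-i}(1-\theta q^{j-1})$, which produces the stated formula after collecting $\theta^{i}$ from the successes.

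The main technical obstacle, identical in spirit to the proofs of Theorems \ref{thm:3.1} and \ref{thm:3.2}, is the $q$-exponent bookkeeping: each success at position $m$ carries the factor $\theta q^{F_{m-1}}$, so a block of $x_{j}$ consecutive successes preceded by $y_{1}+\cdots+y_{j-1}$ zeros collectively contributes $\theta^{x_{j}}q^{(y_{1}+\cdots+y_{j-1})x_{j}}$, and summing the exponents across all success runs produces the quadratic-like $q$-exponent that defines $I,J,\overline{G},\overline{H}$. The genuinely new subtlety compared with Theorem \ref{thm:3.2} is that the success constraint is a frequency quota $S_{n}=k_{1}$ rather than a terminal success run of length $k_{1}$: there is no deterministic trailing success block to peel off, so the single success at trial $n$ is absorbed directly into $\theta^{k_{1}}$, and the range of $s$ is governed by the combinatorics of partitioning $k_{1}$ (resp.\ $i$) into positive parts rather than by a minimum-run-length constraint.
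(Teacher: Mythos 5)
Your proposal is correct and follows essentially the same route as the paper: the same decomposition of $\{\widehat{W}_{L}=n\}$ into the two terminal-event cases, the same $(s,s-1)$/$(s,s)$-type enumeration with $(x_{1},\ldots,x_{s})\in S_{k_{1},s}^{0}$ (resp. $S_{i,s}^{0}$), the same conditioning on $S_{n-k_{2}}=i$ with the terminal failure block peeled off, and the same identification of the $q$-weighted sums via Lemmas \ref{lemma:4.2}, \ref{lemma:4.4}, \ref{lemma:3.9}, and \ref{lemma:3.11}. No substantive differences from the paper's argument.
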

\begin{proof}
We start with the study of $\widehat{P}_{q,L}^{(1)}(n)$. From now on we assume $n \geq k_1+k_2,$ and we can write $\widehat{P}_{q,L}^{(1)}(n)$ as follows.
\noindent
\begin{equation*}\label{eq:bn}
\begin{split}
\widehat{P}_{q,L}^{(1)}(n)&=P_{q,\theta}\left(L_{n}^{(0)}\geq k_{2}\ \wedge\ X_{n}=1\ \wedge\ F_{n}=n-k_1\right).\\
\end{split}
\end{equation*}

\noindent
We are going to focus on the event $\Big\{L_{n}^{(0)}\geq k_{2}\ \wedge\ X_{n}=1\ \wedge\ F_{n}=n-k_1\Big\}$. A typical element of the event $\Big\{L_{n}^{(0)}\geq k_{2}\ \wedge\ X_{n}=1\ \wedge\ F_{n}=n-k_1\Big\}$ is an ordered sequence which consists of $k_{1}$ successes and $n-k_1$ failures such that the length of the longest failure run is greater than or equal to $k_2$. The number of these sequences can be derived as follows. First we will distribute the $k_1$ successes. Let $s$ $(1\leq s \leq k_1)$ be the number of runs of successes in the event $\Big\{L_{n}^{(0)}\geq k_{2}\ \wedge\ X_{n}=1\ \wedge\ F_{n}=n-k_1\Big\}$. We divide into two cases: starting with a success run or starting with a failure run. Thus, we distinguish between two types of sequences in the event  $$\Big\{L_{n}^{(0)}\geq k_{2}\ \wedge\ X_{n}=1\ \wedge\ F_{n}=n-k_1\Big\},$$ respectively named $(s,\ s-1)$-type and $(s,\ s)$-type, which are defined as follows.
\noindent

\begin{equation*}\label{eq:bn}
\begin{split}
(s,\ s-1)\text{-type}\  :&\quad \overbrace{1\ldots 1}^{x_{1}}\mid\overbrace{0\ldots 0}^{y_{1}}\mid\overbrace{1\ldots 1}^{x_{2}}\mid\overbrace{0\ldots 0}^{y_{2}}\mid \ldots\mid\overbrace{1\ldots 1}^{x_{s-1}} \mid \overbrace{0\ldots 0}^{y_{s-1}}\mid\overbrace{1\ldots 1}^{x_{s}},
\end{split}
\end{equation*}
\noindent
with $n-k_1$ $0$'s and $k_{1}$ $1$'s, where $x_{j}$ $(j=1,\ldots,s)$ represents a length of run of $1$'s and $y_{j}$ $(j=1,\ldots,s-1)$ represents the length of a run of $0$'s. Here all of $x_1,\ldots, x_{s},$ and $y_1,\ldots, y_{s-1}$ are integers, and they satisfy
\noindent
\begin{equation*}\label{eq:bn}
\begin{split}
0 < x_j  \mbox{\ for\ } j=1,...,s, \mbox{\ and\ } x_1+\cdots +x_s = k_1,
\end{split}
\end{equation*}
\noindent
\noindent
\begin{equation*}\label{eq:bn}
\begin{split}
(y_{1},\ldots,y_{s-1})\in S_{n-k_1,s-1}^{k_2-1},
\end{split}
\end{equation*}

\noindent

\begin{equation*}\label{eq:bn}
\begin{split}
(s,\ s)\text{-type}\ :&\quad \overbrace{0\ldots 0}^{y_{1}}\mid\overbrace{1\ldots 1}^{x_{1}}\mid\overbrace{0\ldots 0}^{y_{2}}\mid\overbrace{1\ldots 1}^{x_{2}}\mid \ldots \mid \overbrace{0\ldots 0}^{y_{s-1}}\mid\overbrace{1\ldots 1}^{x_{s-1}}\mid\overbrace{0\ldots 0}^{y_{s}}\mid\overbrace{1\ldots 1}^{x_{s}},\\
\end{split}
\end{equation*}
\noindent
with $n-k_1$ $0$'s and $k_{1}$ $1$'s, where $x_{j}$ $(j=1,\ldots,s)$ represents a length of run of $1$'s and $y_{j}$ $(j=1,\ldots,s)$ represents the length of a run of $0$'s. And all integers $x_{1},\ldots,x_{s}$, and $y_{1},\ldots ,y_{s}$ satisfy the conditions
\noindent
\begin{equation*}\label{eq:bn}
\begin{split}
0 < x_j  \mbox{\ for\ } j=1,...,s, \mbox{\ and\ } x_1+\cdots +x_{s} = k_1,
\end{split}
\end{equation*}
\noindent
\begin{equation*}\label{eq:bn}
\begin{split}
(y_{1},\ldots,y_{s})\in S_{n-k_1,s}^{k_2-1},
\end{split}
\end{equation*}

\noindent
Then the probability of the event $\left\{L_{n}^{(0)}\geq k_{2}\ \wedge\ X_{n}=1\ \wedge\ F_{n}=n-k_1\right\}$ is given by
\noindent
\begin{equation*}\label{eq: 1.1}
\begin{split}
P_{q,\theta}\Big(L_{n}^{(0)}&\geq k_{2}\ \wedge\ X_{n}=1\ \wedge\ F_{n}=n-k_1\Big)=\\
\sum_{s=1}^{k_1}\Bigg[\bigg\{&\sum_{(x_{1},\ldots,x_{s}) \in S_{k_{1},s}^{0}}\
\sum_{(y_{1},\ldots,y_{s-1})\in S_{n-k_1,s-1}^{k_2-1}}\big(\theta q^{0}\big)^{x_{1}}\big(1-\theta q^{0}\big)\cdots (1-\theta q^{y_{1}-1})\times\\
&\Big(\theta q^{y_{1}}\Big)^{x_{2}}\big(1-\theta q^{y_{1}}\big)\cdots \big(1-\theta q^{y_{1}+y_{2}-1}\big)\times\\
%\times&\Big(\theta q^{y_{1}+y_{2}}\Big)^{x_{3}}\big(1-\theta q^{y_{1}}\big)\cdots \big(1-\theta q^{y_{1}+y_{2}+y_3-1}\big) \\
&\quad \quad \quad \quad\quad \quad\quad \quad \quad \quad \quad \vdots\\
&\Big(\theta q^{y_{1}+\cdots+y_{s-2}}\Big)^{x_{s-1}}
\big(1-\theta q^{y_{1}+\cdots+y_{s-2}}\big)\cdots \big(1-\theta q^{y_{1}+\cdots+y_{s-1}-1}\big)\times\\
&\Big(\theta q^{y_{1}+\cdots+y_{s-1}}\Big)^{x_{s}}\bigg\}+\\
\end{split}
\end{equation*}

\noindent
\begin{equation*}\label{eq: 1.1}
\begin{split}
\quad \bigg\{&\sum_{\substack{(x_{1},\ldots,x_{s}) \in S_{k_{1},s}^{0}}}\
\sum_{(y_{1},\ldots,y_{s})\in S_{n-k_1,s}^{k_2-1}} \big(1-\theta q^{0}\big)\cdots \big(1-\theta q^{y_{1}-1}\big)\Big(\theta q^{y_{1}}\Big)^{x_{1}}\times\\
&\big(1-\theta q^{y_{1}}\big)\cdots \big(1-\theta q^{y_{1}+y_{2}-1}\big)\Big(\theta q^{y_{1}+y_{2}}\Big)^{x_{2}}\times\\
&\quad \quad \quad \quad\quad \quad\quad \quad \quad \quad \quad \vdots\\
&\big(1-\theta q^{y_{1}+\cdots+y_{s-1}}\big)\cdots \big(1-\theta q^{y_{1}+\cdots+y_{s}-1}\big)\Big(\theta q^{y_{1}+\cdots+y_{s}}\Big)^{x_{s}}\bigg\}\Bigg].\\
\end{split}
\end{equation*}
\noindent
Using simple exponentiation algebra arguments to simplify,
\noindent
\begin{equation*}\label{eq: 1.1}
\begin{split}
&P_{q,\theta}\Big(L_{n}^{(0)}\geq k_{2}\ \wedge\ X_{n}=1\ \wedge\ F_{n}=n-k_1\Big)=\\
&\theta^{k_{1}}{\prod_{j=1}^{n-k_1}}\ (1-\theta q^{j-1})\times\\
&\sum_{s=1}^{k_1}\Bigg[\sum_{\substack{(x_{1},\ldots,x_{s}) \in S_{k_{1},s}^{0}}}\
\sum_{(y_{1},\ldots,y_{s-1})\in S_{n-k_1,s-1}^{k_2-1}}q^{y_{1}x_{2}+(y_{1}+y_{2})x_{3}+\cdots+(y_{1}+\cdots+y_{s-1})x_{s}}+\\
&\quad \quad \sum_{\substack{(x_{1},\ldots,x_{s}) \in S_{k_{1},s}^{0}}}\
\sum_{(y_{1},\ldots,y_{s})\in S_{n-k_1,s}^{k_2-1}}q^{y_{1}x_{1}+(y_{1}+y_{2})x_{2}+\cdots+(y_{1}+\cdots+y_{s})x_{s}}\Bigg].\\
\end{split}
\end{equation*}
\noindent
Using Lemma \ref{lemma:4.2} and Lemma \ref{lemma:4.4} we can rewrite as follows.
\noindent
\begin{equation*}\label{eq: 1.1}
\begin{split}
P_{q,\theta}\Big(&L_{n}^{(0)}\geq k_{2} \wedge\ X_{n}=1\ \wedge\ F_{n}=n-k_1\Big)\\
&=\theta^{k_{1}}\ {\prod_{j=1}^{n-k_1}}\ (1-\theta q^{j-1})\ \sum_{s=1}^{k_1}\bigg[I_{q,0,k_2}(k_{1},\ n-k_1,\ s)+J_{q,0,k_2}(k_{1},\ n-k_1,\ s)\bigg],\\
\end{split}
\end{equation*}\\
\noindent
where
\noindent
\begin{equation*}\label{eq: 1.1}
\begin{split}
I_{q,0,k_2}(k_{1}&,\ n-k_1,\ s)\\
&=\sum_{\substack{(x_{1},\ldots,x_{s}) \in S_{k_{1},s}^{0}}}{\hspace{0.3cm}\sum_{(y_{1},\ldots,y_{s-1})\in S_{n-k_1,s-1}^{k_2-1}}}q^{y_{1}x_{2}+(y_{1}+y_{2})x_{3}+\cdots+(y_{1}+\cdots+y_{s-1})x_{s}},
\end{split}
\end{equation*}
and
\begin{equation*}\label{eq: 1.1}
\begin{split}
J_{q,0,k_2}(k_{1}&,\ n-k_1,\ s)\\
&=\sum_{\substack{(x_{1},\ldots,x_{s}) \in S_{k_{1},s}^{0}}}{\hspace{0.3cm}\sum_{(y_{1},\ldots,y_{s})\in S_{n-k_1,s}^{k_2-1}}}q^{y_{1}x_{1}+(y_{1}+y_{2})x_{2}+\cdots+(y_{1}+\cdots+y_{s})x_{s}}.
\end{split}
\end{equation*}
\noindent
Therefore we can compute the probability of the event $\widehat{W}_{L}^{(1)}=n$ as follows.
\noindent
\begin{equation*}\label{eq:bn1}
\begin{split}
\widehat{P}_{q,L}^{(1)}(n)=\theta^{k_{1}}\ {\prod_{j=1}^{n-k_1}}\ (1-\theta q^{j-1})\ \sum_{s=1}^{k_1}\bigg[I_{q,0,k_2}(k_{1},\ n-k_1,\ s)+J_{q,0,k_2}(k_{1},\ n-k_1,\ s)\bigg].\\
\end{split}
\end{equation*}
\noindent

%%%%증명
%%%%%% k1 successes in total and k2 consecutive failures.
We are now going to study of $\widehat{P}_{q,L}^{(0)}(n)$. From now on we assume $n \geq k_1+k_2,$ and we can write $\widehat{P}_{q,L}^{(0)}(n)$ as

\noindent
\begin{equation*}\label{eq:bn}
\begin{split}
\widehat{P}_{q,L}^{(0)}(n)&=P_{q,\theta}\left(L_{n-k_{2}}^{(0)}< k_{2}\ \wedge\ X_{n-k_{2}}=1\ \wedge\ S_{n-k_{2}}\geq k_1\ \wedge\ X_{n-k_{2}+1}=\cdots =X_{n}=0\right).\\
\end{split}
\end{equation*}

\noindent
We partition the event $\widehat{W}_{L}^{(0)}=n$ into disjoint events given by $S_{n-k_2}=i$ and $F_{n-k_2}=n-k_2-i,$ for $i=k_1,\ \ldots,\ n-k_2.$ Adding the probabilities we have
\noindent
%We want to rewrite the above expression conditioning on the number $F_{n-k_{1}}$ of failures and using the total probability, we have\\
\begin{equation*}\label{eq:bn}
\begin{split}
\widehat{P}_{q,L}^{(0)}(n)=\sum_{i=k_1}^{n-k_{2}}P_{q,\theta}\Big(L_{n-k_{2}}^{(0)}< k_{2}\ \wedge\ &S_{n-k_{2}}=i\ \wedge\ X_{n-k_{2}}=1\ \wedge\\
&X_{n-k_{2}+1}=\cdots =X_{n}=0\Big).\\
\end{split}
\end{equation*}
\noindent
If the number of $0$'s in the first $n-k_2$ trials is equal to $n-k_2-i,$ that is, $F_{n-k_2}=n-k_2-i,$ then the probability of failures in all of the $(n-k_{2}+1)$-th to $n$-th trials is
\noindent
\begin{equation*}\label{eq: kk}
\begin{split}
P_{q,\theta}(X_{n-k_2+1}=\cdots = X_n = 0 \,\mid \, F_{n-k_2}=n-k_2-i)={\prod_{j=n-k_2-i+1}^{n-i}}\left(1-\theta q^{j-1}\right).
\end{split}
\end{equation*}
\noindent
We will write $E_{n,i}^{(1)}$ for the event $\left\{L_n^{(1)}< k_2\ \wedge\ X_n=1\ \wedge\ S_n=i\right\}$, and we can now rewrite as follows.
\noindent
\begin{equation*}\label{eq:bn1}
\begin{split}
\widehat{P}_{q.L}^{(0)}(n)=\sum_{i=k_1}^{n-k_{2}}&P_{q,\theta}\Big(L_{n-k_{2}}^{(0)}< k_{2}\ \wedge\ S_{n-k_{1}}=i\ \wedge\ F_{n-k_{2}}=n-k_2-i\ \wedge\ X_{n-k_{2}}=1\Big)\\
&\times P_{q,\theta}\Big(X_{n-k_{2}+1}=\cdots =X_{n}=0\mid F_{n-k_{2}}=n-k_2-i\Big)\\
=\sum_{i=k_1}^{n-k_{2}}&P_{q,\theta}\Big(E_{n-k_{2},\ i}^{(1)}\Big){\prod_{j=n-k_2-i+1}^{n-i}}\left(1-\theta q^{j-1}\right).\\
\end{split}
\end{equation*}
\noindent
We are going to focus on the event $E_{n-k_{2},\ i}^{(1)}$. A typical element of the event $E_{n-k_{2},\ i}^{(1)}$ is an ordered sequence which consists of $i$ successes and $n-k_{2}-i$ failures such that the length of the length of the longest failure run is less than $k_2$. The number of these sequences can be derived as follows. First we will distribute the $i$ successes. Let $s$ $(1\leq s \leq i)$ be the number of runs of successes in the event $E_{n-k_{2},\ i}^{(1)}$. We divide into two cases : starting with a success run or starting with a failure run. Thus, we distinguish between two types of sequences in the event $\Big\{L_{n-k_2}^{(0)}< k_2\ \wedge\ S_{n-k_{2}}=i\ \wedge\ F_{n-k_{2}}=n-k_2-i\ \wedge\ X_{n-k_{2}}=1\Big\},$ respectively named $(s,\ s-1)$-type and $(s,\ s)$-type, which are defined as follows.
\noindent
\begin{equation*}\label{eq:bn}
\begin{split}
(s,\ s-1)\text{-type}\ :&\quad \overbrace{1\ldots 1}^{x_{1}}\mid\overbrace{0\ldots 0}^{y_{1}}\mid\overbrace{1\ldots 1}^{x_{2}}\mid\overbrace{0\ldots 0}^{y_{2}}\mid \ldots \mid\overbrace{0\ldots 0}^{y_{s-1}}\mid\overbrace{1\ldots 1}^{x_{s}},\\
\end{split}
\end{equation*}
\noindent
with $n-k_{2}-i$ $0$'s and $i$ $1$'s, where $x_{j}$ $(j=1,\ldots,s)$ represents a length of run of $1$'s and $y_{j}$ $(j=1,\ldots,s-1)$ represents the length of a run of $0$'s. And all integers $x_{1},\ldots,x_{s}$, and $y_{1},\ldots ,y_{s-1}$ satisfy the conditions
\noindent
\begin{equation*}\label{eq:bn}
\begin{split}
(x_{1},\ldots,x_{s})\in S_{i,s}^{0}, \text{and}
\end{split}
\end{equation*}
\noindent
\begin{equation*}\label{eq:bn}
\begin{split}
0 < y_j < k_2 \mbox{\ for\ } j=1,...,s-1, \mbox{\ and\ } y_1+\cdots +y_{s-1} = n-k_2-i,
\end{split}
\end{equation*}
\noindent
\begin{equation*}\label{eq:bn}
\begin{split}
(s,\ s)\text{-type}\  :&\quad \overbrace{0\ldots 0}^{y_{1}}\mid\overbrace{1\ldots 1}^{x_{1}}\mid\overbrace{0\ldots 0}^{y_{2}}\mid\overbrace{1\ldots 1}^{x_{2}}\mid\overbrace{0\ldots 0}^{y_{3}}\mid \ldots \mid\overbrace{0\ldots 0}^{y_{s}}\mid\overbrace{1\ldots 1}^{x_{s}},
\end{split}
\end{equation*}
\noindent
with $n-k_{2}-i$ $0$'s and $i$ $1$'s, where $x_{j}$ $(j=1,\ldots,s)$ represents a length of run of $1$'s and $y_{j}$ $(j=1,\ldots,s)$ represents the length of a run of $0$'s. And all integers $x_{1},\ldots,x_{s}$, and $y_{1},\ldots ,y_{s}$ satisfy the conditions
\noindent
\begin{equation*}\label{eq:bn}
\begin{split}
(x_{1},\ldots,x_{s})\in S_{i,s}^{0}, \text{and}
\end{split}
\end{equation*}
\noindent
\begin{equation*}\label{eq:bn}
\begin{split}
0 < y_j < k_2 \mbox{\ for\ } j=1,...,s, \mbox{\ and\ } y_1+\cdots +y_{s} = n-k_2-i,
\end{split}
\end{equation*}
\noindent
Then the probability of the event $E_{n-k_2,i}^{(1)}$ is given by
\noindent
\begin{equation*}\label{eq: 1.1}
\begin{split}
P_{q,\theta}\Big(E_{n-k_{2},\ i}^{(1)}&\Big)=\\
\sum_{s=k_1}^{i}\Bigg[\bigg\{&\sum_{\substack{(x_{1},\ldots,x_{s})\in S_{i,s}^{0}}}{\hspace{0.3cm}\sum_{\substack{y_1+\cdots +y_{s-1} = n-k_2-i\\ y_{1},\ldots,y_{s-1} \in \{1,\ldots,k_{2}-1\}}}} \big(\theta q^{0}\big)^{x_{1}}\big(1-\theta q^{0}\big)\cdots (1-\theta q^{y_{1}-1})\times\\
&\Big(\theta q^{y_{1}}\Big)^{x_{2}}\big(1-\theta q^{y_{1}}\big)\cdots \big(1-\theta q^{y_{1}+y_{2}-1}\big)\times\\
&\Big(\theta q^{y_{1}+y_{2}}\Big)^{x_{3}}\big(1-\theta q^{y_{1}+y_2}\big)\cdots \big(1-\theta q^{y_{1}+y_{2}+y_3-1}\big)\times \\
&\quad \quad \quad \vdots\\
&\Big(\theta q^{y_{1}+\cdots+y_{s-1}}\Big)^{x_{s}}\bigg\}+\\
\end{split}
\end{equation*}

\noindent
\begin{equation*}\label{eq: 1.1}
\begin{split}
\quad \quad \quad \bigg\{&\sum_{\substack{(x_{1},\ldots,x_{s})\in S_{i,s}^{0}}}{\hspace{0.3cm}\sum_{\substack{y_{1}+\cdots+y_{s}=n-k_2-i\\ y_{1},\ldots,y_{s} \in \{1,\ldots,k_{2}-1\}}}}\big(1-\theta q^{0}\big)\cdots \big(1-\theta q^{y_{1}-1}\big)\Big(\theta q^{y_{1}}\Big)^{x_{1}}\times\\
&\big(1-\theta q^{y_{1}}\big)\cdots \big(1-\theta q^{y_{1}+y_{2}-1}\big)\Big(\theta q^{y_{1}+y_{2}}\Big)^{x_{2}}\times\\
&\big(1-\theta q^{y_{1}+y_2}\big)\cdots \big(1-\theta q^{y_{1}+y_{2}+y_3-1}\big)\Big(\theta q^{y_{1}+y_{2}+y_3}\Big)^{x_{3}}\times \\
&\quad \quad \quad \quad\quad \quad\quad \quad \quad \quad \quad \vdots\\
&\big(1-\theta q^{y_{1}+\cdots+y_{s-1}}\big)\cdots \big(1-\theta q^{y_{1}+\cdots+y_{s}-1}\big)\Big(\theta q^{y_{1}+\cdots+y_{s}}\Big)^{x_{s}}\bigg\}\Bigg].\\
\end{split}
\end{equation*}
\noindent
and then using simple exponentiation algebra arguments to simplify,
\noindent
\begin{equation*}\label{eq: 1.1}
\begin{split}
&P_{q,\theta}\Big(E_{n-k_{2},\ i}^{(1)}\Big)=\\
&\theta^{i}{\prod_{j=1}^{n-k_{2}-i}}\ (1-\theta q^{j-1})\\
&\sum_{s=1}^{i}\Bigg[\sum_{\substack{(x_{1},\ldots,x_{s})\in S_{i,s}^{0}}}
{\hspace{0.3cm}\sum_{\substack{y_1+\cdots +y_{s-1} = n-k_2-i\\ y_{1},\ldots,y_{s-1} \in \{1,\ldots,k_{2}-1\}}}}q^{y_{1}x_{2}+(y_{1}+y_{2})x_{3}+\cdots+(y_{1}+\cdots+y_{s-1})x_{s}}+\\
&\quad \quad \sum_{\substack{(x_{1},\ldots,x_{s})\in S_{i,s}^{0}}}{\hspace{0.3cm}\sum_{\substack{y_1+\cdots +y_{s} = n-k_2-i\\ y_{1},\ldots,y_{s} \in \{1,\ldots,k_{2}-1\}}}}q^{y_{1}x_{1}+(y_{1}+y_{2})x_{2}+\cdots+(y_{1}+\cdots+y_{s})x_{s}}\Bigg]\\
\end{split}
\end{equation*}
\noindent
Using Lemma \ref{lemma:3.9} and Lemma \ref{lemma:3.11}, we can rewrite as follows
\noindent
\begin{equation*}\label{eq: 1.1}
\begin{split}
P_{q,\theta}\Big(E_{n-k_{2},\ i}^{(1)}\Big)=\theta^{i}{\prod_{j=1}^{n-k_{2}-i}}\ (1-\theta q^{j-1})\sum_{s=k_1}^{i}\Bigg[\overline{G}_{q,0,0}^{\infty,k_2}(i,\ n-k_{2}-i,\ s)+\overline{H}_{q,0,0}^{\infty,k_2}(i,\ n-k_{2}-i,\ s)\Bigg],\\
\end{split}
\end{equation*}\\
\noindent
where
\noindent

\begin{equation*}\label{eq: 1.1}
\begin{split}
\overline{G}_{q,0,0}^{\infty,k_2}(&i,\ n-k_{2}-i,\ s)\\
&=\sum_{\substack{(x_{1},\ldots,x_{s})\in S_{i,s}^{0}}}\
\sum_{\substack{y_1+\cdots +y_{s-1} = n-k_2-i\\ y_{1},\ldots,y_{s-1} \in \{1,\ldots,k_{2}-1\}}}q^{y_{1}x_{2}+(y_{1}+y_{2})x_{3}+\cdots+(y_{1}+\cdots+y_{s-1})x_{s}},
\end{split}
\end{equation*}
\noindent
and
\noindent
\begin{equation*}\label{eq: 1.1}
\begin{split}
\overline{H}_{q,0,0}^{\infty,k_2}(&i,\ n-k_{2}-i,\ s)\\
&=\sum_{\substack{(x_{1},\ldots,x_{s})\in S_{i,s}^{0}}}\
\sum_{\substack{y_1+\cdots +y_{s} = n-k_2-i\\ y_{1},\ldots,y_{s} \in \{1,\ldots,k_{2}-1\}}}q^{y_{1}x_{1}+(y_{1}+y_{2})x_{2}+\cdots+(y_{1}+\cdots+y_{s})x_{s}}.
\end{split}
\end{equation*}
\noindent
Therefore we can compute the probability of the event $W_{L}^{(0)}=n$ as follows
\noindent
\begin{equation*}\label{eq:bn1}
\begin{split}
\widehat{P}_{q,L}^{(0)}(n)=&\sum_{i=k_1}^{n-k_{2}}P_{q,\theta}\Big(E_{n-k_{2},\ i}^{(1)}\Big){\prod_{j=n-k_2-i+1}^{n-i}}(1-\theta q^{j-1})\\
=&\sum_{i=k_1}^{n-k_{2}}\theta^{i}{\prod_{j=1}^{n-k_{2}-i}}\ (1-\theta q^{j-1})\sum_{s=k_1}^{i}\Bigg[\overline{G}_{q,0,0}^{\infty,k_2}(i,\ n-k_{2}-i,\ s)+\overline{H}_{q,0,0}^{\infty,k_2}(i,\ n-k_{2}-i,\ s)\Bigg]\\
&{\prod_{j=n-k_2-i+1}^{n-i}}(1-\theta q^{j-1}).\\
\end{split}
\end{equation*}
\noindent
Finally, applying typical factorization algebra arguments, $\widehat{P}_{L}^{(0)}(n)$ can be rewritten as follows
\noindent
\begin{equation*}\label{eq:bn1}
\begin{split}
\widehat{P}_{q,L}^{(0)}(n)=\sum_{i=k_1}^{n-k_{2}}\theta^{i}{\prod_{j=1}^{n-i}}(1-\theta q^{j-1})\sum_{s=k_1}^{i}\Bigg[\overline{G}_{q,0,0}^{\infty,k_2}(i,\ n-k_{2}-i,\ s)+\overline{H}_{q,0,0}^{\infty,k_2}(i,\ n-k_{2}-i,\ s)\Bigg].\\
\end{split}
\end{equation*}
\noindent
Thus proof is completed.
\end{proof}
\noindent
It is worth mentioning here that the PMF $\widehat{f}_{q,L}(n;\theta)$ approaches the probability function of the later waiting time distribution of order $(k_1,k_2)$
in the limit as $q$ tends to 1 when a frequency quota on runs of successes and a run quota on failures are imposed of IID model. The details are presented in the following remark.
\begin{remark}
{\rm
For $q=1$, the PMF $\widehat{f}_{q,L}(n;\theta)$ reduces to the PMF $\widehat{f}_{L}(n;\theta)=P_{\theta}(\widehat{W}_{L}=n)$ for $n\geq k_1+k_2$ is given by

\begin{equation*}\label{eq: 1.1}
\begin{split}
P(\widehat{W}_{L}=n)=\widehat{P}_{L}^{(1)}(n)+\widehat{P}_{L}^{(0)}(n),\ n\geq k_1+k_2,
\end{split}
\end{equation*}
where
\begin{equation*}\label{eq:bn1}
\begin{split}
\widehat{P}_{L}^{(1)}(n)=\theta^{k_{1}}\ (1-\theta)^{n-k_1}\ \sum_{s=1}^{k_1}M(s,\ k_1)\bigg[R(s-1,\ k_{2},\ n-k_1)+R(s,\ k_{2},\ n-k_1)\bigg].\\
\end{split}
\end{equation*}
\noindent

and

\begin{equation*}\label{eq:bn1}
\begin{split}
\widehat{P}_{L}^{(0)}(n)=\sum_{i=k_1}^{n-k_{2}}\theta^{i}(1-\theta)^{n-i}\sum_{s=k_1}^{i}M(s,\ i)\Bigg[S(s-1,\ k_2,\ n-k_{2}-i)+S(s,\ k_2\ n-k_{2}-i)\Bigg],\\
\end{split}
\end{equation*}

\noindent
where $M(a,\ b)$ denotes the total number of integer solution $x_{1}+x_{2}+\cdots+x_{a}=b$ such that $x_{1},\ldots,x_{a} \in S_{b,a}^{0}$. The number is given by
\noindent
\begin{equation*}\label{eq: 1.1}
\begin{split}
M(a,\ b)={b-1 \choose a-1}.
\end{split}
\end{equation*}
\noindent
and

\noindent
\begin{equation*}\label{eq: 1.1}
\begin{split}
S(a,\ b,\ c)=\sum_{j=0}^{min\left(a,\ \left[\frac{c-a}{b-1}\right]\right) }(-1)^{j}{a \choose j}{c-j(b-1)-1 \choose a-1}.
\end{split}
\end{equation*}
\noindent
See, e.g. \citet{charalambides2002enumerative}.
}
\end{remark}

\subsection{A run quota on successes and a frequency quota on failures are imposed}
We shall discuss of the $q$-sooner and later waiting times impose a run quota on successes and a frequency quota on failures. Each cases will be described in the following subsection.

\subsubsection{Sooner waiting time}
The problem of waiting time that will be discussed in this section is one of the 'sooner cases' and it emerges when a run quota on successes and a frequency quota on failures are imposed. More specifically, Binary (zero and one) trials with probability of ones varying according to a geometric rule, are performed sequentially until $k_1$ consecutive successes or $k_2$ failures in total are observed, whichever event occurs first. Let $\widetilde{W}_{S}$ be a random variable denoting that the waiting time until either $k_1$ consecutive successes in total or $k_2$ failures in total are occurred, whichever event observe sooner. The probability function of the $q$-sooner waiting time distribution impose a run quota on successes and a frequency quota on failures is obtained by the following theorem. It is evident that
\noindent
\begin{equation*}
P_{q,\theta}\left(\widetilde{W}_{S}=n\right)=0\ \text{for}\ 0\leq n<\text{min}(k_1,k_2)
\end{equation*}
\noindent
and so we shall focus on determining the probability mass function for $n\geq\min(k_{1},\ k_{2})$.
\noindent
\begin{theorem}
\label{thm:4.3}
The PMF $P_{q,\theta}\left(\widetilde{W}_{S}=n\right)$ satisfies
\noindent
\begin{equation*}\label{eq:bn}
\begin{split}
P_{q,\theta}\left(\widetilde{W}_{S}=n\right)=\widetilde{P}_{q,S}^{(1)}(n)+\widetilde{P}_{q,S}^{(0)}(n)\ \text{for}\ n\geq\min(k_{1},\ k_{2}),
\end{split}
\end{equation*}
\noindent
where $\widetilde{P}_{q,S}^{(1)}(k_1)=\theta^{k_{1}}$, $\widetilde{P}_{q,S}^{(0)}(k_2)={\prod_{j=1}^{k_2}}\left(1-\theta q^{j-1}\right)$,
\noindent
\begin{equation*}\label{eq:bn1}
\begin{split}
\widetilde{P}_{q,S}^{(1)}(n)=\sum_{i=1}^{\min(n-k_1,k_2-1)}\theta^{n-i} q^{ik_{1}}{\prod_{j=1}^{i}}(1-\theta q^{j-1})\sum_{s=1}^{i}\Bigg[\overline{E}_{q}&(n-k_{1}-i,\ i,\ s)\\
&+\overline{F}_{q}(n-k_{1}-i,\ i,\ s)\Bigg],\ n>k_{1}\\
\end{split}
\end{equation*}
\noindent
and
\noindent
\begin{equation*}\label{eq:bn1}
\begin{split}
\widetilde{P}_{q,S}^{(0)}(n)=\theta^{n-k_{2}}&{\prod_{j=1}^{k_2}}\ (1-\theta q^{j-1})\ \sum_{s=1}^{k_2}\bigg[\overline{E}_{q}(n-k_2,\ k_{2} ,\ s)+\overline{F}_{q}(n-k_2,\ k_{2} ,\ s)\bigg],\ n>k_2.\\
\end{split}
\end{equation*}
\end{theorem}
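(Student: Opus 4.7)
The strategy follows the pattern of Theorem~\ref{thm:3.1}, splitting the event $\{\widetilde{W}_S = n\}$ according to which quota is attained first. Write $\widetilde{P}_{q,S}^{(1)}(n)$ for the probability that the run quota $k_1$ on successes is met at trial $n$ before the frequency quota $k_2$ on failures, and $\widetilde{P}_{q,S}^{(0)}(n)$ for the symmetric event where the $k_2$-th failure appears at trial $n$ before any run of $k_1$ successes. The boundary values $\widetilde{P}_{q,S}^{(1)}(k_1)=\theta^{k_1}$ and $\widetilde{P}_{q,S}^{(0)}(k_2)=\prod_{j=1}^{k_2}(1-\theta q^{j-1})$ are immediate.

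For $n>k_1$, the event defining $\widetilde{P}_{q,S}^{(1)}(n)$ forces $X_{n-k_1}=0$, $X_{n-k_1+1}=\cdots=X_n=1$, and in the prefix $X_1,\ldots,X_{n-k_1}$ the longest success run is less than $k_1$ and the failure count $i$ satisfies $1\le i\le \min(n-k_1,k_2-1)$ (otherwise the frequency quota $k_2$ would have been reached earlier). Conditioning on $F_{n-k_1}=i$, the final block contributes $(\theta q^{i})^{k_1}$. The prefix must consist of $n-k_1-i$ ones and $i$ zeros, with longest success run less than $k_1$ and ending with a zero. Splitting according to whether the prefix starts with a zero or a one gives two templates:
\begin{equation*}
(s-1,s)\text{-type}:\ \overbrace{0\cdots 0}^{y_1}\,\overbrace{1\cdots 1}^{x_1}\cdots\overbrace{1\cdots 1}^{x_{s-1}}\,\overbrace{0\cdots 0}^{y_s},\qquad (s,s)\text{-type}:\ \overbrace{1\cdots 1}^{x_1}\,\overbrace{0\cdots 0}^{y_1}\cdots\overbrace{1\cdots 1}^{x_s}\,\overbrace{0\cdots 0}^{y_s},
\end{equation*}
with the constraints $0<x_j<k_1$, $0<y_j$, $\sum x_j=n-k_1-i$, $\sum y_j=i$, and $1\le s\le i$. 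Writing out the conditional probability under the $q$-geometric rule \eqref{failureprobofq2} and using the telescoping identity $\theta^{\#\text{ones}}\prod_{j=1}^{i}(1-\theta q^{j-1})$ to pull out the $\theta$'s and $(1-\theta q^{j-1})$'s, the remaining $q$-exponent is precisely the form appearing in the definitions of $\overline{E}_{q,0,0}^{k_1,\infty}$ and $\overline{F}_{q,0,0}^{k_1,\infty}$ with parameters $(n-k_1-i,i,s)$. Invoking Lemmas~\ref{lemma:3.5} and~\ref{lemma:3.7} (with the shorthand $\overline{E}_q$, $\overline{F}_q$) and multiplying by $(\theta q^{i})^{k_1}=\theta^{k_1}q^{ik_1}$ yields the claimed formula for $\widetilde{P}_{q,S}^{(1)}(n)$.

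For $n>k_2$, the event defining $\widetilde{P}_{q,S}^{(0)}(n)$ requires $X_n=0$ with exactly $k_2$ zeros and $n-k_2$ ones in the full sequence $X_1,\ldots,X_n$, the longest success run being less than $k_1$, and the sequence ending with a zero. No further conditioning on a failure count is needed because $k_2$ is determined. The same two templates apply, now with $s$ ranging over $1\le s\le k_2$ (since there are $k_2$ zeros partitioned into $s$ runs) and with $\sum x_j=n-k_2$, $\sum y_j=k_2$. The probability of any admissible arrangement factors as $\theta^{n-k_2}\prod_{j=1}^{k_2}(1-\theta q^{j-1})$ times a $q$-power in the exponents $y_1 x_1+(y_1+y_2)x_2+\cdots$ (or the shifted version), and summing over arrangements reduces via Lemmas~\ref{lemma:3.5} and~\ref{lemma:3.7} to $\overline{E}_q(n-k_2,k_2,s)+\overline{F}_q(n-k_2,k_2,s)$, giving the stated expression.

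The main bookkeeping obstacle is the range of $i$ in the $\widetilde{P}_{q,S}^{(1)}(n)$ case: one must verify that imposing $i\le k_2-1$ is exactly the condition preventing the frequency quota from being reached before trial $n-k_1$, and that $i\le n-k_1$ is automatic from the prefix length. Once this is settled, the algebraic simplification is identical in structure to the corresponding reduction in the proof of Theorem~\ref{thm:3.1}, so no new computational difficulty arises.
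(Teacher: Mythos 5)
Your proposal is correct and follows essentially the same route as the paper: the same split into $\widetilde{P}_{q,S}^{(1)}$ and $\widetilde{P}_{q,S}^{(0)}$, the same conditioning on $F_{n-k_{1}}=i$ with $1\leq i\leq\min(n-k_{1},k_{2}-1)$, the same $(s-1,s)$- and $(s,s)$-type decompositions of the prefix, and the same reduction via Lemmas~\ref{lemma:3.5} and~\ref{lemma:3.7} to $\overline{E}_{q,0,0}^{k_1,\infty}$ and $\overline{F}_{q,0,0}^{k_1,\infty}$, followed by the factorization pulling out $\theta^{n-i}q^{ik_{1}}\prod_{j=1}^{i}(1-\theta q^{j-1})$ (respectively $\theta^{n-k_{2}}\prod_{j=1}^{k_{2}}(1-\theta q^{j-1})$). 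The bookkeeping point you flag about the upper limit $i\leq k_{2}-1$ is handled identically in the paper.
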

\begin{proof}
\noindent
We are now going to study $\widetilde{P}_{q,S}^{(1)}(n)$. It is easy to see that $\widetilde{P}_{q,S}^{(1)}(k_1)=\theta^{k_1}$. From now on we assume $n > k_1,$ and we can write $\widetilde{P}_{q,S}^{(1)}(n)$ as
\noindent
\begin{equation*}\label{eq:bn}
\begin{split}
\widetilde{P}_{q,S}^{(1)}(n)=P_{q,\theta}\left(L_{n-k_{1}}^{(0)}< k_{1}\ \wedge\ X_{n-k_{1}}=0\ \wedge\ X_{n-k_{1}+1}=\cdots =X_{n}=1\right).\\
\end{split}
\end{equation*}
\noindent
We partition the event $W_{\widetilde{S}}^{(1)}=n$ into disjoint events given by $F_{n-k_1}=i,$ for $i=1, \ldots,\ \min(n-k_1,k_2-1).$ Adding the probabilities we have
\noindent
\begin{equation*}\label{eq:bn}
\begin{split}
\widetilde{P}_{q,S}^{(1)}(n)=\sum_{i=1}^{\min(n-k_1,k_2-1)}P_{q,\theta}\Big(L_{n-k_{1}}^{(1)}< k_{1}\ \wedge\ F_{n-k_1}=i&\ \wedge\ X_{n-k_{1}}=0\ \wedge\\
&X_{n-k_{1}+1}=\cdots =X_{n}=1\Big).\\
\end{split}
\end{equation*}
\noindent
Using $S_{n-k_1}=n-k_1-F_{n-k_1},$ we can rewrite as follows.
\begin{equation*}\label{eq:bn}
\begin{split}
\widetilde{P}_{q,S}^{(1)}(n)=\sum_{i=1}^{\min(n-k_1,k_2-1)}P_{q,\theta}\Big(L_{n-k_{1}}^{(1)}< k_{1}\ \wedge\ S_{n-k_1}=n-k_1-i&\ \wedge\ X_{n-k_{1}}=0\ \wedge\\
&X_{n-k_{1}+1}=\cdots =X_{n}=1\Big).\\
\end{split}
\end{equation*}
\noindent
If the number of $0$'s in the first $n-k_1$ trials is equal to $i,$ that is, $F_{n-k_1}=i,$ then the probability of successes in all of the $(n-k_1+1)$-th to $n$-th trials is
\noindent
\begin{equation*}\label{eq: kk}
\begin{split}
P_{q,\theta}(X_{n-k_1+1}=\cdots = X_n = 1 \,\mid \, F_{n-k_1}=i)=\left(\theta q^{i}\right)^{k_1}.
\end{split}
\end{equation*}
\noindent
We write $E_{n,i}^{(0)}$ for the event $\left\{L_n^{(1)}< k_1\ \wedge\ X_n=0\ \wedge\ S_n=n-i\right\}.$ We can now rewrite as follows.
\noindent
\begin{equation*}\label{eq:bn1}
\begin{split}
\widetilde{P}_{q,S}^{(1)}(n)=\sum_{i=1}^{\min(n-k_1,k_2-1)}P_{q,\theta}\Big(L_{n-k_{1}}^{(1)}< &k_{1}\ \wedge\ S_{n-k_1}=n-k_1-i\ \wedge\ X_{n-k_{1}}=0\Big)\\
&\times P_{q,\theta}\Big(X_{n-k_{1}+1}=\cdots =X_{n}=1\mid F_{n-k_{1}}=i\Big)\\
=\sum_{i=1}^{\min(n-k_1,k_2-1)}P_{q,\theta}\Big(E_{n-k_{1},\ i}^{(0)}&\Big)\left(\theta q^{i}\right)^{k_1}.\\
\end{split}
\end{equation*}
\noindent
We are going to focus on the event $E_{n-k_{1},\ i}^{(0)}$. A typical element of the event $\Big\{L_{n-k_{1}}^{(1)}< k_{1}\ \&\ S_{n-k_1}=n-k_1-i\ \wedge\ X_{n-k_{1}}=0\Big\}$ is an ordered sequence which consists of $n-k_1-i$ successes and $i$ failures such that the length of the longest success run is less than $k_1$. The number of these sequences can be derived as follows. First we will distribute the $i$ failures. Let $s$ $(1\leq s \leq i)$ be the number of runs of failures in the event $E_{n-k_{1},i}^{(0)}$. We divide into two cases: starting with a success run or starting with a failure run. Thus, we distinguish between two types of sequences in the event $$\left\{L_{n-k_{1}}^{(1)}< k_{1}\ \&
\ S_{n-k_1}=n-k_1-i\ \wedge\ X_{n-k_{1}}=0\right\},$$respectively named $(s-1,\ s)$-type and $(s,\ s)$-type, which are defined as follows.
\noindent
\begin{equation*}\label{eq:bn}
\begin{split}
(s-1,\ s)\text{-type}\ :&\quad \overbrace{0\ldots 0}^{y_{1}}\mid\overbrace{1\ldots 1}^{x_{1}}\mid\overbrace{0\ldots 0}^{y_{2}}\mid\overbrace{1\ldots 1}^{x_{2}}\mid \ldots \mid \overbrace{0\ldots 0}^{y_{s-1}}\mid\overbrace{1\ldots 1}^{x_{s-1}}\mid\overbrace{0\ldots 0}^{y_{s}},\\
\end{split}
\end{equation*}
\noindent
with $i$ $0$'s and $n-k_{1}-i$ $1$'s, where $x_{j}$ $(j=1,\ldots,s-1)$ represents a length of run of $1$'s and $y_{j}$ $(j=1,\ldots,s)$ represents the length of a run of $0$'s. And all integers $x_{1},\ldots,x_{s-1}$, and $y_{1},\ldots ,y_{s}$ satisfy the conditions
\noindent
\begin{equation*}\label{eq:bn}
\begin{split}
0 < x_j < k_1 \mbox{\ for\ } j=1,...,s-1, \mbox{\ and\ } x_1+\cdots +x_{s-1} = n-k_1-i,
\end{split}
\end{equation*}
\noindent
\begin{equation*}\label{eq:bn}
\begin{split}
(y_{1},\ldots,y_{s}) \in S_{i,s}^{0}.
\end{split}
\end{equation*}
\begin{equation*}\label{eq:bn}
\begin{split}
(s,\ s)\text{-type}\  :&\quad \overbrace{1\ldots 1}^{x_{1}}\mid\overbrace{0\ldots 0}^{y_{1}}\mid\overbrace{1\ldots 1}^{x_{2}}\mid\overbrace{0\ldots 0}^{y_{2}}\mid\overbrace{1\ldots 1}^{x_{3}}\mid \ldots \mid \overbrace{0\ldots 0}^{y_{s-1}}\mid\overbrace{1\ldots 1}^{x_{s}}\mid\overbrace{0\ldots 0}^{y_{s}},
\end{split}
\end{equation*}
\noindent
with $i$ $0$'s and $n-k_{1}-i$ $1$'s, where $x_{j}$ $(j=1,\ldots,s)$ represents a length of run of $1$'s and $y_{j}$ $(j=1,\ldots,s)$ represents the length of a run of $0$'s. Here all of $x_1,\ldots, x_{s}$, and $y_1,\ldots, y_s$ are integers, and they satisfy
\noindent
\begin{equation*}\label{eq:bn}
\begin{split}
0 < x_j < k_1 \mbox{\ for\ } j=1,...,s, \mbox{\ and\ } x_1+\cdots +x_s = n-k_1-i,
\end{split}
\end{equation*}
\noindent
\begin{equation*}\label{eq:bn}
\begin{split}
(y_{1},\ldots,y_{s}) \in S_{i,s}^{0}.
\end{split}
\end{equation*}
\noindent
Then the probability of the event $E_{n-k_1,i}^{(0)}$ is given by
\noindent
\begin{equation*}\label{eq: 1.1}
\begin{split}
P_{q,\theta}\Big(E_{n-k_{1},\ i}^{(0)}&\Big)=\\
\sum_{s=1}^{i}\Bigg[\bigg\{&\sum_{\substack{x_{1}+\cdots+x_{s-1}=n-k_{1}-i\\ x_{1},\ldots,x_{s-1} \in \{1,\ldots,k_{1}-1\}}}{\hspace{0.3cm}\sum_{(y_{1},\ldots,y_{s}) \in  S_{i,s}^{0}}} \big(1-\theta q^{0}\big)\cdots \big(1-\theta q^{y_{1}-1}\big)\times\\
&\Big(\theta q^{y_{1}}\Big)^{x_{1}}\big(1-\theta q^{y_{1}}\big)\cdots \big(1-\theta q^{y_{1}+y_{2}-1}\big)\times\\
&\Big(\theta q^{y_{1}+y_{2}}\Big)^{x_{2}}\big(1-\theta q^{y_{1}+y_2}\big)\cdots \big(1-\theta q^{y_{1}+y_{2}+y_3-1}\big)\times \\
&\quad \quad \quad \quad\quad \quad\quad \quad \quad \quad \quad \vdots\\
&\Big(\theta q^{y_{1}+\cdots+y_{s-1}}\Big)^{x_{s-1}}
\big(1-\theta q^{y_{1}+\cdots+y_{s-1}}\big)\cdots \big(1-\theta q^{y_{1}+\cdots+y_{s}-1}\big)\bigg\}+\\
\end{split}
\end{equation*}
\noindent
\begin{equation*}\label{eq: 1.1}
\begin{split}
\quad  \quad \quad \quad \quad \bigg\{&\sum_{\substack{x_{1}+\cdots+x_{s}=n-k_{1}-i\\ x_{1},\ldots,x_{s} \in \{1,\ldots,k_{1}-1\}}}{\hspace{0.3cm}\sum_{(y_{1},\ldots,y_{s}) \in  S_{i,s}^{0}}}\big(\theta q^{0}\big)^{x_{1}}\big(1-\theta q^{0}\big)\cdots (1-\theta q^{y_{1}-1})\times\\
&\Big(\theta q^{y_{1}}\Big)^{x_{2}}\big(1-\theta q^{y_{1}+y_2}\big)\cdots \big(1-\theta q^{y_{1}+y_{2}-1}\big)\times\\
&\Big(\theta q^{y_{1}+y_{2}}\Big)^{x_{3}}\big(1-\theta q^{y_{1}}\big)\cdots \big(1-\theta q^{y_{1}+y_{2}+y_3-1}\big)\times \\
&\quad \quad \quad \quad\quad \quad\quad \quad \quad \quad \quad \vdots\\
&\Big(\theta q^{y_{1}+\cdots+y_{s-1}}\Big)^{x_{s}}
\big(1-\theta q^{y_{1}+\cdots+y_{s-1}}\big)\cdots \big(1-\theta q^{y_{1}+\cdots+y_{s}-1}\big)\bigg\}\Bigg].\\
\end{split}
\end{equation*}
\noindent
Using simple exponentiation algebra arguments to simplify,
\noindent
\begin{equation*}\label{eq: 1.1}
\begin{split}
&P_{q,\theta}\Big(E_{n-k_{1},\ i}^{(0)}\Big)=\\
&\theta^{n-k_{1}-i}{\prod_{j=1}^{i}}\ (1-\theta q^{j-1})\times\\
&\sum_{s=1}^{i}\Bigg[\sum_{\substack{x_{1}+\cdots+x_{s-1}=n-k_{1}-i\\ x_{1},\ldots,x_{s-1} \in \{1,\ldots,k_{1}-1\}}}\
\sum_{(y_{1},\ldots,y_{s}) \in  S_{i,s}^{0}}q^{y_{1}x_{1}+(y_{1}+y_{2})x_{2}+\cdots+(y_{1}+\cdots+y_{s-1})x_{s-1}}+\\
&\quad \quad \sum_{\substack{x_{1}+\cdots+x_{s}=n-k_{1}-i\\ x_{1},\ldots,x_{s} \in \{1,\ldots,k_{1}-1\}}}\
\sum_{(y_{1},\ldots,y_{s}) \in  S_{i,s}^{0}}q^{y_{1}x_{2}+(y_{1}+y_{2})x_{3}+\cdots+(y_{1}+\cdots+y_{s-1})x_{s}}\Bigg].\\
\end{split}
\end{equation*}
\noindent
Using Lemma \ref{lemma:3.5} and Lemma \ref{lemma:3.7}, we can rewrite as follows.
\noindent
\begin{equation*}\label{eq: 1.1}
\begin{split}
P_{q,\theta}\Big(E_{n-k_{1},\ i}^{(0)}\Big)=\theta^{n-k_{1}-i}{\prod_{j=1}^{i}}(1-\theta q^{j-1})\sum_{s=1}^{i}\Bigg[\overline{E}&_{q,0,0}^{k_1,\infty}(n-k_{1}-i,\ i,\ s)+\overline{F}_{q,0,0}^{k_1,\infty}(n-k_{1}-i,\ i,\ s)\Bigg],\\
\end{split}
\end{equation*}\\
\noindent
where
\noindent
\begin{equation*}\label{eq: 1.1}
\begin{split}
\overline{E}&_{q,0,0}^{k_1,\infty}(n-k_{1}-i,\ i,\ s)\\
&=\sum_{\substack{x_{1}+\cdots+x_{s-1}=n-k_{1}-i\\ x_{1},\ldots,x_{s-1} \in \{1,\ldots,k_{1}-1\}}}\
\sum_{(y_{1},\ldots,y_{s}) \in S_{i,s}^{0}}q^{y_{1}x_{1}+(y_{1}+y_{2})x_{2}+\cdots+(y_{1}+\cdots+y_{s-1})x_{s-1}},
\end{split}
\end{equation*}
\noindent
and
\noindent
\begin{equation*}\label{eq: 1.1}
\begin{split}
\overline{F}&_{q,0,0}^{k_1,\infty}(n-k_{1}-i,\ i,\ s)\\
&=\sum_{\substack{x_{1}+\cdots+x_{s}=n-k_{1}-i\\ x_{1},\ldots,x_{s} \in \{1,\ldots,k_{1}-1\}}}\
\sum_{(y_{1},\ldots,y_{s}) \in  S_{i,s}^{0}}q^{y_{1}x_{2}+(y_{1}+y_{2})x_{3}+\cdots+(y_{1}+\cdots+y_{s-1})x_{s}}.
\end{split}
\end{equation*}
\noindent
Therefore we can compute the probability of the event $\widetilde{W}_{S}^{(1)}=n$ as follows
\noindent
\begin{equation*}\label{eq:bn1}
\begin{split}
\widetilde{P}_{q,S}^{(1)}(n)=&\sum_{i=1}^{\min(n-k_1,k_2-1)}P_{q,\theta}\Big(E_{n-k_{1},\ i}^{(0)}\Big)\Big(\theta q^{i}\Big)^{k_1}\\
=&\sum_{i=1}^{\min(n-k_1,k_2-1)}
\theta^{n-k_{1}-i}{\prod_{j=1}^{i}}(1-\theta q^{j-1})\sum_{s=1}^{i}\Bigg[\overline{E}_{q,0,0}^{k_1,\infty}(n-k_{1}-i,\ i,\ s)\\
&+\overline{F}_{q,0,0}^{k_1,\infty}(n-k_{1}-i,\ i,\ s)\Bigg]\times \Big(\theta q^{i}\Big)^{k_1}.\\
\end{split}
\end{equation*}
\noindent
Finally, applying typical factorization algebra arguments, $\widetilde{P}_{S}^{(1)}(n)$ can be rewritten as follows
\noindent
\begin{equation*}\label{eq:bn1}
\begin{split}
\widetilde{P}_{q,S}^{(1)}(n)=\sum_{i=1}^{\min(n-k_1,k_2-1)}\theta^{n-i} q^{ik_{1}}{\prod_{j=1}^{i}}(1-\theta q^{j-1})\sum_{s=1}^{i}\Bigg[\overline{E}_{q,0,0}^{k_1,\infty}&(n-k_{1}-i,\ i,\ s)\\
&+\overline{F}_{q,0,0}^{k_1,\infty}(n-k_{1}-i,\ i,\ s)\Bigg].\\
\end{split}
\end{equation*}
\noindent
We are now going to study $\widetilde{P}_{q,S}^{(0)}(n)$. It is easy to see that $\widetilde{P}_{q,S}^{(0)}(k_2)={\prod_{j=1}^{k_2}}(1-\theta q^{j-1})$. From now on we assume $n > k_2,$ and we can write $\widetilde{P}_{q,S}^{(0)}(n)$ as follows.
\noindent
\begin{equation*}\label{eq:bn}
\begin{split}
\widetilde{P}_{q,S}^{(0)}(n)&=P_{q,\theta}\left(L_{n}^{(1)}< k_{1}\ \wedge\ X_{n}=0\ \wedge\ S_{n}=n-k_2\right).\\
\end{split}
\end{equation*}
\noindent
We are going to focus on the event $\left\{L_{n}^{(1)}< k_{1}\ \wedge\ X_{n}=0\ \wedge\ S_{n}=n-k_2\right\}$. A typical element of the event $\left\{L_{n}^{(1)}< k_{1}\ \wedge\ X_{n}=0\ \wedge \ S_{n}=n-k_2\right\}$ is an ordered sequencewhich consists of $n-k_{2}$ successes and $k_2$ failures such that the length of the longest success run is less than $k_1$. The number of these sequences can be derived as follows. First we will distribute the $k_2$ failures. Let $s$ $(1\leq s \leq k_2)$ be the number of runs of failures in the event $\left\{L_{n}^{(1)}< k_{1}\ \&
\ X_{n}=0\ \wedge\ S_{n}=n-k_2\right\}$. We divide into two cases: starting with a success run or starting with a failure run. Thus, we distinguish between two types of sequences in the event  $$\left\{L_{n}^{(1)}< k_{1}\ \wedge\ X_{n}=0\ \wedge \ S_{n}=n-k_2\right\},$$ respectively named $(s-1,\ s)$-type and $(s,\ s)$-type, which are defined as follows.
\noindent
\begin{equation*}\label{eq:bn}
\begin{split}
(s-1,\ s)\text{-type}\ :&\quad \overbrace{0\ldots 0}^{y_{1}}\mid\overbrace{1\ldots 1}^{x_{1}}\mid\overbrace{0\ldots 0}^{y_{2}}\mid\overbrace{1\ldots 1}^{x_{2}}\mid \ldots \mid \overbrace{0\ldots 0}^{y_{s-1}}\mid\overbrace{1\ldots 1}^{x_{s-1}}\mid\overbrace{0\ldots 0}^{y_{s}},\\
\end{split}
\end{equation*}
\noindent
with $k_2$ $0$'s and $n-k_{2}$ $1$'s, where $x_{j}$ $(j=1,\ldots,s-1)$ represents a length of run of $1$'s and $y_{j}$ $(j=1,\ldots,s)$ represents the length of a run of $0$'s. And all integers $x_{1},\ldots,x_{s-1}$, and $y_{1},\ldots ,y_{s}$ satisfy the conditions
\noindent
\begin{equation*}\label{eq:bn}
\begin{split}
0 < x_j < k_1 \mbox{\ for\ } j=1,...,s-1, \mbox{\ and\ } x_1+\cdots +x_{s-1} = n-k_2,
\end{split}
\end{equation*}
\noindent
\begin{equation*}\label{eq:bn}
\begin{split}
(y_{1},\ldots,y_{s})\in S_{k_2,s}^{0},
\end{split}
\end{equation*}
\noindent
\begin{equation*}\label{eq:bn}
\begin{split}
(s,\ s)\text{-type}\  :&\quad \overbrace{1\ldots 1}^{x_{1}}\mid\overbrace{0\ldots 0}^{y_{1}}\mid\overbrace{1\ldots 1}^{x_{2}}\mid\overbrace{0\ldots 0}^{y_{2}}\mid\overbrace{1\ldots 1}^{x_{3}}\mid \ldots \mid \overbrace{0\ldots 0}^{y_{s-1}}\mid\overbrace{1\ldots 1}^{x_{s}}\mid\overbrace{0\ldots 0}^{y_{s}},
\end{split}
\end{equation*}
\noindent
with $k_2$ $0$'s and $n-k_{2}$ $1$'s, where $x_{j}$ $(j=1,\ldots,s)$ represents a length of run of $1$'s and $y_{j}$ $(j=1,\ldots,s)$ represents the length of a run of $0$'s. Here all of $x_1,\ldots, x_{s-1},$ and $y_1,\ldots, y_s$ are integers, and they satisfy
\noindent
\begin{equation*}\label{eq:bn}
\begin{split}
0 < x_j < k_1 \mbox{\ for\ } j=1,...,s, \mbox{\ and\ } x_1+\cdots +x_{s} = n-k_2,
\end{split}
\end{equation*}
\noindent
\begin{equation*}\label{eq:bn}
\begin{split}
(y_{1},\ldots,y_{s})\in S_{k_2,s}^{0},
\end{split}
\end{equation*}
\noindent
Then the probability of the event $\left\{L_{n}^{(1)}< k_{1}\ \wedge\ X_{n}=0\ \wedge\ S_{n}=n-k_2\right\}$ is given by
\noindent
\begin{equation*}\label{eq: 1.1}
\begin{split}
P_{q,\theta}\Big(L_{n}^{(1)}<& k_{1}\ \wedge\ X_{n}=0\ \wedge\ S_{n}=n-k_2\Big)\\
=\sum_{s=1}^{k_2}\Bigg[\bigg\{&\sum_{\substack{x_{1}+\cdots+x_{s-1}=n-k_{2}\\ x_{1},\ldots,x_{s-1} \in \{1,\ldots,k_{1}-1\}}}\
\sum_{(y_{1},\ldots,y_{s}) \in S_{k_{2},s}^{0}} \big(1-\theta q^{0}\big)\cdots \big(1-\theta q^{y_{1}-1}\big)\times\\
&\Big(\theta q^{y_{1}}\Big)^{x_{1}}\big(1-\theta q^{y_{1}}\big)\cdots \big(1-\theta q^{y_{1}+y_{2}-1}\big)\times\\
&\Big(\theta q^{y_{1}+y_{2}}\Big)^{x_{2}}\big(1-\theta q^{y_{1}+y_2}\big)\cdots \big(1-\theta q^{y_{1}+y_{2}+y_3-1}\big)\times \\
&\quad \quad \quad \quad\quad \quad\quad \quad \quad \quad \quad \vdots\\
&\Big(\theta q^{y_{1}+\cdots+y_{s-1}}\Big)^{x_{s-1}}
\big(1-\theta q^{y_{1}+\cdots+y_{s-1}}\big)\cdots \big(1-\theta q^{y_{1}+\cdots+y_{s}-1}\big)\bigg\}+\\
\end{split}
\end{equation*}
\noindent
\begin{equation*}\label{eq: 1.1}
\begin{split}
\quad \quad \quad \quad \quad \quad \quad \bigg\{&\sum_{\substack{x_{1}+\cdots+x_{s}=n-k_{2}\\ x_{1},\ldots,x_{s} \in \{1,\ldots,k_{1}-1\}}}\
\sum_{(y_{1},\ldots,y_{s}) \in S_{k_{2},s}^{0}}\big(\theta q^{0}\big)^{x_{1}}\big(1-\theta q^{0}\big)\cdots (1-\theta q^{y_{1}-1})\times\\
&\Big(\theta q^{y_{1}}\Big)^{x_{2}}\big(1-\theta q^{y_{1}}\big)\cdots \big(1-\theta q^{y_{1}+y_{2}-1}\big)\times\\
&\Big(\theta q^{y_{1}+y_{2}}\Big)^{x_{3}}\big(1-\theta q^{y_{1}+y_{2}}\big)\cdots \big(1-\theta q^{y_{1}+y_{2}+y_3-1}\big)\times \\
&\quad \quad \quad \quad\quad \quad\quad \quad \quad \quad \quad \vdots\\
&\Big(\theta q^{y_{1}+\cdots+y_{s-1}}\Big)^{x_{s}}
\big(1-\theta q^{y_{1}+\cdots+y_{s-1}}\big)\cdots \big(1-\theta q^{y_{1}+\cdots+y_{s}-1}\big)\bigg\}\Bigg].\\
\end{split}
\end{equation*}
\noindent
Using simple exponentiation algebra arguments to simplify,
\noindent
\begin{equation*}\label{eq: 1.1}
\begin{split}
&P_{q,\theta}\Big(L_{n}^{(1)}< k_{1}\ \wedge\ X_{n}=0\ \wedge
\ S_{n}=n-k_2\Big)=\\
&\theta^{n-k_{2}}{\prod_{j=1}^{k_2}}\ (1-\theta q^{j-1})\times\\
&\sum_{s=1}^{k_2}\Bigg[\sum_{\substack{x_{1}+\cdots+x_{s-1}=n-k_{2}\\ x_{1},\ldots,x_{s-1} \in \{1,\ldots,k_{1}-1\}}}\
\sum_{(y_{1},\ldots,y_{s}) \in S_{k_{2},s}^{0}}q^{y_{1}x_{1}+(y_{1}+y_{2})x_{2}+\cdots+(y_{1}+\cdots+y_{s-1})x_{s-1}}+\\
&\quad\quad\ \sum_{\substack{x_{1}+\cdots+x_{s}=n-k_{2}\\ x_{1},\ldots,x_{s} \in \{1,\ldots,k_{1}-1\}}}\
\sum_{(y_{1},\ldots,y_{s}) \in S_{k_{2},s}^{0}}q^{y_{1}x_{2}+(y_{1}+y_{2})x_{3}+\cdots+(y_{1}+\cdots+y_{s-1})x_{s}}\Bigg].\\
\end{split}
\end{equation*}
\noindent
Using Lemma \ref{lemma:3.5} and Lemma \ref{lemma:3.7}, we can rewrite as follows.
\noindent
\begin{equation*}\label{eq: 1.1}
\begin{split}
P_{q,\theta}\Big(L_{n}^{(1)}&< k_{1}\ \wedge\ X_{n}=0\ \wedge\ S_{n}=n-k_2\Big)\\
=\theta^{n-k_{2}}&{\prod_{j=1}^{k_2}}\ (1-\theta q^{j-1})\ \sum_{s=1}^{k_2}\bigg[\overline{E}_{q,0,0}^{k_1,\infty}(n-k_2,\ k_{2} ,\ s)+\overline{F}_{q,0,0}^{k_1,\infty}(n-k_2,\ k_{2} ,\ s)\bigg],\\
\end{split}
\end{equation*}\\
\noindent
where
\noindent
\begin{equation*}\label{eq: 1.1}
\begin{split}
\overline{E}&_{q,0,0}^{k_1,\infty}(n-k_{2},\ k_2,\ s)\\
&=\sum_{\substack{x_{1}+\cdots+x_{s-1}=n-k_{2}\\ x_{1},\ldots,x_{s-1} \in \{1,\ldots,k_{1}-1\}}}{\hspace{0.3cm}\sum_{(y_{1},\ldots,y_{s}) \in S_{k_2,s}^{0}}}q^{y_{1}x_{1}+(y_{1}+y_{2})x_{2}+\cdots+(y_{1}+\cdots+y_{s-1})x_{s-1}},
\end{split}
\end{equation*}
\noindent
and
\noindent
\begin{equation*}\label{eq: 1.1}
\begin{split}
\overline{F}&_{q,0,0}^{k_1,\infty}(n-k_{2},\ k_2,\ s)\\
&=\sum_{\substack{x_{1}+\cdots+x_{s}=n-k_{2}\\ x_{1},\ldots,x_{s} \in \{1,\ldots,k_{1}-1\}}}{\hspace{0.3cm}\sum_{y_{1},\ \ldots,\ y_{s} \in S_{k_2,s}^{0}}}q^{y_{1}x_{2}+(y_{1}+y_{2})x_{3}+\cdots+(y_{1}+\cdots+y_{s-1})x_{s}}.
\end{split}
\end{equation*}
\noindent
Therefore we can compute the probability of the event $\widetilde{W}_{S}^{(0)}=n$ as follows.
\noindent
\begin{equation*}\label{eq:bn1}
\begin{split}
\widetilde{P}_{q,S}^{(0)}(n)=\theta^{n-k_{2}}&{\prod_{j=1}^{k_2}}\ (1-\theta q^{j-1})\ \sum_{s=1}^{k_2}\bigg[\overline{E}_{q,0,0}^{k_1,\infty}(n-k_2,\ k_{2} ,\ s)+\overline{F}_{q,0,0}^{k_1,\infty}(n-k_2,\ k_{2} ,\ s)\bigg].\\
\end{split}
\end{equation*}
\noindent
Thus proof is completed.
\end{proof}
\noindent
It is worth mentioning here that the PMF $\widetilde{f}_{q,S}(n;\theta)$ approaches the probability function of the sooner waiting time distribution of order $(k_1,k_2)$
in the limit as $q$ tends to 1 when a run quota on runs of successes and a frequency quota on failures are imposed of IID model. The details are presented in the following remark.
\noindent
\begin{remark}
{\rm For $q=1$, the PMF $\widetilde{f}_{q,S}(n;\theta)$ reduces to the PMF $\widetilde{f}_{S}(n;\theta)=P_{\theta}(\widetilde{W}_{S}=n)$ for $n\geq min(k_1,k_2)$ is given by

\noindent
\begin{equation*}\label{eq:bn}
\begin{split}
P(\widetilde{W}_{S}=n)=\widetilde{P}_{S}^{(1)}(n)+\widetilde{P}_{S}^{(0)}(n),
\end{split}
\end{equation*}
\noindent
where $\widetilde{P}_S^{(1)}(k_1)=\theta^{k_{1}}$, $\widetilde{P}_S^{(0)}(k_2)=(1-\theta)^{k_2}$,
\noindent
\begin{equation*}\label{eq:bn1}
\begin{split}
\widetilde{P}_{S}^{(1)}(n)=\sum_{i=1}^{\min(n-k_1,k_2-1)}\theta^{n-i} (1-\theta)^{i}\sum_{s=1}^{i}M(s,\ n-k_1-i)\Big[S(s-1,\ k_2,\ i)+S(s,\ k_2,\ i)\Big],\ n>k_{1}\\
\end{split}
\end{equation*}
\noindent
and
\noindent
\begin{equation*}\label{eq:bn1}
\begin{split}
\widetilde{P}_{S}^{(0)}(n)=\theta^{n-k_{2}}(1-\theta)^{k_2}\sum_{s=1}^{k_2}\Big[S(s-1,\ k_1,\ n-k_2)+S(s,\ k_1,\ n-k_2)\Big]M(s,\ k_2),\ n>k_2,\\
\end{split}
\end{equation*}
\noindent
where
\noindent
\begin{equation*}\label{eq: 1.1}
\begin{split}
M(a,\ b)={b-1 \choose a-1},
\end{split}
\end{equation*}
\noindent
and
\noindent
\begin{equation*}\label{eq: 1.1}
\begin{split}
S(a,\ b,\ c)=\sum_{j=0}^{min(a,\left[\frac{c-a}{b-1}\right])}(-1)^{j}{a \choose j}{c-j(b-1)-1 \choose a-1}.
\end{split}
\end{equation*}
\noindent
See, e.g. \citet{charalambides2002enumerative}.
}

\end{remark}

\subsubsection{Later waiting time}

The problem of waiting time that will be discussed in this section is one of the 'later cases' and it emerges when a run quota on successes and a frequency quota on failures are imposed. More specifically, Binary (zero and one) trials with probability of ones varying according to a geometric rule, are performed sequentially until $k_1$ consecutive successes or $k_2$ failures in total are observed, whichever event gets observed later.
Let random variable $\widehat{W}_{L}$ denote the waiting time until both $k_{1}$ consecutive successes and $k_2$ failures in total have observed, whichever event gets observed later.  We now make some Lemma for the proofs of Theorem in the sequel.
\noindent
\begin{definition}
For $0<q\leq1$, we define
\begin{equation*}\label{eq: 1.1}
\begin{split}
\overline{K}_{q}(u,v,s)={\sum_{x_{1},\ldots,x_{s-1}}}\
\sum_{y_{1},\ldots,y_{s}} q^{y_{1}x_{1}+(y_{1}+y_{2})x_{2}+\cdots+(y_{1}+\cdots+y_{s-1})x_{s-1}},\
\end{split}
\end{equation*}
\noindent
where the summation is over all integers $x_1,\ldots,x_{s-1},$ and $y_1,\ldots,y_{s}$ satisfying
\noindent
\begin{equation*}\label{eq:1}
\begin{split}
(x_1,\ldots,x_{s-1})\in S_{u,s-1}^{0},\ \text{and}
\end{split}
\end{equation*}
\noindent
\begin{equation*}\label{eq:2}
\begin{split}
(y_{1},\ldots,y_{s}) \in S_{v,s}^{0}.
\end{split}
\end{equation*}
\end{definition}
\noindent
The following gives a recurrence relation useful for the computation of $B_{q}^{k_1,k_2}(m,r,s)$.
\noindent
\begin{lemma}
\label{lemma:4.5}
$\overline{K}_{q}(u,v,s)$ obeys the following recurrence relation.
\begin{equation*}\label{eq: 1.1}
\begin{split}
\overline{K}&_{q}(u,v,s)\\
&=\left\{
  \begin{array}{ll}
    \sum_{b=1}^{v-(s-1)}\sum_{a=1}^{u-(s-2)}q^{a(v-b)} \overline{K}_{q}(u-a,v-b,s-1), & \text{for}\ s>1,\ s-1\leq u\ \text{and}\ s\leq v \\
    1, & \text{for}\ s=1,\ u=0,\ \text{and}\ 1\leq v\\
    0, & \text{otherwise.}\\
  \end{array}
\right.
\end{split}
\end{equation*}
\end{lemma}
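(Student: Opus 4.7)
The plan is to adapt the peeling argument used in the proof of Lemma \ref{lemma:3.5}, adjusted for the fact that here the $x$-variables are unconstrained positive integers (rather than bounded above by $k_1-1$). The structure of the exponent $y_1 x_1 + (y_1+y_2)x_2 + \cdots + (y_1+\cdots+y_{s-1})x_{s-1}$ is identical to that of Lemma \ref{lemma:3.5}, so the factorization mechanics carry over unchanged; only the summation ranges need attention.

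First I would dispose of the boundary and trivial cases. For $s=1$ the $x$-tuple is empty, forcing $u=0$; the $y$-tuple must be $(y_1)=(v)$ with $v\ge 1$; and the empty exponent gives the value $1$. If $u<s-1$ or $v<s$, then either $S_{u,s-1}^{0}$ or $S_{v,s}^{0}$ is empty, so $\overline{K}_q(u,v,s)=0$, matching the "otherwise" clause.

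For $s>1$, $s-1\le u$, and $s\le v$, I would peel off the last indices $x_{s-1}=a$ and $y_s=b$. Since the entries of an element of $S_{u,s-1}^{0}$ are positive integers summing to $u$, the value $a$ ranges over $\{1,\ldots,u-(s-2)\}$ and $(x_1,\ldots,x_{s-2})\in S_{u-a,s-2}^{0}$; similarly $b$ ranges over $\{1,\ldots,v-(s-1)\}$ and $(y_1,\ldots,y_{s-1})\in S_{v-b,s-1}^{0}$. The key algebraic observation is that $x_{s-1}$ enters the exponent only through the coefficient $y_1+\cdots+y_{s-1}=v-y_s=v-b$, so each summand factors as
\[
q^{a(v-b)}\cdot q^{y_1 x_1 + (y_1+y_2)x_2 + \cdots + (y_1+\cdots+y_{s-2})x_{s-2}}.
\]
Summing the second factor over the shifted index sets $S_{u-a,s-2}^{0}$ and $S_{v-b,s-1}^{0}$ yields precisely $\overline{K}_q(u-a,v-b,s-1)$, and the stated double sum follows.

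I expect no substantive obstacle, as the argument is word-for-word parallel to Lemma \ref{lemma:3.5}. The only point worth double-checking is the upper limit $u-(s-2)$ on $a$: in the edge case $s=2$ this forces $a=u$, which is consistent with the direct evaluation $\overline{K}_q(u,v,2)=\sum_{y_1=1}^{v-1}q^{uy_1}=\sum_{b=1}^{v-1}q^{u(v-b)}$. Any overshoot in the $a$-range for larger $s$ is harmlessly killed by the "otherwise" clause of the recurrence applied to $\overline{K}_q(u-a,v-b,s-1)$.
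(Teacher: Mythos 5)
Your proof is correct and follows essentially the same route as the paper: peel off $x_{s-1}=a$ and $y_s=b$, use $y_1+\cdots+y_{s-1}=v-b$ to factor out $q^{a(v-b)}$, and identify the remaining sum as $\overline{K}_q(u-a,v-b,s-1)$. Your additional observations about the base case, the empty index sets, and the harmlessness of overshooting $a$-values are sound and consistent with the paper's (terser) treatment of "the other cases are obvious."
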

\begin{proof}
For $s > 1$, $s-1\leq u$ and $s\leq v$, we observe that $x_{s-1}$ may assume any value $1,\ldots,u-(s-2)$, then $\overline{K}_{q}(u,v,s)$ can be written as
\noindent
\begin{equation*}%\label{eq: 1.1}
\begin{split}
\overline{K}&_{q}(u,v,s)\\
=&\sum_{x_{s-1}=1}^{u-(s-2)}{\sum_{(x_1,\ldots,x_{s-1})\in S_{u-x_{s-1},s-1}^{0}}}{\hspace{0.3cm}\sum_{(y_{1},\ldots,y_{s}) \in S_{v,s}^{0}}}q^{x_{s-1}(v-y_{s})}\ q^{y_{1}x_{1}+(y_{1}+y_{2})x_{2}+\cdots+(y_{1}+\cdots+y_{s-2})x_{s-2}}
\end{split}
\end{equation*}
\noindent
Similarly, we observe that since $y_s$ can assume the values $1,\ldots,v-(s-1)$, then $\overline{K}_{q}(u,v,s)$ can be rewritten as
\noindent
\begin{equation*}%\label{eq: 1.1}
\begin{split}
\overline{K}&_{q}(u,v,s)\\
=&\sum_{y_{s}=1}^{v-(s-1)}\sum_{x_{s-1}=1}^{u-(s-2)}q^{x_{s-1}(v-y_{s})}{\sum_{(x_1,\ldots,x_{s-1})\in S_{u,s-1}^{0}}}{\hspace{0.3cm}\sum_{(y_{1},\ldots,y_{s}) \in S_{v,s}^{0}}}\ q^{y_{1}x_{1}+(y_{1}+y_{2})x_{2}+\cdots+(y_{1}+\cdots+y_{s-2})x_{s-2}}\\
=&\sum_{b=1}^{v-(s-1)}\sum_{a=1}^{u-(s-2)}q^{a(v-b)} \overline{K}_{q}(u-a,v-b,s-1).
\end{split}
\end{equation*}
\noindent
The other cases are obvious and thus the proof is completed.
\end{proof}
\begin{remark}
{\rm
We observe that $\overline{K}_{1}(m,r,s)$ is the number of integer solutions $(x_{1},\ldots,x_{s-1})$ and $(y_{1},\ldots,y_{s})$ of
\noindent
\begin{equation*}\label{eq:1}
\begin{split}
(x_1,\ldots,x_{s-1})\in S_{u,s-1}^{0},\ \text{and}
\end{split}
\end{equation*}
\noindent
\begin{equation*}\label{eq:2}
\begin{split}
(y_{1},\ldots,y_{s}) \in S_{v,s}^{0}
\end{split}
\end{equation*}
\noindent
given by
\noindent
\begin{equation*}\label{eq: 1.1}
\begin{split}
\overline{K}_{1}(m,r,s)=M(s-1,\ u)M(s,\ v ),
\end{split}
\end{equation*}
\noindent
where $M(a,\ b)$ denotes the total number of integer solution $y_{1}+x_{2}+\cdots+y_{a}=b$ such that $(y_{1},\ldots,y_{a}) \in S_{b,a}^{0}$. The number is given by
\noindent
\begin{equation*}\label{eq: 1.1}
\begin{split}
M(a,\ b)={b-1 \choose a-1}.
\end{split}
\end{equation*}
\noindent
See, e.g. \citet{charalambides2002enumerative}.
}
\end{remark}

\begin{definition}
For $0<q\leq1$, we define
\begin{equation*}\label{eq: 1.1}
\begin{split}
K_{q,k_1,0}(m,r,s)={\sum_{x_{1},\ldots,x_{s-1}}}\
\sum_{y_{1},\ldots,y_{s}} q^{y_{1}x_{1}+(y_{1}+y_{2})x_{2}+\cdots+(y_{1}+\cdots+y_{s-1})x_{s-1}},\
\end{split}
\end{equation*}
\noindent
where the summation is over all integers $x_1,\ldots,x_{s-1},$ and $y_1,\ldots,y_s$ satisfying
\noindent
\begin{equation*}\label{eq:1}
\begin{split}
(x_1,\ldots,x_{s-1})\in S_{m,s-1}^{k_1-1},\ \text{and}
\end{split}
\end{equation*}
\noindent
\begin{equation*}\label{eq:2}
\begin{split}
(y_1,\ldots,y_s)\in S_{r,s}^{0}.
\end{split}
\end{equation*}
\end{definition}
\noindent
The following gives a recurrence relation useful for the computation of $K_{q,k_1,0}(m,r,s)$.
\noindent
\begin{lemma}
\label{lemma:4.6}
$K_{q,k_1,0}(m,r,s)$ obeys the following recurrence relation.
\begin{equation*}\label{eq: 1.1}
\begin{split}
K&_{q,k_1,0}(m,r,s)\\
&=\left\{
  \begin{array}{ll}
    \sum_{a=1}^{k_1-1}\sum_{b=1}^{r-(s-1)}q^{a(r-b)} K_{q,k_1,0}(m-a,r-b,s-1)\\
    +\sum_{a=k_1}^{m-(s-2)}\sum_{b=1}^{r-(s-1)} q^{a(r-b)} \overline{K}_{q}(m-a,r-b,s-1), & \text{for}\ s>1,\ (s-2)+k_1\leq m\\
    &\text{and}\ s \leq r \\
    0, & \text{otherwise.}\\
  \end{array}
\right.
\end{split}
\end{equation*}
\end{lemma}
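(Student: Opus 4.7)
The plan is to mirror the two-step conditioning argument already used throughout Lemmas \ref{lemma:3.1}--\ref{lemma:3.12} and \ref{lemma:4.5}, but with an extra case split dictated by the constraint $(x_1,\ldots,x_{s-1})\in S_{m,s-1}^{k_1-1}$. First I would fix $s>1$, $(s-2)+k_1\leq m$, and $s\leq r$, and split the outer sum over $(x_1,\ldots,x_{s-1})\in S_{m,s-1}^{k_1-1}$ according to the value of $x_{s-1}$. Since the tuple is required to contain at least one coordinate strictly greater than $k_1-1$, two sub-cases arise: when $1\leq x_{s-1}\leq k_1-1$, the remaining tuple $(x_1,\ldots,x_{s-2})$ must still supply a coordinate $\geq k_1$, so it lies in $S_{m-x_{s-1},s-2}^{k_1-1}$; when $k_1\leq x_{s-1}\leq m-(s-2)$, the maximum condition is already met by $x_{s-1}$ itself, so $(x_1,\ldots,x_{s-2})$ may be any positive composition, i.e.\ a member of $S_{m-x_{s-1},s-2}^{0}$.

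Next I would split the inner sum over $(y_1,\ldots,y_s)\in S_{r,s}^{0}$ by conditioning on $y_s$, which can take any value in $\{1,\ldots,r-(s-1)\}$ (the superscript $0$ here imposes no real restriction beyond positivity, exactly as in the analysis of $\overline{K}_q$ in Lemma \ref{lemma:4.5}). Using $y_1+\cdots+y_{s-1}=r-y_s$, the exponent factors as
\[
y_1x_1+(y_1+y_2)x_2+\cdots+(y_1+\cdots+y_{s-1})x_{s-1}
= x_{s-1}(r-y_s)+\bigl[y_1x_1+\cdots+(y_1+\cdots+y_{s-2})x_{s-2}\bigr],
\]
and the bracketed remainder is precisely the exponent appearing in the definition of either $K_{q,k_1,0}(m-x_{s-1},r-y_s,s-1)$ (in the first sub-case) or $\overline{K}_q(m-x_{s-1},r-y_s,s-1)$ (in the second sub-case).

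Finally, I would pull $q^{x_{s-1}(r-y_s)}$ outside the inner summations, recognise the two resulting inner double sums as the two auxiliary quantities, and reindex with $a=x_{s-1}$, $b=y_s$. Combining the two sub-cases yields the announced recurrence. The ``otherwise'' branch is immediate from the observation that $S_{m,s-1}^{k_1-1}$ is empty whenever $m<(s-2)+k_1$ and $S_{r,s}^{0}$ is empty whenever $r<s$, so $K_{q,k_1,0}(m,r,s)=0$ in those regimes.

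The main obstacle will be the bookkeeping of the case split on $x_{s-1}$: one must verify that the ranges $1\leq a\leq k_1-1$ and $k_1\leq a\leq m-(s-2)$ together exhaust (without overlap) the admissible values of $x_{s-1}$, and that the pair of constraint sets $S_{\cdot,s-2}^{k_1-1}$ and $S_{\cdot,s-2}^{0}$ correctly partition the contributions coming from $S_{m,s-1}^{k_1-1}$. This is the one genuinely new feature compared with the earlier lemmas, where both of the summed variables were bounded from above by fixed constants and no such bifurcation was needed.
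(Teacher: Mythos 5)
Your proposal is correct and follows essentially the same route as the paper: condition on the last success-run length $x_{s-1}$ and the last failure-run length $y_s$, split the range of $x_{s-1}$ into $1\leq a\leq k_1-1$ (where the residual tuple must still lie in $S^{k_1-1}_{m-a,s-2}$, giving the $K_{q,k_1,0}$ term) versus $k_1\leq a\leq m-(s-2)$ (where the constraint is already satisfied, giving the $\overline{K}_q$ term), and factor $q^{a(r-b)}$ out of the exponent. The only cosmetic difference is that you condition on $x_{s-1}$ before $y_s$ while the paper does the reverse, which changes nothing.
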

\begin{proof}
For $s > 1$, $(s-2)+k_1\leq m$ and $s \leq r$, we observe that $y_{s}$ may assume any value $1,\ldots,r-(s-1)$, then $K_{q,k_1,0}(m,r,s)$ can be written as
\begin{equation*}%\label{eq: 1.1}
\begin{split}
K&_{q,k_1,0}(m,r,s)\\
=&\sum_{y_{s}=1}^{r-(s-1)}{\sum_{(x_1,\ldots,x_{s-1})\in S_{m,s-1}^{k_1-1}}}{\hspace{0.3cm}\sum_{(y_{1},\ldots,y_{s}) \in S_{r-y_s,s}^{0}}}q^{x_{s-1}(r-y_{s})}\ q^{y_{1}x_{1}+(y_{1}+y_{2})x_{2}+\cdots+(y_{1}+\cdots+y_{s-2})x_{s-2}}
\end{split}
\end{equation*}
\noindent
Similarly, we observe that since $x_{s-1}$ can assume the values $1,\ldots,m-(s-2)$, then $K_{q,k_1,0}(m,r,s)$ can be rewritten as
\noindent
\begin{equation*}%\label{eq: 1.1}
\begin{split}
K&_{q,k_1,0}(m,r,s)\\
=&\sum_{x_{s-1}=1}^{k_1-1}\sum_{y_{s}=1}^{r-(s-1)}q^{x_{s-1}(r-y_{s})}{\sum_{(x_1,\ldots,x_{s-2})\in S_{m-x_{s-1},s-2}^{k_1-1}}}{\hspace{0.3cm}\sum_{(y_{1},\ldots,y_{s-1}) \in S_{r-y_s,s-1}^{0}}} q^{y_{1}x_{1}+(y_{1}+y_{2})x_{2}+\cdots+(y_{1}+\cdots+y_{s-2})x_{s-2}}\\
&+\sum_{x_{s-1}=k_1}^{m-(s-2)}\sum_{y_{s}=1}^{r-(s-1)}q^{x_{s-1}(r-y_{s})}{\sum_{(x_1,\ldots,x_{s-2})\in S_{m-x_{s-1},s-2}^{0}}}{\hspace{0.3cm}\sum_{(y_{1},\ldots,y_{s-1}) \in S_{r-y_s,s-1}^{0}}}q^{y_{1}x_{1}+(y_{1}+y_{2})x_{2}+\cdots+(y_{1}+\cdots+y_{s-2})x_{s-2}}\\
=&\sum_{a=1}^{k_1-1}\sum_{b=1}^{r-(s-1)}q^{a(r-b)} K_{q,k_1,0}(m-a,r-b,s-1)+\sum_{a=k_1}^{m-(s-2)}\sum_{b=1}^{r-(s-1)} q^{a(r-b)} \overline{K}_{q}(m-a,r-b,s-1).
\end{split}
\end{equation*}
The other cases are obvious and thus the proof is completed.
\end{proof}
\begin{remark}
{\rm
We observe that $K_{1,k_1,0}(m,r,s)$ is the number of integer solutions $(x_{1},\ldots,x_{s-1})$ and $(y_{1},\ldots,y_{s})$ of
\noindent
\begin{equation*}\label{eq:1}
\begin{split}
(x_1,\ldots,x_{s-1})\in S_{m,s-1}^{k_1-1},\ \text{and}
\end{split}
\end{equation*}
\noindent
\begin{equation*}\label{eq:2}
\begin{split}
(y_{1},\ldots,y_{s}) \in S_{r,s}^{0}
\end{split}
\end{equation*}
\noindent
given by
\noindent
\begin{equation*}\label{eq: 1.1}
\begin{split}
K_{1,k_1,0}(m,r,s)=R(s-1,\ k_{1},\ m)M(s,\ r),
\end{split}
\end{equation*}
\noindent
where $R(a,\ b,\ c)$ denotes the total number of integer solution $x_{1}+x_{2}+\cdots+x_{a}=c$ such that $(x_{1},\ldots,x_{a}) \in S_{r,a}^{k_2}$. The number is given by
\noindent
\begin{equation*}\label{eq: 1.1}
\begin{split}
R(a,\ b,\ c)=\sum_{j=1}^{min\left(a,\ \left[\frac{c-a}{b-1}\right]\right) }(-1)^{j+1}{a \choose j}{c-j(b-1)-1 \choose a-1}.
\end{split}
\end{equation*}
\noindent
where $M(a,\ b)$ denotes the total number of integer solution $y_{1}+x_{2}+\cdots+y_{a}=b$ such that $(y_{1},\ldots,y_{a}) \in S_{b,a}^{0}$. The number is given by
\noindent
\begin{equation*}\label{eq: 1.1}
\begin{split}
M(a,\ b)={b-1 \choose a-1}.
\end{split}
\end{equation*}
\noindent
See, e.g. \citet{charalambides2002enumerative}.
}
\end{remark}
\begin{definition}
For $0<q\leq1$, we define
\begin{equation*}\label{eq: 1.1}
\begin{split}
\overline{L}_{q}(u,v,s)={\sum_{x_{1},\ldots,x_{s}}}\
\sum_{y_{1},\ldots,y_{s}} q^{y_{1}x_{2}+(y_{1}+y_{2})x_{3}+\cdots+(y_{1}+\cdots+y_{s-1})x_{s}},\
\end{split}
\end{equation*}
\noindent
where the summation is over all integers $x_1,\ldots,x_{s},$ and $y_1,\ldots,y_s$ satisfying
\noindent
\begin{equation*}\label{eq:1}
\begin{split}
(x_1,\ldots,x_{s})\ \in S_{u,s}^{0},\ \text{and}
\end{split}
\end{equation*}
\noindent
\begin{equation*}\label{eq:2}
\begin{split}
(y_{1},\ldots,y_{s}) \in S_{v,s}^{0}
\end{split}
\end{equation*}
\noindent
\end{definition}
\noindent
The following gives a recurrence relation useful for the computation of $\overline{L}_{q}(u,v,s)$.
\noindent
\begin{lemma}
\label{lemma:4.7}
$\overline{L}_{q}(u,v,s)$ obeys the following recurrence relation.
\begin{equation*}\label{eq: 1.1}
\begin{split}
\overline{L}&_{q}(u,v,s)\\
&=\left\{
  \begin{array}{ll}
    \sum_{b=1}^{v-(s-1)}\sum_{a=1}^{u-(s-1)}q^{a(v-b)} \overline{L}_{q}(u-a,v-b,s-1), & \text{for}\ s>1,\ s\leq u\ \text{and}\ s\leq v \\
    1, & \text{for}\ s=1,\ 1\leq u\ \text{and}\ 1\leq v\\
    0, & \text{otherwise.}\\
  \end{array}
\right.
\end{split}
\end{equation*}
\end{lemma}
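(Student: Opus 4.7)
The plan is to mimic the argument used for the previous $\overline{E}_q$, $\overline{F}_q$, $\overline{I}_q$, $\overline{J}_q$, and $\overline{K}_q$ recurrences (Lemmas \ref{lemma:3.5}, \ref{lemma:3.7}, \ref{lemma:4.1}, \ref{lemma:4.3}, \ref{lemma:4.5}), since the only structural difference in $\overline{L}_q$ is the placement of the $x_j$'s in the exponent (namely $y_1x_2+(y_1+y_2)x_3+\cdots+(y_1+\cdots+y_{s-1})x_s$ rather than $y_1x_1+\cdots+(y_1+\cdots+y_{s-1})x_{s-1}$). The key observation is that the last variable appearing in the exponent is $x_s$, multiplied by $y_1+\cdots+y_{s-1}$, and because $(y_1,\ldots,y_s)\in S_{v,s}^{0}$ forces $y_1+\cdots+y_s=v$, this coefficient equals $v-y_s$. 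That is precisely the structural feature that makes the recursion collapse cleanly.

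For the inductive step ($s>1$, $s\leq u$ and $s\leq v$), I would first peel off $x_s$, letting it range over $1,\ldots,u-(s-1)$ so that the remaining $(x_1,\ldots,x_{s-1})$ lies in $S_{u-x_s,s-1}^{0}$. This separates the factor $q^{(y_1+\cdots+y_{s-1})x_s}=q^{(v-y_s)x_s}$ from the rest of the exponent, which by inspection is exactly the exponent of $\overline{L}_q$ on the reduced $(x_1,\ldots,x_{s-1})$ with some $(y_1,\ldots,y_{s-1})$. Next I would peel off $y_s$, letting it range over $1,\ldots,v-(s-1)$ so that $(y_1,\ldots,y_{s-1})\in S_{v-y_s,s-1}^{0}$. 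After this double peel-off, the inner double sum is precisely $\overline{L}_q(u-x_s,v-y_s,s-1)$. Renaming $a=x_s$, $b=y_s$ yields
\[
\overline{L}_q(u,v,s)=\sum_{b=1}^{v-(s-1)}\sum_{a=1}^{u-(s-1)}q^{a(v-b)}\,\overline{L}_q(u-a,v-b,s-1),
\]
as claimed.

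For the base case $s=1$, the defining sums contain a single $x_1$ with $x_1\in S_{u,1}^{0}$ (so $x_1=u\geq 1$) and a single $y_1$ with $y_1\in S_{v,1}^{0}$ (so $y_1=v\geq 1$). The exponent $y_1x_2+\cdots+(y_1+\cdots+y_{s-1})x_s$ is an empty sum, hence equals $0$, and the single term contributes $q^0=1$. If either $u=0$ or $v=0$ (or $u<s$ or $v<s$ in general), the admissible index sets are empty and $\overline{L}_q(u,v,s)=0$, covering the "otherwise" case.

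I do not expect any real obstacle here, since the argument is the direct analogue of the earlier $\overline{F}_q$ / $\overline{J}_q$ proofs; the only minor subtlety to verify carefully is that the exponent rearrangement really produces a factor $q^{(v-y_s)x_s}$ cleanly (using $y_1+\cdots+y_s=v$) and that the residual exponent matches the definition of $\overline{L}_q$ at the shifted indices, with no stray $y_s$-term appearing, because $y_s$ enters the original exponent only through the coefficient of $x_s$ (which we already extracted).
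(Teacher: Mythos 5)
Your proposal is correct and follows essentially the same route as the paper's proof: peel off $x_s$ over $1,\ldots,u-(s-1)$, use $y_1+\cdots+y_{s-1}=v-y_s$ to extract the factor $q^{(v-y_s)x_s}$, then peel off $y_s$ over $1,\ldots,v-(s-1)$ and recognize the inner double sum as $\overline{L}_q(u-x_s,v-y_s,s-1)$. The base and degenerate cases are handled the same way, so nothing further is needed.
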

\begin{proof}
For $s > 1$, $s\leq u$ and $s\leq v$, we observe that $x_{s}$ may assume any value $1,\ldots,u-(s-1)$, then $\overline{L}_{q}(u,v,s)$ can be written as
\noindent
\begin{equation*}%\label{eq: 1.1}
\begin{split}
\overline{L}&_{q}(u,v,s)\\
=&\sum_{x_{s}=1}^{u-(s-1)}{\sum_{(x_1,\ldots,x_{s-1})\in S_{u-x_{s},s-1}^{0}}}{\hspace{0.3cm}\sum_{(y_{1},\ldots,y_{s}) \in S_{v,s}^{0}}}q^{x_{s}(v-y_{s})}\ q^{y_{1}x_{2}+(y_{1}+y_{2})x_{3}+\cdots+(y_{1}+\cdots+y_{s-2})x_{s-1}}
\end{split}
\end{equation*}
\noindent
Similarly, we observe that since $y_s$ can assume the values $1,\ldots,v-(s-1)$, then $\overline{L}_{q}(u,v,s)$ can be rewritten as
\noindent
\begin{equation*}%\label{eq: 1.1}
\begin{split}
\overline{L}&_{q}(u,v,s)\\
=&\sum_{y_{s}=1}^{v-(s-1)}\sum_{x_{s}=1}^{u-(s-1)}q^{x_{s}(v-y_{s})}{\sum_{(x_1,\ldots,x_{s-1})\in S_{u-x_{s},s-1}^{0}}}{\hspace{0.3cm}\sum_{(y_{1},\ldots,y_{s-1}) \in S_{v-y_{s},s-1}^{0}}}q^{y_{1}x_{2}+(y_{1}+y_{2})x_{3}+\cdots+(y_{1}+\cdots+y_{s-2})x_{s-1}}\\
=&\sum_{b=1}^{v-(s-1)}\sum_{a=1}^{u-(s-1)}q^{a(v-b)} \overline{L}_{q}(u-a,v-b,s-1).
\end{split}
\end{equation*}
\noindent
The other cases are obvious and thus the proof is completed.
\end{proof}
\begin{remark}
{\rm
We observe that $\overline{L}_{1}(m,r,s)$ is the number of integer solutions $(x_{1},\ldots,x_{s})$ and $(y_{1},\ldots,y_{s})$ of

\noindent
\begin{equation*}\label{eq:1}
\begin{split}
(x_1,\ldots,x_{s})\ \in S_{u,s}^{0},\ \text{and}
\end{split}
\end{equation*}
\noindent
\begin{equation*}\label{eq:2}
\begin{split}
(y_{1},\ldots,y_{s}) \in S_{v,s}^{0}
\end{split}
\end{equation*}
\noindent
given by
\noindent
\begin{equation*}\label{eq: 1.1}
\begin{split}
\overline{L}_{1}(u,v,s)=M(s,\ u)M(s,\ v ),
\end{split}
\end{equation*}
\noindent
where $M(a,\ b)$ denotes the total number of integer solution $y_{1}+x_{2}+\cdots+y_{a}=b$ such that $(y_{1},\ldots,y_{a}) \in S_{b,a}^{0}$. The number is given by
\noindent
\begin{equation*}\label{eq: 1.1}
\begin{split}
M(a,\ b)={b-1 \choose a-1}.
\end{split}
\end{equation*}
\noindent
See, e.g. \citet{charalambides2002enumerative}.
}
\end{remark}

\begin{definition}
For $0<q\leq1$, we define
\begin{equation*}\label{eq: 1.1}
\begin{split}
L_{q,k_1,0}(m,r,s)={\sum_{x_{1},\ldots,x_{s}}}\
\sum_{y_{1},\ldots,y_{s}}q^{y_{1}x_{2}+(y_{1}+y_{2})x_{3}+\cdots+(y_{1}+\cdots+y_{s-1})x_{s}},\
\end{split}
\end{equation*}
\noindent
where the summation is over all integers $x_1,\ldots,x_{s},$ and $y_1,\ldots,y_s$ satisfying
\noindent
\begin{equation*}\label{eq:1}
\begin{split}
(x_1,\ldots,x_{s})\ \in S_{m,s}^{k_1},\ \text{and}
\end{split}
\end{equation*}
\noindent
\begin{equation*}\label{eq:2}
\begin{split}
(y_{1},\ldots,y_{s}) \in S_{r,s}^{0}
\end{split}
\end{equation*}
\end{definition}
\noindent
The following gives a recurrence relation useful for the computation of $L_{q,k_1,0}(m,r,s)$.
\noindent
\begin{lemma}
\label{lemma:4.8}
$L_{q,k_1,0}(m,r,s)$ obeys the following recurrence relation,
\begin{equation*}\label{eq: 1.1}
\begin{split}
L&_{q,k_1,0}(m,r,s)\\
&=\left\{
  \begin{array}{ll}
    \sum_{a=1}^{k_{1}-1}\ \sum_{b=1}^{r-(s-1)}q^{a(r-b)}L_{q,k_1,0}(m-a,r-b,s-1)\\
    +\sum_{a=k_{1}}^{m-(s-1)}\ \sum_{b=1}^{r-(s-1)}q^{a(r-b)}\overline{L}_{q}(m-a,r-b,s-1), & \text{for}\quad s>1,\ (s-1)+k_1\leq m\\
    &\text{and}\quad s\leq r \\
    1, & \text{for}\quad s=1,\ k_1\leq m,\quad \text{and}\quad 1\leq r\\
    0, & \text{otherwise.}\\
  \end{array}
\right.
\end{split}
\end{equation*}
\end{lemma}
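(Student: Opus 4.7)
The plan is to follow the two-step conditioning used in Lemma \ref{lemma:3.12} and Lemma \ref{lemma:4.6}: condition first on the last success-block length $x_s$, then on the last failure-block length $y_s$, peeling one summand off the exponent at each stage. The combinatorial picture is the usual one, with $L_{q,k_1,0}$ playing the role of ``at least one $x$-run still needs to be large'' and $\overline{L}_q$ playing the role of ``no further largeness condition is outstanding''.

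First I would fix $x_s=a$ and split the outer sum into two ranges. If $a\in\{1,\ldots,k_1-1\}$, then $x_s$ itself is too small to fulfil the ``some coordinate $\geq k_1$'' requirement encoded in the defining constraint on $(x_1,\ldots,x_s)$, so the surviving tuple $(x_1,\ldots,x_{s-1})$ must still carry the largeness condition, i.e.\ it lies in the support of $L_{q,k_1,0}(\cdot,\cdot,s-1)$. If instead $a\in\{k_1,\ldots,m-(s-1)\}$, then $x_s$ alone already witnesses the maximum condition, and the remaining tuple only needs to be positive integers summing to $m-a$, which is the support of $\overline{L}_q(\cdot,\cdot,s-1)$. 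Next I would fix $y_s=b$; because the remaining $y$-variables must be positive integers summing to $r-b$, the admissible range is $b\in\{1,\ldots,r-(s-1)\}$.

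With both extractions in place, I would use the identity $y_1+\cdots+y_{s-1}=r-b$ to factor $q^{a(r-b)}$ out of the coefficient of $x_s$ in the exponent. What survives inside the double sum is exactly
\[
y_1x_2+(y_1+y_2)x_3+\cdots+(y_1+\cdots+y_{s-2})x_{s-1},
\]
which I would recognise as the defining exponent of $L_{q,k_1,0}(m-a,r-b,s-1)$ in the first subcase and of $\overline{L}_q(m-a,r-b,s-1)$ in the second subcase, invoking Lemma \ref{lemma:4.7}. Summing over $a$ and $b$ in each subcase then produces the two double sums on the right-hand side. The base case $s=1$ with $k_1\leq m$ and $1\leq r$ is immediate: the only feasible tuple is $x_1=m$, $y_1=r$, the exponent is empty, so the value is $1$. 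All remaining parameter ranges give empty index sets and hence $0$.

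The only point that requires genuine bookkeeping, and which I expect to be the one slightly delicate step, is matching the admissible ranges of the inner variables $a$ and $b$ against the support regions of $L_{q,k_1,0}$ and $\overline{L}_q$ at size $s-1$, so that no spurious zero contributions are missed and no infeasible configurations are double-counted across the two subcases. The stated hypotheses $(s-1)+k_1\leq m$ and $s\leq r$ are chosen precisely to keep the ranges nonempty, and a short check that $m-a$ and $r-b$ land in the supports of the smaller problems closes the argument.
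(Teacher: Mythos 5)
Your proposal is correct and follows essentially the same argument as the paper: peel off $x_s$ and $y_s$, split the range of $x_s$ at $k_1$ so that the ``some coordinate $\geq k_1$'' condition either passes to the remaining tuple (yielding $L_{q,k_1,0}$ at size $s-1$) or is already witnessed by $x_s$ (yielding $\overline{L}_q$ at size $s-1$), and factor out $q^{a(r-b)}$ using $y_1+\cdots+y_{s-1}=r-b$. The only cosmetic difference is the order in which the two extractions are presented, and your treatment of the base case and the range bookkeeping matches the paper's.
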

\begin{proof}
For $s > 1$, $k_1\leq m$, and $1\leq r$, we observe that $x_{s}$ may assume any value $1,\ \ldots,\ k_{1}-1$, then $L_{q,k_1,0}(m,r,s)$ can be written as

\begin{equation*}%\label{eq: 1.1}
\begin{split}
L&_{q,k_1,0}(m,r,s)\\
=&\sum_{y_{s}=1}^{r-(s-1)}{\sum_{(x_{1},\ldots,x_{s}) \in S_{m,s}^{k_1-1}}}\quad
{\sum_{(y_{1},\ldots,y_{s-1}) \in S_{r-y_s,s}^{0}}}q^{x_{s}(r-y_{s})}q^{y_{1}x_{2}+(y_{1}+y_{2})x_{3}+\cdots+(y_{1}+\cdots+y_{s-2})x_{s-1}}
\end{split}
\end{equation*}

\noindent
Similarly, we observe that since $y_s$ can assume the values $1,\ldots,r-(s-1)$, then $L_{q,k_1,0}(m,r,s)$ can be rewritten as
\noindent

\begin{equation*}%\label{eq: 1.1}
\begin{split}
L&_{q,k_1,0}(m,r,s)\\
=&\sum_{x_{s}=1}^{k_{1}-1}\sum_{y_{s}=1}^{r-(s-1)}q^{x_{s}(r-y_{s})}{\sum_{(x_{1},\ldots,x_{s-1}) \in S_{m-x_s,s}^{k_1-1}}}{\hspace{0.3cm}\sum_{(y_{1},\ldots,y_{s-1}) \in S_{r-y_{s},s-1}^{0}}} q^{y_{1}x_{2}+(y_{1}+y_{2})x_{3}+\cdots+(y_{1}+\cdots+y_{s-2})x_{s-1}}\\
&+\sum_{x_{s}=k_{1}}^{m-(s-1)}\sum_{y_{s}=1}^{r-(s-1)}q^{x_{s}(r-y_{s})}{\sum_{(x_{1},\ldots,x_{s-1}) \in S_{m-x_s,s}^{0}}}{\hspace{0.3cm}\sum_{(y_{1},\ldots,y_{s-1}) \in S_{r-y_{s},s-1}^{0}}} q^{y_{1}x_{2}+(y_{1}+y_{2})x_{3}+\cdots+(y_{1}+\cdots+y_{s-2})x_{s-1}}\\
=&\sum_{a=1}^{k_{1}-1}\ \sum_{b=1}^{r-(s-1)}q^{a(r-b)}L_{q,k_1,0}(m-a,r-b,s-1)+\sum_{a=k_{1}}^{m-(s-1)}\ \sum_{b=1}^{r-(s-1)}q^{a(r-b)}\overline{L}_{q}(m-a,r-b,s-1).
\end{split}
\end{equation*}
\noindent
The other cases are obvious and thus the proof is completed.
\end{proof}
\begin{remark}
{\rm
We observe that $L_{1,k_1,0}(m,r,s)$ is the number of integer solutions $(x_{1},\ldots,x_{s})$ and $(y_{1},\ldots,y_{s})$ of
\noindent
\noindent
\begin{equation*}\label{eq:1}
\begin{split}
(x_1,\ldots,x_{s})\ \in S_{m,s}^{k_1-1},\ \text{and}
\end{split}
\end{equation*}
\noindent
\begin{equation*}\label{eq:2}
\begin{split}
(y_{1},\ldots,y_{s}) \in S_{r,s}^{0}
\end{split}
\end{equation*}

\noindent
given by
\noindent
\begin{equation*}\label{eq: 1.1}
\begin{split}
L_{1,k_1,0}(m,r,s)=R(s,\ k_{1},\ m)M(s,\ r),
\end{split}
\end{equation*}

where $R(a,\ b,\ c)$ denotes the total number of integer solution $y_{1}+x_{2}+\cdots+y_{a}=c$ such that $(x_{1},\ldots,x_{a}) \in S_{r,a}^{k_2}$. The number is given by
\noindent
\begin{equation*}\label{eq: 1.1}
\begin{split}
R(a,\ b,\ c)=\sum_{j=1}^{min\left(a,\ \left[\frac{c-a}{b-1}\right]\right)}(-1)^{j+1}{a \choose j}{c-j(b-1)-1 \choose a-1}.
\end{split}
\end{equation*}
\noindent

where $M(a,\ b)$ denotes the total number of integer solution $y_{1}+y_{2}+\cdots+y_{a}=b$ such that $(y_{1},\ldots,y_{a}) \in S_{b,a}^{0}$. The number is given by
\noindent
\begin{equation*}\label{eq: 1.1}
\begin{split}
M(a,\ b)={b-1 \choose a-1}.
\end{split}
\end{equation*}
See, e.g. \citet{charalambides2002enumerative}.
}
\end{remark}
\noindent
The probability function of the $q$-later waiting time distribution impose a run quota on successes and a frequency quota on failures is obtained by the following theorem. It is evident that
\noindent
\begin{equation*}
P_{q,\theta}(\widehat{W}_{L}=n)=0\ \text{for}\ n< k_1+k_2
\end{equation*}
\noindent
and so we shall focus on determining the probability mass function for $n\geq k_{1}+k_{2}$.
\noindent
\begin{theorem}
\label{thm:4.4}
The PMF $P_{q,\theta}(\widetilde{W}_{L}=n)$ satisfies
\begin{equation*}\label{eq: 1.1}
\begin{split}
P(\widetilde{W}_{L}=n)=\widetilde{P}_{q,L}^{(1)}(n)+\widetilde{P}_{q,L}^{(0)}(n),\ n\geq k_1+k_2,
\end{split}
\end{equation*}
where
\begin{equation*}\label{eq:bn1}
\begin{split}
\widetilde{P}_{q,L}^{(1)}(n)=\sum_{i=k_2}^{n-k_{1}}\theta^{n-i} q^{ik_{1}}{\prod_{j=1}^{i}}(1-\theta q^{j-1})\sum_{s=1}^{i}\bigg[\overline{E}_{q,0,0}^{k_1,\infty}(n-k_{1}-i,\ i,\ s)+\overline{F}_{q,0,0}^{k_1,\infty}(n-k_{1}-i,\ i,\ s)\bigg],\\
\end{split}
\end{equation*}
and

\begin{equation*}\label{eq:bn1}
\begin{split}
\widetilde{P}_{q,L}^{(0)}(n)=\theta^{n-k_{2}}\ {\prod_{j=1}^{k_2}}\ (1-\theta q^{j-1})\ \sum_{s=1}^{k_2}\bigg[K_{q,k_1,0}(n-k_{2},\ k_2,\ s)+L_{q,k_1,0}(n-k_{2},\ k_2\ s)\bigg].
\end{split}
\end{equation*}
\end{theorem}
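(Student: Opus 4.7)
The plan is to decompose $\widetilde{W}_L = n$ into the two disjoint events according to which of the two quotas is filled last: $\widetilde{P}_{q,L}^{(1)}(n)$ corresponds to the case when the $k_1$ consecutive successes complete at trial $n$ (so that the $k_2$ failures in total were already reached earlier), and $\widetilde{P}_{q,L}^{(0)}(n)$ corresponds to the case when the $k_2$-th failure in total occurs at trial $n$ (so that a success run of length $\geq k_1$ has already been observed earlier). For each part, I will follow the same paradigm used in Theorems \ref{thm:3.1}, \ref{thm:3.2}, \ref{thm:4.1}, \ref{thm:4.2}: first express the event in terms of $L^{(1)}$, $L^{(0)}$, $S_n$, $F_n$ and an ending segment; then partition the sample space by the count of failures (or successes) up to the appropriate cut-point; then enumerate the admissible binary arrangements by splitting into $(s-1,s)$-type and $(s,s)$-type strings; finally apply the relevant counting lemma to collapse the multiple sums into the advertised $q$-functions.

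For $\widetilde{P}_{q,L}^{(1)}(n)$ I would write
\[
\widetilde{P}_{q,L}^{(1)}(n) = P_{q,\theta}\!\left(L_{n-k_1}^{(1)} < k_1 \ \wedge\ F_{n-k_1} \geq k_2 \ \wedge\ X_{n-k_1}=0 \ \wedge\ X_{n-k_1+1}=\cdots=X_n=1\right),
\]
and partition by $F_{n-k_1}=i$ for $i=k_2,\dots,n-k_1$. Given $F_{n-k_1}=i$, the conditional probability that the last $k_1$ trials are all successes equals $(\theta q^i)^{k_1}$. It remains to count the probability of the event on the first $n-k_1$ trials: $i$ failures and $n-k_1-i$ successes, ending with $0$, with every success run of length strictly less than $k_1$, and no upper bound on failure run lengths (only the frequency $i$ is fixed, with $i\ge k_2$). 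The arrangements split naturally into the $(s-1,s)$-type (beginning with $0$) and the $(s,s)$-type (beginning with $1$), with $s$ the number of failure runs. Computing the associated probability under the geometric rule \eqref{failureprobofq2}, telescoping the factors $\theta^{n-k_1-i}\prod_{j=1}^{i}(1-\theta q^{j-1})$ out of the sum, and observing that the remaining $q$-exponents match the summands in Definitions of $\overline{E}_{q,0,0}^{k_1,\infty}$ and $\overline{F}_{q,0,0}^{k_1,\infty}$, Lemmas \ref{lemma:3.5} and \ref{lemma:3.7} then yield the stated closed form.

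For $\widetilde{P}_{q,L}^{(0)}(n)$ I would write
\[
\widetilde{P}_{q,L}^{(0)}(n) = P_{q,\theta}\!\left(L_{n}^{(1)} \geq k_1 \ \wedge\ X_n = 0 \ \wedge\ F_n = k_2\right),
\]
so the sample has exactly $n-k_2$ successes and $k_2$ failures in total, ends with a $0$, and has at least one success run of length $\ge k_1$ (which must occur before position $n$ since the string ends with $0$). Here no partitioning by failure count is needed since $F_n=k_2$ is already fixed. I enumerate arrangements by the number $s$ of failure runs, with $1\le s\le k_2$; the $(s-1,s)$-type starts with a $0$ and has $s-1$ success runs, while the $(s,s)$-type starts with a $1$ and has $s$ success runs. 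Writing the probability of each typical string under the geometric rule, the overall $\theta$ and $(1-\theta q^{j-1})$ weights collapse (independently of the internal arrangement) to $\theta^{n-k_2}\prod_{j=1}^{k_2}(1-\theta q^{j-1})$; the residual $q$-factors are precisely those summed in the definitions of $K_{q,k_1,0}$ and $L_{q,k_1,0}$, and Lemmas \ref{lemma:4.6} and \ref{lemma:4.8} deliver the announced formula.

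The main obstacle I anticipate is the bookkeeping associated with the asymmetry introduced by imposing a run quota on one side and a frequency quota on the other, so that the constraint sets are $0<x_j<k_1$ versus $(y_1,\dots,y_s)\in S_{k_2,s}^{0}$ (or $(x_1,\dots,x_s)\in S_{m,s}^{k_1-1}$ versus $0<y_j$). In particular, one has to take care that the geometrically varying success probability depends on the accumulated failure count, so the $q$-weighted factors at each success run become $(\theta q^{y_1+\cdots+y_{j-1}})^{x_j}$; verifying that the cross-product of these contributions, after pulling out the $\theta$- and $(1-\theta q^{j-1})$-factors, reduces exactly to the $q^{y_1 x_j+(y_1+y_2)x_{j+1}+\cdots}$ pattern needed to invoke the lemmas is the step that warrants careful execution. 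Everything else is routine factorisation and a substitution of the appropriate Lemma.
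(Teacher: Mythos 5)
Your proposal follows essentially the same route as the paper's own proof: the same decomposition of $\widetilde{P}_{q,L}^{(1)}(n)$ via conditioning on $F_{n-k_1}=i$ for $i=k_2,\dots,n-k_1$ with the ending block contributing $(\theta q^i)^{k_1}$, the same direct identification of $\widetilde{P}_{q,L}^{(0)}(n)$ with the event $\{L_n^{(1)}\ge k_1 \wedge X_n=0 \wedge F_n=k_2\}$ requiring no further partition, the same split into $(s-1,s)$- and $(s,s)$-type arrangements, and the same invocation of Lemmas \ref{lemma:3.5}, \ref{lemma:3.7}, \ref{lemma:4.6} and \ref{lemma:4.8}. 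The approach is correct and matches the paper.
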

\begin{proof}
We start with the study of $\widetilde{P}_{q,L}^{(1)}(n)$. It is easy to see that $\widetilde{P}_{q,L}^{(1)}(k_1+k_2)={\prod_{j=1}^{k_2}}(1-\theta q^{j-1})(\theta q^{k_2})^{k_{1}}.$ From now on we assume $n > k_1+k_2,$ and we can write $\widetilde{P}_{q,L}^{(1)}(n)$ as follows
\begin{equation*}\label{eq:bn}
\begin{split}
\widetilde{P}_{q,L}^{(1)}(n)&=P_{q,\theta}\left(L_{n-k_{1}}^{(1)}< k_{1}\ \wedge\ F_{n-k_{1}}^{(0)}\geq k_{2}\ \wedge\ X_{n-k_{1}}=0\ \wedge
\ X_{n-k_{1}+1}=\cdots =X_{n}=1\right).\\
\end{split}
\end{equation*}
\noindent
We partition the event $W_{L}^{(1)}=n$ into disjoint events given by $F_{n-k_1}=i,$ for $i=k_2, \ldots,\ n-k_1.$ Adding the probabilities we have
\noindent
%We want to rewrite the above expression conditioning on the number $F_{n-k_{1}}$ of failures and using the total probability, we have\\
\begin{equation*}\label{eq:bn}
\begin{split}
\widetilde{P}_{q,L}^{(1)}(n)=\sum_{i=k_2}^{n-k_{1}}P_{q,\theta}\Big(L_{n-k_{1}}^{(1)}< k_{1}\ \wedge\ F_{n-k_{1}}=i&\ \wedge
\ X_{n-k_{1}}=0\ \wedge\\
&X_{n-k_{1}+1}=\cdots =X_{n}=1\Big).\\
\end{split}
\end{equation*}
\noindent
If the number of $0$'s in the first $n-k_1$ trials is equal to $i,$ that is, $F_{n-k_1}=i,$ then in the $(n-k_{1}+1)$-th to $n$-th trials with probability of success
\noindent
\begin{equation*}\label{eq: kk}
\begin{split}
p_{n-k_{1}+1}=\cdots=p_{n}=\theta q^{i}.
\end{split}
\end{equation*}
\noindent
We will write $E_{n,i}^{(0)}$ for the event $\left\{L_n^{(1)}< k_1\ \wedge\ X_n=0\ \wedge\ F_n=i\right\}.$ We can now rewrite as follows
\noindent
\begin{equation*}\label{eq:bn1}
\begin{split}
\widetilde{P}_{q,L}^{(1)}(n)=\sum_{i=k_2}^{n-k_{1}}&P_{q,\theta}\Big(L_{n-k_{1}}^{(1)}< k_{1}\ \wedge
\ F_{n-k_{1}}=i\ \wedge\ X_{n-k_{1}}=0\Big)\\
&\times P_{q,\theta}\Big(X_{n-k_{1}+1}=\cdots =X_{n}=1\mid F_{n-k_{1}}=i\Big)\\
=\sum_{i=k_2}^{n-k_{1}}&P_{q,\theta}\Big(E_{n-k_{1},\ i}^{(0)}\Big)\Big(\theta q^{i}\Big)^{k_1}.\\
\end{split}
\end{equation*}
\noindent
We are going to focus on the event $E_{n-k_{1},\ i}^{(0)}$. A typical element of the event $\Big\{L_{n-k_{1}}^{(1)}< k_{1}\ \wedge\ F_{n-k_{1}}=i\ \wedge\ X_{n-k_{1}}=0\Big\}$ is an ordered sequence which consists of $n-k_{1}-i$ successes and $i$ failures such that the length of the longest success run is less than $k_1$. The number of these sequences can be derived as follows. First we will distribute the $i$ failures. Let $s$ $(1\leq s \leq i)$ be the number of runs of failures in the event $E_{n-k_{1},\ i}^{(0)}$. We divide into two cases: starting with a failure run or starting with a success run. Thus, we distinguish between two types of sequences in the event $\left\{L_{n-k_{1}}^{(1)}< k_{1}\ \wedge\ F_{n-k_{1}}=i\ \wedge\ X_{n-k_{1}}=0\right\},$ respectively named $(s-1,\ s)$-type and $(s,\ s)$-type, which are defined as follows.
\noindent
\begin{equation*}\label{eq:bn}
\begin{split}
(s-1,\ s)\text{-type}\ :&\quad \overbrace{0\ldots 0}^{y_{1}}\mid\overbrace{1\ldots 1}^{x_{1}}\mid\overbrace{0\ldots 0}^{y_{2}}\mid\overbrace{1\ldots 1}^{x_{2}}\mid \ldots \mid \overbrace{0\ldots 0}^{y_{s-1}}\mid\overbrace{1\ldots 1}^{x_{s-1}}\mid\overbrace{0\ldots 0}^{y_{s}},\\
\end{split}
\end{equation*}
\noindent
with $i$ $0$'s and $n-k_{1}-i$ $1$'s, where $x_{j}$ $(j=1,\ldots,s-1)$ represents a length of run of $1$'s and $y_{j}$ $(j=1,\ldots,s)$ represents the length of a run of $0$'s. And all integers $x_{1},\ldots,x_{s-1}$, and $y_{1},\ldots ,y_{s}$ satisfy the conditions
\noindent
\begin{equation*}\label{eq:bn}
\begin{split}
0 < x_j < k_1 \mbox{\ for\ } j=1,...,s-1, \mbox{\ and\ } x_1+\cdots +x_{s-1} = n-k_1-i,
\end{split}
\end{equation*}
\noindent
\begin{equation*}\label{eq:bn}
\begin{split}
(y_{1},\ldots,y_{s})\in S_{i,s}^{0},
\end{split}
\end{equation*}
\noindent
\begin{equation*}\label{eq:bn}
\begin{split}
(s,\ s)\text{-type}\  :&\quad \overbrace{1\ldots 1}^{x_{1}}\mid\overbrace{0\ldots 0}^{y_{1}}\mid\overbrace{1\ldots 1}^{x_{2}}\mid\overbrace{0\ldots 0}^{y_{2}}\mid\overbrace{1\ldots 1}^{x_{3}}\mid \ldots \mid \overbrace{0\ldots 0}^{y_{s-1}}\mid\overbrace{1\ldots 1}^{x_{s}}\mid\overbrace{0\ldots 0}^{y_{s}},
\end{split}
\end{equation*}
\noindent
with $i$ $0$'s and $n-k_{1}-i$ $1$'s, where $x_{j}$ $(j=1,\ldots,s)$ represents a length of run of $1$'s and $y_{j}$ $(j=1,\ldots,s)$ represents the length of a run of $0$'s. Here all of $x_1,\ldots, x_{s-1},$ and $y_1,\ldots, y_s$ are integers, and they satisfy
\noindent
\begin{equation*}\label{eq:bn}
\begin{split}
0 < x_j < k_1 \mbox{\ for\ } j=1,...,s, \mbox{\ and\ } x_1+\cdots +x_{s} = n-k_1-i,
\end{split}
\end{equation*}
\noindent
\begin{equation*}\label{eq:bn}
\begin{split}
(y_{1},\ldots,y_{s})\in S_{i,s}^{0},
\end{split}
\end{equation*}
\noindent
Then the probability of the event $E_{n-k_1,i}^{(0)}$ is given by
\noindent
\begin{equation*}\label{eq: 1.1}
\begin{split}
P_{q,\theta}\Big(E_{n-k_{1},\ i}^{(0)}&\Big)=\\
\sum_{s=1}^{i}\Bigg[\bigg\{&\sum_{\substack{x_{1}+\cdots+x_{s-1}=n-k_{1}-i\\ x_{1},\ldots,x_{s-1} \in \{1,\ldots,k_{1}-1\}}}{\hspace{0.3cm}\sum_{\substack{(y_{1},\ldots,y_{s})\in S_{i,s}^{0}}}} \big(1-\theta q^{0}\big)\cdots \big(1-\theta q^{y_{1}-1}\big)\times\\
&\Big(\theta q^{y_{1}}\Big)^{x_{1}}\big(1-\theta q^{y_{1}}\big)\cdots \big(1-\theta q^{y_{1}+y_{2}-1}\big)\times\\
&\Big(\theta q^{y_{1}+y_{2}}\Big)^{x_{2}}\big(1-\theta q^{y_{1}}\big)\cdots \big(1-\theta q^{y_{1}+y_{2}+y_3-1}\big)\times \\
&\quad \quad \quad \quad\quad \quad\quad \quad \quad \quad \quad \vdots\\
&\Big(\theta q^{y_{1}+\cdots+y_{s-1}}\Big)^{x_{s-1}}
\big(1-\theta q^{y_{1}+\cdots+y_{s-1}}\big)\cdots \big(1-\theta q^{y_{1}+\cdots+y_{s}-1}\big)\bigg\}+\\
\end{split}
\end{equation*}

\noindent
\begin{equation*}\label{eq: 1.1}
\begin{split}
\quad \quad \quad \quad \quad \quad \bigg\{&\sum_{\substack{x_{1}+\cdots+x_{s}=n-k_{1}-i\\ x_{1},\ldots,x_{s} \in \{1,\ldots,k_{1}-1\}}}{\hspace{0.3cm}\sum_{\substack{(y_{1},\ldots,y_{s})\in S_{i,s}^{0}}}}\big(\theta q^{0}\big)^{x_{1}}\big(1-\theta q^{0}\big)\cdots (1-\theta q^{y_{1}-1})\times\\
&\Big(\theta q^{y_{1}}\Big)^{x_{2}}\big(1-\theta q^{y_{1}}\big)\cdots \big(1-\theta q^{y_{1}+y_{2}-1}\big)\times\\
&\Big(\theta q^{y_{1}+y_{2}}\Big)^{x_{3}}\big(1-\theta q^{y_{1}}\big)\cdots \big(1-\theta q^{y_{1}+y_{2}+y_3-1}\big)\times \\
&\quad \quad \quad \quad\quad \quad\quad \quad \quad \quad \quad \vdots\\
&\Big(\theta q^{y_{1}+\cdots+y_{s-1}}\Big)^{x_{s}}
\big(1-\theta q^{y_{1}+\cdots+y_{s-1}}\big)\cdots \big(1-\theta q^{y_{1}+\cdots+y_{s}-1}\big)\bigg\}\Bigg],\\
\end{split}
\end{equation*}
\noindent
and then using simple exponentiation algebra arguments to simplify,
\noindent
\begin{equation*}\label{eq: 1.1}
\begin{split}
&P_{q,\theta}\Big(E_{n-k_{1},\ i}^{(0)}\Big)=\\
&\theta^{n-k_{1}-i}{\prod_{j=1}^{i}}\ (1-\theta q^{j-1})\times\\
&\sum_{s=1}^{i}\Bigg[\sum_{\substack{x_{1}+\cdots+x_{s-1}=n-k_{1}-i\\ x_{1},\ldots,x_{s-1} \in \{1,\ldots,k_{1}-1\}}}{\hspace{0.3cm}\sum_{\substack{(y_{1},\ldots,y_{s})\in S_{i,s}^{0}}}}q^{y_{1}x_{1}+(y_{1}+y_{2})x_{2}+\cdots+(y_{1}+\cdots+y_{s-1})x_{s-1}}+\\
&\quad \quad \sum_{\substack{x_{1}+\cdots+x_{s}=n-k_{1}-i\\ x_{1},\ldots,x_{s} \in \{1,\ldots,k_{1}-1\}}}
{\hspace{0.3cm}\sum_{\substack{(y_{1},\ldots,y_{s})\in S_{i,s}^{0}}}}q^{y_{1}x_{2}+(y_{1}+y_{2})x_{3}+\cdots+(y_{1}+\cdots+y_{s-1})x_{s}}\Bigg]\\
\end{split}
\end{equation*}
\noindent
Using Lemma \ref{lemma:3.5} and Lemma \ref{lemma:3.7}, we can rewrite follow as.
\noindent
\begin{equation*}\label{eq: 1.1}
\begin{split}
P_{q,\theta}\Big(E_{n-k_{1},\ i}^{(0)}\Big)=\theta^{n-k_{1}-i}{\prod_{j=1}^{i}}(1-\theta q^{j-1})\sum_{s=1}^{i}\Bigg[\overline{E}&_{q,0,0}^{k_1,\infty}(n-k_{1}-i,\ i,\ s)+\overline{F}_{q,0,0}^{k_1,\infty}(n-k_{1}-i,\ i,\ s)\Bigg],\\
\end{split}
\end{equation*}\\
\noindent
where
\noindent
\begin{equation*}\label{eq: 1.1}
\begin{split}
\overline{E}_{q,0,0}^{k_1,\infty}(n&-k_{1}-i,\ i,\ s)\\
&=\sum_{\substack{x_{1}+\cdots+x_{s-1}=n-k_{1}-i\\ x_{1},\ldots,x_{s-1} \in \{1,\ldots,k_{1}-1\}}}{\hspace{0.3cm}\sum_{\substack{(y_{1},\ldots,y_{s})\in S_{i,s}^{0}}}}q^{y_{1}x_{1}+(y_{1}+y_{2})x_{2}+\cdots+(y_{1}+\cdots+y_{s-1})x_{s-1}},
\end{split}
\end{equation*}
\noindent
and
\noindent
\begin{equation*}\label{eq: 1.1}
\begin{split}
\overline{F}_{q,0,0}^{k_1,\infty}(n&-k_{1}-i,\ i,\ s)\\
&=\sum_{\substack{x_{1}+\cdots+x_{s}=n-k_{1}-i\\ x_{1},\ \ldots,\ x_{s} \in \{1,\ \ldots,\ k_{1}-1\}}}\
\sum_{\substack{(y_{1},\ldots,y_{s})\in S_{i,s}^{0}}}q^{y_{1}x_{2}+(y_{1}+y_{2})x_{3}+\cdots+(y_{1}+\cdots+y_{s-1})x_{s}}.
\end{split}
\end{equation*}
\noindent
Therefore we can compute the probability of the event $\widetilde{W}_{L}^{(1)}=n$ as follows.
\noindent
\begin{equation*}\label{eq:bn1}
\begin{split}
\widetilde{P}_{q,L}^{(1)}(n)=&\sum_{i=k_2}^{n-k_{1}}P_{q,\theta}\Big(E_{n-k_{1},\ i}^{(0)}\Big)\Big(\theta q^{i}\Big)^{k_{1}}\\
=&\sum_{i=k_2}^{n-k_{1}}\theta^{n-k_{1}-i}{\prod_{j=1}^{i}}(1-\theta q^{j-1})\sum_{s=1}^{i}\bigg[\overline{E}_{q,0,0}^{k_1,\infty}(n-k_{1}-i,\ i,\ s)\\
&\quad\quad\quad\quad\quad\quad\quad\quad\quad\quad\quad\quad\quad+\overline{F}_{q,0,0}^{k_1,\infty}(n-k_{1}-i,\ i,\ s)\bigg]\Big(\theta q^{i}\Big)^{k_{1}}.\\
\end{split}
\end{equation*}
\noindent
Finally, applying typical factorization algebra arguments, $P_{q,L}^{(1)}(n)$ can be rewritten as follows
\noindent
\begin{equation*}\label{eq:bn1}
\begin{split}
\widetilde{P}_{q,L}^{(1)}(n)=\sum_{i=k_2}^{n-k_{1}}\theta^{n-i} q^{ik_{1}}{\prod_{j=1}^{i}}(1-\theta q^{j-1})\sum_{s=1}^{i}\bigg[\overline{E}_{q,0,0}^{k_1,\infty}(n-k_{1}-i,\ i,\ s)+\overline{F}_{q,0,0}^{k_1,\infty}(n-k_{1}-i,\ i,\ s)\bigg].\\
\end{split}
\end{equation*}
\noindent
We start with the study of $\widetilde{P}_{q,L}^{(0)}(n)$. From now on we assume $n \geq k_1+k_2,$ and we can write $\widetilde{P}_{q,L}^{(0)}(n)$ as follows.
\noindent
\begin{equation*}\label{eq:bn}
\begin{split}
\widetilde{P}_{q,L}^{(0)}(n)&=P_{q,\theta}\left(L_{n}^{(1)}> k_{1}-1\ \wedge
\ X_{n}=0\ \wedge\ S_{n}=n-k_2\right).\\
\end{split}
\end{equation*}
\noindent
We are going to focus on the event $\left\{L_{n}^{(1)}\geq k_{1}\ \wedge\ X_{n}=0\ \wedge\ S_{n}=n-k_2\right\}$. A typical element of the event $\left\{L_{n}^{(1)}\geq k_{1}\ \wedge\ X_{n}=0\ \wedge\ S_{n}=n-k_2\right\}$ is an ordered sequence which consists of $n-k_{2}$ successes and $k_2$ failures such that the length of the longest success run is greater than or equal to $k_1$. The number of these sequences can be derived as follows. First we will distribute the $k_2$ failures. Let $s$ $(1\leq s \leq k_2)$ be the number of runs of failures in the event $\left\{L_{n}^{(1)}\geq k_{1}\ \wedge\ X_{n}=0\ \wedge
\ S_{n}=n-k_2\right\}$. We divide into two cases: starting with a failure run or starting with a success run. Thus, we distinguish between two types of sequences in the event  $$\left\{L_{n}^{(1)}\geq k_{1}\ \wedge\ X_{n}=0\ \wedge\ S_{n}=n-k_2\right\},$$ respectively named $(s-1,\ s)$-type and $(s,\ s)$-type, which are defined as follows.
\noindent
\begin{equation*}\label{eq:bn}
\begin{split}
(s-1,\ s)\text{-type}\ :&\quad \overbrace{0\ldots 0}^{y_{1}}\mid\overbrace{1\ldots 1}^{x_{1}}\mid\overbrace{0\ldots 0}^{y_{2}}\mid\overbrace{1\ldots 1}^{x_{2}}\mid \ldots \mid \overbrace{0\ldots 0}^{y_{s-1}}\mid\overbrace{1\ldots 1}^{x_{s-1}}\mid\overbrace{0\ldots 0}^{y_{s}},\\
\end{split}
\end{equation*}
\noindent
with $k_2$ $0$'s and $n-k_{2}$ $1$'s, where $x_{j}$ $(j=1,\ldots,s-1)$ represents a length of run of $1$'s and $y_{j}$ $(j=1,\ldots,s)$ represents the length of a run of $0$'s. And all integers $x_{1},\ldots,x_{s-1}$, and $y_{1},\ldots ,y_{s}$ satisfy the conditions
\noindent
\begin{equation*}\label{eq:bn}
\begin{split}
(x_{1},\ldots,x_{s-1})\in S_{n-k_2,s-1}^{k_1-1}, \mbox{\ and\ }
\end{split}
\end{equation*}
\noindent
\begin{equation*}\label{eq:bn}
\begin{split}
(y_{1},\ldots,y_{s})\in S_{k_2,s}^{0},
\end{split}
\end{equation*}
\noindent
\begin{equation*}\label{eq:bn}
\begin{split}
(s,\ s)\text{-type}\  :&\quad \overbrace{1\ldots 1}^{x_{1}}\mid\overbrace{0\ldots 0}^{y_{1}}\mid\overbrace{1\ldots 1}^{x_{2}}\mid\overbrace{0\ldots 0}^{y_{2}}\mid\overbrace{1\ldots 1}^{x_{3}}\mid \ldots \mid \overbrace{0\ldots 0}^{y_{s-1}}\mid\overbrace{1\ldots 1}^{x_{s}}\mid\overbrace{0\ldots 0}^{y_{s}},
\end{split}
\end{equation*}
\noindent
with $k_2$ $0$'s and $n-k_{2}$ $1$'s, where $x_{j}$ $(j=1,\ldots,s)$ represents a length of run of $1$'s and $y_{j}$ $(j=1,\ldots,s)$ represents the length of a run of $0$'s. Here all of $x_1,\ldots, x_{s},$ and $y_1,\ldots, y_s$ are integers, and they satisfy
\noindent
\begin{equation*}\label{eq:bn}
\begin{split}
(x_{1},\ldots,x_{s})\in S_{n-k_2,s}^{k_1-1}, \mbox{\ and\ }
\end{split}
\end{equation*}
\noindent
\begin{equation*}\label{eq:bn}
\begin{split}
(y_{1},\ldots,y_{s})\in S_{k_2,s}^{0},
\end{split}
\end{equation*}
\noindent
Then the probability of the event $\left\{L_{n}^{(1)}\geq k_{1}\ \wedge\ X_{n}=0\ \wedge\ S_{n}=n-k_2\right\}$ is given by
\noindent
\begin{equation*}\label{eq: 1.1}
\begin{split}
P_{q,\theta}\Big(L_{n}^{(1)}&\geq k_{1}\ \wedge
\ X_{n}=0\ \wedge\ S_{n}=n-k_2\Big)=\\
\sum_{s=1}^{k_1}\Bigg[\bigg\{&\sum_{(x_{1},\ldots,x_{s-1})\in S_{n-k_2,s-1}^{k_1-1}}\sum_{(y_{1},\ldots,y_{s})\in S_{k_2,s}^{0}} \big(1-\theta q^{0}\big)\cdots \big(1-\theta q^{y_{1}-1}\big)\times\\
&\Big(\theta q^{y_{1}}\Big)^{x_{1}}\big(1-\theta q^{y_{1}}\big)\cdots \big(1-\theta q^{y_{1}+y_{2}-1}\big)\times\\
&\Big(\theta q^{y_{1}+y_{2}}\Big)^{x_{2}}\big(1-\theta q^{y_{1}+y_2}\big)\cdots \big(1-\theta q^{y_{1}+y_{2}+y_3-1}\big)\times \\
&\quad \quad \quad \quad\quad \quad\quad \quad \quad \quad \quad \vdots\\
&\Big(\theta q^{y_{1}+\cdots+y_{s-1}}\Big)^{x_{s-1}}
\big(1-\theta q^{y_{1}+\cdots+y_{s-1}}\big)\cdots \big(1-\theta q^{y_{1}+\cdots+y_{s}-1}\big)\bigg\}+\\
\end{split}
\end{equation*}
\noindent
\begin{equation*}\label{eq: 1.1}
\begin{split}
\quad \quad \quad \quad \bigg\{&\sum_{(x_{1},\ldots,x_{s})\in S_{n-k_2,s}^{k_1-1}}\
\sum_{(y_{1},\ldots,y_{s})\in S_{k_2,s}^{0}}\big(\theta q^{0}\big)^{x_{1}}\big(1-\theta q^{0}\big)\cdots (1-\theta q^{y_{1}-1})\times\\
&\Big(\theta q^{y_{1}}\Big)^{x_{2}}\big(1-\theta q^{y_{1}}\big)\cdots \big(1-\theta q^{y_{1}+y_{2}-1}\big)\times\\
&\Big(\theta q^{y_{1}+y_{2}}\Big)^{x_{3}}\big(1-\theta q^{y_{1}+y_2}\big)\cdots \big(1-\theta q^{y_{1}+y_{2}+y_3-1}\big)\times \\
&\quad \quad \quad \quad\quad \quad\quad \quad \quad \quad \quad \vdots\\
&\Big(\theta q^{y_{1}+\cdots+y_{s-1}}\Big)^{x_{s}}
\big(1-\theta q^{y_{1}+\cdots+y_{s-1}}\big)\cdots \big(1-\theta q^{y_{1}+\cdots+y_{s}-1}\big)\bigg\}\Bigg].\\
\end{split}
\end{equation*}
\noindent
Using simple exponentiation algebra arguments to simplify,
\noindent
\begin{equation*}\label{eq: 1.1}
\begin{split}
&P_{q,\theta}\Big(L_{n}^{(1)}\geq k_{1}\ \wedge\ X_{n}=0\ \wedge\ S_{n}=n-k_2\Big)=\\
&\theta^{n-k_{2}}{\prod_{j=1}^{k_2}}\ (1-\theta q^{j-1})\times\\
&\sum_{s=1}^{k_2}\Bigg[\sum_{(x_{1},\ldots,x_{s-1})\in S_{n-k_2,s-1}^{k_1-1}}\
\sum_{(y_{1},\ldots,y_{s})\in S_{k_2,s}^{0}}q^{y_{1}x_{1}+(y_{1}+y_{2})x_{2}+\cdots+(y_{1}+\cdots+y_{s-1})x_{s-1}}+\\
&\quad \quad\sum_{(x_{1},\ldots,x_{s})\in S_{n-k_2,s}^{k_1-1}}\
\sum_{(y_{1},\ldots,y_{s})\in S_{k_2,s}^{0}}q^{y_{1}x_{2}+(y_{1}+y_{2})x_{3}+\cdots+(y_{1}+\cdots+y_{s-1})x_{s}}\Bigg].\\
\end{split}
\end{equation*}
\noindent
Using Lemma \ref{lemma:4.6} and Lemma \ref{lemma:4.8}, we can rewrite as follows.
\noindent
\begin{equation*}\label{eq: 1.1}
\begin{split}
P_{q,\theta}\Big(&L_{n}^{(1)}\geq k_{1} \wedge\ X_{n}=0\ \wedge
\ S_{n}=n-k_2\Big)\\
&=\theta^{n-k_{2}}\ {\prod_{j=1}^{k_2}}\ (1-\theta q^{j-1})\ \sum_{s=1}^{k_2}\bigg[K_{q,k_1,0}(n-k_{2},\ k_2,\ s)+L_{q,k_1,0}(n-k_{2},\ k_2\ s)\bigg],\\
\end{split}
\end{equation*}\\
\noindent
where
\noindent
\begin{equation*}\label{eq: 1.1}
\begin{split}
K_{q,k_1,0}(n-k_{2}&,\ k_2,\ s)\\
&=\sum_{(x_{1},\ldots,x_{s-1})\in S_{n-k_2,s-1}^{k_1-1}}{\hspace{0.3cm}\sum_{(y_{1},\ldots,y_{s})\in S_{k_2,s}^{0}}}q^{y_{1}x_{1}+(y_{1}+y_{2})x_{2}+\cdots+(y_{1}+\cdots+y_{s-1})x_{s-1}},
\end{split}
\end{equation*}
\noindent
and
\noindent
\begin{equation*}\label{eq: 1.1}
\begin{split}
L_{q,k_1,0}(n-k_{2}&,\ k_2,\ s)\\
&=\sum_{(x_{1},\ldots,x_{s})\in S_{n-k_2,s}^{k_1-1}}{\hspace{0.3cm}\sum_{(y_{1},\ldots,y_{s})\in S_{k_2,s}^{0}}}q^{y_{1}x_{2}+(y_{1}+y_{2})x_{3}+\cdots+(y_{1}+\cdots+y_{s-1})x_{s}}.
\end{split}
\end{equation*}
\noindent
Therefore we can compute the probability of the event $\widetilde{W}_{L}^{(0)}=n$ as follows.
\noindent
\begin{equation*}\label{eq:bn1}
\begin{split}
\widetilde{P}_{q,L}^{(0)}(n)=\theta^{n-k_{2}}\ {\prod_{j=1}^{k_2}}\ (1-\theta q^{j-1})\ \sum_{s=1}^{k_2}\bigg[K_{q,k_1,0}(n-k_{2},\ k_2,\ s)+L_{q,k_1,0}(n-k_{2},\ k_2\ s)\bigg].\\
\end{split}
\end{equation*}
\noindent
Thus proof is completed.
\end{proof}
\noindent
It is worth mentioning here that the PMF $f_{q,L}(n;\theta)$ approaches the probability function of the later waiting time distribution of order $(k_1,k_2)$
in the limit as $q$ tends to 1 when a run quota on runs of successes and a frequency quota on runs of failures are imposed of IID model. The details are presented in the following remark.
\noindent
\begin{remark}
{\rm
For $q=1$, the PMF $\widetilde{f}_{q,L}(n;\theta)$ reduces to the PMF $\widetilde{f}_{L}(n;\theta)=P_{\theta}(\widetilde{W}_{L}=n)$ for $n\geq k_1+k_2$ is given by

\begin{equation*}\label{eq: 1.1}
\begin{split}
P(\widetilde{W}_{L}=n)=\widetilde{P}_{L}^{(1)}(n)+\widetilde{P}_{L}^{(0)}(n),\ n\geq k_1+k_2,
\end{split}
\end{equation*}
where
\begin{equation*}\label{eq:bn1}
\begin{split}
\widetilde{P}_{L}^{(1)}(n)=\sum_{i=k_2}^{n-k_{1}}\theta^{n-i}(1-\theta)^{i}\sum_{s=1}^{i}\Big[S(s-1,\ k_1,\ n-k_{1}-i)+S(s,\ k_1,\ n-k_{1}-i)\Big]M(s,\ i),\\
\end{split}
\end{equation*}
and

\begin{equation*}\label{eq:bn1}
\begin{split}
\widetilde{P}_{L}^{(0)}(n)=\theta^{n-k_{2}}(1-\theta)^{k_2}\sum_{s=1}^{k_2}\bigg[R(s-1,\ k_1,\ n-k_{2})+R(s,\ k_1,\ n-k_{2})\bigg]M(s,\ k_2).
\end{split}
\end{equation*}

where

\noindent
\begin{equation*}\label{eq: 1.1}
\begin{split}
R(a,\ b,\ c)=\sum_{j=1}^{min\left(a,\ \left[\frac{c-a}{b-1}\right]\right)}(-1)^{j+1}{a \choose j}{c-j(b-1)-1 \choose a-1},
\end{split}
\end{equation*}
\noindent
\noindent
\begin{equation*}\label{eq: 1.1}
\begin{split}
S(a,\ b,\ c)=\sum_{j=0}^{min(a,\left[\frac{c-a}{b-1}\right])}(-1)^{j}{a \choose j}{c-j(b-1)-1 \choose a-1},
\end{split}
\end{equation*}
\noindent
and

\noindent
\begin{equation*}\label{eq: 1.1}
\begin{split}
M(a,\ b)={b-1 \choose a-1}.
\end{split}
\end{equation*}
\noindent
See, e.g. \citet{charalambides2002enumerative}.
}
\end{remark}
\section{Frequency quota for $q$-binary trials}

{\rm
In the present section we shall study of the $q$-sooner and later waiting times to be discussed will arise by setting a frequency quota of successes and failures. We consider sooner cases (or later cases) impose a frequency quotas on successes and failures.
}
\subsection{Sooner waiting time}

{\rm
The problem of waiting time that will be discussed in this section is one of the 'sooner cases' and it emerges when a frequency quota on successes and failures are imposed. More specifically, Binary (zero and one) trials with probability of ones varying according to a geometric rule, are performed sequentially until $k_1$ successes in total or $k_2$ failures in total are observed, whichever event occurs first. Let $\overline{W}_{S}$ be a random variable denoting that the waiting time until either $k_{1}$ successes in total or $k_{2}$ failures in total us occurred, whichever event observe sooner. In the following theorem we derive the PMF of the random variable $\overline{W}_{S}$.

\noindent
The probability function of the $q$-sooner waiting time distribution impose a frequency quotas on successes and failures is obtained by the following theorem. It is evident that
\noindent
\begin{equation*}
P_{q,\theta}(\widehat{W}_{S}=n)=0\ \text{for}\ 0\leq n<\text{min}(k_1,k_2)
\end{equation*}
\noindent
and so we shall focus on determining the probability mass function for $n=\text{min}(k_1,k_2),\ldots, k_1+k_2-1.$
\noindent
}

\begin{theorem}
\label{thm:5.1}
The PMF $P_{q,\theta}(\overline{W}_{S}=n)$ satisfies
\begin{equation*}\label{eq: 1.1}
\begin{split}
P_{q,\theta}(\overline{W}_{S}=n)=\overline{P}_{q,S}^{(1)}(n)+\overline{P}_{q,S}^{(0)}(n)\ \text{for}\ n=\text{min}(k_1,k_2),\ldots, k_1+k_2-1,
\end{split}
\end{equation*}
where

\begin{equation*}\label{eq:bn1}
\begin{split}
\overline{P}_{q,S}^{(1)}(n)=\theta^{k_{1}}\ {\prod_{j=1}^{n-k_1}}\ (1-\theta q^{j-1})\ \sum_{s=1}^{k_1}\bigg[\overline{I}_{q}(k_{1},\ n-k_1,\ s)+\overline{J}_{q}(k_{1},\ n-k_1,\ s)\bigg].\\
\end{split}
\end{equation*}
\noindent

and

\begin{equation*}\label{eq:bn1}
\begin{split}
\overline{P}_{q,S}^{(0)}(n)=\theta^{n-k_{2}}\ {\prod_{j=1}^{k_2}}\ (1-\theta q^{j-1})\ \sum_{s=1}^{k_2}\bigg[\overline{K}_{q}(n-k_{2},\ k_2,\ s)+\overline{L}_{q}(n-k_{2},\ k_2\ s)\bigg].\\
\end{split}
\end{equation*}
\end{theorem}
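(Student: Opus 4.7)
The plan is to mirror the structure of the proofs of Theorems \ref{thm:4.1} and \ref{thm:4.3}, but since both quotas are now frequency quotas, there are no upper bounds on individual run lengths and the arguments become purely compositional. First I would split the event $\{\overline{W}_{S}=n\}$ into the two disjoint cases according to which quota triggered termination, namely
\[
\overline{P}_{q,S}^{(1)}(n)=P_{q,\theta}\bigl(X_{n}=1\ \wedge\ S_{n}=k_{1}\ \wedge\ F_{n}=n-k_{1}\bigr),
\]
\[
\overline{P}_{q,S}^{(0)}(n)=P_{q,\theta}\bigl(X_{n}=0\ \wedge\ F_{n}=k_{2}\ \wedge\ S_{n}=n-k_{2}\bigr).
\]
The range $\min(k_{1},k_{2})\leq n\leq k_{1}+k_{2}-1$ automatically guarantees that the other quota has not yet been reached prior to time $n$.

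For $\overline{P}_{q,S}^{(1)}(n)$ I would enumerate all $n$-bit strings with exactly $k_{1}$ ones and $n-k_{1}$ zeros that end with $1$. Letting $s\in\{1,\ldots,k_{1}\}$ denote the number of success runs, such strings split into $(s,s-1)$-type (starting with a success run) and $(s,s)$-type (starting with a failure run), with composition parts $(x_{1},\ldots,x_{s})\in S_{k_{1},s}^{0}$ and either $(y_{1},\ldots,y_{s-1})\in S_{n-k_{1},s-1}^{0}$ or $(y_{1},\ldots,y_{s})\in S_{n-k_{1},s}^{0}$. Multiplying the $q$-geometric success and failure probabilities along each such arrangement and pulling out the common factor $\theta^{k_{1}}\prod_{j=1}^{n-k_{1}}(1-\theta q^{j-1})$ produces a single $q$-monomial in the run lengths. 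The resulting exponents $y_{1}x_{1}+(y_{1}+y_{2})x_{2}+\cdots+(y_{1}+\cdots+y_{s-1})x_{s-1}$ and $y_{1}x_{1}+(y_{1}+y_{2})x_{2}+\cdots+(y_{1}+\cdots+y_{s})x_{s}$ are exactly those summed by $\overline{I}_{q}(k_{1},n-k_{1},s)$ and $\overline{J}_{q}(k_{1},n-k_{1},s)$ of Lemmas \ref{lemma:4.1} and \ref{lemma:4.3}, giving the first stated formula.

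The derivation of $\overline{P}_{q,S}^{(0)}(n)$ proceeds by the mirror argument: one enumerates strings with $k_{2}$ zeros and $n-k_{2}$ ones ending in $0$, decomposes into $(s-1,s)$-type (starting with a failure run) and $(s,s)$-type (starting with a success run) for $s\in\{1,\ldots,k_{2}\}$, and applies the same exponentiation and factorisation steps with the common prefactor now $\theta^{n-k_{2}}\prod_{j=1}^{k_{2}}(1-\theta q^{j-1})$. Identifying the two resulting nested sums with $\overline{K}_{q}(n-k_{2},k_{2},s)$ and $\overline{L}_{q}(n-k_{2},k_{2},s)$ of Lemmas \ref{lemma:4.5} and \ref{lemma:4.7} yields the second formula.

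The main obstacle, as in the previous theorems, is the careful bookkeeping that collapses the alternating product of $\theta q^{(\cdot)}$ and $1-\theta q^{(\cdot)}$ factors into the clean prefactor times a single $q$-power. The delicate point is that a success at position $j$ contributes $\theta q^{F_{j-1}}$ while a failure contributes $1-\theta q^{F_{j-1}}$, so the telescoping must track the cumulative failure counts to the left of each success run; after collecting these contributions one obtains precisely the nested linear forms $(y_{1}+\cdots+y_{i})x_{i+1}$ (or $x_{i}$) appearing in the four auxiliary quantities. Once this reduction is verified, the upper bounds $s\leq k_{1}$ and $s\leq k_{2}$ on the number of runs follow from the fact that a composition of $k_{i}$ into positive parts has at most $k_{i}$ summands, and the remainder of the proof is substitution.
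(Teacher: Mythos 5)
Your proposal is correct and follows essentially the same route as the paper: the same decomposition of $\{\overline{W}_{S}=n\}$ into the events $\{S_{n}=k_{1}\wedge F_{n}=n-k_{1}\wedge X_{n}=1\}$ and $\{S_{n}=n-k_{2}\wedge F_{n}=k_{2}\wedge X_{n}=0\}$, the same split by the number of runs into the two starting types, the same factorisation of the alternating product into $\theta^{k_{1}}\prod_{j=1}^{n-k_1}(1-\theta q^{j-1})$ (resp.\ $\theta^{n-k_{2}}\prod_{j=1}^{k_2}(1-\theta q^{j-1})$) times a $q$-monomial, and the same identification with $\overline{I}_{q},\overline{J}_{q},\overline{K}_{q},\overline{L}_{q}$ via Lemmas \ref{lemma:4.1}, \ref{lemma:4.3}, \ref{lemma:4.5} and \ref{lemma:4.7}. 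The only blemish is a small index slip where you write the exponent matched by $\overline{I}_{q}$ as $y_{1}x_{1}+\cdots+(y_{1}+\cdots+y_{s-1})x_{s-1}$ rather than $y_{1}x_{2}+\cdots+(y_{1}+\cdots+y_{s-1})x_{s}$ (the first success run carries no preceding failures), but your own description of the telescoping makes clear you have the correct structure in mind.
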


\begin{proof}
We start with the study of $\overline{P}_{q,S}^{(1)}(n)$. From now on we assume $n=\text{min}(k_1,k_2),\ldots, k_1+k_2-1,$ and we can write $\overline{P}_{q,S}^{(1)}(n)$ as follows.
\noindent
\begin{equation*}\label{eq:bn}
\begin{split}
\overline{P}_{q,S}^{(1)}(n)&=P_{q,\theta}\Big(S_{n}=k_{1}\ \wedge\ F_{n}=n-k_{1}\ \wedge\ X_{n}=1\Big).\\
\end{split}
\end{equation*}

\noindent
We are going to focus on the event $\left\{ S_{n}=k_{1}\ \wedge\ F_{n}=n-k_{1}\ \wedge\ X_{n}=1 \right\}$. A typical element of the event $\left\{ S_{n}=k_{1}\ \wedge\ F_{n}=n-k_{1}\ \wedge\ X_{n}=1 \right\}$ is an ordered sequence which consists of $k_{1}$ successes and $n-k_1$ failures. The number of these sequences can be derived as follows. First we will distribute the $k_1$ successes. Let $s$ $(1\leq s \leq k_1)$ be the number of runs of successes in the event $\left\{S_{n}=k_{1}\ \wedge\ F_{n}=n-k_{1}\ \wedge\ X_{n}=1\right\}$. We divide into two cases: starting with a success run or starting with a failure run. Thus, we distinguish between two types of sequences in the event  $$\Big\{S_{n}=k_{1}\ \wedge
\ F_{n}=n-k_{1}\ \wedge\ X_{n}=1\Big\},$$ respectively named $(s,\ s-1)$-type and $(s,\ s)$-type, which are defined as follows.
\noindent

\begin{equation*}\label{eq:bn}
\begin{split}
(s,\ s-1)\text{-type}\  :&\quad \overbrace{1\ldots 1}^{x_{1}}\mid\overbrace{0\ldots 0}^{y_{1}}\mid\overbrace{1\ldots 1}^{x_{2}}\mid\overbrace{0\ldots 0}^{y_{2}}\mid \ldots\mid\overbrace{1\ldots 1}^{x_{s-1}} \mid \overbrace{0\ldots 0}^{y_{s-1}}\mid\overbrace{1\ldots 1}^{x_{s}},
\end{split}
\end{equation*}
\noindent
with $n-k_1$ $0$'s and $k_{1}$ $1$'s, where $x_{j}$ $(j=1,\ldots,s)$ represents a length of run of $1$'s and $y_{j}$ $(j=1,\ldots,s-1)$ represents the length of a run of $0$'s. Here all of $x_1,\ldots, x_{s},$ and $y_1,\ldots, y_{s-1}$ are integers, and they satisfy
\noindent
\begin{equation*}\label{eq:bn}
\begin{split}
(x_{1},\ldots,x_{s})\in S_{k_1,s}^{0}, \text{and}
\end{split}
\end{equation*}
\noindent
\noindent
\begin{equation*}\label{eq:bn}
\begin{split}
(y_{1},\ldots,y_{s-1})\in S_{n-k_1,s-1}^{0},
\end{split}
\end{equation*}

\noindent

\begin{equation*}\label{eq:bn}
\begin{split}
(s,\ s)\text{-type}\ :&\quad \overbrace{0\ldots 0}^{y_{1}}\mid\overbrace{1\ldots 1}^{x_{1}}\mid\overbrace{0\ldots 0}^{y_{2}}\mid\overbrace{1\ldots 1}^{x_{2}}\mid \ldots \mid \overbrace{0\ldots 0}^{y_{s-1}}\mid\overbrace{1\ldots 1}^{x_{s-1}}\mid\overbrace{0\ldots 0}^{y_{s}}\mid\overbrace{1\ldots 1}^{x_{s}},\\
\end{split}
\end{equation*}
\noindent
with $n-k_1$ $0$'s and $k_{1}$ $1$'s, where $x_{j}$ $(j=1,\ldots,s)$ represents a length of run of $1$'s and $y_{j}$ $(j=1,\ldots,s)$ represents the length of a run of $0$'s. And all integers $x_{1},\ldots,x_{s}$, and $y_{1},\ldots ,y_{s}$ satisfy the conditions
\noindent
\begin{equation*}\label{eq:bn}
\begin{split}
(x_{1},\ldots,x_{s})\in S_{k_1,s}^{0}, \text{and}
\end{split}
\end{equation*}
\noindent
\begin{equation*}\label{eq:bn}
\begin{split}
(y_{1},\ldots,y_{s})\in S_{n-k_1,s}^{0},
\end{split}
\end{equation*}

\noindent
Then the probability of the event $\Big\{ S_{n}=k_{1}\ \wedge
\ F_{n}=n-k_{1}\ \wedge\ X_{n}=1 \Big\}$ is given by
\noindent
\begin{equation*}\label{eq: 1.1}
\begin{split}
P_{q,\theta}\Big(S_{n}=&k_{1}\ \wedge\ F_{n}=n-k_{1}\ \wedge\ X_{n}=1\Big)=\\
\sum_{s=1}^{k_1}\Bigg[\bigg\{&\sum_{(x_{1},\ldots,x_{s}) \in S_{k_{1},s}^{0}}{\hspace{0.3cm}\sum_{(y_{1},\ldots,y_{s-1})\in S_{n-k_1,s-1}^{0}}}\big(\theta q^{0}\big)^{x_{1}}\big(1-\theta q^{0}\big)\cdots (1-\theta q^{y_{1}-1})\times\\
&\Big(\theta q^{y_{1}}\Big)^{x_{2}}\big(1-\theta q^{y_{1}}\big)\cdots \big(1-\theta q^{y_{1}+y_{2}-1}\big)\times\\
%\times&\Big(\theta q^{y_{1}+y_{2}}\Big)^{x_{3}}\big(1-\theta q^{y_{1}}\big)\cdots \big(1-\theta q^{y_{1}+y_{2}+y_3-1}\big) \\
&\quad \quad \quad \quad\quad \quad\quad \quad \quad \quad \quad \vdots\\
&\Big(\theta q^{y_{1}+\cdots+y_{s-2}}\Big)^{x_{s-1}}
\big(1-\theta q^{y_{1}+\cdots+y_{s-2}}\big)\cdots \big(1-\theta q^{y_{1}+\cdots+y_{s-1}-1}\big)\times\\
&\Big(\theta q^{y_{1}+\cdots+y_{s-1}}\Big)^{x_{s}}\bigg\}+\\
\end{split}
\end{equation*}

\noindent
\begin{equation*}\label{eq: 1.1}
\begin{split}
\quad \bigg\{&\sum_{\substack{(x_{1},\ldots,x_{s}) \in S_{k_{1},s}^{0}}}{\hspace{0.3cm}\sum_{(y_{1},\ldots,y_{s})\in S_{n-k_1,s}^{0}}} \big(1-\theta q^{0}\big)\cdots \big(1-\theta q^{y_{1}-1}\big)\Big(\theta q^{y_{1}}\Big)^{x_{1}}\times\\
&\big(1-\theta q^{y_{1}}\big)\cdots \big(1-\theta q^{y_{1}+y_{2}-1}\big)\Big(\theta q^{y_{1}+y_{2}}\Big)^{x_{2}}\times\\
&\quad \quad \quad \quad\quad \quad\quad \quad \quad \quad \quad \vdots\\
&\big(1-\theta q^{y_{1}+\cdots+y_{s-1}}\big)\cdots \big(1-\theta q^{y_{1}+\cdots+y_{s}-1}\big)\Big(\theta q^{y_{1}+\cdots+y_{s}}\Big)^{x_{s}}\bigg\}\Bigg].\\
\end{split}
\end{equation*}
\noindent
Using simple exponentiation algebra arguments to simplify,
\noindent
\begin{equation*}\label{eq: 1.1}
\begin{split}
&P_{q,\theta}\Big(S_{n}=k_{1}\ \wedge
\ F_{n}=n-k_{1}\ \wedge\ X_{n}=1\Big)=\\
&\theta^{k_{1}}{\prod_{j=1}^{n-k_1}}\ (1-\theta q^{j-1})\times\\
&\sum_{s=1}^{k_1}\Bigg[\sum_{\substack{(x_{1},\ldots,x_{s}) \in S_{k_{1},s}^{0}}}{\hspace{0.3cm}\sum_{(y_{1},\ldots,y_{s-1})\in S_{n-k_1,s-1}^{0}}}q^{y_{1}x_{2}+(y_{1}+y_{2})x_{3}+\cdots+(y_{1}+\cdots+y_{s-1})x_{s}}+\\
&\quad \quad \sum_{\substack{(x_{1},\ldots,x_{s}) \in S_{k_{1},s}^{0}}}{\hspace{0.3cm}\sum_{(y_{1},\ldots,y_{s})\in S_{n-k_1,s}^{0}}}q^{y_{1}x_{1}+(y_{1}+y_{2})x_{2}+\cdots+(y_{1}+\cdots+y_{s})x_{s}}\Bigg].\\
\end{split}
\end{equation*}
\noindent
Using Lemma \ref{lemma:4.1} and Lemma \ref{lemma:4.3}, we can rewrite as follows.
\noindent
\begin{equation*}\label{eq: 1.1}
\begin{split}
P_{q,\theta}\Big(&S_{n}=k_{1}\ \wedge\ F_{n}=n-k_{1}\ \wedge\ X_{n}=1\Big)\\
&=\theta^{k_{1}}\ {\prod_{j=1}^{n-k_1}}\ (1-\theta q^{j-1})\ \sum_{s=1}^{k_1}\bigg[\overline{I}_{q}(k_{1},\ n-k_1,\ s)+\overline{J}_{q}(k_{1},\ n-k_1,\ s)\bigg],\\
\end{split}
\end{equation*}\\
\noindent
where
\noindent
\begin{equation*}\label{eq: 1.1}
\begin{split}
\overline{I}_{q}(k_{1}&,\ n-k_1,\ s)\\
&=\sum_{\substack{ (x_{1},\ldots,x_{s}) \in S_{k_{1},s}^{0}}}{\hspace{0.3cm}\sum_{(y_{1},\ldots,y_{s-1})\in S_{n-k_1,s-1}^{0}}}q^{y_{1}x_{2}+(y_{1}+y_{2})x_{3}+\cdots+(y_{1}+\cdots+y_{s-1})x_{s}},
\end{split}
\end{equation*}
and
\begin{equation*}\label{eq: 1.1}
\begin{split}
\overline{J}_{q}(k_{1}&,\ n-k_1,\ s)\\
&=\sum_{\substack{x_{1},\ldots,x_{s} \in S_{k_{1},s}^{0}}}{\hspace{0.3cm}\sum_{(y_{1},\ldots,y_{s})\in S_{n-k_1,s}^{0}}}q^{y_{1}x_{1}+(y_{1}+y_{2})x_{2}+\cdots+(y_{1}+\cdots+y_{s})x_{s}}.
\end{split}
\end{equation*}
\noindent
Therefore we can compute the probability of the event $\overline{W}_{S}^{(1)}=n$ as follows.
\noindent
\begin{equation*}\label{eq:bn1}
\begin{split}
\overline{P}_{q,S}^{(1)}(n)=\theta^{k_{1}}\ {\prod_{j=1}^{n-k_1}}\ (1-\theta q^{j-1})\ \sum_{s=1}^{k_1}\bigg[\overline{I}_{q}(k_{1},\ n-k_1,\ s)+\overline{J}_{q}(k_{1},\ n-k_1,\ s)\bigg].\\
\end{split}
\end{equation*}
\noindent

%%%%증명

We start with the study of $\overline{P}_{q,S}^{(0)}(n)$. From now on we assume $n=\text{min}(k_1,k_2),\ldots, k_1+k_2-1,$ and we can write $\overline{P}_{q,S}^{(0)}(n)$ as follows.
\noindent
\begin{equation*}\label{eq:bn}
\begin{split}
\overline{P}_{q,S}^{(0)}(n)&=P_{q,\theta}\Big(S_{n}=n-k_2\ \wedge\ F_{n}=k_2\ \wedge\ X_{n}=0\Big).\\
\end{split}
\end{equation*}

\noindent
We are going to focus on the event $\left\{S_{n}=n-k_2\ \wedge\ F_{n}=k_2\ \wedge\ X_{n}=0\right\}$. A typical element of the event $\left\{S_{n}=n-k_2\ \wedge\ F_{n}=k_2\ \wedge\ X_{n}=0\right\}$ is an ordered sequence which consists of $n-k_{2}$ successes and $k_2$ failures. The number of these sequences can be derived as follows. First we will distribute the $k_2$ failures. Let $s$ $(1\leq s \leq k_2)$ be the number of runs of failures in the event $\Big\{S_{n}=n-k_2\ \wedge\ F_{n}=k_2\ \wedge\ X_{n}=0\Big\}$. We divide into two cases: starting with a success run or starting with a failure run. Thus, we distinguish between two types of sequences in the event $\Big\{S_{n}=n-k_2\ \wedge\ F_{n}=k_2\ \wedge\ X_{n}=0\Big\},$ respectively named $(s-1,\ s)$-type and $(s,\ s)$-type, which are defined as follows.

\noindent
\begin{equation*}\label{eq:bn}
\begin{split}
(s-1,\ s)\text{-type}\ :&\quad \overbrace{0\ldots 0}^{y_{1}}\mid\overbrace{1\ldots 1}^{x_{1}}\mid\overbrace{0\ldots 0}^{y_{2}}\mid\overbrace{1\ldots 1}^{x_{2}}\mid \ldots \mid \overbrace{0\ldots 0}^{y_{s-1}}\mid\overbrace{1\ldots 1}^{x_{s-1}}\mid\overbrace{0\ldots 0}^{y_{s}},\\
\end{split}
\end{equation*}
\noindent
with $k_2$ $0$'s and $n-k_{2}$ $1$'s, where $x_{j}$ $(j=1,\ldots,s-1)$ represents a length of run of $1$'s and $y_{j}$ $(j=1,\ldots,s)$ represents the length of a run of $0$'s. And all integers $x_{1},\ldots,x_{s-1}$, and $y_{1},\ldots,y_{s}$ satisfy the conditions
\noindent
\begin{equation*}\label{eq:bn}
\begin{split}
(x_{1},\ldots,x_{s-1})\in S_{n-k_2,s-1}^{0}, \mbox{\ and\ }
\end{split}
\end{equation*}
\noindent
\begin{equation*}\label{eq:bn}
\begin{split}
(y_{1},\ldots,y_{s})\in S_{k_2,s}^{0},
\end{split}
\end{equation*}
\noindent
\begin{equation*}\label{eq:bn}
\begin{split}
(s,\ s)\text{-type}\  :&\quad \overbrace{1\ldots 1}^{x_{1}}\mid\overbrace{0\ldots 0}^{y_{1}}\mid\overbrace{1\ldots 1}^{x_{2}}\mid\overbrace{0\ldots 0}^{y_{2}}\mid\overbrace{1\ldots 1}^{x_{3}}\mid \ldots \mid \overbrace{0\ldots 0}^{y_{s-1}}\mid\overbrace{1\ldots 1}^{x_{s}}\mid\overbrace{0\ldots 0}^{y_{s}},
\end{split}
\end{equation*}
\noindent
with $k_2$ $0$'s and $n-k_{2}$ $1$'s, where $x_{j}$ $(j=1,\ldots,s)$ represents a length of run of $1$'s and $y_{j}$ $(j=1,\ldots,s)$ represents the length of a run of $0$'s. Here all of $x_1,\ldots, x_{s},$ and $y_1,\ldots, y_s$ are integers, and they satisfy
\noindent
\begin{equation*}\label{eq:bn}
\begin{split}
(x_{1},\ldots,x_{s})\in S_{n-k_2,s}^{0}, \mbox{\ and\ }
\end{split}
\end{equation*}
\noindent
\begin{equation*}\label{eq:bn}
\begin{split}
(y_{1},\ldots,y_{s})\in S_{k_2,s}^{0},
\end{split}
\end{equation*}

\noindent
Then the probability of the event $\Big\{S_{n}=n-k_2\ \wedge\ F_{n}=k_2\ \wedge\ X_{n}=0\Big\}$ is given by
\noindent
\begin{equation*}\label{eq: 1.1}
\begin{split}
P_{q,\theta}\Big(S_{n}=&n-k_2\ \wedge\ F_{n}=k_2\ \wedge\ X_{n}=0\Big)=\\
\sum_{s=1}^{k_1}\Bigg[\bigg\{&\sum_{(x_{1},\ldots,x_{s-1})\in S_{n-k_2,s-1}^{0}}{\hspace{0.3cm}\sum_{(y_{1},\ldots,y_{s})\in S_{k_2,s}^{0}}} \big(1-\theta q^{0}\big)\cdots \big(1-\theta q^{y_{1}-1}\big)\times\\
&\Big(\theta q^{y_{1}}\Big)^{x_{1}}\big(1-\theta q^{y_{1}}\big)\cdots \big(1-\theta q^{y_{1}+y_{2}-1}\big)\times\\
&\Big(\theta q^{y_{1}+y_{2}}\Big)^{x_{2}}\big(1-\theta q^{y_{1}+y_2}\big)\cdots \big(1-\theta q^{y_{1}+y_{2}+y_3-1}\big)\times \\
&\quad \quad \quad \quad\quad \quad\quad \quad \quad \quad \quad \vdots\\
&\Big(\theta q^{y_{1}+\cdots+y_{s-1}}\Big)^{x_{s-1}}
\big(1-\theta q^{y_{1}+\cdots+y_{s-1}}\big)\cdots \big(1-\theta q^{y_{1}+\cdots+y_{s}-1}\big)\bigg\}+\\
\end{split}
\end{equation*}

\noindent
\begin{equation*}\label{eq: 1.1}
\begin{split}
\quad \quad \quad \quad \bigg\{&\sum_{(x_{1},\ldots,x_{s})\in S_{n-k_2,s}^{0}}{\hspace{0.3cm}\sum_{(y_{1},\ldots,y_{s})\in S_{k_2,s}^{0}}}\big(\theta q^{0}\big)^{x_{1}}\big(1-\theta q^{0}\big)\cdots (1-\theta q^{y_{1}-1})\times\\
&\Big(\theta q^{y_{1}}\Big)^{x_{2}}\big(1-\theta q^{y_{1}}\big)\cdots \big(1-\theta q^{y_{1}+y_{2}-1}\big)\times\\
&\Big(\theta q^{y_{1}+y_{2}}\Big)^{x_{3}}\big(1-\theta q^{y_{1}+y_2}\big)\cdots \big(1-\theta q^{y_{1}+y_{2}+y_3-1}\big)\times \\
&\quad \quad \quad \quad\quad \quad\quad \quad \quad \quad \quad \vdots\\
&\Big(\theta q^{y_{1}+\cdots+y_{s-1}}\Big)^{x_{s}}
\big(1-\theta q^{y_{1}+\cdots+y_{s-1}}\big)\cdots \big(1-\theta q^{y_{1}+\cdots+y_{s}-1}\big)\bigg\}\Bigg].\\
\end{split}
\end{equation*}
\noindent
Using simple exponentiation algebra arguments to simplify,
\noindent
\begin{equation*}\label{eq: 1.1}
\begin{split}
&P_{q,\theta}\Big(S_{n}=n-k_2\ \wedge
\ F_{n}=k_2\ \wedge\ X_{n}=0\Big)=\\
&\theta^{n-k_{2}}{\prod_{j=1}^{k_2}}\ (1-\theta q^{j-1})\times\\
&\sum_{s=1}^{k_2}\Bigg[\sum_{(x_{1},\ldots,x_{s-1})\in S_{n-k_2,s-1}^{0}}{\hspace{0.3cm}\sum_{(y_{1},\ldots,y_{s})\in S_{k_2,s}^{0}}}q^{y_{1}x_{1}+(y_{1}+y_{2})x_{2}+\cdots+(y_{1}+\cdots+y_{s-1})x_{s-1}}+\\
&\quad \quad\sum_{(x_{1},\ldots,x_{s})\in S_{n-k_2,s}^{0}}{\hspace{0.3cm}\sum_{(y_{1},\ldots,y_{s})\in S_{k_2,s}^{0}}}q^{y_{1}x_{2}+(y_{1}+y_{2})x_{3}+\cdots+(y_{1}+\cdots+y_{s-1})x_{s}}\Bigg].\\
\end{split}
\end{equation*}
\noindent
Using Lemma \ref{lemma:4.5} and Lemma \ref{lemma:4.7}, we can rewrite as follows.
\noindent
\begin{equation*}\label{eq: 1.1}
\begin{split}
P_{q,\theta}\Big(&S_{n}=n-k_2\ \wedge\ F_{n}=k_2\ \wedge\ X_{n}=0\Big)\\
&=\theta^{n-k_{2}}\ {\prod_{j=1}^{k_2}}\ (1-\theta q^{j-1})\ \sum_{s=1}^{k_2}\bigg[\overline{K}_{q}(n-k_{2},\ k_2,\ s)+\overline{L}_{q}(n-k_{2},\ k_2\ s)\bigg],\\
\end{split}
\end{equation*}\\
\noindent

where
\noindent
\begin{equation*}\label{eq: 1.1}
\begin{split}
\overline{K}_{q}(n-k_{2}&,\ k_2,\ s)\\
&=\sum_{(x_{1},\ldots,x_{s-1})\in S_{n-k_2,s-1}^{0}}{\hspace{0.3cm}\sum_{(y_{1},\ldots,y_{s})\in S_{k_2,s}^{0}}}q^{y_{1}x_{1}+(y_{1}+y_{2})x_{2}+\cdots+(y_{1}+\cdots+y_{s-1})x_{s-1}},
\end{split}
\end{equation*}
\noindent
and
\noindent
\begin{equation*}\label{eq: 1.1}
\begin{split}
\overline{L}_{q}(n-k_{2}&,\ k_2,\ s)\\
&=\sum_{(x_{1},\ldots,x_{s})\in S_{n-k_2,s}^{0}}{\hspace{0.3cm}\sum_{(y_{1},\ldots,y_{s})\in S_{k_2,s}^{0}}}q^{y_{1}x_{2}+(y_{1}+y_{2})x_{3}+\cdots+(y_{1}+\cdots+y_{s-1})x_{s}}.
\end{split}
\end{equation*}
\noindent
Therefore we can compute the probability of the event $\Big\{\overline{W}_{L}^{(0)}=n\Big\}$ as follows.
\noindent
\begin{equation*}\label{eq:bn1}
\begin{split}
\overline{P}_{q,S}^{(0)}(n)=\theta^{n-k_{2}}\ {\prod_{j=1}^{k_2}}\ (1-\theta q^{j-1})\ \sum_{s=1}^{k_2}\bigg[\overline{K}_{q}(n-k_{2},\ k_2,\ s)+\overline{L}_{q}(n-k_{2},\ k_2\ s)\bigg].\\
\end{split}
\end{equation*}
\noindent
Thus proof is completed.
\end{proof}
\noindent
It is worth mentioning here that the PMF $\overline{f}_{q,S}(n;\theta)$ approaches the probability function of the sooner waiting time distribution of order $(k_1,k_2)$
in the limit as $q$ tends to 1 when a frequency quota is imposed on runs of successes and failures of IID model. The details are presented in the following remark.
\noindent
\begin{remark}
{\rm
For $q=1$, the PMF $\overline{f}_{q,S}(n;\theta)$ reduces to the PMF $\overline{f}_{S}(n;\theta)=P_{\theta}(\overline{W}_{S}=n)$ for $n=\text{min}(k_1,k_2),\ldots, k_1+k_2-1$ is given by

\begin{equation*}\label{eq: 1.1}
\begin{split}
P(\overline{W}_{S}=n)=\overline{P}_{S}^{(1)}(n)+\overline{P}_{S}^{(0)}(n),
\end{split}
\end{equation*}
where
\begin{equation*}\label{eq:bn1}
\begin{split}
\overline{P}_{S}^{(1)}(n)=\theta^{k_{1}}\ (1-\theta)^{n-k_1}\ \sum_{s=1}^{k_1}M(s,\ k_1)\bigg[M(s-1,\ n-k_1)+M(s,\ n-k_1)\bigg].\\
\end{split}
\end{equation*}
\noindent

and

\begin{equation*}\label{eq:bn1}
\begin{split}
\overline{P}_{S}^{(0)}(n)=\theta^{n-k_{2}}(1-\theta)^{k_2}\sum_{s=1}^{k_2}\bigg[M(s-1,\ n-k_{2})+M(s,\ n-k_{2})\bigg]M(s,\ k_2),
\end{split}
\end{equation*}
\noindent
where
\noindent
\begin{equation*}\label{eq: 1.1}
\begin{split}
M(a,\ b)={b-1 \choose a-1}.
\end{split}
\end{equation*}
See, e.g. \citet{charalambides2002enumerative}.
}
\end{remark}

\begin{theorem}
The PMF $P_{q,\theta}(\overline{W}_{S}=n)$ for $n=\text{min}(k_1,k_2),\ldots, k_1+k_2-1$ is given by
\begin{equation*}\label{}
\begin{split}
P_{q,\theta}(\overline{W}_{S}= n)&=\sum_{x=n-k_2}^{k_1-1}{n-1\brack x }_{q}
\theta^{x}\prod_{i=1}^{n-1-x}(1-\theta q^{i-1})-\sum_{x=n+1-k_2}^{k_1-1}{n\brack x }_{q}
\theta^{x}\prod_{i=1}^{n-x}(1-\theta q^{i-1}).
\end{split}
\end{equation*}
\end{theorem}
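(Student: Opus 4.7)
\smallskip

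\noindent\textbf{Proof proposal.} The plan is to bypass the case-by-case combinatorial dissection used in the preceding theorem and instead realize $P_{q,\theta}(\overline{W}_S = n)$ as the telescoping difference
\[
P_{q,\theta}(\overline{W}_S = n) = P_{q,\theta}(\overline{W}_S \geq n) - P_{q,\theta}(\overline{W}_S \geq n+1),
\]
so that only the survival function of $\overline{W}_S$ needs to be identified. The survival event $\{\overline{W}_S \geq n+1\}$ is exactly the event that after $n$ trials neither quota has yet been met, that is $\{S_n \leq k_1 - 1\} \cap \{F_n \leq k_2 - 1\}$. Using $F_n = n - S_n$, this is the single event
\[
\{n - k_2 + 1 \leq S_n \leq k_1 - 1\} \;=\; \{n+1-k_2 \leq Z_n \leq k_1-1\},
\]
where $Z_n$ is the number of successes in $n$ trials.

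The first key step is to invoke the $q$-binomial distribution of $Z_n$ recalled in the preliminaries (Definition~2.1 and equation~(2.5)): $P_{q,\theta}(Z_n = x) = {n\brack x}_q \theta^x \prod_{i=1}^{n-x}(1-\theta q^{i-1})$. Summing over the admissible range gives
\[
P_{q,\theta}(\overline{W}_S \geq n+1) = \sum_{x=n+1-k_2}^{k_1-1}{n\brack x}_q \theta^x \prod_{i=1}^{n-x}(1-\theta q^{i-1}),
\]
and replacing $n$ by $n-1$ yields the corresponding formula for $P_{q,\theta}(\overline{W}_S \geq n)$ with summation index running from $n-k_2$ to $k_1-1$. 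Subtracting these two expressions produces exactly the closed form in the statement.

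The last step is merely to check the range of validity: for $n < \min(k_1,k_2)$ both survival probabilities equal~$1$ and their difference is~$0$, while for $n \geq k_1 + k_2$ both are~$0$; for the stated range $\min(k_1,k_2) \leq n \leq k_1 + k_2 - 1$ the two index sets $\{n-k_2,\dots,k_1-1\}$ and $\{n+1-k_2,\dots,k_1-1\}$ are nonempty and nested, so no degenerate summands appear. The only mild obstacle I foresee is notational: one must verify that the stopping rule interpretation of $\overline{W}_S$ really makes $\{\overline{W}_S \geq n+1\}$ coincide with $\{S_n < k_1,\, F_n < k_2\}$ without any end-of-trial convention issue, but this is immediate from the definition of $\overline{W}_S$ as the first epoch at which either cumulative count reaches its quota. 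No recursive identities and no case split into $(s,s)$- and $(s,s-1)$-type sequences are needed in this approach; the entire argument reduces to the known $q$-binomial PMF plus one telescoping identity.
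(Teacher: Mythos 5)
Your proposal is correct and is essentially the paper's own argument: the paper likewise writes $P_{q,\theta}(\overline{W}_{S}\geq n)=P_{q,\theta}(F_{n-1}\leq k_2-1,\ S_{n-1}\leq k_1-1)=\sum_{x=n-k_2}^{k_1-1}P_{q,\theta}(S_{n-1}=x)$ via the $q$-binomial law and then telescopes $P_{q,\theta}(\overline{W}_{S}=n)=P_{q,\theta}(\overline{W}_{S}\geq n)-P_{q,\theta}(\overline{W}_{S}\geq n+1)$. Your additional remarks on the range of validity and the stopping-rule convention are fine but not needed beyond what the paper records.
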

\begin{proof}
It follows from the definition of $\overline{W}_{S}$ that
\begin{equation*}\label{}
\begin{split}
P_{q,\theta}(\overline{W}_{S}\geq n)&=P_{q,\theta}(\overline{W}_{S}^{(1)}\geq n,\ \overline{W}_{S}^{(0)}\geq n)\\
&=P_{q,\theta}(F_{n-1}\leq k_2-1,\ S_{n-1}\leq k_1-1)\\
&=P_{q,\theta}(n-k_2\leq S_{n-1}\leq k_1-1)\\
&=\sum_{x=n-k_2}^{k_1-1}P_{q,\theta}(S_{n-1}=x).\\
\end{split}
\end{equation*}
Thus the proof is completed by noting that
\begin{equation*}\label{}
\begin{split}
P_{q,\theta}(\overline{W}_{S}= n)&=P_{q,\theta}(\overline{W}_{S}\geq n)-P_{q,\theta}(W_{S}\geq n+1)\\
&=\sum_{x=n-k_2}^{k_1-1}P_{q,\theta}(S_{n-1}=x)-\sum_{m=n+1-k_2}^{k_1-1}P_{q,\theta}(S_{n}=x)\\
&=\sum_{x=n-k_2}^{k_1-1} {n-1\brack x }_{q}
\theta^{x}\prod_{i=1}^{n-1-x}(1-\theta q^{i-1})-\sum_{x=n+1-k_2}^{k_1-1}{n\brack x }_{q}
\theta^{x}\prod_{i=1}^{n-x}(1-\theta q^{i-1}).
\end{split}
\end{equation*}
\end{proof}
\noindent
It is worth mentioning here that the PMF $\overline{f}_{q,S}(n;\theta)$ approaches the probability function of the sooner waiting time distribution of order $(k_1,k_2)$
in the limit as $q$ tends to 1 when a frequency quota is imposed on runs of successes and failures of IID model. The details are presented in the following remark.
\noindent
\begin{remark}
{\rm
For $q=1$, the PMF $\overline{f}_{q,S}(n;\theta)$ reduces to the PMF $\overline{f}_{S}(n;\theta)=P_{\theta}(\overline{W}_{S}=n)$ for $n=\text{min}(k_1,k_2),\ldots, k_1+k_2-1$ is given by
\begin{equation*}\label{}
\begin{split}
P_{q,\theta}(\overline{W}_{S}= n)&=\sum_{x=n-k_2}^{k_1-1}\binom{n-1}{x}\theta^{x}(1-\theta)^{n-1-x}-\sum_{x=n+1-k_2}^{k_1-1}\binom{n}{x}\theta^{x}(1-\theta)^{n-x}.
\end{split}
\end{equation*}
}

\end{remark}

\subsection{Later waiting time}

The problem of waiting time that will be discussed in this section is one of the 'later cases' and it emerges when a frequency quota on successes and failures are imposed. More specifically, Binary (zero and one) trials with probability of ones varying according to a geometric rule, are performed sequentially until $k_1$ successes in total or $k_2$ failures in total are observed, whichever event gets observed later.
Let random variable $\widehat{W}_{L}$ denote the waiting time until both $k_{1}$ successes in total and $k_{2}$ failures in total have observed, whichever event gets observed later. Let random variable $\widehat{W}_{L}$ denote the waiting time until both $k_{1}$ successes in total and $k_{2}$ failures in total have observed, whichever event gets observed later.
\noindent
The probability function of the $q$-later waiting time distribution impose a frequency quotas on both successes and failures is obtained by the following theorem. It is evident that
\noindent
\begin{equation*}
P_{q,\theta}(\widehat{W}_{L}=n)=0\ \text{for}\ n< k_1+k_2
\end{equation*}
\noindent
and so we shall focus on determining the probability mass function for $n\geq k_{1}+k_{2}$.
\noindent
\begin{theorem}
\label{thm:5.3}
The PMF $P_{q,\theta}(\overline{W}_{L}=n)$ satisfies
\begin{equation*}\label{eq: 1.1}
\begin{split}
P(\overline{W}_{L}=n)=\overline{P}_{q,L}^{(1)}(n)+\overline{P}_{q,L}^{(0)}(n),\ n\geq k_1+k_2,
\end{split}
\end{equation*}
where

\begin{equation*}\label{eq:bn1}
\begin{split}
\overline{P}_{q,L}^{(1)}(n)=\theta^{k_{1}}\ {\prod_{j=1}^{n-k_1}}\ (1-\theta q^{j-1})\ \sum_{s=1}^{k_1}\bigg[\overline{I}_{q}(k_{1},\ n-k_1,\ s)+\overline{J}_{q}(k_{1},\ n-k_1,\ s)\bigg]\\
\end{split}
\end{equation*}
\noindent

and

\begin{equation*}\label{eq:bn1}
\begin{split}
\overline{P}_{q,L}^{(0)}(n)=\theta^{n-k_{2}}\ {\prod_{j=1}^{k_2}}\ (1-\theta q^{j-1})\ \sum_{s=1}^{k_2}\bigg[\overline{K}_{q}(n-k_{2},\ k_2,\ s)+\overline{L}_{q}(n-k_{2},\ k_2\ s)\bigg].\\
\end{split}
\end{equation*}
\end{theorem}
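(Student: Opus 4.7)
The plan is to mirror the argument of Theorem \ref{thm:5.1}, since the combinatorial event underlying $\overline{W}_L^{(1)}=n$ is formally the same as that of $\overline{W}_S^{(1)}=n$—only the permissible range of $n$ differs. More precisely, I first decompose
\[
\{\overline{W}_L=n\}=\{\overline{W}_L^{(1)}=n\}\,\cup\,\{\overline{W}_L^{(0)}=n\},
\]
where $\{\overline{W}_L^{(1)}=n\}=\{S_n=k_1,\;F_n=n-k_1,\;X_n=1\}$ (the $k_1$-th success completes both quotas at trial $n$, which forces $n-k_1\geq k_2$) and $\{\overline{W}_L^{(0)}=n\}=\{S_n=n-k_2,\;F_n=k_2,\;X_n=0\}$ (the $k_2$-th failure completes them, forcing $n-k_2\geq k_1$). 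These two events are disjoint because $X_n$ is fixed in each.

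For $\overline{P}_{q,L}^{(1)}(n)$, I enumerate the sequences realizing the event by the number $s$ of success runs, $1\leq s\leq k_1$, and split into $(s,s-1)$-type (ending-run pattern beginning with a $1$) and $(s,s)$-type (beginning with a $0$). In the first, $(x_1,\ldots,x_s)\in S_{k_1,s}^{0}$ and $(y_1,\ldots,y_{s-1})\in S_{n-k_1,s-1}^{0}$; in the second, $(x_1,\ldots,x_s)\in S_{k_1,s}^{0}$ and $(y_1,\ldots,y_s)\in S_{n-k_1,s}^{0}$. Using the geometric rule $P(X_j=1\mid F_{j-1}=i)=\theta q^{i}$, the probability of a given configuration is a telescoping product of factors of the form $(\theta q^{\,y_1+\cdots+y_{j-1}})^{x_j}$ and $(1-\theta q^{\,c})$; exactly as in the proof of Theorem \ref{thm:5.1}, these collapse via simple exponent algebra to
\[
\theta^{k_1}\,\prod_{j=1}^{n-k_1}(1-\theta q^{j-1})\cdot q^{\,y_1x_2+(y_1+y_2)x_3+\cdots+(y_1+\cdots+y_{s-1})x_s}
\]
for the $(s,s-1)$-type and the analogous form with the extra $y_s$-factor for $(s,s)$-type. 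Summing over the compositions and invoking Lemma \ref{lemma:4.1} and Lemma \ref{lemma:4.3} identifies the inner sums with $\overline{I}_q(k_1,n-k_1,s)$ and $\overline{J}_q(k_1,n-k_1,s)$ respectively, giving the stated formula for $\overline{P}_{q,L}^{(1)}(n)$.

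For $\overline{P}_{q,L}^{(0)}(n)$ I proceed symmetrically: partition the event by the number $s$ of failure runs with $1\leq s\leq k_2$, distinguishing the $(s-1,s)$-type (beginning with a $0$) and $(s,s)$-type (beginning with a $1$), where the integer vectors lie in $S_{n-k_2,\,s-1}^{0}\times S_{k_2,s}^{0}$ and $S_{n-k_2,s}^{0}\times S_{k_2,s}^{0}$ respectively. The same exponent simplification produces the prefactor $\theta^{n-k_2}\prod_{j=1}^{k_2}(1-\theta q^{j-1})$, and applying Lemmas \ref{lemma:4.5} and \ref{lemma:4.7} collapses the inner sums to $\overline{K}_q(n-k_2,k_2,s)$ and $\overline{L}_q(n-k_2,k_2,s)$.

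The only genuine obstacle is bookkeeping: verifying that the telescoped exponent of $q$ in each term is exactly the bilinear form that matches the definitions of $\overline{I}_q,\overline{J}_q,\overline{K}_q,\overline{L}_q$. This is purely mechanical—each $(1-\theta q^{y_1+\cdots+y_{j-1}+\ell})$ contributes a $1-\theta q^{j'-1}$ factor once the indices are relabeled by the total number of failures preceding it, while each $(\theta q^{y_1+\cdots+y_{j-1}})^{x_j}$ contributes a $\theta^{x_j}$ times the $q$-weight $q^{(y_1+\cdots+y_{j-1})x_j}$, and summing these exponents over $j$ reproduces precisely the bilinear form in the definitions of the four combinatorial sums. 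Once this identification is made, the theorem follows immediately from the four lemmas cited, and no further technical step is needed.
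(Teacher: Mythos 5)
Your proposal is correct and follows essentially the same route as the paper: the same decomposition of $\{\overline{W}_L=n\}$ into $\{S_n=k_1,\,F_n=n-k_1,\,X_n=1\}$ and $\{S_n=n-k_2,\,F_n=k_2,\,X_n=0\}$, the same enumeration by number of runs split into $(s,s-1)$/$(s,s)$-types, the same telescoping of the $q$-exponents, and the same four lemmas (Lemmas \ref{lemma:4.1}, \ref{lemma:4.3}, \ref{lemma:4.5}, \ref{lemma:4.7}). Your observation that the argument is formally identical to that of Theorem \ref{thm:5.1} with only the admissible range of $n$ changed is exactly how the paper proceeds.
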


\begin{proof}
We start with the study of $\overline{P}_{q,L}^{(1)}(n)$. From now on we assume $n \geq k_1+k_2,$ and we can write $\overline{P}_{q,L}^{(1)}(n)$ as follows.
\noindent
\begin{equation*}\label{eq:bn}
\begin{split}
\overline{P}_{q,L}^{(1)}(n)&=P_{q,\theta}\left(S_{n}=k_{1}\ \wedge\ F_{n}=n-k_{1}\ \wedge\ X_{n}=1\right).\\
\end{split}
\end{equation*}

\noindent
We are going to focus on the event $\left\{ S_{n}=k_{1}\ \wedge\ F_{n}=n-k_{1}\ \wedge\ X_{n}=1 \right\}$. A typical element of the event $\left\{ S_{n}=k_{1}\ \wedge\ F_{n}=n-k_{1}\ \wedge\ X_{n}=1 \right\}$ is an ordered sequence which consists of $k_{1}$ successes and $n-k_1$ failures. The number of these sequences can be derived as follows. First we will distribute the $k_1$ successes. Let $s$ $(1\leq s \leq k_1)$ be the number of runs of successes in the event $\left\{S_{n}=k_{1}\ \wedge\ F_{n}=n-k_{1}\ \wedge\ X_{n}=1\right\}$. We divide into two cases: starting with a success run or starting with a failure run. Thus, we distinguish between two types of sequences in the event  $$\Big\{S_{n}=k_{1}\ \wedge\ F_{n}=n-k_{1}\ \wedge\ X_{n}=1\Big\},$$ respectively named $(s,\ s-1)$-type and $(s,\ s)$-type, which are defined as follows.
\noindent

\begin{equation*}\label{eq:bn}
\begin{split}
(s,\ s-1)\text{-type}\  :&\quad \overbrace{1\ldots 1}^{x_{1}}\mid\overbrace{0\ldots 0}^{y_{1}}\mid\overbrace{1\ldots 1}^{x_{2}}\mid\overbrace{0\ldots 0}^{y_{2}}\mid \ldots\mid\overbrace{1\ldots 1}^{x_{s-1}} \mid \overbrace{0\ldots 0}^{y_{s-1}}\mid\overbrace{1\ldots 1}^{x_{s}},
\end{split}
\end{equation*}
\noindent
with $n-k_1$ $0$'s and $k_{1}$ $1$'s, where $x_{j}$ $(j=1,\ldots,s)$ represents a length of run of $1$'s and $y_{j}$ $(j=1,\ldots,s-1)$ represents the length of a run of $0$'s. Here all of $x_1,\ldots, x_{s},$ and $y_1,\ldots, y_{s-1}$ are integers, and they satisfy
\noindent
\begin{equation*}\label{eq:bn}
\begin{split}
(x_{1},\ldots,x_{s})\in S_{k_1,s}^{0}, \text{and}
\end{split}
\end{equation*}
\noindent
\noindent
\begin{equation*}\label{eq:bn}
\begin{split}
(y_{1},\ldots,y_{s-1})\in S_{n-k_1,s-1}^{0},
\end{split}
\end{equation*}

\noindent

\begin{equation*}\label{eq:bn}
\begin{split}
(s,\ s)\text{-type}\ :&\quad \overbrace{0\ldots 0}^{y_{1}}\mid\overbrace{1\ldots 1}^{x_{1}}\mid\overbrace{0\ldots 0}^{y_{2}}\mid\overbrace{1\ldots 1}^{x_{2}}\mid \ldots \mid \overbrace{0\ldots 0}^{y_{s-1}}\mid\overbrace{1\ldots 1}^{x_{s-1}}\mid\overbrace{0\ldots 0}^{y_{s}}\mid\overbrace{1\ldots 1}^{x_{s}},\\
\end{split}
\end{equation*}
\noindent
with $n-k_1$ $0$'s and $k_{1}$ $1$'s, where $x_{j}$ $(j=1,\ldots,s)$ represents a length of run of $1$'s and $y_{j}$ $(j=1,\ldots,s)$ represents the length of a run of $0$'s. And all integers $x_{1},\ldots,x_{s}$, and $y_{1},\ldots ,y_{s}$ satisfy the conditions
\noindent
\begin{equation*}\label{eq:bn}
\begin{split}
(x_{1},\ldots,x_{s})\in S_{k_1,s}^{0}, \text{and}
\end{split}
\end{equation*}
\noindent
\begin{equation*}\label{eq:bn}
\begin{split}
(y_{1},\ldots,y_{s})\in S_{n-k_1,s}^{0},
\end{split}
\end{equation*}

\noindent
Then the probability of the event $\Big\{ S_{n}=k_{1}\ \wedge\ F_{n}=n-k_{1}\ \wedge\ X_{n}=1 \Big\}$ is given by
\noindent
\begin{equation*}\label{eq: 1.1}
\begin{split}
P_{q,\theta}\Big(S_{n}=&k_{1}\ \wedge\ F_{n}=n-k_{1}\ \wedge\ X_{n}=1\Big)=\\
\sum_{s=1}^{k_1}\Bigg[\bigg\{&\sum_{(x_{1},\ldots,x_{s}) \in S_{k_{1},s}^{0}}{\hspace{0.3cm}\sum_{(y_{1},\ldots,y_{s-1})\in S_{n-k_1,s-1}^{0}}}\big(\theta q^{0}\big)^{x_{1}}\big(1-\theta q^{0}\big)\cdots (1-\theta q^{y_{1}-1})\times\\
&\Big(\theta q^{y_{1}}\Big)^{x_{2}}\big(1-\theta q^{y_{1}}\big)\cdots \big(1-\theta q^{y_{1}+y_{2}-1}\big)\times\\
%\times&\Big(\theta q^{y_{1}+y_{2}}\Big)^{x_{3}}\big(1-\theta q^{y_{1}}\big)\cdots \big(1-\theta q^{y_{1}+y_{2}+y_3-1}\big) \\
&\quad \quad \quad \quad\quad \quad\quad \quad \quad \quad \quad \vdots\\
&\Big(\theta q^{y_{1}+\cdots+y_{s-2}}\Big)^{x_{s-1}}
\big(1-\theta q^{y_{1}+\cdots+y_{s-2}}\big)\cdots \big(1-\theta q^{y_{1}+\cdots+y_{s-1}-1}\big)\times\\
&\Big(\theta q^{y_{1}+\cdots+y_{s-1}}\Big)^{x_{s}}\bigg\}+\\
\end{split}
\end{equation*}

\noindent
\begin{equation*}\label{eq: 1.1}
\begin{split}
\quad \bigg\{&\sum_{\substack{x_{1},\ldots,x_{s} \in S_{k_{1},s}^{0}}}{\hspace{0.3cm}\sum_{(y_{1},\ldots,y_{s})\in S_{n-k_1,s}^{0}}} \big(1-\theta q^{0}\big)\cdots \big(1-\theta q^{y_{1}-1}\big)\Big(\theta q^{y_{1}}\Big)^{x_{1}}\times\\
&\big(1-\theta q^{y_{1}}\big)\cdots \big(1-\theta q^{y_{1}+y_{2}-1}\big)\Big(\theta q^{y_{1}+y_{2}}\Big)^{x_{2}}\times\\
&\quad \quad \quad \quad\quad \quad\quad \quad \quad \quad \quad \vdots\\
&\big(1-\theta q^{y_{1}+\cdots+y_{s-1}}\big)\cdots \big(1-\theta q^{y_{1}+\cdots+y_{s}-1}\big)\Big(\theta q^{y_{1}+\cdots+y_{s}}\Big)^{x_{s}}\bigg\}\Bigg].\\
\end{split}
\end{equation*}
\noindent
Using simple exponentiation algebra arguments to simplify,
\noindent
\begin{equation*}\label{eq: 1.1}
\begin{split}
&P_{q,\theta}\Big(S_{n}=k_{1}\ \wedge\ F_{n}=n-k_{1}\ \wedge\ X_{n}=1\Big)=\\
&\theta^{k_{1}}{\prod_{j=1}^{n-k_1}}\ (1-\theta q^{j-1})\times\\
&\sum_{s=1}^{k_1}\Bigg[\sum_{\substack{(x_{1},\ldots,x_{s}) \in S_{k_{1},s}^{0}}}{\hspace{0.3cm}\sum_{(y_{1},\ldots,y_{s-1})\in S_{n-k_1,s-1}^{0}}}q^{y_{1}x_{2}+(y_{1}+y_{2})x_{3}+\cdots+(y_{1}+\cdots+y_{s-1})x_{s}}+\\
&\quad \quad \sum_{\substack{(x_{1},\ldots,x_{s}) \in S_{k_{1},s}^{0}}}{\hspace{0.3cm}\sum_{(y_{1},\ \ldots,\ y_{s})\in S_{n-k_1,s}^{0}}}q^{y_{1}x_{1}+(y_{1}+y_{2})x_{2}+\cdots+(y_{1}+\cdots+y_{s})x_{s}}\Bigg].\\
\end{split}
\end{equation*}
\noindent
Using Lemma \ref{lemma:4.1} and Lemma \ref{lemma:4.3}, we can rewrite as follows.
\noindent
\begin{equation*}\label{eq: 1.1}
\begin{split}
P_{q,\theta}\Big(&S_{n}=k_{1}\ \wedge\ F_{n}=n-k_{1}\ \wedge\ X_{n}=1\Big)\\
&=\theta^{k_{1}}\ {\prod_{j=1}^{n-k_1}}\ (1-\theta q^{j-1})\ \sum_{s=1}^{k_1}\bigg[\overline{I}_{q}(k_{1},\ n-k_1,\ s)+\overline{J}_{q}(k_{1},\ n-k_1,\ s)\bigg],\\
\end{split}
\end{equation*}\\
\noindent
where
\noindent
\begin{equation*}\label{eq: 1.1}
\begin{split}
\overline{I}_{q}(k_{1}&,\ n-k_1,\ s)\\
&=\sum_{\substack{ (x_{1},\ldots,x_{s}) \in S_{k_{1},s}^{0}}}{\hspace{0.3cm}\sum_{(y_{1},\ldots,y_{s-1})\in S_{n-k_1,s-1}^{0}}}q^{y_{1}x_{2}+(y_{1}+y_{2})x_{3}+\cdots+(y_{1}+\cdots+y_{s-1})x_{s}},
\end{split}
\end{equation*}
and
\begin{equation*}\label{eq: 1.1}
\begin{split}
\overline{J}_{q}(k_{1}&,\ n-k_1,\ s)\\
&=\sum_{\substack{(x_{1},\ldots,x_{s}) \in S_{k_{1},s}^{0}}}{\hspace{0.3cm}\sum_{(y_{1},\ldots,y_{s})\in S_{n-k_1,s}^{0}}}q^{y_{1}x_{1}+(y_{1}+y_{2})x_{2}+\cdots+(y_{1}+\cdots+y_{s})x_{s}}.
\end{split}
\end{equation*}
\noindent
Therefore we can compute the probability of the event $\overline{W}_{L}^{(1)}=n$ as follows.
\noindent
\begin{equation*}\label{eq:bn1}
\begin{split}
\overline{P}_{q,L}^{(1)}(n)=\theta^{k_{1}}\ {\prod_{j=1}^{n-k_1}}\ (1-\theta q^{j-1})\ \sum_{s=1}^{k_1}\bigg[\overline{I}_{q}(k_{1},\ n-k_1,\ s)+\overline{J}_{q}(k_{1},\ n-k_1,\ s)\bigg].\\
\end{split}
\end{equation*}
\noindent

%%%%증명

We start with the study of $\overline{P}_{q,L}^{(0)}(n)$. From now on we assume $n \geq k_1+k_2,$ and we can write $\overline{P}_{q,L}^{(0)}(n)$ as follows.
\noindent
\begin{equation*}\label{eq:bn}
\begin{split}
\overline{P}_{q,L}^{(0)}(n)&=P_{q,\theta}\Big(S_{n}=n-k_2\ \wedge\ F_{n}=k_2\ \wedge\ X_{n}=0\Big).\\
\end{split}
\end{equation*}

\noindent
We are going to focus on the event $\left\{S_{n}=n-k_2\ \wedge\ F_{n}=k_2\ \wedge\ X_{n}=0\right\}$. A typical element of the event $\left\{S_{n}=n-k_2\ \wedge\ F_{n}=k_2\ \wedge\ X_{n}=0\right\}$ is an ordered sequence which consists of $n-k_{2}$ successes and $k_2$ failures. The number of these sequences can be derived as follows. First we will distribute the $k_2$ failures. Let $s$ $(1\leq s \leq k_2)$ be the number of runs of failures in the event $\Big\{S_{n}=n-k_2\ \wedge\ F_{n}=k_2\ \wedge \ X_{n}=0\Big\}$. We divide into two cases: starting with a success run or starting with a failure run. Thus, we distinguish between two types of sequences in the event  $$\Big\{S_{n}=n-k_2\ \wedge\ F_{n}=k_2\ \wedge\ X_{n}=0\Big\},$$ respectively named $(s-1,\ s)$-type and $(s,\ s)$-type, which are defined as follows.

\noindent
\begin{equation*}\label{eq:bn}
\begin{split}
(s-1,\ s)\text{-type}\ :&\quad \overbrace{0\ldots 0}^{y_{1}}\mid\overbrace{1\ldots 1}^{x_{1}}\mid\overbrace{0\ldots 0}^{y_{2}}\mid\overbrace{1\ldots 1}^{x_{2}}\mid \ldots \mid \overbrace{0\ldots 0}^{y_{s-1}}\mid\overbrace{1\ldots 1}^{x_{s-1}}\mid\overbrace{0\ldots 0}^{y_{s}},\\
\end{split}
\end{equation*}
\noindent
with $k_2$ $0$'s and $n-k_{2}$ $1$'s, where $x_{j}$ $(j=1,\ldots,s-1)$ represents a length of run of $1$'s and $y_{j}$ $(j=1,\ldots,s)$ represents the length of a run of $0$'s. And all integers $x_{1},\ldots,x_{s-1}$, and $y_{1},\ldots,y_{s}$ satisfy the conditions
\noindent
\begin{equation*}\label{eq:bn}
\begin{split}
(x_{1},\ldots,x_{s-1})\in S_{n-k_2,s-1}^{0} \mbox{\ and\ }
\end{split}
\end{equation*}
\noindent
\begin{equation*}\label{eq:bn}
\begin{split}
(y_{1},\ldots,y_{s})\in S_{k_2,s}^{0},
\end{split}
\end{equation*}
\noindent
\begin{equation*}\label{eq:bn}
\begin{split}
(s,\ s)\text{-type}\  :&\quad \overbrace{1\ldots 1}^{x_{1}}\mid\overbrace{0\ldots 0}^{y_{1}}\mid\overbrace{1\ldots 1}^{x_{2}}\mid\overbrace{0\ldots 0}^{y_{2}}\mid\overbrace{1\ldots 1}^{x_{3}}\mid \ldots \mid \overbrace{0\ldots 0}^{y_{s-1}}\mid\overbrace{1\ldots 1}^{x_{s}}\mid\overbrace{0\ldots 0}^{y_{s}},
\end{split}
\end{equation*}
\noindent
with $k_2$ $0$'s and $n-k_{2}$ $1$'s, where $x_{j}$ $(j=1,\ldots,s)$ represents a length of run of $1$'s and $y_{j}$ $(j=1,\ldots,s)$ represents the length of a run of $0$'s. Here all of $x_1,\ldots, x_{s-1},$ and $y_1,\ldots, y_s$ are integers, and they satisfy
\noindent
\begin{equation*}\label{eq:bn}
\begin{split}
(x_{1},\ldots, x_{s})\in S_{n-k_2,s}^{0} \mbox{\ and\ }
\end{split}
\end{equation*}
\noindent
\begin{equation*}\label{eq:bn}
\begin{split}
(y_{1}, \ldots, y_{s})\in S_{k_2,s}^{0},
\end{split}
\end{equation*}

\noindent
Then the probability of the event $\Big\{S_{n}=n-k_2\ \wedge\ F_{n}=k_2\ \wedge\ X_{n}=0\Big\}$ is given by
\noindent
\begin{equation*}\label{eq: 1.1}
\begin{split}
P_{q,\theta}\Big(S_{n}=&n-k_2\ \wedge\ F_{n}=k_2\ \wedge\ X_{n}=0\Big)=\\
\sum_{s=1}^{k_1}\Bigg[\bigg\{&\sum_{(x_{1},\ldots,x_{s-1})\in S_{n-k_2,s-1}^{0}}{\hspace{0.3cm}\sum_{(y_{1},\ldots,y_{s})\in S_{k_2,s}^{0}}} \big(1-\theta q^{0}\big)\cdots \big(1-\theta q^{y_{1}-1}\big)\times\\
&\Big(\theta q^{y_{1}}\Big)^{x_{1}}\big(1-\theta q^{y_{1}}\big)\cdots \big(1-\theta q^{y_{1}+y_{2}-1}\big)\times\\
&\Big(\theta q^{y_{1}+y_{2}}\Big)^{x_{2}}\big(1-\theta q^{y_{1}+y_2}\big)\cdots \big(1-\theta q^{y_{1}+y_{2}+y_3-1}\big)\times \\
&\quad \quad \quad \quad\quad \quad\quad \quad \quad \quad \quad \vdots\\
&\Big(\theta q^{y_{1}+\cdots+y_{s-1}}\Big)^{x_{s-1}}
\big(1-\theta q^{y_{1}+\cdots+y_{s-1}}\big)\cdots \big(1-\theta q^{y_{1}+\cdots+y_{s}-1}\big)\bigg\}+\\
\end{split}
\end{equation*}

\noindent
\begin{equation*}\label{eq: 1.1}
\begin{split}
\quad \quad \quad \quad \bigg\{&\sum_{(x_{1},\ldots,x_{s})\in S_{n-k_2,s}^{0}}{\hspace{0.3cm}\sum_{(y_{1},\ldots,y_{s})\in S_{k_2,s}^{0}}}\big(\theta q^{0}\big)^{x_{1}}\big(1-\theta q^{0}\big)\cdots (1-\theta q^{y_{1}-1})\times\\
&\Big(\theta q^{y_{1}}\Big)^{x_{2}}\big(1-\theta q^{y_{1}}\big)\cdots \big(1-\theta q^{y_{1}+y_{2}-1}\big)\times\\
&\Big(\theta q^{y_{1}+y_{2}}\Big)^{x_{3}}\big(1-\theta q^{y_{1}+y_2}\big)\cdots \big(1-\theta q^{y_{1}+y_{2}+y_3-1}\big)\times \\
&\quad \quad \quad \quad\quad \quad\quad \quad \quad \quad \quad \vdots\\
&\Big(\theta q^{y_{1}+\cdots+y_{s-1}}\Big)^{x_{s}}
\big(1-\theta q^{y_{1}+\cdots+y_{s-1}}\big)\cdots \big(1-\theta q^{y_{1}+\cdots+y_{s}-1}\big)\bigg\}\Bigg].\\
\end{split}
\end{equation*}
\noindent
Using simple exponentiation algebra arguments to simplify,
\noindent
\begin{equation*}\label{eq: 1.1}
\begin{split}
&P_{q,\theta}\Big(S_{n}=n-k_2\ \wedge\ F_{n}=k_2\ \wedge\ X_{n}=0\Big)=\\
&\theta^{n-k_{2}}{\prod_{j=1}^{k_2}}\ (1-\theta q^{j-1})\times\\
&\sum_{s=1}^{k_2}\Bigg[\sum_{(x_{1},\ldots,x_{s-1})\in S_{n-k_2,s-1}^{0}}{\hspace{0.3cm}\sum_{(y_{1},\ldots,y_{s})\in S_{k_2,s}^{0}}}q^{y_{1}x_{1}+(y_{1}+y_{2})x_{2}+\cdots+(y_{1}+\cdots+y_{s-1})x_{s-1}}+\\
&\quad \quad\sum_{(x_{1},\ldots,x_{s})\in S_{n-k_2,s}^{0}}{\hspace{0.3cm}\sum_{(y_{1},\ldots,y_{s})\in S_{k_2,s}^{0}}}q^{y_{1}x_{2}+(y_{1}+y_{2})x_{3}+\cdots+(y_{1}+\cdots+y_{s-1})x_{s}}\Bigg].\\
\end{split}
\end{equation*}
\noindent
Using Lemma \ref{lemma:4.5} and Lemma \ref{lemma:4.7}, we can rewrite as follows.
\noindent
\begin{equation*}\label{eq: 1.1}
\begin{split}
P_{q,\theta}\Big(&S_{n}=n-k_2\ \wedge\ F_{n}=k_2\ \wedge\ X_{n}=0\Big)\\
&=\theta^{n-k_{2}}\ {\prod_{j=1}^{k_2}}\ (1-\theta q^{j-1})\ \sum_{s=1}^{k_2}\bigg[\overline{K}_{q}(n-k_{2},\ k_2,\ s)+\overline{L}_{q}(n-k_{2},\ k_2\ s)\bigg],\\
\end{split}
\end{equation*}\\
\noindent

where
\noindent
\begin{equation*}\label{eq: 1.1}
\begin{split}
\overline{K}_{q}(n-k_{2}&,\ k_2,\ s)\\
&=\sum_{(x_{1},\ldots,x_{s-1})\in S_{n-k_2,s-1}^{0}}{\hspace{0.3cm}\sum_{(y_{1},\ldots,y_{s})\in S_{k_2,s}^{0}}}q^{y_{1}x_{1}+(y_{1}+y_{2})x_{2}+\cdots+(y_{1}+\cdots+y_{s-1})x_{s-1}},
\end{split}
\end{equation*}
\noindent
and
\noindent
\begin{equation*}\label{eq: 1.1}
\begin{split}
\overline{L}_{q}(n-k_{2}&,\ k_2,\ s)\\
&=\sum_{(x_{1},\ldots,x_{s})\in S_{n-k_2,s}^{0}}{\hspace{0.3cm}\sum_{(y_{1},\ldots,y_{s})\in S_{k_2,s}^{0}}}q^{y_{1}x_{2}+(y_{1}+y_{2})x_{3}+\cdots+(y_{1}+\cdots+y_{s-1})x_{s}}.
\end{split}
\end{equation*}
\noindent
Therefore we can compute the probability of the event $\overline{W}_{L}^{(0)}=n$ as follows.
\noindent
\begin{equation*}\label{eq:bn1}
\begin{split}
\overline{P}_{q,L}^{(0)}(n)=\theta^{n-k_{2}}\ {\prod_{j=1}^{k_2}}\ (1-\theta q^{j-1})\ \sum_{s=1}^{k_2}\bigg[\overline{K}_{q}(n-k_{2},\ k_2,\ s)+\overline{L}_{q}(n-k_{2},\ k_2\ s)\bigg].\\
\end{split}
\end{equation*}
\noindent
Thus proof is completed.
\end{proof}
\noindent
It is worth mentioning here that the PMF $f_{q,L}(n;\theta)$ approaches the probability function of the later waiting time distribution of order $(k_1,k_2)$
in the limit as $q$ tends to 1 when a frequency quota is imposed on runs of successes and failures of IID model. The details are presented in the following remark.
\noindent
\begin{remark}
{\rm
For $q=1$, the PMF $\overline{f}_{q,L}(n;\theta)$ reduces to the PMF $\overline{f}_{L}(n;\theta)=P_{\theta}(\overline{W}_{L}=n)$ for $n\geq k_1+k_2$ is given by

\begin{equation*}\label{eq: 1.1}
\begin{split}
P(\overline{W}_{L}=n)=\overline{P}_{L}^{(1)}(n)+\overline{P}_{L}^{(0)}(n),\ n\geq k_1+k_2,
\end{split}
\end{equation*}
where
\begin{equation*}\label{eq:bn1}
\begin{split}
\overline{P}_{L}^{(1)}(n)=\theta^{k_{1}}\ (1-\theta)^{n-k_1}\ \sum_{s=1}^{k_1}M(s,\ k_1)\bigg[M(s-1,\ n-k_1)+M(s,\ n-k_1)\bigg].\\
\end{split}
\end{equation*}
\noindent
and
\begin{equation*}\label{eq:bn1}
\begin{split}
\overline{P}_{L}^{(0)}(n)=\theta^{n-k_{2}}(1-\theta)^{k_2}\sum_{s=1}^{k_2}\bigg[M(s-1,\ n-k_{2})+M(s,\ n-k_{2})\bigg]M(s,\ k_2),
\end{split}
\end{equation*}
where
\begin{equation*}
M(a,\ b)=\binom{b-1}{a-1}.
\end{equation*}
See, e.g. \citet{charalambides2002enumerative}.
}
\end{remark}

\section{Longest run for $q$-binary trials}
In this section, we shall study of the distribution of the lengths of the longest success run. More specifically binary (zero and one) trials with probability of ones varing according to a geometric rule. First, we obtain the exact PMF of the distribution of $\left(L_{n}^{(1)}= k\right)$. Next, we obtain the exact PMF of the distribution of $\left(L_{n}^{(1)}\leq k\right)$. The result are derived by means of enumerative combinatorics.

\subsection{Closed expression for the PMF of $\left(L_{n}^{(1)}= k\right)$}

\noindent
We now make some useful Definition and Lemma for the proofs of Theorem in the sequel.
\noindent
\begin{definition}
For $0<q\leq1$, we define

\begin{equation*}\label{eq: 1.1}
\begin{split}
U_{q}^{k}(r,s,t)={\sum_{x_{1},\ldots,x_{r}}} q^{x_{2}+2x_{3}+\cdots+(r-1)x_{r}},\
\end{split}
\end{equation*}
\noindent
where the summation is over all integers $x_1,\ldots,x_{r}$ satisfying
\noindent

\begin{equation*}\label{eq:1}
\begin{split}
0\leq x_{j}\leq k\ \text{for}\ j=1,\ldots,r,
\end{split}
\end{equation*}
\noindent
\begin{equation*}\label{eq:1}
\begin{split}
x_{1}+\cdots+x_{r}=s,\ \text{and}
\end{split}
\end{equation*}
\noindent
\begin{equation*}\label{eq:2}
\begin{split}
\delta_{y_{1},k}+\cdots+\delta_{y_{r},k}=t.
\end{split}
\end{equation*}

\end{definition}

\noindent
The following gives a recurrence relation useful for the computation of $U_{q}^{k}(r,s,t)$.\\
\noindent

\begin{lemma}
\label{lemma3.1-n}
For $0<q\leq1$, $U_{q}^{k}(r,s,t)$ obeys the following recurrence relation.
\noindent
\begin{equation*}\label{eq: 1.1}
\begin{split}
U_{q}&^{k}(r,s,t)\\
&=\left\{
  \begin{array}{ll}
    \sum_{a=0}^{k-1}q^{a(r-1)}U_{q}^{k}(r-1,s-a,t)+\\
    \quad \quad\ q^{k(r-1)}U_{q}^{k}(r-1,s-k,t-1) & \text{for}\ r>1,\ tk\leq s\leq rk\ \text{and}\ t\leq r<s, \\
    1 & \text{for}\ r=1,\ s=k,\ \text{and}\ t=1,\\
    & \text{or}\ r=1,\ 0\leq s<k,\ \text{and}\ t=0, \text{or}\\
    0 & \text{otherwise.}\\
  \end{array}
\right.
\end{split}
\end{equation*}
\end{lemma}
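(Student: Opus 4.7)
The plan is to mimic the argument used for Lemmas \ref{lemma:3.1}--\ref{lemma:3.4}, namely to condition on the value of the last variable $x_r$ and separate the contribution according to whether or not that variable attains the upper bound $k$ (and therefore contributes to the counter $t$). Since the exponent of $q$ in the summand is $x_2+2x_3+\cdots+(r-1)x_r$, the variable $x_r$ appears only through the factor $q^{(r-1)x_r}$, which can be pulled outside the remaining inner summation.

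First I would fix $r>1$ and assume the nontrivial range $tk\leq s\leq rk$, $t\leq r<s$, so that at least one admissible tuple exists. I would split the outer sum according to the value $x_r=a$, distinguishing the two cases $0\leq a\leq k-1$ and $a=k$. In the first case, $x_r$ is not one of the coordinates attaining the maximum, so the remaining variables $x_1,\ldots,x_{r-1}$ must satisfy $0\leq x_j\leq k$, sum to $s-a$, and contain exactly $t$ entries equal to $k$. In the second case, $x_r=k$ already accounts for one of the required maxima, so the remaining variables must sum to $s-k$ and contain exactly $t-1$ entries equal to $k$. By definition, these inner sums are precisely $U_q^{k}(r-1,s-a,t)$ and $U_q^{k}(r-1,s-k,t-1)$, each multiplied by the extracted factor $q^{a(r-1)}$ or $q^{k(r-1)}$ respectively. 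Collecting the two cases gives the claimed recurrence
\begin{equation*}
U_q^{k}(r,s,t)=\sum_{a=0}^{k-1}q^{a(r-1)}U_q^{k}(r-1,s-a,t)+q^{k(r-1)}U_q^{k}(r-1,s-k,t-1).
\end{equation*}

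For the base $r=1$, the sum has a single term $x_1$, whose exponent of $q$ is the empty sum (equal to $0$). The constraint $x_1=s$ forces a unique tuple, and the indicator count $\delta_{x_1,k}=t$ determines whether this tuple is admissible: we get the value $1$ precisely when either $s=k$ and $t=1$, or $0\leq s<k$ and $t=0$, and $0$ otherwise. Finally, for parameter values outside the stated ranges (e.g.\ $s>rk$, $s<tk$, $t>r$, or $t<0$), the admissible set is empty and the sum vanishes.

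The only delicate point is bookkeeping on the ranges: I would check that for $r>1$ the summands $U_q^{k}(r-1,s-a,t)$ and $U_q^{k}(r-1,s-k,t-1)$ appearing on the right-hand side are automatically supported in their own admissible ranges (so that no extra truncation is needed), and that the two cases $a\leq k-1$ and $a=k$ exhaust the possibilities without overlap. This is the main (mild) obstacle; once it is verified, the recurrence is an immediate consequence of partitioning by the value of $x_r$.
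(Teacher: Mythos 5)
Your proposal is correct and follows essentially the same route as the paper: both condition on the value of the last variable $x_r$, pull out the factor $q^{(r-1)x_r}$, and split according to whether $x_r\leq k-1$ (leaving $t$ coordinates equal to $k$ among the first $r-1$ variables) or $x_r=k$ (leaving $t-1$), which yields the two terms of the recurrence, with the base case $r=1$ and the vacuous cases handled identically. No gaps.
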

\begin{proof}
For $r > 1$, $tk\leq s\leq rk$ and $t\leq r<s$, we observe that since $x_{r}$ can assume the values $1,\ldots,k$, then $U_{q}^{k}(r,s,t)$ can be written as follows.
\noindent
\begin{equation*}%\label{eq: 1.1}
\begin{split}
U_{q}^{k}(r,s,t)=&\sum_{x_{r}=0}^{k-1}\ q^{x_{r}(r-1)} {\hspace{0.5cm}\sum_{\substack{x_{1}+\cdots+x_{s-1}=m-x_{s}\\ x_{1},\ldots,x_{r-1} \in \{0,1,\ldots,k\}\\\delta_{y_{1},k}+\cdots+\delta_{y_{r},k}=t}}}\quad
 q^{x_{2}+2x_{3}+\cdots+(r-2)x_{r-1}}\\
 &+q^{k(r-1)}{\hspace{0.5cm}\sum_{\substack{x_{1}+\cdots+x_{s-1}=m-x_{s}\\ x_{1},\ldots,x_{r-1} \in \{0,1,\ldots,k\}\\\delta_{y_{1},k}+\cdots+\delta_{y_{r},k}=t-1}}}\quad
\ q^{x_{2}+2x_{3}+\cdots+(r-2)x_{r-1}}\\
\end{split}
\end{equation*}
\noindent
Using simple algebraic arguments to simplify, then $U_{q}^{k}(r,s,t)$ can be rewritten as follows.
\noindent
\begin{equation*}%\label{eq: 1.1}
\begin{split}
U_{q}^{k}(r,s,t)=&\sum_{a=0}^{k-1}q^{a(r-1)}U_{q}^{k}(r-1,s-a,t)+ q^{k(r-1)}U_{q}^{k}(r-1,s-k,t-1).
\end{split}
\end{equation*}
\noindent
The other cases are obvious and thus the proof is completed.
\end{proof}

\begin{remark}
{\rm
We observe that $U_1^{k}(r,s,t)$ is the number of integer solutions $(x_{1},\ \ldots,\ x_{r})$ of
\noindent

\begin{equation*}\label{eq:1}
\begin{split}
0\leq x_{j}\leq k\ \text{for}\ j=1,\ldots,r,
\end{split}
\end{equation*}
\noindent
\begin{equation*}\label{eq:1}
\begin{split}
x_{1}+\cdots+x_{r}=s,\ \text{and}
\end{split}
\end{equation*}
\noindent
\begin{equation*}\label{eq:2}
\begin{split}
\delta_{y_{1},k}+\cdots+\delta_{y_{r},k}=t.
\end{split}
\end{equation*}
\noindent
which is
\noindent
\begin{equation*}\label{eq: 1.1}
\begin{split}
U_1^{k}(r,s,t)={r \choose t}C(s-tk,\ r-t,\ k-1),
\end{split}
\end{equation*}
\noindent
denotes the number of ways of distributing $s$ identical balls into $r$ different cells, so that each of $t$ of them receives $k$ balls is occupied,
remaining $s-tk$ balls placed in the remaining $r-t$ cells with no cell receiving more than $k-1$ balls.\\
where $C(a,\ b,\ c)$ denotes the total number of integer solutions $x_{1}+x_{2}+\cdots+x_{b}=a$ such that $0\leq x_{i}\leq c$ for $i=1,2,\ldots,b$. Alternatively, it is the number of ways of distributing $a$ identical balls into $b$ different cells , so that each cell is occupied by at most $c$ balls. The number can be expressed as
\noindent
\begin{equation*}\label{eq: 1.1}
\begin{split}
C(a,\ b,\ c)=\sum_{j=0}^{b}(-1)^{j}{b \choose j}{a-(c+1)j+b-1 \choose b-1}.
\end{split}
\end{equation*}
\noindent
if $a>0$: $1$ if $a=0$: and $0$ otherwise (See, e.g. \citet{charalambides2002enumerative}.
}
\end{remark}
\noindent
We shall study of the joint distribution of $\left(L_{n}^{(1)}= k\right)$. The probability function of the  $P_{q,\theta}\Big(L_{n}^{(1)}= k\Big)$ is obtained by the following theorem. It is evident that
\noindent
\begin{equation*}
P_{q,\theta}\left(L_{n}^{(1)}=0\right)={\prod_{j=1}^{n}}\left(1-\theta q^{j-1}\right)\ \text{for}\ k=0
\end{equation*}
\noindent
and so we shall focus on determining the probability mass function for $1\leq k \leq n$.
\noindent
\begin{theorem}
The PMF $P_{q,\theta}\Big(L_{n}^{(1)}= k\Big)$ for $k=1,\ldots,n,$
\begin{equation*}\label{}
\begin{split}
P_{q,\theta}\Big(L_{n}^{(1)}= k\Big)=&\sum_{y=[n/(k+1)]}^{n-k}\theta^{n-y}\ {\prod_{j=1}^{y}}\ \left(1-\theta q^{j-1}\right)\ \sum_{i=1}^{y+1} U_{q}^{k}(y+1,\ n-y,\ i).
\end{split}
\end{equation*}
\end{theorem}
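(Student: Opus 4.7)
The plan is to partition the event $\{L_n^{(1)}=k\}$ according to the total number of failures $y=F_n$ in the $n$ trials. Two necessary conditions constrain $y$: the event requires at least $k$ consecutive successes, hence $y\le n-k$; and since the $y$ failures split the $n-y$ successes into $y+1$ possibly-empty runs each of length at most $k$, we must have $n-y\le(y+1)k$, which rearranges to $y\ge[n/(k+1)]$. Thus only $y\in\{[n/(k+1)],\ldots,n-k\}$ can contribute.

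For a fixed such $y$, a typical element of the event is a sequence of the form
\[
\underbrace{1\cdots 1}_{x_1}\mid 0\mid\underbrace{1\cdots 1}_{x_2}\mid 0\mid\cdots\mid 0\mid\underbrace{1\cdots 1}_{x_{y+1}},
\]
where $x_i$ denotes the length (possibly $0$) of the $i$-th block of successes. These integers satisfy $0\le x_i\le k$, $x_1+\cdots+x_{y+1}=n-y$, and $\max_i x_i=k$. Using the stochastic model \eqref{failureprobofq2}, the $j$-th failure is preceded by exactly $j-1$ failures and contributes a factor $1-\theta q^{j-1}$, while each of the $x_i$ successes in the $i$-th block is preceded by exactly $i-1$ failures and contributes a factor $\theta q^{i-1}$. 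Consequently the probability of any specific such arrangement is
\[
\prod_{j=1}^{y}(1-\theta q^{j-1})\prod_{i=1}^{y+1}(\theta q^{i-1})^{x_i}=\theta^{n-y}\prod_{j=1}^{y}(1-\theta q^{j-1})\,q^{x_2+2x_3+\cdots+yx_{y+1}}.
\]

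Summing this expression over all admissible $(x_1,\ldots,x_{y+1})$ and classifying by the number $t\in\{1,\ldots,y+1\}$ of indices at which $x_i=k$ (at least one such index is required for $\max x_i=k$), the inner sum becomes exactly $\sum_{i=1}^{y+1}U_q^{k}(y+1,n-y,i)$ by the definition of $U_q^{k}$. Aggregating over $y$ then produces the claimed formula. The only nontrivial bookkeeping will be verifying that the $q$-exponent collected from $\prod_i(q^{i-1})^{x_i}$ indeed matches the pattern $x_2+2x_3+\cdots+yx_{y+1}$ appearing inside $U_q^{k}$, and ensuring the boundary/degenerate cases (empty blocks $x_i=0$, or extreme values $y=0$, $k=n$) are consistent with the recursion in Lemma~\ref{lemma3.1-n}; both are routine once the block representation above is fixed.
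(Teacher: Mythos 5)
Your proposal is correct and follows essentially the same route as the paper's proof: partitioning on $F_{n}=y$, representing the sequence as $y+1$ possibly empty success blocks of lengths $0\leq x_{i}\leq k$ separated by single failures, computing the probability of each arrangement as $\theta^{n-y}\prod_{j=1}^{y}(1-\theta q^{j-1})\,q^{x_{2}+2x_{3}+\cdots+yx_{y+1}}$, and classifying by the number of blocks of length exactly $k$ to recognize $\sum_{i=1}^{y+1}U_{q}^{k}(y+1,\ n-y,\ i)$. The only difference is cosmetic: you derive the range of $y$ from the two inequalities explicitly, whereas the paper simply asserts it.
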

\begin{proof}
We partition the event $\left\{L_{n}^{(1)}= k\right\}$ into disjoint events given by $F_{n}=y,$ for $y=[n/(k+1)], \ldots,n-k$. Adding the probabilities we have
\noindent
\begin{equation*}\label{eq:bn}
\begin{split}
P_{q,\theta}\Big(L_{n}^{(1)}= k\Big)=\sum_{y=[n/(k+1)]}^{n-k}P_{q,\theta}\Big(L_{n}^{(1)}= k\ \wedge\ F_{n}=y\Big).\\
\end{split}
\end{equation*}
\noindent
We are going to focus on the event $\left\{L_{n}^{(1)}= k\ \wedge\ F_{n}=y\right\}$. A typical element of the event $\Big\{L_{n}^{(1)}= k\ \wedge\ F_{n}=y\Big\}$ is an ordered sequence which consists of $n-y$ successes and $y$ failures such that the length of the longest success run is equal to $k$. The number of these sequences can be derived as follows. First we will distribute the $y$ failures which form $y+1$ cells. Next, we will distribute the $n-y$ successes in the $y+1$ distinguishable cells and they satisfy the conditions, $i$ ($i=1,\ldots,\text{min}\{y+1,[(n-y)/k]\})$ specified cells among the $y+1$ cells occupied exactly $k$ successes balls. And the remaining $n-y-ik$ successes can be distributed in the remaining $y+1-i$ cells with no cell receiving more than $k-1$ successes. Thus, the event $\left\{L_{n}^{(1)}= k\ \wedge\ F_{n}=y\right\}$ is defined as follows.
\noindent
\begin{equation*}\label{eq:bn}
\begin{split}
\underbrace{1\ldots 1}_{0\leq x_{1}\leq k}0\underbrace{1\ldots 1}_{0\leq x_{2}\leq k}0\underbrace{1\ldots 1}_{0\leq x_{3}\leq k}0 \ldots 0\underbrace{1\ldots 1}_{0\leq x_{y}\leq k}0\underbrace{1\ldots 1}_{0\leq x_{y+1}\leq k},\\
\end{split}
\end{equation*}
\noindent
with $y$ $0$'s and $n-y$ $1$'s, where $x_{j}$ $(j=1,\ldots,y+1$ represents a length of run of $1$'s . And all integers $x_{1},\ldots,x_{s}$ satisfy the conditions
\noindent
\begin{equation*}\label{eq:bn}
\begin{split}
0 \leq x_j \leq k \mbox{\ for\ } j=1,...,y+1,\  x_1+\cdots +x_{y+1} = n-y,\mbox{\ and}
\end{split}
\end{equation*}
\noindent
\begin{equation*}\label{eq:bn}
\begin{split}
\delta_{x_1,k}+\delta_{x_2,k}+\cdots+\delta_{x_{y+1},k}=i.
\end{split}
\end{equation*}
\noindent
Then the probability of the event $\left\{L_{n}^{(1)}= k\ \wedge\ F_{n}=y\right\}$ is given by
\noindent
\begin{equation*}\label{eq: 1.1}
\begin{split}
P_{q,\theta}\Big(L_{n}^{(1)}= k\ \wedge\ F_{n}=y\Big)&\\
=\sum_{i=1}^{y+1}\Bigg[\bigg\{\sum_{\substack{x_1+\cdots +x_{y+1} = n-y\\ x_{1},\ldots,x_{y+1} \in \{0.1,\ldots,k\}\\\delta_{x_1,k}+\cdots+\delta_{x_{y+1},k}=i}}&\Big(\theta q^{0}\Big)^{x_{1}}\Big(1-\theta q^{0}\Big)\Big(\theta q^{1}\Big)^{x_{2}}\Big(1-\theta q^{1}\Big)\times\\
&\quad \quad \quad \quad\quad \quad\quad \quad \quad  \vdots\\
& \Big(1-\theta q^{y-2}\Big)\Big(\theta q^{y-1}\Big)^{x_{y}} \Big(1-\theta q^{y-1}\Big)\Big(\theta q^{y}\Big)^{x_{y+1}}\bigg\}\Bigg].\\
\end{split}
\end{equation*}
\noindent
Using simple exponentiation algebra arguments to simplify,
\noindent
\begin{equation*}\label{eq: 1.1}
\begin{split}
P_{q,\theta}\Big(&L_{n}^{(1)}= k\ \wedge\ F_{n}=y\Big)=\\
&\theta^{n-y}\ {\prod_{j=1}^{y}}\ \left(1-\theta q^{j-1}\right)\ \sum_{i=1}^{y+1}\ \sum_{\substack{x_1+\cdots +x_{y+1} = n-y\\ x_{1},\ldots,x_{y+1} \in \{0,1,\ldots,k\}\\\delta_{x_1,k}+\cdots+\delta_{x_{y+1},k}=i}}q^{x_{2}+2y_{3}\cdots+(y-1)x_{y}+yx_{y+1}}.\\
\end{split}
\end{equation*}
\noindent
Using Lemma \ref{lemma3.1-n}, we can rewrite as follows.
\noindent
\begin{equation*}\label{eq: 1.1}
\begin{split}
P_{q,\theta}\Big(L_{n}^{(1)}= k\ \wedge\ F_{n}=y\Big)=\theta^{n-y}\ {\prod_{j=1}^{y}}\ \left(1-\theta q^{j-1}\right)\ \sum_{i=1}^{y+1} U_{q}^{k}(y+1,\ n-y,\ i),\\
\end{split}
\end{equation*}\\
\noindent
where
\noindent
\begin{equation*}\label{eq: 1.1}
\begin{split}
U_{q}^{k}(y+1,\ n-y,\ i)=\sum_{\substack{x_1+\cdots +x_{y+1} = n-y\\ x_{1},\ldots,x_{y+1} \in \{0.1,\ldots,k\}\\\delta_{x_1,k}+\cdots+\delta_{x_{y+1},k}=i}}q^{x_{2}+2y_{3}\cdots+(y-1)x_{y}+yx_{y+1}}.
\end{split}
\end{equation*}
\noindent
Therefore we can compute the probability of the event $\left\{L_{n}^{(1)}= k\right\}$ as follows.
\noindent
\begin{equation*}\label{eq:bn1}
\begin{split}
P_{q,\theta}\Big(L_{n}^{(1)}= k\Big)=&\sum_{y=[n/(k+1)]}^{n-k}\theta^{n-y}\ {\prod_{j=1}^{y}}\ \left(1-\theta q^{j-1}\right)\ \sum_{i=1}^{y+1} U_{q}^{k}(y+1,\ n-y,\ i).\\
\end{split}
\end{equation*}
\noindent
Thus proof is completed.
\end{proof}
\noindent
It is worth mentioning here that the PMF $P_{q,\theta}\Big(L_{n}^{(1)}= k\Big)$ approaches the probability function of $P_{\theta}\Big(L_{n}^{(1)}= k\Big)$ in the limit as $q$ tends to 1 of IID model. The details are presented in the following remark.
\noindent
\begin{remark}
{\rm For $q=1$, the PMF $P_{q,\theta}\Big(L_{n}^{(1)}= k\Big)$ reduces to the PMF $P_{\theta}\Big(L_{n}^{(1)}= k\Big)$. Then $P_{\theta}\Big(L_{n}^{(1)}= 0\Big)=(1-\theta)^{n}$ and for $1\leq k \leq n$ is given by
\noindent
\begin{equation*}\label{eq:bn1}
\begin{split}
P_{\theta}\Big(L_{n}^{(1)}= k\Big)=&\sum_{y=[n/(k+1)]}^{n-k}\theta^{n-y}(1-\theta)^{y}\ \sum_{i=1}^{y+1} U_{1}^{k}(y+1,\ n-y,\ i)\\
=&\sum_{y=[n/(k+1)]}^{n-k}\theta^{n-y}(1-\theta)^{y}\ \sum_{i=1}^{y+1} {y+1 \choose i}C(n-y+ik,\ y+1-i,\ k-1),
\end{split}
\end{equation*}
\noindent
where
\noindent
\begin{equation*}\label{eq: 1.1}
\begin{split}
C(a,\ b,\ c)=\sum_{j=0}^{b}(-1)^{j}{b \choose j}{a-(c+1)j+b-1 \choose b-1}.
\end{split}
\end{equation*}
\noindent
if $a>0$: $1$ if $a=0$: and $0$ otherwise (See, e.g. \citet{charalambides2002enumerative}).
}
\end{remark}

\subsection{Closed expression for the PMF of $\left(L_{n}^{(1)}\leq k\right)$}
\noindent
We now make some useful Definition and Lemma for the proofs of Theorem in the sequel.
\noindent
\begin{definition}
For $0<q\leq1$, we define
\noindent
\begin{equation*}\label{eq: 1.1}
\begin{split}
V_{q}(r,s,k)={\sum_{x_{1},\ldots,x_{r}}} q^{x_{2}+2x_{3}+\cdots+(r-1)x_{r}},\
\end{split}
\end{equation*}
\noindent
where the summation is over all integers $x_1,\ldots,x_{r}$ satisfying
\noindent
\begin{equation*}\label{eq:1}
\begin{split}
0\leq x_{j}\leq k\quad \text{for}\quad j=1,\ldots,r,\ \text{and}
\end{split}
\end{equation*}
\noindent
\begin{equation*}\label{eq:1}
\begin{split}
x_{1}+\cdots+x_{r}=s
\end{split}
\end{equation*}
\noindent
\end{definition}
\noindent
The following gives a recurrence relation useful for the computation of $V_{q}(r,s,k)$.\\
\noindent
\begin{lemma}
\label{lemma3.2-n}
For $0<q\leq1$, $V_{q}(r,s,k)$ obeys the following recurrence relation.
\noindent
\begin{equation*}\label{eq: 1.1}
\begin{split}
V_{q}&^{k}(r,s,k)=\left\{
  \begin{array}{ll}
    \sum_{a=0}^{k}q^{a(r-1)}V_{q}(r-1,s-a,k) & \text{for}\ r>1,\ \text{and}\ 0\leq s\leq kr\\
     1 & \text{for}\ r=1,\ \text{and}\ 0\leq s\leq k,\ \text{or}\\
    0 & \text{otherwise.}\\
  \end{array}
\right.
\end{split}
\end{equation*}
\end{lemma}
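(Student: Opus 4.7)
The plan is to imitate the recurrences already established for $A_q^{k_1,k_2}$, $B_q^{k_1,k_2}$, $U_q^k$, etc.: I would condition on the last coordinate $x_r$, factor out its contribution to the $q$-weight, and identify the remaining sum as a smaller instance of $V_q$.

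First I would handle the base case $r=1$. The set of admissible tuples consists of the single integer $x_1$, which must satisfy $0\leq x_1\leq k$ and $x_1=s$. Hence whenever $0\leq s\leq k$ the only solution is $x_1=s$, and since there is no $x_2,x_3,\ldots$ the exponent of $q$ is the empty sum, giving $V_q(1,s,k)=1$. For $s<0$ or $s>k$ there is no solution and $V_q(1,s,k)=0$, matching the "otherwise" line.

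Next, for $r>1$ and $0\leq s\leq kr$, I would split the sum defining $V_q(r,s,k)$ according to the value $x_r=a\in\{0,1,\ldots,k\}$. For fixed $a$, the weight $q^{x_2+2x_3+\cdots+(r-1)x_r}$ factors as $q^{(r-1)a}\cdot q^{x_2+2x_3+\cdots+(r-2)x_{r-1}}$, and the inner sum is over all integers $x_1,\ldots,x_{r-1}$ with $0\leq x_j\leq k$ and $x_1+\cdots+x_{r-1}=s-a$. By the definition of $V_q$ this inner sum equals $V_q(r-1,s-a,k)$ (including the boundary cases where $s-a<0$ or $s-a>k(r-1)$, which contribute $0$ and so disappear automatically). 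Summing over $a$ gives
\[
V_q(r,s,k)=\sum_{a=0}^{k}q^{a(r-1)}V_q(r-1,s-a,k),
\]
as claimed. The "otherwise" cases ($r\geq 1$ with $s<0$ or $s>kr$) hold because the sum defining $V_q$ is empty.

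I expect no real obstacle here: the argument is a direct transcription of the conditioning-on-the-last-coordinate technique used in Lemmas~\ref{lemma:3.1}--\ref{lemma:3.4} and in Lemma~\ref{lemma3.1-n}, with the simplification that now there are no auxiliary $y_j$ variables to track and no $k$-indicator constraint, so only the last coordinate needs to be conditioned on. The only bookkeeping care required is verifying that the factorization of the exponent $q^{(r-1)a+(x_2+2x_3+\cdots+(r-2)x_{r-1})}$ is legitimate, and that the index range $a\in\{0,\ldots,k\}$ exhausts all admissible values of $x_r$ consistent with $0\leq x_r\leq k$.
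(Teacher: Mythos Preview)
Your proof is correct and follows essentially the same approach as the paper: condition on the value $x_r=a$, factor out $q^{(r-1)a}$ from the weight, and recognize the remaining sum over $x_1,\ldots,x_{r-1}$ as $V_q(r-1,s-a,k)$. If anything, your treatment of the base case $r=1$ and the ``otherwise'' cases is more explicit than the paper's, which dismisses them as obvious.
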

\begin{proof}
For $r > 1$ and $0\leq s\leq kr$, we observe that since $x_{r}$ can assume the values $1,\ldots,k$, then $V_{q}(r,s,k)$ can be written as follows.
\noindent
\begin{equation*}%\label{eq: 1.1}
\begin{split}
V_{q}(r,s,k)=&\sum_{x_{r}=0}^{k}\ q^{x_{r}(r-1)} {\hspace{0.5cm}\sum_{\substack{x_{1}+\cdots+x_{s-1}=m-x_{s}\\ x_{1},\ldots,x_{r-1} \in \{0,1,\ldots,k\}}}}\quad
 q^{x_{2}+2x_{3}+\cdots+(r-2)x_{r-1}}
\end{split}
\end{equation*}
\noindent
Using simple algebraic arguments to simplify, then $V_{q}(r,s,k)$ can be rewritten as follows.
\noindent
\begin{equation*}%\label{eq: 1.1}
\begin{split}
V_{q}&(r,s,k)=\sum_{a=0}^{k}q^{a(r-1)}V_{q}(r-1,s-a,k).
\end{split}
\end{equation*}
\noindent
The other cases are obvious and thus the proof is completed.
\end{proof}

\begin{remark}
{\rm
We observe that $V_1(r,s,k)$ is the number of integer solutions $(x_{1},\ \ldots,\ x_{r})$ of
\noindent
\begin{equation*}\label{eq:1}
\begin{split}
0\leq x_{j}\leq k\quad \text{for}\quad j=1,\ldots,r,\ \text{and}
\end{split}
\end{equation*}
\noindent
\begin{equation*}\label{eq:1}
\begin{split}
x_{1}+\cdots+x_{r}=s
\end{split}
\end{equation*}
\noindent
which is
\noindent
\begin{equation*}\label{eq: 1.1}
\begin{split}
V_1(r,s,k)=C(s,\ r,\ k),
\end{split}
\end{equation*}
\noindent
where $C(a,\ b,\ c)$ denotes the total number of integer solutions $x_{1}+x_{2}+\cdots+x_{b}=a$ such that $0\leq x_{i}\leq c$ for $i=1,2,\ldots,b$. Alternatively, it is the number of ways of distributing $a$ identical balls into $b$ different cells , so that each cell is occupied by at most $c$ balls. The number can be expressed as
\noindent
\begin{equation*}\label{eq: 1.1}
\begin{split}
C(a,\ b,\ c)=\sum_{j=0}^{b}(-1)^{j}{b \choose j}{a-(c+1)j+b-1 \choose b-1}.
\end{split}
\end{equation*}
\noindent
if $a>0$: $1$ if $a=0$: and $0$ otherwise (See, e.g. \citet{charalambides2002enumerative}.
}
\end{remark}
\noindent
We shall study of the cumulative distribution function $P_{q,\theta}\Big(L_{n}^{(1)}\leq k\Big)$ of the length $L_{n}^{(1)}$ of the longest success run. The probability function of the  $P_{q,\theta}\Big(L_{n}^{(1)}\leq k\Big)$ is obtained by the following theorem.
\noindent
\begin{theorem}
The PMF $P_{q,\theta}\Big(L_{n}^{(1)}\leq k\Big)$ is given by
\begin{equation*}\label{}
\begin{split}
P_{q,\theta}\Big(L_{n}^{(1)}\leq k\Big)=&\sum_{y=[n/(k+1)]}^{n-k}\theta^{n-y}\ {\prod_{j=1}^{y}}\ \left(1-\theta q^{j-1}\right)\  V_{q}(y+1,\ n-y,\ k).
\end{split}
\end{equation*}
\end{theorem}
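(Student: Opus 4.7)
The approach will closely parallel the proof of the preceding theorem for $P_{q,\theta}(L_n^{(1)}=k)$, but without the constraint that some success run attains length exactly $k$. The plan is to partition the event $\{L_n^{(1)}\leq k\}$ according to the number of failures $F_n=y$, describe the typical sample outcome combinatorially as a string of $y$ zeros and $n-y$ ones in which the ones are distributed into the $y+1$ cells determined by the zeros with each cell receiving at most $k$ ones, compute the probability of each such arrangement using the geometric rule \eqref{failureprobofq2}, and finally recognize the resulting sum of $q$-weights as $V_q(y+1,n-y,k)$.

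First I would write
\begin{equation*}
P_{q,\theta}\Big(L_n^{(1)}\leq k\Big)=\sum_{y}P_{q,\theta}\Big(L_n^{(1)}\leq k\ \wedge\ F_n=y\Big),
\end{equation*}
where $y$ ranges over the values for which the distribution of $n-y$ successes into $y+1$ cells of capacity at most $k$ is feasible; the summation limits in the theorem encode precisely this feasibility range. A typical element of the event $\{L_n^{(1)}\leq k\ \wedge\ F_n=y\}$ has the generic shape
\begin{equation*}
\underbrace{1\ldots 1}_{x_1}\,0\,\underbrace{1\ldots 1}_{x_2}\,0\,\underbrace{1\ldots 1}_{x_3}\,0\,\cdots\,0\,\underbrace{1\ldots 1}_{x_{y+1}},
\end{equation*}
with $0\leq x_j\leq k$ for $j=1,\ldots,y+1$ and $x_1+\cdots+x_{y+1}=n-y$.

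Second, I would compute the probability of each such arrangement using the geometric rule: the failure in the $j$-th position among zeros contributes $1-\theta q^{j-1}$, while each success in the block of length $x_j$ located after $j-1$ failures contributes $\theta q^{j-1}$. Multiplying out and using standard exponent algebra,
\begin{equation*}
\prod_{j=1}^{y+1}\Big(\theta q^{j-1}\Big)^{x_j}\ \prod_{j=1}^{y}\Big(1-\theta q^{j-1}\Big)=\theta^{n-y}\ \prod_{j=1}^{y}\Big(1-\theta q^{j-1}\Big)\ q^{x_2+2x_3+\cdots+y x_{y+1}}.
\end{equation*}

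Third, summing over all admissible tuples $(x_1,\ldots,x_{y+1})$ with $0\leq x_j\leq k$ and $x_1+\cdots+x_{y+1}=n-y$ produces the quantity $V_q(y+1,n-y,k)$ by Definition of $V_q$ and Lemma \ref{lemma3.2-n}. Substituting back and summing over $y$ yields the claimed formula.

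The work is essentially bookkeeping: the only genuinely delicate step is verifying the summation range for $y$ (the feasibility condition $n-y\leq k(y+1)$ together with $y\leq n$), and confirming that the exponent-algebra simplification is identical to the one used in the previous theorem's proof of $P_{q,\theta}(L_n^{(1)}=k)$. Since those mechanics are already established, no new technical obstacle arises; the theorem is an immediate variant obtained by dropping the ``exactly one cell is full'' requirement encoded by the Kronecker-delta conditions in $U_q^k$.
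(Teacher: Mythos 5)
Your proposal is correct and follows essentially the same route as the paper's own proof: partition on $F_{n}=y$, describe the admissible configurations as $n-y$ ones distributed into the $y+1$ cells formed by the zeros with each cell capacity $k$, simplify the product of factors $\theta q^{j-1}$ and $1-\theta q^{j-1}$ to extract $\theta^{n-y}\prod_{j=1}^{y}(1-\theta q^{j-1})\,q^{x_{2}+2x_{3}+\cdots+yx_{y+1}}$, and identify the remaining sum with $V_{q}(y+1,\,n-y,\,k)$ via Lemma \ref{lemma3.2-n}. No substantive difference from the paper's argument.
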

\begin{proof}
We partition the event $\left\{L_{n}^{(1)}= k\right\}$ into disjoint events given by $F_{n}=y,$ for $y=[n/(k+1)], \ldots,n-k$. Adding the probabilities we have
\noindent
\begin{equation*}\label{eq:bn}
\begin{split}
P_{q,\theta}\Big(L_{n}^{(1)}= k\Big)=\sum_{y=[n/(k+1)]}^{n-k}P_{q,\theta}\Big(L_{n}^{(1)}= k\ \wedge\ F_{n}=y\Big).\\
\end{split}
\end{equation*}
\noindent
We are going to focus on the event $\left\{L_{n}^{(1)}\leq k\ \wedge\ F_{n}=y\right\}$. A typical element of the event $\Big\{L_{n}^{(1)}\leq k\ \wedge\ F_{n}=y\Big\}$ is an ordered sequence which consists of $n-y$ successes and $y$ failures such that the length of the longest success run is less and equal to $k$. The number of these sequences can be derived as follows. First we will distribute the $y$ failures which form $y+1$ cells. Next, we will distribute the $n-y$ successes in the $y+1$ distinguishable cells, so that each cell occupied by at most $k$ successes balls. Thus, the event $\left\{L_{n}^{(1)}\leq k\ \wedge\ F_{n}=y\right\}$ is defined as follows.
\noindent
\begin{equation*}\label{eq:bn}
\begin{split}
\underbrace{1\ldots 1}_{0\leq x_{1}\leq k}0\underbrace{1\ldots 1}_{0\leq x_{2}\leq k}0\underbrace{1\ldots 1}_{0\leq x_{3}\leq k}0 \ldots 0\underbrace{1\ldots 1}_{0\leq x_{y}\leq k}0\underbrace{1\ldots 1}_{0\leq x_{y+1}\leq k},\\
\end{split}
\end{equation*}
\noindent
with $y$ $0$'s and $n-y$ $1$'s, where $x_{j}$ $(j=1,\ldots,y+1$ represents a length of run of $1$'s . And all integers $x_{1},\ldots,x_{s}$ satisfy the conditions
\noindent
\begin{equation*}\label{eq:bn}
\begin{split}
0 \leq x_j \leq k \mbox{\ for\ } j=1,...,y+1,\  \mbox{\ and}\ x_1+\cdots +x_{y+1} = n-y,
\end{split}
\end{equation*}
\noindent
Then the probability of the event $\left\{L_{n}^{(1)}\leq k\ \wedge\ F_{n}=y\right\}$ is given by
\noindent
\begin{equation*}\label{eq: 1.1}
\begin{split}
P_{q,\theta}\Big(L_{n}^{(1)}\leq k\ \wedge\ F_{n}=y\Big)&\\
=\sum_{\substack{x_1+\cdots +x_{y+1} = n-y\\ x_{1},\ldots,x_{y+1} \in \{1,\ldots,k\}}}&\Big(\theta q^{0}\Big)^{x_{1}}\Big(1-\theta q^{0}\Big)\Big(\theta q^{1}\Big)^{x_{2}}\Big(1-\theta q^{1}\Big)\times\\
&\quad \quad \quad \quad\quad \quad\quad \quad \quad  \vdots\\
& \Big(1-\theta q^{y-2}\Big)\Big(\theta q^{y-1}\Big)^{x_{y}} \Big(1-\theta q^{y-1}\Big)\Big(\theta q^{y}\Big)^{x_{y+1}}.\\
\end{split}
\end{equation*}
\noindent
Using simple exponentiation algebra arguments to simplify,
\noindent
\begin{equation*}\label{eq: 1.1}
\begin{split}
P_{q,\theta}\Big(&L_{n}^{(1)}\leq k\ \wedge\ F_{n}=y\Big)=\\
&\theta^{n-y}\ {\prod_{j=1}^{y}}\ \left(1-\theta q^{j-1}\right)\ \sum_{\substack{x_1+\cdots +x_{y+1} = n-y\\ x_{1},\ldots,x_{y+1} \in \{0,1,\ldots,k\}}}q^{x_{2}+2y_{3}\cdots+(y-1)x_{y}+yx_{y+1}}.\\
\end{split}
\end{equation*}
\noindent
Using Lemma \ref{lemma3.2-n}, we can rewrite as follows.
\noindent
\begin{equation*}\label{eq: 1.1}
\begin{split}
P_{q,\theta}\Big(L_{n}^{(1)}\leq k\ \wedge\ F_{n}=y\Big)=\theta^{n-y}\ \left(1-\theta q^{j-1}\right)\  V_{q}(y+1,\ n-y,\ k),\\
\end{split}
\end{equation*}\\
\noindent
where
\noindent
\begin{equation*}\label{eq: 1.1}
\begin{split}
V_{q}(y+1,\ n-y,\ k)=\sum_{\substack{x_1+\cdots +x_{y+1} = n-y\\ x_{1},\ldots,x_{y+1} \in \{0,1,\ldots,k\}}}q^{x_{2}+2y_{3}\cdots+(y-1)x_{y}+yx_{y+1}}.
\end{split}
\end{equation*}
\noindent
Therefore we can compute the probability of the event $\left\{L_{n}^{(1)}\leq k\right\}$ as follows.
\noindent
\begin{equation*}\label{eq:bn1}
\begin{split}
P_{q,\theta}\Big(L_{n}^{(1)}\leq k\Big)=&\sum_{y=[n/(k+1)]}^{n}\theta^{n-y}\ {\prod_{j=1}^{y}}\ \left(1-\theta q^{j-1}\right)\ V_{q}(y+1,\ n-y,\ k).\\
\end{split}
\end{equation*}
\noindent
Thus proof is completed.
\end{proof}
\noindent
It is worth mentioning here that the PMF $P_{q,\theta}\Big(L_{n}^{(1)}\leq k\Big)$ approaches the probability function of $P_{\theta}\Big(L_{n}^{(1)}\leq k\Big)$ in the limit as $q$ tends to 1 of IID model. The details are presented in the following remark.
\noindent
\begin{remark}
{\rm For $q=1$, the PMF $P_{q,\theta}\Big(L_{n}^{(1)}\leq k\Big)$ reduces to the PMF $P_{\theta}\Big(L_{n}^{(1)}\leq k\Big)$ is given by
\noindent
\begin{equation*}\label{eq:bn1}
\begin{split}
P_{\theta}\Big(L_{n}^{(1)}\leq k\Big)=&\sum_{y=[n/(k+1)]}^{n}\theta^{n-y}(1-\theta)^{y}\ V_{1}(y+1,\ n-y,\ k)\\
=&\sum_{y=[n/(k+1)]}^{n}\theta^{n-y}(1-\theta)^{y}\ C(n-y,\ y+1,\ k),
\end{split}
\end{equation*}
\noindent
where
\noindent
\begin{equation*}\label{eq: 1.1}
\begin{split}
C(a,\ b,\ c)=\sum_{j=0}^{b}(-1)^{j}{b \choose j}{a-(c+1)j+b-1 \choose b-1}.
\end{split}
\end{equation*}
\noindent
if $a>0$: $1$ if $a=0$: and $0$ otherwise (See, e.g. \citet{charalambides2002enumerative}).
}
\end{remark}

\section{Joint distribution of the longest success and failure runs for $q$-binary trials}

%Longest runs which can be considered as the extremes of run lengths are widely studied in the literature. See, e.g. Antzoulakos and Philippou (1997), Fu et al. (2003a) and Makri et al. (2007). Let $L_{n}^{(j)}$ is denote denote the longest run of type $j$, $j =0, 1$, i.e.
%\begin{equation*}
%L_{n}^{(j)}=\underset{1\leq j \leq R_{n}^{(1)}}{Max}x_{j}\ \text{and}
%\end{equation*}
%\\
%
%
%Denote by $R_{n}^{(j)}$ is the number of runs of type $j$ $(j=0,1)$ and let $x_{i}(y_{i})$ denote the length of the $i$th run of successes (failures) in $X_{1}$, $\ldots$,$X_{n}$, respectively. For example in a sequence of ten trials $2213331112$ we have $R_{n}^{(1)}=2$,\\

In this section, we study the joint distribution of the lengths of the longest success and longest failure runs.
Let $\{X_{i}\}_{i\geq 1}$ be an arbitrary sequence of $n$ binary (zero and one) trials ordered on a line, with probability of ones varying according to a geometric rule. We consider that an ordered sequence of $n$ trials which consist of $n_1$ of the total number of successes and $n_2$ of the total number of failures, where $n_1+n_2=n$. We consider these $n$ trials how they are arranged to form a random sequence and we shall discuss various aspects of the representation of successes and failures trials which arise. We will distribute the $n_1$ successes and the $n_2$ failures. We first consider the failure runs. Let $s$$(1\leq s \leq n_2)$ be the number of runs of failures and then $n_1$ failures can be arranged to form $s$ runs. It is true that whenever there are $s$ runs of failures there are three alternatives for the successes :

\begin{itemize}
\item there are $s-1$ runs of successes and the series can begin only with a run of failures (B-type);
\item there are $s$ runs of failures and the series can begin with a run of successes or a run of failures (D and A-type, respectively) and
\item there are $s+1$ runs of successes and the series can begin only with a run of successes (C-type).
\end{itemize}
Thus sequence $X_{1}$, $\ldots$,$X_{n}$ has one of the following four forms:

%
%(\lowercase\expandafter{\romannumeral1}) there are $s-1$ runs of successes and the series can begin only with a run of failures (B-type), (\lowercase\expandafter{\romannumeral2}) there are $s$ runs of failures and the series can begin with a run of successes or a run of failures (D and A-type, respectively) and (\lowercase\expandafter{\romannumeral3}) there are $s+1$ runs of successes and the series can begin only with a run of successes (C-type). Thus sequence $X_{1}$, $\ldots$,$X_{n}$ has one of the following four forms:

\noindent

\begin{equation*}\label{eq:bn}
\begin{split}
\text{A-type}\ :&\quad \overbrace{0\ldots 0}^{y_{1}}\mid\overbrace{1\ldots 1}^{x_{1}}\mid\overbrace{0\ldots 0}^{y_{2}}\mid\overbrace{1\ldots 1}^{x_{2}}\mid \ldots \mid \overbrace{0\ldots 0}^{y_{s}}\mid\overbrace{1\ldots 1}^{x_{s}},\\
\text{B-type}\ :&\quad \overbrace{0\ldots 0}^{y_{1}}\mid\overbrace{1\ldots 1}^{x_{1}}\mid\overbrace{0\ldots 0}^{y_{2}}\mid\overbrace{1\ldots 1}^{x_{2}}\mid \ldots \mid \overbrace{0\ldots 0}^{y_{s-1}}\mid\overbrace{1\ldots 1}^{x_{s-1}}\mid\overbrace{0\ldots 0}^{y_{s}},\\
\text{C-type}\ :&\quad \overbrace{1\ldots 1}^{x_{1}}\mid\overbrace{0\ldots 0}^{y_{1}}\mid\overbrace{1\ldots 1}^{x_{2}}\mid\overbrace{0\ldots 0}^{y_{2}}\mid \ldots \mid\overbrace{0\ldots 0}^{y_{s}}\mid\overbrace{1\ldots 1}^{x_{s+1}},\\
\text{D-type}\ :&\quad \overbrace{1\ldots 1}^{x_{1}}\mid\overbrace{0\ldots 0}^{y_{1}}\mid\overbrace{1\ldots 1}^{x_{2}}\mid\overbrace{0\ldots 0}^{y_{2}}\mid \ldots \mid \overbrace{0\ldots 0}^{y_{s-1}}\mid\overbrace{1\ldots 1}^{x_{s}}\mid\overbrace{0\ldots 0}^{y_{s}},\\
\end{split}
\end{equation*}
\noindent
It is noteworthy that the representations of A to D-type are based on the arguments given in the proof of Theorem 3.2.1 of \citet{gibbons2020nonparametric}. It is of interest to note that, for the first and fourth forms total number of success and failure runs are both equal and we have $(s-1)$ and $(s+1)$ success runs for the second and third forms, respectively. Here, we derive the joint distribution of the lengths of the longest success and longest failure runs. More specifically, we derive the joint distributions for $P_{q,\theta}\Big(L_{n}^{(1)}\leq k_{1}\ \wedge\ L_{n}^{(0)}\leq k_{2}\Big)$, $P_{q,\theta}\Big(L_{n}^{(1)}\leq k_{1}\ \wedge\ L_{n}^{(0)}\geq k_{2}\Big)$, $P_{q,\theta}\Big(L_{n}^{(1)}\geq k_{1}\ \wedge\ L_{n}^{(0)}\leq k_{2}\Big)$ and $P_{q,\theta}\Big(L_{n}^{(1)}\geq k_{1}\ \wedge\ L_{n}^{(0)}\geq k_{2}\Big)$.

\subsection{Closed expression for the PMF of $\left(L_{n}^{(1)}\leq k_{1}\ \wedge\ L_{n}^{(0)}\leq k_{2}\right)$}

\noindent
We shall study of the joint distribution of $\left(L_{n}^{(1)}\leq k_{1}\ \wedge\ L_{n}^{(0)}\leq k_{2}\right)$. The probability function of the  $P_{q,\theta}\Big(L_{n}^{(1)}\leq k_{1}\ \wedge\ L_{n}^{(0)}\leq k_{2}\Big)$ is obtained by the following theorem.
\noindent
\begin{theorem}
The PMF $P_{q,\theta}\Big(L_{n}^{(1)}\leq k_{1}\ \wedge\ L_{n}^{(0)}\leq k_{2}\Big)$ for $n\geq1$
\begin{equation*}\label{eq:bn1}
\begin{split}
P_{q,\theta}\Big(L_{n}^{(1)}\leq k_{1}\ \wedge\ L_{n}^{(0)}\leq k_{2}\Big)=&\sum_{i=1}^{n}\theta^{n-i}\ {\prod_{j=1}^{i}}\ (1-\theta q^{j-1})\sum_{s=1}^{i}\Big[D_{q}^{k_1+1,k_2+1}(n-i,\ i,\ s)\\
&+A_{q}^{k_1+1,k_2+1}(n-i,\ i,\ s)+C_{q}^{k_1+1,k_2+1}(n-i,\ i,\ s+1)\\
&+B_{q}^{k_1+1,k_2+1}(n-i,\ i,\ s)\Big]+\eta(n,k_1)\theta^n,
\end{split}
\end{equation*}
where $\eta(n,k_1)$ is the function by $\eta(n,k_1)=1$ if $n\leq k_1$ and $0$ otherwise.
\end{theorem}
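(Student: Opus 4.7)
The plan is to mirror the partition-and-interleave strategy used in the proofs of Theorems \ref{thm:3.1} and \ref{thm:3.2}, but now imposing \emph{upper} bounds on both types of runs instead of a stopping condition. Specifically, I would condition on $F_n=i$, the total number of failures in the first $n$ trials, and sum over $i$. The isolated term $\eta(n,k_1)\theta^n$ accounts for the degenerate case $i=0$: if $n\leq k_1$ then the all-ones sequence automatically satisfies $L_n^{(1)}\leq k_1\wedge L_n^{(0)}\leq k_2$, and it contributes probability $\theta^n$; if $n>k_1$ this case is impossible, hence the indicator $\eta$.

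For $1\leq i\leq n$, I would write
\[
P_{q,\theta}\Big(L_n^{(1)}\leq k_1\wedge L_n^{(0)}\leq k_2\wedge F_n=i\Big)
\]
by enumerating all binary words of length $n$ with exactly $i$ zeros and $n-i$ ones such that every run of ones has length at most $k_1$ and every run of zeros has length at most $k_2$. Following the A/B/C/D classification described immediately before the theorem, such a word falls into exactly one of the four structural types according to whether it begins with a $0$ or a $1$ and ends with a $0$ or a $1$. Let $s$ ($1\leq s\leq i$) denote the number of failure runs; then Types A and D contain $s$ success runs, Type B contains $s-1$, and Type C contains $s+1$. In every case the length constraints become $0<x_j<k_1+1$ and $0<y_j<k_2+1$, matching the index shifts in the statement.

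Next I would compute the probability of each arrangement using the geometric rule \eqref{failureprobofq2}. As in the proof of Theorem \ref{thm:3.1}, a direct exponentiation collapses the product of the one-step probabilities into
\[
\theta^{n-i}\,\prod_{j=1}^{i}(1-\theta q^{j-1})\cdot q^{E(x,y)},
\]
where $E(x,y)$ is the appropriate exponent (e.g.\ $y_1x_1+(y_1+y_2)x_2+\cdots$ for Type B, and the analogues read off the other three types). Summing $q^{E(x,y)}$ over all admissible $(x,y)$ is, by construction, precisely the quantity $A_q^{k_1+1,k_2+1}(n-i,i,s)$ for Type B, $B_q^{k_1+1,k_2+1}(n-i,i,s)$ for Type D, $D_q^{k_1+1,k_2+1}(n-i,i,s)$ for Type A, and $C_q^{k_1+1,k_2+1}(n-i,i,s+1)$ for Type C (the $s+1$ reflecting the extra success run). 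Adding the four type contributions and summing on $s$ produces the bracketed expression in the statement.

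The only genuine obstacle is bookkeeping: correctly matching each of A, B, C, D with the corresponding $D_q$, $A_q$, $C_q$, $B_q$ (noting the off-by-one in the third argument of $C_q$), and verifying that the admissible ranges for $s$ and the upper limit $i$ on the inner sum automatically kill out-of-range contributions via the convention that $A_q,B_q,C_q,D_q$ vanish outside their prescribed supports. Once the correspondence of types with the four auxiliary functions is fixed, the calculation reduces to the same exponent-collapse identity used in Theorem \ref{thm:3.1}, and no further analytic input is needed beyond Lemmas \ref{lemma:3.1}--\ref{lemma:3.4}.
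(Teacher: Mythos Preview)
Your proposal is correct and follows essentially the same approach as the paper: partition by $F_n=i$, treat $i=0$ via the $\eta(n,k_1)\theta^n$ term, classify the remaining sequences into the four A/B/C/D structural types, collapse each arrangement's probability to $\theta^{n-i}\prod_{j=1}^{i}(1-\theta q^{j-1})q^{E(x,y)}$, and identify the resulting $q$-sums with $D_q$, $A_q$, $C_q(\cdot,\cdot,s+1)$, $B_q$ respectively. Your type-to-function matching (A$\to D_q$, B$\to A_q$, C$\to C_q$ with the $s+1$ shift, D$\to B_q$) and the explanation of the $\eta$ term are exactly what the paper does.
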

\begin{proof}
We partition the event $\left\{L_{n}^{(1)}\leq k_{1}\ \wedge\ L_{n}^{(0)}\leq k_{2}\right\}$ into disjoint events given by $F_{n}=i,$ for $i=1, \ldots,n$. Adding the probabilities we have
\noindent
\begin{equation*}\label{eq:bn}
\begin{split}
P_{q,\theta}\Big(L_{n}^{(1)}\leq k_{1}\ \wedge\ L_{n}^{(0)}\leq k_{2}\Big)=\sum_{i=1}^{n}P_{q,\theta}\Big(L_{n}^{(1)}\leq k_{1}\ \wedge\ L_{n}^{(0)}\leq k_{2}\ \wedge\ F_{n}=i\Big).\\
\end{split}
\end{equation*}
\noindent
We will write $E_{n,i}=\left\{L_{n}^{(1)}\leq k_{1}\ \wedge\ L_{n}^{(0)}\leq k_{2}\ \wedge\ F_{n}=i\right\}$.
\noindent
We can now rewrite as follows
\noindent
\begin{equation*}\label{eq:bn1}
\begin{split}
P_{q,\theta}\Big(L_{n}^{(1)}\leq k_{1}\ \wedge\ L_{n}^{(0)}\leq k_{2}\Big)=\sum_{i=1}^{n}P_{q,\theta}\left(E_{n,i}\right).
\end{split}
\end{equation*}

We are going to focus on the event $E_{n,\ i}$. A typical element of the event $\left\{L_{n}^{(1)}\leq k_{1}\ \wedge\ L_{n}^{(0)}\leq k_{2}\right\}$ is an ordered sequence which consists of $n-i$ successes and $i$ failures such that the length of the longest success run is less than or equal to $k_1$. The number of these sequences can be derived as follows. First we will distribute the $i$ failures and the $n-i$ successes.  Let $s$ $(1\leq s \leq i)$ be the number of runs of failures in the event $E_{n,\ i}$. We divide into two cases: starting with a success run or starting with a failure run. Thus, we distinguish between four types of sequences in the event $\left\{L_{n}^{(1)}\leq k_{1}\ \wedge\ L_{n}^{(0)}\leq k_{2}\right\}$, respectively named A, B, C and D-type, which are defined as follows.

\begin{equation*}\label{eq:bn}
\begin{split}
\text{A-type}\ :&\quad \overbrace{0\ldots 0}^{y_{1}}\mid\overbrace{1\ldots 1}^{x_{1}}\mid\overbrace{0\ldots 0}^{y_{2}}\mid\overbrace{1\ldots 1}^{x_{2}}\mid \ldots \mid \overbrace{0\ldots 0}^{y_{s}}\mid\overbrace{1\ldots 1}^{x_{s}},\\
\end{split}
\end{equation*}
\noindent
with $i$ $0$'s and $n-i$ $1$'s, where $x_{j}$ $(j=1,\ldots,s)$ represents a length of run of $1$'s and $y_{j}$ $(j=1,\ldots,s)$ represents the length of a run of $0$'s. And all integers $x_{1},\ldots,x_{s}$, and $y_{1},\ldots,y_{s}$ satisfy the conditions
\noindent
\begin{equation*}\label{eq:bn}
\begin{split}
0 < x_j < k_1+1 \mbox{\ for\ } j=1,...,s, \mbox{\ and\ } x_1+\cdots +x_{s} = n-i,
\end{split}
\end{equation*}
\noindent
\begin{equation*}\label{eq:bn}
\begin{split}
0 < y_j < k_2+1 \mbox{\ for\ } j=1,...,s, \mbox{\ and\ } y_1+\cdots +y_s=i.
\end{split}
\end{equation*}

\begin{equation*}\label{eq:bn}
\begin{split}
\text{B-type}\ :&\quad \overbrace{0\ldots 0}^{y_{1}}\mid\overbrace{1\ldots 1}^{x_{1}}\mid\overbrace{0\ldots 0}^{y_{2}}\mid\overbrace{1\ldots 1}^{x_{2}}\mid \ldots \mid \overbrace{0\ldots 0}^{y_{s-1}}\mid\overbrace{1\ldots 1}^{x_{s-1}}\mid\overbrace{0\ldots 0}^{y_{s}},\\
\end{split}
\end{equation*}
\noindent
with $i$ $0$'s and $n-i$ $1$'s, where $x_{j}$ $(j=1,\ldots,s-1)$ represents a length of run of $1$'s and $y_{j}$ $(j=1,\ldots,s)$ represents the length of a run of $0$'s. And all integers $x_{1},\ldots,x_{s-1}$, and $y_{1},\ldots,y_{s}$ satisfy the conditions
\noindent
\begin{equation*}\label{eq:bn}
\begin{split}
0 < x_j < k_1+1 \mbox{\ for\ } j=1,...,s-1, \mbox{\ and\ } x_1+\cdots +x_{s-1} = n-i,
\end{split}
\end{equation*}
\noindent
\begin{equation*}\label{eq:bn}
\begin{split}
0 < y_j < k_2+1 \mbox{\ for\ } j=1,...,s, \mbox{\ and\ } y_1+\cdots +y_s=i.
\end{split}
\end{equation*}

\begin{equation*}\label{eq:bn}
\begin{split}
\text{C-type}\ :&\quad \overbrace{1\ldots 1}^{x_{1}}\mid\overbrace{0\ldots 0}^{y_{1}}\mid\overbrace{1\ldots 1}^{x_{2}}\mid\overbrace{0\ldots 0}^{y_{2}}\mid \ldots \mid\overbrace{0\ldots 0}^{y_{s}}\mid\overbrace{1\ldots 1}^{x_{s+1}},\\
\end{split}
\end{equation*}
\noindent
with $i$ $0$'s and $n-i$ $1$'s, where $x_{j}$ $(j=1,\ldots,s+1)$ represents a length of run of $1$'s and $y_{j}$ $(j=1,\ldots,s)$ represents the length of a run of $0$'s. And all integers $x_{1},\ldots,x_{s+1}$, and $y_{1},\ldots,y_{s}$ satisfy the conditions
\noindent
\begin{equation*}\label{eq:bn}
\begin{split}
0 < x_j < k_1+1 \mbox{\ for\ } j=1,...,s+1, \mbox{\ and\ } x_1+\cdots +x_{s+1} = n-i,
\end{split}
\end{equation*}
\noindent
\begin{equation*}\label{eq:bn}
\begin{split}
0 < y_j < k_2+1 \mbox{\ for\ } j=1,...,s, \mbox{\ and\ } y_1+\cdots +y_{s}=i.
\end{split}
\end{equation*}

\begin{equation*}\label{eq:bn}
\begin{split}
\text{D-type}\ :&\quad \overbrace{1\ldots 1}^{x_{1}}\mid\overbrace{0\ldots 0}^{y_{1}}\mid\overbrace{1\ldots 1}^{x_{2}}\mid\overbrace{0\ldots 0}^{y_{2}}\mid\overbrace{1\ldots 1}^{x_{3}}\mid \ldots \mid \overbrace{0\ldots 0}^{y_{s-1}}\mid\overbrace{1\ldots 1}^{x_{s}}\mid\overbrace{0\ldots 0}^{y_{s}},\\
\end{split}
\end{equation*}
\noindent
with $i$ $0$'s and $n-i$ $1$'s, where $x_{j}$ $(j=1,\ldots,s)$ represents a length of run of $1$'s and $y_{j}$ $(j=1,\ldots,s)$ represents the length of a run of $0$'s. And all integers $x_{1},\ldots,x_{s}$, and $y_{1},\ldots,y_{s}$ satisfy the conditions
\noindent
\begin{equation*}\label{eq:bn}
\begin{split}
0 < x_j < k_1+1 \mbox{\ for\ } j=1,...,s, \mbox{\ and\ } x_1+\cdots +x_{s} = n-i,
\end{split}
\end{equation*}
\noindent
\begin{equation*}\label{eq:bn}
\begin{split}
0 < y_j < k_2+1 \mbox{\ for\ } j=1,...,s, \mbox{\ and\ } y_1+\cdots +y_{s}=i.
\end{split}
\end{equation*}

Then the probability of the event $E_{n,\ i}$ is given by

\noindent
\begin{equation*}\label{eq: 1.1}
\begin{split}
P_{q,\theta}\Big(E_{n,\ i}\Big)&\\
=\sum_{s=1}^{i}\Bigg[\bigg\{&\sum_{\substack{x_{1}+\cdots+x_{s}=n-i\\ x_{1},\ldots,x_{s} \in \{1,\ldots,k_{1}\}}}{\hspace{0.3cm}\sum_{\substack{y_{1}+\cdots+y_{s}=i\\ y_{1},\ldots,y_{s} \in \{1,\ldots,k_{2}\}}}}\big(1-\theta q^{0}\big)\cdots \big(1-\theta q^{y_{1}-1}\big)\Big(\theta q^{y_{1}}\Big)^{x_{1}}\times\\
&\big(1-\theta q^{y_{1}}\big)\cdots \big(1-\theta q^{y_{1}+y_{2}-1}\big)\Big(\theta q^{y_{1}+y_{2}}\Big)^{x_{2}}\times\\
&\big(1-\theta q^{y_{1}+y_2}\big)\cdots \big(1-\theta q^{y_{1}+y_{2}+y_3-1}\big)\Big(\theta q^{y_{1}+y_{2}+y_3}\Big)^{x_{3}}\times \\
&\quad \quad \quad \quad\quad \quad\quad \quad \quad \quad \quad \vdots\\
&\big(1-\theta q^{y_{1}+\cdots+y_{s-1}}\big)\cdots \big(1-\theta q^{y_{1}+\cdots+y_{s}-1}\big)\Big(\theta q^{y_{1}+\cdots+y_{s}}\Big)^{x_{s}}\bigg\}+\\
\end{split}
\end{equation*}

\noindent
\begin{equation*}\label{eq: 1.1}
\begin{split}
\quad \bigg\{&\sum_{\substack{x_{1}+\cdots+x_{s-1}=n-i\\ x_{1},\ldots,x_{s-1} \in \{1,\ldots,k_{1}\}}}{\hspace{0.3cm}\sum_{\substack{y_{1}+\cdots+y_{s}=i\\ y_{1},\ldots,y_{s} \in \{1,\ldots,k_{2}\}}}} \big(1-\theta q^{0}\big)\cdots \big(1-\theta q^{y_{1}-1}\big)\times\\
&\Big(\theta q^{y_{1}}\Big)^{x_{1}}\big(1-\theta q^{y_{1}}\big)\cdots \big(1-\theta q^{y_{1}+y_{2}-1}\big)\times\\
&\Big(\theta q^{y_{1}+y_{2}}\Big)^{x_{2}}\big(1-\theta q^{y_{1}+y_2}\big)\cdots \big(1-\theta q^{y_{1}+y_{2}+y_3-1}\big)\times \\
&\quad \quad \quad \quad\quad \quad\quad \quad \quad \quad \quad \vdots\\
&\Big(\theta q^{y_{1}+\cdots+y_{s-1}}\Big)^{x_{s-1}}
\big(1-\theta q^{y_{1}+\cdots+y_{s-1}}\big)\cdots \big(1-\theta q^{y_{1}+\cdots+y_{s}-1}\big)\bigg\}+\\
\end{split}
\end{equation*}

\noindent
\begin{equation*}\label{eq: 1.1}
\begin{split}
 \quad\quad \quad \quad\bigg\{&\sum_{\substack{x_{1}+\cdots+x_{s+1}=n-i\\ x_{1},\ldots,x_{s+1} \in \{1,\ldots,k_{1}\}}}
{\hspace{0.3cm}\sum_{\substack{y_{1}+\cdots+y_{s}=i\\ y_{1},\ldots,y_{s} \in \{1,\ldots,k_{2}\}}}} \big(\theta q^{0}\big)^{x_{1}}\big(1-\theta q^{0}\big)\cdots (1-\theta q^{y_{1}-1})\times\\
&\Big(\theta q^{y_{1}}\Big)^{x_{2}}\big(1-\theta q^{y_{1}}\big)\cdots \big(1-\theta q^{y_{1}+y_{2}-1}\big)\times\\
&\Big(\theta q^{y_{1}+y_{2}}\Big)^{x_{3}}\big(1-\theta q^{y_{1}+y_2}\big)\cdots \big(1-\theta q^{y_{1}+y_{2}+y_3-1}\big)\times \\
&\quad \quad \quad \quad \vdots\\
&\Big(\theta q^{y_{1}+\cdots+y_{s}}\Big)^{x_{s+1}}\bigg\}+\\
\end{split}
\end{equation*}

\noindent
\begin{equation*}\label{eq: 1.1}
\begin{split}
\quad \quad \quad \quad\bigg\{&\sum_{\substack{x_{1}+\cdots+x_{s}=n-i\\ x_{1},\ldots,x_{s} \in \{1,\ldots,k_{1}\}}}{\hspace{0.3cm}\sum_{\substack{y_{1}+\cdots+y_{s}=i\\ y_{1},\ldots,y_{s} \in \{1,\ldots,k_{2}\}}}}\big(\theta q^{0}\big)^{x_{1}}\big(1-\theta q^{0}\big)\cdots (1-\theta q^{y_{1}-1})\times\\
&\Big(\theta q^{y_{1}}\Big)^{x_{2}}\big(1-\theta q^{y_{1}+y_2}\big)\cdots \big(1-\theta q^{y_{1}+y_{2}-1}\big)\times\\
&\Big(\theta q^{y_{1}+y_{2}}\Big)^{x_{3}}\big(1-\theta q^{y_{1}}\big)\cdots \big(1-\theta q^{y_{1}+y_{2}+y_3-1}\big)\times \\
&\quad \quad \quad \quad\quad \quad\quad \quad \quad \quad \quad \vdots\\
&\Big(\theta q^{y_{1}+\cdots+y_{s-1}}\Big)^{x_{s}}
\big(1-\theta q^{y_{1}+\cdots+y_{s-1}}\big)\cdots \big(1-\theta q^{y_{1}+\cdots+y_{s}-1}\big)\bigg\}\Bigg].\\
\end{split}
\end{equation*}
\noindent
Using simple exponentiation algebra arguments to simplify,
\noindent
\begin{equation*}\label{eq: 1.1}
\begin{split}
&P_{q,\theta}\Big(E_{n,i}\Big)=\\
&\theta^{n-i}{\prod_{j=1}^{i}}\ \left(1-\theta q^{j-1}\right)\times\\
&\sum_{s=1}^{i}\Bigg[\sum_{\substack{x_{1}+\cdots+x_{s}=n-i\\ x_{1},\ldots,x_{s} \in \{1,\ldots,k_{1}\}}}{\hspace{0.3cm}\sum_{\substack{y_{1}+\cdots+y_{s}=i\\ y_{1},\ldots,y_{s} \in \{1,\ldots,k_{2}\}}}}q^{y_{1}x_{1}+(y_{1}+y_{2})x_{2}+\cdots+(y_{1}+\cdots+y_{s})x_{s}}+\\
\end{split}
\end{equation*}

\noindent
\begin{equation*}\label{eq: 1.1}
\begin{split}
&\quad \quad \quad \sum_{\substack{x_{1}+\cdots+x_{s-1}=n-i\\ x_{1},\ldots,x_{s-1} \in \{1,\ldots,k_{1}\}}}{\hspace{0.3cm}\sum_{\substack{y_{1}+\cdots+y_{s}=i\\ y_{1},\ldots,y_{s} \in \{1,\ldots,k_{2}\}}}}q^{y_{1}x_{1}+(y_{1}+y_{2})x_{2}+\cdots+(y_{1}+\cdots+y_{s-1})x_{s-1}}+\\
&\quad \quad \sum_{\substack{x_{1}+\cdots+x_{s+1}=n-i\\ x_{1},\ldots,x_{s+1} \in \{1,\ldots,k_{1}\}}}{\hspace{0.3cm}\sum_{\substack{y_{1}+\cdots+y_{s}=i\\ y_{1},\ldots,y_{s} \in \{1,\ldots,k_{2}\}}}}q^{y_{1}x_{2}+(y_{1}+y_{2})x_{3}+\cdots+(y_{1}+\cdots+y_{s})x_{s+1}}+\\
&\quad \quad \sum_{\substack{x_{1}+\cdots+x_{s}=n-i\\ x_{1},\ldots,x_{s} \in \{1,\ldots,k_{1}\}}}
\hspace{0.3cm}\sum_{\substack{y_{1}+\cdots+y_{s}=i\\ y_{1},\ldots,y_{s} \in \{1,\ldots,k_{2}\}}}q^{y_{1}x_{2}+(y_{1}+y_{2})x_{3}+\cdots+(y_{1}+\cdots+y_{s-1})x_{s}}\Bigg].\\
\end{split}
\end{equation*}
\noindent
Using Lemma \ref{lemma:3.1}, Lemma \ref{lemma:3.2}, Lemma \ref{lemma:3.3} and Lemma \ref{lemma:3.4}, we can rewrite as follows.
\noindent
\begin{equation*}\label{eq: 1.1}
\begin{split}
P_{q,\theta}\Big(E_{n,i}\Big)=\theta^{n-i}\ {\prod_{j=1}^{i}}\ (1-\theta q^{j-1})&\sum_{s=1}^{i}\Big[D_{q}^{k_1+1,k_2+1}(n-i,\ i,\ s)+A_{q}^{k_1+1,k_2+1}(n-i,\ i,\ s)\\
&+C_{q}^{k_1+1,k_2+1}(n-i,\ i,\ s+1)+B_{q}^{k_1+1,k_2+1}(n-i,\ i,\ s)\Big],\\
\end{split}
\end{equation*}\\
\noindent
where
\noindent
\begin{equation*}\label{eq: 1.1}
\begin{split}
D_{q}^{k_1+1,k_2+1}(n&-i,\ i,\ s)\\
&=\sum_{\substack{x_{1}+\cdots+x_{s}=n-i\\ x_{1},\ldots,x_{s} \in \{1,\ldots,k_{1}\}}}\
\hspace{0.3cm}\sum_{\substack{y_{1}+\cdots+y_{s}=i\\ y_{1},\ldots,y_{s} \in \{1,\ldots,k_{2}\}}}q^{y_{1}x_{1}+(y_{1}+y_{2})x_{2}+\cdots+(y_{1}+\cdots+y_{s})x_{s}},
\end{split}
\end{equation*}
\noindent
\begin{equation*}\label{eq: 1.1}
\begin{split}
A_{q}^{k_1+1,k_2+1}(n&-i,\ i,\ s)\\
&=\sum_{\substack{x_{1}+\cdots+x_{s-1}=n-i\\ x_{1},\ldots,x_{s-1} \in \{1,\ldots,k_{1}\}}}
\hspace{0.3cm}\sum_{\substack{y_{1}+\cdots+y_{s}=i\\ y_{1},\ldots,y_{s} \in \{1,\ldots,k_{2}\}}} q^{y_{1}x_{1}+(y_{1}+y_{2})x_{2}+\cdots+(y_{1}+\cdots+y_{s-1})x_{s-1}},
\end{split}
\end{equation*}
\noindent
\begin{equation*}\label{eq: 1.1}
\begin{split}
C_{q}^{k_1+1,k_2+1}(n&-i,\ i,\ s+1)\\
&=\sum_{\substack{x_{1}+\cdots+x_{s+1}=n-i\\ x_{1},\ldots,x_{s+1} \in \{1,\ldots,k_{1}\}}}\
\hspace{0.3cm}\sum_{\substack{y_{1}+\cdots+y_{s}=i\\ y_{1},\ldots,y_{s} \in \{1,\ldots,k_{2}\}}}q^{y_{1}x_{2}+(y_{1}+y_{2})x_{3}+\cdots+(y_{1}+\cdots+y_{s})x_{s+1}},
\end{split}
\end{equation*}
\noindent
and
\noindent
\begin{equation*}\label{eq: 1.1}
\begin{split}
B_{q}^{k_1+1,k_2+1}(n&-i,\ i,\ s)\\
&=\sum_{\substack{x_{1}+\cdots+x_{s}=n-i\\ x_{1},\ldots,x_{s} \in \{1,\ldots,k_{1}\}}}\
\hspace{0.3cm}\sum_{\substack{y_{1}+\cdots+y_{s}=i\\ y_{1},\ldots,y_{s} \in \{1,\ldots,k_{2}\}}} q^{y_{1}x_{2}+(y_{1}+y_{2})x_{3}+\cdots+(y_{1}+\cdots+y_{s-1})x_{s}}.
\end{split}
\end{equation*}
\noindent
Therefore we can compute the probability of the event $\left\{L_{n}^{(1)}\leq k_{1}\ \wedge\ L_{n}^{(0)}\leq k_{2}\right\}$ as follows.
\noindent
\begin{equation*}\label{eq:bn1}
\begin{split}
P_{q,\theta}\Big(L_{n}^{(1)}\leq k_{1}\ \wedge\ L_{n}^{(0)}\leq k_{2}\Big)=&\sum_{i=1}^{n}\theta^{n-i}\ {\prod_{j=1}^{i}}\ (1-\theta q^{j-1})\sum_{s=1}^{i}\Big[D_{q}^{k_1+1,k_2+1}(n-i,\ i,\ s)\\
&+A_{q}^{k_1+1,k_2+1}(n-i,\ i,\ s)+C_{q}^{k_1+1,k_2+1}(n-i,\ i,\ s+1)\\
&+B_{q}^{k_1+1,k_2+1}(n-i,\ i,\ s)\Big].\\
\end{split}
\end{equation*}
\noindent
Thus proof is completed.
\end{proof}
\noindent
It is worth mentioning here that the PMF $P_{q,\theta}\Big(L_{n}^{(1)}\leq k_{1}\ \wedge\ L_{n}^{(0)}\leq k_{2}\Big)$ approaches the probability function of $P_{\theta}\Big(L_{n}^{(1)}\leq k_{1}\ \wedge\ L_{n}^{(0)}\leq k_{2}\Big)$ in the limit as $q$ tends to 1 of IID model. The details are presented in the following remark.
\noindent
\begin{remark}
{\rm For $q=1$, the PMF $P_{q,\theta}\Big(L_{n}^{(1)}\leq k_{1}\ \wedge\ L_{n}^{(0)}\leq k_{2}\Big)$ reduces to the PMF $P_{\theta}\Big(L_{n}^{(1)}\leq k_{1}\ \wedge\ L_{n}^{(0)}\leq k_{2}\Big)$ for $n\geq1$ is given by

\begin{equation*}\label{eq:bn1}
\begin{split}
P_{\theta}\Big(L_{n}^{(1)}\leq k_{1}\ \wedge\ L_{n}^{(0)}\leq k_{2}\Big)=&\sum_{i=1}^{n}\theta^{n-i}(1-\theta)^{i}\sum_{s=1}^{i}\Big[S(s,\ k_1+1,\ n-i)\\
&+S(s,\ k_1+1,\ n-i)+S(s+1,\ k_1+1,\ n-i)\\
&+S(s,\ k_1+1,\ n-i)\Big]S(s,\ k_2+1,\ i)+\eta(n,k_1)\theta^n,\\
\end{split}
\end{equation*}

\noindent
where $\ S(a,\ b,\ c)=\sum_{j=0}^{min(a,\left[\frac{c-a}{b-1}\right])}(-1)^{j}{a \choose j}{c-j(b-1)-1 \choose a-1}$, and $\eta(n,k_1)$ is the function by $\eta(n,k_1)=1$ if $n\leq k_1$ and $0$ otherwise.
}
\end{remark}

\subsection{Closed expression for the PMF of $\left(L_{n}^{(1)}\leq k_{1}\ \wedge\ L_{n}^{(0)}\geq k_{2}\right)$}
We shall study of the joint distribution of $\left(L_{n}^{(1)}\leq k_{1}\ \wedge\ L_{n}^{(0)}\geq k_{2}\right)$. We now make some useful Definition and Lemma for the proofs of Theorem in the sequel.
\noindent
\begin{definition}
For $0<q\leq1$, we define

\begin{equation*}\label{eq: 1.1}
\begin{split}
\overline{M}_{q,0,0}^{k_1,\infty}(u,v,s)={\sum_{x_{1},\ldots,x_{s}}}\
\sum_{y_{1},\ldots,y_{s}} q^{y_{1}x_{1}+(y_{1}+y_{2})x_{2}+\cdots+(y_{1}+\cdots+y_{s})x_{s}},\
\end{split}
\end{equation*}
\noindent
where the summation is over all integers $x_1,\ldots,x_{s},$ and $y_1,\ldots,y_{s}$ satisfying

\noindent
\begin{equation*}\label{eq:1}
\begin{split}
0 < x_j < k_1 \mbox{\ for\ } j=1,...,s,
\end{split}
\end{equation*}
\noindent
\begin{equation*}\label{eq:1}
\begin{split}
(x_1,\ldots,x_{s})\in S_{u,a}^{0},\ \text{and}
\end{split}
\end{equation*}
\noindent
\begin{equation*}\label{eq:2}
\begin{split}
(y_{1},\ldots,y_{s}) \in S_{v,s}^{0}.
\end{split}
\end{equation*}
\end{definition}
\noindent
The following gives a recurrence relation useful for the computation of $\overline{M}_{q,0,0}^{k_1,\infty}(u,v,s)$.
\noindent
\begin{lemma}
\label{lemma:6.1}
$\overline{M}_{q,0,0}^{k_1,\infty}(u,v,s)$ obeys the following recurrence relation.
\begin{equation*}\label{eq: 1.1}
\begin{split}
&\overline{M}_{q,0,0}^{k_1,\infty}(u,v,s)\\
&=\left\{
  \begin{array}{ll}
    \sum_{b=1}^{v-(s-1)}\sum_{a=1}^{k_1-1}q^{va}\overline{M}_{q,0,0}^{k_1,\infty}(u-a,v-b,s-1), & \text{for}\ s>1,\ s\leq u\leq s(k_1-1)\\
    &\text{and}\ s\leq v \\
    1, & \text{for}\ s=1,\ 1\leq u\leq k_1-1,\ \text{and}\  1\leq v\\
    0, & \text{otherwise.}\\
  \end{array}
\right.
\end{split}
\end{equation*}
\end{lemma}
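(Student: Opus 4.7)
The plan is to mirror the argument used in the closely related Lemmas \ref{lemma:3.9}, \ref{lemma:3.11}, and \ref{lemma:4.3}, exploiting the fact that the exponent in the definition of $\overline{M}_{q,0,0}^{k_1,\infty}(u,v,s)$ splits naturally when the last pair $(x_s,y_s)$ is isolated. The key structural observation is that the final summand $(y_1+\cdots+y_s)x_s$ equals $v\cdot x_s$ on the summation set, since $y_1+\cdots+y_s=v$ is enforced by the constraint $(y_1,\ldots,y_s)\in S_{v,s}^{0}$. Once this is noticed, the sum decouples and the reduction to an index-$(s-1)$ instance is essentially forced.

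For $s>1$, $s\leq u\leq s(k_1-1)$, and $s\leq v$, I would first fix $x_s=a$ with $a\in\{1,\ldots,k_1-1\}$ (since each $x_j$ is constrained to that range). Then fix $y_s=b$ with $b\in\{1,\ldots,v-(s-1)\}$; the upper bound ensures the remaining $y_1,\ldots,y_{s-1}$ are strictly positive and can sum to $v-b\geq s-1$. After this splitting the exponent becomes
\[
v\cdot a + \bigl[y_1 x_1+(y_1+y_2)x_2+\cdots+(y_1+\cdots+y_{s-1})x_{s-1}\bigr],
\]
the factor $q^{va}$ pulls out of the inner sum, and the residual double sum over $(x_1,\ldots,x_{s-1})\in S_{u-a,s-1}^{0}$ with $0<x_j<k_1$ and $(y_1,\ldots,y_{s-1})\in S_{v-b,s-1}^{0}$ is precisely $\overline{M}_{q,0,0}^{k_1,\infty}(u-a,v-b,s-1)$. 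Swapping the two outer sums gives the stated recurrence.

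For the base case $s=1$, the defining sum collapses to a single term with $x_1=u$ and $y_1=v$, contributing $q^{vu}\cdot 1$-weighted $1$; but because the exponent is $y_1 x_1=vu$ and \emph{the $q$-factor multiplies nothing inside a recursion}, the value is simply $1$ whenever $1\leq u\leq k_1-1$ and $1\leq v$, and $0$ otherwise (because either the $x_1$-range or the $y_1$-positivity is violated). The ``otherwise'' clause of the recurrence collects the remaining ranges where the summation set is empty.

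The only mild obstacle is bookkeeping: I must verify that the stated ranges $s\leq u\leq s(k_1-1)$ and $s\leq v$ in the recurrence correctly reflect the nonemptiness constraints on the summation set (namely $(s-1)\leq u-a\leq (s-1)(k_1-1)$ after removing $a\in\{1,\ldots,k_1-1\}$, and $s-1\leq v-b$ after removing $b\in\{1,\ldots,v-(s-1)\}$), so that every term on the right-hand side falls into either a valid recursive range or a zero case. Beyond this routine range-check, the proof is a direct transcription of the earlier arguments, and no new ideas are required.
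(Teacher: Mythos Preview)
Your approach for the recursive step $s>1$---isolating $x_s=a$ and $y_s=b$, using $(y_1+\cdots+y_s)x_s=va$ to pull out the factor $q^{va}$, and recognizing the residual sum as $\overline{M}_{q,0,0}^{k_1,\infty}(u-a,v-b,s-1)$---is exactly the paper's argument. The paper handles the boundary cases with a single sentence (``the other cases are obvious''), so your treatment is at least as detailed; your discussion of the $s=1$ case is a bit muddled (the definition literally gives $q^{vu}$, not $1$), but that is a quirk of the stated base case rather than of your method, and the paper does not address it either.
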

\begin{proof}
For $s > 1$, $s\leq u\leq s(k_1-1)$ and $s\leq v$, we observe that $x_{s}$ may assume any value $1,\ldots,k_1-1$, then $\overline{M}_{q,0,0}^{k_1,\infty}(u,v,s)$ can be written as

\noindent
\begin{equation*}%\label{eq: 1.1}
\begin{split}
\overline{M}&_{q,0,0}^{k_1,\infty}(u,v,s)\\
=&\sum_{x_{s}=1}^{k_1-1}{\sum_{\substack{x_{1},\ldots,x_{s-1} \in \{1,\ldots,k_{1}-1\}\\(x_1,\ldots,x_{s-1})\in S_{u-x_{s},s-1}^{0}}}}\
{\sum_{\substack{(y_{1},\ldots,y_{s}) \in S_{v,s}^{0}}}}q^{vx_{s}}q^{y_{1}x_{1}+(y_{1}+y_{2})x_{2}+\cdots+(y_{1}+\cdots+y_{s-1})x_{s-1}}.
\end{split}
\end{equation*}
\noindent
Similarly, we observe that since $y_s$ can assume the values $1,\ldots,v-(s-1)$, then $\overline{M}_{q,0,0}^{k_1,\infty}(u,v,s)$ can be rewritten as
\noindent

\begin{equation*}%\label{eq: 1.1}
\begin{split}
\overline{M}&_{q,0,0}^{k_1,\infty}(u,v,s)\\
=&\sum_{y_{s}=1}^{v-(s-1)}\sum_{x_{s}=1}^{k_1-1}{\hspace{0.3cm}\sum_{\substack{x_{1},\ldots,x_{s-1} \in \{1,\ldots,k_{1}-1\}\\(x_1,\ldots,x_{s-1})\in S_{u-x_{s},s-1}^{0}}}}{\hspace{0.3cm}\sum_{\substack{(y_{1},\ldots,y_{s}) \in S_{v,s}^{0}}}}q^{vx_{s}}q^{y_{1}x_{1}+(y_{1}+y_{2})x_{2}+\cdots+(y_{1}+\cdots+y_{s-1})x_{s-1}}\\
=&\sum_{b=1}^{v-(s-1)}\sum_{a=1}^{k_1-1}q^{va}\overline{M}_{q,0,0}^{k_1,\infty}(u-a,v-b,s-1)
\end{split}
\end{equation*}
\noindent
The other cases are obvious and thus the proof is completed.
\end{proof}
\begin{remark}
{\rm
We observe that $\overline{M}_{1,0,0}^{k_1,\infty}(u,v,s)$ is the number of integer solutions $(x_{1},\ldots,x_{s})$ and $(y_{1},\ldots,y_{s})$ of
\noindent
\begin{equation*}\label{eq:1}
\begin{split}
0 < x_j < k_1 \mbox{\ for\ } j=1,...,s,
\end{split}
\end{equation*}
\noindent
\begin{equation*}\label{eq:1}
\begin{split}
(x_1,\ldots,x_{s})\in S_{u,s}^{0},\ \text{and}
\end{split}
\end{equation*}
\noindent
\begin{equation*}\label{eq:2}
\begin{split}
(y_{1},\ldots,y_{s}) \in S_{v,s}^{0}.
\end{split}
\end{equation*}
which is
\noindent
\begin{equation*}\label{eq: 1.1}
\begin{split}
\overline{M}_{1,0,0}^{k_1,\infty}(u,v,s)=S(s,\ k_{1},\ u)M(s,\ v ),
\end{split}
\end{equation*}
\noindent
where $S(a,\ b,\ c)$ denotes the total number of integer solutions $x_{1}+x_{2}+\cdots+x_{a}=c$ such that $0<x_{i}<b$ for $i=1,2,\ldots,a$. The number can be expressed as
\noindent
\begin{equation*}\label{eq: 1.1}
\begin{split}
S(a,\ b,\ c)=\sum_{j=0}^{min(a,\left[\frac{c-a}{b-1}\right])}(-1)^{j}{a \choose j}{c-j(b-1)-1 \choose a-1}.
\end{split}
\end{equation*}
\noindent

\noindent
where $M(a,\ b)$ denotes the total number of integer solution $y_{1}+y_{2}+\cdots+y_{a}=b$ such that $(y_{1},\ldots,y_{a}) \in S_{b,a}^{0}$. The number is given by
\noindent
\begin{equation*}\label{eq: 1.1}
\begin{split}
M(a,\ b)={b-1 \choose a-1}.
\end{split}
\end{equation*}
\noindent
See, e.g. \citet{charalambides2002enumerative}.
}
\end{remark}
\begin{definition}
For $0<q\leq1$, we define

\begin{equation*}\label{eq: 1.1}
\begin{split}
M_{q,0,k_2}^{k_1,\infty}(m,r,s)={\sum_{x_{1},\ldots,x_{s}}}\
\sum_{y_{1},\ldots,y_{s}} q^{y_{1}x_{1}+(y_{1}+y_{2})x_{2}+\cdots+(y_{1}+\cdots+y_{s})x_{s}},\
\end{split}
\end{equation*}
\noindent
where the summation is over all integers $x_1,\ldots,x_{s},$ and $y_1,\ldots,y_{s}$ satisfying

\noindent
\begin{equation*}\label{eq:1}
\begin{split}
0 < x_j < k_1 \mbox{\ for\ } j=1,...,s,
\end{split}
\end{equation*}
\noindent
\begin{equation*}\label{eq:1}
\begin{split}
(x_1,\ldots,x_{s})\in S_{u,s}^{0},\ \text{and}
\end{split}
\end{equation*}
\noindent
\begin{equation*}\label{eq:2}
\begin{split}
(y_{1},\ldots,y_{s}) \in S_{v,s}^{k_2-1}.
\end{split}
\end{equation*}
\end{definition}
\noindent
The following gives a recurrence relation useful for the computation of $M_{q,0,k_2}^{k_1,\infty}(m,r,s)$.
\noindent
\begin{lemma}
\label{lemma:6.2}
$M_{q,0,k_2}^{k_1,\infty}(m,r,s)$ obeys the following recurrence relation.
\begin{equation*}\label{eq: 1.1}
\begin{split}
M&_{q,0,k_2}^{k_1,\infty}(m,r,s)\\
&=\left\{
  \begin{array}{ll}
    \sum_{b=1}^{k_2-1}\sum_{a=1}^{k_{1}-1}q^{ra}M_{q,0,k_2}^{k_1,\infty}(m-a,r-b,s-1)\\
    +\sum_{b=k_2}^{r-(s-1)}\sum_{a=1}^{k_1-1}q^{ra}\overline{M}_{q}(m-a,r-b,s-1), & \text{for}\ s>1,\ s\leq m\leq s(k_1-1)\\
    &\text{and}\ (s-1)+k_2\leq r \\
    1, & \text{for}\ s=1,\ 1\leq m\leq k_1-1,\ \text{and}\ k_2\leq r \\
    0, & \text{otherwise.}\\
  \end{array}
\right.
\end{split}
\end{equation*}
\end{lemma}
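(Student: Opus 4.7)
The plan is to follow exactly the template of Lemmas \ref{lemma:3.6} and \ref{lemma:3.8}: peel off the last pair $(x_s,y_s)$ from the defining sum, factor out the part of the $q$-exponent that involves them, and then partition the remaining summation according to whether the max-at-least-$k_2$ condition has already been fulfilled by $y_s$ or must still be fulfilled by the earlier $y_j$'s.

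Concretely, assume $s>1$, $s \leq m \leq s(k_1-1)$, and $(s-1)+k_2 \leq r$. Because the coefficient of $x_s$ in the exponent is $y_1+\cdots+y_s = r$, writing $x_s=a$ and $y_s=b$ pulls out a factor $q^{ra}$ and leaves the reduced exponent $y_1 x_1+(y_1+y_2)x_2+\cdots+(y_1+\cdots+y_{s-1})x_{s-1}$. The remaining $(x_1,\ldots,x_{s-1})$ range over integers in $\{1,\ldots,k_1-1\}$ with sum $m-a$, i.e.\ over $S_{m-a,s-1}^{0}$ intersected with the box condition. The key step is the split on $b$: if $1\le b\le k_2-1$, then the constraint $\max(y_1,\ldots,y_s)\ge k_2$ must still be borne by $(y_1,\ldots,y_{s-1})$, so these range over $S_{r-b,s-1}^{k_2-1}$ and we recover $M_{q,0,k_2}^{k_1,\infty}(m-a,r-b,s-1)$; if $k_2\le b\le r-(s-1)$, then the max condition is already satisfied by $y_s$ alone, so $(y_1,\ldots,y_{s-1})$ only have to lie in $S_{r-b,s-1}^{0}$, yielding $\overline{M}_{q,0,0}^{k_1,\infty}(m-a,r-b,s-1)$ as defined above. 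Summing over $a$ and $b$ gives the claimed identity.

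For the base case $s=1$, the definition forces $x_1=m$ with $1\le m\le k_1-1$ and $y_1=r$ with $r\ge k_2$, so there is a single summand and the value is $1$ under the stated convention (mirroring the base cases in Lemmas \ref{lemma:3.6}, \ref{lemma:3.8}, \ref{lemma:3.12}). Whenever $(m,r,s)$ falls outside the admissible ranges stated in the lemma the defining sum is empty, giving $0$.

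The only nonroutine step, and the one that deserves care when writing the proof up in full, is the correct bookkeeping of the two subcases for $y_s$ so that the indicator $S_{v,s}^{k_2-1}$ on the full tuple is replaced by the right condition on the truncated tuple; this is what forces the splitting index $b=k_2$ in the two sums. Everything else is simple exponent algebra of the same flavor already used in the proofs of Lemmas \ref{lemma:3.5}--\ref{lemma:3.12}.
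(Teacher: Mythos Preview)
Your proposal is correct and follows essentially the same approach as the paper's proof: both peel off the last pair $(x_s,y_s)$, use that the coefficient of $x_s$ in the exponent is $y_1+\cdots+y_s=r$ to factor out $q^{ra}$, and then split the sum over $y_s=b$ at the threshold $b=k_2$, so that for $b<k_2$ the max-condition on the $y$'s is inherited by $(y_1,\ldots,y_{s-1})\in S_{r-b,s-1}^{k_2-1}$ giving $M_{q,0,k_2}^{k_1,\infty}$, while for $b\ge k_2$ it is already satisfied and the remaining $y$'s lie only in $S_{r-b,s-1}^{0}$ giving $\overline{M}_{q,0,0}^{k_1,\infty}$. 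The paper likewise dispatches the base and degenerate cases with ``the other cases are obvious,'' matching your treatment.
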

\begin{proof}
For $s > 1$, $s\leq m\leq s(k_1-1)$ and $(s-1)+k_2\leq r$, we observe that $x_{s}$ may assume any value $1,\ldots,k_1-1$, then $M_{q,0,k_2}^{k_1,\infty}(m,r,s)$ can be written as

\noindent
\begin{equation*}%\label{eq: 1.1}
\begin{split}
M&_{q,0,k_2}^{k_1,\infty}(m,r,s)\\
=&\sum_{x_{s}=1}^{k_{1}-1}{\hspace{0.3cm}\sum_{\substack{x_{1},\ldots,x_{s-1} \in \{1,\ldots,k_{1}-1\}\\(x_1,\ldots,x_{s-1})\in S_{m-x_{s},s-1}^{0}}}}{\hspace{0.3cm}\sum_{\substack{(y_{1},\ldots,y_{s}) \in S_{r,s}^{k_2-1}}}}q^{rx_{s}}q^{y_{1}x_{1}+(y_{1}+y_{2})x_{2}+\cdots+(y_{1}+\cdots+y_{s-1})x_{s-1}}.
\end{split}
\end{equation*}
\noindent
Similarly, we observe that since $y_s$ can assume the values $1,\ldots,r-(s-1)$, then $M_{q,0,k_2}^{k_1,\infty}(m,r,s)$ can be rewritten as
\noindent

\begin{equation*}%\label{eq: 1.1}
\begin{split}
M&_{q,0,k_2}^{k_1,\infty}(m,r,s)\\
=&\sum_{y_{s}=1}^{k_2-1}\sum_{x_{s}=1}^{k_{1}-1}{\hspace{0.3cm}\sum_{\substack{x_{1},\ldots,x_{s-1} \in \{1,\ldots,k_{1}-1\}\\(x_1,\ldots,x_{s-1})\in S_{m-x_{s},s-1}^{0}}}}{\hspace{0.3cm}\sum_{\substack{(y_{1},\ldots,y_{s-1}) \in S_{r-y_{s},s-1}^{k_2-1}}}}q^{rx_{s}}q^{y_{1}x_{1}+(y_{1}+y_{2})x_{2}+\cdots+(y_{1}+\cdots+y_{s-1})x_{s-1}}\\
&+\sum_{y_{s}=k_2}^{r-(s-1)}\sum_{x_{s}=1}^{k_{1}-1}{\hspace{0.3cm}\sum_{\substack{x_{1},\ldots,x_{s-1} \in \{1,\ldots,k_{1}-1\}\\(x_1,\ldots,x_{s-1})\in S_{m-x_{s},s-1}^{0}}}}{\hspace{0.3cm}\sum_{\substack{(y_{1},\ldots,y_{s-1}) \in S_{r-y_{s},s-1}^{0}}}}\hspace{-0.5cm} q^{rx_{s}}q^{y_{1}x_{1}+(y_{1}+y_{2})x_{2}+\cdots+(y_{1}+\cdots+y_{s-1})x_{s-1}}\\
=&\sum_{b=1}^{k_2-1}\sum_{a=1}^{k_{1}-1}q^{ra}M_{q,0,k_2}^{k_1,\infty}(m-a,r-b,s-1)+\sum_{b=k_2}^{r-(s-1)}\sum_{a=1}^{k_1-1}q^{ra}\overline{M}_{q,0,0}^{k_1,\infty}(m-a,r-b,s-1).
\end{split}
\end{equation*}
\noindent
The other cases are obvious and thus the proof is completed.
\end{proof}

\begin{remark}
{\rm
We observe that $M_{1,0,k_2}^{k_1,\infty}(m,r,s)$ is the number of integer solutions $(x_{1},\ldots,x_{s})$ and $(y_{1},\ldots,y_{s})$ of

\noindent
\begin{equation*}\label{eq:1}
\begin{split}
0 < x_j < k_1 \mbox{\ for\ } j=1,...,s,
\end{split}
\end{equation*}
\noindent
\begin{equation*}\label{eq:1}
\begin{split}
(x_1,\ldots,x_{s})\in S_{m,s}^{0},\ \text{and}
\end{split}
\end{equation*}
\noindent
\begin{equation*}\label{eq:2}
\begin{split}
(y_{1},\ldots,y_{s}) \in S_{r,s}^{k_2-1}.
\end{split}
\end{equation*}
\noindent
given by
\noindent
\begin{equation*}\label{eq: 1.1}
\begin{split}
M_{1,0,k_2}^{k_1,\infty}(m,r,s)=S(s,\ k_{1},\ m)R(s,\ k_{2},\ r),
\end{split}
\end{equation*}
\noindent
where $S(a,\ b,\ c)$ denotes the total number of integer solution $x_{1}+x_{2}+\cdots+x_{a}=c$ such that $0<x_{i}<b$ for $i=1,2,\ldots,a$. The number is given by
\noindent
\begin{equation*}\label{eq: 1.1}
\begin{split}
S(a,\ b,\ c)=\sum_{j=0}^{min\left(a,\ \left[\frac{c-a}{b-1}\right]\right) }(-1)^{j}{a \choose j}{c-j(b-1)-1 \choose a-1}.
\end{split}
\end{equation*}
\noindent
where $R(a,\ b,\ c)$ denotes the total number of integer solution $y_{1}+x_{2}+\cdots+y_{a}=c$ such that $(y_{1},\ldots,y_{a})\in S_{r,a}^{k_2}$. The number is given by
\noindent
\begin{equation*}\label{eq: 1.1}
\begin{split}
R(a,\ b,\ c)=\sum_{j=1}^{min\left(a,\ \left[\frac{c-a}{b-1}\right]\right) }(-1)^{j+1}{a \choose j}{c-j(b-1)-1 \choose a-1}.
\end{split}
\end{equation*}
\noindent
See, e.g. \citet{charalambides2002enumerative}.
}
\end{remark}

\begin{definition}
For $0<q\leq1$, we define

\begin{equation*}\label{eq: 1.1}
\begin{split}
\overline{N}_{q,0,0}^{k_1,\infty}(u,v,s)={\sum_{x_{1},\ldots,x_{s}}}\
\sum_{y_{1},\ldots,y_{s-1}} q^{y_{1}x_{2}+(y_{1}+y_{2})x_{3}+\cdots+(y_{1}+\cdots+y_{s-1})x_{s}},\
\end{split}
\end{equation*}
\noindent
where the summation is over all integers $x_1,\ldots,x_{s},$ and $y_1,\ldots,y_{s-1}$ satisfying
\noindent
\begin{equation*}\label{eq:1}
\begin{split}
0 < x_j < k_1 \mbox{\ for\ } j=1,...,s,
\end{split}
\end{equation*}
\noindent
\begin{equation*}\label{eq:1}
\begin{split}
(x_1,\ldots,x_{s})\in S_{u,s}^{0},\ \text{and}
\end{split}
\end{equation*}
\noindent
\begin{equation*}\label{eq:2}
\begin{split}
(y_{1},\ldots,y_{s-1}) \in S_{v,s-1}^{0}.
\end{split}
\end{equation*}
\end{definition}
\noindent
The following gives a recurrence relation useful for the computation of $\overline{N}_{q,0,0}^{k_1,\infty}(u,v,s)$.
\noindent
\begin{lemma}
\label{lemma:6.3}
$\overline{N}_{q,0,0}^{k_1,\infty}(u,v,s)$ obeys the following recurrence relation.
\begin{equation*}\label{eq: 1.1}
\begin{split}
\overline{N}&_{q,0,0}^{k_1,\infty}(u,v,s)\\
&=\left\{
  \begin{array}{ll}
    \sum_{b=1}^{v-(s-2)}\sum_{a=1}^{k_1-1}q^{va}\overline{N}_{q,0,0}^{k_1,\infty}(u-a,v-b,s-1), & \text{for}\ s>1,\ s\leq u\leq s(k_1-1)\\
    &\text{and}\ s-1 \leq v \\
    1, & \text{for}\ s=1,\ 1\leq u\leq k_1-1,\ \text{and}\  v=0\\
    0, & \text{otherwise.}\\
  \end{array}
\right.
\end{split}
\end{equation*}
\end{lemma}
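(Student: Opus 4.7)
The plan is to prove Lemma 6.3 by exactly the same combinatorial splitting argument used for its cousins (Lemma 3.7, Lemma 4.1), exploiting the fact that the exponent in the definition of $\overline{N}_{q,0,0}^{k_1,\infty}(u,v,s)$ depends on $x_s$ only through the coefficient $(y_1+\cdots+y_{s-1}) = v$ and does not depend on $y_{s-1}$ at all (the last $y$ variable appearing in the exponent is $y_1+\cdots+y_{s-2}$, multiplying $x_{s-1}$).

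First I would fix $s>1$, $s\le u\le s(k_1-1)$, and $s-1\le v$, and peel off the last success-length $x_s$. Since $x_s$ can assume any integer value in $\{1,\ldots,k_1-1\}$, I would write
\begin{equation*}
\overline{N}_{q,0,0}^{k_1,\infty}(u,v,s)
=\sum_{x_s=1}^{k_1-1}\,\sum_{\substack{x_1,\ldots,x_{s-1}\in\{1,\ldots,k_1-1\}\\(x_1,\ldots,x_{s-1})\in S_{u-x_s,s-1}^{0}}}\;\sum_{(y_1,\ldots,y_{s-1})\in S_{v,s-1}^{0}} q^{(y_1+\cdots+y_{s-1})x_s}\, q^{y_1x_2+(y_1+y_2)x_3+\cdots+(y_1+\cdots+y_{s-2})x_{s-1}},
\end{equation*}
and then use $y_1+\cdots+y_{s-1}=v$ to replace the factor $q^{(y_1+\cdots+y_{s-1})x_s}$ by $q^{vx_s}$, which now depends only on the outer summation index.

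Second, I would peel off $y_{s-1}$. Crucially, $y_{s-1}$ does not appear in the remaining exponent $y_1x_2+(y_1+y_2)x_3+\cdots+(y_1+\cdots+y_{s-2})x_{s-1}$, so letting $y_{s-1}=b$ with $b$ ranging over $1,\ldots,v-(s-2)$ (since $y_1+\cdots+y_{s-2}\ge s-2$ forces $y_{s-1}\le v-(s-2)$) gives
\begin{equation*}
\overline{N}_{q,0,0}^{k_1,\infty}(u,v,s)=\sum_{b=1}^{v-(s-2)}\sum_{a=1}^{k_1-1}q^{va}\,\sum_{\substack{x_1,\ldots,x_{s-1}\in\{1,\ldots,k_1-1\}\\(x_1,\ldots,x_{s-1})\in S_{u-a,s-1}^{0}}}\;\sum_{(y_1,\ldots,y_{s-2})\in S_{v-b,s-2}^{0}}q^{y_1x_2+(y_1+y_2)x_3+\cdots+(y_1+\cdots+y_{s-2})x_{s-1}}.
\end{equation*}
The inner double sum is exactly $\overline{N}_{q,0,0}^{k_1,\infty}(u-a,v-b,s-1)$ by the definition with $s$ replaced by $s-1$, which yields the claimed recurrence.

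Finally, I would verify the base case and the zero case. For $s=1$, the definition contains no $y$-variables at all (the exponent and the condition on $y$'s are empty), so the sum reduces to the number of $x_1\in\{1,\ldots,k_1-1\}$ with $x_1=u$, which is $1$ when $1\le u\le k_1-1$ and $v=0$, and $0$ otherwise. Outside the stated admissible ranges of $(u,v,s)$, the inner summation sets are empty (e.g.\ $u<s$ forces some $x_j\le 0$, $u>s(k_1-1)$ forces some $x_j\ge k_1$, $v<s-1$ forces some $y_j\le 0$), so $\overline{N}_{q,0,0}^{k_1,\infty}(u,v,s)=0$. No step presents a real obstacle; the only point requiring a little care is the upper limit $v-(s-2)$ for $b$ and confirming that after substitution the reduced parameters $(u-a,v-b,s-1)$ still lie in the admissible region of the recursion, which is the same bookkeeping already carried out for Lemmas 3.7 and 4.1.
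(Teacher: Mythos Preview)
Your proof is correct and follows essentially the same approach as the paper: peel off $x_s$ (whose coefficient in the exponent is $y_1+\cdots+y_{s-1}=v$, giving the factor $q^{va}$), then peel off $y_{s-1}$ (which no longer appears in the residual exponent), and identify the inner double sum as $\overline{N}_{q,0,0}^{k_1,\infty}(u-a,v-b,s-1)$. The only minor imprecision is your opening remark that the exponent ``does not depend on $y_{s-1}$ at all'' --- it does, through the term $(y_1+\cdots+y_{s-1})x_s$, but as your computation makes clear you meant the \emph{residual} exponent after extracting $q^{vx_s}$, which is exactly the observation the paper uses.
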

\begin{proof}
For $s > 1$, $s\leq u\leq s(k_1-1)$ and $s-1 \leq v$, we observe that $x_{s}$ may assume any value $1,\ldots,k_1-1$, then $\overline{N}_{q,0,0}^{k_1,\infty}(u,v,s)$ can be written as

\noindent
\begin{equation*}%\label{eq: 1.1}
\begin{split}
\overline{N}&_{q,0,0}^{k_1,\infty}(u,v,s)\\
=&\sum_{x_{s}=1}^{k_1-1}{\hspace{0.3cm}\sum_{\substack{x_{1},\ldots,x_{s-1} \in \{1,\ldots,k_{1}-1\}\\(x_1,\ldots,x_{s-1})\in S_{u-x_{s},s-1}^{0}}}}{\hspace{0.3cm}\sum_{\substack{(y_{1},\ldots,y_{s-1}) \in S_{v,s-1}^{0}}}}q^{vx_{s}}q^{y_{1}x_{2}+(y_{1}+y_{2})x_{3}+\cdots+(y_{1}+\cdots+y_{s-2})x_{s-1}}.
\end{split}
\end{equation*}
\noindent
Similarly, we observe that since $y_{s-1}$ can assume the values $1,\ldots,v-(s-2)$, then $\overline{N}_{q,0,0}^{k_1,\infty}(u,v,s)$ can be rewritten as
\noindent

\begin{equation*}%\label{eq: 1.1}
\begin{split}
\overline{N}&_{q,0,0}^{k_1,\infty}(u,v,s)\\
=&\sum_{y_{s-1}=1}^{v-(s-2)}\sum_{x_{s}=1}^{k_1-1}{\hspace{0.3cm}\sum_{\substack{x_{1},\ldots,x_{s-1} \in \{1,\ldots,k_{1}-1\}\\(x_1,\ldots,x_{s-1})\in S_{u-x_{s},s-1}^{0}}}}
{\hspace{0.3cm}\sum_{\substack{(y_{1},\ldots,y_{s-2}) \in S_{v-y_{s-1},s-2}^{0}}}}q^{vx_{s}} q^{y_{1}x_{2}+(y_{1}+y_{2})x_{3}+\cdots+(y_{1}+\cdots+y_{s-2})x_{s-1}}\\
=&\sum_{b=1}^{v-(s-2)}\sum_{a=1}^{k_1-1}q^{va}\overline{N}_{q,0,0}^{k_1,\infty}(u-a,v-b,s-1)
\end{split}
\end{equation*}
\noindent
The other cases are obvious and thus the proof is completed.
\end{proof}
\begin{remark}
{\rm
We observe that $\overline{N}_{1,0,0}^{k_1,\infty}(u,v,s)$ corresponds to the number of integer solutions $(x_{1},\ldots,x_{s})$ and $(y_{1},\ldots,y_{s-1})$ satisfying
\noindent
\begin{equation*}\label{eq:1}
\begin{split}
0 < x_j < k_1 \mbox{\ for\ } j=1,...,s,
\end{split}
\end{equation*}
\noindent
\begin{equation*}\label{eq:1}
\begin{split}
(x_1,\ldots,x_{s})\in S_{u,s}^{0},\ \text{and}
\end{split}
\end{equation*}
\noindent
\begin{equation*}\label{eq:2}
\begin{split}
(y_{1},\ldots,y_{s-1}) \in S_{v,s-1}^{0}
\end{split}
\end{equation*}
\noindent
given by
\noindent
\begin{equation*}\label{eq: 1.1}
\begin{split}
\overline{N}_{1,0,0}^{k_1,\infty}(u,v,s)=S(s,\ k_{1},\ u)M(s-1,\ v)
\end{split}
\end{equation*}

\noindent
where $S(a,\ b,\ c)$ denotes the total number of integer solutions $x_{1}+x_{2}+\cdots+x_{a}=c$ such that $0<x_{i}<b$ for $i=1,2,\ldots,a$. The number can be expressed as
\noindent
\begin{equation*}\label{eq: 1.1}
\begin{split}
S(a,\ b,\ c)=\sum_{j=0}^{min(a,\left[\frac{c-a}{b-1}\right])}(-1)^{j}{a \choose j}{c-j(b-1)-1 \choose a-1}.
\end{split}
\end{equation*}

\noindent
where $M(a,\ b)$ denotes the total number of integer solution $y_{1}+y_{2}+\cdots+y_{a}=b$ such that $(y_{1},\ldots,y_{a}) \in S_{b,a}^{0}$. The number is given by
\noindent
\begin{equation*}\label{eq: 1.1}
\begin{split}
M(a,\ b)={b-1 \choose a-1}.
\end{split}
\end{equation*}
\noindent
See, e.g. \citet{charalambides2002enumerative}.
}
\end{remark}
\begin{definition}
For $0<q\leq1$, we define

\begin{equation*}\label{eq: 1.1}
\begin{split}
N_{q,0,k_2}^{k_1,\infty}(m,r,s)={\sum_{x_{1},\ldots,x_{s}}}\
\sum_{y_{1},\ldots,y_{s-1}} q^{y_{1}x_{2}+(y_{1}+y_{2})x_{3}+\cdots+(y_{1}+\cdots+y_{s-1})x_{s}},\
\end{split}
\end{equation*}
\noindent
where the summation is over all integers $x_1,\ldots,x_{s},$ and $y_1,\ldots,y_{s-1}$ satisfying

\noindent
\begin{equation*}\label{eq:1}
\begin{split}
0 < x_j < k_1 \mbox{\ for\ } j=1,...,s,
\end{split}
\end{equation*}
\noindent
\begin{equation*}\label{eq:1}
\begin{split}
(x_1,\ldots,x_{s})\in S_{m,s}^{0},\ \text{and}
\end{split}
\end{equation*}
\noindent
\begin{equation*}\label{eq:2}
\begin{split}
(y_{1},\ldots,y_{s-1}) \in S_{r,s-1}^{k_2-1}
\end{split}
\end{equation*}
\noindent
\end{definition}
\noindent
The following gives a recurrence relation useful for the computation of $N_{q,0,k_2}^{k_1,\infty}(m,r,s)$.
\noindent
\begin{lemma}
\label{lemma:6.4}
$N_{q,0,k_2}^{k_1,\infty}(m,r,s)$ obeys the following recurrence relation.
\begin{equation*}\label{eq: 1.1}
\begin{split}
N&_{q,0,k_2}^{k_1,\infty}(m,r,s)\\
&=\left\{
  \begin{array}{ll}
    \sum_{b=1}^{k_2-1}\sum_{a=1}^{k_{1}-1}q^{ra}N_{q,0,k_2}^{k_1,\infty}(m-a,r-b,s-1)\\
    +\sum_{b=k_2}^{r-(s-2)}\sum_{a=1}^{k_1-1}q^{ra}\overline{N}_{q,0,0}^{k_1,\infty}(m-a,r-b,s-1), & \text{for}\ s>1,\ s\leq m\leq s(k_1-1)\\
    &\text{and}\ (s-2)+k_2\leq r \\
    0, & \text{otherwise.}\\
  \end{array}
\right.
\end{split}
\end{equation*}
\end{lemma}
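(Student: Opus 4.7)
The plan is to mirror the induction-on-$s$ argument used for Lemma \ref{lemma:6.2} (and its analogues Lemmas \ref{lemma:3.6}, \ref{lemma:3.8}), adapting it to the shifted exponent pattern $y_1 x_2 + (y_1+y_2)x_3 + \cdots + (y_1+\cdots+y_{s-1})x_s$ and to the outer constraint that the maximum of $(y_1,\ldots,y_{s-1})$ exceeds $k_2-1$. The boundary cases ($s=1$ with $r<k_2$, or $m$ outside $[s,s(k_1-1)]$, or $r<(s-2)+k_2$) are vacuous, so the only content is the main recursive case.

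Assume $s>1$, $s\le m\le s(k_1-1)$, and $(s-2)+k_2\le r$. I would first isolate the last $x$-variable $x_s$, which ranges over $\{1,\ldots,k_1-1\}$. Because $y_1+\cdots+y_{s-1}=r$ is fixed, the coefficient of $x_s$ in the exponent collapses to $r x_s$, so pulling out the factor $q^{r x_s}$ and summing over the remaining $(x_1,\ldots,x_{s-1})$ subject to $x_1+\cdots+x_{s-1}=m-x_s$ leaves me with an inner double sum in $(x_1,\ldots,x_{s-1})$ and the full $(y_1,\ldots,y_{s-1})$, with the weight $q^{y_1 x_2 + \cdots + (y_1+\cdots+y_{s-2})x_{s-1}}$.

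Next I would peel off $y_{s-1}$ and split the range according to whether it is responsible for the max condition. If $y_{s-1}\in\{1,\ldots,k_2-1\}$, then the required max $\ge k_2$ among $(y_1,\ldots,y_{s-1})$ must come from the truncated tuple $(y_1,\ldots,y_{s-2})$, which therefore lies in $S_{r-y_{s-1},s-2}^{k_2-1}$; the resulting inner sum is exactly $N_{q,0,k_2}^{k_1,\infty}(m-x_s,r-y_{s-1},s-1)$. If instead $y_{s-1}\in\{k_2,\ldots,r-(s-2)\}$, the max condition is already witnessed by $y_{s-1}$, so $(y_1,\ldots,y_{s-2})$ is unconstrained beyond positivity and the sum-to-$(r-y_{s-1})$ condition, i.e.\ in $S_{r-y_{s-1},s-2}^{0}$; this gives $\overline{N}_{q,0,0}^{k_1,\infty}(m-x_s,r-y_{s-1},s-1)$, which is meaningful thanks to Lemma \ref{lemma:6.3}. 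Relabelling $a=x_s$ and $b=y_{s-1}$ yields the two displayed double sums in the statement.

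The main obstacle, though really only a bookkeeping issue, is handling the max constraint cleanly at the boundary between the two cases: when $y_{s-1}<k_2$ the constraint must be transferred to the remaining $(y_1,\ldots,y_{s-2})$, while when $y_{s-1}\ge k_2$ it is discharged. Verifying that these two regimes are disjoint and exhaustive (and that the new summation ranges for $s-1$ are exactly the support of $N_{q,0,k_2}^{k_1,\infty}(\cdot,\cdot,s-1)$ and $\overline{N}_{q,0,0}^{k_1,\infty}(\cdot,\cdot,s-1)$ respectively) is what makes the recurrence close. The remaining listed cases ($s=1$, etc.) follow directly from the definition: there is no admissible tuple when $r<k_2$ with $s=1$, and the lower bounds on $m$ and $r$ simply enforce non-emptiness of the underlying sets.
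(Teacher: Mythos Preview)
Your proposal is correct and follows essentially the same approach as the paper: first isolate $x_s$ (whose coefficient in the exponent is $y_1+\cdots+y_{s-1}=r$, yielding the factor $q^{ra}$), then peel off $y_{s-1}$ and split according to whether $y_{s-1}<k_2$ (so the max constraint transfers to $(y_1,\ldots,y_{s-2})$, giving $N_{q,0,k_2}^{k_1,\infty}$) or $y_{s-1}\ge k_2$ (discharging the constraint, giving $\overline{N}_{q,0,0}^{k_1,\infty}$). One minor remark: for $s=1$ the quantity vanishes unconditionally, since the $y$-tuple is empty and cannot satisfy the max condition, not merely when $r<k_2$.
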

\begin{proof}
For $s > 1$, $s\leq m\leq s(k_1-1)$ and $(s-2)+k_2\leq r$, we observe that $x_{s}$ may assume any value $1,\ldots,k_1-1$, then $N_{q,0,k_2}^{k_1,\infty}(m,r,s)$ can be written as

\noindent
\begin{equation*}%\label{eq: 1.1}
\begin{split}
N&_{q,0,k_2}^{k_1,\infty}(m,r,s)\\
=&\sum_{x_{s}=1}^{k_{1}-1}{\hspace{0.3cm}\sum_{\substack{x_{1},\ldots,x_{s-1} \in \{1,\ldots,k_{1}-1\}\\(x_1,\ldots,x_{s-1})\in S_{m-x_{s},s-1}^{0}}}}{\hspace{0.3cm}\sum_{\substack{(y_{1},\ldots,y_{s-1}) \in S_{r,s-1}^{k_2-1}}}}q^{rx_{s}}q^{y_{1}x_{2}+(y_{1}+y_{2})x_{3}+\cdots+(y_{1}+\cdots+y_{s-2})x_{s-1}}.
\end{split}
\end{equation*}
\noindent
Similarly, we observe that since $y_{s-1}$ can assume the values $1,\ldots,r-(s-2)$, then $N_{q,0,k_2}^{k_1,\infty}(m,r,s)$ can be rewritten as
\noindent

\begin{equation*}%\label{eq: 1.1}
\begin{split}
N&_{q,0,k_2}^{k_1,\infty}(m,r,s)\\
=&\sum_{y_{s-1}=1}^{k_2-1}\sum_{x_{s}=1}^{k_{1}-1}{\hspace{0.3cm}\sum_{\substack{x_{1},\ldots,x_{s-1} \in \{1,\ldots,k_{1}-1\}\\(x_1,\ldots,x_{s-1})\in S_{m-x_{s},s-1}^{k_2-1}}}}{\hspace{0.3cm}\sum_{\substack{(y_{1},\ldots,y_{s-2}) \in S_{r-y_{s-1},s-2}^{0}}}}q^{rx_{s}}q^{y_{1}x_{2}+(y_{1}+y_{2})x_{3}+\cdots+(y_{1}+\cdots+y_{s-2})x_{s-1}}\\
&+\sum_{y_{s-1}=k_2}^{r-(s-2)}\sum_{x_{s}=k_{1}}^{k_1-1}{\hspace{0.3cm}\sum_{\substack{x_{1},\ldots,x_{s-1} \in \{1,\ldots,k_{1}-1\}\\(x_1,\ldots,x_{s-1})\in S_{m-x_{s},s-1}^{0}}}}{\hspace{0.3cm}\sum_{\substack{(y_{1},\ \ldots,\ y_{s-2}) \in S_{r-y_{s-1},s-2}^{0}}}}q^{rx_{s}}q^{y_{1}x_{2}+(y_{1}+y_{2})x_{3}+\cdots+(y_{1}+\cdots+y_{s-2})x_{s-1}}\\
=&\sum_{b=1}^{k_2-1}\sum_{a=1}^{k_{1}-1}q^{ra}N_{q,0,k_2}^{k_1,\infty}(m-a,r-b,s-1)+\sum_{b=k_2}^{r-(s-2)}\sum_{a=1}^{k_1-1}q^{ra}\overline{N}_{q,0,0}^{k_1,\infty}(m-a,r-b,s-1,k_1,k_2).
\end{split}
\end{equation*}
\noindent
The other cases are obvious and thus the proof is completed.
\end{proof}
\begin{remark}
{\rm
We observe that $N_{1,0,k_2}^{k_1,\infty}(m,r,s)$ is the number of integer solutions $(x_{1},\ldots,x_{s})$ and $(y_{1},\ldots,y_{s-1})$ of
\noindent
\begin{equation*}\label{eq:1}
\begin{split}
0 < x_j < k_1 \mbox{\ for\ } j=1,...,s,
\end{split}
\end{equation*}
\noindent
\begin{equation*}\label{eq:1}
\begin{split}
(x_1,\ldots,x_{s})\in S_{m,s}^{0},\ \text{and}
\end{split}
\end{equation*}
\noindent
\begin{equation*}\label{eq:2}
\begin{split}
(y_{1},\ldots,y_{s-1}) \in S_{r,s-1}^{k_2-1}
\end{split}
\end{equation*}
\noindent
given by
\noindent
\begin{equation*}\label{eq: 1.1}
\begin{split}
N_{1,0,k_2}^{k_1,\infty}(m,r,s)=S(s,\ k_{1},\ m)R(s-1,\ k_2,\ r)
\end{split}
\end{equation*}
\noindent
where $S(a,\ b,\ c)$ denotes the total number of integer solution $x_{1}+x_{2}+\cdots+x_{a}=c$ such that $0<x_{i}<b$ for $i=1,2,\ldots,a$. The number is given by
\noindent
\begin{equation*}\label{eq: 1.1}
\begin{split}
S(a,\ b,\ c)=\sum_{j=0}^{min\left(a,\ \left[\frac{c-a}{b-1}\right]\right) }(-1)^{j}{a \choose j}{c-j(b-1)-1 \choose a-1}.
\end{split}
\end{equation*}
\noindent
where $R(a,\ b,\ c)$ denotes the total number of integer solution $y_{1}+x_{2}+\cdots+y_{a}=c$ such that $(y_{1},\ldots,y_{a}) \in S_{r,a}^{k_2}$. The number is given by
\noindent
\begin{equation*}\label{eq: 1.1}
\begin{split}
R(a,\ b,\ c)=\sum_{j=1}^{min\left(a,\ \left[\frac{c-a}{b-1}\right]\right) }(-1)^{j+1}{a \choose j}{c-j(b-1)-1 \choose a-1}.
\end{split}
\end{equation*}
}
\end{remark}
\noindent
The probability function of the $P_{q,\theta}\Big(L_{n}^{(1)}\leq k_{1}\ \wedge\ L_{n}^{(0)}\geq k_{2}\Big)$ is obtained by the following theorem.
\noindent
\begin{theorem}
The PMF $P_{q,\theta}\Big(L_{n}^{(1)}\leq k_{1}\ \wedge\ L_{n}^{(0)}\geq k_{2}\Big)$ for $n\geq k_2$
\begin{equation*}\label{eq:bn1}
\begin{split}
P_{q,\theta}\Big(L_{n}^{(1)}\leq k_{1}\ \wedge\ L_{n}^{(0)}\geq k_{2}\Big)=&\sum_{i=k_2}^{n}\theta^{n-i}\ {\prod_{j=1}^{i}}\ (1-\theta q^{j-1})\sum_{s=1}^{i-k_2+1}\Big[M_{q,0,k_2}^{k_1+1,\infty}(n-i,\ i,\ s)\\
&+E_{q,0,k_2}^{k_1+1,\infty}(n-i,\ i,\ s)+N_{q,0,k_2}^{k_1+1,\infty}(n-i,\ i,\ s+1)\\
&+F_{q,0,k_2}^{k_1+1,\infty}(n-i,\ i,\ s)\Big].\\
\end{split}
\end{equation*}
\end{theorem}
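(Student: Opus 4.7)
The plan is to mirror the proof of the preceding theorem for $P_{q,\theta}(L_n^{(1)}\le k_1 \wedge L_n^{(0)}\le k_2)$, adapting only the constraint on the failure runs. First I would partition the event $\{L_n^{(1)}\le k_1 \wedge L_n^{(0)}\ge k_2\}$ by the value $F_n=i$ of the total number of failures. Since a failure run of length at least $k_2$ is required, the sum must start at $i=k_2$; the all-success case contributes nothing (so no correction $\eta(n,k_1)\theta^n$ appears, in contrast with the previous theorem). For fixed $i$, write
\[
P_{q,\theta}(L_n^{(1)}\le k_1 \wedge L_n^{(0)}\ge k_2) \;=\; \sum_{i=k_2}^{n} P_{q,\theta}\bigl(E_{n,i}\bigr),
\]
with $E_{n,i}=\{L_n^{(1)}\le k_1 \wedge L_n^{(0)}\ge k_2 \wedge F_n=i\}$.

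Next I would enumerate the elements of $E_{n,i}$ by the number $s$ of failure runs, splitting into the same four geometric shapes A, B, C, D used in the $\le k_1,\le k_2$ theorem (starts/ends with success vs.\ failure). In every shape, each success run-length $x_j$ must satisfy $0<x_j<k_1+1$ and sum to $n-i$, exactly as before; the only change is that the failure run-length tuple must lie in $S_{i,s}^{k_2-1}$ (i.e.\ at least one $y_j\ge k_2$) rather than in the bounded set $\{1,\ldots,k_2\}^s$. Because $|S_{i,s}^{k_2-1}|>0$ forces $s\le i-k_2+1$, the inner sum naturally runs over $s=1,\ldots,i-k_2+1$. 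Writing out the geometric-rule product of $(\theta q^{\cdot})^{x_j}$ and $(1-\theta q^{\cdot})$ factors and simplifying exactly as in the proofs of Theorems~\ref{thm:3.1} and~\ref{thm:3.2}, the deterministic $\theta$- and $(1-\theta q^{j-1})$-factors collapse to $\theta^{n-i}\prod_{j=1}^{i}(1-\theta q^{j-1})$, leaving behind a pure $q$-power in each summand.

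The remaining $q$-sums are then identified, type by type, with the functions introduced in the lemmas just preceding the theorem together with those of Section~3: the A-type contribution (starts with $0$, ends with $1$; $s$ runs of each) is $M_{q,0,k_2}^{k_1+1,\infty}(n-i,i,s)$ by Lemma~\ref{lemma:6.2}; the B-type (starts and ends with $0$; $s$ failure runs, $s-1$ success runs) is $E_{q,0,k_2}^{k_1+1,\infty}(n-i,i,s)$ by Lemma~\ref{lemma:3.6}; the C-type (starts and ends with $1$; $s+1$ success runs, $s$ failure runs) is $N_{q,0,k_2}^{k_1+1,\infty}(n-i,i,s+1)$ by Lemma~\ref{lemma:6.4}; and the D-type (starts with $1$, ends with $0$) is $F_{q,0,k_2}^{k_1+1,\infty}(n-i,i,s)$ by Lemma~\ref{lemma:3.8}. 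Summing over the four types and over $s$ yields the stated formula.

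The main obstacle is bookkeeping rather than conceptual: one must be careful that for each of the four shapes the $q$-exponent coming from the probability expansion ($y_1 x_1+(y_1+y_2)x_2+\cdots$ or its shifted analogue $y_1 x_2+(y_1+y_2)x_3+\cdots$) exactly matches the definition of the corresponding $M$, $E$, $N$ or $F$ with parameters $k_1+1$ and $k_2$, and that the index $s+1$ in the C-type contribution is applied to the right argument. Once the four identifications are made correctly, the conclusion follows by straightforward summation, and the passage $q\to 1$ (were one to record the IID analogue in a remark) would replace each $q$-function by the corresponding product $S(\cdot,k_1+1,\cdot)\,R(\cdot,k_2,\cdot)$ via the remarks attached to Lemmas~\ref{lemma:3.6}, \ref{lemma:3.8}, \ref{lemma:6.2}, and~\ref{lemma:6.4}.
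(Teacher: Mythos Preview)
Your proposal is correct and follows essentially the same approach as the paper: partition by $F_n=i$ for $i=k_2,\ldots,n$, split each $E_{n,i}$ into the four A--D types with success runs bounded by $k_1+1$ and failure-run tuple in $S_{i,s}^{k_2-1}$, simplify the product to $\theta^{n-i}\prod_{j=1}^{i}(1-\theta q^{j-1})$, and identify the four resulting $q$-sums via Lemmas~\ref{lemma:3.6}, \ref{lemma:3.8}, \ref{lemma:6.2}, \ref{lemma:6.4}. Your type-by-type matching (A $\to M$, B $\to E$, C $\to N$ with index $s+1$, D $\to F$), the range $1\le s\le i-k_2+1$, and the observation that no $\eta(n,k_1)\theta^n$ term is needed all agree with the paper.
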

\begin{proof}
We partition the event $\left\{L_{n}^{(1)}\leq k_{1}\ \wedge\ L_{n}^{(0)}\geq k_{2}\right\}$ into disjoint events given by $F_{n}=i,$ for $i=k_2,\ldots,n$. Adding the probabilities we have
\noindent
\begin{equation*}\label{eq:bn}
\begin{split}
P_{q,\theta}\Big(L_{n}^{(1)}\leq k_{1}\ \wedge\ L_{n}^{(0)}\geq k_{2}\Big)=\sum_{i=k_2}^{n}P\Big(L_{n}^{(1)}\leq k_{1}\ \wedge\ L_{n}^{(0)}\geq k_{2}\ \wedge\ F_{n}=i\Big).\\
\end{split}
\end{equation*}
\noindent
We will write $E_{n,i}=\left\{L_{n}^{(1)}\leq k_{1}\ \wedge\ L_{n}^{(0)}\geq k_{2}\ \wedge\ F_{n}=i\right\}$.
\noindent
We can now rewrite as follows
\noindent
\begin{equation*}\label{eq:bn1}
\begin{split}
P_{q,\theta}\Big(L_{n}^{(1)}\leq k_{1}\ \wedge\ L_{n}^{(0)}\geq k_{2}\Big)=\sum_{i=k_2}^{n}P_{q,\theta}\left(E_{n,i}\right).
\end{split}
\end{equation*}
\noindent
We are going to focus on the event $E_{n,\ i}$ which consists of $n-i$ successes and $i$ failures. First we will distribute the $i$ failures and the $n-i$ successes.  Let $s$ $(1\leq s \leq i-k_2+1)$ be the number of runs of failures in the event $E_{n,\ i}$. We distinguish between four types of sequences in the event $\left\{L_{n}^{(1)}\leq k_{1}\ \wedge\ L_{n}^{(0)}\geq k_{2}\ \wedge\ F_{n}=i\right\}$, respectively named A, B, C and D-type, which are defined as follows.
\noindent
\begin{equation*}\label{eq:bn}
\begin{split}
\text{A-type}\ :&\quad \overbrace{0\ldots 0}^{y_{1}}\mid\overbrace{1\ldots 1}^{x_{1}}\mid\overbrace{0\ldots 0}^{y_{2}}\mid\overbrace{1\ldots 1}^{x_{2}}\mid \ldots \mid \overbrace{0\ldots 0}^{y_{s}}\mid\overbrace{1\ldots 1}^{x_{s}},\\
\end{split}
\end{equation*}
\noindent
with $i$ $0$'s and $n-i$ $1$'s, where $x_{j}$ $(j=1,\ldots,s)$ represents a length of run of $1$'s and $y_{j}$ $(j=1,\ldots,s)$ represents the length of a run of $0$'s. And all integers $x_{1},\ldots,x_{s}$, and $y_{1},\ldots,y_{s}$ satisfy the conditions
\noindent
\begin{equation*}\label{eq:bn}
\begin{split}
0 < x_j < k_1+1 \mbox{\ for\ } j=1,...,s, \mbox{\ and\ } x_1+\cdots +x_{s} = n-i,
\end{split}
\end{equation*}
\noindent
\begin{equation*}\label{eq:bn}
\begin{split}
(y_{1},\ldots,y_{s}) \in S_{i,s}^{k_2-1}.
\end{split}
\end{equation*}
\begin{equation*}\label{eq:bn}
\begin{split}
\text{B-type}\ :&\quad \overbrace{0\ldots 0}^{y_{1}}\mid\overbrace{1\ldots 1}^{x_{1}}\mid\overbrace{0\ldots 0}^{y_{2}}\mid\overbrace{1\ldots 1}^{x_{2}}\mid \ldots \mid \overbrace{0\ldots 0}^{y_{s-1}}\mid\overbrace{1\ldots 1}^{x_{s-1}}\mid\overbrace{0\ldots 0}^{y_{s}},\\
\end{split}
\end{equation*}
\noindent
with $i$ $0$'s and $n-i$ $1$'s, where $x_{j}$ $(j=1,\ldots,s-1)$ represents a length of run of $1$'s and $y_{j}$ $(j=1,\ldots,s)$ represents the length of a run of $0$'s. And all integers $x_{1},\ldots,x_{s-1}$, and $y_{1},\ldots,y_{s}$ satisfy the conditions
\noindent
\begin{equation*}\label{eq:bn}
\begin{split}
0 < x_j < k_1+1 \mbox{\ for\ } j=1,...,s-1, \mbox{\ and\ } x_1+\cdots +x_{s-1} = n-i,
\end{split}
\end{equation*}
\noindent
\begin{equation*}\label{eq:bn}
\begin{split}
(y_{1},\ldots,y_{s}) \in S_{i,s}^{k_2-1}.
\end{split}
\end{equation*}

\begin{equation*}\label{eq:bn}
\begin{split}
\text{C-type}\ :&\quad \overbrace{1\ldots 1}^{x_{1}}\mid\overbrace{0\ldots 0}^{y_{1}}\mid\overbrace{1\ldots 1}^{x_{2}}\mid\overbrace{0\ldots 0}^{y_{2}}\mid \ldots \mid\overbrace{0\ldots 0}^{y_{s}}\mid\overbrace{1\ldots 1}^{x_{s+1}},\\
\end{split}
\end{equation*}
\noindent
with $i$ $0$'s and $n-i$ $1$'s, where $x_{j}$ $(j=1,\ldots,s+1)$ represents a length of run of $1$'s and $y_{j}$ $(j=1,\ldots,s)$ represents the length of a run of $0$'s. And all integers $x_{1},\ldots,x_{s+1}$, and $y_{1},\ldots,y_{s}$ satisfy the conditions
\noindent
\begin{equation*}\label{eq:bn}
\begin{split}
0 < x_j < k_1+1 \mbox{\ for\ } j=1,...,s+1, \mbox{\ and\ } x_1+\cdots +x_{s+1} = n-i,
\end{split}
\end{equation*}
\noindent
\begin{equation*}\label{eq:bn}
\begin{split}
(y_{1},\ldots,y_{s}) \in S_{i,s}^{k_2-1}.
\end{split}
\end{equation*}

\begin{equation*}\label{eq:bn}
\begin{split}
\text{D-type}\ :&\quad \overbrace{1\ldots 1}^{x_{1}}\mid\overbrace{0\ldots 0}^{y_{1}}\mid\overbrace{1\ldots 1}^{x_{2}}\mid\overbrace{0\ldots 0}^{y_{2}}\mid\overbrace{1\ldots 1}^{x_{3}}\mid \ldots \mid \overbrace{0\ldots 0}^{y_{s-1}}\mid\overbrace{1\ldots 1}^{x_{s}}\mid\overbrace{0\ldots 0}^{y_{s}},\\
\end{split}
\end{equation*}
\noindent
with $i$ $0$'s and $n-i$ $1$'s, where $x_{j}$ $(j=1,\ldots,s)$ represents a length of run of $1$'s and $y_{j}$ $(j=1,\ldots,s)$ represents the length of a run of $0$'s. And all integers $x_{1},\ldots,x_{s}$, and $y_{1},\ldots ,y_{s}$ satisfy the conditions
\noindent
\begin{equation*}\label{eq:bn}
\begin{split}
0 < x_j < k_1+1 \mbox{\ for\ } j=1,...,s, \mbox{\ and\ } x_1+\cdots +x_{s} = n-i, \text{and}
\end{split}
\end{equation*}
\noindent
\begin{equation*}\label{eq:bn}
\begin{split}
(y_{1},\ldots,y_{s}) \in S_{i,s}^{k_2-1}.
\end{split}
\end{equation*}
\noindent
Then the probability of the event $E_{n,\ i}$ is given by
\noindent
\begin{equation*}\label{eq: 1.1}
\begin{split}
P_{q,\theta}\Big(E_{n,\ i}\Big)\quad\ &\\
=\sum_{s=1}^{i-k_2+1}\Bigg[\bigg\{&\sum_{\substack{x_{1}+\cdots+x_{s}=n-i\\ x_{1},\ldots,x_{s} \in \{1,\ldots,k_{1}\}}}\
\sum_{\substack{(y_{1},\ldots,y_{s})\in S_{i,s}^{k_2-1}}}\big(1-\theta q^{0}\big)\cdots \big(1-\theta q^{y_{1}-1}\big)\Big(\theta q^{y_{1}}\Big)^{x_{1}}\times\\
&\big(1-\theta q^{y_{1}}\big)\cdots \big(1-\theta q^{y_{1}+y_{2}-1}\big)\Big(\theta q^{y_{1}+y_{2}}\Big)^{x_{2}}\times\\
&\big(1-\theta q^{y_{1}+y_2}\big)\cdots \big(1-\theta q^{y_{1}+y_{2}+y_3-1}\big)\Big(\theta q^{y_{1}+y_{2}+y_3}\Big)^{x_{3}}\times \\
&\quad \quad \quad \quad\quad \quad\quad \quad \quad \quad \quad \vdots\\
&\big(1-\theta q^{y_{1}+\cdots+y_{s-1}}\big)\cdots \big(1-\theta q^{y_{1}+\cdots+y_{s}-1}\big)\Big(\theta q^{y_{1}+\cdots+y_{s}}\Big)^{x_{s}}\bigg\}+\\
\end{split}
\end{equation*}

\noindent
\begin{equation*}\label{eq: 1.1}
\begin{split}
\quad \quad \quad \quad \quad \quad\bigg\{&\sum_{\substack{x_{1}+\cdots+x_{s-1}=n-i\\ x_{1},\ldots,x_{s-1} \in \{1,\ldots,k_{1}\}}}\
\sum_{\substack{(y_{1},\ldots,y_{s})\in S_{i,s}^{k_2-1}}} \big(1-\theta q^{0}\big)\cdots \big(1-\theta q^{y_{1}-1}\big)\times\\
&\Big(\theta q^{y_{1}}\Big)^{x_{1}}\big(1-\theta q^{y_{1}}\big)\cdots \big(1-\theta q^{y_{1}+y_{2}-1}\big)\times\\
&\Big(\theta q^{y_{1}+y_{2}}\Big)^{x_{2}}\big(1-\theta q^{y_{1}+y_2}\big)\cdots \big(1-\theta q^{y_{1}+y_{2}+y_3-1}\big)\times \\
&\quad \quad \quad \quad\quad \quad\quad \quad \quad \quad \quad \vdots\\
&\Big(\theta q^{y_{1}+\cdots+y_{s-1}}\Big)^{x_{s-1}}
\big(1-\theta q^{y_{1}+\cdots+y_{s-1}}\big)\cdots \big(1-\theta q^{y_{1}+\cdots+y_{s}-1}\big)\bigg\}+\\
\end{split}
\end{equation*}

\noindent
\begin{equation*}\label{eq: 1.1}
\begin{split}
\quad \quad \quad \quad \quad \quad&\bigg\{\sum_{\substack{x_{1}+\cdots+x_{s+1}=n-i\\ x_{1},\ldots,x_{s+1} \in \{1,\ldots,k_{1}\}}}\
\sum_{\substack{(y_{1},\ldots,y_{s}) \in S_{i,s}^{k_2-1}}} \big(\theta q^{0}\big)^{x_{1}}\big(1-\theta q^{0}\big)\cdots (1-\theta q^{y_{1}-1})\times\\
&\Big(\theta q^{y_{1}}\Big)^{x_{2}}\big(1-\theta q^{y_{1}}\big)\cdots \big(1-\theta q^{y_{1}+y_{2}-1}\big)\times\\
&\Big(\theta q^{y_{1}+y_{2}}\Big)^{x_{3}}\big(1-\theta q^{y_{1}+y_2}\big)\cdots \big(1-\theta q^{y_{1}+y_{2}+y_3-1}\big)\times \\
&\quad \quad \quad \quad \vdots\\
&\Big(\theta q^{y_{1}+\cdots+y_{s}}\Big)^{x_{s+1}}\bigg\}+\\
\end{split}
\end{equation*}

\noindent
\begin{equation*}\label{eq: 1.1}
\begin{split}
\quad \quad \quad \quad \quad \quad&\bigg\{\sum_{\substack{x_{1}+\cdots+x_{s}=n-i\\ x_{1},\ldots,x_{s} \in \{1,\ldots,k_{1}\}}}\
\sum_{\substack{(y_{1},\ldots,y_{s}) \in S_{i,s}^{k_2-1}}}\big(\theta q^{0}\big)^{x_{1}}\big(1-\theta q^{0}\big)\cdots (1-\theta q^{y_{1}-1})\times\\
&\Big(\theta q^{y_{1}}\Big)^{x_{2}}\big(1-\theta q^{y_{1}+y_2}\big)\cdots \big(1-\theta q^{y_{1}+y_{2}-1}\big)\times\\
&\Big(\theta q^{y_{1}+y_{2}}\Big)^{x_{3}}\big(1-\theta q^{y_{1}}\big)\cdots \big(1-\theta q^{y_{1}+y_{2}+y_3-1}\big)\times \\
&\quad \quad \quad \quad\quad \quad\quad \quad \quad \quad \quad \vdots\\
&\Big(\theta q^{y_{1}+\cdots+y_{s-1}}\Big)^{x_{s}}
\big(1-\theta q^{y_{1}+\cdots+y_{s-1}}\big)\cdots \big(1-\theta q^{y_{1}+\cdots+y_{s}-1}\big)\bigg\}\Bigg].\\
\end{split}
\end{equation*}

Using simple exponentiation algebra arguments to simplify,
\noindent
\begin{equation*}\label{eq: 1.1}
\begin{split}
&P_{q,\theta}\Big(E_{n,i}\Big)=\\
&\theta^{n-i}{\prod_{j=1}^{i}}\ (1-\theta q^{j-1})\times\\
&\sum_{s=1}^{i-k_2+1}\Bigg[\sum_{\substack{x_{1}+\cdots+x_{s}=n-i\\ x_{1},\ldots,x_{s} \in \{1,\ldots,k_{1}\}}}\
\sum_{\substack{(y_{1},\ldots,y_{s})\in S_{i,s}^{k_2-1}}}q^{y_{1}x_{1}+(y_{1}+y_{2})x_{2}+\cdots+(y_{1}+\cdots+y_{s})x_{s}}+\\
&\quad \quad\quad\sum_{\substack{x_{1}+\cdots+x_{s-1}=n-i\\ x_{1},\ldots,x_{s-1} \in \{1,\ldots,k_{1}\}}}\
\sum_{\substack{(y_{1},\ldots,y_{s})\in S_{i,s}^{k_2-1}}}q^{y_{1}x_{1}+(y_{1}+y_{2})x_{2}+\cdots+(y_{1}+\cdots+y_{s-1})x_{s-1}}+\\
&\quad \quad \quad\sum_{\substack{x_{1}+\cdots+x_{s+1}=n-i\\ x_{1},\ \ldots,\ x_{s+1} \in \{1,\ldots,k_{1}\}}}\
\sum_{\substack{(y_{1},\ldots,y_{s}) \in S_{i,s}^{k_2-1}}}q^{y_{1}x_{2}+(y_{1}+y_{2})x_{3}+\cdots+(y_{1}+\cdots+y_{s})x_{s+1}}+\\
&\quad \quad \quad \sum_{\substack{x_{1}+\cdots+x_{s}=n-i\\ x_{1},\ldots,x_{s} \in \{1,\ldots,k_{1}\}}}\
\sum_{\substack{(y_{1},\ldots,y_{s}) \in S_{i,s}^{k_2-1}}}q^{y_{1}x_{2}+(y_{1}+y_{2})x_{3}+\cdots+(y_{1}+\cdots+y_{s-1})x_{s}}\Bigg].\\
\end{split}
\end{equation*}
\noindent
Using Lemma \ref{lemma:3.6}, Lemma \ref{lemma:3.8}, Lemma \ref{lemma:6.2} and Lemma \ref{lemma:6.4}, we can rewrite as follows.
\noindent
\begin{equation*}\label{eq: 1.1}
\begin{split}
P_{q,\theta}\Big(E_{n,i}\Big)=&\theta^{n-i}\ {\prod_{j=1}^{i}}\ (1-\theta q^{j-1})\sum_{s=1}^{i-k_2+1}\Big[M_{q,0,k_2}^{k_1+1,\infty}(n-i,\ i,\ s)+E_{q,0,k_2}^{k_1+1,\infty}(n-i,\ i,\ s)\\
&+N_{q,0,k_2}^{k_1+1,\infty}(n-i,\ i,\ s+1)+F_{q,0,k_2}^{k_1+1,\infty}(n-i,\ i,\ s)\Big],\\
\end{split}
\end{equation*}\\
\noindent
where
\noindent
\begin{equation*}\label{eq: 1.1}
\begin{split}
M_{q,0,k_2}^{k_1+1,\infty}(n&-i,\ i,\ s)\\
&=\sum_{\substack{x_{1}+\cdots+x_{s}=n-i\\ x_{1},\ldots,x_{s} \in \{1,\ldots,k_{1}\}}}\
\sum_{\substack{(y_{1},\ldots,y_{s}) \in S_{i,s}^{k_2-1}}}q^{y_{1}x_{1}+(y_{1}+y_{2})x_{2}+\cdots+(y_{1}+\cdots+y_{s})x_{s}},
\end{split}
\end{equation*}
\noindent
\begin{equation*}\label{eq: 1.1}
\begin{split}
E_{q,0,k_2}^{k_1+1,\infty}(n&-i,\ i,\ s)\\
&=\sum_{\substack{x_{1}+\cdots+x_{s-1}=n-i\\ x_{1},\ldots,x_{s-1} \in \{1,\ldots,k_{1}\}}}\
\sum_{\substack{(y_{1},\ldots,y_{s})\in S_{i,s}^{k_2}}}q^{y_{1}x_{1}+(y_{1}+y_{2})x_{2}+\cdots+(y_{1}+\cdots+y_{s-1})x_{s-1}},
\end{split}
\end{equation*}
\noindent
\begin{equation*}\label{eq: 1.1}
\begin{split}
N_{q,0,k_2}^{k_1+1,\infty}(n&-i,\ i,\ s+1)\\
&=\sum_{\substack{x_{1}+\cdots+x_{s+1}=n-i\\ x_{1},\ldots,x_{s+1} \in \{1,\ldots,k_{1}\}}}\
\sum_{\substack{(y_{1},\ldots,y_{s}) \in S_{i,s}^{k_2-1}}}q^{y_{1}x_{2}+(y_{1}+y_{2})x_{3}+\cdots+(y_{1}+\cdots+y_{s})x_{s+1}},
\end{split}
\end{equation*}
\noindent
and
\noindent
\begin{equation*}\label{eq: 1.1}
\begin{split}
F_{q,0,k_2}^{k_1+1,\infty}(n&-i,\ i,\ s)\\
&=\sum_{\substack{x_{1}+\cdots+x_{s}=n-i\\ x_{1},\ldots,x_{s} \in \{1,\ldots,k_{1}\}}}\
\sum_{\substack{(y_{1},\ldots,y_{s})\in S_{i,s}^{k_2-1}}}q^{y_{1}x_{2}+(y_{1}+y_{2})x_{3}+\cdots+(y_{1}+\cdots+y_{s-1})x_{s}}.
\end{split}
\end{equation*}
\noindent
Therefore we can compute the probability of the event $\left\{L_{n}^{(1)}\leq k_{1}\ \wedge\ L_{n}^{(0)}\geq k_{2}\right\}$ as follows
\noindent
\begin{equation*}\label{eq:bn1}
\begin{split}
P_{q,\theta}\Big(L_{n}^{(1)}\leq k_{1}\ \wedge\ L_{n}^{(0)}\geq k_{2}\Big)=&\sum_{i=k_2}^{n}\theta^{n-i}\ {\prod_{j=1}^{i}}\ (1-\theta q^{j-1})\sum_{s=1}^{i-k_2+1}\Big[M_{q,0,k_2}^{k_1+1,\infty}(n-i,\ i,\ s)\\
&+E_{q,0,k_2}^{k_1+1,\infty}(n-i,\ i,\ s)+N_{q,0,k_2}^{k_1+1,\infty}(n-i,\ i,\ s+1)\\
&+F_{q,0,k_2}^{k_1+1,\infty}(n-i,\ i,\ s)\Big].\\
\end{split}
\end{equation*}
Thus proof is completed.
\noindent
\end{proof}
\noindent
It is worth mentioning here that the PMF $P_{q,\theta}\Big(L_{n}^{(1)}\leq k_{1}\ \wedge\ L_{n}^{(0)}\geq k_{2}\Big)$ approaches the probability function of $P_{\theta}\Big(L_{n}^{(1)}\leq k_{1}\ \wedge\ L_{n}^{(0)}\geq k_{2}\Big)$ for $n\geq1$ in the limit as $q$ tends to 1 of IID model. The details are presented in the following remark.
\noindent
\begin{remark}
{\rm For $q=1$, the PMF $P_{q,\theta}\Big(L_{n}^{(1)}\leq k_{1}\ \wedge\ L_{n}^{(0)}\geq k_{2}\Big)$ for $n\geq k_2$ reduces to the PMF $P_{\theta}\Big(L_{n}^{(1)}\leq k_{1}\ \wedge\ L_{n}^{(0)}\geq k_{2}\Big)$ for $n\geq1$ is given by
\noindent
\begin{equation*}\label{eq:bn1}
\begin{split}
P_{q,\theta}\Big(L_{n}^{(1)}\leq k_{1}\ \wedge\ L_{n}^{(0)}\geq k_{2}\Big)=&\sum_{i=k_2}^{n}\theta^{n-i}(1-\theta)^{i}\ \sum_{s=1}^{i-k_2+1}\Big[S(s,\ k_1+1,\ n-i)\\
&+S(s-1,\ k_1+1,\ n-i)+S(s+1,\ k_1+1,\ n-i)\\
&+S(s,\ k_1+1,\ n-i)\Big]R(s,\ k_2,\ i),\\
\end{split}
\end{equation*}

where

\noindent
\begin{equation*}\label{eq: 1.1}
\begin{split}
S(a,\ b,\ c)=\sum_{j=0}^{min(a,\left[\frac{c-a}{b-1}\right])}(-1)^{j}{a \choose j}{c-j(b-1)-1 \choose a-1}
\end{split}
\end{equation*}
\noindent
and
\noindent
\begin{equation*}\label{eq: 1.1}
\begin{split}
R(a,\ b,\ c)=\sum_{j=1}^{min\left(a,\ \left[\frac{c-a}{b-1}\right]\right)}(-1)^{j+1}{a \choose j}{c-j(b-1)-1 \choose a-1}.
\end{split}
\end{equation*}
\noindent
See, e.g. \citet{charalambides2002enumerative}.
}
\end{remark}

\subsection{Closed expression for the PMF of $\left(L_{n}^{(1)}\geq k_{1}\ \wedge\ L_{n}^{(0)}\leq k_{2}\right)$}
We shall study of the joint distribution of $\left(L_{n}^{(1)}\geq k_{1}\ \wedge\ L_{n}^{(0)}\leq k_{2}\right)$. We now make some useful Definition and Lemma for the proofs of Theorem in the sequel.

\noindent
\begin{definition}
For $0<q\leq1$, we define

\begin{equation*}\label{eq: 1.1}
\begin{split}
\overline{O}_{q,0,0}^{0,k_2}(u,v,s)={\sum_{x_{1},\ldots,x_{s-1}}}\
\sum_{y_{1},\ldots,y_{s}} q^{y_{1}x_{1}+(y_{1}+y_{2})x_{2}+\cdots+(y_{1}+\cdots+y_{s-1})x_{s-1}},\
\end{split}
\end{equation*}
\noindent
where the summation is over all integers $x_1,\ldots,x_{s-1},$ and $y_1,\ldots,y_{s}$ satisfying
\noindent
\begin{equation*}\label{eq:1}
\begin{split}
(x_1,\ldots,x_{s-1})\in S_{u,s-1}^{0},\ \text{and}
\end{split}
\end{equation*}
\noindent

\begin{equation*}\label{eq:1}
\begin{split}
0<y_{j}<k_2\ \text{for}\ j=1,\ldots,s,
\end{split}
\end{equation*}

\noindent
\begin{equation*}\label{eq:2}
\begin{split}
(y_{1},\ldots,y_{s}) \in S_{v,s}^{0}.
\end{split}
\end{equation*}
\end{definition}
\noindent
The following gives a recurrence relation useful for the computation of $\overline{O}_{q,0,0}^{\infty,k_2}(u,v,s)$.\\
\begin{lemma}
\label{lemma:6.5}
$\overline{O}_{q,0,0}^{\infty,k_2}(u,v,s)$ obeys the following recurrence relation.
\begin{equation*}\label{eq: 1.1}
\begin{split}
\overline{O}&_{q,0,0}^{\infty,k_2}(u,v,s)\\
&=\left\{
  \begin{array}{ll}
    \sum_{a=1}^{u-(s-2)}\sum_{b=1}^{k_2-1}q^{a(v-b)} \overline{O}_{q,0,0}^{\infty,k_2}(u-a,v-b,s-1), & \text{for}\ s>1,\ s-1\leq u\\
    &\text{and}\ s\leq v\leq s(k_2-1) \\
    1, & \text{for}\ s=1,\ u=0,\ \text{and}\ 1\leq v\leq k_2-1\\
    0, & \text{otherwise.}\\
  \end{array}
\right.
\end{split}
\end{equation*}
\end{lemma}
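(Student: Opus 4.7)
The plan is to mimic the argument used in the sibling lemmas (notably Lemmas \ref{lemma:3.5}, \ref{lemma:3.7}, \ref{lemma:3.9} and \ref{lemma:3.11}): peel off the final $x$-index followed by the final $y$-index, and then recognize the inner double sum as an instance of $\overline{O}_{q,0,0}^{\infty,k_2}$ with reduced parameters.

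Concretely, fix $s>1$ with $s-1\le u$ and $s\le v\le s(k_2-1)$. Since $(x_1,\ldots,x_{s-1})\in S_{u,s-1}^{0}$ is a composition of $u$ into $s-1$ positive parts, the last index $x_{s-1}$ ranges exactly over $\{1,\ldots,u-(s-2)\}$. Isolating the sum over $x_{s-1}$ and using $y_1+\cdots+y_s=v$, the rightmost exponent satisfies $(y_1+\cdots+y_{s-1})x_{s-1}=x_{s-1}(v-y_s)$, while the remaining portion $y_1 x_1+(y_1+y_2)x_2+\cdots+(y_1+\cdots+y_{s-2})x_{s-2}$ does not depend on $x_{s-1}$. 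This yields
\begin{equation*}
\overline{O}_{q,0,0}^{\infty,k_2}(u,v,s)=\sum_{x_{s-1}=1}^{u-(s-2)}\ \sum_{y_{s}}q^{x_{s-1}(v-y_{s})}\!\!\sum_{(x_1,\ldots,x_{s-2})\in S_{u-x_{s-1},s-2}^{0}}\ \sum_{\substack{y_1,\ldots,y_{s-1}\in\{1,\ldots,k_2-1\}\\ (y_1,\ldots,y_{s-1})\in S_{v-y_s,s-1}^{0}}} q^{y_1 x_1+\cdots+(y_1+\cdots+y_{s-2})x_{s-2}} .
\end{equation*}
Next, under the standing hypotheses the bound $0<y_s<k_2$ is the active one (since $v-(s-1)\ge 1$ is automatic and the complementary tuple must still admit a valid composition), so $y_s$ ranges over $\{1,\ldots,k_2-1\}$. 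The two innermost sums are, by definition, exactly $\overline{O}_{q,0,0}^{\infty,k_2}(u-x_{s-1},v-y_s,s-1)$. Reindexing $a=x_{s-1}$ and $b=y_s$ produces the stated recurrence.

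For the base case $s=1$, the $x$-variables are absent (empty product forces $u=0$) and the sum reduces to a single term with exponent $0$, giving the value $1$ exactly when $u=0$ and $1\le v\le k_2-1$; all other parameter combinations violate either the positivity/upper-bound constraints on $y_j$ or the compatibility range needed for a nonempty composition, and hence evaluate to $0$. I expect no real obstacle: the only mildly delicate point is justifying that the outer summation range $\sum_{a=1}^{u-(s-2)}\sum_{b=1}^{k_2-1}$ is exactly what is forced by the positivity of the remaining $x$-composition and by the bound $y_s<k_2$, which matches the hypothesis regime $s-1\le u$ and $s\le v\le s(k_2-1)$ precisely.
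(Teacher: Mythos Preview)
Your proof is correct and follows essentially the same approach as the paper: peel off the last $x$-index and the last $y$-index, use $y_1+\cdots+y_{s-1}=v-y_s$ to rewrite the final exponent as $x_{s-1}(v-y_s)$, and recognize the remaining inner sum as $\overline{O}_{q,0,0}^{\infty,k_2}(u-x_{s-1},v-y_s,s-1)$. The only cosmetic difference is that the paper isolates $y_s$ first and then $x_{s-1}$, whereas you do it in the reverse order; the argument and outcome are identical.
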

\begin{proof}
For $s > 1$, $s-1\leq u$ and $s\leq v\leq s(k_2-1)$, we observe that $y_{s}$ may assume any value $1,\ldots,k_{2}-1$, then $\overline{O}_{q,0,0}^{\infty,k_2}(u,v,s)$ can be written as

\begin{equation*}%\label{eq: 1.1}
\begin{split}
\overline{O}&_{q,0,0}^{\infty,k_2}(u,v,s)\\
=&\sum_{y_{s}=1}^{k_2-1}{\hspace{0.3cm}\sum_{\substack{(x_{1},\ldots,x_{s-1}) \in S_{u,s-1}^{0}}}}{\hspace{0.3cm}\sum_{\substack{0<y_1,\ldots,y_{s-1}<k_2\\(y_1,\ldots,y_{s-1})\in S_{v-y_{s},s-1}^{0}}}}q^{x_{s-1}(v-y_{s})} q^{y_{1}x_{1}+(y_{1}+y_{2})x_{2}+\cdots+(y_{1}+\cdots+y_{s-2})x_{s-2}}
\end{split}
\end{equation*}
\noindent
Similarly, we observe that since $x_{s-1}$ can assume the values $1,\ldots,r-(s-2)$, then $\overline{O}_{q,0,0}^{\infty,k_2}(u,v,s)$ can be rewritten as
\noindent

\begin{equation*}%\label{eq: 1.1}
\begin{split}
\overline{O}&_{q,0,0}^{\infty,k_2}(u,v,s)\\
=&\sum_{x_{s-1}=1}^{u-(s-2)}\sum_{y_{s}=1}^{k_2-1}q^{x_{s-1}(v-y_{s})}{\hspace{0.3cm}\sum_{\substack{(x_{1},\ldots,x_{s-2}) \in S_{u-x_{s-1},s-2}^{0}}}}{\hspace{0.3cm}\sum_{\substack{0<y_1,\ldots,y_{s-1}<k_2\\(y_1,\ldots,y_{s-1})\in S_{v-y_{s},s-1}{0}}}}q^{y_{1}x_{1}+(y_{1}+y_{2})x_{2}+\cdots+(y_{1}+\cdots+y_{s-2})x_{s-2}}\\
=&\sum_{a=1}^{u-(s-2)}\sum_{b=1}^{k_2-1}q^{a(v-b)} \overline{O}_{q,0,0}^{\infty,k_2}(u-a,v-b,s-1).
\end{split}
\end{equation*}
\noindent
The other cases are obvious and thus the proof is completed.
\end{proof}
\begin{remark}
{\rm
We observe that $\overline{O}_{1,0,0}^{\infty,k_2}(m,r,s)$ is the number of integer solutions $(x_{1},\ldots,x_{s-1})$ and $(y_{1},\ldots,y_{s})$ of

\noindent
\begin{equation*}\label{eq:1}
\begin{split}
(x_1,\ldots,x_{s-1})\in S_{u,s-1}^{0},\ \text{and}
\end{split}
\end{equation*}
\noindent

\begin{equation*}\label{eq:1}
\begin{split}
0<y_{j}<k_2\ \text{for}\ j=1,\ldots,s,
\end{split}
\end{equation*}

\noindent
\begin{equation*}\label{eq:2}
\begin{split}
(y_{1},\ldots,y_{s}) \in S_{v,s}^{0}.
\end{split}
\end{equation*}
\noindent
given by
\noindent
\begin{equation*}\label{eq: 1.1}
\begin{split}
\overline{O}_{1,0,0}^{\infty,k_2}(m,r,s)=M(s-1,\ u )S(s,\ k_{2},\ v),
\end{split}
\end{equation*}
\noindent
where $M(a,\ b)$ denotes the total number of integer solution $y_{1}+x_{2}+\cdots+y_{a}=b$ such that $(y_{1},\ldots,y_{a}) \in S_{b,a}^{0}$. The number is given by
\noindent
\begin{equation*}\label{eq: 1.1}
\begin{split}
M(a,\ b)={b-1 \choose a-1}.
\end{split}
\end{equation*}
\noindent
where $S(a,\ b,\ c)$ denotes the total number of integer solution $x_{1}+x_{2}+\cdots+x_{a}=c$ such that $0<x_{i}<b$ for $i=1,2,\ldots,a$. The number is given by
\noindent
\begin{equation*}\label{eq: 1.1}
\begin{split}
S(a,\ b,\ c)=\sum_{j=0}^{min\left(a,\ \left[\frac{c-a}{b-1}\right]\right) }(-1)^{j}{a \choose j}{c-j(b-1)-1 \choose a-1}.
\end{split}
\end{equation*}
\noindent
See, e.g. \citet{charalambides2002enumerative}.
}
\end{remark}
\begin{definition}
For $0<q\leq1$, we define
\noindent
\begin{equation*}\label{eq: 1.1}
\begin{split}
O_{q,k_1,0}^{\infty,k_2}(m,r,s)={\sum_{x_{1},\ldots,x_{s-1}}}\
\sum_{y_{1},\ldots,y_{s}} q^{y_{1}x_{1}+(y_{1}+y_{2})x_{2}+\cdots+(y_{1}+\cdots+y_{s-1})x_{s-1}},\
\end{split}
\end{equation*}
\noindent
where the summation is over all integers $x_1,\ldots,x_{s-1},$ and $y_1,\ldots,y_s$ satisfying
\noindent
\begin{equation*}\label{eq:1}
\begin{split}
(x_1,\ldots,x_{s-1})\in S_{u,s-1}^{k_1-1},\ \text{and}
\end{split}
\end{equation*}
\noindent

\begin{equation*}\label{eq:1}
\begin{split}
0<y_{j}<k_2\ \text{for}\ j=1,\ldots,s,
\end{split}
\end{equation*}

\noindent
\begin{equation*}\label{eq:2}
\begin{split}
(y_{1},\ldots,y_{s}) \in S_{v,s}^{0}.
\end{split}
\end{equation*}
\end{definition}
\noindent
The following gives a recurrence relation useful for the computation of $O_{q,k_1,0}^{\infty,k_2}(m,r,s)$.
\noindent\\
\begin{lemma}
\label{lemma:6.6}
$O_{q,k_1,0}^{\infty,k_2}(m,r,s)$ obeys the following recurrence relation.
\begin{equation*}\label{eq: 1.1}
\begin{split}
O&_{q,k_1,0}^{\infty,k_2}(m,r,s)\\
&=\left\{
  \begin{array}{ll}
    \sum_{a=1}^{k_1-1}\sum_{b=1}^{k_2-1}q^{a(r-b)} O_{q,k_1,0}^{\infty,k_2}(m-a,r-b,s-1)\\
    +\sum_{a=k_1}^{m-(s-2)}\sum_{b=1}^{k_2-1} q^{a(r-b)} \overline{O}_{q,0,0}^{\infty,k_2}(m-a,r-b,s-1), & \text{for}\ s>1,\ (s-2)+k_1\leq m)\\
    &\text{and}\ s\leq r \leq s(k_2-1)\\
    1, & \text{for}\ s=1,\ m=0,\ \text{and}\ 1\leq r \leq k_2-1\\
    0, & \text{otherwise.}\\
  \end{array}
\right.
\end{split}
\end{equation*}
\end{lemma}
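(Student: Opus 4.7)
The plan is to mirror the splitting technique already used in Lemma \ref{lemma:3.10} (for $G_{q,k_1,0}^{\infty,k_2}$) and Lemma \ref{lemma:3.12} (for $H_{q,k_1,0}^{\infty,k_2}$), but now summing the variable $x_{s-1}$ (the last $x$-coordinate in the definition of $O_{q,k_1,0}^{\infty,k_2}$) together with $y_s$. The boundary and ``otherwise'' clauses are routine; only the main recurrence case needs argument.

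First I would isolate the contribution of $x_{s-1}$ in the exponent. Writing
\[
y_1 x_1+(y_1+y_2)x_2+\cdots+(y_1+\cdots+y_{s-1})x_{s-1}
=\bigl[y_1 x_1+\cdots+(y_1+\cdots+y_{s-2})x_{s-2}\bigr]+(y_1+\cdots+y_{s-1})x_{s-1},
\]
and using $y_1+\cdots+y_{s-1}=r-y_s$, the $x_{s-1}$-contribution to the $q$-weight becomes $q^{x_{s-1}(r-y_s)}$. The remaining $y$-weight depends only on $x_1,\ldots,x_{s-2}$ and $y_1,\ldots,y_{s-1}$, which is exactly the shape of the defining sum at index $s-1$.

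Next I would split the outer sum over $x_{s-1}$ into two ranges, according to how the ``$\max>k_1-1$'' constraint on $(x_1,\ldots,x_{s-1})\in S_{m,s-1}^{k_1-1}$ is satisfied: (i) if $x_{s-1}\in\{1,\ldots,k_1-1\}$ then some earlier $x_j$ must reach $k_1$, forcing $(x_1,\ldots,x_{s-2})\in S_{m-x_{s-1},s-2}^{k_1-1}$; (ii) if $x_{s-1}\in\{k_1,\ldots,m-(s-2)\}$ then the constraint is already fulfilled by $x_{s-1}$ itself, and the remaining tuple only needs to lie in $S_{m-x_{s-1},s-2}^{0}$. In parallel, sum over $y_s\in\{1,\ldots,k_2-1\}$, after which $(y_1,\ldots,y_{s-1})$ inherits the conditions $0<y_j<k_2$ and $(y_1,\ldots,y_{s-1})\in S_{r-y_s,s-1}^{0}$. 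Each of the two resulting double sums factors as $q^{a(r-b)}$ (with $a=x_{s-1}$, $b=y_s$) times the defining sum of $O_{q,k_1,0}^{\infty,k_2}(m-a,r-b,s-1)$ in case (i), and times $\overline{O}_{q,0,0}^{\infty,k_2}(m-a,r-b,s-1)$ in case (ii). Relabeling $(a,b)$ yields exactly the stated recurrence.

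Finally, for the base case $s=1$ the $x$-sum is empty (so $m=0$ is forced), and the $y$-sum runs over a single $y_1\in\{1,\ldots,k_2-1\}$ with $y_1=r$, contributing $q^{0}=1$ iff $1\leq r\leq k_2-1$; otherwise the sum is empty and the value is $0$. These observations dispose of the boundary and ``otherwise'' rows.

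The main obstacle is purely bookkeeping: correctly partitioning the constraint $(x_1,\ldots,x_{s-1})\in S_{m,s-1}^{k_1-1}$ according to whether the ``max $\geq k_1$'' witness is $x_{s-1}$ itself or lies among the remaining coordinates, and verifying that the two subcases match the $O$- and $\overline{O}$-recursions respectively. Once that partition is set up, the rest is identical in form to the proofs of Lemmas \ref{lemma:3.10} and \ref{lemma:3.12}, so no new algebraic ideas are required.
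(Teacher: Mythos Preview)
Your proposal is correct and follows essentially the same approach as the paper's proof: isolate the last pair $(x_{s-1},y_s)$, use $y_1+\cdots+y_{s-1}=r-y_s$ to extract the factor $q^{x_{s-1}(r-y_s)}$, and split the range of $x_{s-1}$ at $k_1$ to decide whether the residual tuple must still carry the ``$\max\geq k_1$'' constraint (giving $O$) or not (giving $\overline{O}$). The paper presents the $y_s$-sum first and the $x_{s-1}$-split second, whereas you describe them in the opposite order, but the content is identical.
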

\begin{proof}
For $s > 1$, $(s-2)+k_1\leq m)$ and $s\leq r \leq s(k_2-1)$, we observe that $y_{s}$ may assume any value $1,\ldots,k_{2}-1$, then $O_{q,k_1,0}^{\infty,k_2}(m,r,s)$ can be written as
\noindent
\begin{equation*}%\label{eq: 1.1}
\begin{split}
O&_{q,k_1,0}^{\infty,k_2}(m,r,s)\\
=&\sum_{y_{s}=1}^{k_2-1}{\hspace{0.3cm}\sum_{\substack{(x_1,\ldots,x_{s-1})\in S_{m,s-1}^{k_1-1}}}}{\hspace{0.3cm}\sum_{\substack{0<y_1,\ldots,y_{s-1}<k_2\\(y_{1},\ldots,y_{s-1}) \in S_{r-y_s,s-1}^{0}}}}q^{x_{s-1}(r-y_{s})}\ q^{y_{1}x_{1}+(y_{1}+y_{2})x_{2}+\cdots+(y_{1}+\cdots+y_{s-2})x_{s-2}}
\end{split}
\end{equation*}
\noindent
Similarly, we observe that since $x_{s-1}$ can assume the values $1,\ldots,m-(s-2)$, then $O_{q,k_1,0}^{\infty,k_2}(m,r,s)$ can be rewritten as
\noindent
\begin{equation*}%\label{eq: 1.1}
\begin{split}
O&_{q,k_1,0}^{\infty,k_2}(m,r,s)\\
=&\sum_{x_{s-1}=1}^{k_1-1}\sum_{y_{s}=1}^{k_2-1}q^{x_{s-1}(r-y_{s})}{\hspace{-0.5cm}\sum_{\substack{(x_1,\ldots,x_{s-2})\in S_{m-x_{s-1},s-2}^{k_1-1}}}}\quad {\sum_{\substack{0<y_1,\ldots,y_{s-1}<k_2\\(y_{1},\ldots,y_{s-1}) \in S_{r-y_s,s-1}^{0}}}}\ q^{y_{1}x_{1}+(y_{1}+y_{2})x_{2}+\cdots+(y_{1}+\cdots+y_{s-2})x_{s-2}}\\
&+\sum_{x_{s-1}=k_1}^{m-(s-2)}\sum_{y_{s}=1}^{k_2-1}q^{x_{s-1}(r-y_{s})}{\hspace{-0.5cm}\sum_{\substack{(x_1,\ldots,x_{s-2})\in S_{m-x_{s-1},s-2}^{0}}}}\quad{\sum_{\substack{0<y_1,\ldots,y_{s-1}<k_2\\(y_{1},\ldots,y_{s-1}) \in S_{r-y_s,s-1}^{0}}}}q^{y_{1}x_{1}+(y_{1}+y_{2})x_{2}+\cdots+(y_{1}+\cdots+y_{s-2})x_{s-2}}\\
=&\sum_{a=1}^{k_1-1}\ \sum_{b=1}^{k_2-1}\ q^{a(r-b)}\ O_{q,k_1,0}^{\infty,k_2}(m-a,r-b,s-1)+\sum_{a=k_1}^{m-(s-2)}\ \sum_{b=1}^{k_2-1}\ q^{a(r-b)}\ \overline{O}_{q,0,0}^{\infty,k_2}(m-a,r-b,s-1).
\end{split}
\end{equation*}
\noindent
The other cases are obvious and thus the proof is completed.
\end{proof}

\begin{remark}
{\rm
We observe that $O_{1,k_1,0}^{\infty,k_2}(m,r,s)$ is the number of integer solutions $(x_{1},\ldots,x_{s-1})$ and $(y_{1},\ldots,y_{s})$ of

\noindent
\begin{equation*}\label{eq:1}
\begin{split}
(x_1,\ldots,x_{s-1})\in S_{m,s-1}^{k_1-1},\ \text{and}
\end{split}
\end{equation*}
\noindent

\begin{equation*}\label{eq:1}
\begin{split}
0<y_{j}<k_2\ \text{for}\ j=1,\ldots,s,
\end{split}
\end{equation*}

\noindent
\begin{equation*}\label{eq:2}
\begin{split}
(y_{1},\ldots,y_{s}) \in S_{r,s}^{0},
\end{split}
\end{equation*}
\noindent
given by
\noindent
\begin{equation*}\label{eq: 1.1}
\begin{split}
O_{1,k_1,0}^{\infty,k_2}(m,r,s)=R(s-1,\ k_{1},\ m)S(s,\ k_{2},\ r),
\end{split}
\end{equation*}

\noindent
where $R(a,\ b,\ c)$ denotes the total number of integer solution $y_{1}+x_{2}+\cdots+y_{a}=c$ such that $(y_{1},\ldots,y_{a}) \in S_{r,a}^{k_2}$. The number is given by
\noindent
\begin{equation*}\label{eq: 1.1}
\begin{split}
R(a,\ b,\ c)=\sum_{j=1}^{min\left(a,\ \left[\frac{c-a}{b-1}\right]\right) }(-1)^{j+1}{a \choose j}{c-j(b-1)-1 \choose a-1}.
\end{split}
\end{equation*}

\noindent
where $S(a,\ b,\ c)$ denotes the total number of integer solution $x_{1}+x_{2}+\cdots+x_{a}=c$ such that $0<x_{i}<b$ for $i=1,2,\ldots,a$. The number is given by
\noindent
\begin{equation*}\label{eq: 1.1}
\begin{split}
S(a,\ b,\ c)=\sum_{j=0}^{min\left(a,\ \left[\frac{c-a}{b-1}\right]\right) }(-1)^{j}{a \choose j}{c-j(b-1)-1 \choose a-1}.
\end{split}
\end{equation*}
\noindent
See, e.g. \citet{charalambides2002enumerative}.
}

\end{remark}

\begin{definition}
For $0<q\leq1$, we define

\begin{equation*}\label{eq: 1.1}
\begin{split}
\overline{P}_{q,0,0}^{\infty,k_2}(u,v,s)={\sum_{x_{1},\ldots,x_{s}}}\
\sum_{y_{1},\ldots,y_{s}} q^{y_{1}x_{2}+(y_{1}+y_{2})x_{3}+\cdots+(y_{1}+\cdots+y_{s-1})x_{s}},\
\end{split}
\end{equation*}
\noindent
where the summation is over all integers $x_1,\ldots,x_{s},$ and $y_1,\ldots,y_s$ satisfying
\noindent
\begin{equation*}\label{eq:1}
\begin{split}
(x_1,\ldots,x_{s})\in S_{u,s}^{0},\ \text{and}
\end{split}
\end{equation*}
\noindent

\begin{equation*}\label{eq:1}
\begin{split}
0<y_{j}<k_2\ \text{for}\ j=1,\ldots,s,
\end{split}
\end{equation*}

\noindent
\begin{equation*}\label{eq:2}
\begin{split}
(y_{1},\ldots,y_{s}) \in S_{v,s}^{0}.
\end{split}
\end{equation*}
\end{definition}
\noindent
The following gives a recurrence relation useful for the computation of $\overline{P}_{q,0,0}^{\infty,k_2}(u,v,s)$.
\noindent\\
\begin{lemma}
\label{lemma:6.7}
$\overline{P}_{q,0,0}^{\infty,k_2}(u,v,s)$ obeys the following recurrence relation.
\begin{equation*}\label{eq: 1.1}
\begin{split}
\overline{P}&_{q,0,0}^{\infty,k_2}(u,v,s)\\
&=\left\{
  \begin{array}{ll}
    \sum_{a=1}^{u-(s-1)}\sum_{b=1}^{k_2-1}q^{a(v-b)} \overline{P}_{q,0,0}^{\infty,k_2}(u-a,v-b,s-1), & \text{for}\ s>1,\ s\leq u\\
    &\text{and}\ s\leq v \leq s(k_{2}-1)\\
    1, & \text{for}\ s=1,\ 1\leq u,\ \text{and}\ 1\leq v\leq k_{2}-1\\
    0, & \text{otherwise.}\\
  \end{array}
\right.
\end{split}
\end{equation*}
\end{lemma}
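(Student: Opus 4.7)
The plan is to parallel the proof of Lemma~\ref{lemma:3.11} essentially verbatim, since the exponent and the constraints defining $\overline{P}_{q,0,0}^{\infty,k_2}(u,v,s)$ differ from those of $\overline{H}_{q,0,0}^{\infty,k_2}(u,v,s)$ only in which $x_j$'s carry which partial sum of $y$'s; the combinatorial mechanics of isolating the last coordinate and recognising a smaller instance of the same quantity are identical. First I would dispose of the degenerate cases. For $s=1$ the weighted sum has an empty exponent, the constraint $(x_1)\in S_{u,1}^0$ forces $x_1=u\geq 1$, and the constraints $0<y_1<k_2$ together with $(y_1)\in S_{v,1}^0$ force $y_1=v$ with $1\leq v\leq k_2-1$, giving the value $1$; in every other boundary configuration at least one index set is empty, contributing $0$.

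For the main case $s>1$ with $s\leq u$ and $s\leq v\leq s(k_2-1)$, I would first peel off $x_s$. Because $x_1,\ldots,x_{s-1}$ are positive integers summing to $u-x_s$, the index $x_s$ ranges over $\{1,\ldots,u-(s-1)\}$. Using $y_1+\cdots+y_{s-1}=v-y_s$, the exponent factors as
\[
q^{(v-y_s)x_s}\cdot q^{y_1x_2+(y_1+y_2)x_3+\cdots+(y_1+\cdots+y_{s-2})x_{s-1}},
\]
so the factor $q^{(v-y_s)x_s}$ pulls outside the inner summations over $(x_1,\ldots,x_{s-1})$. Next I would peel off $y_s$: since $0<y_s<k_2$ and the remaining $y_j$'s are positive integers strictly less than $k_2$ summing to $v-y_s$, the index $y_s$ ranges over $\{1,\ldots,k_2-1\}$.

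After interchanging the order of summation and substituting $a=x_s$, $b=y_s$, the residual double sum becomes
\[
\sum_{(x_1,\ldots,x_{s-1})\in S_{u-a,s-1}^0}\ \sum_{\substack{0<y_j<k_2\\(y_1,\ldots,y_{s-1})\in S_{v-b,s-1}^0}}q^{y_1x_2+(y_1+y_2)x_3+\cdots+(y_1+\cdots+y_{s-2})x_{s-1}},
\]
which is exactly $\overline{P}_{q,0,0}^{\infty,k_2}(u-a,v-b,s-1)$ by definition. Assembling gives the stated recurrence. I do not foresee a real obstacle here; the only care point is bookkeeping on the index ranges---specifically verifying that the upper limit $u-(s-1)$ on $a$ coincides with the support requirement $s-1\leq u-a$ of the smaller instance, so that no spurious zero terms are added and no nonzero terms are omitted when the summation is truncated at $u-(s-1)$.
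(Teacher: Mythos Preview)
Your proposal is correct and follows essentially the same approach as the paper's proof: isolate $x_s$ and $y_s$, factor out $q^{(v-y_s)x_s}$, and recognise the residual sum as $\overline{P}_{q,0,0}^{\infty,k_2}(u-a,v-b,s-1)$. The only cosmetic difference is that the paper peels off $y_s$ first and then $x_s$, whereas you do the reverse; this is immaterial.
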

\begin{proof}
For $s > 1$, $s\leq u$ and $s\leq v \leq s(k_{2}-1)$, we observe that $y_{s}$ may assume any value $1,\ldots,k_{2}-1$, then $\overline{P}_{q,0,0}^{\infty,k_2}(u,v,s)$ can be written as
\noindent
\begin{equation*}%\label{eq: 1.1}
\begin{split}
\overline{P}&_{q,0,0}^{\infty,k_2}(u,v,s)\\
=&\sum_{y_{s}=1}^{k_2-1}{\sum_{\substack{(x_1,\ldots,x_{s})\in S_{u,s}^{0}}}}
{\hspace{0.3cm}\sum_{\substack{0<y_1,\ldots,y_{s-1}<k_2\\(y_{1},\ldots,y_{s-1}) \in S_{v-y_s,s-1}^{0}}}}q^{x_{s}(v-y_{s})}\ q^{y_{1}x_{2}+(y_{1}+y_{2})x_{3}+\cdots+(y_{1}+\cdots+y_{s-2})x_{s-1}}
\end{split}
\end{equation*}
\noindent
Similarly, we observe that since $x_s$ can assume the values $1,\ldots,u-(s-1)$, then $\overline{P}_{q,0,0}^{\infty,k_2}(u,v,s)$ can be rewritten as
\noindent
\begin{equation*}%\label{eq: 1.1}
\begin{split}
\overline{P}&_{q,0,0}^{\infty,k_2}(u,v,s)\\
=&\sum_{x_{s}=1}^{u-(s-1)}\sum_{y_{s}=1}^{k_2-1}q^{x_{s}(v-y_{s})}{\hspace{-0.3cm}\sum_{\substack{(x_1,\ldots,x_{s-1})\in S_{u-x_s,s-1}^{0}}}}{\hspace{0.3cm}\sum_{\substack{0<y_1,\ldots,y_{s-1}<k_2\\(y_{1},\ldots,y_{s-1}) \in S_{v-y_s,s-1}^{0}}}}\ q^{y_{1}x_{2}+(y_{1}+y_{2})x_{3}+\cdots+(y_{1}+\cdots+y_{s-2})x_{s-1}}\\
=&\sum_{a=1}^{u-(s-1)}\sum_{b=1}^{k_2-1}q^{a(v-b)} \overline{P}_{q,0,0}^{\infty,k_2}(u-a,v-b,s-1).
\end{split}
\end{equation*}
\noindent
The other cases are obvious and thus the proof is completed.
\end{proof}
\begin{remark}
{\rm
We observe that $\overline{P}_{1,0,0}^{\infty,k_2}(u,v,s)$ is the number of integer solutions $(x_{1},\ldots,x_{s})$ and $(y_{1},\ldots,y_{s})$ of
\noindent
\begin{equation*}\label{eq:1}
\begin{split}
(x_1,\ldots,x_{s})\in S_{u,s}^{0},\ \text{and}
\end{split}
\end{equation*}
\noindent

\begin{equation*}\label{eq:1}
\begin{split}
0<y_{j}<k_2\ \text{for}\ j=1,\ldots,s,
\end{split}
\end{equation*}

\noindent
\begin{equation*}\label{eq:2}
\begin{split}
(y_{1},\ldots,y_{s}) \in S_{v,s}^{0},
\end{split}
\end{equation*}

\noindent
given by
\noindent
\begin{equation*}\label{eq: 1.1}
\begin{split}
\overline{P}_{1,0,0}^{\infty,k_2}(m,r,s)=M(s,\ u )S(s,\ k_{2},\ v),
\end{split}
\end{equation*}

\noindent
where $M(a,\ b)$ denotes the total number of integer solution $y_{1}+x_{2}+\cdots+y_{a}=b$ such that $y_{1},\ldots,y_{a} \in S_{b,a}^{0}$. The number is given by
\noindent
\begin{equation*}\label{eq: 1.1}
\begin{split}
M(a,\ b)={b-1 \choose a-1}.
\end{split}
\end{equation*}

\noindent
where $S(a,\ b,\ c)$ denotes the total number of integer solution $x_{1}+x_{2}+\cdots+x_{a}=c$ such that $0<x_{i}<b$ for $i=1,2,\ldots,a$. The number is given by
\noindent
\begin{equation*}\label{eq: 1.1}
\begin{split}
S(a,\ b,\ c)=\sum_{j=0}^{min\left(a,\ \left[\frac{c-a}{b-1}\right]\right) }(-1)^{j}{a \choose j}{c-j(b-1)-1 \choose a-1}.
\end{split}
\end{equation*}
\noindent
See, e.g. \citet{charalambides2002enumerative}.
}
\end{remark}

\begin{definition}
For $0<q\leq1$, we define
\begin{equation*}\label{eq: 1.1}
\begin{split}
P_{q,k_1,0}^{\infty,k_2}(m,r,s)={\sum_{x_{1},\ldots,x_{s}}}\
\sum_{y_{1},\ldots,y_{s}}q^{y_{1}x_{2}+(y_{1}+y_{2})x_{3}+\cdots+(y_{1}+\cdots+y_{s-1})x_{s}},\
\end{split}
\end{equation*}
\noindent
where the summation is over all integers $x_1,\ldots,x_{s},$ and $y_1,\ldots,y_s$ satisfying
\noindent
\begin{equation*}\label{eq:1}
\begin{split}
(x_1,\ldots,x_{s})\in S_{m,s}^{k_1-1},\ \text{and}
\end{split}
\end{equation*}
\noindent

\begin{equation*}\label{eq:1}
\begin{split}
0<y_{j}<k_2\ \text{for}\ j=1,\ldots,s,
\end{split}
\end{equation*}

\noindent
\begin{equation*}\label{eq:2}
\begin{split}
(y_{1},\ldots,y_{s}) \in S_{r,s}^{0}.
\end{split}
\end{equation*}
\end{definition}
\noindent
The following gives a recurrence relation useful for the computation of $P_{q,k_1,0}^{\infty,k_2}(m,r,s)$.
\noindent\\
\begin{lemma}
\label{lemma:6.8}
$P_{q,k_1,0}^{\infty,k_2}(m,r,s)$ obeys the following recurrence relation,
\begin{equation*}\label{eq: 1.1}
\begin{split}
P&_{q,k_1,0}^{\infty,k_2}(m,r,s)\\
&=\left\{
  \begin{array}{ll}
    \sum_{b=1}^{k_{2}-1}\ \sum_{a=1}^{k_{1}-1}q^{a(r-b)}P_{q,k_1,0}^{\infty,k_2}(m-a,r-b,s-1)\\
    +\sum_{b=k_{2}}^{r-(s-1)}\ \sum_{a=1}^{k_{1}-1}q^{a(r-b)}\overline{P}_{q,0,0}^{\infty,k_2}(m-a,r-b,s-1), & \text{for}\quad s>1,\ (s-1)+k_{1}\leq m\\
    &\text{and}\quad s\leq r \leq s(k_{2}-1)\\
    1, & \text{for}\quad s=1,\ k_1\leq m\ \text{and}\\
    & 1\leq r \leq k_{2}-1\\
    0, & \text{otherwise.}\\
  \end{array}
\right.
\end{split}
\end{equation*}
\end{lemma}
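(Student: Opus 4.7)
The plan is to mirror the argument given for Lemma~\ref{lemma:3.8} (and the closely related Lemma~\ref{lemma:3.6}) almost verbatim, since the stated recurrence for $P_{q,k_1,0}^{\infty,k_2}$ has exactly the same shape as the one for $F_{q,0,k_2}^{k_1,\infty}$: both split on $y_s$ at the threshold $k_2$, with $x_s$ confined to $\{1,\ldots,k_1-1\}$. Concretely, I would peel off the last pair $(x_s,y_s)$ from the defining sum, factor out the unique $q$-exponent depending on $x_s$, and then partition the $y_s$-range at $k_2$ according to where the threshold on the $y$-tuple is realized.

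Step~1 (isolate $x_s$): Fix $s>1$ with $(s-1)+k_1\le m$ and $s\le r\le s(k_2-1)$. Since $x_s$ ranges over $\{1,\ldots,k_1-1\}$ and the $q$-exponent in the definition depends on $x_s$ only through $(y_1+\cdots+y_{s-1})x_s=(r-y_s)x_s$, I rewrite
\begin{equation*}
P_{q,k_1,0}^{\infty,k_2}(m,r,s)=\sum_{x_s=1}^{k_1-1}\sum_{x_1+\cdots+x_{s-1}=m-x_s}\sum_{y_1,\ldots,y_s}q^{x_s(r-y_s)}\,q^{y_1x_2+(y_1+y_2)x_3+\cdots+(y_1+\cdots+y_{s-2})x_{s-1}},
\end{equation*}
with the inner tuples constrained exactly as in the definition, only with $m$ replaced by $m-x_s$.

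Step~2 (split $y_s$ at $k_2$): Next pull the sum over $y_s$ outside. The admissible range for $y_s$ is $\{1,\ldots,r-(s-1)\}$, and I partition it at $k_2$. If $1\le y_s\le k_2-1$, then the threshold condition carried by the $y$-tuple must be witnessed inside the remaining coordinates $(y_1,\ldots,y_{s-1})$, so these are described by the same kind of constraint as in $P$; the reduced inner sum is therefore $P_{q,k_1,0}^{\infty,k_2}(m-a,r-b,s-1)$ for $a=x_s$, $b=y_s$. If $k_2\le y_s\le r-(s-1)$, then $y_s$ alone witnesses the threshold and the remaining $(y_1,\ldots,y_{s-1})$ need only be positive integers summing to $r-y_s$; the reduced inner sum is therefore $\overline{P}_{q,0,0}^{\infty,k_2}(m-a,r-b,s-1)$.

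Step~3 (assemble and verify base case): Substituting $(a,b)$ for $(x_s,y_s)$ and pulling out the common factor $q^{a(r-b)}$ from both regimes gives exactly the stated two-term recurrence. For $s=1$ the $y$-tail is empty, the $q$-exponent is empty, and the constraints reduce to $x_1=m\ge k_1$ and $y_1=r\in\{1,\ldots,k_2-1\}$, yielding~$1$ on the stated base region and $0$ otherwise; for $(s,m,r)$ outside the stated ranges the index set is empty and the value is~$0$. The main obstacle will be the bookkeeping of the threshold superscript in the $S_{\cdot,\cdot}^{\cdot}$-notation when $y_s$ crosses $k_2$ — in particular, one must verify that dropping $y_s$ from a tuple already witnessing the threshold produces precisely the unconstrained-max tuple that matches the definition of $\overline{P}_{q,0,0}^{\infty,k_2}$, whereas in the below-threshold case the superscript must persist in the tail.
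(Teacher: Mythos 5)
There is a genuine gap: you have attached the threshold to the wrong family of variables. By the paper's definition, $P_{q,k_1,0}^{\infty,k_2}(m,r,s)$ sums over $(x_1,\ldots,x_s)\in S_{m,s}^{k_1-1}$ (so it is the \emph{$x$-tuple} that must satisfy $\max_j x_j\geq k_1$, and $x_s$ may be as large as $m-(s-1)$), while the $y_j$ satisfy $0<y_j<k_2$ and $y_1+\cdots+y_s=r$ with \emph{no} threshold condition. Your Step~1 asserts that $x_s$ ranges over $\{1,\ldots,k_1-1\}$, and your Step~2 partitions the range of $y_s$ at $k_2$ ``according to where the threshold on the $y$-tuple is realized'' --- but under the actual constraints the range $y_s\geq k_2$ is empty and there is no $y$-threshold to witness, so your second sum vanishes identically and the term involving $\overline{P}_{q,0,0}^{\infty,k_2}$ is never produced. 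You have transplanted the decomposition appropriate to $F_{q,0,k_2}^{k_1,\infty}$ (bounded $x$'s, thresholded $y$'s) onto a quantity whose definition is the mirror image.

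The paper's own proof peels off $x_s$ and splits \emph{that} variable at $k_1$: for $x_s\in\{1,\ldots,k_1-1\}$ the threshold must be realized among $(x_1,\ldots,x_{s-1})$, which reproduces $P_{q,k_1,0}^{\infty,k_2}(m-a,r-b,s-1)$, and for $x_s\in\{k_1,\ldots,m-(s-1)\}$ the threshold is witnessed by $x_s$ itself, so the tail constraint relaxes to $S_{m-x_s,s-1}^{0}$ and one obtains $\overline{P}_{q,0,0}^{\infty,k_2}(m-a,r-b,s-1)$; in both regimes $y_s$ simply runs over $1,\ldots,k_2-1$. What the proof actually derives is
\[
\sum_{b=1}^{k_2-1}\sum_{a=1}^{k_1-1}q^{a(r-b)}P_{q,k_1,0}^{\infty,k_2}(m-a,r-b,s-1)+\sum_{b=1}^{k_2-1}\sum_{a=k_1}^{m-(s-1)}q^{a(r-b)}\overline{P}_{q,0,0}^{\infty,k_2}(m-a,r-b,s-1),
\]
whose second term differs from the one displayed in the lemma (the displayed ranges $\sum_{b=k_2}^{r-(s-1)}\sum_{a=1}^{k_1-1}$ appear to be a copy-and-paste slip from the $F$-type lemmas); pattern-matching on that display rather than on the definition is likely what led you astray. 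Your base-case check in Step~3 is correct, but the inductive step must be redone with the split on $x_s$.
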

\begin{proof}
For $s > 1$, $(s-1)+k_{1}\leq m$ and $s\leq r \leq s(k_{2}-1)$, we observe that $y_{s}$ may assume any value $1,\ldots,k_{2}-1$, then $P_{q,k_1,0}^{\infty,k_2}(m,r,s)$ can be written as
\noindent
\begin{equation*}%\label{eq: 1.1}
\begin{split}
P&_{q,k_1,0}^{\infty,k_2}(m,r,s)\\
=&\sum_{y_{s}=1}^{k_2-1}{\hspace{0.3cm}\sum_{\substack{(x_1,\ldots,x_{s})\in S_{m,s}^{k_1-1}}}}{\hspace{0.3cm}\sum_{\substack{0<y_1,\ldots,y_{s-1}<k_2\\(y_{1},\ldots,y_{s-1}) \in S_{r-y_s,s-1}^{0}}}}q^{x_{s}(r-y_{s})}q^{y_{1}x_{2}+(y_{1}+y_{2})x_{3}+\cdots+(y_{1}+\cdots+y_{s-2})x_{s-1}}
\end{split}
\end{equation*}
\noindent
Similarly, we observe that since $x_s$ can assume the values $1,\ldots,m-(s-1)$, then $L_{q,k_1,0}(m,r,s)$ can be rewritten as
\noindent
\begin{equation*}%\label{eq: 1.1}
\begin{split}
P&_{q,k_1,0}^{\infty,k_2}(m,r,s)\\
=&\sum_{x_{s}=1}^{k_1-1}\sum_{y_{s}=1}^{k_{2}-1}q^{x_{s}(r-y_{s})}{\hspace{-0.3cm}\sum_{\substack{(x_1,\ldots,x_{s-1})\in S_{m-x_s,s-1}^{k_1-1}}}}{\hspace{0.3cm}\sum_{\substack{(y_{1},\ldots,y_{s-1}) \in S_{r-y_s,s-1}^{0}}}} q^{y_{1}x_{2}+(y_{1}+y_{2})x_{3}+\cdots+(y_{1}+\cdots+y_{s-2})x_{s-1}}\\
&+\sum_{x_{s}=k_1}^{m-(s-1)}\sum_{y_{s}=1}^{k_2-1}q^{x_{s}(r-y_{s})}{\hspace{-0.3cm}\sum_{\substack{(x_1,\ldots,x_{s-1})\in S_{m-x_s,s-1}^{0}}}}{\hspace{0.3cm}\sum_{\substack{(y_{1},\ldots,y_{s-1}) \in S_{r-y_{s},s-1}^{0}}}} q^{y_{1}x_{2}+(y_{1}+y_{2})x_{3}+\cdots+(y_{1}+\cdots+y_{s-2})x_{s-1}}\\
=&\sum_{a=1}^{k_{1}-1}\ \sum_{b=1}^{k_{2}-1}q^{a(r-b)}P_{q,k_1,0}^{\infty,k_2}(m-a,r-b,s-1)+\sum_{a=k_{1}}^{m-(s-1)}\ \sum_{b=1}^{k_{2}-1}q^{a(r-b)}\overline{P}_{q,0,0}^{\infty,k_2}(m-a,r-b,s-1).
\end{split}
\end{equation*}
\noindent
The other cases are obvious and thus the proof is completed.
\end{proof}
\begin{remark}
{\rm
We observe that $P_{1,k_1,0}^{\infty,k_2}(m,r,s)$ is the number of integer solutions $(x_{1},\ldots,x_{s})$ and $(y_{1},\ldots,y_{s})$ of

\noindent
\begin{equation*}\label{eq:1}
\begin{split}
(x_1,\ldots,x_{s})\in S_{m,s}^{k_1-1},\ \text{and}
\end{split}
\end{equation*}
\noindent

\begin{equation*}\label{eq:1}
\begin{split}
0<y_{j}<k_2\ \text{for}\ j=1,\ldots,s,
\end{split}
\end{equation*}

\noindent
\begin{equation*}\label{eq:2}
\begin{split}
(y_{1},\ldots,y_{s}) \in S_{r,s}^{0},
\end{split}
\end{equation*}
\noindent
given by
\noindent
\begin{equation*}\label{eq: 1.1}
\begin{split}
P_{1,k_1,0}^{\infty,k_2}(m,r,s)=R(s,\ k_{1},\ m)S(s,\ k_{2},\ r),
\end{split}
\end{equation*}

\noindent
where $R(a,\ b,\ c)$ denotes the total number of integer solution $y_{1}+x_{2}+\cdots+y_{a}=c$ such that $(y_{1},\ldots,y_{a}) \in S_{r,a}^{k_2}$. The number is given by
\noindent
\begin{equation*}\label{eq: 1.1}
\begin{split}
R(a,\ b,\ c)=\sum_{j=1}^{min\left(a,\ \left[\frac{c-a}{b-1}\right]\right)}(-1)^{j+1}{a \choose j}{c-j(b-1)-1 \choose a-1}.
\end{split}
\end{equation*}

\noindent
where $S(a,\ b,\ c)$ denotes the total number of integer solution $x_{1}+x_{2}+\cdots+x_{a}=c$ such that $0<x_{i}<b$ for $i=1,2,\ldots,a$. The number is given by
\noindent
\begin{equation*}\label{eq: 1.1}
\begin{split}
S(a,\ b,\ c)=\sum_{j=0}^{min\left(a,\ \left[\frac{c-a}{b-1}\right]\right)}(-1)^{j}{a \choose j}{c-j(b-1)-1 \choose a-1}.
\end{split}
\end{equation*}
\noindent
See, e.g. \citet{charalambides2002enumerative}.
}
\end{remark}
\noindent
The probability function of $P_{q,\theta}\Big(L_{n}^{(1)}\geq k_{1}\ \wedge\ L_{n}^{(0)}\leq k_{2}\Big)$ for $n\geq k_1$ is obtained by the following theorem.
\noindent\\
\begin{theorem}
The PMF $P_{q,\theta}\Big(L_{n}^{(1)}\geq k_{1}\ \wedge\ L_{n}^{(0)}\leq k_{2}\Big)$ for $n\geq k_1$
\noindent
\begin{equation*}\label{eq:bn1}
\begin{split}
P_{q,\theta}\Big(L_{n}^{(1)}\geq k_{1}\ \wedge\ L_{n}^{(0)}\leq k_{2}\Big)=&\sum_{i=1}^{n-k_1}\theta^{n-i}\ {\prod_{j=1}^{i}}\ (1-\theta q^{j-1})\sum_{s=1}^{i}\Big[H_{q,k_1,0}^{\infty,k_2+1}(n-i,\ i,\ s)\\
&+O_{q,k_1,0}^{\infty,k_2+1}(n-i,\ i,\ s)+G_{q,k_1,0}^{\infty,k_2+1}(n-i,\ i,\ s+1)\\
&+P_{q,k_1,0}^{\infty,k_2+1}(n-i,\ i,\ s)\Big]J(n,k_1+1)+J(n,k_1)\theta^n,\\
\end{split}
\end{equation*}
\noindent
where $J(n,k_1)$ is the function defined by $J(n,k_1)=1$ if $n\geq k_1$ and $0$ otherwise.
\end{theorem}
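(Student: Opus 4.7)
The plan is to follow the same scheme used in the proof of Theorem 6.2 (the $L_n^{(1)}\le k_1,\ L_n^{(0)}\ge k_2$ case), but with the roles of the success and failure constraints swapped. First I would partition the event
\[
\Big\{L_{n}^{(1)}\geq k_{1}\ \wedge\ L_{n}^{(0)}\leq k_{2}\Big\}
\]
into disjoint sub-events according to the total number $F_n=i$ of failures. Since we need at least one success run of length $\geq k_1$, the range of $i$ is $1\le i\le n-k_1$, plus the separate extreme case $i=0$ (all-successes) which contributes $\theta^n$ and is accounted for by the indicator $J(n,k_1)\theta^n$.

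Next, for each fixed $i$, I would classify each admissible sequence into one of the four types A, B, C, D described in the introduction to Section 6, according to whether it begins/ends with a success or a failure run. Letting $s$ denote the number of failure runs, the block lengths $x_1,\ldots$ of success blocks and $y_1,\ldots$ of failure blocks must satisfy: (i) the $y_j$'s are positive and bounded by $k_2$ (all failure runs $\le k_2$); (ii) the $x_j$'s are positive and their multiset lies in $S_{n-i,\cdot}^{k_1-1}$ (at least one success run of length $\ge k_1$); (iii) $\sum x_j=n-i$ and $\sum y_j=i$. The range $1\le s\le i$ is the natural one since each failure run contains at least one failure.

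Then I would write out the probability of each type using the geometric probability rule $P(X_j=1\mid S_{j-1}^{(0)}=r)=\theta q^{r}$ and $P(X_j=0\mid S_{j-1}^{(0)}=r)=1-\theta q^{r}$, multiply through, and collect the $\theta$-powers and the $(1-\theta q^{j-1})$ products. After the standard exponentiation bookkeeping (identical to what was done in the previous theorems), the remaining $q$-weighted sums become exactly
\[
H_{q,k_1,0}^{\infty,k_2+1}(n-i,i,s),\quad O_{q,k_1,0}^{\infty,k_2+1}(n-i,i,s),\quad G_{q,k_1,0}^{\infty,k_2+1}(n-i,i,s+1),\quad P_{q,k_1,0}^{\infty,k_2+1}(n-i,i,s),
\]
corresponding respectively to types D (start $1$, end $1$), A (start $0$, end $1$), C (start $1$, end $0$), B (start $0$, end $0$), by invocation of Lemmas \ref{lemma:3.10}, \ref{lemma:3.12}, \ref{lemma:6.6}, \ref{lemma:6.8}. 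Summing over $s$ and then over $i$, and restricting to $n\ge k_1+1$ via the indicator $J(n,k_1+1)$ (there must be room for at least one success run of length $k_1$ \emph{plus} at least one failure), produces the stated formula; the all-success term $J(n,k_1)\theta^n$ is added to cover the remaining case $F_n=0$ which is not captured by the enumeration with $s\ge 1$.

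The main obstacle I anticipate is not any single calculation but the careful matching of the four type-enumerations to the four auxiliary quantities $H,O,G,P$ with the correct shifted parameters $(k_1, k_2+1)$ and the correct shift of $s$ to $s+1$ in the C-type term; any off-by-one here would break the identity. A related subtlety is justifying the indicator structure $J(n,k_1+1)$ versus $J(n,k_1)$: the former governs the generic $i\ge 1$ sum (which requires $n-i\ge k_1$ and $i\ge 1$, i.e.\ $n\ge k_1+1$), while the latter contributes only the degenerate $i=0$ configuration (a single success run $1\cdots 1$ of length $n\ge k_1$ with probability $\theta^n$). Once these boundary cases are pinned down, the rest of the proof is a direct transcription of the argument used in Theorem 6.2 with the succession-constraint and frequency-constraint indices interchanged.
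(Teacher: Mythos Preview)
Your plan matches the paper's proof essentially step for step: partition by $F_n=i$, split into the four A/B/C/D configurations, compute the $q$-weighted probabilities, identify each with one of $H,O,G,P$, and handle the $i=0$ all-success case by the $J(n,k_1)\theta^n$ term.

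The one place where you slip is exactly the matching you flagged as the main obstacle. In the paper's conventions, type~A (start $0$, end $1$) corresponds to $H$, type~B (start $0$, end $0$) to $O$, type~C (start $1$, end $1$) to $G$ with index $s+1$, and type~D (start $1$, end $0$) to $P$; you have permuted three of these and also swapped the start/end descriptions of C and D. The diagnostic is simple: look at the exponent pattern in each definition. A pattern $q^{y_1x_1+\cdots}$ (first success block weighted by $y_1$) means the sequence starts with a failure run, while $q^{y_1x_2+\cdots}$ means it starts with a success run; whether the last term carries $(y_1+\cdots+y_s)$ or $(y_1+\cdots+y_{s-1})$ tells you whether the sequence ends with a success or a failure. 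Fixing that bookkeeping, your argument is the paper's argument.
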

\begin{proof}
We partition the event $\left\{L_{n}^{(1)}\geq k_{1}\ \wedge\ L_{n}^{(0)}\leq k_{2}\right\}$ into disjoint events given by $F_{n}=i,$ for $i=k_2, \ldots,\ n$. Adding the probabilities we have
\noindent
\begin{equation*}\label{eq:bn}
\begin{split}
P_{q,\theta}\Big(L_{n}^{(1)}\geq k_{1}\ \wedge\ L_{n}^{(0)}\leq k_{2}\Big)=\sum_{i=1}^{n-k_1}P_{q,\theta}\Big(L_{n}^{(1)}\geq k_{1}\ \wedge\ L_{n}^{(0)}\leq k_{2}\ \wedge\ F_{n}=i\Big).\\
\end{split}
\end{equation*}
\noindent
We will write $E_{n,i}=\left\{L_{n}^{(1)}\geq k_{1}\ \wedge\ L_{n}^{(0)}\leq k_{2}\ \wedge\ F_{n}=i\right\}$.
\noindent
We can now rewrite as follows
\noindent
\begin{equation*}\label{eq:bn1}
\begin{split}
P_{q,\theta}\Big(L_{n}^{(1)}\geq k_{1}\ \wedge\ L_{n}^{(0)}\leq k_{2}\Big)=\sum_{i=1}^{n-k_1}P_{q,\theta}\left(E_{n,i}\right).
\end{split}
\end{equation*}
\noindent
We are going to focus on the event $E_{n,\ i}$. A typical element of the event $\Big\{L_{n}^{(1)}\geq k_{1}\ \wedge\ L_{n}^{(0)}\leq k_{2}\ \wedge\ F_{n}=i\Big\}$ is an ordered sequence which consists of $n-i$ successes and $i$ failures such that the length of the longest success run is greater than or equal to $k_1$ and the length of the longest failure run is less than or equal to $k_2$. The number of these sequences can be derived as follows. First we will distribute the $i$ failures and the $n-i$ successes.  Let $s$ $(1\leq s \leq i)$ be the number of runs of failures in the event $E_{n,\ i}$. We divide into two cases: starting with a failure run or starting with a success run. Thus, we distinguish between four types of sequences in the event $\Big\{L_{n}^{(1)}\geq k_{1}\ \wedge\ L_{n}^{(0)}\leq k_{2}\ \wedge\ F_{n}=i\Big\}$, respectively named A, B, C and D-type, which are defined as follows.
\begin{equation*}\label{eq:bn}
\begin{split}
\text{A-type}\ :&\quad \overbrace{0\ldots 0}^{y_{1}}\mid\overbrace{1\ldots 1}^{x_{1}}\mid\overbrace{0\ldots 0}^{y_{2}}\mid\overbrace{1\ldots 1}^{x_{2}}\mid \ldots \mid \overbrace{0\ldots 0}^{y_{s}}\mid\overbrace{1\ldots 1}^{x_{s}},\\
\end{split}
\end{equation*}
\noindent
with $i$ $0$'s and $n-i$ $1$'s, where $x_{j}$ $(j=1,\ldots,s)$ represents a length of run of $1$'s and $y_{j}$ $(j=1,\ldots,s)$ represents the length of a run of $0$'s. And all integers $x_{1},\ldots,x_{s}$, and $y_{1},\ldots ,y_{s}$ satisfy the conditions
\noindent
\begin{equation*}\label{eq:bn}
\begin{split}
(x_1,\ldots,x_{s})\in S_{n-i,s}^{k_1-1},\ \text{and}
\end{split}
\end{equation*}
\noindent
\begin{equation*}\label{eq:bn}
\begin{split}
0 < y_j < k_2+1 \mbox{\ for\ } j=1,...,s, \mbox{\ and\ } y_1+\cdots +y_s=i.
\end{split}
\end{equation*}

\begin{equation*}\label{eq:bn}
\begin{split}
\text{B-type}\ :&\quad \overbrace{0\ldots 0}^{y_{1}}\mid\overbrace{1\ldots 1}^{x_{1}}\mid\overbrace{0\ldots 0}^{y_{2}}\mid\overbrace{1\ldots 1}^{x_{2}}\mid \ldots \mid \overbrace{0\ldots 0}^{y_{s-1}}\mid\overbrace{1\ldots 1}^{x_{s-1}}\mid\overbrace{0\ldots 0}^{y_{s}},\\
\end{split}
\end{equation*}
\noindent
with $i$ $0$'s and $n-i$ $1$'s, where $x_{j}$ $(j=1,\ldots,s-1)$ represents a length of run of $1$'s and $y_{j}$ $(j=1,\ldots,s)$ represents the length of a run of $0$'s. And all integers $x_{1},\ldots,x_{s-1}$, and $y_{1},\ldots ,y_{s}$ satisfy the conditions
\noindent
\begin{equation*}\label{eq:bn}
\begin{split}
(x_1,\ldots,x_{s-1})\in S_{n-i,s-1}^{k_1-1},\ \text{and}\end{split}
\end{equation*}
\noindent
\begin{equation*}\label{eq:bn}
\begin{split}
0 < y_j < k_2+1 \mbox{\ for\ } j=1,...,s, \mbox{\ and\ } y_1+\cdots +y_s=i.
\end{split}
\end{equation*}

\begin{equation*}\label{eq:bn}
\begin{split}
\text{C-type}\ :&\quad \overbrace{1\ldots 1}^{x_{1}}\mid\overbrace{0\ldots 0}^{y_{1}}\mid\overbrace{1\ldots 1}^{x_{2}}\mid\overbrace{0\ldots 0}^{y_{2}}\mid \ldots \mid\overbrace{0\ldots 0}^{y_{s}}\mid\overbrace{1\ldots 1}^{x_{s+1}},\\
\end{split}
\end{equation*}
\noindent
with $i$ $0$'s and $n-i$ $1$'s, where $x_{j}$ $(j=1,\ldots,s+1)$ represents a length of run of $1$'s and $y_{j}$ $(j=1,\ldots,s)$ represents the length of a run of $0$'s. And all integers $x_{1},\ldots,x_{s+1}$, and $y_{1},\ldots ,y_{s}$ satisfy the conditions
\noindent
\begin{equation*}\label{eq:bn}
\begin{split}
(x_1,\ldots,x_{s+1})\in S_{n-i,s+1}^{k_1-1},\ \text{and}
\end{split}
\end{equation*}
\noindent
\begin{equation*}\label{eq:bn}
\begin{split}
0 < y_j < k_2+1 \mbox{\ for\ } j=1,...,s, \mbox{\ and\ } y_1+\cdots +y_{s}=i.
\end{split}
\end{equation*}

\begin{equation*}\label{eq:bn}
\begin{split}
\text{D-type}\ :&\quad \overbrace{1\ldots 1}^{x_{1}}\mid\overbrace{0\ldots 0}^{y_{1}}\mid\overbrace{1\ldots 1}^{x_{2}}\mid\overbrace{0\ldots 0}^{y_{2}}\mid\overbrace{1\ldots 1}^{x_{3}}\mid \ldots \mid \overbrace{0\ldots 0}^{y_{s-1}}\mid\overbrace{1\ldots 1}^{x_{s}}\mid\overbrace{0\ldots 0}^{y_{s}},\\
\end{split}
\end{equation*}
\noindent
with $i$ $0$'s and $n-i$ $1$'s, where $x_{j}$ $(j=1,\ldots,s)$ represents a length of run of $1$'s and $y_{j}$ $(j=1,\ldots,s)$ represents the length of a run of $0$'s. And all integers $x_{1},\ldots,x_{s}$, and $y_{1},\ldots ,y_{s}$ satisfy the conditions
\noindent
\begin{equation*}\label{eq:bn}
\begin{split}
(x_1,\ldots,x_{s})\in S_{n-i,s}^{k_1-1},\ \text{and}
\end{split}
\end{equation*}
\noindent
\begin{equation*}\label{eq:bn}
\begin{split}
0 < y_j < k_2+1 \mbox{\ for\ } j=1,...,s, \mbox{\ and\ } y_1+\cdots +y_{s}=i.
\end{split}
\end{equation*}

Then the probability of the event $E_{n,\ i}$ is given by

\noindent
\begin{equation*}\label{eq: 1.1}
\begin{split}
P_{q,\theta}\Big(E_{n,\ i}\Big)&\\
=\sum_{s=1}^{i}\Bigg[\bigg\{&\sum_{\substack{(x_1,\ldots,x_{s})\in S_{n-i,s}^{k_1-1}}}\hspace{0.3cm}\sum_{\substack{y_{1}+\cdots+y_{s}=i\\ y_{1}, \ldots, y_{s} \in \{1,\ldots,k_{2}\}}}\big(1-\theta q^{0}\big)\cdots \big(1-\theta q^{y_{1}-1}\big)\Big(\theta q^{y_{1}}\Big)^{x_{1}}\times\\
&\big(1-\theta q^{y_{1}}\big)\cdots \big(1-\theta q^{y_{1}+y_{2}-1}\big)\Big(\theta q^{y_{1}+y_{2}}\Big)^{x_{2}}\times\\
&\big(1-\theta q^{y_{1}+y_2}\big)\cdots \big(1-\theta q^{y_{1}+y_{2}+y_3-1}\big)\Big(\theta q^{y_{1}+y_{2}+y_3}\Big)^{x_{3}}\times \\
&\quad \quad \quad \quad\quad \quad\quad \quad \quad \quad \quad \vdots\\
&\big(1-\theta q^{y_{1}+\cdots+y_{s-1}}\big)\cdots \big(1-\theta q^{y_{1}+\cdots+y_{s}-1}\big)\Big(\theta q^{y_{1}+\cdots+y_{s}}\Big)^{x_{s}}\bigg\}+\\
\end{split}
\end{equation*}

\noindent
\begin{equation*}\label{eq: 1.1}
\begin{split}
\quad \  &\bigg\{\sum_{\substack{(x_1,\ldots,x_{s-1})\in S_{n-i,s-1}^{k_1-1}}}\hspace{0.3cm}\sum_{\substack{y_{1}+\cdots+y_{s}=i\\ y_{1},\ldots,y_{s} \in \{1,\ldots,k_{2}\}}} \big(1-\theta q^{0}\big)\cdots \big(1-\theta q^{y_{1}-1}\big)\times\\
&\Big(\theta q^{y_{1}}\Big)^{x_{1}}\big(1-\theta q^{y_{1}}\big)\cdots \big(1-\theta q^{y_{1}+y_{2}-1}\big)\times\\
&\Big(\theta q^{y_{1}+y_{2}}\Big)^{x_{2}}\big(1-\theta q^{y_{1}+y_2}\big)\cdots \big(1-\theta q^{y_{1}+y_{2}+y_3-1}\big)\times \\
&\quad \quad \quad \quad\quad \quad\quad \quad \quad \quad \quad \vdots\\
&\Big(\theta q^{y_{1}+\cdots+y_{s-1}}\Big)^{x_{s-1}}
\big(1-\theta q^{y_{1}+\cdots+y_{s-1}}\big)\cdots \big(1-\theta q^{y_{1}+\cdots+y_{s}-1}\big)\bigg\}+\\
\end{split}
\end{equation*}

\noindent
\begin{equation*}\label{eq: 1.1}
\begin{split}
\quad \quad \quad \quad &\bigg\{\sum_{\substack{(x_1,\ldots,x_{s+1})\in S_{n-i,s+1}^{k_1-1}}}
\hspace{0.3cm}\sum_{\substack{y_{1}+\cdots+y_{s}=i\\ y_{1},\ldots,y_{s} \in \{1,\ldots,k_{2}\}}} \big(\theta q^{0}\big)^{x_{1}}\big(1-\theta q^{0}\big)\cdots (1-\theta q^{y_{1}-1})\times\\
&\Big(\theta q^{y_{1}}\Big)^{x_{2}}\big(1-\theta q^{y_{1}}\big)\cdots \big(1-\theta q^{y_{1}+y_{2}-1}\big)\times\\
&\Big(\theta q^{y_{1}+y_{2}}\Big)^{x_{3}}\big(1-\theta q^{y_{1}+y_2}\big)\cdots \big(1-\theta q^{y_{1}+y_{2}+y_3-1}\big)\times \\
&\quad \quad \quad \quad \vdots\\
&\Big(\theta q^{y_{1}+\cdots+y_{s}}\Big)^{x_{s+1}}\bigg\}+\\
\end{split}
\end{equation*}

\noindent
\begin{equation*}\label{eq: 1.1}
\begin{split}
\quad \quad \quad &\bigg\{\sum_{\substack{(x_1,\ldots,x_{s})\in S_{n-i,s}^{k_1-1}}}\hspace{0.3cm}
\sum_{\substack{y_{1}+\cdots+y_{s}=i\\ y_{1},\ldots,y_{s} \in \{1,\ldots,k_{2}\}}}\big(\theta q^{0}\big)^{x_{1}}\big(1-\theta q^{0}\big)\cdots (1-\theta q^{y_{1}-1})\times\\
&\Big(\theta q^{y_{1}}\Big)^{x_{2}}\big(1-\theta q^{y_{1}+y_2}\big)\cdots \big(1-\theta q^{y_{1}+y_{2}-1}\big)\times\\
&\Big(\theta q^{y_{1}+y_{2}}\Big)^{x_{3}}\big(1-\theta q^{y_{1}}\big)\cdots \big(1-\theta q^{y_{1}+y_{2}+y_3-1}\big)\times \\
&\quad \quad \quad \quad\quad \quad\quad \quad \quad \quad \quad \vdots\\
&\Big(\theta q^{y_{1}+\cdots+y_{s-1}}\Big)^{x_{s}}
\big(1-\theta q^{y_{1}+\cdots+y_{s-1}}\big)\cdots \big(1-\theta q^{y_{1}+\cdots+y_{s}-1}\big)\bigg\}\Bigg].\\
\end{split}
\end{equation*}
\noindent
Using simple exponentiation algebra arguments to simplify,
\noindent
\begin{equation*}\label{eq: 1.1}
\begin{split}
&P_{q,\theta}\Big(E_{n,i}\Big)=\\
&\theta^{n-i}{\prod_{j=1}^{i}}\ (1-\theta q^{j-1})\times\\
&\sum_{s=1}^{i}\Bigg[\sum_{\substack{(x_1,\ldots,x_{s})\in S_{n-i,s}^{k_1-1}}}\quad
\sum_{\substack{y_{1}+\cdots+y_{s}=i\\ y_{1},\ldots,y_{s} \in \{1,\ldots,k_{2}\}}}q^{y_{1}x_{1}+(y_{1}+y_{2})x_{2}+\cdots+(y_{1}+\cdots+y_{s})x_{s}}+\\
&\quad \sum_{\substack{(x_1,\ldots,x_{s-1})\in S_{n-i,s-1}^{k_1-1}}}\quad
\sum_{\substack{y_{1}+\cdots+y_{s}=i\\ y_{1},\ldots,y_{s} \in \{1,\ldots,k_{2}\}}}q^{y_{1}x_{1}+(y_{1}+y_{2})x_{2}+\cdots+(y_{1}+\cdots+y_{s-1})x_{s-1}}+\\
&\quad\sum_{\substack{(x_1,\ldots,x_{s+1})\in S_{n-i,s+1}^{k_1-1}}}\quad\sum_{\substack{y_{1}+\cdots+y_{s}=i\\ y_{1},\ldots,y_{s} \in \{1,\ldots,k_{2}\}}}q^{y_{1}x_{2}+(y_{1}+y_{2})x_{3}+\cdots+(y_{1}+\cdots+y_{s})x_{s+1}}+\\
&\quad\sum_{\substack{(x_1,\ldots,x_{s})\in S_{n-i,s}^{k_1-1}}}\quad
\sum_{\substack{y_{1}+\cdots+y_{s}=i\\ y_{1},\ldots,y_{s} \in \{1,\ldots,k_{2}\}}}q^{y_{1}x_{2}+(y_{1}+y_{2})x_{3}+\cdots+(y_{1}+\cdots+y_{s-1})x_{s}}\Bigg].\\
\end{split}
\end{equation*}
\noindent
Using Lemma \ref{lemma:3.10}, Lemma \ref{lemma:3.12}, Lemma \ref{lemma:6.6} and Lemma \ref{lemma:6.8}, we can rewrite as follows.
\noindent
\begin{equation*}\label{eq: 1.1}
\begin{split}
P_{q,\theta}\Big(E_{n,i}\Big)=\theta^{n-i}\ {\prod_{j=1}^{i}}\ (1-\theta q^{j-1})\sum_{s=1}^{i}&\Big[H_{q,k_1,0}^{\infty,k_2+1}(n-i,\ i,\ s)+O_{q,k_1,0}^{\infty,k_2+1}(n-i,\ i,\ s)\\
&+G_{q,k_1,0}^{\infty,k_2+1}(n-i,\ i,\ s+1)+P_{q,k_1,0}^{\infty,k_2+1}(n-i,\ i,\ s)\Big],\\
\end{split}
\end{equation*}\\
\noindent
where
\noindent
\begin{equation*}\label{eq: 1.1}
\begin{split}
H_{q,k_1,0}^{\infty,k_2+1}(n&-i,\ i,\ s)\\
&=\sum_{\substack{(x_1,\ldots,x_{s})\in S_{n-i,s}^{k_1-1}}}\quad
\sum_{\substack{y_{1}+\cdots+y_{s}=i\\ y_{1},\ldots,y_{s} \in \{1,\ldots,k_{2}\}}}q^{y_{1}x_{1}+(y_{1}+y_{2})x_{2}+\cdots+(y_{1}+\cdots+y_{s})x_{s}},
\end{split}
\end{equation*}
\noindent
\begin{equation*}\label{eq: 1.1}
\begin{split}
O_{q,k_1,0}^{\infty,k_2+1}(n&-i,\ i,\ s)\\
&=\sum_{\substack{(x_1,\ldots,x_{s-1})\in S_{n-i,s-1}^{k_1-1}}}\quad
\sum_{\substack{y_{1}+\cdots+y_{s}=i\\ y_{1},\ldots,y_{s} \in \{1,\ldots,k_{2}\}}} q^{y_{1}x_{1}+(y_{1}+y_{2})x_{2}+\cdots+(y_{1}+\cdots+y_{s-1})x_{s-1}},
\end{split}
\end{equation*}
\noindent
\begin{equation*}\label{eq: 1.1}
\begin{split}
G_{q,k_1,0}^{\infty,k_2+1}(n&-i,\ i,\ s+1)\\
&=\sum_{\substack{(x_1,\ldots,x_{s+1})\in S_{n-i,s+1}^{k_1-1}}}\quad
\sum_{\substack{y_{1}+\cdots+y_{s}=i\\ y_{1},\ldots,y_{s} \in \{1,\ldots,k_{2}\}}}q^{y_{1}x_{2}+(y_{1}+y_{2})x_{3}+\cdots+(y_{1}+\cdots+y_{s})x_{s+1}},
\end{split}
\end{equation*}
\noindent
and
\noindent
\begin{equation*}\label{eq: 1.1}
\begin{split}
P_{q,k_1,0}^{\infty,k_2+1}(n&-i,\ i,\ s)\\
&=\sum_{\substack{(x_1,\ldots,x_{s})\in S_{n-i,s}^{k_1-1}}}
\quad\sum_{\substack{y_{1}+\cdots+y_{s}=i\\ y_{1},\ldots,y_{s} \in \{1,\ldots,k_{2}\}}} q^{y_{1}x_{2}+(y_{1}+y_{2})x_{3}+\cdots+(y_{1}+\cdots+y_{s-1})x_{s}}.
\end{split}
\end{equation*}
\noindent
Therefore we can compute the probability of the event $\left\{L_{n}^{(1)}\geq k_{1}\ \wedge\ L_{n}^{(0)}\leq k_{2}\right\}$ as follows
\noindent
\begin{equation*}\label{eq:bn1}
\begin{split}
P_{q,\theta}\Big(L_{n}^{(1)}\geq k_{1}\ \wedge\ L_{n}^{(0)}\leq k_{2}\Big)=&\sum_{i=1}^{n-k_1}\theta^{n-i}\ {\prod_{j=1}^{i}}\ (1-\theta q^{j-1})\sum_{s=1}^{i}\Big[H_{q,k_1,0}^{\infty,k_2+1}(n-i,\ i,\ s)\\
&+O_{q,k_1,0}^{\infty,k_2+1}(n-i,\ i,\ s)+G_{q,k_1,0}^{\infty,k_2+1}(n-i,\ i,\ s+1)\\
&+P_{q,k_1,0}^{\infty,k_2+1}(n-i,\ i,\ s)\Big].\\
\end{split}
\end{equation*}
\noindent
Thus proof is completed.
\end{proof}
\noindent
It is worth mentioning here that the PMF $P_{q,\theta}\Big(L_{n}^{(1)}\geq k_{1}\ \wedge\ L_{n}^{(0)}\leq k_{2}\Big)$ approaches the probability function of $P_{\theta}\Big(L_{n}^{(1)}\geq k_{1}\ \wedge\ L_{n}^{(0)}\leq k_{2}\Big)$ of IID model. The details are presented in the following remark.
\noindent\\
\begin{remark}
{\rm For $q=1$, the PMF $P_{q,\theta}\Big(L_{n}^{(1)}\geq k_{1}\ \wedge\ L_{n}^{(0)}\leq k_{2}\Big)$ for $n\geq k_1$ reduces to the PMF $P_{\theta}\Big(L_{n}^{(1)}\geq k_{1}\ \wedge\ L_{n}^{(0)}\leq k_{2}\Big)$ for $n\geq k_1$ is given by
\noindent
\begin{equation*}\label{eq:bn1}
\begin{split}
P\Big(L_{n}^{(1)}\geq k_{1}\ \wedge\ L_{n}^{(0)}\leq k_{2}\Big)=&\sum_{i=1}^{n-k_1}\theta^{n-i}\ (1-\theta)^{i}\sum_{s=1}^{i}S(s,\ k_2+1,\ i)\Big[R(s,\ k_1,\ n-i)\\
&+R(s-1,\ k_1,\ n-i)+S(s+1,\ k_1,\ n-i)\\
&+R(s,\ k_1,\ n-i)\Big]J(n,k_1+1)+J(n,k_1)\theta^n,\\
\end{split}
\end{equation*}
\noindent
where
\noindent
\begin{equation*}\label{eq: 1.1}
\begin{split}
S(a,\ b,\ c)=\sum_{j=0}^{min(a,\left[\frac{c-a}{b-1}\right])}(-1)^{j}{a \choose j}{c-j(b-1)-1 \choose a-1},
\end{split}
\end{equation*}

\noindent
\begin{equation*}\label{eq: 1.1}
\begin{split}
R(a,\ b,\ c)=\sum_{j=1}^{min\left(a,\ \left[\frac{c-a}{b-1}\right]\right)}(-1)^{j+1}{a \choose j}{c-j(b-1)-1 \choose a-1}.
\end{split}
\end{equation*}
\noindent

and $J(n,k_1)$ is the function defined by $J(n,k_1)=1$ if $n\geq k_1$ and $0$ otherwise.
}
\end{remark}

\subsection{Closed expression for the PMF of $\left(L_{n}^{(1)}\geq k_{1}\ \wedge\ L_{n}^{(0)}\geq k_{2}\right)$}
We shall study of the joint distribution of $\left(L_{n}^{(1)}\geq k_{1}\ \wedge\ L_{n}^{(0)}\geq k_{2}\right)$. We now make some useful Definition and Lemma for the proofs of Theorem in the sequel.
\begin{definition}
For $0<q\leq1$, we define

\begin{equation*}\label{eq: 1.1}
\begin{split}
\overline{Q}_{q,k_1,0}(u,v,s)={\sum_{x_{1},\ldots,x_{s}}}\
\sum_{y_{1},\ldots,y_{s}} q^{y_{1}x_{1}+(y_{1}+y_{2})x_{2}+\cdots+(y_{1}+\cdots+y_{s})x_{s}},\
\end{split}
\end{equation*}
\noindent
where the summation is over all integers $x_1,\ldots,x_{s},$ and $y_1,\ldots,y_{s}$ satisfying

\noindent
\begin{equation*}\label{eq:1}
\begin{split}
(x_1,\ldots,x_{s})\in S_{u,s}^{k_1-1},\ \text{and}
\end{split}
\end{equation*}
\noindent
\begin{equation*}\label{eq:2}
\begin{split}
(y_{1},\ldots,y_{s}) \in S_{v,s}^{0}.
\end{split}
\end{equation*}
\end{definition}
\noindent
The following gives a recurrence relation useful for the computation of $\overline{Q}_{q,k_1,0}(u,v,s)$.
\noindent
\begin{lemma}
\label{lemma:6.9}
$\overline{Q}_{q,k_1,0}(u,v,s)$ obeys the following recurrence relation.
\begin{equation*}\label{eq: 1.1}
\begin{split}
\overline{Q}&_{q,k_1,0}(u,v,s)\\
&=\left\{
  \begin{array}{ll}
    \sum_{a=1}^{k_1-1}\sum_{b=1}^{v-(s-1)}q^{va}\overline{Q}_{q,k_1,0}(u-a,v-b,s-1)\\
    +\sum_{a=k_1}^{u-(s-1)}\sum_{b=1}^{v-(s-1)}q^{va}\overline{J}_{q}(u-a,v-b,s-1), & \text{for}\ s>1,\ (s-1)+k_1\leq u\\
    &\text{and}\ s\leq v \\
    1, & \text{for}\ s=1,\ k_1\leq u,\ \text{and}\  1\leq v\\
    0, & \text{otherwise.}\\
  \end{array}
\right.
\end{split}
\end{equation*}
\end{lemma}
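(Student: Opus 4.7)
The plan is to prove Lemma 6.9 by the same two-step conditioning argument used throughout Sections 3–6, namely fix the values of $x_s$ and $y_s$ and read off the resulting reduction, but with a new twist: because the constraint $(x_1,\ldots,x_s)\in S_{u,s}^{k_1-1}$ asks that at least one of the $x_j$ be $\geq k_1$, the behaviour of the recurrence will split according to whether $x_s$ itself already meets this threshold or whether the responsibility falls to the remaining coordinates. This dichotomy is exactly what produces the two summands in the stated recurrence, one involving $\overline{Q}_{q,k_1,0}$ and the other involving the already-analysed $\overline{J}_q$ of Lemma \ref{lemma:4.3}.

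First, I would handle the generic case $s>1$, $(s-1)+k_1\leq u$, $s\leq v$. Fix $x_s=a$ and note that $a$ may range over $1,\ldots,u-(s-1)$. Because $y_1+\cdots+y_s=v$, the last term of the exponent satisfies $(y_1+\cdots+y_s)x_s = va$, which factors out cleanly as $q^{va}$ (matching the same factorisation used in the proofs of Lemmas \ref{lemma:3.9}--\ref{lemma:3.12} and \ref{lemma:6.5}--\ref{lemma:6.8}). The remaining exponent only involves $x_1,\ldots,x_{s-1}$ and $y_1,\ldots,y_{s-1}$, so after fixing $y_s=b\in\{1,\ldots,v-(s-1)\}$ the inner double-sum over $(x_1,\ldots,x_{s-1})$ and $(y_1,\ldots,y_{s-1})$ ranges over a $(u-a,v-b,s-1)$-sized problem of the same \emph{shape}.

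The step that carries the novelty is determining which shape. If $a\in\{1,\ldots,k_1-1\}$, then $x_s$ does not reach the threshold $k_1$, so in order for $(x_1,\ldots,x_s)$ to lie in $S_{u,s}^{k_1-1}$ the truncated tuple $(x_1,\ldots,x_{s-1})$ must itself lie in $S_{u-a,s-1}^{k_1-1}$; this produces precisely $\overline{Q}_{q,k_1,0}(u-a,v-b,s-1)$. If instead $a\in\{k_1,\ldots,u-(s-1)\}$, then the threshold condition is already fulfilled by $x_s$ alone, and $(x_1,\ldots,x_{s-1})$ only needs to be a composition of $u-a$ into $s-1$ positive parts, i.e.\ to lie in $S_{u-a,s-1}^{0}$; combined with the unchanged constraint $(y_1,\ldots,y_{s-1})\in S_{v-b,s-1}^{0}$, this is exactly $\overline{J}_q(u-a,v-b,s-1)$ in the sense of Definition 4.3. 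Summing the two ranges of $a$ against $q^{va}$ and $b\in\{1,\ldots,v-(s-1)\}$ gives the displayed recurrence.

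The boundary case $s=1$ reduces to a single pair $(x_1,y_1)$ with $x_1=u\geq k_1$ and $y_1=v\geq 1$, and the exponent is empty, which yields the value $1$. The "otherwise" clause is immediate because either the sum conditions or the maximum condition have no solutions. The main obstacle, and really the only non-routine step, is cleanly justifying the two-case split on $x_s$: I would emphasise that the event "some $x_j\geq k_1$" decomposes disjointly into the event "$x_s<k_1$ and some earlier $x_j\geq k_1$" and the event "$x_s\geq k_1$", which is exactly what produces the $\overline{Q}$ vs $\overline{J}$ alternative. Everything else is the standard exponent bookkeeping already established in the earlier lemmas.
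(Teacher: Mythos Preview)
Your proposal is correct and follows essentially the same argument as the paper: factor out $q^{va}$ from the last term $(y_1+\cdots+y_s)x_s$, condition on $y_s=b$, and then split the range of $x_s=a$ at the threshold $k_1$ so that the inner sum becomes $\overline{Q}_{q,k_1,0}$ when $a<k_1$ and $\overline{J}_q$ when $a\ge k_1$. Your explicit justification of the disjoint decomposition of the event $\{\max_j x_j\ge k_1\}$ according to whether $x_s\ge k_1$ is exactly the point the paper leaves implicit, so if anything your write-up is slightly clearer on the one non-routine step.
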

\begin{proof}
For $s > 1$, $(s-1)+k_1\leq u$ and $s\leq v$, we observe that $y_{s}$ may assume any value $1,\ldots,v-(s-1)$, then $\overline{Q}_{q}(u,v,s)$ can be written as
\noindent
\begin{equation*}%\label{eq: 1.1}
\begin{split}
\overline{Q}&_{q,k_1,0}(u,v,s)\\
=&\sum_{y_{s}=1}^{v-(s-1)}{\sum_{\substack{(x_1,\ldots,x_{s})\in S_{u,s}^{k_1-1}}}}{\hspace{0.3cm}\sum_{\substack{(y_{1},\ldots,y_{s-1}) \in S_{v-y_s,s-1}^{0}}}}q^{vx_{s}}q^{y_{1}x_{1}+(y_{1}+y_{2})x_{2}+\cdots+(y_{1}+\cdots+y_{s-1})x_{s-1}}.
\end{split}
\end{equation*}
\noindent
Similarly, we observe that since $x_s$ can assume the values $1,\ldots,u-(s-1)$, then $\overline{Q}_{q}(u,v,s)$ can be rewritten as
\noindent
\begin{equation*}%\label{eq: 1.1}
\begin{split}
\overline{Q}&_{q,k_1,0}(u,v,s)\\
=&\sum_{x_{s}=1}^{k_1-1}\sum_{y_{s}=1}^{v-(s-1)}{\sum_{\substack{(x_1,\ldots,x_{s-1})\in S_{u-x_{s},s-1}^{k_1-1}}}}{\hspace{0.3cm}\sum_{\substack{(y_{1},\ldots,y_{s-1}) \in S_{v-y_{s},s-1}^{0}}}}q^{vx_{s}}q^{y_{1}x_{1}+(y_{1}+y_{2})x_{2}+\cdots+(y_{1}+\cdots+y_{s-1})x_{s-1}}\\
&+\sum_{x_{s}=k_1}^{u-(s-1)}\sum_{y_{s}=1}^{v-(s-1)}{\sum_{\substack{(x_1,\ldots,x_{s-1})\in S_{u-x_{s},s-1}^{0}}}}{\hspace{0.3cm}\sum_{\substack{(y_{1},\ldots,y_{s-1}) \in S_{v-y_{s},s-1}^{0}}}}q^{vx_{s}}q^{y_{1}x_{1}+(y_{1}+y_{2})x_{2}+\cdots+(y_{1}+\cdots+y_{s-1})x_{s-1}}\\
=&\sum_{a=1}^{k_1-1}\sum_{b=1}^{v-(s-1)}q^{va}\overline{Q}_{q,k_1,0}(u-a,v-b,s-1)+\sum_{a=k_1}^{u-(s-1)}\sum_{b=1}^{v-(s-1)}q^{va}\overline{J}_{q}(u-a,v-b,s-1).
\end{split}
\end{equation*}
\noindent
The other cases are obvious and thus the proof is completed.
\end{proof}

\begin{remark}
{\rm
We observe that $\overline{Q}_{1,k_1,0}(u,v,s)$ is the number of integer solutions $(x_{1},\ldots,x_{s})$ and $(y_{1},\ldots,y_{s})$ of

\noindent
\begin{equation*}\label{eq:1}
\begin{split}
(x_1,\ldots,x_{s})\in S_{u,s}^{k_1-1},\ \text{and}
\end{split}
\end{equation*}
\noindent
\begin{equation*}\label{eq:2}
\begin{split}
(y_{1},\ldots,y_{s}) \in S_{v,s}^{0}.
\end{split}
\end{equation*}
\noindent
given by
\noindent
\begin{equation*}\label{eq: 1.1}
\begin{split}
\overline{Q}_{1,k_1,0}(u,v,s)=R(s,\ k_{1},\ u)M(s,\ v ),
\end{split}
\end{equation*}
\noindent
where $R(a,\ b,\ c)$ denotes the total number of integer solution $y_{1}+x_{2}+\cdots+y_{a}=c$ such that $(y_{1},\ldots,y_{a}) \in S_{r,a}^{k_2}$. The number is given by
\noindent
\begin{equation*}\label{eq: 1.1}
\begin{split}
R(a,\ b,\ c)=\sum_{j=1}^{min\left(a,\ \left[\frac{c-a}{b-1}\right]\right) }(-1)^{j+1}{a \choose j}{c-j(b-1)-1 \choose a-1}.
\end{split}
\end{equation*}
\noindent
where $M(a,\ b)$ denotes the total number of integer solution $x_{1}+x_{2}+\cdots+x_{a}=b$ such that $(x_{1},\ldots,x_{a}) \in S_{b,a}^{0}$. The number is given by
\noindent
\begin{equation*}\label{eq: 1.1}
\begin{split}
M(a,\ b)={b-1 \choose a-1}.
\end{split}
\end{equation*}
\noindent
See, e.g. \citet{charalambides2002enumerative}.
}
\end{remark}

\begin{definition}
For $0<q\leq1$, we define
\begin{equation*}\label{eq: 1.1}
\begin{split}
Q_{q,k_1,k_2}(m,r,s)={\sum_{x_{1},\ldots,x_{s}}}\
\sum_{y_{1},\ldots,y_{s}} q^{y_{1}x_{1}+(y_{1}+y_{2})x_{2}+\cdots+(y_{1}+\cdots+y_{s})x_{s}},\
\end{split}
\end{equation*}
\noindent
where the summation is over all integers $x_1,\ldots,x_{s},$ and $y_1,\ldots,y_{s}$ satisfying
\begin{equation*}\label{eq:1}
\begin{split}
(x_1,\ldots,x_{s})\in S_{m,s}^{k_1-1},\ \text{and}
\end{split}
\end{equation*}
\noindent
\begin{equation*}\label{eq:2}
\begin{split}
(y_{1},\ldots,y_{s}) \in S_{r,s}^{k_2-1}.
\end{split}
\end{equation*}
\end{definition}
\noindent
The following gives a recurrence relation useful for the computation of $Q_{q,k_1,k_2}(m,r,s)$.
\noindent\\
\begin{lemma}
\label{lemma:6.10}
$Q_{q,k_1,k_2}(m,r,s)$ obeys the following recurrence relation.
\begin{equation*}\label{eq: 1.1}
\begin{split}
Q&_{q,k_1,k_2}(m,r,s)\\
&=\left\{
  \begin{array}{ll}
    \sum_{b=1}^{k_2-1}\sum_{a=1}^{k_{1}-1}q^{ra}Q_{q,k_1,k_2}(m-a,r-b,s-1)\\
    +\sum_{b=k_2}^{r-(s-1)}\sum_{a=1}^{k_1-1}q^{ra}\overline{Q}_{q,k_1,0}(m-a,r-b,s-1)\\
+\sum_{b=1}^{k_2-1}\sum_{a=k_1}^{m-(s-1)}q^{ra}J_{q,0,k_2}(m-a,r-b,s-1)\\
+\sum_{b=k_2}^{r-(s-1)}\sum_{a=k_1}^{m-(s-1)}q^{ra}\overline{J}_{q}(m-a,r-b,s-1), & \text{for}\ s>1,\ (s-1)+k_{1}\leq m\\
    &\text{and}\ (s-1)+k_2\leq r \\
    1, & \text{for}\ s=1,\ k_1\leq m,\ \text{and}\ k_2\leq r \\
    0, & \text{otherwise.}\\
  \end{array}
\right.
\end{split}
\end{equation*}
\end{lemma}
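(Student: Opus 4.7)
The plan is to follow the same case-splitting template used in Lemmas 4.4, 4.8, and 6.8, but now to split on \emph{both} $x_s$ and $y_s$ simultaneously since both max-constraints (namely $\max_j x_j \geq k_1$ coming from $(x_1,\ldots,x_s)\in S_{m,s}^{k_1-1}$ and $\max_j y_j \geq k_2$ coming from $(y_1,\ldots,y_s)\in S_{r,s}^{k_2-1}$) are active. In the generic regime $s>1$, $(s-1)+k_1\leq m$, $(s-1)+k_2\leq r$, I will first isolate the last-column contribution of the defining exponent: using $y_1+\cdots+y_s=r$, the coefficient of $x_s$ equals $r$, so the weight separates as a factor $q^{r x_s}$ times the residual exponent $y_1 x_1+(y_1+y_2)x_2+\cdots+(y_1+\cdots+y_{s-1})x_{s-1}$ in the first $s-1$ coordinates.

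Next I partition the possible pairs $(x_s,y_s)$ into four regions according to which (if any) max-condition the last coordinate already realizes. If $x_s\in\{1,\ldots,k_1-1\}$ and $y_s\in\{1,\ldots,k_2-1\}$, both max-conditions must still be realized by $(x_1,\ldots,x_{s-1})$ and $(y_1,\ldots,y_{s-1})$, so the inner double sum is $Q_{q,k_1,k_2}(m-x_s, r-y_s, s-1)$. If $x_s\in\{1,\ldots,k_1-1\}$ and $y_s\in\{k_2,\ldots,r-(s-1)\}$, the $y$-condition is discharged by $y_s$, reducing the inner sum to $\overline{Q}_{q,k_1,0}(m-x_s, r-y_s, s-1)$. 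Symmetrically, $x_s\in\{k_1,\ldots,m-(s-1)\}$ with $y_s\in\{1,\ldots,k_2-1\}$ gives $J_{q,0,k_2}(m-x_s, r-y_s, s-1)$, and when both coordinates are large it gives $\overline{J}_q(m-x_s, r-y_s, s-1)$. Relabeling $a=x_s$, $b=y_s$ and adding the four contributions yields exactly the stated four-term recurrence.

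The base case $s=1$ is immediate: the constraint set forces $x_1=m\geq k_1$ and $y_1=r\geq k_2$ (otherwise the max-conditions fail), the exponent is empty and equals $q^0=1$, and all other parameter configurations leave the summation empty, giving $0$. The main obstacle I anticipate is purely notational bookkeeping: one must verify in each of the four branches that the residual exponent in the first $s-1$ coordinates coincides literally with the defining exponent of the target reduced quantity, and that the index ranges imposed on $(x_1,\ldots,x_{s-1})$ and $(y_1,\ldots,y_{s-1})$ (positivity together with the appropriate surviving max-condition, or absence thereof) match the definitions of $Q_{q,k_1,k_2}$, $\overline{Q}_{q,k_1,0}$, $J_{q,0,k_2}$, and $\overline{J}_q$ at the shifted arguments. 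Once this matching is confirmed by direct comparison with Definitions 4.3, 4.4, 6.9, and the definition preceding Lemma 6.10, the recurrence follows without further calculation, completing the proof.
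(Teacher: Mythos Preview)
Your proposal is correct and follows essentially the same approach as the paper: factor out $q^{rx_s}$ using $y_1+\cdots+y_s=r$, then partition on the pair $(x_s,y_s)$ according to whether each of the two max-constraints is already discharged by the last coordinate, yielding the four reduced sums $Q_{q,k_1,k_2}$, $\overline{Q}_{q,k_1,0}$, $J_{q,0,k_2}$, $\overline{J}_q$ at level $s-1$. The paper presents the split in two stages (first $x_s$, then $y_s$) rather than as a single four-way partition, but the argument is the same.
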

\begin{proof}
For $s > 1$, $(s-1)+k_{1}\leq m$ and $(s-1)+k_2\leq r$, we observe that $x_{s}$ may assume any value $1,\ldots,m-(s-1)$, then $Q_{q,k_1,k_2}(m,r,s)$ can be written as
\noindent
\begin{equation*}%\label{eq: 1.1}
\begin{split}
Q&_{q,k_1,k_2}(m,r,s)\\
=&\sum_{x_{s}=1}^{k_{1}-1}{\sum_{\substack{(x_1,\ldots,x_{s-1})\in S_{m-x_{s},s-1}^{k_{1}-1}}}}{\hspace{0.3cm}\sum_{\substack{(y_{1},\ldots,y_{s}) \in S_{r,s}^{k_2-1}}}}q^{rx_{s}}q^{y_{1}x_{1}+(y_{1}+y_{2})x_{2}+\cdots+(y_{1}+\cdots+y_{s-1})x_{s-1}}\\
&+\sum_{x_{s}=k_{1}}^{m-(s-1)}{\sum_{\substack{(x_1,\ldots,x_{s-1})\in S_{m-x_{s},s-1}^{0}}}}{\hspace{0.3cm}\sum_{\substack{(y_{1},\ldots,y_{s}) \in S_{r,s}^{k_2-1}}}}q^{rx_{s}}q^{y_{1}x_{1}+(y_{1}+y_{2})x_{2}+\cdots+(y_{1}+\cdots+y_{s-1})x_{s-1}}.
\end{split}
\end{equation*}
\noindent
Similarly, we observe that since $y_s$ can assume the values $1,\ldots,r-(s-1)$, then $Q_{q,k_1,k_2}(m,r,s)$ can be rewritten as
\noindent

\begin{equation*}%\label{eq: 1.1}
\begin{split}
Q&_{q}(m,r,s)\\
=&\sum_{y_{s}=1}^{k_2-1}\sum_{x_{s}=1}^{k_{1}-1}{\sum_{\substack{(x_1,\ldots,x_{s-1})\in S_{m-x_{s},s-1}^{k_{1}-1}}}}{\hspace{0.3cm}\sum_{\substack{(y_{1},\ldots,y_{s-1}) \in S_{r-y_{s},s-1}^{k_2-1}}}}q^{rx_{s}}q^{y_{1}x_{1}+(y_{1}+y_{2})x_{2}+\cdots+(y_{1}+\cdots+y_{s-1})x_{s-1}}\\
&+\sum_{y_{s}=k_2}^{r-(s-1)}\sum_{x_{s}=1}^{k_{1}-1}{\sum_{\substack{(x_1,\ldots,x_{s-1})\in S_{m-x_{s},s-1}^{k_1-1}}}}{\hspace{0.3cm}\sum_{\substack{(y_{1},\ldots,y_{s-1}) \in S_{r-y_{s},s-1}^{0}}}}q^{rx_{s}}q^{y_{1}x_{1}+(y_{1}+y_{2})x_{2}+\cdots+(y_{1}+\cdots+y_{s-1})x_{s-1}}\\
&+\sum_{y_{s}=1}^{k_2-1}\sum_{x_{s}=k_{1}}^{m-(s-1)}{\sum_{\substack{(x_1,\ldots,x_{s-1})\in S_{m-x_{s},s-1}^{0}}}}{\hspace{0.3cm}\sum_{\substack{(y_{1},\ldots,y_{s-1}) \in S_{r-y_{s},s-1}^{k_2-1}}}}q^{rx_{s}}q^{y_{1}x_{1}+(y_{1}+y_{2})x_{2}+\cdots+(y_{1}+\cdots+y_{s-1})x_{s-1}}\\
&+\sum_{y_{s}=k_2}^{r-(s-1)}\sum_{x_{s}=k_{1}}^{m-(s-1)}{\sum_{\substack{(x_1,\ldots,x_{s-1})\in S_{m-x_{s},s-1}^{0}}}}{\hspace{0.3cm}\sum_{\substack{(y_{1},\ldots,y_{s-1}) \in S_{r-y_{s},s-1}^{0}}}}q^{rx_{s}}q^{y_{1}x_{1}+(y_{1}+y_{2})x_{2}+\cdots+(y_{1}+\cdots+y_{s-1})x_{s-1}}\\
=&\sum_{b=1}^{k_2-1}\sum_{a=1}^{k_{1}-1}q^{ra}Q_{q,k_1,k_2}(m-a,r-b,s-1)+\sum_{b=k_2}^{r-(s-1)}\sum_{a=1}^{k_1-1}q^{ra}\overline{Q}_{q,k_1,0}(m-a,r-b,s-1)\\
&+\sum_{b=1}^{k_2-1}\sum_{a=k_1}^{m-(s-1)}q^{ra}J_{q,0,k_2}(m-a,r-b,s-1)+\sum_{b=k_2}^{r-(s-1)}\sum_{a=k_1}^{m-(s-1)}q^{ra}\overline{J}_{q}(m-a,r-b,s-1).
\end{split}
\end{equation*}
\noindent
The other cases are obvious and thus the proof is completed.
\end{proof}

\begin{remark}
{\rm
We observe that $Q_{1,k_1,k_2}(m,r,s)$ is the number of integer solutions $(x_{1},\ldots,x_{s})$ and $(y_{1},\ldots,y_{s})$ of

\noindent
\begin{equation*}\label{eq:1}
\begin{split}
(x_1,\ldots,x_{s})\in S_{m,s}^{k_{1}-1},\ \text{and}
\end{split}
\end{equation*}

\noindent
\begin{equation*}\label{eq:2}
\begin{split}
(y_{1},\ldots,y_{s}) \in S_{r,s}^{k_2-1}
\end{split}
\end{equation*}
\noindent
given by
\noindent
\begin{equation*}\label{eq: 1.1}
\begin{split}
Q_{1,k_1,k_2}(m,r,s)=R(s,\ k_{1},\ m)R(s,\ k_{2},\ r),
\end{split}
\end{equation*}
\noindent
where $R(a,\ b,\ c)$ denotes the total number of integer solution $y_{1}+x_{2}+\cdots+y_{a}=c$ such that $(y_{1},\ldots,y_{a}) \in S_{r,a}^{k_2}$. The number is given by
\noindent
\begin{equation*}\label{eq: 1.1}
\begin{split}
R(a,\ b,\ c)=\sum_{j=1}^{min\left(a,\ \left[\frac{c-a}{b-1}\right]\right) }(-1)^{j+1}{a \choose j}{c-j(b-1)-1 \choose a-1}.
\end{split}
\end{equation*}
}

\end{remark}

\begin{definition}
For $0<q\leq1$, we define
\begin{equation*}\label{eq: 1.1}
\begin{split}
\overline{R}_{q,0,k_2}(u,v,s)={\sum_{x_{1},\ldots,x_{s-1}}}\
\sum_{y_{1},\ldots,y_{s}} q^{y_{1}x_{1}+(y_{1}+y_{2})x_{2}+\cdots+(y_{1}+\cdots+y_{s-1})x_{s-1}},\
\end{split}
\end{equation*}
\noindent
where the summation is over all integers $x_1,\ldots,x_{s-1},$ and $y_1,\ldots,y_{s}$ satisfying

\noindent
\begin{equation*}\label{eq:1}
\begin{split}
(x_1,\ldots,x_{s-1})\in S_{u,s-1}^{0},\ \text{and}
\end{split}
\end{equation*}

\noindent
\begin{equation*}\label{eq:2}
\begin{split}
(y_{1},\ldots,y_{s}) \in S_{v,s}^{k_2-1}.
\end{split}
\end{equation*}
\end{definition}
\noindent
The following gives a recurrence relation useful for the computation of $\overline{R}_{q,0,k_2}(u,v,s)$.
\noindent\\
\begin{lemma}
\label{lemma:6.11}
$\overline{R}_{q,0,k_2}(u,v,s)$ obeys the following recurrence relation.
\begin{equation*}\label{eq: 1.1}
\begin{split}
\overline{R}&_{q,0,k_2}(u,v,s)\\
&=\left\{
  \begin{array}{ll}
    \sum_{b=1}^{k_2-1}\sum_{a=1}^{u-(s-2)}q^{a(v-b)} \overline{R}_{q,0,k_2}(u-a,v-b,s-1)\\
    +\sum_{b=k_2}^{v-(s-1)}\sum_{a=1}^{u-(s-2)}q^{a(v-b)} \overline{K}_{q}(u-a,v-b,s-1), & \text{for}\ s>1,\ s-1\leq u\ \text{and}\\ &(s-1)+k_2\leq v \\
    1, & \text{for}\ s=1,\ u=0,\ \text{and}\ k_2\leq v\\
    0, & \text{otherwise.}\\
  \end{array}
\right.
\end{split}
\end{equation*}
\end{lemma}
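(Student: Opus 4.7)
The plan is to mirror the proofs of Lemmas 3.6, 3.8, 6.2, and 6.4, all of which establish analogous recurrences by peeling off the last summation variable on each side and then splitting the range of $y_s$ according to whether the ``large'' value demanded by the $S^{k_2-1}$ constraint sits at the last coordinate or must appear earlier. Fix $s>1$, $s-1\leq u$, and $(s-1)+k_2\leq v$. In the defining sum for $\overline{R}_{q,0,k_2}(u,v,s)$, the index $x_{s-1}$ may assume any value $1,\ldots,u-(s-2)$, since $(x_1,\ldots,x_{s-1})\in S_{u,s-1}^{0}$ just asks for positive integers summing to $u$; writing $x_{s-1}=a$ pulls out $q^{a(y_1+\cdots+y_{s-1})}=q^{a(v-y_s)}$ and leaves an inner sum over $(x_1,\ldots,x_{s-2})\in S_{u-a,s-2}^{0}$.

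Next I peel off $y_s$. The constraint $(y_1,\ldots,y_s)\in S_{v,s}^{k_2-1}$ requires at least one $y_j\geq k_2$, so there are exactly two disjoint cases. If $1\leq y_s\leq k_2-1$, the large $y$ must occur among $y_1,\ldots,y_{s-1}$, forcing $(y_1,\ldots,y_{s-1})\in S_{v-y_s,s-1}^{k_2-1}$; the residual double sum over $(x_1,\ldots,x_{s-2})$ and $(y_1,\ldots,y_{s-1})$, weighted by the exponent $y_1 x_1+\cdots+(y_1+\cdots+y_{s-2})x_{s-2}$, matches the definition of $\overline{R}_{q,0,k_2}(u-a,v-y_s,s-1)$ verbatim. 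If $k_2\leq y_s\leq v-(s-1)$, the max condition is already met by $y_s$ itself, so the remaining $(y_1,\ldots,y_{s-1})$ need only lie in $S_{v-y_s,s-1}^{0}$; the same residual sum now matches $\overline{K}_q(u-a,v-y_s,s-1)$ exactly as defined earlier in the paper. Setting $b=y_s$ and combining the two cases yields the stated recurrence.

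For the base case $s=1$, $u=0$, $v\geq k_2$: there are no $x$-variables, a single $y_1=v\geq k_2$ automatically lies in $S_{v,1}^{k_2-1}$, and the exponent is empty, so the sum equals $q^0=1$. In every remaining regime the defining sum is vacuous because either the $x$-constraints or the $y$-constraints cannot be satisfied, so the value is $0$.

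There is no substantive obstacle here; the only thing to watch is the index bookkeeping. Specifically, I want the upper limit $u-(s-2)$ for $a$ and the two upper limits $k_2-1$ and $v-(s-1)$ for $b$ to line up with the non-triviality ranges of $\overline{R}_{q,0,k_2}(u-a,v-b,s-1)$ and $\overline{K}_q(u-a,v-b,s-1)$ so that no spurious zero or double-counted term sneaks in; checking that $u-a\geq s-2$ and $v-b\geq s-1$ in the appropriate sub-ranges is immediate from the assumed inequalities on $u$ and $v$.
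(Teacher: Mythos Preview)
Your proof is correct and follows essentially the same approach as the paper: peel off $x_{s-1}=a$ to extract the factor $q^{a(v-y_s)}$, then peel off $y_s=b$ and split its range at $k_2$ so that the residual double sum is recognized as $\overline{R}_{q,0,k_2}(u-a,v-b,s-1)$ when $b<k_2$ and as $\overline{K}_q(u-a,v-b,s-1)$ when $b\geq k_2$. Your treatment of the base case and the bookkeeping on the summation limits is also in line with the paper's argument.
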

\begin{proof}
For $s > 1$, $s-1\leq u$ and $(s-1)+k_2\leq v$, we observe that $x_{s-1}$ may assume any value $1,\ldots,u-(s-2)$, then $\overline{R}_{q,0,k_2}(u,v,s)$ can be written as

\begin{equation*}%\label{eq: 1.1}
\begin{split}
\overline{R}&_{q,0,k_2}(u,v,s)\\
=&\sum_{x_{s-1}=1}^{u-(s-2)}{\sum_{(x_1,\ldots,x_{s-1})\in S_{u,s-1}^{0}}}{\hspace{0.3cm}\sum_{(y_{1},\ldots,y_{s}) \in S_{v,s}^{k_2-1}}}q^{x_{s-1}(v-y_{s})}q^{y_{1}x_{1}+(y_{1}+y_{2})x_{2}+\cdots+(y_{1}+\cdots+y_{s-2})x_{s-2}}
\end{split}
\end{equation*}
\noindent
Similarly, we observe that since $y_s$ can assume the values $1,\ldots,v-(s-1)$, then $\overline{R}_{q,0,k_2}(u,v,s)$ can be rewritten as
\noindent
\begin{equation*}%\label{eq: 1.1}
\begin{split}
\overline{R}&_{q,0,k_2}(u,v,s)\\
=&\sum_{y_{s}=1}^{k_2-1}\sum_{x_{s-1}=1}^{u-(s-2)}q^{x_{s-1}(v-y_{s})}{\sum_{(x_1,\ldots,x_{s-1})\in S_{u-x_{s-1},s-1}^{0}}}{\hspace{0.3cm}\sum_{(y_{1},\ldots,y_{s-1}) \in S_{v-y_{s},s-1}^{k_2-1}}}q^{y_{1}x_{1}+(y_{1}+y_{2})x_{2}+\cdots+(y_{1}+\cdots+y_{s-2})x_{s-2}}\\
&+\sum_{y_{s}=k_2}^{v-(s-1)}\sum_{x_{s-1}=1}^{u-(s-2)}q^{x_{s-1}(v-y_{s})}{\hspace{-0.9cm}\sum_{(x_1,\ldots,x_{s-1})\in S_{u-x_{s-1},s-1}^{0}}}{\hspace{0.3cm}\sum_{(y_{1},\ldots,y_{s-1}) \in S_{v-y_{s},s-1}^{0}}}q^{y_{1}x_{1}+(y_{1}+y_{2})x_{2}+\cdots+(y_{1}+\cdots+y_{s-2})x_{s-2}}\\
=&\sum_{b=1}^{k_2-1}\sum_{a=1}^{u-(s-2)}q^{a(v-b)} \overline{R}_{q,0,k_2}(u-a,v-b,s-1)+\sum_{b=k_2}^{v-(s-1)}\sum_{a=1}^{u-(s-2)}q^{a(v-b)} \overline{K}_{q}(u-a,v-b,s-1).
\end{split}
\end{equation*}
\noindent
The other cases are obvious and thus the proof is completed.
\end{proof}
\begin{remark}
{\rm
We observe that $\overline{R}_{1,0,k_2}(m,r,s)$ is the number of integer solutions $(x_{1},\ldots,x_{s-1})$ and $(y_{1},\ldots,y_{s})$ of

\noindent
\begin{equation*}\label{eq:1}
\begin{split}
(x_1,\ldots,x_{s-1})\in S_{u,s-1}^{0},\ \text{and}
\end{split}
\end{equation*}

\noindent
\begin{equation*}\label{eq:2}
\begin{split}
(y_{1},\ldots,y_{s}) \in S_{v,s}^{k_2-1}
\end{split}
\end{equation*}
\noindent
given by
\noindent
\begin{equation*}\label{eq: 1.1}
\begin{split}
\overline{R}_{1,0,k_2}(m,r,s)=M(s-1,\ u )R(s,\ k_2,\ v),
\end{split}
\end{equation*}
\noindent
where $M(a,\ b)$ denotes the total number of integer solution $y_{1}+x_{2}+\cdots+y_{a}=b$ such that $(y_{1},\ldots,y_{a}) \in S_{b,a}^{0}$. The number is given by
\noindent
\begin{equation*}\label{eq: 1.1}
\begin{split}
M(a,\ b)={b-1 \choose a-1}.
\end{split}
\end{equation*}

\noindent
where $R(a,\ b,\ c)$ denotes the total number of integer solution $y_{1}+x_{2}+\cdots+y_{a}=c$ such that $(y_{1},\ldots,y_{a}) \in S_{r,a}^{k_2}$. The number is given by
\noindent
\begin{equation*}\label{eq: 1.1}
\begin{split}
R(a,\ b,\ c)=\sum_{j=1}^{min\left(a,\ \left[\frac{c-a}{b-1}\right]\right) }(-1)^{j+1}{a \choose j}{c-j(b-1)-1 \choose a-1}.
\end{split}
\end{equation*}
\noindent
See, e.g. \citet{charalambides2002enumerative}.
}
\end{remark}

\begin{definition}
For $0<q\leq1$, we define

\begin{equation*}\label{eq: 1.1}
\begin{split}
R_{q,k_1,k_2}(m,r,s)={\sum_{x_{1},\ldots,x_{s-1}}}\sum_{y_{1},\ldots,y_{s}} q^{y_{1}x_{1}+(y_{1}+y_{2})x_{2}+\cdots+(y_{1}+\cdots+y_{s-1})x_{s-1}},\
\end{split}
\end{equation*}
\noindent
where the summation is over all integers $x_1,\ldots,x_{s-1},$ and $y_1,\ldots,y_s$ satisfying
\noindent
\begin{equation*}\label{eq:1}
\begin{split}
(x_1,\ldots,x_{s-1})\in S_{m,s-1}^{k_1-1},\ \text{and}
\end{split}
\end{equation*}
\noindent
\begin{equation*}\label{eq:2}
\begin{split}
(y_1,\ldots,y_s)\in S_{r,s}^{k_2-1}.
\end{split}
\end{equation*}
\end{definition}
\noindent
The following gives a recurrence relation useful for the computation of $R_{q,k_1,k_2}(m,r,s)$.
\noindent\\
\begin{lemma}
\label{lemma:6.12}
$R_{q,k_1,k_2}(m,r,s)$ obeys the following recurrence relation.
\begin{equation*}\label{eq: 1.1}
\begin{split}
R&_{q,k_1,k_2}(m,r,s)\\
&=\left\{
  \begin{array}{ll}
    \sum_{b=1}^{k_2-1}\sum_{a=1}^{k_1-1}q^{a(r-b)} R_{q,k_1,k_2}(m-a,r-b,s-1)\\
    +\sum_{b=k_2}^{r-(s-1)}\sum_{a=1}^{k_1-1}q^{a(r-b)} K_{q,k_1,0}(m-a,r-b,s-1)\\
+\sum_{b=1}^{k_2-1} \sum_{a=k_1}^{m-(s-2)}q^{a(r-b)} \overline{R}_{q,0,k_2}(m-a,r-b,s-1)\\
+\sum_{b=k_2}^{r-(s-1)}\sum_{a=k_1}^{m-(s-2)} q^{a(r-b)}\overline{K}_{q}(m-a,r-b,s-1) , & \text{for}\ s>1,\ (s-2)+k_1\leq m\\
    &\text{and}\ (s-1)+k_2 \leq r \\
    0, & \text{otherwise.}\\
  \end{array}
\right.
\end{split}
\end{equation*}
\end{lemma}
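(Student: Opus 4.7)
The plan is to prove the recurrence by partitioning the sum defining $R_{q,k_1,k_2}(m,r,s)$ according to the values of the last variables $x_{s-1}$ and $y_s$, in exactly the same style as Lemmas \ref{lemma:3.5}--\ref{lemma:3.12}. First, I would observe that the exponent in the defining sum can be rewritten as
$$y_{1}x_{1}+(y_{1}+y_{2})x_{2}+\cdots+(y_{1}+\cdots+y_{s-1})x_{s-1} = x_{s-1}(r-y_s) + \bigl[y_{1}x_{1}+\cdots+(y_{1}+\cdots+y_{s-2})x_{s-2}\bigr],$$
since $y_1+\cdots+y_{s-1}=r-y_s$. After fixing $x_{s-1}=a$ and $y_s=b$, the factor $q^{a(r-b)}$ pulls out cleanly and the remaining exponent matches the defining sum of one of the auxiliary quantities at parameters $(m-a,r-b,s-1)$.

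Second, because the constraints are $(x_1,\ldots,x_{s-1})\in S_{m,s-1}^{k_1-1}$ (at least one $x_j\geq k_1$) and $(y_1,\ldots,y_s)\in S_{r,s}^{k_2-1}$ (at least one $y_j\geq k_2$), I would split into four sub-cases according to whether $x_{s-1}<k_1$ or $x_{s-1}\geq k_1$, and whether $y_s<k_2$ or $y_s\geq k_2$. If $x_{s-1}<k_1$, the ``large entry'' requirement must still be met by $(x_1,\ldots,x_{s-2})$, so this sub-sequence lies in $S^{k_1-1}_{m-x_{s-1},s-2}$; if $x_{s-1}\geq k_1$, the requirement is already satisfied and $(x_1,\ldots,x_{s-2})$ only needs to consist of positive entries summing to $m-x_{s-1}$. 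The same dichotomy applies to the $y$-variables.

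Third, I would match each of the four resulting double sums with the already-defined quantities: $(x_{s-1}<k_1,\,y_s<k_2)$ yields $R_{q,k_1,k_2}(m-a,r-b,s-1)$; $(x_{s-1}<k_1,\,y_s\geq k_2)$ yields $K_{q,k_1,0}(m-a,r-b,s-1)$ (from Definition of $K$, which has the $k_1$-threshold on the $x$'s and no threshold on the $y$'s); $(x_{s-1}\geq k_1,\,y_s<k_2)$ yields $\overline{R}_{q,0,k_2}(m-a,r-b,s-1)$; and $(x_{s-1}\geq k_1,\,y_s\geq k_2)$ yields $\overline{K}_q(m-a,r-b,s-1)$. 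Summing with the appropriate index ranges $a\in\{1,\ldots,k_1-1\}$ or $\{k_1,\ldots,m-(s-2)\}$ and $b\in\{1,\ldots,k_2-1\}$ or $\{k_2,\ldots,r-(s-1)\}$ reproduces the stated recurrence.

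The main obstacle is bookkeeping rather than conceptual: one must verify that the upper bounds $m-(s-2)$ and $r-(s-1)$ on $x_{s-1}$ and $y_s$ in the ``exceeds-threshold'' cases are precisely those forced by the need to leave enough positive mass for the remaining $s-2$ positive $x$-entries and $s-1$ positive $y$-entries. The ``otherwise'' (vanishing) case then follows because either $S^{k_1-1}_{m,s-1}$ or $S^{k_2-1}_{r,s}$ is empty outside the stated region; in particular, when $s=1$ the tuple $(x_1,\ldots,x_{s-1})$ is empty and cannot contain an entry exceeding $k_1-1$, so $R_{q,k_1,k_2}(m,r,1)=0$ for every $m,r$, consistent with the absence of a non-trivial $s=1$ base case.
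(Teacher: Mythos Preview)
Your proposal is correct and follows essentially the same approach as the paper's proof: both partition the defining sum by the values of $x_{s-1}$ and $y_s$, split each according to whether the chosen value meets its respective threshold ($k_1$ or $k_2$), and identify the four resulting sub-sums with $R_{q,k_1,k_2}$, $K_{q,k_1,0}$, $\overline{R}_{q,0,k_2}$, and $\overline{K}_q$ at reduced parameters. Your treatment of the $s=1$ base case (the empty $x$-tuple cannot contain an entry $\geq k_1$) and of the upper bounds $m-(s-2)$, $r-(s-1)$ is also exactly the justification the paper relies on implicitly.
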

\begin{proof}
For $s > 1$, $(s-2)+k_1\leq m$ and $(s-1)+k_2 \leq r$, we observe that $x_{s-1}$ may assume any value $1,\ldots,m-(s-2)$, then $R_{q,k_1,k_2}(m,r,s)$ can be written as
\noindent
\begin{equation*}%\label{eq: 1.1}
\begin{split}
R&_{q,k_1,k_2}(m,r,s)\\
=&\sum_{x_{s-1}=1}^{k_1-1}{\hspace{0.3cm}\sum_{(x_1,\ldots,x_{s-2})\in S_{m-x_{s-1},s-2}^{k_1-1}}}{\hspace{0.3cm}\sum_{(y_{1},\ldots,y_{s}) \in S_{r,s}^{k_2-1}}}q^{x_{s-1}(r-y_{s})}q^{y_{1}x_{1}+(y_{1}+y_{2})x_{2}+\cdots+(y_{1}+\cdots+y_{s-2})x_{s-2}}\\
&+\sum_{x_{s-1}=k_1}^{m-(s-2)}{\hspace{0.3cm}\sum_{(x_1,\ldots,x_{s-2})\in S_{m-x_{s-1},s-2}^{0}}}{\hspace{0.3cm}\sum_{(y_{1},\ldots,y_{s}) \in S_{r,s}^{k_2-1}}}q^{x_{s-1}(r-y_{s})}q^{y_{1}x_{1}+(y_{1}+y_{2})x_{2}+\cdots+(y_{1}+\cdots+y_{s-2})x_{s-2}}.
\end{split}
\end{equation*}
\noindent
Similarly, we observe that since $y_s$ can assume the values $1,\ldots,r-(s-1)$, then $R_{q,k_1,k_2}(m,r,s)$ can be rewritten as
\noindent
\begin{equation*}%\label{eq: 1.1}
\begin{split}
R&_{q,k_1,k_2}(m,r,s)\\
=&\sum_{y_{s}=1}^{k_2-1}\sum_{x_{s-1}=1}^{k_1-1}q^{x_{s-1}(r-y_{s})}{\hspace{-0.5cm}\sum_{(x_1,\ldots,x_{s-2})\in S_{m-x_{s-1},s-2}^{k_1-1}}}{\hspace{0.3cm}\sum_{(y_{1},\ldots,y_{s-1}) \in S_{r-y_s,s-1}^{k_2-1}}}q^{y_{1}x_{1}+(y_{1}+y_{2})x_{2}+\cdots+(y_{1}+\cdots+y_{s-2})x_{s-2}}\\
&+\sum_{y_{s}=k_2}^{r-(s-1)}\sum_{x_{s-1}=1}^{k_1-1}q^{x_{s-1}(r-y_{s})}{\hspace{-0.5cm}\sum_{(x_1,\ldots,x_{s-2})\in S_{m-x_{s-1},s-2}^{k_1-1}}}{\hspace{0.3cm}\sum_{(y_{1},\ldots,y_{s-1}) \in S_{r-y_s,s-1}^{0}}}q^{y_{1}x_{1}+(y_{1}+y_{2})x_{2}+\cdots+(y_{1}+\cdots+y_{s-2})x_{s-2}}\\
&+\sum_{y_{s}=1}^{k_2-1}\sum_{x_{s-1}=k_1}^{m-(s-2)}q^{x_{s-1}(r-y_{s})}{\hspace{-0.5cm}\sum_{(x_1,\ldots,x_{s-2})\in S_{m-x_{s-1},s-2}^{0}}}{\hspace{0.3cm}\sum_{(y_{1},\ \ldots,\ y_{s-1}) \in S_{r-y_s,s-1}^{k_2-1}}}q^{y_{1}x_{1}+(y_{1}+y_{2})x_{2}+\cdots+(y_{1}+\cdots+y_{s-2})x_{s-2}}\\
&+\sum_{y_{s}=k_2}^{r-(s-1)}\sum_{x_{s-1}=k_1}^{m-(s-2)}q^{x_{s-1}(r-y_{s})}{\hspace{-0.5cm}\sum_{(x_1,\ldots,x_{s-2})\in S_{m-x_{s-1},s-2}^{0}}}{\hspace{0.3cm}\sum_{(y_{1},\ldots,y_{s-1}) \in S_{r-y_s,s-1}^{0}}}q^{y_{1}x_{1}+(y_{1}+y_{2})x_{2}+\cdots+(y_{1}+\cdots+y_{s-2})x_{s-2}}\\
=&\sum_{b=1}^{k_2-1}\sum_{a=1}^{k_1-1}q^{a(r-b)} R_{q,k_1,k_2}(m-a,r-b,s-1)+\sum_{b=k_2}^{r-(s-1)}\sum_{a=1}^{k_1-1}q^{a(r-b)} K_{q,k_1,0}(m-a,r-b,s-1)\\
&+\sum_{b=1}^{k_2-1} \sum_{a=k_1}^{m-(s-2)}q^{a(r-b)} \overline{R}_{q,0,k_2}(m-a,r-b,s-1)+\sum_{b=k_2}^{r-(s-1)}\sum_{a=k_1}^{m-(s-2)} q^{a(r-b)} \overline{K}_{q}(m-a,r-b,s-1).
\end{split}
\end{equation*}
\noindent
The other cases are obvious and thus the proof is completed.
\end{proof}
\begin{remark}
{\rm
We observe that $R_{1,k_1,k_2}(m,r,s)$ is the number of integer solutions $(x_{1},\ldots,x_{s-1})$ and $(y_{1},\ldots,y_{s})$ of

\noindent
\begin{equation*}\label{eq:1}
\begin{split}
(x_1,\ldots,x_{s-1})\in S_{m,s-1}^{k_1-1},\ \text{and}
\end{split}
\end{equation*}
\noindent
\begin{equation*}\label{eq:2}
\begin{split}
(y_{1},\ldots,y_{s}) \in S_{r,s}^{k_2-1}
\end{split}
\end{equation*}
\noindent
given by
\noindent
\begin{equation*}\label{eq: 1.1}
\begin{split}
R_{1,k_1,k_2}(m,r,s)=R(s-1,\ k_{1},\ m)R(s,\ k_{2},\ r),
\end{split}
\end{equation*}
\noindent
where $R(a,\ b,\ c)$ denotes the total number of integer solution $x_{1}+x_{2}+\cdots+x_{a}=c$ such that $(x_{1},\ldots,x_{a}) \in S_{r,a}^{k_2}$. The number is given by
\noindent
\begin{equation*}\label{eq: 1.1}
\begin{split}
R(a,\ b,\ c)=\sum_{j=1}^{min\left(a,\ \left[\frac{c-a}{b-1}\right]\right) }(-1)^{j+1}{a \choose j}{c-j(b-1)-1 \choose a-1}.
\end{split}
\end{equation*}
\noindent
See, e.g. \citet{charalambides2002enumerative}.
}
\end{remark}

\begin{definition}
For $0<q\leq1$, we define

\begin{equation*}\label{eq: 1.1}
\begin{split}
\overline{S}_{q,k_1,0}(u,v,s)={\sum_{x_{1},\ldots,x_{s}}}\
\sum_{y_{1},\ldots,y_{s-1}} q^{y_{1}x_{2}+(y_{1}+y_{2})x_{3}+\cdots+(y_{1}+\cdots+y_{s-1})x_{s}},\
\end{split}
\end{equation*}
\noindent
where the summation is over all integers $x_1,\ldots,x_{s},$ and $y_1,\ldots,y_{s-1}$ satisfying

\begin{equation*}\label{eq:1}
\begin{split}
(x_1,\ldots,x_{s})\in S_{u,s}^{k_1-1},\ \text{and}
\end{split}
\end{equation*}
\noindent
\begin{equation*}\label{eq:2}
\begin{split}
(y_{1},\ldots,y_{s-1}) \in S_{v,s-1}^{0}.
\end{split}
\end{equation*}
\end{definition}
\noindent
The following gives a recurrence relation useful for the computation of $\overline{S}_{q,k_1,0}(u,v,s)$.
\noindent\\
\begin{lemma}
\label{lemma:6.13}
$\overline{S}_{q,k_1,0}(u,v,s)$ obeys the following recurrence relation.
\begin{equation*}\label{eq: 1.1}
\begin{split}
\overline{S}&_{q,k_1,0}(u,v,s)\\
&=\left\{
  \begin{array}{ll}
   \sum_{a=1}^{k_1-1}\sum_{b=1}^{v-(s-2)}q^{va}\overline{S}_{q,k_1,0}(u-a,v-b,s-1)\\
   +\sum_{a=k_1}^{u-(s-1)}\sum_{b=1}^{v-(s-2)}q^{va}\overline{I}_{q}(u-a,v-b,s-1), & \text{for}\ s>1,\ (s-1)+k_1\leq u\ \text{and}\ (s-1)\leq v \\
    1, & \text{for}\ s=1,\ k_1\leq u,\ \text{and}\  v=0\\
    0, & \text{otherwise.}\\
  \end{array}
\right.
\end{split}
\end{equation*}
\end{lemma}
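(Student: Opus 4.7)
The plan is to follow the same two-step decomposition used in the earlier recurrence lemmas of the paper (cf.\ the proofs of Lemma \ref{lemma:3.10} and Lemma \ref{lemma:4.8}), first slicing off the last success-run length $x_s$ and then slicing off the last failure-run length $y_{s-1}$. The exponent appearing in $\overline{S}_{q,k_1,0}(u,v,s)$ has the form
\[
y_{1}x_{2}+(y_{1}+y_{2})x_{3}+\cdots+(y_{1}+\cdots+y_{s-1})x_{s},
\]
so the coefficient of $x_s$ in the exponent is exactly $y_1+\cdots+y_{s-1}=v$. Thus fixing $x_s$ separates a factor $q^{vx_s}$ from the rest of the sum, and the remaining exponent involves only $x_1,\dots,x_{s-1}$ and $y_1,\dots,y_{s-2}$ in the same shape as the defining sum, now of one lower order.

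The main step is handling the constraint $(x_1,\ldots,x_s)\in S^{k_1-1}_{u,s}$, namely that at least one of the $x_j$ is $\ge k_1$. I would split the range of $x_s$ into two disjoint pieces. If $x_s\in\{1,\ldots,k_1-1\}$, then $x_s$ itself does not witness the constraint, so the remaining tuple $(x_1,\ldots,x_{s-1})$ must still lie in $S^{k_1-1}_{u-x_s,s-1}$, producing a term of the same type $\overline{S}_{q,k_1,0}(u-x_s,v,s-1)$. If instead $x_s\in\{k_1,\ldots,u-(s-1)\}$, then the constraint is already satisfied by $x_s$, so $(x_1,\ldots,x_{s-1})$ need only satisfy $(x_1,\ldots,x_{s-1})\in S^{0}_{u-x_s,s-1}$; this is precisely the shape defining $\overline{I}_{q}(u-x_s,v,s-1)$ (see Definition and Lemma \ref{lemma:4.1}).

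Next I would perform the analogous slicing on $y_{s-1}$. Since the $y$-tuple lies in $S^{0}_{v,s-1}$ with no upper cap, $y_{s-1}$ can take any value in $\{1,\ldots,v-(s-2)\}$, leaving $(y_1,\ldots,y_{s-2})\in S^{0}_{v-y_{s-1},s-2}$, which is exactly the index pattern in the definitions of $\overline{S}_{q,k_1,0}$ and $\overline{I}_q$ at order $s-1$. Reindexing $x_s=a$ and $y_{s-1}=b$ and collecting the factor $q^{va}$ pulled out in the first step yields
\[
\overline{S}_{q,k_1,0}(u,v,s)=\sum_{a=1}^{k_1-1}\sum_{b=1}^{v-(s-2)}q^{va}\,\overline{S}_{q,k_1,0}(u-a,v-b,s-1)+\sum_{a=k_1}^{u-(s-1)}\sum_{b=1}^{v-(s-2)}q^{va}\,\overline{I}_{q}(u-a,v-b,s-1),
\]
which is the asserted recurrence on the interior range $(s-1)+k_1\le u$, $(s-1)\le v$.

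The boundary cases are straightforward bookkeeping: for $s=1$ the $y$-sum is empty and the only condition reduces to $x_1=u\ge k_1$, yielding $\overline{S}_{q,k_1,0}(u,0,1)=1$; all other configurations give an empty index set. The only delicate point I foresee is being careful with the summation ranges after the split (in particular, verifying that the cases $a<k_1$ and $a\ge k_1$ together cover the whole legal range of $x_s$ without double counting, and that the lower bound $u-(s-1)\ge k_1$ forced by the outer hypothesis $(s-1)+k_1\le u$ makes the second inner sum nonempty), but this is purely combinatorial bookkeeping and is the same kind of check already carried out in the analogous proofs above.
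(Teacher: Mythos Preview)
Your proposal is correct and follows essentially the same approach as the paper: you separate the factor $q^{vx_s}$, split the range of $x_s$ according to whether $x_s<k_1$ or $x_s\ge k_1$ (leading to $\overline{S}_{q,k_1,0}$ and $\overline{I}_q$ respectively), and then slice off $y_{s-1}$. The only cosmetic difference is the order in which you peel off $x_s$ and $y_{s-1}$; the paper does $y_{s-1}$ first and $x_s$ second, but the computation is otherwise identical.
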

\begin{proof}
For $s > 1$, $(s-1)+k_1\leq u$ and $(s-1)\leq v$, we observe that $y_{s-1}$ may assume any value $1,\ldots,v-(s-2)$, then $\overline{S}_{q,k_1,0}(u,v,s)$ can be written as,
\noindent
\begin{equation*}%\label{eq: 1.1}
\begin{split}
\overline{S}&_{q,k_1,0}(u,v,s)\\
=&\sum_{y_{s-1}=1}^{v-(s-2)}{\hspace{0.3cm}\sum_{\substack{(x_1,\ldots,x_{s})\in S_{u,s}^{k_1-1}}}}{\hspace{0.5cm}\sum_{(y_{1},\ldots,y_{s-2}) \in S_{v-y_{s-1},s-2}^{0}}}q^{vx_{s}}q^{y_{1}x_{2}+(y_{1}+y_{2})x_{3}+\cdots+(y_{1}+\cdots+y_{s-2})x_{s-1}}.
\end{split}
\end{equation*}
\noindent
Similarly, we observe that since $x_s$ can assume the values $1,\ldots,u-(s-1)$, then $\overline{S}_{q,k_1,0}(u,v,s)$ can be rewritten as
\noindent
\begin{equation*}%\label{eq: 1.1}
\begin{split}
\overline{R}&_{q,k_1,0}(u,v,s)\\
=&\sum_{x_{s}=1}^{k_1-1}\sum_{y_{s-1}=1}^{v-(s-2)}{\hspace{0.2cm}\sum_{\substack{(x_1,\ldots,x_{s-1})\in S_{u-x_{s},s-1}^{k_1-1}}}}{\hspace{0.3cm}\sum_{(y_{1},\ldots,y_{s-2}) \in S_{v-y_{s-1},s-2}^{0}}}q^{vx_{s}}q^{y_{1}x_{2}+(y_{1}+y_{2})x_{3}+\cdots+(y_{1}+\cdots+y_{s-2})x_{s-1}}\\
&+\sum_{x_{s}=k_1}^{u-(s-1)}\sum_{y_{s-1}=1}^{v-(s-2)}{\hspace{0.2cm}\sum_{\substack{(x_1,\ldots,x_{s-1})\in S_{u-x_{s},s-1}^{0}}}}{\hspace{0.3cm}\sum_{(y_{1},\ldots,y_{s-2}) \in S_{v-y_{s-1},s-2}^{0}}}q^{vx_{s}}q^{y_{1}x_{2}+(y_{1}+y_{2})x_{3}+\cdots+(y_{1}+\cdots+y_{s-2})x_{s-1}}\\
=&\sum_{a=1}^{k_1-1}\sum_{b=1}^{v-(s-2)}q^{va}\overline{S}_{q,k_1,0}(u-a,v-b,s-1)+\sum_{a=k_1}^{u-(s-1)}\sum_{b=1}^{v-(s-2)}q^{va}\overline{I}_{q}(u-a,v-b,s-1).
\end{split}
\end{equation*}
\noindent
The other cases are obvious and thus the proof is completed.
\end{proof}

\begin{remark}
{\rm
We observe that $\overline{S}_{1,k_1,0}(u,v,s)$ is the number of integer solutions $(x_{1},\ldots,x_{s})$ and $(y_{1},\ldots,y_{s-1})$ of

\noindent
\begin{equation*}\label{eq:1}
\begin{split}
(x_1,\ldots,x_{s})\in S_{u,s}^{k_1-1},\ \text{and}
\end{split}
\end{equation*}

\noindent
\begin{equation*}\label{eq:2}
\begin{split}
(y_{1},\ldots,y_{s-1}) \in S_{v,s-1}^{0}
\end{split}
\end{equation*}
\noindent
given by
\noindent
\begin{equation*}\label{eq: 1.1}
\begin{split}
\overline{S}_{1,k_1,0}(u,v,s)=R(s,\ k_{1},\ u)M(s-1,\ v),
\end{split}
\end{equation*}

\noindent
where $R(a,\ b,\ c)$ denotes the total number of integer solution $y_{1}+x_{2}+\cdots+y_{a}=c$ such that $(y_{1},\ldots,y_{a}) \in S_{r,a}^{k_2}$. The number is given by
\noindent
\begin{equation*}\label{eq: 1.1}
\begin{split}
R(a,\ b,\ c)=\sum_{j=1}^{min\left(a,\ \left[\frac{c-a}{b-1}\right]\right) }(-1)^{j+1}{a \choose j}{c-j(b-1)-1 \choose a-1}.
\end{split}
\end{equation*}

\noindent
where $M(a,\ b)$ denotes the total number of integer solution $x_{1}+x_{2}+\cdots+x_{a}=b$ such that $(x_{1},\ldots,x_{a}) \in S_{b,a}^{0}$. The number is given by
\noindent
\begin{equation*}\label{eq: 1.1}
\begin{split}
M(a,\ b)={b-1 \choose a-1}.
\end{split}
\end{equation*}
See, e.g. \citet{charalambides2002enumerative}.
}
\end{remark}

\begin{definition}
For $0<q\leq1$, we define

\begin{equation*}\label{eq: 1.1}
\begin{split}
S_{q,k_1,k_2}(m,r,s)={\sum_{x_{1},\ldots,x_{s}}}\
\sum_{y_{1},\ldots,y_{s-1}} q^{y_{1}x_{2}+(y_{1}+y_{2})x_{3}+\cdots+(y_{1}+\cdots+y_{s-1})x_{s}},\
\end{split}
\end{equation*}
\noindent
where the summation is over all integers $x_1,\ldots,x_{s},$ and $y_1,\ldots,y_{s-1}$ satisfying

\begin{equation*}\label{eq:1}
\begin{split}
(x_1,\ldots,x_{s})\in S_{m,s}^{k_1-1},\ \text{and}
\end{split}
\end{equation*}
\noindent
\begin{equation*}\label{eq:2}
\begin{split}
(y_{1},\ldots,y_{s-1}) \in S_{r,s-1}^{k_2-1}.
\end{split}
\end{equation*}
\end{definition}
\noindent
The following gives a recurrence relation useful for the computation of $S_{q,k_1,k_2}(m,r,s)$.
\noindent\\
\begin{lemma}
\label{lemma:6.14}
$S_{q,k_1,k_2}(m,r,s)$ obeys the following recurrence relation.
\begin{equation*}\label{eq: 1.1}
\begin{split}
S&_{q,k_1,k_2}(m,r,s)\\
&=\left\{
  \begin{array}{ll}
    \sum_{b=1}^{k_2-1}\sum_{a=1}^{k_1-1}q^{ra}S_{q,k_1,k_2}(u-a,v-b,s-1)\\
    +\sum_{b=k_2}^{r-(s-2)}\sum_{a=1}^{k_1-1}q^{ra}\overline{S}_{q,k_1,0}(u-a,v-b,s-1)\\
+\sum_{b=1}^{k_2-1}\sum_{a=k_1}^{m-(s-1)}q^{ra}I_{q,0,k_2}(u-a,v-b,s-1)\\
+\sum_{b=k_2}^{r-(s-2)}\sum_{a=k_1}^{m-(s-1)}q^{ra}\overline{I}_{q}(u-a,v-b,s-1), & \text{for}\ s>1,\ (s-1)+k_1\leq m\\
    &\text{and}\ (s-2)+k_{2}\leq r\\
    1, & \text{for}\ s=1,\ k_1\leq m,\ \text{and}\ r=0\\
    0, & \text{otherwise.}\\
  \end{array}
\right.
\end{split}
\end{equation*}
\end{lemma}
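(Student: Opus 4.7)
The plan is to mimic the case-splitting argument used in Lemmas 6.10--6.13, since $S_{q,k_1,k_2}$ carries two ``max-exceeds-threshold'' constraints (one on the $x$'s via $S_{m,s}^{k_1-1}$ and one on the $y$'s via $S_{r,s-1}^{k_2-1}$) and the exponent in the $q$-weight is the same ``B-type'' exponent as in $\overline{S}_{q,k_1,0}$ and $I_{q,0,k_2}$. In the main case $s>1$, $(s-1)+k_1\le m$, $(s-2)+k_2\le r$, I would condition jointly on the last success-run length $x_s$ and the last failure-run length $y_{s-1}$, using the crucial identity $y_1+\cdots+y_{s-1}=r$ to rewrite the coefficient of $x_s$ in the exponent as $r$, so that factoring $q^{rx_s}$ leaves precisely the exponent $y_1x_2+(y_1+y_2)x_3+\cdots+(y_1+\cdots+y_{s-2})x_{s-1}$ associated with the smaller index $s-1$.

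Next I would handle the four sub-cases obtained by splitting each variable at its threshold. If $1\le x_s\le k_1-1$ the constraint $\max(x_1,\ldots,x_s)>k_1-1$ must still be carried by $(x_1,\ldots,x_{s-1})$, giving the set $S_{m-x_s,s-1}^{k_1-1}$; if $k_1\le x_s\le m-(s-1)$ that constraint is already satisfied by $x_s$ alone, so the remaining $(x_1,\ldots,x_{s-1})$ lies in $S_{m-x_s,s-1}^{0}$. The analogous dichotomy for $y_{s-1}$ against $k_2$ produces the $y$-constraint $S_{r-y_{s-1},s-2}^{k_2-1}$ or $S_{r-y_{s-1},s-2}^{0}$. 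Combining these, the four sub-cases correspond exactly to the four quantities $S_{q,k_1,k_2}$, $\overline{S}_{q,k_1,0}$, $I_{q,0,k_2}$, and $\overline{I}_q$ on $(m-a,r-b,s-1)$, matching the four terms in the claimed recurrence after reindexing $a=x_s$, $b=y_{s-1}$.

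For the boundary, when $s=1$ the sum over $y_j$'s is empty (no $y$'s appear) and the $x$-constraint $S_{m,1}^{k_1-1}$ with $\sum x_j=m$ forces $x_1=m\ge k_1$, while $r$ must equal $0$; the $q$-exponent is also empty, yielding the value $1$. All other configurations of $(m,r,s)$ give an empty index set and hence $0$. The routine verification that these boundary values are consistent with the recurrence (when combined with Lemmas 4.1, 4.2, 6.11, 6.13) completes the proof.

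The main obstacle, as in the previous longest-run lemmas, is the bookkeeping: one must verify that the ``at least one index exceeds the threshold'' conditions on $(x_1,\ldots,x_{s-1})$ and $(y_1,\ldots,y_{s-2})$ are correctly redistributed after fixing $x_s$ and $y_{s-1}$, and that the recognition of each of the four inner sums as a previously-defined recurrence function is exact (in particular that the exponent $y_1x_2+\cdots+(y_1+\cdots+y_{s-2})x_{s-1}$ matches the definition of $\overline{S}$, $S$, $I$, $\overline{I}$ respectively, and not some shifted variant). Once this matching is made precise, the rest is algebraic factoring of the $q$-weight.
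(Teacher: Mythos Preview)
Your proposal is correct and follows essentially the same approach as the paper: condition on the last pair $(x_s,y_{s-1})$, use $y_1+\cdots+y_{s-1}=r$ to factor out $q^{r x_s}$, and split each of $x_s$ and $y_{s-1}$ at its threshold ($k_1$ and $k_2$ respectively) to obtain the four sub-sums identified as $S_{q,k_1,k_2}$, $\overline{S}_{q,k_1,0}$, $I_{q,0,k_2}$, $\overline{I}_q$ at level $s-1$. Your explicit observation that the ``max exceeds'' constraint either is satisfied by the peeled-off variable or must be inherited by the remaining tuple is exactly the mechanism the paper uses, and your treatment of the $s=1$ boundary (empty $y$-tuple forcing $r=0$, single $x_1=m\ge k_1$) matches as well.
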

\begin{proof}
For $s > 1$, $(s-1)+k_1\leq m$ and $(s-2)+k_{2}\leq r$, we observe that $x_{s}$ may assume any value $1,\ldots,m-(s-1)$, then $S_{q,k_1,k_2}(m,r,s)$ can be written as
\noindent
\begin{equation*}%\label{eq: 1.1}
\begin{split}
S&_{q,k_1,k_2}(m,r,s)\\
=&\sum_{x_{s}=1}^{k_1-1}{\hspace{0.3cm}\sum_{\substack{(x_1,\ldots,x_{s-1})\in S_{m-x_{s},s-1}^{k_1-1}}}}{\hspace{0.3cm}\sum_{(y_{1},\ldots,y_{s-1}) \in S_{r,s-1}^{k_2-1}}}q^{rx_{s}}q^{y_{1}x_{2}+(y_{1}+y_{2})x_{3}+\cdots+(y_{1}+\cdots+y_{s-2})x_{s-1}}\\
&+\sum_{x_{s}=k_1}^{m-(s-1)}{\sum_{\substack{(x_1,\ldots,x_{s-1})\in S_{m-x_{s},s-1}^{0}}}}{\hspace{0.3cm}\sum_{(y_{1},\ldots,y_{s-1}) \in S_{r,s-1}^{k_2-1}}}q^{rx_{s}}q^{y_{1}x_{2}+(y_{1}+y_{2})x_{3}+\cdots+(y_{1}+\cdots+y_{s-2})x_{s-1}}.
\end{split}
\end{equation*}
\noindent
Similarly, we observe that since $y_s$ can assume the values $1,\ldots,r-(s-2)$, then $S_{q,k_1,k_2}(m,r,s)$ can be rewritten as
\noindent

\begin{equation*}%\label{eq: 1.1}
\begin{split}
S&_{q,k_1,k_2}(m,r,s)\\
=&\sum_{y_{s-1}=1}^{k_2-1}\sum_{x_{s}=1}^{m-(s-1)}{\hspace{0.3cm}\sum_{\substack{(x_1,\ldots,x_{s-1})\in S_{m-x_{s},s-1}^{k_{1}-1}}}}{\hspace{0.3cm}\sum_{(y_{1},\ldots,y_{s-2}) \in S_{r-y_{s-1},s-2}^{k_2-1}}}q^{rx_{s}}q^{y_{1}x_{2}+(y_{1}+y_{2})x_{3}+\cdots+(y_{1}+\cdots+y_{s-2})x_{s-1}}\\
&+\sum_{y_{s-1}=k_2}^{r-(s-2)}\sum_{x_{s}=1}^{k_1-1}\hspace{0.3cm}{\sum_{\substack{(x_1,\ldots,x_{s-1})\in S_{m-x_{s},s-1}^{k_{1}-1}}}}\hspace{0.3cm}{\sum_{(y_{1},\ldots,y_{s-2}) \in S_{r-y_{s-1},s-2}^{0}}}q^{rx_{s}}q^{y_{1}x_{2}+(y_{1}+y_{2})x_{3}+\cdots+(y_{1}+\cdots+y_{s-2})x_{s-1}}\\
&+\sum_{y_{s-1}=1}^{k_2-1}\sum_{x_{s}=k_1}^{m-(s-1)}{\hspace{0.3cm}\sum_{\substack{(x_1,\ldots,x_{s-1})\in S_{m-x_{s},s-1}^{0}}}}{\hspace{0.3cm}\sum_{(y_{1},\ldots,y_{s-2}) \in S_{r-y_{s-1},s-2}^{k_2-1}}}q^{rx_{s}}q^{y_{1}x_{2}+(y_{1}+y_{2})x_{3}+\cdots+(y_{1}+\cdots+y_{s-2})x_{s-1}}\\
&+\sum_{y_{s-1}=k_2}^{r-(s-2)}\sum_{x_{s}=k_1}^{m-(s-1)}{\hspace{0.3cm}\sum_{\substack{(x_1,\ldots,x_{s-1})\in S_{m-x_{s},s-1}^{0}}}}{\hspace{0.3cm}\sum_{(y_{1},\ldots,y_{s-2}) \in S_{r-y_{s-1},s-2}^{0}}}q^{rx_{s}}q^{y_{1}x_{2}+(y_{1}+y_{2})x_{3}+\cdots+(y_{1}+\cdots+y_{s-2})x_{s-1}}\\
\end{split}
\end{equation*}

\noindent

\begin{equation*}%\label{eq: 1.1}
\begin{split}
=&\sum_{b=1}^{k_2-1}\sum_{a=1}^{k_1-1}q^{ra}S_{q,k_1,k_2}(u-a,v-b,s-1)+\sum_{b=k_2}^{r-(s-2)}\sum_{a=1}^{k_1-1}q^{ra}\overline{S}_{q,k_1,0}(u-a,v-b,s-1)\\
&+\sum_{b=1}^{k_2-1}\sum_{a=1}^{m-(s-1)}q^{ra}I_{q,0,k_2}(u-a,v-b,s-1)+\sum_{b=k_2}^{r-(s-2)}\sum_{a=1}^{m-(s-1)}q^{ra}\overline{I}_{q}(u-a,v-b,s-1).
\end{split}
\end{equation*}
\noindent
The other cases are obvious and thus the proof is completed.

\end{proof}

\begin{remark}
{\rm
We observe that $S_{1,k_1,k_2}(m,r,s)$ is the number of integer solutions $(x_{1},\ldots,x_{s})$ and $(y_{1},\ldots,y_{s-1})$ of

\noindent
\begin{equation*}\label{eq:1}
\begin{split}
(x_1,\ldots,x_{s})\in S_{m,s}^{k_1-1},\ \text{and}
\end{split}
\end{equation*}

\noindent
\begin{equation*}\label{eq:2}
\begin{split}
(y_{1},\ldots,y_{s-1}) \in S_{r,s-1}^{k_2-1}
\end{split}
\end{equation*}
\noindent
given by
\noindent
\begin{equation*}\label{eq: 1.1}
\begin{split}
S_{1,k_1,k_2}(m,r,s)=R(s,\ k_{1},\ u)R(s-1,\ k_{2},\ r),
\end{split}
\end{equation*}

\noindent

where $R(a,\ b,\ c)$ denotes the total number of integer solution $y_{1}+x_{2}+\cdots+y_{a}=c$ such that $(y_{1},\ldots,y_{a}) \in S_{r,a}^{k_2}$. The number is given by
\noindent
\begin{equation*}\label{eq: 1.1}
\begin{split}
R(a,\ b,\ c)=\sum_{j=1}^{min\left(a,\ \left[\frac{c-a}{b-1}\right]\right) }(-1)^{j+1}{a \choose j}{c-j(b-1)-1 \choose a-1}.
\end{split}
\end{equation*}
\noindent
See, e.g. \citet{charalambides2002enumerative}.
}
\end{remark}

\begin{definition}
For $0<q\leq1$, we define

\begin{equation*}\label{eq: 1.1}
\begin{split}
\overline{T}_{q}(u,v,s)={\sum_{x_{1},\ldots,x_{s-1}}}\
\sum_{y_{1},\ldots,y_{s}} q^{y_{1}x_{2}+(y_{1}+y_{2})x_{3}+\cdots+(y_{1}+\cdots+y_{s-1})x_{s}},\
\end{split}
\end{equation*}
\noindent
where the summation is over all integers $x_1,\ldots,x_{s},$ and $y_1,\ldots,y_s$ satisfying

\noindent
\begin{equation*}\label{eq:1}
\begin{split}
(x_1,\ldots,x_{s})\ \in S_{u,s}^{0},\ \text{and}
\end{split}
\end{equation*}
\noindent
\begin{equation*}\label{eq:2}
\begin{split}
(y_{1},\ldots,y_{s}) \in S_{v,s}^{k_2-1}.
\end{split}
\end{equation*}
\end{definition}
\noindent
The following gives a recurrence relation useful for the computation of $\overline{T}_{q,0,k_2}(u,v,s)$.
\noindent
\begin{lemma}
\label{lemma:6.15}
$\overline{T}_{q,0,k_2}(u,v,s)$ obeys the following recurrence relation.
\begin{equation*}\label{eq: 1.1}
\begin{split}
\overline{T}&_{q,0,k_2}(u,v,s)\\
&=\left\{
  \begin{array}{ll}
    \sum_{b=1}^{k_2-1}\sum_{a=1}^{u-(s-1)}q^{a(v-b)} \overline{T}_{q,0,k_2}(u-a,v-b,s-1)\\
    +\sum_{b=k_2}^{v-(s-1)}\sum_{a=1}^{u-(s-1)}q^{a(v-b)} \overline{L}_{q}(u-a,v-b,s-1), & \text{for}\ s>1,\ s\leq u\ \text{and}\ (s-1)+k_2\leq v \\
    1, & \text{for}\ s=1,\ 1\leq u\ \text{and}\ k_2\leq v\\
    0, & \text{otherwise.}\\
  \end{array}
\right.
\end{split}
\end{equation*}
\end{lemma}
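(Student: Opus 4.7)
The plan is to mimic the template used throughout Sections 3, 4, and 6 (in particular Lemmas \ref{lemma:3.8}, \ref{lemma:4.4}, and \ref{lemma:6.11}, where the two-dimensional recurrence splits according to whether the ``last'' $y$-coordinate meets or fails to meet the threshold $k_2$). I would treat the generic case $s>1$, $s\leq u$, and $(s-1)+k_2\leq v$; the boundary cases $s=1$ and ``otherwise'' will be immediate from the constraints in the definition, exactly as in the analogous lemmas.

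First I would peel off the last $x$-coordinate. Noting that the defining constraint $(x_1,\ldots,x_s)\in S_{u,s}^{0}$ forces each $x_j\geq 1$, the value $x_s$ ranges over $\{1,\ldots,u-(s-1)\}$, and for each fixed $x_s$ the residual vector $(x_1,\ldots,x_{s-1})$ lies in $S_{u-x_s,s-1}^{0}$. Similarly, using $y_1+\cdots+y_s=v$, the exponent factors as
\[
q^{y_{1}x_{2}+(y_{1}+y_{2})x_{3}+\cdots+(y_{1}+\cdots+y_{s-1})x_{s}}
= q^{x_{s}(v-y_{s})}\cdot q^{y_{1}x_{2}+(y_{1}+y_{2})x_{3}+\cdots+(y_{1}+\cdots+y_{s-2})x_{s-1}},
\]
pulling the factor $q^{x_s(v-y_s)}$ outside the inner summations.

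Next I would split the $y_s$-summation at the threshold $k_2$. If $1\leq y_s\leq k_2-1$, then $y_s$ is not ``large enough'' to guarantee that $(y_1,\ldots,y_s)\in S_{v,s}^{k_2-1}$, so the residual $(y_1,\ldots,y_{s-1})$ must still contain a coordinate $\geq k_2$, i.e., it must lie in $S_{v-y_s,s-1}^{k_2-1}$; the matching inner double sum is exactly $\overline{T}_{q,0,k_2}(u-x_s,v-y_s,s-1)$. If instead $k_2\leq y_s\leq v-(s-1)$, then the threshold is already met by $y_s$ alone, so the residual $(y_1,\ldots,y_{s-1})$ only needs to consist of positive integers summing to $v-y_s$, i.e., to lie in $S_{v-y_s,s-1}^{0}$; the matching inner double sum is $\overline{L}_{q}(u-x_s,v-y_s,s-1)$ (using the definition from Lemma \ref{lemma:4.7}). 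Relabelling $a=x_s$ and $b=y_s$ then yields the asserted recurrence.

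The only step that requires any care is verifying that the two cases exhaust the summation range of $y_s$ without overlap and that the claimed ranges for $a$ and $b$ agree with the support of the quantities on the right-hand side; the rest is a direct copy of the algebraic manipulation carried out in Lemmas \ref{lemma:3.8} and \ref{lemma:4.4}. The boundary clauses are then handled separately: when $s=1$ the sum reduces to a single choice with $y_1=v\geq k_2$ and $u\geq 1$, giving the value $1$, and when either $s>u$ or $v<(s-1)+k_2$ the admissibility constraints are empty, giving $0$. This completes the proof.
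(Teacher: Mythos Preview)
Your proposal is correct and follows essentially the same approach as the paper: peel off $x_s$ (ranging over $1,\ldots,u-(s-1)$), factor out $q^{x_s(v-y_s)}$ from the exponent, then split the $y_s$-summation at the threshold $k_2$ so that the residual $(y_1,\ldots,y_{s-1})$ lands in $S_{v-y_s,s-1}^{k_2-1}$ or $S_{v-y_s,s-1}^{0}$, recovering $\overline{T}_{q,0,k_2}$ and $\overline{L}_q$ respectively. The boundary cases are handled identically.
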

\begin{proof}
For $s > 1$, $s\leq u$ and $(s-1)+k_2\leq v$, we observe that $x_{s}$ may assume any value $1,\ldots,u-(s-1)$, then $\overline{T}_{q,0,k_2}(u,v,s)$ can be written as
\noindent
\begin{equation*}%\label{eq: 1.1}
\begin{split}
\overline{T}&_{q,0,k_2}(u,v,s)\\
=&\sum_{x_{s}=1}^{u-(s-1)}{\sum_{(x_1,\ldots,x_{s-1})\in S_{u-x_{s},s-1}^{0}}}{\hspace{0.3cm}\sum_{(y_{1},\ldots,y_{s}) \in S_{v,s}^{k_2-1}}}q^{x_{s}(v-y_{s})}q^{y_{1}x_{2}+(y_{1}+y_{2})x_{3}+\cdots+(y_{1}+\cdots+y_{s-2})x_{s-1}}
\end{split}
\end{equation*}
\noindent
Similarly, we observe that since $y_s$ can assume the values $1,\ldots,v-(s-1)$, then $\overline{T}_{q,0,k_2}(u,v,s)$ can be rewritten as
\noindent
\begin{equation*}%\label{eq: 1.1}
\begin{split}
\overline{T}&_{q,0,k_2}(u,v,s)\\
=&\sum_{y_{s}=1}^{k_2-1}\sum_{x_{s}=1}^{u-(s-1)}q^{x_{s}(v-y_{s})}{\hspace{-0.5cm}\sum_{(x_1,\ldots,x_{s-1})\in S_{u-x_{s},s-1}^{0}}}{\hspace{0.3cm}\sum_{(y_{1},\ldots,y_{s-1}) \in S_{v-y_{s},s-1}^{k_2-1}}}q^{y_{1}x_{2}+(y_{1}+y_{2})x_{3}+\cdots+(y_{1}+\cdots+y_{s-2})x_{s-1}}\\
&+\sum_{y_{s}=k_2}^{v-(s-1)}\sum_{x_{s}=1}^{u-(s-1)}q^{x_{s}(v-y_{s})}{\hspace{-0.5cm}\sum_{(x_1,\ldots,x_{s-1})\in S_{u-x_{s},s-1}^{0}}}{\hspace{0.3cm}\sum_{(y_{1},\ldots,y_{s-1}) \in S_{v-y_{s},s-1}^{0}}}q^{y_{1}x_{2}+(y_{1}+y_{2})x_{3}+\cdots+(y_{1}+\cdots+y_{s-2})x_{s-1}}\\
=&\sum_{b=1}^{k_2-1}\sum_{a=1}^{u-(s-1)}q^{a(v-b)} \overline{T}_{q,0,k_2}(u-a,v-b,s-1)+\sum_{b=k_2}^{v-(s-1)}\sum_{a=1}^{u-(s-1)}q^{a(v-b)} \overline{L}_{q}(u-a,v-b,s-1).
\end{split}
\end{equation*}
\noindent
The other cases are obvious and thus the proof is completed.
\end{proof}
\begin{remark}
{\rm
We observe that $\overline{T}_{1,0,k_2}(m,r,s)$ is the number of integer solutions $(x_{1},\ldots,x_{s})$ and $(y_{1},\ldots,y_{s})$ of

\noindent
\begin{equation*}\label{eq:1}
\begin{split}
(x_1,\ldots,x_{s})\ \in S_{u,s}^{0},\ \text{and}
\end{split}
\end{equation*}
\noindent
\begin{equation*}\label{eq:2}
\begin{split}
(y_{1},\ldots,y_{s}) \in S_{v,s}^{k_2-1}
\end{split}
\end{equation*}
\noindent

given by
\noindent
\begin{equation*}\label{eq: 1.1}
\begin{split}
\overline{T}_{1,0,k_2}(u,v,s)=M(s,\ u)R(s,\ k_{2},\ v),
\end{split}
\end{equation*}
\noindent
where $M(a,\ b)$ denotes the total number of integer solution $y_{1}+x_{2}+\cdots+y_{a}=b$ such that $(y_{1},\ldots,y_{a}) \in S_{b,a}^{0}$. The number is given by
\noindent
\begin{equation*}\label{eq: 1.1}
\begin{split}
M(a,\ b)={b-1 \choose a-1}.
\end{split}
\end{equation*}
\noindent

where $R(a,\ b,\ c)$ denotes the total number of integer solution $y_{1}+x_{2}+\cdots+y_{a}=c$ such that $(y_{1},\ldots,y_{a}) \in S_{r,a}^{k_2}$. The number is given by
\noindent
\begin{equation*}\label{eq: 1.1}
\begin{split}
R(a,\ b,\ c)=\sum_{j=1}^{min\left(a,\ \left[\frac{c-a}{b-1}\right]\right) }(-1)^{j+1}{a \choose j}{c-j(b-1)-1 \choose a-1}.
\end{split}
\end{equation*}
\noindent
See, e.g. \citet{charalambides2002enumerative}.
}
\end{remark}

\begin{definition}
For $0<q\leq1$, we define
\begin{equation*}\label{eq: 1.1}
\begin{split}
T_{q,k_1,k_2}(m,r,s)={\sum_{x_{1},\ldots,x_{s}}}\
\sum_{y_{1},\ldots,y_{s}}q^{y_{1}x_{2}+(y_{1}+y_{2})x_{3}+\cdots+(y_{1}+\cdots+y_{s-1})x_{s}},\
\end{split}
\end{equation*}
\noindent
where the summation is over all integers $x_1,\ldots,x_{s},$ and $y_1,\ldots,y_s$ satisfying
\noindent
\begin{equation*}\label{eq:1}
\begin{split}
(x_1,\ldots,x_{s})\ \in S_{m,s}^{k_1-1},\ \text{and}
\end{split}
\end{equation*}
\noindent
\begin{equation*}\label{eq:2}
\begin{split}
(y_{1},\ldots,y_{s}) \in S_{r,s}^{k_2-1}
\end{split}
\end{equation*}
\end{definition}
\noindent
The following gives a recurrence relation useful for the computation of $T_{q,k_1,k_2}(m,r,s)$.
\noindent\\
\begin{lemma}
\label{lemma:6.16}
$T_{q,k_1,k_2}(m,r,s)$ obeys the following recurrence relation,
\begin{equation*}\label{eq: 1.1}
\begin{split}
T&_{q,k_1,k_2}(m,r,s)\\
&=\left\{
  \begin{array}{ll}
    \sum_{b=1}^{k_2-1}\sum_{a=1}^{k_{1}-1}\ q^{a(r-b)}T_{q,k_1,k_2}(m-a,r-b,s-1)\\
    +\sum_{b=k_2}^{r-(s-1)}\sum_{a=1}^{k_{1}-1}\ q^{a(r-b)}L_{q,k_1,0}(m-a,r-b,s-1)\\
+\sum_{b=1}^{k_2-1}\sum_{a=k_{1}}^{m-(s-1)}\ q^{a(r-b)}\overline{T}_{q,0,k_2}(m-a,r-b,s-1)\\
+\sum_{b=k_2}^{r-(s-1)}\sum_{a=k_{1}}^{m-(s-1)}\ q^{a(r-b)}\overline{L}_{q}(m-a,r-b,s-1), & \text{for}\quad s>1,\ (s-1)+k_1\leq m\\
    &\text{and}\ (s-1)+k_2\leq r \\
    1, & \text{for}\ s=1,\ k_1\leq m,\ \text{and}\ k_2\leq r\\
    0, & \text{otherwise.}\\
  \end{array}
\right.
\end{split}
\end{equation*}
\end{lemma}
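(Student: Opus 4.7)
The plan is to proceed exactly as in Lemmas 4.8, 6.10 and the preceding Lemma 6.15, by splitting the outermost summations on $x_s$ and $y_s$ into the four regimes determined by whether these two indices lie below or at/above the thresholds $k_1$ and $k_2$. The structure of the exponent is already set up for this: writing $y_1+\cdots+y_s = r$, the final summand in the exponent is $(y_1+\cdots+y_{s-1})x_s = (r-y_s)x_s$, so specializing to $x_s=a$ and $y_s=b$ factors out a clean $q^{a(r-b)}$, and what remains is precisely the exponent $y_1 x_2 + (y_1+y_2)x_3 + \cdots + (y_1+\cdots+y_{s-2})x_{s-1}$ defining the $(s-1)$-index version of the same family.

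Concretely, for $s>1$, $(s-1)+k_1\leq m$ and $(s-1)+k_2\leq r$, I would first write
\begin{equation*}
T_{q,k_1,k_2}(m,r,s) = \sum_{x_s}\sum_{y_s} q^{(r-y_s)x_s}\; \Sigma(m-x_s, r-y_s, s-1),
\end{equation*}
where $\Sigma$ denotes the inner double sum over $(x_1,\ldots,x_{s-1})$ and $(y_1,\ldots,y_{s-1})$ with the inherited constraints. The key combinatorial observation is how the two max-constraints get distributed: if $x_s\geq k_1$ then $(x_1,\ldots,x_s)\in S_{m,s}^{k_1-1}$ is automatic and $(x_1,\ldots,x_{s-1})$ only needs to lie in $S_{m-x_s,s-1}^{0}$; if $x_s<k_1$ the constraint is inherited and $(x_1,\ldots,x_{s-1})\in S_{m-x_s,s-1}^{k_1-1}$. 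The identical dichotomy applies to $y_s$ versus $k_2$.

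This produces the four cases, each matching one of the already-defined quantities:
\begin{itemize}
\item $1\leq x_s \leq k_1-1$ and $1\leq y_s\leq k_2-1$: the inner sum is $T_{q,k_1,k_2}(m-x_s,r-y_s,s-1)$.
\item $1\leq x_s \leq k_1-1$ and $k_2\leq y_s \leq r-(s-1)$: the $y$-constraint becomes vacuous, so the inner sum is $L_{q,k_1,0}(m-x_s,r-y_s,s-1)$.
\item $k_1\leq x_s\leq m-(s-1)$ and $1\leq y_s\leq k_2-1$: the $x$-constraint becomes vacuous, so the inner sum is $\overline{T}_{q,0,k_2}(m-x_s,r-y_s,s-1)$.
\item $k_1\leq x_s\leq m-(s-1)$ and $k_2\leq y_s\leq r-(s-1)$: both constraints become vacuous, so the inner sum is $\overline{L}_q(m-x_s,r-y_s,s-1)$.
\end{itemize}
Relabeling $a=x_s$, $b=y_s$ in each piece and summing yields the stated recurrence. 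The base cases ($s=1$: the exponent is empty and there is one solution iff $k_1\leq m$ and $k_2\leq r$; and the degenerate ranges giving $0$) are verified directly from the definition.

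The only nontrivial step is the careful bookkeeping of the two max-constraints across the split; once the correct reduction of $S^{k_1-1}$ to either $S^{k_1-1}$ or $S^0$ (according to whether $x_s$ witnesses the max) is in hand, and similarly for the $y$'s, the rest is a mechanical reorganization identical in spirit to the proofs of Lemmas 4.8 and 6.10. I do not anticipate any genuine obstacle beyond ensuring that the four remaining inner sums are matched with the correct previously-defined symbols ($T$, $L$, $\overline{T}$, $\overline{L}$) and that the factor $q^{a(r-b)}$ is properly extracted in each case.
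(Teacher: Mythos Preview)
Your proposal is correct and follows essentially the same approach as the paper: factor out $q^{x_s(r-y_s)}$ from the last term of the exponent, then split the sums over $x_s$ and $y_s$ at the thresholds $k_1$ and $k_2$, and identify the four resulting inner sums with $T_{q,k_1,k_2}$, $L_{q,k_1,0}$, $\overline{T}_{q,0,k_2}$, and $\overline{L}_q$ respectively. The only presentational difference is that the paper first displays the split on $x_s$ alone and then refines by splitting on $y_s$, whereas you describe the four-way split directly; the content is identical.
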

\begin{proof}
For $s > 1$, $(s-1)+k_1\leq m$ and $(s-1)+k_2\leq r$, we observe that $x_{s}$ may assume any value $1,\ldots,m-(s-1)$, then $T_{q,k_1,k_2}(m,r,s)$ can be written as
\noindent
\begin{equation*}%\label{eq: 1.1}
\begin{split}
T&_{q,k_1,k_2}(m,r,s)\\
=&\sum_{x_{s}=1}^{k_1-1}{\hspace{0.3cm}\sum_{(x_{1},\ldots,x_{s-1}) \in S_{m-x_s,s}^{k_1-1}}}{\hspace{0.3cm}\sum_{(y_{1},\ldots,y_{s}) \in S_{r,s}^{k_2-1}}}q^{x_{s}(r-y_{s})}q^{y_{1}x_{2}+(y_{1}+y_{2})x_{3}+\cdots+(y_{1}+\cdots+y_{s-2})x_{s-1}}\\
&+\sum_{x_{s}=k_1}^{m-(s-1)}{\sum_{(x_{1},\ldots,x_{s}) \in S_{m,s}^{0}}}{\hspace{0.3cm}\sum_{(y_{1},\ldots,y_{s}) \in S_{r,s}^{k_2-1}}}q^{x_{s}(r-y_{s})}q^{y_{1}x_{2}+(y_{1}+y_{2})x_{3}+\cdots+(y_{1}+\cdots+y_{s-2})x_{s-1}}
\end{split}
\end{equation*}
\noindent
Similarly, we observe that since $y_s$ can assume the values $1,\ldots,r-(s-1)$, then $T_{q,k_1,k_2}(m,r,s)$ can be rewritten as
\noindent
\begin{equation*}%\label{eq: 1.1}
\begin{split}
T&_{q,k_1,k_2}(m,r,s)\\
=&\sum_{y_{s}=1}^{k_2-1}\sum_{x_{s}=1}^{k_{1}-1}q^{x_{s}(r-y_{s})}{\hspace{-0.3cm}\sum_{(x_{1},\ldots,x_{s-1}) \in S_{m-x_s,s}^{k_1-1}}}{\hspace{0.3cm}\sum_{(y_{1},\ldots,y_{s-1}) \in S_{r-y_{s},s-1}^{k_2-1}}} q^{y_{1}x_{2}+(y_{1}+y_{2})x_{3}+\cdots+(y_{1}+\cdots+y_{s-2})x_{s-1}}\\
&+\sum_{y_{s}=k_2}^{r-(s-1)}\sum_{x_{s}=1}^{k_{1}-1}q^{x_{s}(r-y_{s})}{\hspace{-0.3cm}\sum_{(x_{1},\ldots,x_{s-1}) \in S_{m-x_s,s}^{k_1-1}}}
{\hspace{0.3cm}\sum_{(y_{1},\ldots,y_{s-1}) \in S_{r-y_{s},s-1}^{0}}} q^{y_{1}x_{2}+(y_{1}+y_{2})x_{3}+\cdots+(y_{1}+\cdots+y_{s-2})x_{s-1}}\\
&+\sum_{y_{s}=1}^{k_2-1}\sum_{x_{s}=k_{1}}^{m-(s-1)}q^{x_{s}(r-y_{s})}{\hspace{-0.3cm}\sum_{(x_{1},\ldots,x_{s-1}) \in S_{m-x_s,s}^{0}}}{\hspace{0.3cm}\sum_{(y_{1},\ldots,y_{s-1}) \in S_{r-y_{s},s-1}^{k_2-1}}} q^{y_{1}x_{2}+(y_{1}+y_{2})x_{3}+\cdots+(y_{1}+\cdots+y_{s-2})x_{s-1}}\\
&+\sum_{y_{s}=k_2}^{r-(s-1)}\sum_{x_{s}=k_{1}}^{m-(s-1)}q^{x_{s}(r-y_{s})}{\hspace{-0.3cm}\sum_{(x_{1},\ldots,x_{s-1}) \in S_{m-x_s,s}^{0}}}{\hspace{0.3cm}\sum_{(y_{1},\ldots,y_{s-1}) \in S_{r-y_{s},s-1}^{0}}} q^{y_{1}x_{2}+(y_{1}+y_{2})x_{3}+\cdots+(y_{1}+\cdots+y_{s-2})x_{s-1}}\\
=&\sum_{b=1}^{k_2-1}\sum_{a=1}^{k_{1}-1}\ q^{a(r-b)}T_{q,0,k_2}(m-a,r-b,s-1)+\sum_{b=k_2}^{r-(s-1)}\sum_{a=1}^{k_{1}-1}\ q^{a(r-b)}L_{q,k_1,0}(m-a,r-b,s-1)\\
&+\sum_{b=1}^{k_2-1}\sum_{a=k_{1}}^{m-(s-1)}q^{a(r-b)}\overline{T}_{q,0,k_2}(m-a,r-b,s-1)+\sum_{b=k_2}^{r-(s-1)}\sum_{a=k_{1}}^{m-(s-1)}\ q^{a(r-b)} \overline{L}_{q}(m-a,r-b,s-1).
\end{split}
\end{equation*}
\noindent
The other cases are obvious and thus the proof is completed.
\end{proof}
\begin{remark}
{\rm
We observe that $T_{1,k_1,k_2}(m,r,s)$ is the number of integer solutions $(x_{1},\ldots,x_{s})$ and $(y_{1},\ldots,y_{s})$ of
\noindent
\noindent
\begin{equation*}\label{eq:1}
\begin{split}
(x_1,\ldots,x_{s})\ \in S_{m,s}^{k_1-1},\ \text{and}
\end{split}
\end{equation*}
\noindent
\begin{equation*}\label{eq:2}
\begin{split}
(y_{1},\ldots,y_{s}) \in S_{r,s}^{k_2-1}
\end{split}
\end{equation*}

\noindent
given by
\noindent
\begin{equation*}\label{eq: 1.1}
\begin{split}
T_{1,k_1,k_2}(m,r,s)=R(s,\ k_{1},\ m)R(s,\ k_{2},\ r),
\end{split}
\end{equation*}

where $R(a,\ b,\ c)$ denotes the total number of integer solution $y_{1}+x_{2}+\cdots+y_{a}=c$ such that $(y_{1},\ldots,y_{a}) \in S_{r,a}^{k_2}$. The number is given by
\noindent
\begin{equation*}\label{eq: 1.1}
\begin{split}
R(a,\ b,\ c)=\sum_{j=1}^{min\left(a,\ \left[\frac{c-a}{b-1}\right]\right)}(-1)^{j+1}{a \choose j}{c-j(b-1)-1 \choose a-1}.
\end{split}
\end{equation*}
\noindent
See, e.g. \citet{charalambides2002enumerative}.
}
\end{remark}
\noindent
The probability function of $P_{q,\theta}\Big(L_{n}^{(1)}\geq k_{1}\ \wedge\ L_{n}^{(0)}\geq k_{2}\Big)$ is obtained by the following theorem.
\noindent
\begin{theorem}
The PMF $P_{q,\theta}\Big(L_{n}^{(1)}\geq k_{1}\ \wedge\ L_{n}^{(0)}\geq k_{2}\Big)$ for $n\geq k_1+k_2$
\noindent
\begin{equation*}\label{eq:bn1}
\begin{split}
P_{q,\theta}\Big(L_{n}^{(1)}\geq k_{1}\ \wedge\ L_{n}^{(0)}\geq k_{2}\Big)=&\sum_{i=k_2}^{n-k_1}\theta^{n-i}\ {\prod_{j=1}^{i}}\ (1-\theta q^{j-1})\sum_{s=1}^{i}\Big[Q_{q,k_1,k_2}(n-i,\ i,\ s)\\
&+R_{q,k_1,k_2}(n-i,\ i,\ s)+S_{q,k_1,k_2}(n-i,\ i,\ s+1)\\
&+T_{q,k_1,k_2}(n-i,\ i,\ s)\Big].\\
\end{split}
\end{equation*}
\end{theorem}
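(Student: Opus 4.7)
The plan is to mimic the structure used in the previous theorems of Section 6 (e.g., the ones for $L_n^{(1)}\leq k_1 \wedge L_n^{(0)}\leq k_2$ and $L_n^{(1)}\geq k_1 \wedge L_n^{(0)}\leq k_2$). First I would partition the event $\{L_n^{(1)}\geq k_1 \wedge L_n^{(0)}\geq k_2\}$ according to the total number of failures $F_n = i$. Since the sequence must contain at least $k_2$ failures (otherwise no failure run of length $\geq k_2$ is possible) and at least $k_1$ successes, the admissible range is $i=k_2, \ldots, n-k_1$. Writing $E_{n,i}$ for the event $\{L_n^{(1)}\geq k_1 \wedge L_n^{(0)}\geq k_2 \wedge F_n = i\}$, this reduces the problem to computing $P_{q,\theta}(E_{n,i})$ for each fixed $i$.

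Next, for each $i$, I would enumerate the typical sequences in $E_{n,i}$ by the number $s$ of failure runs ($1\leq s \leq i$), and split into the four standard types (A, B, C, D) depending on whether the sequence begins with a failure or a success run and whether the number of success runs equals $s$, $s-1$, $s+1$, or $s$. In each type the $x_j$'s sum to $n-i$ with the constraint that $(x_1,\ldots) \in S^{k_1-1}_{n-i,\cdot}$ (to force a success run of length $\geq k_1$) and the $y_j$'s sum to $i$ with $(y_1,\ldots)\in S^{k_2-1}_{i,\cdot}$ (to force a failure run of length $\geq k_2$). Multiplying the appropriate geometric success/failure probabilities along each sequence and then summing, the common prefactor $\theta^{n-i}\prod_{j=1}^{i}(1-\theta q^{j-1})$ factors out by the standard exponentiation-algebra simplification used throughout the paper, leaving inner sums in $q$ of the form
\[
\sum q^{y_{1}x_{1}+(y_{1}+y_{2})x_{2}+\cdots+(y_{1}+\cdots+y_{s})x_{s}} \quad \text{and} \quad \sum q^{y_{1}x_{2}+(y_{1}+y_{2})x_{3}+\cdots+(y_{1}+\cdots+y_{s-1})x_{s}},
\]
which are precisely the quantities covered by Lemmas \ref{lemma:6.10}, \ref{lemma:6.12}, \ref{lemma:6.14} and \ref{lemma:6.16}.

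Identifying the four inner sums with $Q_{q,k_1,k_2}$, $R_{q,k_1,k_2}$, $S_{q,k_1,k_2}$, and $T_{q,k_1,k_2}$ respectively (where the $s+1$ shift in the third term comes from the C-type having one extra success run), I would obtain
\[
P_{q,\theta}(E_{n,i}) = \theta^{n-i}\prod_{j=1}^{i}(1-\theta q^{j-1})\sum_{s=1}^{i}\bigl[Q_{q,k_1,k_2}(n-i,i,s)+R_{q,k_1,k_2}(n-i,i,s)+S_{q,k_1,k_2}(n-i,i,s+1)+T_{q,k_1,k_2}(n-i,i,s)\bigr].
\]
Summing over $i=k_2,\ldots,n-k_1$ then yields the claimed formula.

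The main obstacle I anticipate is bookkeeping rather than conceptual: each of the four types has slightly different constraints on the tuples $(x_j)$ and $(y_j)$ (whether the success/failure count lives in $S^{k_1-1}$ or $S^0$, and whether it has length $s$, $s-1$, or $s+1$), and the restrictions imposed by $L_n^{(1)}\geq k_1$ and $L_n^{(0)}\geq k_2$ must be attributed correctly to each type so that the matching is to the $Q,R,S,T$ sums rather than to the $\overline{Q},\overline{R},\overline{S},\overline{T}$ sums (which would correspond to the longest run not exceeding the threshold in one coordinate). A secondary bookkeeping point is that the telescoping prefactor $\prod_{j=1}^{i}(1-\theta q^{j-1})$ must combine contributions from every block of failures irrespective of whether the sequence ends in a success run or a failure run; this is the same simplification already carried out in Theorem \ref{thm:3.2} and can be reused verbatim.
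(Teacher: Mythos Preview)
Your proposal is correct and follows essentially the same approach as the paper: partition by $F_n=i$ for $i=k_2,\ldots,n-k_1$, split each $E_{n,i}$ into the four A--D types indexed by the number $s$ of failure runs with the constraints $(x_j)\in S^{k_1-1}_{n-i,\cdot}$ and $(y_j)\in S^{k_2-1}_{i,\cdot}$, factor out $\theta^{n-i}\prod_{j=1}^{i}(1-\theta q^{j-1})$, and identify the four resulting $q$-sums with $Q_{q,k_1,k_2}$, $R_{q,k_1,k_2}$, $S_{q,k_1,k_2}(\cdot,\cdot,s+1)$, $T_{q,k_1,k_2}$ via Lemmas~\ref{lemma:6.10}, \ref{lemma:6.12}, \ref{lemma:6.14}, \ref{lemma:6.16}. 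Your anticipated bookkeeping caveats are exactly the ones the paper handles, and no additional ideas are needed.
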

\begin{proof}
We partition the event $\left\{L_{n}^{(1)}\geq k_{1}\ \wedge\ L_{n}^{(0)}\geq k_{2}\right\}$ into disjoint events given by $F_{n}=i,$ for $i=k_2, \ldots,\ n-k_1$. Adding the probabilities we have
\noindent
\begin{equation*}\label{eq:bn}
\begin{split}
P_{q,\theta}\Big(L_{n}^{(1)}\geq k_{1}\ \wedge\ L_{n}^{(0)}\geq k_{2}\Big)=\sum_{i=k_2}^{n-k_1}P_{q,\theta}\Big(L_{n}^{(1)}\geq k_{1}\ \wedge\ L_{n}^{(0)}\geq k_{2}\ \wedge\ F_{n}=i\Big).\\
\end{split}
\end{equation*}
\noindent
We will write $E_{n,i}=\left\{L_{n}^{(1)}\geq k_{1}\ \wedge\ L_{n}^{(0)}\geq k_{2}\ \wedge\ F_{n}=i\right\}$.
\noindent
We can now rewrite as follows
\noindent
\begin{equation*}\label{eq:bn1}
\begin{split}
P_{q,\theta}\Big(L_{n}^{(1)}\geq k_{1}\ \wedge\ L_{n}^{(0)}\geq k_{2}\Big)=\sum_{i=k_2}^{n-k_1}P_{q,\theta}\left(E_{n,i}\right).
\end{split}
\end{equation*}

We are going to focus on the event $E_{n,\ i}$. A typical element of the event $\Big\{L_{n}^{(1)}\geq k_{1}\ \wedge\ L_{n}^{(0)}\geq k_{2}\ \wedge\ F_{n}=i\Big\}$ is an ordered sequence which consists of $n-i$ successes and $i$ failures such that the length of the longest success run is greater than or equal to $k_1$ and the length of the longest failure run is greater than or equal to $k_2$. The number of these sequences can be derived as follows. First we will distribute the $i$ failures and the $n-i$ successes.  Let $s$ $(1\leq s \leq i)$ be the number of runs of failures in the event $E_{n,\ i}$. We divide into two cases: starting with a failure run or starting with a success run. Thus, we distinguish between four types of sequences in the event $\Big\{L_{n}^{(1)}\geq k_{1}\ \wedge\ L_{n}^{(0)}\geq k_{2}\ \wedge\ F_{n}=i\Big\}$, respectively named A, B, C and D-type, which are defined as follows.

\begin{equation*}\label{eq:bn}
\begin{split}
\text{A-type}\ :&\quad \overbrace{0\ldots 0}^{y_{1}}\mid\overbrace{1\ldots 1}^{x_{1}}\mid\overbrace{0\ldots 0}^{y_{2}}\mid\overbrace{1\ldots 1}^{x_{2}}\mid \ldots \mid \overbrace{0\ldots 0}^{y_{s}}\mid\overbrace{1\ldots 1}^{x_{s}},\\
\end{split}
\end{equation*}
\noindent
with $i$ $0$'s and $n-i$ $1$'s, where $x_{j}$ $(j=1,\ldots,s)$ represents a length of run of $1$'s and $y_{j}$ $(j=1,\ldots,s)$ represents the length of a run of $0$'s. And all integers $x_{1},\ldots,x_{s}$, and $y_{1},\ldots,y_{s}$ satisfy the conditions
\noindent
\begin{equation*}\label{eq:bn}
\begin{split}
(x_1,\ldots,x_{s})\in S_{n-i,s}^{k_1-1},\ \text{and}
\end{split}
\end{equation*}
\noindent
\begin{equation*}\label{eq:bn}
\begin{split}
(y_{1},\ldots,y_{s}) \in S_{i,s}^{k_2-1}.
\end{split}
\end{equation*}

\begin{equation*}\label{eq:bn}
\begin{split}
\text{B-type}\ :&\quad \overbrace{0\ldots 0}^{y_{1}}\mid\overbrace{1\ldots 1}^{x_{1}}\mid\overbrace{0\ldots 0}^{y_{2}}\mid\overbrace{1\ldots 1}^{x_{2}}\mid \ldots \mid \overbrace{0\ldots 0}^{y_{s-1}}\mid\overbrace{1\ldots 1}^{x_{s-1}}\mid\overbrace{0\ldots 0}^{y_{s}},\\
\end{split}
\end{equation*}
\noindent
with $i$ $0$'s and $n-i$ $1$'s, where $x_{j}$ $(j=1,\ldots,s-1)$ represents a length of run of $1$'s and $y_{j}$ $(j=1,\ldots,s)$ represents the length of a run of $0$'s. And all integers $x_{1},\ldots,x_{s-1}$, and $y_{1},\ldots,y_{s}$ satisfy the conditions
\noindent
\begin{equation*}\label{eq:bn}
\begin{split}
(x_1,\ldots,x_{s-1})\in S_{n-i,s-1}^{k_1-1},\ \text{and}\end{split}
\end{equation*}
\noindent
\begin{equation*}\label{eq:bn}
\begin{split}
(y_{1},\ldots,y_{s}) \in S_{i,s}^{k_2-1}.
\end{split}
\end{equation*}

\begin{equation*}\label{eq:bn}
\begin{split}
\text{C-type}\ :&\quad \overbrace{1\ldots 1}^{x_{1}}\mid\overbrace{0\ldots 0}^{y_{1}}\mid\overbrace{1\ldots 1}^{x_{2}}\mid\overbrace{0\ldots 0}^{y_{2}}\mid \ldots \mid\overbrace{0\ldots 0}^{y_{s}}\mid\overbrace{1\ldots 1}^{x_{s+1}},\\
\end{split}
\end{equation*}
\noindent
with $i$ $0$'s and $n-i$ $1$'s, where $x_{j}$ $(j=1,\ldots,s+1)$ represents a length of run of $1$'s and $y_{j}$ $(j=1,\ldots,s)$ represents the length of a run of $0$'s. And all integers $x_{1},\ldots,x_{s+1}$, and $y_{1},\ldots,y_{s}$ satisfy the conditions
\noindent
\begin{equation*}\label{eq:bn}
\begin{split}
(x_1,\ldots,x_{s+1})\in S_{n-i,s+1}^{k_1-1},\ \text{and}
\end{split}
\end{equation*}
\noindent
\begin{equation*}\label{eq:bn}
\begin{split}
(y_{1},\ldots,y_{s}) \in S_{i,s}^{k_2-1}.
\end{split}
\end{equation*}

\begin{equation*}\label{eq:bn}
\begin{split}
\text{D-type}\ :&\quad \overbrace{1\ldots 1}^{x_{1}}\mid\overbrace{0\ldots 0}^{y_{1}}\mid\overbrace{1\ldots 1}^{x_{2}}\mid\overbrace{0\ldots 0}^{y_{2}}\mid\overbrace{1\ldots 1}^{x_{3}}\mid \ldots \mid \overbrace{0\ldots 0}^{y_{s-1}}\mid\overbrace{1\ldots 1}^{x_{s}}\mid\overbrace{0\ldots 0}^{y_{s}},\\
\end{split}
\end{equation*}
\noindent
with $i$ $0$'s and $n-i$ $1$'s, where $x_{j}$ $(j=1,\ldots,s)$ represents a length of run of $1$'s and $y_{j}$ $(j=1,\ldots,s)$ represents the length of a run of $0$'s. And all integers $x_{1},\ldots,x_{s}$, and $y_{1},\ldots,y_{s}$ satisfy the conditions
\noindent
\begin{equation*}\label{eq:bn}
\begin{split}
(x_1,\ldots,x_{s})\in S_{n-i,s}^{k_1-1},\ \text{and}
\end{split}
\end{equation*}
\noindent
\begin{equation*}\label{eq:bn}
\begin{split}
(y_{1},\ldots,y_{s}) \in S_{i,s}^{k_2-1}.
\end{split}
\end{equation*}

Then the probability of the event $E_{n,\ i}$ is given by

\noindent
\begin{equation*}\label{eq: 1.1}
\begin{split}
P_{q,\theta}\Big(E_{n,\ i}\Big)\ &\\
=\sum_{s=1}^{i}\Bigg[\bigg\{&\sum_{\substack{(x_1,\ldots,x_{s})\in S_{n-i,s}^{k_1-1}}}\quad\sum_{\substack{(y_{1},\ldots,y_{s}) \in S_{i,s}^{k_2-1}}}\big(1-\theta q^{0}\big)\cdots \big(1-\theta q^{y_{1}-1}\big)\Big(\theta q^{y_{1}}\Big)^{x_{1}}\times\\
&\big(1-\theta q^{y_{1}}\big)\cdots \big(1-\theta q^{y_{1}+y_{2}-1}\big)\Big(\theta q^{y_{1}+y_{2}}\Big)^{x_{2}}\times\\
&\big(1-\theta q^{y_{1}+y_2}\big)\cdots \big(1-\theta q^{y_{1}+y_{2}+y_3-1}\big)\Big(\theta q^{y_{1}+y_{2}+y_3}\Big)^{x_{3}}\times \\
&\quad \quad \quad \quad\quad \quad\quad \quad \quad \quad \quad \vdots\\
&\big(1-\theta q^{y_{1}+\cdots+y_{s-1}}\big)\cdots \big(1-\theta q^{y_{1}+\cdots+y_{s}-1}\big)\Big(\theta q^{y_{1}+\cdots+y_{s}}\Big)^{x_{s}}\bigg\}+\\
\end{split}
\end{equation*}

\noindent
\begin{equation*}\label{eq: 1.1}
\begin{split}
\quad \  &\bigg\{\sum_{\substack{(x_1,\ldots,x_{s-1})\in S_{n-i,s-1}^{k_1-1}}}\quad
\sum_{\substack{(y_{1},\ldots,y_{s}) \in S_{i,s}^{k_2-1}}} \big(1-\theta q^{0}\big)\cdots \big(1-\theta q^{y_{1}-1}\big)\times\\
&\Big(\theta q^{y_{1}}\Big)^{x_{1}}\big(1-\theta q^{y_{1}}\big)\cdots \big(1-\theta q^{y_{1}+y_{2}-1}\big)\times\\
&\Big(\theta q^{y_{1}+y_{2}}\Big)^{x_{2}}\big(1-\theta q^{y_{1}+y_2}\big)\cdots \big(1-\theta q^{y_{1}+y_{2}+y_3-1}\big)\times \\
&\quad \quad \quad \quad\quad \quad\quad \quad \quad \quad \quad \vdots\\
&\Big(\theta q^{y_{1}+\cdots+y_{s-1}}\Big)^{x_{s-1}}
\big(1-\theta q^{y_{1}+\cdots+y_{s-1}}\big)\cdots \big(1-\theta q^{y_{1}+\cdots+y_{s}-1}\big)\bigg\}+\\
\end{split}
\end{equation*}

\noindent
\begin{equation*}\label{eq: 1.1}
\begin{split}
\quad \quad \quad \quad &\bigg\{\sum_{\substack{(x_1,\ldots,x_{s+1})\in S_{n-i,s+1}^{k_1-1}}}\quad\sum_{\substack{(y_{1},\ldots,y_{s}) \in S_{i,s}^{k_2-1}}} \big(\theta q^{0}\big)^{x_{1}}\big(1-\theta q^{0}\big)\cdots (1-\theta q^{y_{1}-1})\times\\
&\Big(\theta q^{y_{1}}\Big)^{x_{2}}\big(1-\theta q^{y_{1}}\big)\cdots \big(1-\theta q^{y_{1}+y_{2}-1}\big)\times\\
&\Big(\theta q^{y_{1}+y_{2}}\Big)^{x_{3}}\big(1-\theta q^{y_{1}+y_2}\big)\cdots \big(1-\theta q^{y_{1}+y_{2}+y_3-1}\big)\times \\
&\quad \quad \quad \vdots\\
&\Big(\theta q^{y_{1}+\cdots+y_{s}}\Big)^{x_{s+1}}\bigg\}+\\
\end{split}
\end{equation*}

\noindent
\begin{equation*}\label{eq: 1.1}
\begin{split}
\quad \quad &\bigg\{\sum_{\substack{(x_1,\ldots,x_{s})\in S_{n-i,s}^{k_1-1}}}\quad\sum_{\substack{(y_{1},\ldots,y_{s}) \in S_{i,s}^{k_2-1}}}\big(\theta q^{0}\big)^{x_{1}}\big(1-\theta q^{0}\big)\cdots (1-\theta q^{y_{1}-1})\times\\
&\Big(\theta q^{y_{1}}\Big)^{x_{2}}\big(1-\theta q^{y_{1}+y_2}\big)\cdots \big(1-\theta q^{y_{1}+y_{2}-1}\big)\times\\
&\Big(\theta q^{y_{1}+y_{2}}\Big)^{x_{3}}\big(1-\theta q^{y_{1}}\big)\cdots \big(1-\theta q^{y_{1}+y_{2}+y_3-1}\big)\times \\
&\quad \quad \quad \quad\quad \quad\quad \quad \quad \quad \quad \vdots\\
&\Big(\theta q^{y_{1}+\cdots+y_{s-1}}\Big)^{x_{s}}
\big(1-\theta q^{y_{1}+\cdots+y_{s-1}}\big)\cdots \big(1-\theta q^{y_{1}+\cdots+y_{s}-1}\big)\bigg\}\Bigg].\\
\end{split}
\end{equation*}

Using simple exponentiation algebra arguments to simplify,
\noindent
\begin{equation*}\label{eq: 1.1}
\begin{split}
&P_{q,\theta}\Big(E_{n,i}\Big)=\\
&\theta^{n-i}{\prod_{j=1}^{i}}\ (1-\theta q^{j-1})\times\\
&\sum_{s=1}^{i}\Bigg[\sum_{\substack{(x_1,\ldots,x_{s})\in S_{n-i,s}^{k_1-1}}}\quad\sum_{\substack{(y_{1},\ldots,y_{s}) \in S_{i,s}^{k_2-1}}}q^{y_{1}x_{1}+(y_{1}+y_{2})x_{2}+\cdots+(y_{1}+\cdots+y_{s})x_{s}}+\\
&\quad \ \sum_{\substack{(x_1,\ldots,x_{s-1})\in S_{n-i,s-1}^{k_1-1}}}\quad\sum_{\substack{(y_{1},\ldots,y_{s}) \in S_{i,s}^{k_2-1}}}q^{y_{1}x_{1}+(y_{1}+y_{2})x_{2}+\cdots+(y_{1}+\cdots+y_{s-1})x_{s-1}}+\\
&\quad \ \sum_{\substack{(x_1,\ldots,x_{s+1})\in S_{n-i,s+1}^{k_1-1}}}\quad\sum_{\substack{(y_{1},\ldots,y_{s}) \in S_{i,s}^{k_2-1}}}q^{y_{1}x_{2}+(y_{1}+y_{2})x_{3}+\cdots+(y_{1}+\cdots+y_{s})x_{s+1}}+\\
&\quad \quad \sum_{\substack{(x_1,\ldots,x_{s})\in S_{n-i,s}^{k_1-1}}}\quad\sum_{\substack{(y_{1},\ldots,y_{s}) \in S_{i,s}^{k_2-1}}}q^{y_{1}x_{2}+(y_{1}+y_{2})x_{3}+\cdots+(y_{1}+\cdots+y_{s-1})x_{s}}\Bigg].\\
\end{split}
\end{equation*}
\noindent
Using Lemma \ref{lemma:6.10}, Lemma \ref{lemma:6.12}, Lemma \ref{lemma:6.14} and Lemma \ref{lemma:6.16}, we can rewrite as follows.
\noindent
\begin{equation*}\label{eq: 1.1}
\begin{split}
P_{q,\theta}\Big(E_{n,i}\Big)=\theta^{n-i}\ {\prod_{j=1}^{i}}\ (1-\theta q^{j-1})\sum_{s=1}^{i}&\Big[Q_{q,k_1,k_2}(n-i,\ i,\ s)+R_{q,k_1,k_2}(n-i,\ i,\ s)\\
&+S_{q,k_1,k_2}(n-i,\ i,\ s+1)+T_{q,k_1,k_2}(n-i,\ i,\ s)\Big],\\
\end{split}
\end{equation*}\\
\noindent
where
\noindent
\begin{equation*}\label{eq: 1.1}
\begin{split}
Q_{q,k_1,k_2}(n&-i,\ i,\ s)\\
&=\sum_{\substack{(x_1,\ldots,x_{s})\in S_{n-i,s}^{k_1-1}}}\quad
\sum_{\substack{(y_{1},\ldots,y_{s}) \in S_{i,s}^{k_2-1}}}q^{y_{1}x_{1}+(y_{1}+y_{2})x_{2}+\cdots+(y_{1}+\cdots+y_{s})x_{s}},
\end{split}
\end{equation*}
\noindent
\begin{equation*}\label{eq: 1.1}
\begin{split}
R_{q,k_1,k_2}(n&-i,\ i,\ s)\\
&=\sum_{\substack{(x_1,\ldots,x_{s-1})\in S_{n-i,s-1}^{k_1-1}}}\quad
\sum_{\substack{(y_{1},\ldots,y_{s}) \in S_{i,s}^{k_2-1}}} q^{y_{1}x_{1}+(y_{1}+y_{2})x_{2}+\cdots+(y_{1}+\cdots+y_{s-1})x_{s-1}},
\end{split}
\end{equation*}
\noindent
\begin{equation*}\label{eq: 1.1}
\begin{split}
S_{q,k_1,k_2}(n&-i,\ i,\ s+1)\\
&=\sum_{\substack{(x_1,\ldots,x_{s+1})\in S_{n-i,s+1}^{k_1-1}}}\quad\sum_{\substack{(y_{1},\ldots,y_{s}) \in S_{i,s}^{k_2-1}}}q^{y_{1}x_{2}+(y_{1}+y_{2})x_{3}+\cdots+(y_{1}+\cdots+y_{s})x_{s+1}},
\end{split}
\end{equation*}
\noindent
and
\noindent
\begin{equation*}\label{eq: 1.1}
\begin{split}
T_{q,k_1,k_2}(n&-i,\ i,\ s)\\
&=\sum_{\substack{(x_1,\ldots,x_{s})\in S_{n-i,s}^{k_1-1}}}\quad
\sum_{\substack{(y_{1},\ldots,y_{s}) \in S_{i,s}^{k_2-1}}} q^{y_{1}x_{2}+(y_{1}+y_{2})x_{3}+\cdots+(y_{1}+\cdots+y_{s-1})x_{s}}.
\end{split}
\end{equation*}
\noindent
Therefore we can compute the probability of the event $\left\{L_{n}^{(1)}\geq k_{1}\ \wedge\ L_{n}^{(0)}\geq k_{2}\right\}$ as follows
\noindent
\begin{equation*}\label{eq:bn1}
\begin{split}
P_{q,\theta}\Big(L_{n}^{(1)}\geq k_{1}\ \wedge\ L_{n}^{(0)}\geq k_{2}\Big)=&\sum_{i=k_2}^{n-k_1}\theta^{n-i}\ {\prod_{j=1}^{i}}\ (1-\theta q^{j-1})\sum_{s=1}^{i}\Big[Q_{q,k_1,k_2}(n-i,\ i,\ s)\\
&+R_{q,k_1,k_2}(n-i,\ i,\ s)+S_{q,k_1,k_2}(n-i,\ i,\ s+1)\\
&+T_{q,k_1,k_2}(n-i,\ i,\ s)\Big].\\
\end{split}
\end{equation*}
\noindent
Thus proof is completed.
\end{proof}
\noindent
It is worth mentioning here that the PMF $P_{q,\theta}\Big(L_{n}^{(1)}\geq k_{1}\ \wedge\ L_{n}^{(0)}\geq k_{2}\Big)$ for $n\geq k_1+k_2$ approaches the probability function of $P_{\theta}\Big(L_{n}^{(1)}\geq k_{1}\ \wedge\ L_{n}^{(0)}\geq k_{2}\Big)$ in the limit as $q$ tends to 1 of IID model. The details are presented in the following remark.
\noindent
\begin{remark}
{\rm For $q=1$, the PMF $P_{q,\theta}\Big(L_{n}^{(1)}\geq k_{1}\ \wedge\ L_{n}^{(0)}\geq k_{2}\Big)$ for $n\geq k_1+k_2$ reduces to the PMF $P_{\theta}\Big(L_{n}^{(1)}\geq k_{1}\ \wedge\ L_{n}^{(0)}\geq k_{2}\Big)$ for $n\geq k_1+k_2$ is given by
\noindent
\begin{equation*}\label{eq:bn1}
\begin{split}
P\Big(L_{n}^{(1)}\geq k_{1}\ \wedge\ L_{n}^{(0)}\geq k_{2}\Big)=&\sum_{i=k_2}^{n-k_1}\theta^{n-i}\ {\prod_{j=1}^{i}}\ (1-\theta q^{j-1})\sum_{s=1}^{i}\Big[R(s,n-i,\ k_1,\ n-i)\\
&+R(s-1,\ k_1,\ n-i)+R(s+1,\ k_1,\ n-i)\\
&+R(s,\ k_1,\ n-i)\Big]R(s,\ k_2,\ i).\\
\end{split}
\end{equation*}
where
\noindent
\begin{equation*}\label{eq: 1.1}
\begin{split}
R(a,\ b,\ c)=\sum_{j=1}^{min\left(a,\ \left[\frac{c-a}{b-1}\right]\right)}(-1)^{j+1}{a \choose j}{c-j(b-1)-1 \choose a-1}.
\end{split}
\end{equation*}
\noindent
See, e.g. \citet{charalambides2002enumerative}.
}
\end{remark}

\bibliography{biblio}
\addcontentsline{toc}{section}{References}
\bibliographystyle{apalike}

\end{document}